\pdfoutput=1
\RequirePackage{ifpdf}
\ifpdf 
\documentclass[pdftex]{sigma}
\else
\documentclass{sigma}
\fi

\usepackage{enumitem}
\usepackage{amsxtra}
\usepackage{mathrsfs}
\usepackage[all]{xy}
\input{glyphtounicode.tex}
\pdfgentounicode=1

\numberwithin{equation}{section}

\newtheorem{Theorem}{Theorem}[section]
\newtheorem{Corollary}[Theorem]{Corollary}
\newtheorem{Lemma}[Theorem]{Lemma}
\newtheorem{Proposition}[Theorem]{Proposition}
\newtheorem{Assumption}[Theorem]{Assumption}
\newtheorem{Caution}[Theorem]{Caution}
\newtheorem{Conjecture}[Theorem]{Conjecture}

{\theoremstyle{definition}
\newtheorem{Definition}[Theorem]{Definition}

\newtheorem{Notation}[Theorem]{Notation}
\newtheorem{Example}[Theorem]{Example}
\newtheorem{Remark}[Theorem]{Remark} }

\makeatletter
\newcommand{\proofstep}[1]{%
 \par 
 \addvspace{\medskipamount}
 \textit{#1\@addpunct{.}}\enspace\ignorespaces
}
\makeatother

\makeatletter
\def\namedlabel#1#2{\begingroup
 #2%
 \def\@currentlabel{#2}%
 \phantomsection\label{#1}\endgroup
}
\makeatother

\newcommand{\C}{\mathbb{C}}
\newcommand{\Z}{\mathbb{Z}}
\newcommand{\N}{\mathbb{N}}
\newcommand{\Q}{\mathbb{Q}}
\newcommand{\R}{\mathbb{R}}
\newcommand{\T}{\mathbb{T}}
\newcommand{\OO}{\mathbb{O}}
\newcommand{\bS}{\mathbb{S}}
\newcommand{\PP}{\mathbb{P}}
\newcommand{\LL}{\mathbb{L}}
\newcommand{\D}{\mathbb{D}}
\newcommand{\tD}{\widetilde{\D}}
\newcommand{\Dsm}{\D_{\rm sm}}
\newcommand{\tDsm}{\tD_{\rm sm}}
\newcommand{\intDsmtimes}{\D_{\rm sm}^{\times \circ}}

\newcommand{\cA}{\mathcal{A}}

\newcommand{\cB}{\mathcal{B}}
\newcommand{\cO}{\mathcal{O}}
\newcommand{\hcO}{\widehat{\cO}}
\newcommand{\cOan}{\cO^{\rm an}}
\newcommand{\cU}{\mathcal{U}}
\newcommand{\cUss}{\cU^{\rm ss}}
\newcommand{\cV}{\mathcal{V}}
\newcommand{\cVsm}{\cV^{\rm sm}}
\newcommand{\cVsmss}{\cV^{\rm sm,ss}}
\newcommand{\cVss}{\cV^{\rm ss}}
\newcommand{\Vss}{V^{\rm ss}}
\newcommand{\cC}{\mathcal{C}}
\newcommand{\cS}{\mathcal{S}}
\newcommand{\cE}{\mathcal{E}}
\newcommand{\cF}{\mathcal{F}}

\newcommand{\cD}{\mathcal{D}}
\newcommand{\cG}{\mathcal{G}}
\newcommand{\cM}{\mathcal{M}}

\newcommand{\cMnd}{\cM^{\rm nd}}
\newcommand{\cMsm}{\cM^{\rm sm}}
\newcommand{\cMtimes}{\cM^{\times}}
\newcommand{\cMsmlf}{\cM^{\rm sm,lf}}
\newcommand{\cMsmss}{\cM^{\rm sm,ss}}
\newcommand{\cMsmtimes}{\cM^{\rm sm,\times}}
\newcommand{\cMsmnd}{\cM^{\rm sm,nd}}
\newcommand{\hcM}{\widehat{\cM}}
\newcommand{\cR}{\mathcal{R}}
\newcommand{\cY}{\mathcal{Y}}
\newcommand{\cYsm}{\cY^{\rm sm}}
\newcommand{\cYtimes}{\cY^{\times}}

\newcommand{\cYsmtimes}{\cY^{\rm sm,\times}}

\newcommand{\sfD}{\mathsf{D}}
\newcommand{\sfG}{\mathsf{G}}

\newcommand{\scrS}{\mathscr{S}}
\newcommand{\scrD}{\mathscr{D}}
\newcommand{\scrG}{\mathscr{G}}
\newcommand{\scrA}{\mathscr{A}}
\newcommand{\scrR}{\mathscr{R}}
\newcommand{\scrRan}{\scrR^{\rm an}}
\newcommand{\scrT}{\mathscr{T}}
\newcommand{\scrC}{\mathscr{C}}
\newcommand{\scrQ}{\mathscr{Q}}
\newcommand{\scrQan}{\scrQ^{\rm an}}

\newcommand{\hS}{{\widehat{S}}}
\newcommand{\hxi}{{\hat{\xi}}}
\newcommand{\hb}{{\hat{b}}}
\newcommand{\chb}{\check{b}}
\newcommand{\hR}{{\widehat{R}}}
\newcommand{\hSigma}{\widehat{\Sigma}}
\newcommand{\hPhi}{\widehat{\Phi}}
\newcommand{\hU}{{\widehat{U}}}
\newcommand{\hGamma}{\widehat{\Gamma}}
\newcommand{\hB}{\widehat{B}}
\newcommand{\hQ}{\widehat{Q}}

\newcommand{\tC}{\widetilde{C}}
\newcommand{\tCC}{{\widetilde{\C}}}
\newcommand{\tdaleth}{\widetilde{\daleth}}
\newcommand{\tXi}{\widetilde{\Xi}}
\newcommand{\tcM}{\widetilde{\cM}}
\newcommand{\tcMsm}{{\tcM}^{\rm sm}}
\newcommand{\tzero}{\tilde{0}}
\newcommand{\tpr}{\widetilde{\pr}}
\newcommand{\tx}{\tilde{x}}
\newcommand{\ttau}{\tilde{\tau}}
\newcommand{\tnabla}{\widetilde{\nabla}}
\newcommand{\tPhi}{\widetilde{\Phi}}
\newcommand{\tU}{\widetilde{U}}
\newcommand{\tV}{\widetilde{V}}
\newcommand{\tR}{\widetilde{R}}
\newcommand{\ovvarsigma}{\overline{\varsigma}}

\newcommand{\bx}{\mathbf{x}}
\newcommand{\bs}{\mathbf{s}}
\newcommand{\bN}{\mathbf{N}}
\newcommand{\bM}{\mathbf{M}}
\newcommand{\bSigma}{\mathbf{\Sigma}}
\newcommand{\hbSigma}{{\widehat{\bSigma}}}
\newcommand{\by}{\mathbf{y}}
\newcommand{\bD}{\mathbf{D}}
\newcommand{\bw}{\mathbf{w}}
\newcommand{\bU}{\mathbf{U}}
\newcommand{\bv}{\mathbf{v}}
\newcommand{\bu}{\mathbf{u}}
\newcommand{\bH}{\mathbf{H}}

\newcommand{\frs}{\mathfrak{s}}
\newcommand{\frX}{\mathfrak{X}}
\newcommand{\hfrX}{\widehat{\frX}}
\newcommand{\frD}{\mathfrak{D}}
\newcommand{\frakm}{\mathfrak{m}}
\newcommand{\frakman}{\frakm^{\rm an}}

\newcommand{\frB}{\mathfrak{B}}

\newcommand{\frS}{\mathfrak{S}}

\newcommand{\ttq}{\mathtt{q}}
\newcommand{\ttp}{\mathtt{p}}
\newcommand{\tti}{\mathtt{i}}
\newcommand{\ttj}{\mathtt{j}}

\newcommand{\Cr}{\operatorname{Cr}}
\newcommand{\Crsm}{\Cr^{\rm sm}}
\newcommand{\Crss}{\Cr^{\rm ss}}
\newcommand{\vol}{\operatorname{vol}}
\newcommand{\crit}{\operatorname{cr}}
\newcommand{\Lef}{\operatorname{\mathsf{Lef}}}
\newcommand{\Lefsm}{\Lef^{\rm sm}}
\newcommand{\mir}{\operatorname{mir}}
\newcommand{\Mir}{\operatorname{Mir}}
\newcommand{\Miran}{\Mir^{\rm an}}
\newcommand{\hMir}{\widehat{\Mir}}
\newcommand{\slide}{\operatorname{\mathtt{s}}}
\newcommand{\Spec}{\operatorname{Spec}}
\newcommand{\Spf}{\operatorname{Spf}}
\newcommand{\id}{\operatorname{id}}
\newcommand{\ev}{\operatorname{ev}}
\newcommand{\pt}{{\operatorname{pt}}}
\newcommand{\Frac}{\operatorname{Frac}}
\newcommand{\Lie}{\operatorname{Lie}}
\newcommand{\Cone}{\operatorname{Cone}}
\newcommand{\PD}{\operatorname{PD}}
\newcommand{\Hom}{\operatorname{Hom}}
\newcommand{\Ext}{\operatorname{Ext}}
\newcommand{\End}{\operatorname{End}}
\newcommand{\rank}{\operatorname{rank}}
\newcommand{\Sym}{\operatorname{Sym}}
\newcommand{\ch}{\operatorname{ch}}
\newcommand{\tch}{\widetilde{\ch}}
\newcommand{\Ch}{\operatorname{Ch}}
\newcommand{\Ker}{\operatorname{Ker}}
\newcommand{\Cok}{\operatorname{Cok}}
\newcommand{\Image}{\operatorname{Im}}
\newcommand{\Pic}{\operatorname{Pic}}
\newcommand{\Aut}{\operatorname{Aut}}

\newcommand{\age}{\operatorname{age}}
\newcommand{\Bx}{\operatorname{Box}}
\newcommand{\CR}{{\rm CR}}
\newcommand{\inv}{\operatorname{inv}}
\newcommand{\Bri}{\operatorname{Bri}}
\newcommand{\Briequiv}{\operatorname{Bri_{\T}}}
\newcommand{\Briequivan}{\overline{\operatorname{Bri_{\T}^{an}}}}
\newcommand{\Brian}{\overline{\operatorname{{Bri}^{an}}}}
\newcommand{\Brism}{\Bri^{\rm sm}}
\newcommand{\Loc}{\operatorname{Loc}}
\newcommand{\pr}{\operatorname{pr}}

\newcommand{\Fan}{\operatorname{\mathfrak{Fan}}}

\newcommand{\CPL}{\operatorname{CPL}}

\newcommand{\cpl}{\operatorname{cpl}}
\newcommand{\PLZ}{\operatorname{PL}_\Z}
\newcommand{\plZ}{\operatorname{pl}_\Z}
\newcommand{\Bl}{\operatorname{Bl}}
\newcommand{\QDM}{\operatorname{QDM}}
\newcommand{\QDMan}{\operatorname{QDM}^{\rm an}}
\newcommand{\Asymp}{\operatorname{Asym}}

\newcommand{\std}{{\rm std}}
\newcommand{\diag}{\operatorname{diag}}
\newcommand{\ori}{\operatorname{\mathsf{ori}}}
\newcommand{\Fockan}{\operatorname{\mathfrak{AFock}}}
\newcommand{\NE}{\operatorname{NE}}
\newcommand{\hNE}{\operatorname{\widehat{NE}}}

\newcommand{\OEf}{\operatorname{OE}}
\newcommand{\hOEf}{\operatorname{\widehat{OE}}}
\newcommand{\Gr}{\operatorname{Gr}}
\newcommand{\gr}{\operatorname{gr}}
\newcommand{\grad}{\operatorname{grad}}

\newcommand{\unit}{\boldsymbol{1}}
\newcommand{\Laa}{\boldsymbol{\Lambda}}

\newcommand{\iu}{\boldsymbol{\mathrm{i}}}
\newcommand{\st}{{\rm st}}

\def\corr#1{\left\langle#1 \right\rangle}
\def\parfrac#1#2{\frac{\partial #1}{\partial #2}}
\def\floor#1{\lfloor #1 \rfloor}

\begin{document}
\allowdisplaybreaks

\newcommand{\arXivNumber}{1906.00801}

\renewcommand{\thefootnote}{}

\renewcommand{\PaperNumber}{032}

\FirstPageHeading

\ShortArticleName{Global Mirrors and Discrepant Transformations for Toric Deligne--Mumford Stacks}

\ArticleName{Global Mirrors and Discrepant Transformations\\ for Toric Deligne--Mumford Stacks\footnote{This paper is a~contribution to the Special Issue on Integrability, Geometry, Moduli in honor of Motohico Mulase for his 65th birthday. The full collection is available at \href{https://www.emis.de/journals/SIGMA/Mulase.html}{https://www.emis.de/journals/SIGMA/Mulase.html}}}

\Author{Hiroshi IRITANI}
\AuthorNameForHeading{H.~Iritani}

\Address{Department of Mathematics, Graduate School of Science, Kyoto University,\\ Kitashirakawa-Oiwake-cho, Sakyo-ku, Kyoto, 606-8502, Japan}

\Email{\href{mailto:iritani@math.kyoto-u.ac.jp}{iritani@math.kyoto-u.ac.jp}}
\URLaddress{\url{https://www.math.kyoto-u.ac.jp/~iritani/}}

\ArticleDates{Received June 13, 2019, in final form March 29, 2020; Published online April 22, 2020}

\Abstract{We introduce a global Landau--Ginzburg model which is mirror to several toric Deligne--Mumford stacks and describe the change of the Gromov--Witten theories under discrepant transformations. We prove a formal decomposition of the quantum coho\-mo\-logy D-modules (and of the all-genus Gromov--Witten potentials) under a discrepant toric wall-crossing. In the case of weighted blowups of weak-Fano compact toric stacks along toric centres, we show that an analytic lift of the formal decomposition corresponds, via the $\hGamma$-integral structure, to an Orlov-type semiorthogonal decomposition of topological $K$-groups. We state a conjectural functoriality of Gromov--Witten theories under discrepant transformations in terms of a Riemann--Hilbert problem.}

\Keywords{quantum cohomology; mirror symmetry; toric variety; Landau--Ginzburg model; Gamma-integral structure}

\Classification{14N35; 14J33; 53D45}

\vspace{-2mm}

{\small \renewcommand{\baselinestretch}{0.7} \tableofcontents

}

\renewcommand{\thefootnote}{\arabic{footnote}}
\setcounter{footnote}{0}

\section{Introduction}
It is a very interesting problem to study how
Gromov--Witten invariants (or quantum cohomo\-lo\-gy) change
under birational transformations.
When the birational transformation is \emph{crepant}
(or a~\emph{$K$-equivalence}), a conjecture of
Yongbin Ruan \cite{Ruan:crepant} says
that the quantum cohomology of $K$-equivalent spaces
should be related to each other
by analytic continuation in quantum parameters
(see, e.g., \cite{Bryan-Graber,Coates-Iritani:Fock,
CIT:wallcrossing,Lee-Lin-Wang:flops, AMLi-Ruan}).
In this paper, we are concerned with \emph{discrepant}
transformations, or more precisely, birational maps
$\varphi \colon \frX_+ \dasharrow \frX_-$
between smooth Deligne--Mumford stacks
such that there exist
projective birational morphisms
$f_\pm \colon \hfrX \to \frX_\pm$ satisfying
$f_- = \varphi \circ f_+$ and
that $f_+^*K_{\frX_+} - f_-^*K_{\frX_-}$ is a non-zero
effective divisor.
In this case, we write ``$K_{\frX_+} > K_{\frX_-}$'' by a slight
abuse of notation
\[
\xymatrix{
& \hfrX \ar[ld]_{f_+}\ar[rd]^{f_-} & \\
\frX_+ \ar@{-->}[rr]^{\varphi}& & \frX_-.
}
\]
This includes the case where $\varphi$ is a
blowup along a smooth subvariey.
In this case, we do not expect that
the quantum cohomology of $\frX_+$ and $\frX_-$
are related by analytic continuation
because their ranks are different.
Instead, we expect that the quantum cohomology of
$\frX_+$ would contain the quantum cohomology of $\frX_-$
as a \emph{direct summand} after analytic continuation.
This is analogous to the conjecture
that $D^b(\frX_+)$ contains $D^b(\frX_-)$ as a
\emph{semiorthogonal summand}
\cite{BFK:VGIT, Bonda-Orlov:SOD,Kawamata:log_crepant,Kawamata:Dcat_birat}, where $D^b(\frX_\pm)$ denotes the derived category
of coherent sheaves on $\frX_\pm$.
In this paper, we describe a decomposition of quantum
cohomology D-modules (and of all genus Gromov--Witten potentials)
for toric Deligne--Mumford stacks under
discrepant transformations.
Moreover, we show in special cases that
the decomposition of quantum cohomology D-modules
is induced by
a semiorthogonal decomposition of derived categories
(or more precisely of topological $K$-groups)
via the \emph{$\hGamma$-integral structure} \cite{Iritani:Integral,
KKP:Hodge}.
We also formulate a general conjecture in view of
our results in the toric case.

\subsection{Quantum cohomology D-modules}
Our central objects of study are
\emph{quantum $($cohomology$)$ D-modules}.
Let $\frX$ be a smooth Deligne--Mumford stack.
The genus-zero Gromov--Witten invariants define a family
of (super)commu\-ta\-ti\-ve product structures $\star_\tau$
on the orbifold cohomology group $H^*_\CR(\frX)$
parametrized by $\tau \in H^*_\CR(\frX)$; this is called
quantum cohomology.
The product $\star_\tau$ then defines a
meromorphic flat connection~$\nabla$ called the \emph{quantum connection}
(or \emph{Dubrovin connection})
on the trivial $H^*_\CR(\frX)$-bundle over~$H^*_\CR(\frX)
\times \C$. It is given by the formulae
\begin{gather*}
\nabla_{\parfrac{}{\tau^i}} =\parfrac{}{\tau^i} + \frac{1}{z} \phi_i\star_\tau, \\
\nabla_{z\parfrac{}{z}} = z \parfrac{}{z} - \frac{1}{z} E\star_\tau+ \mu,
\end{gather*}
where $(\tau,z)$ represents a point on the base $H^*_\CR(\frX) \times \C$,
$\{\tau^i\}$ are linear co-ordinates on $H^*_\CR(\frX)$
dual to a basis $\{\phi_i\}$, $E$ is the so-called Euler vector
field, and $\mu$ is the (constant) grading operator.
This connection is self-dual with respect to the pairing $P$
between the fibres at $(\tau,-z)$ and $(\tau,z)$
induced by the orbifold Poincar\'e pairing.
The \emph{quantum D-module}\footnote
{Here we assume the convergence and the
analyticity of quantum cohomology (which are true
for toric DM stacks); the superscript ``an'' means
analytic.} $\QDMan(\frX)$ of $\frX$
is, roughly speaking, the module of sections of this
vector bundle equipped with the meromorphic flat
connection $\nabla$ and the pairing $P$
(see Sections~\ref{subsec:QDM} and~\ref{subsec:Hukuhara-Turrittin} for the details).
We also obtain the \emph{formal} quantum D-module
(or more precisely, the quantum D-module \emph{completed in} $z$)
by restricting $\QDMan(\frX)$ to the formal neighbourhood of $z=0$:
\[
\overline{\QDMan}(\frX)
= \QDMan(\frX) \otimes_{\cOan[z]}\cOan[\![z]\!].
\]
When a torus $\T$ acts on $\frX$, we can also define
the \emph{$\T$-equivariant} quantum D-module
$\QDMan_\T(\frX)$ and its formal version
$\overline{\QDMan_\T}(\frX)$; they are
deformation of the non-equivariant quantum D-modules.

\subsection{Global Landau--Ginzburg models and toric wall-crossings}
A smooth semiprojective toric Deligne--Mumford (DM) stack $\frX_\bSigma$
(in the sense of Borisov--Chen--Smith~\cite{BCS})
can be
defined as the geometric invariant theory (GIT)
quotient of a vector space~$\C^S$
by a torus $\LL_{\C^\times} = \LL\otimes \C^\times$,
where $\LL$ is a free $\Z$-module of finite rank.
The torus $\LL_{\C^\times}$ acts on $\C^S$ via
a group homomorphism $\LL_{\C^\times}
\to (\C^\times)^S$, and by dualizing it,
we get the family $\pr\colon (\C^\times)^S
\to \LL^\star \otimes \C^\times$ of tori
equipped with the function
$F = \sum\limits_{b\in S} u_b$ on $(\C^\times)^S$,
where $u_b$ is the $b$th co-ordinate on $(\C^\times)^S$
\[
\xymatrix{
(\C^\times)^S \ar[rr]^{F} \ar[d]^{\pr} &
& \C.
\\
\LL^\star \otimes \C^\times & &
}
\]
This is the Landau--Ginzburg (LG) model mirror to $\frX_\bSigma$ introduced
by Givental \cite{Givental:ICM}.
Using the secondary fan of Gelfand--Kapranov--Zelevinsky
\cite{GKZ:discriminants},
we can partially compactify this LG model to an LG model of the form
(see Section~\ref{subsec:LG})
\[
\xymatrix{
\cY \ar[rr]^{F} \ar[d]^{\pr} & & \C,
\\
\cM & &
}
\]
where $\cY$, $\cM$ are possibly singular toric DM stacks
(in the sense of Tyomkin~\cite{Tyomkin:tropical}).
The base $\cM$ of this LG model
corresponds to the (extended) K\"ahler moduli space of $\frX_\bSigma$
(i.e., the base of the quantum D-module)
and contains a distinguished point~$0_\bSigma$ called the
\emph{large radius limit point} of~$\frX_\bSigma$.
It also contains,\footnote{In fact, by choosing a suitable presentation of $\frX_\bSigma$
as the GIT quotient $[\C^S/\!/\LL_{\C^\times}]$,
we can arrange
that the large radius limit point of any given smooth toric DM stack
$\frX_{\bSigma'}$ having the same affinization and
the same generic
stabilizer as $\frX_{\bSigma}$ appears in the base space $\cM$.}
as torus-fixed points, the large radius limit points
$0_{\bSigma'}$ of several other toric DM stacks $\frX_{\bSigma'}$
which can be obtained from $\frX_{\bSigma}$
by varying the stability condition for the GIT quotient
$[\C^S/\!/\LL_{\C^\times}]$.
Hodge-theoretic mirror symmetry for toric DM stacks
established by
Coates--Corti-Iritani--Tseng \cite{CCIT:mirrorthm, CCIT:MS}
implies that, for each smooth toric DM stack $\frX_{\bSigma'}$
whose large radius limit point appears in~$\cM$,
we have a mirror map defined
on the formal neighbourhood of~$0_{\bSigma'}$
\[
\mir \colon \ (\cM,0_{\bSigma'})\sphat \longrightarrow
\text{a partial compactification of
$H^*_\CR(\frX_{\bSigma'})/2\pi\iu H^2(\frX_{\bSigma'},\Z)$}
\]
and a mirror isomorphism
\[
\Bri(F)\sphat_{\bSigma'} \cong \mir^*\QDMan(\frX_{\bSigma'}).
\]
Here $\Bri(F)$ denotes
the \emph{Brieskorn module} associated with the LG potential $F$
and the sub/su\-per\-scripts ${}_{\bSigma'}\sphat$
means the completion at the large-radius limit point $0_{\bSigma'}$
(see Sections~\ref{subsec:Bri}--\ref{subsec:completion}).
Using the convergence result from~\cite{Iritani:coLef,CCIT:MS},
we show that this mirror isomorphism can be extended to a
small analytic neighbourhood of~$0_{\bSigma'}$ as an
isomorphism\footnote{The analytified Brieskorn module $\Brian(F)_{\bSigma'}$
is analytic in the $\cM$-direction
but is still formal in $z$; similarly the formal quantum D-module
$\overline\QDMan(\frX_{\bSigma'})$ is formal in $z$ but
is still analytic in $\tau$, so they can be compared. }
between a certain \emph{analy\-ti\-fied} Brieskorn module
$\Brian(F)_{\bSigma'}$
and the formal quantum $D$-module
$\overline{\QDMan}(\frX_{\bSigma'})$
(see Theo\-rem~\ref{thm:an_mirror_isom}).
This enables us to compare the quantum D-modules of various
birational models of~$\frX_\bSigma$ over the mirror moduli
space~$\cM$.
Then we arrive at the following result
(in this theorem, we do not assume compactness of~$\frX_\pm$ or (semi-)positivity of~$c_1(\frX_\pm)$).

\begin{Theorem}[{Theorem \ref{thm:decomp_QDM}}]
\label{thm:decomp_QDM_introd}
Let $\varphi\colon \frX_+\dasharrow \frX_-$ be a discrepant
transformation between semiprojective toric DM stacks
induced by a single wall-crossing in the space of GIT stability
conditions. Suppose that $K_{\frX_+}>K_{\frX_-}$.
Then we have a formal decomposition of the
$\T$-equivariant quantum D-modules
\begin{equation}\label{eq:decomp_QDM_introd}
\mir_+^*\overline{\QDMan_\T}(\frX_+) \cong
\mir_-^* \overline{\QDMan_\T}(\frX_-) \oplus \scrR
\end{equation}
over a non-empty open subset $\cU_0'$ of $\cM\times \Lie \T$,
where $\mir_\pm$ denotes the mirror map for $\frX_\pm$
and~$\scrR$ is a locally free $\cOan_{\cU_0'}[\![z]\!]$-module equipped
with a meromorphic flat connection and a pairing.
\end{Theorem}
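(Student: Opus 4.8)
\medskip
\noindent\emph{Proof plan.}\quad The plan is to transport the statement to the mirror Landau--Ginzburg model and to read the decomposition off a clustering of the critical values of the superpotential. First I would fix a GIT presentation $[\C^S/\!/\LL_{\C^\times}]$ realizing the given wall-crossing and consider the associated global LG model $F\colon\cY\to\cM$, so that the large radius limit points $0_{\bSigma_+}$ and $0_{\bSigma_-}$ are the two torus-fixed points of $\cM$ adjacent across the relevant wall $W$ of the secondary fan. By the equivariant version of the analytic mirror theorem (Theorem~\ref{thm:an_mirror_isom}), on a neighbourhood of each $0_{\bSigma_\pm}$ there is an isomorphism $\Briequivan(F)_{\bSigma_\pm}\cong\mir_\pm^*\overline{\QDMan_\T}(\frX_\pm)$ of D-modules with pairing; hence it suffices to study the equivariant, $z$-formal Brieskorn module $\Briequivan(F)$ of the single LG model over (the analytification of) $\cM\times\Lie\T$ near $W$, and to produce over a suitable open $\cU_0'$ a splitting $\Briequivan(F)|_{\cU_0'}\cong\cE_-\oplus\scrR$ whose summand $\cE_-$ analytically continues, near $0_{\bSigma_-}$, to the entire Brieskorn module there.

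Next I would analyze the relative critical locus $\Cr(F)\subset\cY$ over $\cM\times\Lie\T$, i.e.\ the equivariant fibrewise critical scheme of $F$. For a generic equivariant parameter and $q$ near $W$ but away from the two LRL points, this locus is finite \'etale of degree $\rank H^*_\CR(\frX_+)$ with pairwise distinct critical values. Using the explicit toric form of $F$ across a single wall-crossing, where the coordinates of $\C^S$ split according to the two GIT-unstable loci, I expect to show that over a non-empty open $\cU_0'\subset\cM\times\Lie\T$ one has a disjoint decomposition $\Cr(F)|_{\cU_0'}=Z_-\sqcup Z_\scrR$, with $Z_-$ the critical points whose coordinates remain in a fixed bounded region as $q\to 0_{\bSigma_-}$ (hence surviving the discrepant contraction) and $Z_\scrR$ those whose coordinates escape at a rate governed by the wall. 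Here the hypothesis $K_{\frX_+}>K_{\frX_-}$ is what makes $Z_\scrR$ non-empty, so that $|Z_-|=\rank H^*_\CR(\frX_-)<\rank H^*_\CR(\frX_+)$, and what forces, after shrinking $\cU_0'$, the two families of critical values $F(Z_-)$ and $F(Z_\scrR)$ to stay disjoint over all of $\cU_0'$; in the crepant case this separation fails and one gets analytic continuation instead of a direct sum.

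Finally, since the eigenvalues of the leading term $E\star$ of the Brieskorn connection are exactly the critical values of $F$, which over $\cU_0'$ form the two disjoint clusters $F(Z_-)$ and $F(Z_\scrR)$, I would invoke the family version over $\cU_0'$ of the formal (Hukuhara--Turrittin) decomposition of irregular connections developed in Section~\ref{subsec:Hukuhara-Turrittin} to obtain a $\nabla$-stable splitting $\Briequivan(F)|_{\cU_0'}=\cE_-\oplus\scrR$ into locally free $\cOan_{\cU_0'}[\![z]\!]$-modules, with $\cE_-$ the summand carrying exponential factors in $F(Z_-)$. The pairing $P$ is block-diagonal for this splitting — it pairs the fibres over $-z$ and $z$, and the exponential contributions of two critical values cancel in $P$ only when the values coincide, so distinct clusters are $P$-orthogonal — hence $\cE_-$ and $\scrR$ inherit a connection and a pairing. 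To identify $\cE_-$ I would argue that $Z_-$ is precisely the part of $\Cr(F)$ that persists in the partially compactified LG model of $\frX_-$ near $0_{\bSigma_-}$ (the points of $Z_\scrR$ leave the relevant chart of $\cY$), so that the analytic continuation of $\cE_-$ to a neighbourhood of $0_{\bSigma_-}$ agrees with the whole $\Briequivan(F)_{\bSigma_-}\cong\mir_-^*\overline{\QDMan_\T}(\frX_-)$; together with the mirror isomorphism at $0_{\bSigma_+}$ this yields \eqref{eq:decomp_QDM_introd}. The main obstacle is the combinatorial part: verifying that the ``bounded'' critical cluster near the wall is canonically the critical locus of the LG model of $\frX_-$ and accounts for its full Brieskorn module, which requires controlling the partial compactification of $\cY$ and the toric geometry of the discrepant wall-crossing precisely enough to track where each critical value goes.
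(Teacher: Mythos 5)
Your overall strategy is the same as the paper's: pass to the global LG mirror, use the equivariant analytic mirror isomorphism (Theorem~\ref{thm:an_mirror_isom}) on both charts, split the relative critical locus into ``convergent'' branches (surviving at $0_{\bSigma_-}$) and ``divergent'' ones, and use the discrepancy hypothesis to guarantee the residual part is nonempty. But the mechanism you propose for the decomposition step is different from the paper's and, as stated, has a gap. You want to separate the two clusters of \emph{critical values} and then invoke a family Hukuhara--Turrittin decomposition from Section~\ref{subsec:Hukuhara-Turrittin}. Those results (Propositions~\ref{prop:formal_decomposition} and~\ref{prop:Hukuhara-Turrittin}) are non-equivariant, assume semisimplicity, and decompose an irregular connection in the $z$-direction; in the equivariant setting there is no $\nabla_{z\partial_z}$ on $\QDM_\T(\frX)$ (the connection is only partial, in the base direction), so the cited machinery does not apply directly, and one would have to build an order-by-order-in-$z$ spectral splitting by hand, valid only where the two clusters of eigenvalues of $E\star$ stay disjoint. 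The paper avoids all of this: it splits the critical \emph{scheme} rather than the critical values. The finite flat map $\tpr=(\pr,x_1\partial F/\partial x_1,\dots,x_n\partial F/\partial x_n)\colon\cY\to\cM\times\Lie\T$ restricted to the neighbourhoods $\cB_\pm$ of $\tzero_\pm$ is proper, and Lemma~\ref{lem:subcover} produces an open $\cU_0$ containing $(\cC\setminus\{0_+,0_-\})\times\{0\}$ with $\tpr^{-1}(\cU_0)\cap\cB_+=(\tpr^{-1}(\cU_0)\cap\cB_-)\sqcup\cR$; since $\Briequivan(F)_\pm$ \emph{is} $\tpr_*(\cOan_{\cB_\pm}[\![z]\!])\cdot\omega$, the module, its connection, grading and higher residue pairing split automatically (Corollary~\ref{cor:direct_summand}), with no semisimplicity and no separation of critical values needed -- which is exactly why the resulting $\cU_0'$ may contain non-semisimple loci.

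The second, related gap is the step you yourself flag as the main obstacle: identifying the bounded cluster with the \emph{full} Brieskorn module of $\frX_-$ near $0_{\bSigma_-}$. You offer no mechanism for this, and ``boundedness of coordinates'' is not by itself enough; the paper's mechanism is the $\C^\times$-action generated by the Euler vector field together with Assumption~\ref{assump:discrepancy} ($\sum_b D_b\cdot\bw>0$): it forces every point of $\pr^{-1}(\cC\setminus\{0_-\})$ to flow to $\tzero_+$, so after saturating $\cB_\pm$, $\cU_\pm$ under the flow one gets $\pr^{-1}(\cC\setminus\{0_-\})\subset\cB_+$ and the containment $\tpr^{-1}(\cU_0)\cap\cB_-\subset\tpr^{-1}(\cU_0)\cap\cB_+$ needed for the clopen splitting, while the mirror isomorphisms and mirror maps extend to these $\C^\times$-saturated domains because they intertwine Euler vector fields and grading operators. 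If you keep your critical-value route, you must (i) supply an equivariant, $z$-formal substitute for Hukuhara--Turrittin (e.g.\ lifting the idempotent/spectral decomposition of $\Briequivan(F)/z\Briequivan(F)$ order by order using flatness of $\nabla$), (ii) verify the clusters of critical values remain disjoint on the open set you use (true along $\cC$ by Proposition~\ref{prop:critv_on_curve}, but it must be checked after shrinking), and (iii) still prove the identification of the convergent summand with $\mir_-^*\overline{\QDMan_\T}(\frX_-)$, which is where the properness/$\C^\times$ argument is doing the real work in the paper.
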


This theorem is a generalization of the
result of Gonz\'alez--Woodward \cite{Gonzalez-Woodward:tmmp}
who showed a~decomposition of the
quantum cohomology \emph{algebras}
under a running of the toric minimal model programme.
In this theorem, we consider analytic continuation
over a neighbourhood of a toric curve $\cC
\subset \cM$
connecting the large radius limit points $0_\pm$ for $\frX_\pm$.
When $\varphi$ is an isomorphism in codimension one (``flip''),
the curve $\cC$ is asymptotically close, near the large radius limit
points~$0_\pm$,
to the curve in the boundary of the K\"ahler moduli space
given by the extremal curve class.
When $\varphi$ (or $\varphi^{-1}$)
contracts a divisor, the curve $\cC$ is asymptotically close to
the curve corresponding to the extremal class near $0_+$
(resp.~$0_-$)
and to the line spanned by a cohomology class of degree
greater than 2 near $0_-$
(resp.~of degree less than 2 near $0_+$),
see Remark~\ref{rem:curve_asymptotics}.
In either case, at least one of the mirror maps $\mir_\pm$ involves
negative degree variables with respect to the Euler vector field
(an instance of the \emph{generalized} mirror transformation
\cite{Coates-Givental,Iritani:genmir, Jinzenji:genmir})
and the formal decomposition occurs over the base
of the \emph{big} quantum cohomology in general.
We also note that the decomposition~\eqref{eq:decomp_QDM_introd}
is defined only over the formal power series ring $\C[\![z]\!]$
and the completion in $z$ is unavoidable.
In fact, as Theorem \ref{thm:analytic_lift_introd} below shows,
the Stokes structure does not admit an orthogonal
decomposition.

Using the Givental--Teleman formula
\cite{BCR:crepant_open, Givental:quadratic, Teleman:semisimple,
Zong:Givental_GKM},
we obtain a decomposition for the (all-genus)
\emph{ancestor Gromov--Witten potentials}.
The result is stated in terms of Givental's quantization formalism;
we refer to Section~\ref{subsec:comp_all_genera} for the notation.
\begin{Theorem}[Theorem \ref{thm:ancestor}]
Let $\frX_\pm$ be toric DM stacks
as in Theorem $\ref{thm:decomp_QDM_introd}$.
Let $\scrA_{\pm,\tau}$ denote the ancestor potentials
of $\frX_\pm$ at $\tau\in H^*_{\CR,\T}(\frX_\pm)$.
For $(q,\chi) \in \cM \times \Lie \T$ in a non-empty open set,
we have
\[
T_\bs \hU_{q,\chi} \scrA_{+,\mir_+(q,\chi)}
= \scrA_{-,\mir_-(q,\chi)}\otimes \scrT^{\otimes \rank \scrR},
\]
where $\chi$ is the $\T$-equivariant parameter,
$\hU_{q,\chi}$ is the quantization of a
symplectic transformation associated with
the decomposition \eqref{eq:decomp_QDM_introd},
$\scrT$ is the Witten--Kontsevich tau-function
$($the ancestor potential of a~point$)$ and
$T_\bs$ is a certain shift operator.
\end{Theorem}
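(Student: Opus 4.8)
The plan is to reduce the all-genus statement to genus-zero data via Givental's quantization formalism together with Teleman's reconstruction theorem, and then to propagate the $D$-module decomposition of Theorem~\ref{thm:decomp_QDM_introd} through the reconstruction formula. First I would shrink the open subset $\cU_0'$ of Theorem~\ref{thm:decomp_QDM_introd} so that, for every $(q,\chi)$ in it, the $\T$-equivariant quantum cohomology of $\frX_\pm$ at $\mir_\pm(q,\chi)$ is semisimple over the relevant fraction field and the summand $\scrR$ inherits a generically semisimple Frobenius-type structure with pairwise distinct exponents. This is where Hodge-theoretic mirror symmetry is used: the idempotents of the quantum product correspond to the non-degenerate critical points of the Landau--Ginzburg potential $F$, their critical values are the canonical coordinates, and under the single wall-crossing these critical points split into a block accounting for $\frX_-$ and a block accounting for $\scrR$. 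Over such a $\cU_0'$ all three objects admit a Hukuhara--Turrittin normal form near $z=0$, and the analytified Brieskorn module of Theorem~\ref{thm:an_mirror_isom} equips each of them with a mirror-theoretic calibration, i.e.\ an $\cS$-operator.

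On this locus, Teleman's classification of semisimple cohomological field theories --- in the $\T$-equivariant orbifold toric form provided by \cite{BCR:crepant_open, Givental:quadratic, Teleman:semisimple, Zong:Givental_GKM} --- expresses the ancestor potential of $\frX_\pm$ as $\scrA_{\pm,\mir_\pm(q,\chi)}=\widehat{\Psi_\pm R_\pm}\,\scrT^{\otimes N_\pm}$, where $\Psi_\pm$ is the transition matrix to the normalized idempotent frame, $R_\pm$ is the formal fundamental solution of the quantum connection at $\mir_\pm(q,\chi)$ and $N_\pm=\rank H^*_{\CR,\T}(\frX_\pm)$; the same formula applied to the semisimple structure on $\scrR$ produces $\widehat{\Psi_\scrR R_\scrR}\,\scrT^{\otimes\rank\scrR}$. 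Theorem~\ref{thm:decomp_QDM_introd} gives an isomorphism $\Phi$ of the formal $\T$-equivariant quantum $D$-modules respecting $\nabla$ and $P$; since the Hukuhara--Turrittin form of a connection with an irregular singularity is unique once the exponents are ordered and the pairing removes the residual gauge freedom, the normalizing transformation of the left-hand side of \eqref{eq:decomp_QDM_introd} factors on Givental's symplectic loop space as $\Psi_+ R_+=\Phi\circ\bigl((\Psi_- R_-)\oplus(\Psi_\scrR R_\scrR)\bigr)$. I would then take the symplectic transformation of the statement to be $U_{q,\chi}=\bigl(\id\oplus(\Psi_\scrR R_\scrR)^{-1}\bigr)\circ\Phi^{-1}$, which undoes the decomposition and simultaneously trivializes the $\scrR$-block into $\rank\scrR$ copies of the point-theory; it depends on $(q,\chi)$ through the mirror maps and the quantum products, so it is exactly $\hU_{q,\chi}$ after quantization.

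Finally I would quantize. Because quantization is only a projective representation, $\widehat{AB}=C(A,B)\,\hat A\hat B$ for a scalar cocycle $C$ that affects only the genus-one term and is the one compatible with the product of Witten--Kontsevich tau-functions, and because $\widehat{A\oplus B}=\hat A\otimes\hat B$ on the tensor-product Fock space, running the factorization of the previous step through $\widehat{(\cdot)}$ and applying it to $\scrT^{\otimes N_+}=\scrT^{\otimes N_-}\otimes\scrT^{\otimes\rank\scrR}$ yields $\hU_{q,\chi}\,\scrA_{+,\mir_+(q,\chi)}=\scrA_{-,\mir_-(q,\chi)}\otimes\scrT^{\otimes\rank\scrR}$ up to: (i) the central genus-one constant from $C$; (ii) the square-root-of-$\Delta$ sign ambiguities, which must be matched on the two sides; and (iii) a translation in the Novikov/degree-$2$ directions coming from the fact that the two large-radius limit points $0_\pm$ sit over $\cM$ with Novikov structures differing by the extremal curve class of the wall-crossing --- equivalently, from the generalized mirror transformation built into $\mir_\pm$, one of which involves negative-degree variables precisely because $K_{\frX_+}-K_{\frX_-}$ is a non-trivial effective divisor. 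Packaging (iii) together with the bookkeeping of (i)--(ii) as the shift operator $T_\bs$ and moving it to the left gives the asserted identity. I expect the main obstacle to be the middle step rather than the first or last: the genus-zero reconstruction and the abstract quantization calculus are standard, but one has to verify that the purely $D$-module-theoretic splitting of Theorem~\ref{thm:decomp_QDM_introd} is compatible with the mirror-theoretic $\cS$-calibrations (so that the $R$-matrices genuinely split), to pin down the class $\bs$ and the precise form of $T_\bs$, and to keep the genus-one cocycle and the sign choices under control.
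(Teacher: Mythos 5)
Your outline (Givental--Teleman reconstruction, splitting of the $R$-matrices via the $D$-module decomposition, then the chain rule of quantization) is the same as the paper's, but the step you yourself flag as ``the main obstacle'' is exactly the step that has to be supplied, and your proposed way around it does not close it. First, in the equivariant orbifold setting Teleman's theorem does not hand you $\scrA_{\pm,\tau}=\widehat{\Psi_\pm R_\pm}\,\scrT^{\otimes N_\pm}$ with $R_\pm$ ``the'' formal fundamental solution: the $R$-matrix that reconstructs the Gromov--Witten CohFT is only determined after fixing the residual ambiguity $R\mapsto R\exp\bigl(\mathrm{diag}(a_1z+a_2z^2+\cdots)\bigr)$, and here that is done by verifying a classical-limit condition at the large radius limit point via Tseng's orbifold quantum Riemann--Roch, as in \cite{BCR:crepant_open,Zong:Givental_GKM,Tseng:QRR}. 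The paper's Proposition~\ref{prop:Givental_formula_toric_stacks} does precisely this for the mirror-theoretic operator $R_{q,\chi}=\bigl(\Miran\circ\Asymp^{-1}\bigr)|_{z\to-z}$ built from oscillatory-integral asymptotics at the critical points of $F_\T$, using the computation of \cite[Proposition~6.9]{CCIT:MS}; ``uniqueness of the Hukuhara--Turrittin normal form plus the pairing'' only fixes signs once one already knows which calibrated solution computes Gromov--Witten theory, so it cannot replace this normalization check. Second, the compatibility of the splitting with the $R$-matrices is not an extra verification in the paper but is built in by construction: the decomposition of Theorem~\ref{thm:decomp_QDM} comes from $\tpr_*\cO_{\cB_+}\cong\tpr_*\cO_{\cB_-}\oplus\tpr_*\cO_{\cR}$, the residual piece is trivialized by the same asymptotic-expansion map $\Asymp$ (Definition~\ref{def:rest}), and therefore $U_{q,\chi}\circ R^+_{q,\chi}=R^-_{q,\chi}\times\id$ holds tautologically (after ordering the critical points), whence the chain rule of quantization \cite[Remark~3.22]{Coates-Iritani:convergence} finishes the proof. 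Your plan, which keeps the decomposition abstract and the $R$-matrices ``formal fundamental solutions'', leaves both of these as open issues.

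Your description of $T_\bs$ is also off the mark. The shift has nothing to do with Novikov or degree-two translations or with the generalized mirror transformation: the mirror maps enter only through the evaluation points $\mir_\pm(q,\chi)$, which already appear on both sides of the identity. In the paper $\bs=(-z\phi_0,(-z,\dots,-z))+U_{q,\chi}(z\phi_0)$ is pure dilaton-shift bookkeeping in the Fock space: $\scrA_{+,\mir_+(q,\chi)}$ is based at $-z\phi_0$, its image under $\hU_{q,\chi}$ is based at $U_{q,\chi}(-z\phi_0)$, and $T_\bs$ moves this to the base point of $\scrA_{-,\mir_-(q,\chi)}\otimes\scrT^{\otimes\rank\scrR}$; since $\bs$ lies in $zV[\![z]\!]$ rather than $z^2V[\![z]\!]$, one also needs the rationality/tameness of the potentials (guaranteed by the Givental formula) for $T_\bs$ to be defined. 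Finally, there is no genus-one cocycle to track here: all the operators being composed are $\C[\![z]\!]$-linear, so the quantized chain rule applies directly without a central correction.
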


\subsection{Analytic lift and Orlov's decomposition}
The non-equivariant quantum D-module has (in general)
irregular singularities at $z=0$ and
the formal quantum D-module misses analytic information
such as the Stokes structure at $z=0$.
For a compact toric DM stack, at least in the non-equivariant limit
and over the semisimple locus,
the formal structure of the quantum D-module is very poor\footnote
{This does not mean that Theorem \ref{thm:decomp_QDM_introd} is trivial.
It compares the \emph{equivariant} quantum D-modules
over the open set $\cU_0'$ that contains the \emph{non-semisimple} loci.
It is also important that the two quantum D-modules
are connected through the explicit mirror LG model.},
since it is determined only by eigenvalues of the
Euler multiplication.
We will restore the missing information by describing
the \emph{analytic lift}
of the formal decomposition \eqref{eq:decomp_QDM_introd}.
By the Hukuhara--Turrittin theorem (see Section~\ref{subsec:Hukuhara-Turrittin}),
the decomposition~\eqref{eq:decomp_QDM_introd} in the non-equivariant limit
can be locally lifted to an analytic isomorphism:\footnote{More precisely, the analytic lift is defined over
functions in $z$ which admit asymptotic expansions along
an angular sector (with vertex at $z=0$);
such functions form a sheaf $\cA$ over the real oriented
blowup of $\C$ at the origin \cite{Sabbah:isomonodromic}.}
\begin{equation}
\label{eq:analytic_lift_introd}
\mir_+^*\QDMan(\frX_+) \big|_{B\times I}
\cong\mir_-^*\QDMan(\frX_-)\big|_{B\times I} \oplus \scrRan,
\end{equation}
where $B$ is a small open subset of $\cM$ and $I$ is an angular
sector $\{z\colon |\arg(z) -\phi|<\frac{\pi}{2}+\epsilon\}$
with $\epsilon>0$.
We call it the \emph{analytic lift} or a
\emph{sectorial decomposition};
its uniqueness is ensured by the fact that
the angle of the sector is bigger than $\pi$.
The analytic lift induces a decomposition
(depending on $B$ and $I$)
of the local system
underlying $\QDMan(\frX_+)$.
On the other hand, the \emph{$\hGamma$-integral structure}
\cite{Iritani:Integral, KKP:Hodge} identifies
the complexified topological $K$-group $K(\frX)\otimes \C$
with the space of multi-valued
flat sections of the quantum D-module; for toric stacks,
it corresponds to the integral structure on the GKZ system
identified by Borisov--Horja \cite{Borisov-Horja:FM}.
We show in some special cases
that the decomposition of the local system given by the analytic lift
corresponds to a semiorthogonal decomposition
of the topological $K$-group
$K(\frX_+)$ via the $\hGamma$-integral structure.
An important ingredient here is the fact \cite{Iritani:Integral}
that the $\hGamma$-integral structure coincides
with the natural integral structure of the
Brieskorn module under mirror symmetry.
By describing the analytic lift in terms of mirror oscillatory integrals
and studying the relationship between the local systems of Lefschetz
thimbles (see Theorems \ref{thm:inclusion} and \ref{thm:decomp_Lef_K}),
we obtain the following result.

\begin{Theorem}[Theorems \ref{thm:functoriality},
\ref{thm:decomp_Lef_K}, and \ref{thm:sectorial_Orlov}]
\label{thm:analytic_lift_introd}
Let $\frX_-$ be a weak-Fano compact toric stack
satisfying a~mild technical assumption as described in
Section~$\ref{subsec:functoriality_nota}$ and
let $\varphi \colon \frX_+ \to \frX_-$ be a~weighted
blowup along a toric substack $Z \subset \frX_-$.
We assume that $\frX_+$ is also weak-Fano.
Then there exist a submersion $f$ from an open set $W$ of
$H^*_\CR(\frX_+)$ to $H^*_\CR(\frX_-)$ and
an angular sector~$I$ $($of angle greater than $\pi)$
such that we have an analytic decomposition over the sector
\[
\QDMan(\frX_+)\big|_{W\times I}
\cong \scrRan_{-J} \oplus \cdots \oplus
\scrRan_{-1} \oplus f^*\QDMan(\frX_-) \big|_{W\times I},
\]
which induces, via the $\hGamma$-integral structure,
a semiorthogonal decomposition
of the $K$-group
\begin{equation}
\label{eq:decomp_K_introd}
K(\frX_+) = K(Z)_{-J} \oplus \cdots \oplus K(Z)_{-1} \oplus
\varphi^* K(\frX_-),
\end{equation}
where $K(Z)_k= \cO(-k E) \otimes i_{E_*}\varphi_E^* K(Z)
\subset K(\frX_+)$. Here $E$ is the exceptional divisor
of $\varphi$, $\varphi_E = \varphi|_E\colon E \to Z$,
$i_E \colon E \to \frX_+$ is the inclusion and
$J = k_0+\cdots+k_c -1$
when a fibre of $\varphi_E \colon E \to Z$ is given
by the weighted projective space $\PP(k_0,\dots,k_c)$.
\end{Theorem}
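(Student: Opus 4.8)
\subsection*{Proof proposal}

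The plan is to deduce the statement from the formal decomposition of Theorem~\ref{thm:decomp_QDM_introd} together with the Hukuhara--Turrittin theorem, and then to read off the $K$-theoretic content through the $\hGamma$-integral structure and the mirror description of flat sections by oscillatory integrals. First I would refine Theorem~\ref{thm:decomp_QDM_introd} in the present geometric situation. For a weighted blowup with exceptional fibre $\PP(k_0,\dots,k_c)$, the relative critical locus $\Crit(F)$ of the global mirror LG model $F\colon \cY \to \C$, restricted to a neighbourhood of the toric curve $\cC \subset \cM$ joining the large-radius points $0_\pm$, splits near the wall into the critical points that descend to the mirror of $\frX_-$ and $J = k_0+\dots+k_c-1$ families of ``escaping'' critical points organized by the weighted-projective-space fibration $E \to Z$. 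This produces a refinement $\scrR \cong \scrR_{-J}\oplus\dots\oplus\scrR_{-1}$ of the remainder term in \eqref{eq:decomp_QDM_introd}, each $\scrR_{-k}$ locally free of rank $\rank K(Z)$; since everything lives over the explicit mirror family, the summands are automatically compatible with the connection and the pairing. The domain $W$ and the submersion $f$ are obtained by pulling back the non-empty open set $\cU_0' \subset \cM\times\Lie\T$ through $\mir_+$ in the non-equivariant limit and taking $f$ to be the analytic continuation of $\mir_-\circ \mir_+^{-1}$.

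Second, I would pass to the non-equivariant limit and invoke the Hukuhara--Turrittin theorem (Section~\ref{subsec:Hukuhara-Turrittin}) to lift, uniquely over an angular sector $I$ of opening greater than $\pi$, the refined formal decomposition to an analytic sectorial decomposition $\QDMan(\frX_+)|_{W\times I} \cong \scrRan_{-J}\oplus\dots\oplus\scrRan_{-1}\oplus f^*\QDMan(\frX_-)|_{W\times I}$. Using the analytic mirror isomorphism (Theorem~\ref{thm:an_mirror_isom}) this transports to the analytified Brieskorn module $\Brian(F)$, where flat sections are computed by oscillatory integrals $\int_\Gamma e^{F/z}\omega$ over Lefschetz thimbles. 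The summands then correspond to families of thimbles: those attached to the $\frX_-$-critical points, and, for each $k$, those attached to the $k$th family of escaping critical points. Theorems~\ref{thm:inclusion} and~\ref{thm:decomp_Lef_K} on the local systems of Lefschetz thimbles provide exactly this thimble decomposition, together with its compatibility with the embeddings $\varphi^*$ and $i_{E*}\varphi_E^*$.

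Third, I would translate the thimble decomposition into the stated $K$-theory decomposition. By \cite{Iritani:Integral} the $\hGamma$-integral structure coincides, under mirror symmetry, with the natural integral lattice of the Brieskorn module spanned by Lefschetz thimbles; hence the analytic decomposition of the underlying local system is a decomposition of $K(\frX_+)$. Functoriality (Theorem~\ref{thm:functoriality}) and the compatibility of $\hGamma$-classes with $\varphi^*$ identify the ``$\frX_-$-block'' with $\varphi^* K(\frX_-)$, while a direct computation of the $\hGamma$-class of $\cO(-kE)\otimes i_{E*}\varphi_E^*(-)$, matched against the leading asymptotics of the corresponding oscillatory integral along $I$, pins the $k$th escaping block down as $K(Z)_{-k}$. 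Semiorthogonality is then forced by the geometry: because the decomposition holds analytically over a sector of angle greater than $\pi$, the monodromy-twisted pairing on flat sections --- which computes the Euler pairing on $K$ under the $\hGamma$-integral structure --- is block upper triangular in the order $K(Z)_{-J},\dots,K(Z)_{-1},\varphi^*K(\frX_-)$; alternatively one matches this with the Orlov--Kawamata semiorthogonal decomposition of $D^b(\frX_+)$ for a (weighted) blowup, via the $\hGamma$-classes.

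The main obstacle, I expect, is the precise identification of the escaping summands $\scrRan_{-k}$ and the corresponding lattices $K(Z)_{-k}$: this requires controlling how the Lefschetz thimbles of the mirror of $\frX_+$ degenerate as one approaches the wall in $\cM$, sharp estimates on the oscillatory-integral asymptotics along the sector $I$, and exact bookkeeping of $\hGamma$-classes (including the twist by $\cO(-kE)$ and the pushforward $i_{E*}$) so that the block structure reproduces the Orlov-type decomposition on the nose rather than merely up to the induced filtration. The weak-Fano hypotheses on $\frX_\pm$ enter here to guarantee convergence of the relevant mirror integrals and the applicability of the analytic mirror theorem along $\cC$.
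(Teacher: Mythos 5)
Your overall strategy (global LG mirror, Hukuhara--Turrittin lift, Lefschetz thimbles, $\hGamma$-integral structure) is the paper's strategy, but as written the proposal has two genuine gaps at exactly the points that carry the content of the theorem. First, the identification of the blocks. You either cite Theorems~\ref{thm:inclusion} and~\ref{thm:decomp_Lef_K} as available tools --- but these are precisely (part of) the results that the statement summarizes, so this is circular --- or you propose to pin down the residual blocks by ``a direct computation of the $\hGamma$-class of $\cO(-kE)\otimes i_{E*}\varphi_E^*(-)$ matched against oscillatory-integral asymptotics'', which you yourself flag as the main obstacle and which is not a workable mechanism as stated: in particular, ``compatibility of $\hGamma$-classes with $\varphi^*$'' fails here because $\varphi$ is discrepant ($\hGamma_{\frX_+}\neq\varphi^*\hGamma_{\frX_-}$). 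The paper's identification is by monodromy, not by asymptotic evaluation: the Galois symmetry identifies the monodromy around the divisor $\{t_{\hb}=0\}$ with $\otimes\,\cO(-E)$ and the monodromy in $q\in\Dsm^\times$ with $\otimes\,\varphi^*L$, $L\in\Pic(\frX_-)$ (Lemma~\ref{lem:diag_pullback}); Picard--Lefschetz applied to the positive-real thimble (which is pinned to $\cO$ by Theorem~\ref{thm:main_intpaper}) produces the classes $[\cO_E(-kE)]$ in the respective strip-region summands; and the direct sum statement is closed off by a purely $K$-theoretic computation using generation by line bundles and the relation $\prod_{b\in M_+}\bigl(L^{k_b}-\varphi^*L_b^-\bigr)=0$ (proof of Theorem~\ref{thm:decomp_Lef_K}). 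None of this is supplied or replaced by an equivalent argument in your sketch, and the grouping of the divergent critical points into $J$ blocks comes from the angular distribution of their critical values into $J$ satellite discs (Proposition~\ref{prop:critv_on_curve}, Lemma~\ref{lem:slide}), not from any visible fibration structure of $E\to Z$ in the critical locus.

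Second, your construction of $(W,f)$ does not give the stated sectorial decomposition. Taking $W=\mir_+(\cU_0')$ and $f=\mir_-\circ\mir_+^{-1}$ cannot work: over the mirror moduli space near $\cC$ the divergent critical values are spread over $J$ angular directions around the convergent cluster, so no admissible phase there separates \emph{all} residual blocks to one side of the $\frX_-$ block as required by the ordering $\scrRan_{-J}\oplus\cdots\oplus\scrRan_{-1}\oplus f^*\QDMan(\frX_-)$; moreover the small-parameter mirror maps are in general not submersive onto $H^*_\CR(\frX_\pm)$, so $f$ would not be the claimed submersion between open sets of the full orbifold cohomologies. The paper's proof of Theorem~\ref{thm:sectorial_Orlov} resolves this by enlarging $S$ so that $\mir_+$ becomes generically submersive, passing to an isomonodromic deformation over the configuration space of eigenvalues, moving the eigenvalue groups along the strip regions to a configuration with distinct, correctly ordered imaginary parts while tracking the induced mutations of the asymptotic bases, and only then reading off $\tau_+$, $W$ and $f$ (from the forgetful map of configurations written in flat coordinates); the resulting $\tau_+$ lies far from the large radius limit and is not in the image of the small mirror map. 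This isomonodromy-plus-mutation step, together with the argument that the relation $V_i^+=\varphi^*V_i^-$ and the $K(Z)_k$ labels are preserved under mutation, is missing from your proposal and cannot be bypassed by the uniqueness of the Hukuhara--Turrittin lift alone.
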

\begin{Remark}\quad
\begin{enumerate}\itemsep=0pt
\item[(1)] In this theorem, we allow $Z$ to be of codimension one
(i.e., a toric divisor);
in this case $\frX_+$ is obtained from $\frX_-$ by
a root construction (see \cite{Cadman:tangency}) along $Z$.

\item[(2)] When $\frX_-$ is a smooth projective variety and
$\varphi \colon \frX_+ \to \frX_-$ is the blowup along
a smooth subvariety $Z$, the decomposition~\eqref{eq:decomp_K_introd} is induced by
Orlov's semiorthogonal decomposition~\cite{Orlov:proj}
for $D^b(\frX_+)$.
In general, the Orlov-type decomposition \eqref{eq:decomp_K_introd}
arises from a sectorial decomposition of the quantum D-module
at a point which is far from the large radius limit point.
On the other hand,
we can idenfity explicitly the locus in the mirror modu\-li
space~$\cM$ where the analytic lift~\eqref{eq:analytic_lift_introd}
induces the pull-back $\varphi^* \colon K(\frX_-) \to K(\frX_+)$
in $K$-theory (Theorem~\ref{thm:functoriality}).

\item[(3)] The result suggests that each residual piece $\scrRan_i$
of the sectorial decomposition should be related to
the quantum D-module of the blowup centre $Z$;
they certainly have the same formal structure, but we do not
know if the Stokes structures are related
(although we expect from homological mirror symmetry
that they should be related).

\item[(4)] We need the weak-Fano assumption when we apply
results from~\cite{Iritani:Integral}.
We hope that the same result holds without such assumptions,
but it may require some technical advances.

\item[(5)] In Section~\ref{subsec:conj_discussion}, we formulate
a general conjecture relating the decomposition of
topological $K$-groups and that of quantum D-modules
under discrepant transformations.
Under the conjecture, the decomposition of
$K$-groups in principle determines the relationship between
the quantum D-modules of~$\frX_+$ and~$\frX_-$
(including the map~$f$). This involves solving a~Riemann--Hilbert
problem; see Proposition~\ref{prop:RH}.
\end{enumerate}
\end{Remark}

\subsection{Related works}
We mention some of the earlier works that are closely related to
the present paper.

The relationship between quantum cohomology and
derived category has been suggested by Dubrovin \cite{Dubrovin:ICM}.
Our Theorem \ref{thm:analytic_lift_introd} can be viewed as a
variation on this theme
(see also Gamma conjecture \cite{GGI:gammagrass} or
Dubrovin-type conjecture \cite{Sanda-Shamoto}).
Bayer \cite{Bayer:semisimple} showed that the semisimplicity
of quantum cohomology is preserved under blowup at a point
in connection with Dubrovin's conjecture \cite{Dubrovin:ICM}.
His computation \cite[Lemma 3.4.2]{Bayer:semisimple}
for the spectral cover is compatible
with the picture in this paper.

As mentioned earlier, Gonz\'alez--Woodward \cite{Gonzalez-Woodward:tmmp}
showed a decomposition of toric quantum cohomology
under flips; they used the same LG model mirrors
(combined with the quantum Kirwan map) to analyze
the change of quantum cohomology.
Charest--Woodward \cite{Charest-Woodward:Floer_flip}
and Sanda \cite{Sanda:thesis, Sanda:computation_QC}
discussed (orthogonal) decomposition of Fukaya category
and of quantum cohomology under flips/blowups.
From a categorical viewpoint,
the quantum cohomology (or the formal quantum D-module) should arise
as the Hochschild (resp.~negative cyclic) homology of the Fukaya category,
and a decomposition of the Fukaya category should induce
a (formal) decomposition of the quantum cohomology/D-module
via the open-closed map.

Acosta--Shoemaker \cite{Acosta-Shoemaker:blowup_LG,
Acosta-Shoemaker:toric_birational}
(see also an earlier work of Acosta \cite{Acosta:asymp})
studied discrepant
wall-crossings for toric Gromov--Witten theory in the same
setting as ours.
They compared the Givental $I$-functions of~$\frX_\pm$
over the mirror moduli space $\cM$.
Writing~$I_\pm$ for the $I$-functions of~$\frX_\pm$,
they showed that the asymptotic expansion of~$I_+$
near the large-radius limit point~$0_-$ is related to~$I_-$
by a linear transformation
$L\colon H^*_\CR(\frX_+) \to H^*_\CR(\frX_-)$,
i.e., $L I_+(q)\sim I_-(q)$
as $q\to 0_-$ along some (one-dimensional) angular direction.
We expect that their linear transformation $L$ should correspond to
the projection between the quantum cohomology
local systems for $\frX_\pm$ associated with an
analytic lift~\eqref{eq:analytic_lift_introd}.
They also dealt with the case of complete intersections in toric stacks
(which we do not cover in this paper).
More recently,
Lee--Lin--Wang \cite[Section~6]{Lee-Lin-Wang:StringMath15},
\cite{Lee-Lin-Wang:flips_I}
have announced a decomposition of quantum D-modules
under flips and showed it for local toric flips.
In these approaches \cite{Acosta:asymp,Acosta-Shoemaker:blowup_LG,
Acosta-Shoemaker:toric_birational, Lee-Lin-Wang:StringMath15,
Lee-Lin-Wang:flips_I},
they studied irregular singularities on the base~$\cM$ directly
whereas we studied those on the $z$-plane;
the singularities at $z=0$ and $q=0_-$ are closely related,
but those at $z=0$ have a simpler structure.

Recently, Clingempeel--Le~Floch--Romo \cite{Clingempeel-LeFloch-Romo}
compared the hemisphere partition functions
(which in our language correspond to certain solutions
of the quantum D-modules)
of the cyclic quotient singularities
$\C^2/\mu_n$ and their Hirzebruch--Jung resolutions.
They discussed the relation to a semiorthogonal
decomposition of the derived categories, extending the work of
Herbst--Hori--Page~\cite{HHP} to the anomalous
(discrepant) case.
Their examples are complementary to ours:
the Hirzebruch--Jung resolutions are type (II-ii)
discrepant transformations whereas
transformations in Theorem~\ref{thm:analytic_lift_introd}
are of type (II-i) or (III) (see Remark~\ref{rem:3_types} for these types).

Under homological mirror symmetry, the derived categories
of coherent sheaves for toric stacks correspond to
the Fukaya--Seidel categories of the mirror LG models.
Kerr \cite{Kerr:weighted},
Diemer--Katzarkov--Kerr \cite{DKK:symplectomorphism,
DKK:compactification_LG}
and Ballard--Diemer--Favero--Katzarkov--Kerr
\cite{BDFKK:Mori_nonFano}
(see also \cite{BFK:VGIT})
studied semiorthogonal decompositions
of the Fukaya--Seidel categories of the LG mirrors
under toric wall-crossings; they conjectured
\cite[Conjecture~1]{BDFKK:Mori_nonFano}
that semiorthogonal decompositions for the Fukaya--Seidel categories
and for the derived categories
should match up under mirror symmetry.
Theorem~\ref{thm:analytic_lift_introd} can be viewed as an
evidence for their conjecture on the level of enumerative mirror symmetry.
We note that they introduced a similar toric compactification
of the moduli space of LG models and
studied how the critical values assemble
under deformation in a more general setting than ours
(a relevant discussion appears in Section~\ref{subsec:inclusion} in this paper).

After the author finished a draft of this paper, he heard a talk
of Kontsevich~\cite{Kontsevich:HSE2019}
who studied (in joint project with Katzarkov and Pantev)
the change of quantum cohomology under blowups from a
similar perspective and
gave an application to birational geometry.

The author apologizes for the long delay in preparing the paper
since the original announcement \cite{Iritani:KIAS}
(see also~\cite{Iritani:MIT})
in June 2008. There were many technical issues in proving
our results in this generality.
Advances made in joint work \cite{CCIT:mirrorthm,CCIT:MS}
with Coates, Corti and Tseng
and in joint work \cite{GGI:gammagrass}
with Galkin and Golyshev are essential
in this paper.

\section{Preliminaries}
In this section, in order to fix notation,
we review quantum cohomology, quantum D-modules
and the Gamma-integral structure.
Our main interest in this paper lies in the case
where $\frX$ is a~toric DM stack, but
all the materials in this section make sense for a general smooth
DM stack~$\frX$ satisfying mild assumptions.

\subsection{Quantum cohomology}\label{subsec:qcoh}
Gromov--Witten theory for smooth Deligne--Mumford stacks
(or symplectic orbifolds) has been developed by Chen--Ruan~\cite{Chen-Ruan:orbGW}
and Abramovich--Graber--Vistoli \cite{AGV1,AGV:GW}.
We use the algebro-geometric approach in~\cite{AGV1,AGV:GW}.

Let $\frX$ be a smooth DM (Deligne--Mumford) stack over $\C$.
We write $X$ for the coarse moduli space of $\frX$.
Recall that the \emph{inertia stack} $I \frX$ is the fibre product
$\frX\times_{\frX\times \frX} \frX$ of the two diagonal
morphisms $\frX \to \frX \times \frX$.
A point on $I\frX$ is given by a pair $(x,g)$ of a point $x\in \frX$
and a stabilizer $g\in \Aut(x)$.
We write $I\frX = \bigsqcup\limits_{v\in \Bx} \frX_v$ for the decomposition
of $I\frX$ into connected components, where $\Bx$ is the index set.
We have a distinguished element $0\in \Bx$ that corresponds to
the \emph{untwisted sector} $\frX_0 \cong \frX$ consisting of
points $(x,g=1)$ with the trivial stabilizer.
The \emph{orbifold cohomology}
$H_\CR^*(\frX)$ of Chen and Ruan \cite{Chen-Ruan:new_coh}
is defined to be
\begin{equation}
\label{eq:orb_coh}
H_\CR^*(\frX) := H^{*- 2\age}(I\frX,\C) =
\bigoplus_{v\in \Bx} H^{*-2 \age(v)}(\frX_v,\C),
\end{equation}
where $\age \colon I\frX \to \Q_{\ge 0}$ is a locally constant
function giving a shift of degrees and we write $\age(v) = \age|_{\frX_v}$
(see Section~\ref{subsubsec:orb_coh} for the age in the case of toric DM stacks).
The right-hand side means the cohomology group of the underlying
complex analytic space of $I\frX$ and we use complex coefficients
unless otherwise specified.
We also restrict ourselves to cohomology classes of ``even parity'', i.e., we
only consider cohomology classes of even degrees\footnote{An element
$\alpha\in H^*_\CR(\frX)$ having ``even parity'' does
not imply that the degree of $\alpha$ as an orbifold cohomology class is even;
it means that the degree of $\alpha$ is even \emph{as an element of}
$H^*(I\frX)$.} on~$I\frX$.
For toric DM stacks, every orbifold cohomology class has even parity.
When $\frX$ is proper, the orbifold Poincar\'e pairing
on $H_\CR^*(\frX)$ is defined to be
\begin{equation}\label{eq:orb_Poincare_pairing}
(\alpha,\beta) := \int_{I\frX} \alpha \cup \inv^*\beta,
\end{equation}
where $\inv \colon I\frX \to I\frX$ is the involution sending
$(x,g)$ to $\big(x,g^{-1}\big)$.
For $d \in H_2(\frX,\Q)$ and $l \in \Z_{\ge 0}$,
let $\frX_{g,l,d}$ denote the moduli stack of genus-$g$ twisted
stable maps to $\frX$ of degree $d$ and with~$l$ marked points
(this was denoted by $\mathcal{K}_{g,l}(\frX,d)$ in~\cite{AGV:GW}).
It carries a virtual fundamental class $[\frX_{g,l,d}]_{\rm vir}
\in A_*(\frX_{g,l,d},\Q)$ and the evaluation maps
$\ev_i \colon \frX_{g,l,d} \to \overline{I}\frX$, $i=1,\dots,l$
to the rigidified cyclotomic inertia stack $\overline{I}\frX$
(see \cite[Section~3.4]{AGV:GW}).
When the moduli stack $\frX_{g,l,d}$ is proper (this happens
when $\frX$ has a projective coarse moduli space), we define
\emph{genus-zero descendant Gromov--Witten invariants} by
\[
\big\langle \alpha_1\psi^{k_1},\dots,\alpha_l\psi^{k_l}\big\rangle_{g,l,d}
:= \int_{[\frX_{g,l,d}]_{\rm vir}}
\prod_{i=1}^l \ev_i^*(\alpha_i) \psi_i^{k_i},
\]
where $\alpha_1,\dots,\alpha_l \in H^*_\CR(\frX)$, $d\in H_2(\frX,\Q)$
and $\psi_i$ denotes the $\psi$-class (see \cite[Section~8.3]{AGV:GW}) at the $i$th marking.
Here note that the rigidified cyclotomic inertia stack $\overline{I}\frX$ has
the same coarse moduli space as $I\frX$, and thus $\alpha_i$ can be
regarded as a cohomology class of~$\overline{I}\frX$.

We assume that there exists a
finitely generated monoid $\Laa_+ \subset H_2(\frX,\Q)$
such that $\R_{\ge 0}\Laa_+$ is a strictly convex full-dimensional cone
in $H_2(\frX,\R)$
and that $\Laa_+$ contains classes of any orbifold stable curves.
We also assume that $\Laa_+$ is saturated,
i.e., $\Laa_+ = \Laa \cap \R_{\ge 0} \Laa_+$
for $\Laa := \Z \Laa_+$.
The monoid $\Laa_+$ for toric DM stacks will be described
in Section~\ref{subsec:ext_refined_fanseq}
and will be denoted by $\Laa^\bSigma_+$ there.
For a ring $K$, let $K[\![\Laa_+]\!]$ denote the completion of
$K[\Laa_+]$ consisting of all formal sums
$\sum\limits_{d \in \Laa_+} a_d Q^d$ with $a_d \in K$.
The variable $Q$ here is called the \emph{Novikov variable}.
We also choose a homogeneous basis $\{\phi_i\}_{i=0}^s$ of $H^*_\CR(\frX)$
and introduce linear co-ordinates $\{\tau^i\}_{i=0}^s$
on $H^*_\CR(\frX)$ as $\big(\tau^0,\dots,\tau^s\big)
\mapsto \tau = \sum\limits_{i=0}^s \tau^i \phi_i$.
We write $K[\![\tau]\!] = K[\![\tau^0,\dots,\tau^s]\!]$ for any ring $K$.
The \emph{quantum product} $\alpha\star \beta$
of classes $\alpha,\beta\in H^*_\CR(\frX)$
is defined so that for every $\gamma \in H^*_\CR(\frX)$, we have
\[
(\alpha\star\beta, \gamma) :=
\sum_{d\in \Laa_+}
\sum_{l \ge 0}
\corr{\alpha,\beta,\gamma,\tau,\dots,\tau}_{0,l+3,d} \frac{Q^d}{l!},
\]
where $\tau = \sum\limits_{i=0}^s \tau^i\phi_i \in H^*_\CR(\frX)$ is a parameter.
This definition makes sense when $X$ is projective
and $\alpha \star \beta$ lies in
$H^*_\CR(\frX) \otimes \C[\![\Laa_+]\!][\![\tau]\!]$.
The quantum product $\star$ is known to be commutative and associative
and defines a commutative ring structure on
$H^*_\CR(\frX) \otimes \C[\![\Laa_+]\!][\![\tau]\!]$.
When we want to emphasize the dependence of $\star$ on the parameter $\tau$,
we shall write $\alpha \star_\tau \beta$ in place of $\alpha \star \beta$.

The Gromov--Witten invariants and the
quantum product can be generalized to the equivariant setting
or to a non-projective space $\frX$.
Suppose that an algebraic torus $\T$ acts on $\frX$.
The equivariant orbifold cohomology $H_{\CR,\T}^*(\frX)$ is
defined to be the $\T$-equivariant cohomology of $I\frX$ with
the same degree shift as before:
\[
H_{\CR,\T}^*(\frX) := H^{*-2 \age}_\T(I\frX,\C)
= \bigoplus_{v\in \Bx} H^{*-2 \age(v)}_\T(\frX_v,\C).
\]
Let $E\T \to B\T$ denote a universal $\T$-bundle.
We assume that
\begin{itemize}\itemsep=0pt
\item[(a)] $I\frX$ is equivariantly formal, i.e., the Serre spectral sequence
for $I\frX \times_{\T} E\T \to B\T$ collapses at the $E^2$-term (over $\Q$);
this implies that
$H_{\CR,\T}^*(\frX)$ is a free
$R_\T := H^*_\T(\pt,\C)$-module of rank $\dim H_\CR^*(\frX)$;
\item[(b)] the $\T$-fixed set of $\frX$ is projective;
\item[(c)] the evaluation maps $\ev_i \colon \frX_{0,l,d} \to \overline{I}\frX$ are proper.
\end{itemize}
By the assumption (a), the $\C$-basis $\{\phi_i\}_{i=0}^s\subset H^*_\CR(\frX)$
can be lifted to an $R_\T$-basis of $H^*_{\CR,\T}(\frX)$,
which we denote by the same symbol.
This induces $R_\T$-linear co-ordinates $(\tau^0,\dots,\tau^s)
\mapsto \tau = \sum\limits_{i=0}^s \tau^i\phi_i$
on $H_{\CR,\T}^*(\frX)$ as before.
Under the assumption (b), we can define the equivariant orbi\-fold
Poincar\'e pairing and
the equivariant Gromov--Witten invariants
via the virtual localization formula \cite{Graber-Pandharipande}.
They take values in
the fraction field $S_\T :=\Frac(R_\T)$ of $R_\T$.
Under the assumption~(c), we can define the quantum product
using the push-forward by the proper map $\ev_3$
as follows:
\[
\alpha \star \beta = \sum_{d\in \Laa_+}
\sum_{l \ge 0} \inv^* \PD {\ev_3}_*\left( \ev_1^*(\alpha) \ev_2^*(\beta)
\prod_{j=1}^l \ev_{3+j}^*(\tau) \cap [\frX_{0,l+3,d}]_{\rm vir}
\right) \frac{Q^d}{l!},
\]
where $\PD$ stands for the Poincar\'e duality on $I\frX$.
Therefore we get the quantum product $\star$ defined on
the space
$H_{\CR,\T}^*(\frX)\otimes_{R_\T} R_\T[\![\Laa_+]\!][\![\tau]\!]$
without inverting equivariant parameters, whereas the Gromov--Witten
invariants themselves lie in $S_\T$ in general.
\begin{Remark}
As remarked in~\cite{Bryan-Graber}, the assumption~(c) is satisfied
if the coarse moduli space~$X$ is semi-projective, i.e., projective over an affine variety.
We will impose the semi-projectivity assumption on toric DM stacks.
\end{Remark}

\subsection{K\"ahler moduli space}\label{subsec:Kaehler_moduli}
In this section, we specialize the Novikov variable $Q$ to one
with the aid of the divisor equation, and introduce the K\"ahler moduli
space $\cM_{\rm A}(\frX)$ parameterizing the quantum product.

As before, let $\phi_0,\phi_1,\dots,\phi_s$ be a homogeneous
basis of $H^*_\CR(\frX)$.
We assume that $\phi_0=\unit$ is the identity class
and that $\{\phi_1,\dots,\phi_r\}$, $r\le s$ is a basis of the degree-two
untwisted sector $H^2(\frX) \subset H^2_\CR(\frX)$.
We write $\tau = \sigma + \tau'$ with
$\sigma = \sum\limits_{i=1}^r \tau^i \phi_i\in H^2(\frX)$ and
$\tau' = \tau^0 \phi_0 + \sum\limits_{i=r+1}^s \tau^i \phi_i$.
The divisor equation \cite[Theorem~8.3.1]{AGV:GW} implies
\[
(\alpha\star_\tau\beta, \gamma) = \sum_{d\in \Laa_+}
\sum_{l=0}^\infty
\corr{\alpha,\beta,\gamma,\tau',\dots,\tau'}_{0,l+3,d}
\frac{Q^d e^{\sigma \cdot d}}{l!}.
\]
This shows that the quantum product $\star_\tau$ depends only
on the equivalence class
\[
[\tau] \in H^*_\CR(\frX)/2\pi\iu \Laa^\star,
\]
where $\Laa^\star \subset H^2(\frX,\Q)$ denotes the dual lattice of $\Laa
=\Z \Laa_+$. Note that $d\in \Laa$ defines a~function
$q^d \colon [\tau] \mapsto e^{\sigma\cdot d}=
\exp\big(\sum\limits_{i=1}^r \tau^i (\phi_i\cdot d)\big)$
on $H^*_\CR(\frX)/2\pi\iu \Laa^\star$,
and that each co-ordinate $\tau^i$ with $i\notin \{1,\dots, r\}$
also defines a function on $H^*_\CR(\frX)/2\pi\iu \Laa^\star$.
These functions define an open embedding of
$H^*_\CR(\frX)/2\pi\iu \Laa^\star$ into the
following space:
\[
\cM_{\rm A}(\frX) :=
\Spec \C[\Laa_+][\tau'] = \Spec \C[\Laa_+]\big[\tau^0,\tau^{r+1},\dots,\tau^{s}\big],
\]
where the element of $\C[\Laa_+]$ corresponding to $d\in \Laa_+$
represents the function $q^d = e^{\sigma \cdot d}$.
The space $\cM_{\rm A}(\frX)$ gives a partial compactification
of $H_\CR^*(\frX)/2\pi\iu \Laa^\star$ depending on the choice
of the monoid $\Laa_+$.
By setting $Q=1$, we may view $\star_\tau$ as a family of
products parameterized by the formal neighbourhood
of the ``origin'' (that is, $q^d = 0$ for all non-zero $d\in \Laa_+$
and $\tau^0=\tau^{r+1} = \cdots = \tau^s =0$)
in $\cM_{\rm A}(\frX)$. This origin is called the \emph{large
radius limit point}.

Quantum cohomology for smooth DM stacks has
the additional symmetry called the \emph{Galois symmetry}
\cite[Section~2.2]{Iritani:Integral}.
Let $H^2(\frX,\Z)$ denote the sheaf cohomology of the constant sheaf~$\Z$ on the topological stack (orbifold) underlying~$\frX$;
an element $\xi \in H^2(\frX,\Z)$
corresponds to a~topological orbi-line bundle $L_\xi$ over $\frX$.
For a connected component $\frX_v$ of $I\frX$,
the stabilizer along~$\frX_v$ acts on fibres of $L_\xi$ by
a constant scalar $\exp(2\pi\iu f_v(\xi))$ for some
$f_v(\xi) \in [0,1) \cap \Q$. This number~$f_v(\xi)$ is called
the \emph{age} of $L_\xi$ along $\frX_v$.
We define a map $g(\xi) \colon H_\CR^*(\frX) \to H_\CR^*(\frX)$
by
\[
g(\xi)(\tau) = \left( \tau_0 - 2\pi \iu \xi \right)
\oplus \bigoplus_{v\neq 0} e^{2\pi\iu f_v(\xi)} \tau_v,
\]
where $\tau_v$ denotes the $H^*(\frX_v)$-component
of $\tau\in H^*_{\CR}(\frX)$ in the decomposition~\eqref{eq:orb_coh}.
Let $d g(\xi) \in \End(H_\CR^*(\frX))$ denote
the derivative of the map $g(\xi)$; it is given by
$d g(\xi) \tau = \bigoplus\limits_{v\in \Bx} e^{2\pi\iu f_v(\xi)} \tau_v$.
Gromov--Witten invariants satisfy
(see \cite[Proposition 2.3]{Iritani:Integral})
\begin{equation}
\label{eq:Galois}
\big\langle dg(\xi)(\alpha_1) \psi^{k_1},\dots, dg(\xi)(\alpha_l) \psi^{k_l}\big\rangle _{g,l,d}
= e^{2\pi\iu \xi \cdot d}
\big\langle \alpha_1 \psi^{k_1},\dots,\alpha_l \psi^{k_l}\big\rangle_{g,l,d}
\end{equation}
and thus the quantum product satisfies
\[
dg(\xi)( \alpha \star_\tau \beta ) = (dg(\xi) \alpha) \star_{g(\xi)(\tau)}
(dg(\xi) \beta).
\]
The map $g(\xi)$ induces the action of $H^2(\frX,\Z)/\Laa^\star$
on $H_\CR^*(\frX)/2\pi\iu \Laa^\star$; this action naturally
extends to the partial compactification $\cM_{\rm A}(\frX)$.
In view of this symmetry, we can regard
the quantum products $\star_\tau$ as a family of products
parametrized by the formal neighbourhood of the origin
(large radius limit) of the stack:
\[
\big[\cM_{\rm A}(\frX)\big/
\big(H^2(\frX,\Z)/\Laa^\star\big)\big].
\]
We refer to $\cM_{\rm A}(\frX)$ or to the quotient stack above
as the \emph{K\"ahler moduli space}, where the subscript A stands for
the A-model.

The above construction can be adapted to the
equivariant quantum cohomology.
We choose a homogeneous $R_\T$-basis
$\{\phi_i\}_{i=0}^s$ of $H_{\CR,\T}^*(\frX)$
such that
\begin{equation}
\label{eq:basis_equiv_lift}
\phi_0=\unit \qquad \text{and} \qquad
\{\phi_1,\dots,\phi_r\} \subset H^2_\T(\frX)
\qquad \text{with $r = \dim H^2(\frX)\le s$}.
\end{equation}
Then the non-equivariant limits of $\phi_1,\dots,\phi_r$
form a basis of $H^2(\frX)$.
The basis $\{\phi_i\}_{i=0}^s$
defines $R_\T$-linear
co-ordinates $\{\tau^i\}_{i=0}^s$ on $H_{\CR,\T}^*(\frX)$ as before.
The equivariant K\"ahler moduli spaces are given
by replacing the ground ring $\C$ with $R_\T$.
\begin{equation}
\label{eq:equiv_Kaehler_moduli}
\cM_{\rm A,\T}(\frX) := \Spec R_\T[\Laa_+]\big[\tau^0,\tau^{r+1},\dots,t^s\big].
\end{equation}
It is fibred over $\Spec R_\T \cong \Lie \T$.
The group $H^2(\frX,\Z)/\Laa^\star$ acts on $\cM_{\rm A,\T}(\frX)$
through the isomorphism $\cM_{\rm A,\T}(\frX) \cong
\cM_{\rm A}(\frX) \times \Spec R_\T$.
The $\T$-equivariant quantum product $\star_\tau$ can be
viewed as a family of product structures parameterized
by the formal neighbourhood of the origin
in $\big[\cM_{\rm A,\T}(\frX)/\big(H^2(\frX,\Z)/\Laa^\star\big)\big]$.

\begin{Remark}
\label{rem:equiv_coh}
Unlike the non-equivariant case, the \emph{equivariant} K\"ahler
moduli space $\cM_{\rm A,\T}(\frX)$ is \emph{not} a partial
compactification of $H^*_{\CR,\T}(\frX)/2\pi\iu \Laa^\star$;
we do not even have a natural map $H^*_{\CR,\T}(\frX) \to
\cM_{\rm A,\T}(\frX)$. If we regard $H^*_{\CR,\T}(\frX)$ as a
locally free coherent sheaf over $\Spec R_\T$ and write
$\bH_{\CR,\T}(\frX)=
\Spec\big(\Sym^\bullet_{R_\T}\big(H^*_{\CR,\T}(\frX)^\vee\big)\big)$
for the total space
of the associated vector bundle (where $(-)^\vee$ stands for the
dual as an $R_\T$-module), then we have an open dense embedding
$\bH_{\CR,\T}(\frX)/2\pi\iu\Laa^\star \to \cM_{\rm A,\T}(\frX)$.
In this paper,
we take the view that the equivariant quantum product
$\star_\tau$ is parametrized by
points in $\bH_{\CR,\T}(\frX)$ rather than by
equivariant cohomology classes.
Note that equivariant cohomology classes correspond to
sections of $\bH_{\CR,\T}(\frX) \to \Spec R_\T$.
\end{Remark}

\begin{Remark}
The \emph{equivariant} K\"ahler moduli space $\cM_{\rm A,\T}(\frX)$
depends on the choice of a basis $\phi_1,\dots,\phi_r$.
In fact, in the equivariant case, we can replace the basis
$\{\phi_i\}_{i=0}^s$ with a new basis
$\{\tilde{\phi}_i\}_{i=0}^s$ of the form
\[
\tilde{\phi}_i =
\begin{cases}
\phi_i + c_i \phi_0, & i \in \{1,\dots,r\}, \\
\phi_i, & i \notin \{1,\dots,r\},
\end{cases}
\]
for some $c_i \in H^2_\T(\pt)$ without violating the homogeneity.
Then the corresponding $0$-th co-ordinate $\ttau^0
= \tau^0 - \sum\limits_{i=1}^r c_i \tau^i$ does not lie in the
ring $R_\T[\Laa_+]\big[\tau^0,\tau^{r+1},\dots,\tau^s\big]$.
In other words, the construction of the equivariant K\"ahler moduli
space requires the choice of a splitting of the sequence
$0 \to H^2_\T(\pt) \to H^2_\T(\frX) \to H^2(\frX) \to 0$.
\end{Remark}

\begin{Remark}
In the above discussion, we considered the specialization $Q=1$
and the equivalence class
of $\tau$ in $H^*_{\CR}(\frX)/2\pi\iu \Laa^\star$.
This is equivalent to considering the restriction to
$\tau^1 = \cdots =\tau^r =0$
and the substitution of $q^d$ for $Q^d$.
\end{Remark}

\begin{Remark}
Henceforth we specialize the Novikov variable $Q$ to one in
the quantum pro\-duct~$\star_\tau$, unless otherwise stated.
\end{Remark}

\subsection{Quantum D-module}\label{subsec:QDM}
The quantum product defines a meromorphic flat connection
on a vector bundle (with fibre orbifold cohomology) over the
K\"ahler moduli space, called the \emph{quantum connection}.
The quantum connection, the grading and the
orbifold Poincar\'e pairing constitute
the \emph{quantum D-module} of~$\frX$.

We start by explaining the equivariant version; we get the non-equivariant version
by taking non-equivariant limit.
As before, we fix a homogeneous $R_\T$-free basis $\{\phi_0,\dots,\phi_s\}$ of
$H^*_{\CR,\T}(\frX)$ that satisfies \eqref{eq:basis_equiv_lift}.
We use this basis to construct the equivariant K\"ahler moduli
spaces $\cM_{\rm A,\T}(\frX)$.
Consider the vector bundle
\begin{equation}
\label{eq:quantum_D-mod_bundle}
\bH_{\CR,\T}(\frX)
\times_{\Spec R_\T} (\cM_{\rm A,\T}(\frX) \times \C_z) \to
\cM_{\rm A,\T}(\frX) \times \C_z
\end{equation}
of rank equal to $\dim H_\CR^*(\frX)$, where $\C_z := \Spec \C[z]$
is the complex plane with co-ordinate $z$ and
$\bH_{\CR,\T}(\frX)$ denotes a vector bundle over
$\Spec R_\T$ corresponding to $H^*_{\CR,\T}(\frX)$
(see Remark~\ref{rem:equiv_coh}).
The Galois symmetry in the previous section induces the
$(H^2(\frX,\Z)/\Laa^\star)$-action on this vector bundle
defined by the map $dg(\xi)\times g(\xi) \times \id_{\C_z}$.
By the Galois symmetry, this vector bundle
descends to a vector bundle on the quotient stack
$\big[\cM_{\rm A,\T}(\frX)/\big(H^2(\frX,\Z)/\Laa^\star\big)\big] \times \C_z$.
The \emph{quantum connection} is a meromorphic flat partial
connection on this vector bundle over the formal neighbourhood
of the origin in $\cM_{\rm A,\T}(\frX)$ (times~$\C_z$); it is given by
\[
\nabla = d + z^{-1} \sum_{i=0}^s (\phi_i\star_\tau) d\tau^i.
\]
This connection is partial in the sense that it is defined only
in the $\tau$-direction and not in the $z$-direction or in the
direction of equivariant parameters
(the first term~$d$ means the relative differential
over $\Spec R_\T[z]$).
The connection has simple poles along $z=0$ and has logarithmic
singularities along the toric boundary of $\Spec \C[\Laa_+]$.
Note that $d\tau^i$ with $1\le i\le r$ defines a~logarithmic
$1$-form on $\Spec \C[\Laa_+]$.
In a more formal language, the module
\[
H^*_{\CR,\T}(\frX) [z][\![\Laa_+]\!][\![\tau']\!]
:=
H^*_{\CR,\T}(\frX) \otimes_{R_\T} R_\T[z][\![\Laa_+]\!]
\big[\!\big[\tau^0,\tau^{r+1},\dots,\tau^s\big]\!\big]
\]
(which we regard as the module of sections of the bundle
\eqref{eq:quantum_D-mod_bundle} over the formal neighbourhood
of the origin in $\cM_{\rm A,\T}(\frX)$)
is equipped with $s+1$ operators
$\nabla_{\parfrac{}{\tau^i}}
= \parfrac{}{\tau^i} + z^{-1} \phi_i \star_\tau$,
$i=0,1,\dots,s$:
\[
\nabla_{\parfrac{}{\tau^i}}\colon
H^*_{\CR,\T}(\frX)[z][\![\Laa_+]\!][\![\tau']\!]
\to z^{-1} H^*_{\CR,\T}(\frX)[z][\![\Laa_+]\!][\![\tau']\!]
\]
that commute each other:
$\big[\nabla_{\parfrac{}{\tau^i}},\nabla_{\parfrac{}{\tau^j}}\big]= 0$.

Let $\{\chi_1,\dots,\chi_k\}$ denote a basis
of $H^2_\T(\pt,\C)$ so that $R_\T = \C[\chi_1,\dots,\chi_k]$.
For $\xi \in H^2(\frX,\C)$, we write $\xi q \parfrac{}{q}$ for the
derivation of $\C[\![\Laa_+]\!]$ given by
$\big(\xi q\parfrac{}{q}\big) q^d = (\xi \cdot d) q^d$.
The \emph{Euler vector field}~$\cE$ is the following
derivation of $R_\T[\![\Laa_+]\!][\![\tau']\!]$
\begin{equation}\label{eq:Euler_A}
\cE = c_1(\frX) q\parfrac{}{q} + \sum_{i\in \{0,r+1,\dots,s\}}
\left(1- \frac{\deg \phi_i}{2}\right) \tau^i \parfrac{}{\tau^i}
+ \sum_{i=1}^k \chi_i \parfrac{}{\chi_i}.
\end{equation}
The \emph{grading operator} $\Gr\in
\End_\C(H_{\CR,\T}(\frX)[z][\![\Laa_+]\!][\![\tau']\!])$
is defined to be
\begin{equation}
\label{eq:grading_A}
\Gr( f(q,\tau',\chi,z) \phi_i )
= \left(\left(\cE + z\parfrac{}{z}\right) f(q,\tau',\chi,z)\right) \phi_i +
\frac{\deg \phi_i}{2} f(q,\tau',\chi,z) \phi_i,
\end{equation}
where $f(q,\tau',\chi,z) \in R_\T[z][\![\Laa_+]\!][\![\tau']\!]$.
This is compatible with the quantum connection in the sense that
\[
\left[ \nabla_{\parfrac{}{\tau^i}}, \Gr \right] =
\left(1 - \frac{\deg\phi_i}{2}\right) \nabla_{\parfrac{}{\tau^i}}
\left( = \nabla_{\big[\parfrac{}{\tau^i},\cE\big]} \right).
\]
Let $P$ denote the pairing between the fibres of the bundle
\eqref{eq:quantum_D-mod_bundle} at $(q,\tau',\chi,-z)$
and at $(q,\tau',\chi,z)$
induced by the orbifold Poincar\'e pairing.
This gives the following $R_\T[\![\Laa_+]\!][\![\tau']\!]$-bilinear
pairing on the module
$H^*_{\CR,\T}(\frX)[z][\![\Laa_+]\!][\![\tau']\!]$:
\begin{equation}
\label{eq:pairing_A}
P(\alpha, \beta) := (\alpha(-z), \beta(z)) \in S_\T[z][\![\Laa_+]\!][\![\tau']\!],
\end{equation}
where $(\cdot,\cdot)$ denotes the orbifold Poincar\'e pairing~\eqref{eq:orb_Poincare_pairing}.
The pairing $P$ satisfies
\begin{gather*}
P(f(q,\tau',\chi,-z) \alpha, \beta) = P(\alpha,f(q,\tau',\chi,z)\beta)
= f(q,\tau',\chi,z) P(\alpha,\beta), \\
P(\beta,\alpha) = P(\alpha,\beta) |_{z\to -z}
\end{gather*}
for $f(q,\tau',\chi,z) \in R_\T[z][\![\Laa_+]\!][\![\tau']\!]$
and the compatibility equations with $\nabla$ and $\cE$:
\begin{gather*}
\parfrac{}{\tau^i} P(\alpha,\beta ) =
P\big(\tnabla_{\parfrac{}{\tau^i}} \alpha, \beta\big)
+ P\big(\alpha, \nabla_{\parfrac{}{\tau^i}} \beta \big), \\
\left( z\parfrac{}{z} + \cE\right) P(\alpha,\beta)
 = P(\Gr \alpha,\beta) + P(\alpha,\Gr \beta) - (\dim \frX) P(\alpha,\beta),
\end{gather*}
where $\tnabla= \nabla|_{z\to -z}$.
The structures $\nabla$, $\Gr$, $P$ are equivariant
with respect to the Galois symmetry of $H^2(\frX,\Z)/\Laa^\star$.
We call the $\big(H^2(\frX,\Z)/\Laa^\star\big)$-equivariant quadruple
\begin{equation}\label{eq:QDM}
\QDM_\T(\frX) :=
\left(H^*_{\CR,\T}(\frX)[z][\![\Laa_+]\!][\![\tau']\!], \nabla, \Gr, P \right)
\end{equation}
the \emph{$\T$-equivariant quantum D-module} of $\frX$.

\begin{Remark}
The flat connection $\nabla$ defines on the space
$H^*_{\CR,\T}(\frX)[z][\![\Laa_+]\!][\![\tau']\!]$
the structure of a module over the ring of differential operators
\[
R_\T[z][\![\Laa_+]\!][\![\tau']\!]\left\langle z\parfrac{}{\tau^0},
z\parfrac{}{\tau^1}, \dots,
z\parfrac{}{\tau^s} \right\rangle,
\]
where $z\parfrac{}{\tau^i}$ acts by
the connection $z \nabla_{\parfrac{}{\tau^i}}$.
This is often called a \emph{$z$-connection}
\cite{Sabbah:twistor_D_modules}.
\end{Remark}

The non-equivariant quantum D-module is the restriction of
the equivariant one to the origin $0\in \Spec R_\T = \Lie \T$.
It is a quadruple
\[
\QDM(\frX) := \big(
H^*_\CR(\frX) [z][\![\Laa_+]\!][\![\tau']\!],
\nabla, \Gr, P \big),
\]
where the pairing $P$ is defined only when $\frX$ is proper.
In the non-equivariant case, we can define the connection in the
$z$-direction by
\begin{equation}\label{eq:conn_z}
\nabla_{z\parfrac{}{z}} := \Gr - \nabla_{\cE} - \frac{\dim \frX}{2}
= z \parfrac{}{z} - \frac{1}{z} (E\star_\tau) + \mu
\end{equation}
and this preserves the pairing $P$.
Here $\cE$ denotes the non-equivariant limit of~\eqref{eq:Euler_A} (and thus does not contain the term
$\sum\limits_{i=1}^n \chi_i\parfrac{}{\chi_i}$)
and
\[
E=c_1(\frX) + \sum_{i\in \{0,r+1,\dots,s\}}
\left(1-\frac{1}{2} \deg \phi_i\right) \tau^i \phi_i
\]
is the section of $\QDM(\frX)$ corresponding to $\cE$ and
$\mu \in \End(H^*_\CR(\frX))$ is the endomorphism
given by $\mu(\phi_i) =
\big(\frac{1}{2} \deg\phi_i - \frac{1}{2} \dim \frX\big) \phi_i$.

\subsection[$\hGamma$-integral structure]{$\boldsymbol{\hGamma}$-integral structure}\label{subsec:integral}

The $\hGamma$-integral structure \cite[Section~2.4]{Iritani:Integral},
\cite[Proposition 3.1]{KKP:Hodge}
is an integral lattice in the space of flat sections of the quantum D-module.
The integral lattice is identified with the topological $K$-group of $\frX$.
We review its definition only in the non-equivariant case (see \cite[Section~3]{CIJ}
for the equivariant $\hGamma$-integral structure).

For simplicity, we assume that the quantum product $\star_\tau$
is convergent in a neighbourhood of~the large radius limit point.
Then the quantum connection is also analytic in the same neighbourhood;
it has no singularities on the intersection of the open subset
$\big[H^*_\CR(\frX)/H^2(\frX,\Z)\big]$
\linebreak $\subset \big[\cM_{\rm A}(\frX)/\big(H^2(\frX,\Z)/\Laa^\star\big)\big]$
and the convergence domain of $\star_\tau$.
Introduce the following \linebreak
$\End(H^*_\CR(\frX))$-valued function:
\[
L(\tau,z) \phi_i = e^{-\sigma/z}
\phi_i +
\sum_{k=0}^s
\sum_{\substack{n\ge 0, \, d\in \Laa_+ \\ (n,d)\neq (0,0)}}
\corr{\phi_k,\tau',\dots,\tau',\frac{e^{-\sigma/z}\phi_i}
{-z-\psi}}_{0,l+2,d} \frac{e^{\sigma \cdot d}}{l!} \phi^k,
\]
where we write $\tau = \sigma+ \tau' \in H^*_\CR(\frX)$,
$\sigma = \sum\limits_{i=1}^r \tau^i\phi_i \in H^2(\frX)$,
$\tau' = \tau^0 \phi_0 + \sum\limits_{i=r+1}^s \tau^i \phi_i$ as before,
$\{\phi^k\}$ is the dual basis of $\{\phi_k\}$ with respect to
the orbifold Poincar\'e pairing, and $1/(-z-\psi)$ should be expanded
in the series $\sum\limits_{k=0}^\infty (-z)^{-k-1} \psi^k$.
We also set
\[
z^{-\mu} z^{c_1(\frX)} := e^{-\mu \log z} e^{c_1(\frX) \log z}.
\]
Then by \cite[Proposition 2.4]{Iritani:Integral},
$L(\tau,z) z^{-\mu} z^{c_1(\frX)}
\phi_i$, $i=0,\dots,s$ form a basis of (multi-valued) $\nabla$-flat sections
(which are flat also in the $z$-direction with respect to~\eqref{eq:conn_z}),
i.e., $L(\tau,z) z^{-\mu} z^{c_1(\frX)}$ is a fundamental
solution of the quantum connection.

We introduce the Chern character and the $\hGamma$-class
for a smooth DM stack $\frX$. Let $\pi \colon I\frX \to \frX$ denote the
natural projection. Recall the decomposition $I\frX = \bigsqcup\limits_{v\in \Bx}
\frX_v$ into twisted sectors.
For an orbi-vector bundle $V$ on $\frX$, the stabilizer along
$\frX_v$ acts on $(\pi^*V)|_{\frX_v}$ and
decomposes it into the sum of eigenbundles
\[
\pi^*V|_{\frX_v} = \bigoplus_{0\le f <1} V_{v,f},
\]
where the stabilizer acts on $V_{v,f}$ by $\exp(2\pi\iu f)$.
The Chern character of $V$ is defined to be
\[
\tch(V) = \bigoplus_{v\in \Bx} \sum_{0\le f<1} e^{2\pi\iu f} \ch(V_{v,f})
\in H^*(I\frX).
\]
Consider now the tangent bundle $V=T\frX$ and let
$\delta_{v,f,j}$, $j=1,\dots,\rank(V_{v,f})$ denote the
Chern roots of $T\frX_{v,f}$.
The $\hGamma$-class of $\frX$ is defined to be
\[
\hGamma_\frX = \bigoplus_{v\in \Bx}
\prod_{0\le f<1} \prod_{j=1}^{\rank(T \frX_{v,f})}
\Gamma(1-f+ \delta_{v,f,j})
\in H^*(I\frX),
\]
where in the right-hand side we expand the Euler
$\Gamma$-function $\Gamma(z)$
in series at $z=1-f$. This is an algebraic cohomology class defined over
transcendental numbers.
\begin{Definition}[{\cite[Definition 2.9]{Iritani:Integral}}]
\label{def:s}
Let $K(\frX)$ denote the Grothendieck group of topological
orbi-vector bundles on $\frX$.
For $V \in K(\frX)$, we define a (multi-valued)
flat section $\frs_V(\tau,z)$ of the quantum D-module by
\[
\frs_V(\tau,z) = \frac{1}{(2\pi)^{\dim \frX/2}}
L(\tau,z) z^{-\mu} z^{c_1(\frX)} \big(
\hGamma_\frX \cup (2\pi\iu)^{\deg_0/2} \inv^*\tch(V) \big),
\]
where $\deg_0$ denotes the grading operator on $H^*(I\frX)$
\emph{without age shift}, i.e., $\deg_0(\alpha) = p \alpha$ for
$\alpha \in H^{p}(I\frX)$. The map $V \mapsto \frs_V$ defines
an integral lattice in the space of flat sections, which we call
the \emph{$\hGamma$-integral structure}.
\end{Definition}

Important properties of the $\hGamma$-integral structure are
as follows
\cite[Proposition 2.10]{Iritani:Integral}:
\begin{itemize}\itemsep=0pt
\item it is monodromy-invariant around the large radius limit point: we have
\begin{equation}\label{eq:Galois_line_tensor}
dg(\xi)^{-1} \frs_V(g(\xi)(\tau),z) =\frs_{V\otimes L_\xi}(\tau,z),
\end{equation}
where $L_\xi$ is the line bundle corresponding to $\xi\in H^2(\frX,\Z)$;
therefore it defines a $\Z$-local system underlying the quantum D-module;

\item it intertwines the Euler pairing with the orbifold Poincar\'e
pairing:
if $V_1$, $V_2$ are holomorphic orbi-vector bundles on $\frX$, we have
\begin{equation}\label{eq:Euler_Poincare}
\chi(V_1,V_2) = \big(\frs_{V_1} \big(\tau,e^{-\pi\iu} z\big), \frs_{V_2} (\tau,z) \big),
\end{equation}
where $\chi(V_1,V_2) =
\sum\limits_{i=0}^{\dim \frX}(-1)^i \dim \Ext^i(V_1,V_2)$.
When $V_1$, $V_2$ are $C^\infty$ orbi-vector bundles,
we can define $\chi(V_1,V_2)$ as
the index of a Dirac operator on $V_1^\vee \otimes V_2 \otimes
\Omega^{0,\bullet}_\frX$
and this formula holds under mild assumptions,
see \cite[Remark~2.8]{Iritani:Integral}.
Note that the right-hand side does not depend on $(\tau,z)$ since
$\frs_{V_1}$, $\frs_{V_2}$ are flat sections.
\end{itemize}
\begin{Remark}\label{rem:branch_L}
The fundamental solution $L(\tau,z) z^{-\mu} z^{c_1(\frX)}$ is
multi-valued as a function of
$(\tau,z)\in \big[H^*_\CR(\frX)/H^2(\frX;\Z)\big]\times \C^\times$,
but it has a standard determination when
$\tau$ is the image of a \emph{real} class in
$H^*_\CR(\frX;\R) = H^*(I\frX;\R)$ sufficiently close
to the large radius limit point and $z$ is \emph{positive real}.
We will sometimes use such a point as a base point.
\end{Remark}

\section{Global Landau--Ginzburg mirrors of toric DM stacks}
In this section, we construct a global Landau--Ginzburg model
(LG model) which is simultaneously mirror to several smooth toric DM stacks.
For background materials on toric (Deligne--Mumford) stacks,
we refer the reader to \cite{BCS, CLS,FMN, Iwanari1, Iwanari2, Tyomkin:tropical}.

\subsection{Toric data}\label{subsec:data}
Throughout the paper, we fix the data $(\bN,\Pi)$, where
\begin{itemize}\itemsep=0pt
\item $\bN$ is a finitely generated abelian group of rank $n$
(possibly having torsion) and
\item $\Pi$ is a full-dimensional, convex,
rational polyhedral cone in $\bN_\R := \bN\otimes \R$.
\end{itemize}
We do not require that $\Pi$ is strictly convex;
$\Pi$ will be the support of the fan of a smooth toric DM stack.
To construct a mirror family of LG models,
we choose a finite subset $S \subset \bN$ such that
\begin{itemize}\itemsep=0pt
\item $S$ generates the cone
$\Pi$ over $\R_{\ge 0}$, i.e., $\Pi = \sum\limits_{b\in S} \R_{\ge 0} b$.
\end{itemize}
The set $S$ specifies the set of monomials appearing
in the LG model.
We make the following technical assumption to ensure that
the base of the mirror family
has no generic stabilizers, that is, generic LG models have
no automorphisms of diagonal symmetry.
\begin{Assumption}
\label{assump:S_generates_N}
$S$ generates $\bN$ as an abelian group, i.e., $\bN = \Z S$.
\end{Assumption}
This is not an essential restriction.
In fact, if $S$ does not satisfy this assumption, we can first construct
the mirror family by taking a bigger set $S' \supset S$
satisfying the assumption,
and then restrict the family to the subspace of the base
corresponding to $S$ (then, the subspace has generic stabilizers).

\begin{Notation}
We write $\bN_{\rm tor}$ for the torsion part of $\bN$ and
write $\overline{\bN} = \bN/\bN_{\rm tor}$ for the torsion-free quotient.
For $v\in \bN$, we write $\overline{v}$ for the image of $v$
in $\overline{\bN}$.
The subscripts $\Q$, $\R$, etc.\ means the tensor product with
$\Q$, $\R$ over $\Z$, e.g., $\bN_\R = \bN \otimes_\Z \R$.
For subsets $A \subset \bN$ and $B \subset \bN_\R$, we write
$A \cap B := \{a\in A\colon \overline{a} \in B\}$.
\end{Notation}

\begin{Definition}\label{def:stacky_fan_adapted_to_S}
A \emph{stacky fan adapted to} $S$ is
a triple $\bSigma = (\bN,\Sigma, R)$ such that
\begin{itemize}\itemsep=0pt
\item[(i)] $\Sigma$ is a rational simplicial fan defined on the
vector space $\bN_\R$;
\item[(ii)] the support $|\Sigma| = \bigcup\limits_{\sigma\in \Sigma} \sigma$
of the fan $\Sigma$ equals $\Pi = \sum\limits_{b\in S} \R_{\ge 0} \overline{b}$;
\item[(iii)] there exists a strictly convex piecewise linear function
$\eta \colon \Pi\to \R$ which is linear on each cone of $\Sigma$;
\item[(iv)] $R \subset S$ is a subset such that
the map $R \ni b \mapsto \R_{\ge 0} \overline{b}$
gives a bijection between $R$ and the set
$\Sigma(1)$ of one-dimensional cones of $\Sigma$.
\end{itemize}
The data $(\bN,\Sigma,R)$ gives a \emph{stacky fan} in the sense of
Borisov, Chen and Smith \cite{BCS}.
We write~$\frX_\bSigma$ for the smooth toric DM stack
(toric stack for short) defined by $\bSigma$.
We also set $R(\bSigma) := R$ (``rays'') and
$G(\bSigma) := S \setminus R$ (``ghost rays'').
We denote by $\Fan(S)$ the set of stacky fans adapted to~$S$.
\end{Definition}
\begin{Remark}The above conditions (ii), (iii) imply that the corresponding toric DM
stack $\frX_\bSigma$ is \emph{semiprojective} \cite[Section~7.2]{CLS},
that is, the coarse moduli space
$X_\Sigma$ of $\frX_\bSigma$ is projective over the affine variety
$\Spec H^0(X_\Sigma,\cO)$
and has a torus fixed point.
Conversely, any semiprojective toric DM stack arises from
some toric data $(\bN, \Pi, S, \Sigma, R)$ in this section.
\end{Remark}

\subsubsection{Toric DM stacks}
\label{subsubsec:toric_stacks}
Let $\bSigma = (\bN,\Sigma,R)\in \Fan(S)$ be a stacky
fan adapted to $S$.
When we start from the stacky fan~$\bSigma$,
the set $G(\bSigma) = S \setminus R$ can be viewed as the
data of an \emph{extended stacky fan}
in the sense of Jiang~\cite{Jiang}. The extended stacky fan
is given by the pair
$(\bSigma, G(\bSigma))$ of the stacky fan $\bSigma$ and
the finite subset $G(\bSigma)\subset \bN \cap |\Sigma|$.
We recall a definition of the smooth toric DM stack $\frX_\bSigma$
\cite{BCS, Jiang} in terms of the
extended stacky fan.
The stacky fan $\bSigma$ defines the \emph{fan sequence}
\begin{equation}
\label{eq:fanseq}
\begin{CD}
0 @>>> \LL^\bSigma @>>> \Z^{R(\bSigma)} @>{\beta(\bSigma)}>> \bN,
\end{CD}
\end{equation}
where $\beta(\bSigma)\colon \Z^{R(\bSigma)} \to \bN$
sends the basis $e_b \in \Z^{R(\bSigma)}$
corresponding to $b\in R(\bSigma)$ to $b\in \bN$
and $\LL^\bSigma := \Ker(\beta(\bSigma))$.
The dual sequence
\begin{equation}\label{eq:divseq}
\begin{CD}
0 @>>> \bM @>>> \big(\Z^{R(\bSigma)}\big)^\star @>>> \big(\LL^\bSigma\big)^\vee
\end{CD}
\end{equation}
is called the \emph{divisor sequence}, where
$\big(\LL^\bSigma\big)^\vee := H^1(\Cone(\beta(\bSigma))^\star)$
is the Gale dual of $\beta(\bSigma)$ (see~\cite{BCS}).
The \emph{extended fan sequence} is the sequence
\begin{equation}\label{eq:ext_fanseq}
\begin{CD}
0 @>>> \LL @>>> \Z^S @>{\beta}>> \bN @>>>0,
\end{CD}
\end{equation}
where the map $\beta \colon \Z^S \to \bN$ sends
the basis $e_b \in \Z^S$
corresponding to $b\in S$ to $b\in \bN$ and
$\LL:=\Ker(\beta)$.
Note that $\beta$ is surjective by Assumption~\ref{assump:S_generates_N}.
The dual sequence
\begin{equation}
\label{eq:ext_divseq}
\begin{CD}
0 @>>> \bM @>>> \big(\Z^S\big)^\star @>{D}>> \LL^\star
\end{CD}
\end{equation}
is called the \emph{extended divisor sequence}, where
$\bM := \Hom(\bN,\Z)$ and $D \colon \big(\Z^S\big)^\star \to
\LL^\star$ is dual to $\LL \to \Z^S$
and $\Cok(D) \cong \Ext^1(\bN,\Z)$.
The torus $\LL_{\C^\times} := \LL \otimes \C^\times$ acts on $\C^S$
via the natural map $\LL_{\C^\times}
\to (\C^\times)^S$ induced by $\LL \to \Z^S$.
The toric stack $\frX_\bSigma$ is defined as a GIT quotient
of $\C^S$ by the $\LL_{\C^\times}$-action.
Set
\[
\scrS_\bSigma := \{I \subset R(\bSigma)\colon \text{the cone spanned by $I$
belongs to $\Sigma$}\}
\]
and define the open subset $U_\bSigma \subset \C^S$ as
\[
U_\bSigma = \C^S \setminus \bigcup_{I\subset S, I \notin \scrS_\bSigma}
\C^{S\setminus I},
\]
where we regard $\C^I$ with $I\subset S$ as a co-ordinate subspace
of $\C^S$ (we set $\C^\varnothing = \{0\}$).
Since every element of $\scrS_\bSigma$ is contained in~$R(\bSigma)$,
we may also write
\[
U_\bSigma = \left (
\C^{R(\bSigma)} \setminus \bigcup_{I \subset R(\bSigma),
I \notin \scrS_\bSigma} \C^{R(\bSigma) \setminus I} \right) \times
(\C^\times)^{G(\bSigma)}.
\]
We define
\[
\frX_\bSigma := [U_\bSigma/\LL_{\C^\times}].
\]
Since $\LL_{\C^\times}$ acts on $U_\bSigma$ with at most finite stabilizers,
$\frX_\bSigma$ is a smooth DM stack.
The toric stack $\frX_\bSigma$ depends only on $\bSigma =(\bN,\Sigma,R)$
and does not depend on the choice of the extension
$G(\bSigma) = S\setminus R$ (see \cite{Jiang}).
The coarse moduli space $X_\Sigma$ of $\frX_\bSigma$ is the toric
variety associated with the fan $\Sigma$.
The $(\C^\times)^S$-action on $U_\bSigma$ induces
the $\T$-action on $\frX_\bSigma$, where $\T$ is the torus
\[
\T := (\C^\times)^S/\Image\big(\LL_{\C^\times} \to (\C^\times)^S\big)
\cong \bN \otimes \C^\times.
\]

\subsubsection{Picard group and second (co)homology}
\label{subsubsec:Pic_2nd}
A character $\xi \in \Hom(\LL_{\C^\times},\C^\times)
=\LL^\star$ of $\LL_{\C^\times}$ defines a line bundle
$L_\xi=[(U_\bSigma\times \C)/\LL_{\C^\times}]$ over~$\frX_\bSigma$,
where $\LL_{\C^\times}$ acts on the second factor $\C$ by the character $\xi$.
We denote by $D_b := D(e_b^\star)$
the image of the standard basis $e_b^\star \in \big(\Z^S\big)^\star$ under~$D$.
We can see that $D_b$ with $b\in G(\bSigma)$ yields
the trivial line bundle on $\frX_\bSigma$, and the correspondence
$\xi \mapsto L_\xi$ gives the identification
\begin{equation}\label{eq:Pic}
\LL^\star\Big/\sum_{b\in G(\bSigma)} \Z D_b \cong \Pic(\frX_\bSigma).
\end{equation}
This group is isomorphic to the Gale dual $\big(\LL^\bSigma\big)^\vee$
of $\beta(\bSigma)$ appearing in~\eqref{eq:divseq}.
The torsion free quotient of $\Pic(\frX_\bSigma)$ can be identified
with the ordinary dual $\big(\LL^\bSigma\big)^\star$.
Moreover we have the identifications
\begin{gather}
H^2(\frX_\bSigma,\Q) \cong \big(\LL^\bSigma_\Q\big)^\star, \nonumber\\
H^2_\T(\frX_\bSigma,\Q) \cong \big(\Q^{R(\bSigma)}\big)^\star, \nonumber\\
H^2_\T(\pt,\Q) \cong \bM_\Q,\label{eq:second_coh}
\end{gather}
so that the divisor sequence \eqref{eq:divseq} over $\Q$ is identified with
\[
\begin{CD}
0 @>>> H^2_\T(\pt,\Q) @>>> H^2_\T(\frX_\bSigma,\Q)
@>>> H^2(\frX_\bSigma,\Q) @>>> 0.
\end{CD}
\]
We write $\overline{D}_b$ for the image of $D_b\in \LL^\star$
in $\big(\LL^\bSigma_\Q\big)^\star \cong H^2(\frX_\bSigma,\Q)$.
This is the class of a toric divisor.
We also write
\[
R_\T := H^*_\T(\pt,\C) \cong \Sym(\bM_\C)
\]
for the $\T$-equivariant cohomology of a point.

\subsubsection{Orbifold cohomology}\label{subsubsec:orb_coh}
For a cone $\sigma$ of $\Sigma$, we introduce
$\Bx(\sigma) \subset \bN$ as
\[
\Bx(\sigma) = \bigg\{v \in \bN \colon
\text{$\overline{v}$ is of the form
$\sum\limits_{b \in \sigma\cap R(\bSigma)} c_b \overline{b}$
for some $c_b\in [0,1)$} \bigg\}
\]
and set $\Bx(\bSigma) = \bigcup\limits_{\sigma \in \Sigma} \Bx(\sigma)$.
The set $\Bx(\bSigma)$ parametrizes connected components of the
inertia stack $I\frX_\bSigma$~\cite{BCS}.
We write $\frX_{\bSigma,v}$ for the component of $I\frX_\bSigma$
corresponding to $v\in \Bx(\bSigma)$.
The \emph{age} of a box element $v\in \Bx(\sigma)$ is given by
$\age(v) := \sum\limits_{b\in R(\bSigma) \cap \sigma} c_b$
when we write $\overline{v} = \sum\limits_{b\in R(\bSigma) \cap \sigma}
c_b \overline{b}$ with $c_b\ge 0$.
The \emph{orbifold cohomology}
$H_{\CR}^*(\frX_\bSigma)$ of Chen and Ruan~\cite{Chen-Ruan:new_coh}
is given by (as a graded vector space):
\begin{equation}\label{eq:orb_coh_toric}
H_{\CR}^*(\frX_\bSigma) =
\bigoplus_{v \in \Bx(\bSigma)} H^{*-2 \age (v)} (\frX_{\bSigma,v},\C).
\end{equation}
As a ring, it is generated by the fundamental classes $\unit_v$
on $I \frX_{\bSigma,v}$
with $v\in \Bx(\bSigma)$ and the toric divisor classes
$\overline{D}_b \in H^2(\frX_\bSigma,\Q)$
with $b\in R(\bSigma)$. Here we regard $\overline{D}_b$ as a class
supported on the untwisted sector $\frX_{\bSigma,0} = \frX_\bSigma$.
The $\T$-equivariant orbifold cohomology $H_{\CR,\T}^*(\frX_\bSigma)$
is defined by replacing each factor in the right-hand side of
\eqref{eq:orb_coh_toric}
with the $\T$-equivariant cohomology $H_\T^{*-2\age(v)} (\frX_{\bSigma,v})$.

\subsection{Landau--Ginzburg model}
\label{subsec:LG}
We construct a global family of mirror LG models
which are simultaneously
mirror to all the toric stacks $\frX_\bSigma$ with
$\bSigma \in \Fan(S)$.

The \emph{uncompactified LG model}
\cite{Givental:ICM, Hori-Vafa, Iritani:Integral} is the family of tori
\[
\begin{CD}
\Hom(\bN,\C^\times) @>>> (\C^\times)^S \\
@. @VV{\pr}V \\
@. \LL^\star \otimes \C^\times
\end{CD}
\]
obtained from the extended fan
sequence \eqref{eq:ext_fanseq} by applying
$\Hom(-,\C^\times)$,
together with the function $F \colon (\C^\times)^S \to \C$
\[
F = \sum_{b\in S} u_b,
\]
where $u_b$ denotes the $\C^\times$-valued co-ordinate on $(\C^\times)^S$
given by the projection to the $b$th factor.

We shall partially compactify this family to include all
the large radius limit points of $\frX_\bSigma$ with $\bSigma \in \Fan(S)$.
We construct partial compactifications of $(\C^\times)^S$ and
$\LL^\star \otimes \C^\times$
as possibly \emph{singular} toric DM stacks
in the sense of Tyomkin \cite{Tyomkin:tropical}.
According to Tyomkin \cite[Section~4.1]{Tyomkin:tropical},
a singular toric DM stack can be described by
\emph{toric stacky data} $(L,\Xi,\daleth)$ such that:
\begin{itemize}\itemsep=0pt
\item $L$ is a finitely generated free abelian group;
\item $\Xi$ is a (not necessarily simplicial) rational fan on $L_\R = L\otimes \R$;
\item $\daleth \subset |\Xi|$ is a subset in the support $|\Xi|$ of $\Xi$
such that for each cone $\sigma \in \Xi$, there exists a finite
index sublattice $L(\sigma) \subset L$ such that $\daleth \cap \sigma =
L(\sigma) \cap \sigma$. We call $\daleth$ the \emph{integral structure}
of the toric stacky data.
\end{itemize}
Note that this is a generalization of a stacky fan $(\bN,\Sigma,R)$
of Borisov, Chen and Smith \cite{BCS} when the group $\bN$ has no torsion.
For a given stacky fan $(\bN,\Sigma,R)$ with free $\bN$,
we can assign a toric stacky data $(\bN,\Sigma,\daleth)$ by
taking $\daleth$ to be the union of the monoids $\sigma_\Z
= \Z_{\ge 0} (R \cap \sigma)$ for all $\sigma \in \Sigma$.
Tyomkin constructed a singular toric DM stack
from $(L,\Xi,\daleth)$ by gluing affine charts; its coarse moduli
space is the toric variety $X_\Xi$ associated with the fan $\Xi$.
We refer the reader to \cite[Section~4.1]{Tyomkin:tropical} for the details
(the construction of the affine charts in our case will be reviewed in
Section~\ref{subsec:localchart_LG}).

\begin{Notation}
For $c =(c_b)_{b\in S} \in \big(\R^S\big)^\star$, we define a convex
piecewise linear function $\eta_c \colon \Pi \to \R$ by
\[
\eta_c(v) := \max\left\{
\varphi(v) \colon \varphi \in \bM_\R=\Hom(\bN_\R,\R),
\, \varphi(\overline{b}) \le c_b \,
(\forall\, b\in S) \right\}.
\]
This is well-defined when there exists $\varphi\in \bM_\R$
such that $\varphi(\overline{b}) \le c_b$ for all $b\in S$;
in particular if $c_b \ge 0$ for all $b\in S$.
The graph of $\eta_c$ is the union of ``lower faces'' of the
convex cone in
$\bN_\R \oplus \R$ generated by $(\overline{b},c_b)$, $b\in S$
and $(0,1)$.
\end{Notation}
For a stacky fan $\bSigma=(\bN,\Sigma,R)$ adapted to $S$,
we define a full-dimensional strictly-convex cone
$\CPL_+(\bSigma) \subset \big(\R^S\big)^\star$ by
\begin{equation}
\label{eq:CPL}
\CPL_+(\bSigma) := \left\{c \in
\big(\R^S\big)^\star \colon
\begin{matrix}
\text{$c_b \ge 0$ $(\forall\, b\in S)$,
$\eta_c$ is linear on each cone of $\Sigma$}, \\
\text{$\eta_c(\overline{b}) = c_b$ for $b\in R(\bSigma)$}
\end{matrix}
\right\},
\end{equation}
where $\CPL$ stands for ``convex piecewise linear'' (notation borrowed
from~\cite{Oda-Park}) and the sub\-script~$+$ means non-negative.
Note that $\eta_c$ in the definition is determined only by
$\{c_b\colon b\in R(\bSigma)\}$. Note also
that $c_b \ge \eta_c(\overline{b})$ for $b\in G(\bSigma)$.
We define the integral structure $\tdaleth\subset \big(\R^S\big)^\star$ as
\[
\tdaleth:= \left\{c \in \big(\Z^S\big)^\star\colon
\text{$c_b \ge 0$ $(\forall\, b\in S)$, and $\eta_c(\overline{b})\in \Z$
$(\forall\, b \in \bN \cap \Pi)$}\right\}.
\]
Note that $\tdaleth \cap \CPL_+(\bSigma)$ equals the intersection
of the following sublattice of $\big(\Z^S\big)^\star$
\begin{equation}
\label{eq:PLZ}
\PLZ(\bSigma) :=
\left\{ c\in \big(\Z^S\big)^\star\colon
\text{$\forall\, \sigma \in \Sigma$, $\exists\, m_\sigma \in \bM$
s.t.~$m_\sigma(b) = c_b$ ($\forall\, b\in R(\bSigma) \cap \sigma$)}
\right\}
\end{equation}
with the cone $\CPL_+(\bSigma)$. We also define
\begin{equation}\label{eq:cpl}
\cpl(\bSigma) := D(\CPL_+(\bSigma)), \qquad
\daleth := D\big(\tdaleth\big),
\end{equation}
where $D \colon \big(\R^S\big)^\star \to \LL^\star_\R$ is the
map appearing in the extended divisor sequence \eqref{eq:ext_divseq}.
The cone $\cpl(\bSigma)$ consists of convex piecewise linear functions
(with respect to $\bSigma$) modulo linear functions.
It is easy to check that
$\CPL_+(\bSigma) = D^{-1}(\cpl(\bSigma)) \cap (\R_{\ge 0})^S$
and that $\daleth \cap \cpl(\bSigma)$ is the intersection of
the finite index sublattice $\plZ(\bSigma) := \PLZ(\bSigma)/\bM$ of $\LL^\star$
with the cone $\cpl(\bSigma)$.

\begin{Definition}[partially compactified LG model]\label{def:LG}
Let $\tXi$ be the fan in $\big(\R^S\big)^\star$ consisting of the maximal cones
$\CPL_+(\bSigma)$ with $\bSigma \in \Fan(S)$ and
their faces. Let $\Xi$ be the fan in $\LL^\star_\R$ consisting of the maximal cones
$\cpl(\bSigma)$ with $\bSigma \in \Fan(S)$ and their faces.
\begin{enumerate}\itemsep=0pt
\item Define $\cY$ to be the singular toric DM stack
corresponding to the toric stacky data \linebreak $\big(\big(\Z^S\big)^\star, \tXi, \tdaleth\big)$.
We call $\cY$ the \emph{total space} of the LG model.

\item Define $\cM$ to be the singular toric DM stack
corresponding to the toric stacky data $(\LL^\star, \Xi,\daleth)$.
We call $\cM$ the \emph{secondary toric stack}.

\item The map $D\colon \big(\Z^S\big)^\star \to \LL^\star$ defines
a map between these toric stacky data, and thus induces
a toric morphism $\pr\colon \cY \to \cM$.
Let $u_b \colon \cY \to \C$ with $b\in S$ denote the regular function defined
by $e_b\in \Z^S$, and define the LG potential to be
$F = \sum\limits_{b\in S} u_b$.
We call the pair $(\pr \colon \cY \to \cM, F)$
the (partially compactified) \emph{LG model} associated to~$S$.
\end{enumerate}
\end{Definition}

\begin{Definition}\label{def:large_radius_limit}
Each stacky fan $\bSigma \in \Fan(S)$ gives rise to a~torus-fixed point $0_\bSigma$ of the secondary toric stack $\cM$. We call $0_\bSigma$ the \emph{large radius limit point} of $\frX_\bSigma$.
\end{Definition}

\begin{Remark}\quad
\begin{enumerate}\itemsep=0pt
\item[(1)] The fan $\Xi$ on $\LL^\star_\R$ defined by the cones $\cpl(\bSigma)$
is called the \emph{secondary fan} or the \emph{GKZ fan} after the
work of Gelfand, Kapranov and Zelevinsky~\cite{GKZ:discriminants}
(see also Oda--Park~\cite{Oda-Park}).
The fan $\tXi$ gives a lift of the secondary fan $\Xi$ to $\big(\R^S\big)^\star$.
The support of $\tXi$ is the positive orthant $(\R_{\ge 0})^S$,
and therefore $\cY$ can be viewed as an iterated weighted blowup of $\C^S$.
The fibre of the map $D\colon |\tXi| = (\R_{\ge 0})^S \to \LL_\R^\star$ at
an interior point $\omega$ of $\cpl(\bSigma)\subset \LL^\star_\R$
can be identified with the image of the moment map
$\mu \colon X_\Sigma \to \bM_\R$
of the $\T$-action on
$X_\Sigma$ with respect to the reduced symplectic form associated with $\omega$.
Therefore the fan $\tXi$ can be viewed as the total space of
the moment polytope fibration over the secondary fan $\Xi$.

\item[(2)] Diemer, Katzarkov and Kerr \cite{DKK:symplectomorphism} introduced
a closely related (but slightly different) compactification of the
LG model in the case where~$S$ lies in a hyperplane of integral
distance one from the origin.
In this case (i.e., when~$S$ lies in a hyperplane of height one),
our space $\cY$ can be obtained from their \emph{total Lafforgue stack}~\cite{DKK:symplectomorphism} by contracting a divisor
(the zero-section), at least on the level of coarse moduli spaces.
Their \emph{secondary stack} \cite{DKK:symplectomorphism}
and our secondary toric stack are the same on the level of coarse
moduli spaces (the coarse moduli spaces are the toric variety
defined by the fan~$\Xi$), however it is not clear to the author if the stack structures are the same.
\end{enumerate}
\end{Remark}

\begin{Remark}
Cones of $\tXi$, $\Xi$ can be described more explicitly as follows.
A \emph{possibly degenerate fan}~\cite{Oda-Park} on $\bN_\R$
is a finite collection $\Sigma$ of convex (but not necessarily strictly
convex) rational polyhedral cones in $\bN_\R$ such that (1) if $\sigma\in \Sigma$
and $\tau$ is a face of $\sigma$, then $\tau \in \Sigma$; and
(2) if $\sigma, \tau \in \Sigma$ then the intersection
$\sigma \cap \tau$ is a common face of $\sigma$ and $\tau$.
Let $\Sigma$ be a possibly degenerate fan on $\bN_\R$.
Each cone $\sigma\in \Sigma$ contains the linear subspace
$V=\sigma \cap (-\sigma)$ as a face, and the linear subspace $V$
does not depend on $\sigma$. When $V=0$, $\Sigma$ is a fan in the usual sense.
A \emph{spanning set}~\cite{Oda-Park} of $\Sigma$
is a finite subset $R' \subset \bN$ such that each cone $\sigma \in \Sigma$
is generated by a subset of~$R'$ over $\R_{\ge 0}$.
Let $(\Sigma,R',\sigma)$ be a triple such that $\Sigma$ is a possibly
degenerate fan on $\bN_\R$ with support $|\Sigma| = \Pi$
which admits a strictly convex piecewise linear function $\eta \colon \Pi \to \R$
linear on each cone of $\Sigma$,
$R' \subset S$ is a spanning set of $\Sigma$, and $\sigma \in \Sigma$
is a cone. For such a triple, we define the cone $\CPL_+(\Sigma,R',\sigma)
\subset \big(\R^S\big)^\star$ as
\[
\CPL_+(\Sigma,R',\sigma) = \left\{
c\in \big(\R^S\big)^\star\colon
\begin{matrix}
c_b\ge 0 \, (\forall\, b\in S),
\ \text{$\eta_c$ is linear on each cone of $\Sigma$,} \\
\eta_c(\overline{b}) = c_b \, (\forall\, b\in R'), \
\eta_c|_\sigma = 0
\end{matrix}
\right\}.
\]
When $\bSigma = (\bN,\Sigma,R)$ is a stacky fan adapted to $S$, $R$ is a spanning
set for $\Sigma$ and
we have $\CPL_+(\bSigma) = \CPL_+(\Sigma,R,\{0\})$.
Then the fan $\tXi$ consists of the cones $\CPL_+(\Sigma,R',\sigma)$
and the fan~$\Xi$ consists of the cones $\cpl(\Sigma,R') =
D(\CPL_+(\Sigma,R',\sigma))$ (which are independent of $\sigma$).
\end{Remark}

\subsection{Extended refined fan sequence and extended Mori cone}\label{subsec:ext_refined_fanseq}
The \emph{refined fan sequence}~\cite{CCIT:MS} is an extension of the fan sequence~\eqref{eq:fanseq} by a finite group. In this section we describe an extended version of
the refined fan sequence for $\bSigma \in \Fan(S)$
(extended by ghost vectors $G(\bSigma) = S\setminus R(\bSigma)$).
This will be used to describe a local chart of the
global LG model $(\pr\colon \cY \to \cM,F)$.
\begin{Notation}\label{nota:Psi}
Define a function $\Psi^\bSigma \colon \Pi \to (\R_{\ge 0})^{S}$ as follows.
For $v\in \Pi$, we take a cone $\sigma \in \Sigma$ containing $v$,
and write $v = \sum\limits_{b\in R(\bSigma) \cap \sigma} c_b \overline{b}$.
Then $\Psi^\bSigma(v) = \big(\Psi^\bSigma_b(v)\big)_{b\in S}$ is given by
\[
\Psi^\bSigma_b(v)
= \begin{cases}
c_b, &\text{if $b\in R(\bSigma) \cap \sigma$}, \\
0, & \text{otherwise}.
\end{cases}
\]
The map $\Psi^\bSigma$ gives a section of the map $\beta \colon (\R_{\ge 0})^S \to \Pi$,
i.e., $\beta \circ \Psi^\bSigma = \id_\Pi$.
\end{Notation}
We define $\OO(\bSigma)\subset \Q^S \oplus \bN$ to be the
subgroup:
\[
\OO(\bSigma) :=
\sum_{v\in \bN \cap \Pi} \Z \big(\Psi^\bSigma(v), v\big)
+ \sum_{b\in G(\bSigma)} \Z (e_b, b)
\]
and define $\Laa(\bSigma)\subset \LL_\Q$ to be
\[
\Laa(\bSigma) := \big\{\lambda \in \Q^S\colon (\lambda,0) \in \OO(\bSigma)\big\}.
\]
Note that $\OO(\bSigma)$ is contained in
$\big\{(\lambda,v) \in \Q^S\oplus \bN \colon \beta(\lambda) = \overline{v}\big\}$.
These groups define the \emph{extended refined fan sequence}:
\begin{equation}\label{eq:ext_refined_fanseq}
\begin{CD}
0 @>>> \Laa(\bSigma) @>>> \OO(\bSigma) @>>> \bN @>>> 0,
\end{CD}
\end{equation}
where the map $\OO(\bSigma) \to \bN$ is given by the second projection.
This is compatible with the extended fan sequence
\eqref{eq:ext_fanseq} under the inclusions
$\LL \subset \Laa(\bSigma)$, $\Z^S \subset \OO(\bSigma)$,
where the second inclusion sends $e_b$ to $(e_b,b)$.
This sequence splits because the torsion part
of $\OO(\bSigma)$ is isomorphic to the torsion part $\bN_{\rm tor}$
of $\bN$.
The original \emph{refined fan sequence} in \cite{CCIT:MS} corresponds to the case
where $G(\bSigma) =\varnothing$; it is
the exact sequence
\begin{equation}
\label{eq:refined_fanseq}
\begin{CD}
0 @>>> \Laa^\bSigma @>>> \OO^{\bSigma} @>>> \bN @>>> 0,
\end{CD}
\end{equation}
where
\begin{gather*}
\OO^\bSigma := \OO(\bSigma)\cap \big( \Q^{R(\bSigma)} \oplus \bN\big)
= \sum_{v\in \Pi\cap \bN} \Z\big(\Psi^\bSigma(v),v\big), \\
\Laa^\bSigma := \Laa(\bSigma) \cap \Q^{R(\bSigma)}
= \big\{\lambda \in \Q^{R(\bSigma)}\colon (\lambda,0) \in \OO^\bSigma\big\}.
\end{gather*}
Note that $\Laa^\bSigma$ is a lattice in $\LL^\bSigma_\Q
= \LL^\bSigma \otimes \Q$, where $\LL^\bSigma$ is the lattice
appearing in the fan sequence~\eqref{eq:fanseq}.
For $b\in G(\bSigma)$, we define
$\delta^\bSigma_b : = e_b - \Psi^\bSigma(b) \in \Q^S$.
Then $\delta^\bSigma_b$ lies in $\Laa(\bSigma)$ and
we have the following decompositions:
\begin{gather}\label{eq:ext_decomp}
\OO(\bSigma) = \OO^{\bSigma} \oplus \bigoplus_{b\in G(\bSigma)}
\Z \delta^\bSigma_b, \qquad
\Laa(\bSigma) = \Laa^{\bSigma} \oplus \bigoplus_{b\in G(\bSigma)}
\Z \delta^\bSigma_b.
\end{gather}
\begin{Remark}[{\cite[Section~2.4]{CCIT:MS}}]
Under the identification $\LL^\bSigma_\Q \cong H_2(\frX_\bSigma,\Q)$
in \eqref{eq:second_coh}, the lattice
$\Laa^\bSigma\subset \LL^\bSigma_\Q$ contains classes of
all orbifold stable maps to $\frX_\bSigma$.
We also have an isomorphism $\big(\Laa^\bSigma\big)^\star \cong \Pic(X_\Sigma)$
\cite[Lemma~4.8]{CCIT:MS}, where $\Pic(X_\Sigma)$ is the Picard group
of the coarse moduli space~$X_\Sigma$.
On the other hand, we expect that $\OO^{\bSigma}$ is the set of classes
of orbi-discs in $H_2(\frX_\bSigma,L;\Q) \cong \Q^{R(\bSigma)}$
with boundaries
in a Lagrangian torus fibre $L \subset \frX_\bSigma$.
The notation $\OO$ indicates `open'.
\end{Remark}

We introduce the (extended) Mori cones and their open analogues.
Let $\Sigma(n)$ denote the set of $n$-dimensional (i.e., maximal) cones
of~$\Sigma$.
For $\sigma \in \Sigma(n)$, we define
\begin{gather}
\tC_{\bSigma,\sigma} := \big\{\lambda
\in \R^{S} \colon
\beta(\lambda) \in \sigma, \, \lambda_b \ge 0 \text{ for } b\notin
R(\bSigma) \cap \sigma \big\}, \nonumber\\
C_{\bSigma,\sigma} := \LL_\R \cap \tC_{\bSigma,\sigma}
= \{\lambda \in \LL_\R \colon \lambda_b \ge 0 \text{ for }
b\notin R(\bSigma) \cap \sigma \},\label{eq:tC_bSigma_sigma}
\end{gather}
where $\lambda_b$ with $b\in S$
denotes the $b$th component of $\lambda \in \R^S$.
We define the \emph{extended Mori cone}
$\hNE(\frX_\bSigma)$ and its open analogue (cone of ``open'' curves)
$\hOEf(\frX_\bSigma)$ by
\begin{gather*}
\hOEf(\frX_\bSigma) := \sum_{\sigma \in \Sigma(n)} \tC_{\bSigma,\sigma}, \qquad
\hNE(\frX_\bSigma) := \sum_{\sigma \in \Sigma(n)} C_{\bSigma,\sigma}
= \hOEf(\frX_\bSigma) \cap \LL_\R.
\end{gather*}
The unextended versions are given by
\begin{gather*}
\OEf(\frX_\bSigma) := \hOEf(\frX_\bSigma) \cap \R^{R(\bSigma)}, \qquad
\NE(\frX_\bSigma) := \hNE(\frX_\bSigma) \cap \LL^\bSigma_\R.
\end{gather*}
The cone $\NE(\frX_\bSigma)$ in $\LL^\bSigma_\R \cong H_2(\frX_\bSigma,\R)$
is the usual Mori cone, that is, the cone spanned by
effective curves in $\frX_\bSigma$.
The corresponding monoids are given as follows
\begin{gather}
\OO(\bSigma)_+ :=\big\{(\lambda,v) \in \OO(\bSigma) \colon
\lambda \in \hOEf(\frX_\bSigma) \big\},\nonumber\\
\Laa(\bSigma)_+ := \Laa(\bSigma) \cap \hNE(\frX_\bSigma).\label{eq:O+Laa+}
\end{gather}
Similarly, the unextended versions are given by
\begin{gather*}
\OO^\bSigma_+ := \OO(\bSigma)_+ \cap \big(\Q^{R(\bSigma)}\oplus \bN\big)
= \big\{(\lambda,v) \in \OO^\bSigma\colon \lambda \in \OEf(\frX_\bSigma)\big\},
\\
\Laa_+^\bSigma := \Laa(\bSigma)_+ \cap \Q^{R(\bSigma)} =
\Laa^\bSigma \cap \NE(\frX_\bSigma).
\end{gather*}
These cones and monoids are compatible with the decompositions in
\eqref{eq:ext_decomp} (cf.~\cite[Lemma~3.2]{Iritani:Integral}).
It is easy to check that
\begin{alignat}{3}
&\hOEf(\frX_\bSigma) = \OEf(\frX_\bSigma) +
\sum_{b\in G(\bSigma)} \R_{\ge 0} \delta_b^\bSigma,
\qquad && \OO(\bSigma)_+= \OO^\bSigma_+ +
\sum_{b\in G(\bSigma)} \Z_{\ge 0} \delta_b^\bSigma,&\nonumber \\
&\hNE(\frX_\bSigma) = \NE(\frX_\bSigma) +
\sum_{b\in G(\bSigma)} \R_{\ge 0} \delta_b^\bSigma,
\qquad&& \Laa(\bSigma)_+= \Laa^\bSigma_+ +
\sum_{b\in G(\bSigma)} \Z_{\ge 0} \delta_b^\bSigma.&\label{eq:cone_monoid_decomp}
\end{alignat}
The following lemma follows immediately from {\cite[Lemma~2.7]{CCIT:MS}}.
\begin{Lemma}\label{lem:OO_Laa_+}
Consider a natural map $\OO(\bSigma)_+ \to \bN \cap \Pi$
given by the second projection. The fibre of this map at $v \in \bN \cap \Pi$
equals $\big(\Psi^\bSigma(\overline{v}),v\big) + \Laa(\bSigma)_+$.
\end{Lemma}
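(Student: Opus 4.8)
The plan is to dispose of well-definedness and of the easy inclusion ``$\supseteq$'' directly, and then to reduce the reverse inclusion to the unextended case $G(\bSigma)=\varnothing$, which is \cite[Lemma~2.7]{CCIT:MS}. First I would check that the second projection really lands in $\bN\cap\Pi$: if $(\lambda,v)\in\OO(\bSigma)_+$ then $\beta(\lambda)=\overline v$ by the definition of $\OO(\bSigma)$, while $\lambda\in\hOEf(\frX_\bSigma)=\sum_{\sigma\in\Sigma(n)}\tC_{\bSigma,\sigma}$ forces $\overline v=\beta(\lambda)\in\Pi$, since $\beta(\tC_{\bSigma,\sigma})\subset\sigma\subset\Pi$ and $\Pi$ is a convex cone. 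For ``$\supseteq$'', the point $(\Psi^\bSigma(\overline v),v)$ lies in $\OO(\bSigma)$ by construction, and $\Psi^\bSigma(\overline v)\in\tC_{\bSigma,\sigma}\subset\hOEf(\frX_\bSigma)$ whenever $\sigma\in\Sigma(n)$ contains $\overline v$; hence $(\Psi^\bSigma(\overline v),v)$ lies in the fibre over $v$, which in particular is non-empty, so the map is onto $\bN\cap\Pi$. Adding any $\mu\in\Laa(\bSigma)_+=\Laa(\bSigma)\cap\hNE(\frX_\bSigma)$ keeps us in the same fibre: viewing $\Laa(\bSigma)$ inside $\OO(\bSigma)$, its $\bN$-component vanishes, and $\Psi^\bSigma(\overline v)+\mu\in\hOEf(\frX_\bSigma)$ because $\hNE(\frX_\bSigma)\subset\hOEf(\frX_\bSigma)$ and $\hOEf(\frX_\bSigma)$ is a cone, hence closed under addition.

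For the reverse inclusion I would peel off the ghost directions using the decompositions in \eqref{eq:ext_decomp} and \eqref{eq:cone_monoid_decomp}, namely $\OO(\bSigma)=\OO^\bSigma\oplus\bigoplus_{b\in G(\bSigma)}\Z\delta_b^\bSigma$, $\OO(\bSigma)_+=\OO^\bSigma_++\sum_{b\in G(\bSigma)}\Z_{\ge 0}\delta_b^\bSigma$ and $\Laa(\bSigma)_+=\Laa^\bSigma_++\sum_{b\in G(\bSigma)}\Z_{\ge 0}\delta_b^\bSigma$, together with the fact that each generator $\delta_b^\bSigma=e_b-\Psi^\bSigma(b)$ has vanishing $\bN$-component. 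These imply that the $\OO^\bSigma$-component of any element of $\OO(\bSigma)_+$ already lies in $\OO^\bSigma_+$ with non-negative $\delta_b^\bSigma$-coefficients, and that the second projection factors through the resulting projection $\OO(\bSigma)_+\to\OO^\bSigma_+$; hence
\[
\pr_2^{-1}(v)\cap\OO(\bSigma)_+=\bigl(\pr_2^{-1}(v)\cap\OO^\bSigma_+\bigr)+\sum_{b\in G(\bSigma)}\Z_{\ge 0}\,\delta_b^\bSigma .
\]
Substituting the unextended identity $\pr_2^{-1}(v)\cap\OO^\bSigma_+=(\Psi^\bSigma(\overline v),v)+\Laa^\bSigma_+$ from \cite[Lemma~2.7]{CCIT:MS} and using $\Laa(\bSigma)_+=\Laa^\bSigma_++\sum_b\Z_{\ge 0}\delta_b^\bSigma$ once more then yields $\pr_2^{-1}(v)\cap\OO(\bSigma)_+=(\Psi^\bSigma(\overline v),v)+\Laa(\bSigma)_+$.

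Thus the only genuinely non-formal ingredient is the unextended identity, and within it the inclusion ``$\subseteq$''. Given $(\mu,v)\in\OO^\bSigma_+$, the membership $(\mu-\Psi^\bSigma(\overline v),0)=(\mu,v)-(\Psi^\bSigma(\overline v),v)\in\OO^\bSigma$ is automatic, so $\mu-\Psi^\bSigma(\overline v)\in\Laa^\bSigma\subset\LL^\bSigma_\Q$; what remains is to see that it lies in $\NE(\frX_\bSigma)$. Since $\mu-\Psi^\bSigma(\overline v)\in\LL^\bSigma_\R$ and $\NE(\frX_\bSigma)=\OEf(\frX_\bSigma)\cap\LL^\bSigma_\R$, this amounts to showing $\mu-\Psi^\bSigma(\overline v)\in\OEf(\frX_\bSigma)$; and because in any Minkowski decomposition of an element of $\hOEf(\frX_\bSigma)$ the summands have non-negative ghost coordinates, they all vanish once those of the element do, so $\OEf(\frX_\bSigma)=\hOEf(\frX_\bSigma)\cap\R^{R(\bSigma)}=\sum_{\sigma\in\Sigma(n)}\bigl(\tC_{\bSigma,\sigma}\cap\R^{R(\bSigma)}\bigr)$. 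So the real assertion is that subtracting the distinguished section value $\Psi^\bSigma(\overline v)$ from $\mu$ stays inside this sum of cones. This is the combinatorial core and the main (if mild) obstacle; it relies on $\Psi^\bSigma(\overline v)$ being the \emph{minimal} lift of $\overline v$ compatible with $\Sigma$ --- it is supported exactly on $R(\bSigma)\cap\sigma_0$, where $\sigma_0\in\Sigma$ is the cone containing $\overline v$ in its relative interior --- so that it can be absorbed, cone by cone, into any expression $\mu=\sum_{\sigma}\mu^{(\sigma)}$ with $\mu^{(\sigma)}\in\tC_{\bSigma,\sigma}\cap\R^{R(\bSigma)}$. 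This is exactly what \cite[Lemma~2.7]{CCIT:MS} supplies (compare \cite[Lemma~3.2]{Iritani:Integral}); granting it, every step above is formal bookkeeping with the ghost decomposition.
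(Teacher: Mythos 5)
Your argument is correct and follows the same route the paper intends: the paper offers no proof beyond the remark that the lemma ``follows immediately from \cite[Lemma~2.7]{CCIT:MS}'', having just recorded the decompositions \eqref{eq:ext_decomp} and \eqref{eq:cone_monoid_decomp}, and your proof is exactly that reduction spelled out --- the direct check of ``$\supseteq$'', the ghost-direction bookkeeping splitting the fibre as $\bigl(\pr_2^{-1}(v)\cap\OO^\bSigma_+\bigr)+\sum_{b\in G(\bSigma)}\Z_{\ge 0}\delta_b^\bSigma$, and the appeal to the unextended statement of \cite[Lemma~2.7]{CCIT:MS} for the combinatorial core. No gaps; the final paragraph merely sketches what the cited lemma supplies, which is consistent with how the paper uses it.
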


We introduce a pairing between $\Pic^\st(\frX_\bSigma) :=
\Pic(\frX_\bSigma)/\Pic(X_\Sigma)$ and
the lattices $\OO(\bSigma)$, $\Laa(\bSigma)$, where $\Pic(X_\Sigma)$ denotes
the Picard group of the coarse moduli space $X_\Sigma$
of $\frX_\bSigma$.
This pairing corresponds to the Galois symmetry for quantum cohomology
in Section~\ref{subsec:Kaehler_moduli}.
Note that we have
\begin{gather}
\label{eq:dual_of_Picst}
\Laa(\bSigma)/\LL \cong \OO(\bSigma)/\Z^S
\cong \OO^\bSigma/\Z^{R(\bSigma)},
\end{gather}
where the first isomorphism follows from the comparison
of the extended fan sequence~\eqref{eq:ext_fanseq} and
the extended refined fan sequence~\eqref{eq:ext_refined_fanseq};
the second isomorphism follows from the fact that
$\OO(\bSigma) = \OO^\bSigma \oplus \bigoplus\limits_{b\in G(\bSigma)}
\Z (e_b,b)$.
Recall from~\eqref{eq:Pic} that $\Pic(\frX) \cong \LL^\star/\sum\limits_{b\in G(\bSigma)}
\Z D_b$.
Since we have $\lambda_b = D_b \cdot \lambda \in \Z$ for $b\in G(\bSigma)$
and $\lambda \in \Laa(\bSigma)$,
the natural pairing $\LL^\star \times (\LL_\Q/\LL) \to \Q/\Z$
induces the pairing
\[
\age \colon \
\Pic(\frX_\bSigma) \times (\Laa(\bSigma)/\LL) \to \Q/\Z.
\]
Through the identification
$\Laa(\bSigma)/\LL \cong \OO^\bSigma/\Z^{R(\bSigma)}$ above,
this pairing coincides with the age pairing
$\Pic(\frX_\bSigma) \times \big(\OO^\bSigma/\Z^{R(\bSigma)}\big) \to \Q/\Z$
introduced in \cite[Section~4.3]{CCIT:MS}.
It follows from \cite[Lemma~4.7]{CCIT:MS} that
the age pairing descends to a perfect pairing
\[
\age \colon \ \Pic^\st(\frX_\bSigma) \times (\Laa(\bSigma)/\LL) \to \Q/\Z.
\]
Via \eqref{eq:dual_of_Picst}, we obtain the following pairings:
\begin{alignat}{3}
& \Pic^\st(\frX_\bSigma) \times \OO(\bSigma) \to \C^\times, \qquad&&
(\xi, x) \mapsto e^{2\pi\iu \age(\xi,x)} , & \nonumber\\
& \Pic^\st(\frX_\bSigma) \times \Laa(\bSigma) \to \C^\times, \qquad&&
(\xi,\lambda) \mapsto e^{2\pi\iu \age(\xi,\lambda)}.& \label{eq:Picst_action}
\end{alignat}

\begin{Remark}[{\cite[Lemma~4.7]{CCIT:MS}}]
For a box element $b\in \Bx(\bSigma)$,
$\age\big(\xi,\big(\Psi^\bSigma(b),b\big)\big)$ is the \emph{age} of the line bundle
$L_\xi$ corresponding to $\xi\in \Pic(\frX_\bSigma)$
along the sector corresponding to the box~$b$,
where $\big(\Psi^\bSigma(b),b\big) \in \OO^\bSigma$.
\end{Remark}

\subsection{Local charts of the LG model}\label{subsec:localchart_LG}
We describe the local charts of $\cY$ and $\cM$ corresponding to
$\bSigma \in \Fan(S)$.
By definition,
the local chart of $\cY$ corresponding to $\bSigma$ is given by
(see \cite[Section~4.1]{Tyomkin:tropical}):
\[
\cY_\bSigma = \big[
\Spec\left( \C[\CPL_+(\bSigma)^\vee \cap \PLZ(\bSigma)^\star]\right)
\big/ \cG_\bSigma \big],
\]
where
\begin{itemize}\itemsep=0pt
\item $\CPL_+(\bSigma)^\vee \subset \R^S$ is the dual cone
of $\CPL_+(\bSigma)\subset \big(\R^S\big)^\star$ (see~\eqref{eq:CPL}),
\item
$\PLZ(\bSigma)^\star = \Hom(\PLZ(\bSigma),\Z) \subset \Q^S$
is the dual lattice of $\PLZ(\bSigma)$ (see \eqref{eq:PLZ});
\item $\cG_\bSigma := \big(\Z^S\big)^\star/\PLZ(\bSigma)$ is a finite group;
it acts on $\C[\CPL_+(\bSigma)^\vee
\cap \PLZ(\bSigma)^\star]$ via the natural pairing
$\big(\Z^S\big)^\star \times \PLZ(\bSigma)^\star \to \Q \to \Q/\Z \subset \C^\times$.
\end{itemize}
The coarse moduli space of $\cY_\bSigma$ is $
\Spec\big(\C\big[\CPL_+(\bSigma)^\vee\cap \Z^S\big]\big)$.
Similarly, the local chart of $\cM$ corresponding to $\bSigma$ is given by:
\[
\cM_\bSigma = \left[
\Spec\left( \C[\cpl(\bSigma)^\vee \cap \plZ(\bSigma)^\star]\right)
\big/ \cG_\bSigma'\right ],
\]
where
\begin{itemize}\itemsep=0pt
\item $\cpl(\bSigma)^\vee\subset \LL_\R$ is the dual cone of $\cpl(\bSigma)$
(see \eqref{eq:cpl}),
\item $\plZ(\bSigma)^\star \subset \LL_\Q$ is the dual lattice of
$\plZ(\bSigma) = \PLZ(\bSigma)/\bM$ and
\item $\cG'_\bSigma :=\LL^\star/\plZ(\bSigma)$
acts on $\C[\cpl(\bSigma)^\vee \cap \plZ(\bSigma)^\star]$
via the natural pairing $\LL^\star \times \plZ(\bSigma)^\star
\to \Q \to \Q/\Z \subset \C^\times$.
\end{itemize}
The coarse moduli space of $\cM_\bSigma$
is $\Spec(\C[\cpl(\bSigma)^\vee \cap \LL])$.
Comparing the extended divisor sequence \eqref{eq:ext_divseq}
with the sequence $0 \to \bM \to \PLZ(\bSigma) \to \plZ(\bSigma)
\to 0$, we obtain the exact sequence
\begin{equation}\label{eq:G_Gprime}
\begin{CD}
0 @>>> \cG_\bSigma @>>> \cG_\bSigma' @>>>
\Ext^1(\bN,\Z) = \bN_{\rm tor}\sphat@>>> 0,
\end{CD}
\end{equation}
where $\bN_{\rm tor}\sphat = \Hom(\bN_{\rm tor},\C^\times)$
denotes the Pontrjagin dual of $\bN_{\rm tor}$.
We have $\Pic(\frX_\bSigma)\cong
\LL^\star/\sum\limits_{b\in G(\bSigma)} \Z D_b$ by~\eqref{eq:Pic},
and that the Picard group $\Pic(X_\Sigma)$ of the coarse moduli
space is given by
$\plZ(\bSigma)/\sum\limits_{b\in G(\bSigma)} \Z D_b$
(see \cite[Section~4.2]{CLS}).
Therefore
\[
\cG_\bSigma' \cong \Pic^\st(\frX_\bSigma) (= \Pic(\frX_\bSigma)/\Pic(X_\Sigma))
\]
and the above sequence~\eqref{eq:G_Gprime}
identifies $\cG_\bSigma$ with the subgroup
of $\Pic^\st(\frX_\bSigma)$ on which the generic stabilizers
$\bN_{\rm tor}$ of $\frX_\bSigma$ acts trivially.

We give another description of the local chart
$\cY_\bSigma \to \cM_\bSigma$, which shows that
the LG model on this chart is the same as the LG model
considered in \cite[Section~4]{CCIT:MS}.

\begin{Lemma}[duality of cones and lattices]\label{lem:dual_cone_lattice}\quad
\begin{enumerate}\itemsep=0pt
\item[$(1)$] The extended Mori cone $\hNE(\bSigma)$ is the dual cone
of $\cpl_+(\bSigma)$; similarly $\hOEf(\bSigma)$ is the dual cone of
$\CPL_+(\bSigma)$.
\item[$(2)$] The lattice $\overline{\OO(\bSigma)} :=
\OO(\bSigma)/\bN_{\rm tor}$ equals the dual lattice
$\PLZ(\bSigma)^\star$;
similarly $\Laa(\bSigma) \!=\! \plZ(\bSigma)^\star$.
Here we identify $\overline{\OO(\bSigma)}$ with the image of
the first projection $\OO(\bSigma) \to \Q^S$
$($recall that $\bN_{\rm tor}$ is the torsion part of
$\OO(\bSigma))$.
\end{enumerate}
\end{Lemma}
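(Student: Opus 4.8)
The plan is to establish first that $\hOEf(\bSigma)$ and $\CPL_+(\bSigma)$ are dual cones, and then to deduce the ``$\hNE$ versus $\cpl$'' statement in (1) and both lattice statements in (2) by pushing this through the surjection $D\colon(\R^S)^\star\to\LL^\star_\R$ of \eqref{eq:ext_divseq} and, for the lattices, intersecting with $\LL_\Q$.

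Since $\hOEf(\bSigma)=\sum_{\sigma\in\Sigma(n)}\tC_{\bSigma,\sigma}$, dualising turns the sum into an intersection, $\hOEf(\bSigma)^\vee=\bigcap_{\sigma\in\Sigma(n)}\tC_{\bSigma,\sigma}^\vee$, so I would analyse one maximal cone at a time. Fix $\sigma\in\Sigma(n)$ with rays $R(\bSigma)\cap\sigma=\{b_1,\dots,b_n\}$ and let $m^\sigma_1,\dots,m^\sigma_n\in\bM_\R$ be the basis dual to $\overline{b_1},\dots,\overline{b_n}$. Any $\lambda\in\R^S$ can be written
\[
\lambda=\sum_{j=1}^n m^\sigma_j(\beta(\lambda))\,e_{b_j}+\sum_{b\in S\setminus(R(\bSigma)\cap\sigma)}\lambda_b\Bigl(e_b-\sum_{j=1}^n m^\sigma_j(\overline{b})\,e_{b_j}\Bigr),
\]
and the coefficients occurring here are exactly the linear forms defining $\tC_{\bSigma,\sigma}$ in \eqref{eq:tC_bSigma_sigma}; hence $\tC_{\bSigma,\sigma}$ is the simplicial cone spanned by the $e_{b_j}$ and by $e_b-\sum_j m^\sigma_j(\overline{b})e_{b_j}$ for $b\notin R(\bSigma)\cap\sigma$, and $c\in\tC_{\bSigma,\sigma}^\vee$ if and only if $c_{b_j}\ge 0$ for all $j$ and $c_b\ge m_\sigma(c)(\overline{b})$ for $b\notin R(\bSigma)\cap\sigma$, where $m_\sigma(c):=\sum_j c_{b_j}m^\sigma_j$ is the unique element of $\bM_\R$ with $m_\sigma(c)(\overline{b_j})=c_{b_j}$. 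I would then check $\bigcap_\sigma\tC_{\bSigma,\sigma}^\vee=\CPL_+(\bSigma)$ by mutual inclusion. If $c$ lies in the intersection, pairing with $e_b\in\tC_{\bSigma,\sigma}$ (for any $\sigma$ with $\overline{b}\in\sigma$) gives $c_b\ge 0$ for every $b\in S$, so $\eta_c$ is defined; and for each maximal $\sigma$ one has $m_\sigma(c)(\overline{b})\le c_b$ for all $b\in S$ (by the definition of $m_\sigma(c)$ when $b\in R(\bSigma)\cap\sigma$, and by the inequality above otherwise), so $m_\sigma(c)$ competes in the maximum defining $\eta_c$, whence $\eta_c\ge m_\sigma(c)$ on $\Pi$, while on $\sigma$ every competitor is $\le m_\sigma(c)$; therefore $\eta_c=m_\sigma(c)$ on $\sigma$, which shows $\eta_c$ is linear on every cone of $\Sigma$ and $\eta_c(\overline{b})=c_b$ on $R(\bSigma)$, i.e.\ $c\in\CPL_+(\bSigma)$. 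Conversely, for $c\in\CPL_+(\bSigma)$ the restriction $\eta_c|_\sigma$ is linear and takes the value $c_{b_j}$ on $\overline{b_j}$, so $\eta_c|_\sigma=m_\sigma(c)$; using that a convex piecewise-linear function dominates each of its linear pieces ($\eta_c\ge\eta_c|_\sigma$ on $\Pi$) together with $\eta_c(\overline{b})\le c_b$, we get $m_\sigma(c)(\overline{b})\le c_b$, so $c\in\tC_{\bSigma,\sigma}^\vee$ for every $\sigma$. This proves $\hOEf(\bSigma)^\vee=\CPL_+(\bSigma)$, and since both sides are closed rational polyhedral cones, biduality gives $\CPL_+(\bSigma)^\vee=\hOEf(\bSigma)$.

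Next I would use that $D$ is dual to the inclusion $\iota\colon\LL_\R\hookrightarrow\R^S$, so $\langle D(c),\lambda\rangle=\langle c,\iota(\lambda)\rangle$ for $\lambda\in\LL_\R$; hence $\cpl(\bSigma)^\vee=D(\CPL_+(\bSigma))^\vee=\{\lambda\in\LL_\R:\iota(\lambda)\in\CPL_+(\bSigma)^\vee\}=\LL_\R\cap\hOEf(\bSigma)=\hNE(\bSigma)$, and biduality gives $\hNE(\bSigma)^\vee=\cpl(\bSigma)$, completing (1). For (2) I would first observe that the pairing $\PLZ(\bSigma)\times\OO(\bSigma)\to\Q$, $(c,(\lambda,v))\mapsto\langle c,\lambda\rangle$, is integer-valued: on the generator $(e_b,b)$ it equals $c_b\in\Z$, and on $(\Psi^\bSigma(v),v)$ with $v\in\bN\cap\sigma$ it equals $\sum_{b}\Psi^\bSigma_b(v)\,c_b=m_\sigma(v)\in\Z$, where $m_\sigma\in\bM$ realises $c$ on $R(\bSigma)\cap\sigma$ as in the definition of $\PLZ(\bSigma)$. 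Thus $\overline{\OO(\bSigma)}\subseteq\PLZ(\bSigma)^\star$ in $\Q^S$, and dually $\PLZ(\bSigma)\subseteq\overline{\OO(\bSigma)}^\star$. Conversely, if $c\in\overline{\OO(\bSigma)}^\star$, pairing with the images of $(\Psi^\bSigma(b),b)$ or $(e_b,b)$ shows $c_b\in\Z$ for all $b\in S$, and pairing with the images of the $(\Psi^\bSigma(v),v)$ with $v\in\bN\cap\sigma$ shows that the unique $m_\sigma\in\bM_\Q$ with $m_\sigma(\overline{b_j})=c_{b_j}$ is integer-valued on all of $\bN\cap\sigma$; since $\sigma$ is full-dimensional these elements span $\overline{\bN}$ (taking $w_0\in\bN$ with $\overline{w_0}$ interior to $\sigma$, every $w\in\bN$ satisfies $\overline{w}=\overline{w+Nw_0}-\overline{Nw_0}$ with both terms in $\sigma$ for $N\gg 0$), so $m_\sigma\in\bM$ and $c\in\PLZ(\bSigma)$. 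Hence $\overline{\OO(\bSigma)}^\star=\PLZ(\bSigma)$, and applying $\star$ once more gives $\overline{\OO(\bSigma)}=\PLZ(\bSigma)^\star$. Finally $\Laa(\bSigma)=\overline{\OO(\bSigma)}\cap\LL_\Q=\PLZ(\bSigma)^\star\cap\LL_\Q=\plZ(\bSigma)^\star$, the last step again because $D$ is dual to $\iota$ and $\plZ(\bSigma)=D(\PLZ(\bSigma))$.

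The step I expect to require the most care is the matching of the two a priori different descriptions of each dual object. In (1) this is the use of the inequality $\eta_c\ge\eta_c|_\sigma$: without it one cannot see that the linear extension $m_\sigma(c)$ of $\eta_c$ off $\sigma$ stays $\le c_b$ at the ghost vectors $\overline{b}$, which is precisely what makes $\bigcap_\sigma\tC_{\bSigma,\sigma}^\vee$ equal $\CPL_+(\bSigma)$. In (2) it is the passage from ``$m_\sigma$ integer-valued on $\bN\cap\sigma$'' to ``$m_\sigma\in\bM$'': the available relations $\langle c,\Psi^\bSigma(v)\rangle\in\Z$ only constrain $m_\sigma$ on $\bN\cap\sigma$, and one genuinely needs $\sigma$ to be full-dimensional (not merely that its rays span $\bN_\R$) to conclude that these lattice points generate $\overline{\bN}$.
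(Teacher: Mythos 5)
Your proof is correct and follows essentially the same route as the paper: both reduce to a maximal-cone-by-maximal-cone computation identifying $\tC_{\bSigma,\sigma}$ (resp.\ the lattice generated by the $(\Psi^\bSigma(v),v)$) as dual to the simplicial pieces cutting out $\CPL_+(\bSigma)$ (resp.\ $\PLZ(\bSigma)$), and then push the result through $D$ to obtain the statements for $\cpl(\bSigma)$ and $\plZ(\bSigma)$. The differences are only organizational: you dualize $\hOEf(\bSigma)$ and $\overline{\OO(\bSigma)}$ and invoke biduality, and you spell out the intersection identity $\CPL_+(\bSigma)=\bigcap_\sigma\tC_{\bSigma,\sigma}^\vee$ and the full-dimensionality argument showing $m_\sigma\in\bM$, details the paper leaves implicit.
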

\begin{proof}First we prove the statement on the dual cones.
Observe that the cone $\CPL_+(\bSigma)$ can be written as the intersection
of the following simplicial cones $K_\sigma$ for all
maximal cones $\sigma \in \Sigma(n)$:
\[
K_\sigma=
\left \{c\in \big(\R^S\big)^\star\colon
\begin{matrix}
\text{$c_b \ge 0$ $(\forall\, b\in R(\bSigma) \cap \sigma)$, and
the linear function} \\
\text{$\varphi\colon \bN_\R \to \R$
defined by $\varphi(\overline{b}) = c_b$, $b\in R(\bSigma) \cap \sigma$} \\
\text{satisfies $\varphi(\overline{b}) \le c_b$ for all $b\in S$.}
\end{matrix}
\right \}.
\]
Recall that $\hOEf(\frX_\bSigma)$ is the sum of the
cones $\tC_{\bSigma,\sigma}$ defined in~\eqref{eq:tC_bSigma_sigma}.
Therefore,
in order to prove $\CPL_+(\bSigma)^\vee = \hOEf(\frX_\bSigma)$,
it suffices to show that $K_\sigma^\vee = \tC_{\bSigma,\sigma}$.
For $b\in S$, we write $\overline{b}
= \sum\limits_{b' \in R(\bSigma) \cap \sigma} A_{bb'} \overline{b'}$
with $A_{bb'} \in \Q$.
Then the cone $K_\sigma$ is defined by the linear inequalities:
\[
c_b
\ge
\begin{cases}
0, & \text{if $b \in R(\bSigma) \cap \sigma$}, \\
\displaystyle \sum_{b' \in R(\bSigma) \cap \sigma} A_{bb'} c_{b'}, &
\text{if $b\notin R(\bSigma)\cap \sigma$}.
\end{cases}
\]
On the other hand, for $\lambda \in \R^S$ and $c\in \big(\R^S\big)^\star$, we have
\begin{gather}\label{eq:c_lambda}
c\cdot \lambda = \sum_{b\in R(\bSigma)\cap \sigma} c_b
\bigg(\lambda_b + \sum_{b' \notin R(\bSigma) \cap \sigma}
A_{b' b} \lambda_{b'} \bigg)
+ \sum_{b'\notin R(\bSigma) \cap \sigma}
\bigg(c_{b'} -
\sum_{b \in R(\bSigma) \cap \sigma}
A_{b'b} c_{b}\bigg)\lambda_{b'}.\!\!
\end{gather}
Hence the dual cone $K_\sigma^\vee$ is defined by the inequalities
$\lambda_b \ge 0$ for $b\notin R(\bSigma) \cap \sigma$ and
$\lambda_b +\sum\limits_{b' \notin R(\bSigma) \cap \sigma}
A_{b'b} \lambda_{b'}\ge 0$ for $b \in R(\bSigma) \cap \sigma$.
The latter inequality is equivalent to $\beta(\lambda) \in \sigma$, and
thus $K_\sigma^\vee = \tC_{\bSigma,\sigma}$.
Hence $\CPL_+(\bSigma)^\vee = \hOEf(\frX_\bSigma)$.
The equality $\cpl(\bSigma)^\vee = \hNE(\frX_\bSigma)$ follows from this
and $D(\CPL_+(\bSigma)) = \cpl(\bSigma)$, $\hNE(\frX_\bSigma)
= \hOEf(\frX_\bSigma) \cap \LL_\R$.

Next we study the dual lattices of $\PLZ(\bSigma)$, $\plZ(\bSigma)$.
For $c\in \big(\R^S\big)^\star$ and a maximal cone $\sigma \in \Sigma(n)$,
let $m_{\sigma}(c) \in \bM_\R$ denote the unique element
satisfying $m_\sigma(c)\cdot b = c_b$ for all $b\in R(\bSigma) \cap \sigma$.
The lattice $\PLZ(\bSigma)$ is the intersection
of the following lattices $L_\sigma$ for all $\sigma \in \Sigma(n)$:
\begin{gather*}
L_\sigma = \big\{ c \in \big(\R^S\big)^\star\colon m_{\sigma}(c) \in \bM, \,
\text{$c_b \in \Z$ for all $b\notin R(\bSigma) \cap \sigma$}\big\}.
\end{gather*}
Therefore $\PLZ(\bSigma)^\star = \sum\limits_{\sigma \in \Sigma(n)} L_\sigma^\star$.
For $\lambda \in \R^S$ and $c\in \big(\R^S\big)^\star$, equation~\eqref{eq:c_lambda} can be rewritten as:
\[
c\cdot \lambda = m_{\sigma}(c) \cdot \beta(\lambda) +
\sum_{b'\notin R(\bSigma) \cap \sigma}
(c_{b'} - m_\sigma(c)\cdot b')\lambda_{b'}.
\]
Therefore we have
\[
L_\sigma^\star = \big\{ \lambda \in \R^S\colon \beta(\lambda) \in \overline{\bN}
= \bN/\bN_{\rm tor},
\text{$\lambda_b \in \Z$ for all $b\notin R(\bSigma) \cap \sigma$}\big\}.
\]
On the other hand,
$\overline{\OO(\bSigma)}=\sum\limits_{\sigma \in \Sigma(n)} L_\sigma^\star$
follows easily from the definition.
Thus $\PLZ(\bSigma)^\star = \overline{\OO(\bSigma)}$.
The equality $\plZ(\bSigma)^\star = \Laa(\bSigma)$ follows from this,
the extended refined fan sequence \eqref{eq:ext_refined_fanseq}
and $\plZ(\bSigma) = \PLZ(\bSigma)/\bM$.
\end{proof}

\begin{Proposition}
\label{prop:chart_LG}
The local chart $\cY_\bSigma \to \cM_\bSigma$ has the following
presentation:
\begin{gather*}
\cY_\bSigma \cong \big[ \Spec (\C[\OO(\bSigma)_+])
/ \Pic^\st(\frX_\bSigma)\big],\\
\cM_\bSigma \cong \big[ \Spec (\C[\Laa(\bSigma)_+])
/\Pic^\st(\frX_\bSigma)\big],
\end{gather*}
where $\Pic^\st(\frX_\bSigma) = \Pic(\frX_\bSigma)/\Pic(X_\Sigma)$
acts on $\C[\OO(\bSigma)_+]$ and $\C[\Laa(\bSigma)_+]$
via~\eqref{eq:Picst_action}.
\end{Proposition}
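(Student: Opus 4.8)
The plan is to read off both presentations from the defining formulas in Section~\ref{subsec:localchart_LG}, using Lemma~\ref{lem:dual_cone_lattice} to identify the cones and lattices involved, and then to match the finite groups together with their actions.

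For $\cM_\bSigma$ the argument is short. By Lemma~\ref{lem:dual_cone_lattice} we have $\cpl(\bSigma)^\vee = \hNE(\frX_\bSigma)$ and $\plZ(\bSigma)^\star = \Laa(\bSigma)$, so $\cpl(\bSigma)^\vee\cap\plZ(\bSigma)^\star = \hNE(\frX_\bSigma)\cap\Laa(\bSigma) = \Laa(\bSigma)_+$ by~\eqref{eq:O+Laa+}. As recalled just before Lemma~\ref{lem:dual_cone_lattice}, $\cG'_\bSigma = \LL^\star/\plZ(\bSigma)$ is canonically isomorphic to $\Pic^\st(\frX_\bSigma)$, and by its very construction the age pairing $\Pic^\st(\frX_\bSigma)\times(\Laa(\bSigma)/\LL)\to\Q/\Z$ is induced by the natural $\Q$-valued pairing between $\LL^\star$ and $\LL_\Q$; hence the action of $\cG'_\bSigma$ on $\C[\Laa(\bSigma)_+]$ via $\LL^\star\times\plZ(\bSigma)^\star\to\Q/\Z$ coincides, under this isomorphism, with the action~\eqref{eq:Picst_action}. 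This gives the claimed presentation of $\cM_\bSigma$.

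For $\cY_\bSigma$ the one new feature is the torsion of $\bN$. By Lemma~\ref{lem:dual_cone_lattice}, $\CPL_+(\bSigma)^\vee = \hOEf(\frX_\bSigma)$ and $\PLZ(\bSigma)^\star = \overline{\OO(\bSigma)} := \OO(\bSigma)/\bN_{\rm tor}$; since the first projection $\OO(\bSigma)_+\to\overline{\OO(\bSigma)}$ has image $\hOEf(\frX_\bSigma)\cap\overline{\OO(\bSigma)}$ and kernel $\bN_{\rm tor}$, this identifies $\CPL_+(\bSigma)^\vee\cap\PLZ(\bSigma)^\star$ with the image $\overline{\OO(\bSigma)_+}$ of $\OO(\bSigma)_+$ in $\overline{\OO(\bSigma)}$, so $\cY_\bSigma = [\Spec\C[\overline{\OO(\bSigma)_+}]/\cG_\bSigma]$. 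On the other hand, since $\overline{\OO(\bSigma)}$ is free the torsion subgroup $\bN_{\rm tor}$ of $\OO(\bSigma)$ admits a complement, and because the positivity condition in~\eqref{eq:O+Laa+} involves only the torsion-free first projection, any such complement yields a monoid isomorphism $\OO(\bSigma)_+\cong\overline{\OO(\bSigma)_+}\times\bN_{\rm tor}$; hence $\C[\OO(\bSigma)_+]\cong\C[\overline{\OO(\bSigma)_+}]\otimes_\C\C[\bN_{\rm tor}]$, so that $\Spec\C[\OO(\bSigma)_+]$ is a disjoint union of copies of $\Spec\C[\overline{\OO(\bSigma)_+}]$ indexed by $\bN_{\rm tor}\sphat$. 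I would then check, using the short exact sequence~\eqref{eq:G_Gprime} $0\to\cG_\bSigma\to\Pic^\st(\frX_\bSigma)\to\bN_{\rm tor}\sphat\to 0$, that $\Pic^\st(\frX_\bSigma)$ permutes these components transitively, that the stabilizer of the distinguished component is $\cG_\bSigma$, and that $\cG_\bSigma$ acts there by the $\cG_\bSigma$-action defining $\cY_\bSigma$; equivalently, that $\Spec\C[\OO(\bSigma)_+]$ is $\Pic^\st(\frX_\bSigma)$-equivariantly the balanced product $\Pic^\st(\frX_\bSigma)\times_{\cG_\bSigma}\Spec\C[\overline{\OO(\bSigma)_+}]$. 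The standard equivalence of quotient stacks (for a finite group with a subgroup acting on a scheme) $[(\Pic^\st(\frX_\bSigma)\times_{\cG_\bSigma}\Spec\C[\overline{\OO(\bSigma)_+}])/\Pic^\st(\frX_\bSigma)]\cong[\Spec\C[\overline{\OO(\bSigma)_+}]/\cG_\bSigma]$ then yields the assertion, i.e.\ $[\Spec\C[\OO(\bSigma)_+]/\Pic^\st(\frX_\bSigma)]\cong\cY_\bSigma$.

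The torsion bookkeeping in this last step is where I expect the real work to lie. Concretely, one must chase the age pairing of~\eqref{eq:Picst_action} through the isomorphism $\OO(\bSigma)/\Z^S\cong\Laa(\bSigma)/\LL$ of~\eqref{eq:dual_of_Picst} and through the dual of the extended divisor sequence~\eqref{eq:ext_divseq}, and verify: (i) for $\xi\in\cG_\bSigma\subset\Pic^\st(\frX_\bSigma)$, the action of $\xi$ on $x^{(\lambda,v)}$ reduces modulo $\Z$ to the natural pairing of a lift of $\xi$ in $(\Z^S)^\star$ with the first projection of $(\lambda,v)$ in $\overline{\OO(\bSigma)}$, so that it is independent of the chosen complement, trivial on the $\C[\bN_{\rm tor}]$-factor, and equal to the $\cG_\bSigma$-action defining $\cY_\bSigma$; and (ii) the induced character $\Pic^\st(\frX_\bSigma)\to\bN_{\rm tor}\sphat$ controlling the action on $\Spec\C[\bN_{\rm tor}]$ is exactly the restriction-to-the-generic-stabilizer map appearing in~\eqref{eq:G_Gprime}, so that the component permutation is as asserted. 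When $\bN$ is torsion-free this step is vacuous and the two presentations already coincide after Lemma~\ref{lem:dual_cone_lattice}; everything else is routine manipulation of cones, lattices and monoid algebras.
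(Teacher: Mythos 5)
Your proposal is correct and follows essentially the same route as the paper: both use Lemma~\ref{lem:dual_cone_lattice} (together with footnote identifications $\OO(\bSigma)_+=\OO(\bSigma)\cap\CPL_+(\bSigma)^\vee$, $\Laa(\bSigma)_+=\Laa(\bSigma)\cap\cpl(\bSigma)^\vee$) to identify the chart monoids, note that $\Spec\C[\CPL_+(\bSigma)^\vee\cap\PLZ(\bSigma)^\star]$ sits inside $\Spec\C[\OO(\bSigma)_+]$ as a connected component, match the $\cG'_\bSigma\cong\Pic^\st(\frX_\bSigma)$ action on the base via the pairing $\LL^\star\times\LL_\Q\to\Q$, and use the sequence~\eqref{eq:G_Gprime} to see that $\Pic^\st(\frX_\bSigma)$ permutes the $|\bN_{\rm tor}|$ components transitively with stabilizer $\cG_\bSigma$, concluding by the standard quotient-stack equivalence. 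Your explicit splitting $\OO(\bSigma)_+\cong\overline{\OO(\bSigma)_+}\times\bN_{\rm tor}$ and the balanced-product formulation merely spell out steps the paper states more briefly.
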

\begin{proof}
By the previous Lemma~\ref{lem:dual_cone_lattice}, we have
$\OO(\bSigma)_+/\bN_{\rm tor} = \CPL_+(\bSigma)^\vee
\cap \PLZ(\bSigma)^\star$ and
$\Laa(\bSigma)_+ = \cpl(\bSigma)^\vee \cap \plZ(\bSigma)^\star$.
Therefore we have the natural maps
\begin{gather*}
\Spec\C\big[\CPL_+(\bSigma)^\vee \cap \PLZ(\bSigma)^\star\big]
 \hookrightarrow \Spec \C[\OO(\bSigma)_+], \\
\Spec \C\big[\cpl(\bSigma)^\vee \cap \plZ(\bSigma)^\star\big]
 \cong \Spec \C[\Laa(\bSigma)_+].
\end{gather*}
The first map is the inclusion of a connected component.
We can see that the $\cG_\bSigma' =
\LL^\star/\plZ(\bSigma) \cong \Pic^\st(\frX_\bSigma)$
action on $\C[\cpl(\bSigma)^\vee \cap \plZ(\bSigma)^\star]$
and the $\Pic^\st(\frX_\bSigma)$ action on
$\C[\Laa(\bSigma)_+]$ are the same (both are induced by
the pairing $\LL^\star \times \LL_\Q \to \Q$).
Also the $\Pic^\st(\frX_\bSigma)$ action on $\Spec \C[\OO(\bSigma)_+]$
induces a permutation of its connected components, and
the subgroup preserving the component
$\Spec \C[\CPL_+(\bSigma)^\vee \cap \PLZ(\bSigma)^\star]$
is identified with $\cG_\bSigma$ (via the sequence \eqref{eq:G_Gprime}).
The proposition follows.
\end{proof}

It follows from the above proposition that the local chart
$\cY_\bSigma \to \cM_\bSigma$ is a quotient (by $\Pic^\st(\frX)$) of
the LG model considered in \cite[Section~4]{CCIT:MS}.
Indeed, the decompositions \eqref{eq:cone_monoid_decomp}
induce the isomorphisms
\begin{gather}
\Spec \C[\OO(\bSigma)_+] \cong \Spec \C\big[\OO^\bSigma_+\big]
\times \C^{G(\bSigma)},\nonumber \\
\Spec \C[\Laa(\bSigma)_+] \cong \Spec \C\big[\Laa^\bSigma_+\big]
\times \C^{G(\bSigma)}\label{eq:LG_local_chart}
\end{gather}
and the structure map $\cY_\bSigma \to \cM_\bSigma$ is induced by
the natural map $\Spec \C\big[\OO^\bSigma_+\big]
\to \Spec \C\big[\Laa^\bSigma_+\big]$.
In~\cite{CCIT:MS}, the LG model was first introduced on
$\Spec \C\big[\OO^\bSigma_+\big] \to \Spec \C\big[\Laa^\bSigma_+\big]$
and then deformed over the parameter space $\C^{G(\bSigma)}$.
The total deformation family there is identified with the uniformizing
chart of
the LG model $(\pr\colon \cY_\bSigma \to \cM_\bSigma,F)$
in the present paper.
See also the expression~\eqref{eq:LG_pot_co-ordinates} in explicit co-ordinates and
Remark~\ref{rem:notation_difference} below.

\subsection{Co-ordinate system on the local chart}
\label{subsec:coord_localchart_LG}
Using the presentation in Proposition \ref{prop:chart_LG},
we introduce a convenient co-ordinate system on the local chart
$\cY_\bSigma \to \cM_\bSigma$.
For $(\lambda,v) \in \OO(\bSigma)$,
we write $u^{(\lambda,v)}$
for the corresponding element in $\C[\OO(\bSigma)]$.
We set
\[
u_b := u^{(e_b,b)}\in \C[\OO(\bSigma)_+], \qquad
q^\lambda := u^{(\lambda,0)}\in \C[\Laa(\bSigma)]
\]
for $b\in S$ and $\lambda \in \Laa(\bSigma)$.
When $\lambda$ lies in $\Laa^\bSigma\subset \Laa(\bSigma)$,
we also write $\ttq^\lambda$
for $q^\lambda$.
We choose a splitting $\varsigma \colon \bN \to \OO^\bSigma$
of the refined fan sequence~\eqref{eq:refined_fanseq}
of the form $\varsigma(v) = (\ovvarsigma(v),v)$,
where $\ovvarsigma \colon \bN \to \Q^{R(\bSigma)}$ defines
a splitting of the fan sequence \eqref{eq:fanseq} over $\Q$.
For $v\in \bN$ and $b\in S$, we define
\begin{gather*}
x^v := u^{\varsigma(v)} \in \C[\OO(\bSigma)], \\
t_b := q^{\delta^\bSigma_b} =
q^{e_b- \Psi^\bSigma(b)} =
u^{(e_b-\Psi^\bSigma(b),0)}
\in \C[\Laa(\bSigma)_+].
\end{gather*}
Note that $x^v$ does not necessarily belong to $\C[\OO(\bSigma)_+]$
and that $t_b = 1$ for $b\in R(\bSigma)$.
Then we have for $b\in S$,
\[
u_b = t_b \ttq^{\lambda(b)} x^b,
\]
where $\lambda(b) := \Psi^\bSigma(b) - \ovvarsigma(b)
\in \Laa^\bSigma$.
Note that $\ttq^{\lambda(b)}x^b = u^{(\Psi^\bSigma(b),b)}
\in \C\big[\OO^\bSigma_+\big]$.
We can regard $q=(\ttq,t)$ as co-ordinates on the base $\cM_\bSigma$
and $x$ as co-ordinates on fibres of $\cY_\bSigma \to \cM_\bSigma$.
The LG potential
\begin{equation}\label{eq:LG_pot_co-ordinates}
F = \sum_{b\in S} u_b = \sum_{b\in R(\bSigma)} \ttq^{\lambda(b)} x^b
+ \sum_{b\in G(\bSigma)} t_b \ttq^{\lambda(b)} x^b
\end{equation}
can then be viewed as a family of Laurent polynomials in $x$ with
the fixed set $S$ of exponents.
Furthermore, we use the following co-ordinate expressions when necessary.
\begin{itemize}\itemsep=0pt
\item Choosing an isomorphism $\bN \cong \Z^n \times \bN_{\rm tor}$,
we write $x^b = x_1^{b_1} \cdots x_n^{b_n}x^\zeta$
when $b \in \bN$ corresponds to $(b_1,\dots,b_n,\zeta)
\in \Z^n \times \bN_{\rm tor}$; $x_1,\dots,x_n$ can be viewed
as co-ordinates along fibres of $\cY \to \cM$ and
$x^\zeta$ are roots of unity labelling connected components
of the fibre.
\item Choosing a $\Z$-basis $\lambda_1,\dots,\lambda_r$ of
$\Laa^\bSigma$, we write $\ttq_i = \ttq^{\lambda_i}
\in \C\big[\Laa^\bSigma\big]$;
then $\ttq_1,\dots,\ttq_r$ and $t_b$, $b\in G(\bSigma)$
together
form a co-ordinate system $q=(\ttq,t)$ on the base.
\end{itemize}

\begin{Remark}\label{rem:notation_difference}
We compare the notation of \cite{CCIT:MS, Iritani:shift_mirror} with the
present one. In these papers, the LG potential was given in the form
\[
F(x;y) = \sum_{b\in S} y_b w_b
= \sum_{b\in S} y_b Q^{\lambda(b)} x^b,
\]
where $\{y_b\}_{b\in S}$
are deformation parameters.
In the present paper, we set\footnote{In \cite{CCIT:MS, Iritani:shift_mirror}, we also wrote
$\{y_1,\dots,y_m\}$ for $\{y_b\}_{b\in R(\bSigma)}$
with $m=|R(\bSigma)|$.}
$y_b=1$ for all $b\in R(\bSigma)$.
The variables $Q$ and the other variables $y_b$, $b\in G(\bSigma)$
correspond to our $\ttq_1,\dots,\ttq_r$ and $t_b$ with
$b\in G(\bSigma)$, and $w_b$ corresponds to our
$u^{(\Psi^\bSigma(b),b)} = \ttq^{\lambda(b)} x^b$.
(Note that $w_b$ \emph{does not} correspond to $u_b$ in the present paper.)
\end{Remark}

\begin{Remark}[{\cite[Section~4.1]{CCIT:MS}}]
\label{rem:flatness_pr}
By Lemma \ref{lem:OO_Laa_+}, we have
\[
\C[\OO(\bSigma)_+]
= \bigoplus_{v\in \bN\cap\Pi} \C[\Laa(\bSigma)_+]
u^{(\Psi^\bSigma(\overline{v}),v)}.
\]
In particular the family $\pr \colon \cY \to \cM$ is flat.
The fact that $\C[\OO(\bSigma)_+]$ is a free $\C[\Laa(\bSigma)_+]$-module
played an important role
in establishing a mirror isomorphism in \cite{CCIT:MS}.
\end{Remark}

\subsection{Examples}
We give examples of partially compactified LG models
for surface singularities.

\subsubsection[$A_1$-singularity resolution]{$\boldsymbol{A_1}$-singularity resolution}
\label{subsubsec:A1}
We take $\bN = \Z^2$ and $S = \{(-1,1), (1,1) , (0,1)\}$.
There are two stacky fans $\bSigma_1$, $\bSigma_2$ adapted to~$S$ as shown
in Fig.~\ref{fig:A1}.
The stacky fan $\bSigma_1$ consists of one maximal cone
and gives rise to the toric stack $\frX_1 = \big[\C^2/\mu_2\big]$
(the $A_1$ singularity)
and the fan $\bSigma_2$ consists of two maximal cones
and gives rise to the crepant resolution $\frX_2 = \cO_{\PP^1}(-2)$
of~$\C^2/\mu_2$.
\begin{figure}[htbp]\centering
\includegraphics{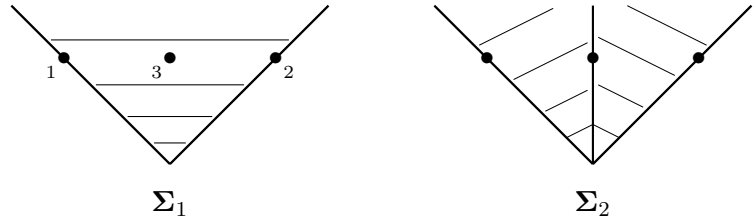}
\caption{The surface $A_1$-singularity (left) and its crepant resolution (right).}\label{fig:A1}
\end{figure}

The extended fan sequence is
\begin{equation}\label{eq:ext_fan_seq_A1}
\begin{CD}
0 @>>> \LL = \Z
@>{\left(\begin{smallmatrix} -1 \\ -1 \\ 2 \end{smallmatrix}\right)}>> \Z^3
@>{\left(\begin{smallmatrix} -1 & 1 & 0 \\
1 & 1 & 1
\end{smallmatrix}\right)}>> \bN = \Z^2 @>>>0
\end{CD}
\end{equation}
and the extended divisor sequence is its dual
\[
\begin{CD}
0 @>>> \bM @>>> \big(\Z^3\big)^\star
@>{D=(-1,-1,2)}>> \LL^\star = \Z
@>>> 0.
\end{CD}
\]
The fan $\tXi$ on the vector space $\big(\Z^3\big)^\star \otimes \R$
consists of the two
maximal cones:
\begin{gather*}
\CPL_+(\bSigma_1) = \big\{(c_1,c_2,c_3)\in (\R_{\ge 0})^3 \colon 2 c_3 \ge c_1 + c_2 \big\}, \\
\CPL_+(\bSigma_2) = \big\{(c_1,c_2,c_3) \in (\R_{\ge 0})^3 \colon 2 c_3 \le c_1 + c_2\big\}
\end{gather*}
and the integral structure $\tdaleth$ is given by
\[
\tdaleth = \big\{ (c_1,c_2,c_3) \in (\Z_{\ge 0})^3\colon
\min\big(c_3, \textstyle\frac{c_1+c_2}{2}\big)\in \Z \big\}.
\]
The map $D\colon \big(\Z^3\big)^\star \to \LL^\star$ induces the
secondary fan $\Xi$ on $\LL^\star_\R\cong \R$.
The fan $\Xi$ consists of the two maximal cones
$\cpl(\bSigma_1) = \R_{\ge 0}$, $\cpl(\bSigma_2) = \R_{\le 0}$
and the integral structure is given by
$\daleth = 2 \Z_{\ge 0} \cup \Z_{\le 0}$.
See Fig.~\ref{fig:A1_fan_LG}.
The extended refined fan sequence (see~\eqref{eq:ext_refined_fanseq})
associated to $\bSigma_1$ is given by
\[
\begin{CD}
0 @>>> \Laa(\bSigma_1) = \frac{1}{2} \Z @>{\left(
\begin{smallmatrix} -1 \\ -1 \\ 2 \end{smallmatrix}\right)}>>
\OO(\bSigma_1) = \Z^3 + \Z
\begin{pmatrix} \frac{1}{2} \\ \overset{}{\frac{1}{2}}
\\ \overset{}{0} \end{pmatrix}
@>{\left(\begin{smallmatrix} -1 & 1 & 0 \\
1 & 1 & 1 \end{smallmatrix}\right)}>> \bN = \Z^2@>>> 0,
\end{CD}
\]
which is a refinement of~\eqref{eq:ext_fan_seq_A1}.
The monoids\footnote{Note that $\OO(\bSigma)_+ = \OO(\bSigma) \cap
\CPL_+(\bSigma)^\vee$ and $\Laa(\bSigma)_+ = \Laa(\bSigma)
\cap \cpl(\bSigma)^\vee$. \label{foot:monoid_cone}}
corresponding to the (open) Mori cone
are given by $\Laa(\bSigma_1)_+ = \frac{1}{2}\Z_{\ge 0}$
and $\OO(\bSigma_1)_+ = \Z_{\ge 0} \big\langle e_1, e_2, \frac{1}{2}(e_1+e_2),
\delta_3 \big\rangle$
with $\delta_3 = \big(-\frac{1}{2},-\frac{1}{2},1\big)$.
By Proposition~\ref{prop:chart_LG},
the chart $\cY_{\bSigma_1} \to \cM_{\bSigma_1}$
is given by
\[
\xymatrix{
\cY_{\bSigma_1} = \big[\big\{(u_1,u_2,v,t)\colon
u_1u_2 = v^2\big\}\big/\mu_2
\big] \ar[d]^{t} \ar[rrr]^>(0.8){F = u_1 + u_2 + t v \quad} & & & \C, \\
\cM_{\bSigma_1} = [\C /\mu_2 ] & & & }
\]
where $u_1$, $u_2$, $v$, $t$ correspond to $e_1,e_2,\frac{1}{2}(e_1+e_2),\delta_3
\in \OO_+(\bSigma_1)$
and $\Pic^{\st}(\frX_{\bSigma_1}) =
\OO(\bSigma_1)/\Z^3 \cong \mu_2$ acts on these variables by
$(u_1,u_2,v,t) \mapsto (u_1,u_2,-v,-t)$.
On the other hand, the extended refined fan sequence for $\bSigma_2$
is the same as the extended fan sequence~\eqref{eq:ext_fan_seq_A1}
and one has $\Laa(\bSigma_2)_+ = \Z_{\le 0}$ and
$\OO(\bSigma_2)_+ = \Z_{\ge 0} \langle e_1,e_2,e_3, (1,1,-2)\rangle $.
Therefore, the chart $\cY_{\bSigma_2} \to \cM_{\bSigma_2}$ is given by
\[
\xymatrix{
\cY_{\bSigma_2} = \big\{ (u_1,u_2, u_3, q)\colon u_1u_2 = q u_3^2 \big\}
\ar[d]^{q} \ar[rrr]^>(0.8){F=u_1+u_2+u_3\quad} &&& \C, \\
\cM_{\bSigma_2} = \C
}
\]
where $u_1$, $u_2$, $u_3$, $q$ correspond respectively to $e_1$, $e_2$, $e_3$, $(1,1,-2)$.
The global LG model $(\cY \to \cM,F)$ is given by gluing
these charts by $u_3 = v t$, $q=t^{-2}$.
The base space is given by $\cM=\PP(1,2)$.

\begin{figure}\centering
\includegraphics{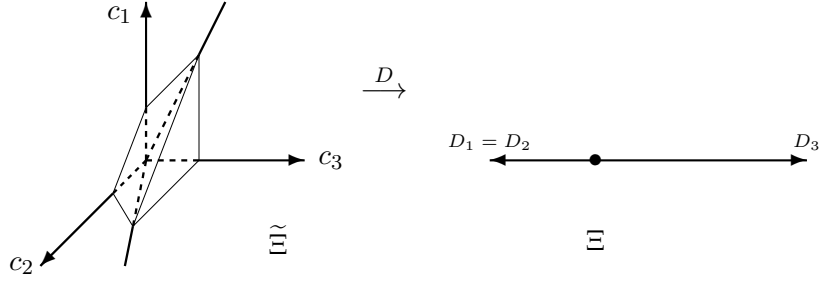}
\caption{The fans $\tXi$, $\Xi$ and the map $D$ between them.}\label{fig:A1_fan_LG}
\end{figure}

\subsubsection[Blowup of $\C^2$]{Blowup of $\boldsymbol{\C^2}$}\label{subsubsec:blowup}
We take $\bN = \Z^2$ and $S = \{(1,0),(0,1),(1,1)\}$.
The possible fan structures $\bSigma_1$, $\bSigma_2$
are shown in Fig.~\ref{fig:fan_blowup}.
The fan $\bSigma_1$ corresponds to $\C^2$ and
$\bSigma_2$ corresponds to the blowup $\Bl_0\big(\C^2\big)$ of $\C^2$
at the origin.
\begin{figure}[htbp]\centering
\includegraphics{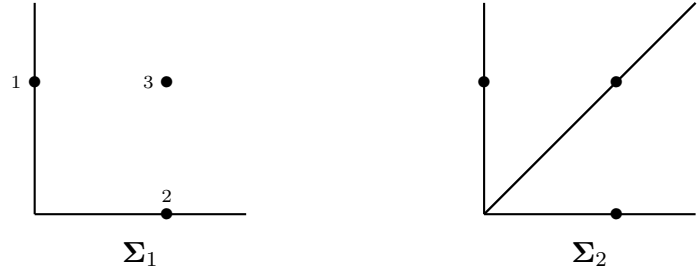}
\caption{$\C^2$ and its blowup at the origin.}\label{fig:fan_blowup}
\end{figure}

In this case, the LG model has no orbifold singularities.
The chart $\cY_{\bSigma_1} \to \cM_{\bSigma_1}$ is given by
\[
\xymatrix{
\cY_{\bSigma_1} = \{(u_1,u_2,u_3,t)\colon t u_1u_2 = u_3 \}
\ar[d]^{t} \ar[rrr]^>(0.8){F= u_1+ u_2+ u_3 \quad}
& & & \C. \\
\cM_{\bSigma_1} = \C & & &
}
\]
The chart $\cY_{\bSigma_2} \to \cM_{\bSigma_2}$ is given by
\[
\xymatrix{
\cY_{\bSigma_2} = \{(u_1,u_2,u_3,q) \colon u_1 u_2 = q u_3 \}
\ar[d]^{q} \ar[rrr]^>(0.8){F = u_1+u_2+u_3 \quad}
& & & \C. \\
\cM_{\bSigma_2} = \C & & &
}
\]
The two charts are glued by $t= q^{-1}$.

\subsubsection{Cyclic quotient singularity}\label{subsubsec:cyclic}
We take $\bN = \Z^2$ and $S = \{(0,1),(d,-1),(1,0)\}$ with $d\ge 3$. The case $d=1$ was considered in Section~\ref{subsubsec:blowup} (blowup of $\C^2$) and the case $d=2$ was considered in
Section~\ref{subsubsec:A1} ($A_1$-singularity).
As before, there are two fan structures
$\bSigma_1$, $\bSigma_2$ (see Fig.~\ref{fig:fan_cyclic_quot}).
The fan $\bSigma_1$ corresponds to $\big[\C^2/\mu_d\big]$
(of type $\frac{1}{d}(1,1)$) and $\bSigma_2$ corresponds to
its minimal resolution (the total space of~$\cO(-d)$ over~$\PP^1$).
\begin{figure}[htbp]\centering
\includegraphics{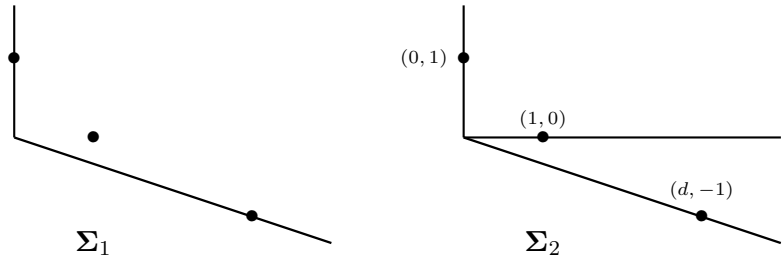}
\caption{Cyclic quotient singularity and its resolution.}\label{fig:fan_cyclic_quot}
\end{figure}

The chart $\cY_{\bSigma_1} \to \cM_{\bSigma_1}$ is given by
\[
\xymatrix
{
\cY_{\bSigma_1} = \big[
\big\{(u_1,u_2,v,t) \colon u_1u_2=v^d\big\} \big/\mu_d
\big] \ar[d]^{t} \ar[rrr]^>(0.8){F = u_1 + u_2 + t v \quad} & & & \C, \\
\cM_{\bSigma_1} = [\C/\mu_d] & & & }
\]
where $\Pic^\st(\frX_{\bSigma_1}) \cong
\mu_d$ acts by $(u_1,u_2,v,t) \mapsto (u_1,u_2,\zeta v, \zeta^{-1}t)$.
The chart $\cY_{\bSigma_2} \to \cM_{\bSigma_2}$ is given by
\[
\xymatrix
{
\cY_{\bSigma_2} =
\big\{(u_1,u_2,u_3,q) \colon u_1 u_2=u_3^d q \big\}
\ar[d]^{q} \ar[rrr]^>(0.8){F = u_1 + u_2 + u_3 \quad} & & & \C. \\
\cM_{\bSigma_2} = \C & & & }
\]
The two charts are glued by $u_3 = t v$ and $q =t^{-d}$.
After gluing, we get $\cM_{\bSigma_1} \cup \cM_{\bSigma_2}
= \PP(1,d)$. The pictures of the fans $\Xi$, $\tXi$ are similar to
Fig.~\ref{fig:A1_fan_LG}.

\section{Mirror symmetry}

In this section, we review mirror symmetry for smooth toric DM stacks
proved by Coates--Corti--Iritani--Tseng \cite{CCIT:mirrorthm, CCIT:MS}
and discuss its analytification.
We construct various versions (algebraic, completed, analytified)
of Brieskorn modules associated
with the Landau--Ginzburg model around the limit point $0_\bSigma$
and compare them with the quantum cohomology D-module
of the toric stack $\frX_\bSigma$.
We fix the data $(\bN,\Pi,S)$ from Section~\ref{subsec:data}.

\subsection{Brieskorn module}\label{subsec:Bri}
Let $(\pr\colon \cY \to \cM, F)$ be the partially compactified
LG model associated with the data $(\bN,\Pi,S)$
(see Definition~\ref{def:LG}).
We adapt the construction of the equivariant Brieskorn module
in \cite{CCIT:MS, Iritani:shift_mirror} to our context.

Note that the toric stacks $\cY$, $\cM$ have natural log structures defined by
their toric boundaries (see, e.g., \cite[Chapter~3]{Gross:tropical_book}).
With respect to these log structures,
the family $\cY \to \cM$ is log-smooth.
The sheaves of logarithmic one-forms and logarithmic vector fields on $\cY$
are globally free and given respectively by
\[
\Omega^1_{\cY} = \bigoplus_{b\in S} \cO_\cY
\frac{du_b}{u_b},
\qquad
\Theta_\cY = \bigoplus_{b\in S} \cO_\cY u_b \parfrac{}{u_b}.
\]
Let $x_1,\dots,x_n$ be the co-ordinates on fibres of $\cY \to \cM$
given by the choice of an isomorphism $\bN\cong \Z^n\times \bN_{\rm tor}$
(see Section~\ref{subsec:coord_localchart_LG}).
Then the sheaf of relative logarithmic $k$-forms\footnote
{The relative $k$-forms $\frac{dx_{i_1}}{x_{i_1}}\wedge \cdots \wedge
\frac{dx_{i_k}}{x_{i_k}}$ are independent of the choice of
a splitting $\varsigma$ in Section~\ref{subsec:coord_localchart_LG},
although the co-ordinates $x_i$ themselves depend on $\varsigma$.}
are
\[
\Omega^k_{\cY/\cM} = \bigoplus_{i_1<\cdots<i_k}
\cO_\cY \frac{dx_{i_1}}{x_{i_1}}\wedge \cdots \wedge
\frac{dx_{i_k}}{x_{i_k}}.
\]
The sheaf $\Theta_{\cY/\cM}$ of relative logarithmic vector fields
is generated by $x_1 \parfrac{}{x_1}, \dots, x_n\parfrac{}{x_n}$.
Let $\{\chi_1,\dots,\chi_n\}$ denote the basis of $\bM$ dual
to the chosen isomorphism $\overline{\bN} \cong \Z^n$; then
the relative vector field $x_i\parfrac{}{x_i}$ acts on functions
(on the chart $\cY_\bSigma$) as
\[
x_i \parfrac{}{x_i} \cdot u^{(\lambda,b)}
= (\chi_i\cdot b) u^{(\lambda,b)}
\qquad \text{for $(\lambda,b) \in \OO(\bSigma)_+$}.
\]
For $\xi \in \LL_\C^\star$, $\xi q \parfrac{}{q}$ denotes a vector field
on $\cM$ such that
\[
\xi q\parfrac{}{q} \cdot q^\lambda =
(\xi \cdot \lambda) q^\lambda
\qquad \text{for $\lambda \in \Laa(\bSigma)_+$}.
\]
We define a generator $\omega$ of $\Omega^n_{\cY/\cM}$ by
\[
\omega := \frac{1}{|\bN_{\rm tor}|} \frac{dx_1}{x_1}\wedge \cdots
\wedge \frac{dx_n}{x_n}.
\]
This is normalized so that
the integral over the maximal compact subgroup $\Hom\big(\bN,S^1\big)$
of $\Hom(\bN,\C^\times)$
equals~$(2\pi\iu)^n$.
Informally speaking, the non-equivariant Brieskorn module below
is a D-module on $\cM\times \C_z$ consisting of
certain cohomology classes of relative differential
forms $f \omega \in \pr_*\Omega^n_{\cY/\cM}$
such that oscillatory integrals
\[
[f \omega ] \longmapsto
\int_{\Gamma \subset \cY_q} e^{F/z} f(x,q,z) \omega
\]
are solutions to the D-module, where $\cY_q := \pr^{-1}(q)$.
In the equivariant case, the phase func\-tion~$F$ should be
replaced with $F - \sum\limits_{i=1}^n
\chi_i \log x_i$, see Remark~\ref{rem:twisted_de_Rham} below.

\begin{Definition}\label{def:Brieskorn}\quad
\begin{enumerate}\itemsep=0pt
\item[(1)]The \emph{equivariant Brieskorn module} $\Briequiv(F)$
is defined to be the $\cO_\cY[z]$-module $\Omega^n_{\cY/\cM}[z]
= \cO_\cY[z]\cdot \omega$ equipped
with the flat connection $
\nabla \colon \Briequiv(F) \to z^{-1}
\Briequiv(F) \otimes_{\cO_\cY} \Omega^1_\cY$
given by
\[
\nabla_V (f \omega) = \big(V(f) + z^{-1} V(F) f \big) \omega
\]
for $f\in \cO_\cY[z]$ and $V \in \Theta_\cY$.
We call $\nabla$ the \emph{Gauss--Manin connection}.
We let the equivariant parameter
$\chi_i \in \bM_\C \cong H^2_\T(\pt,\C)$
act on $\Briequiv(F)$ by $z \nabla_{x_i\parfrac{}{x_i}}$, i.e.,
\[
\chi_i \cdot f\omega := z \nabla_{x_i\parfrac{}{x_i}} (f\omega)
= \left( z x_i \parfrac{f}{x_i} +
\sum_{b\in S} (\chi_i \cdot b) u_b f \right) \omega.
\]
We shall often regard $\Briequiv(F)$ as a sheaf of modules over $\cM$ by
pushing it forward by $\pr\colon \cY\to \cM$; by abuse of notation we use
the same symbol to denote the pushed-forward sheaf.
The action of $\chi_i$ commutes with the $\cO_\cM[z]$-module structure
and thus $\Briequiv(F)$ has the structure of an $\cO_\cM\otimes R_\T[z]$-module
with $R_\T = H^*_\T(\pt,\C) \cong \Sym(\bM_\C)$.
The \emph{grading operator} $\Gr \in \End_\C(\Briequiv(F))$ is defined by
\begin{equation}\label{eq:grading_B}
\Gr (f\omega) = \left(
z\parfrac{f}{z} + \sum_{b\in S}u_b\parfrac{f}{u_b} \right) \omega
\end{equation}
for $f\in \cO_\cY[z]$.

\item[(2)] The (non-equivariant) \emph{Brieskorn module} $\Bri(F)$ is defined
to be the non-equivariant limit $\Briequiv(F)/\bM_\C \cdot \Briequiv(F)$
of the equivariant Brieskorn module.
This has the structure of an $\cO_\cM[z]$-module.
The flat connection and the grading operator
on $\Briequiv(F)$ descend to a flat connection
$\nabla \colon \Bri(F) \to z^{-1} \Bri(F) \otimes \Omega_\cM^1$,
also called the \emph{Gauss--Manin connection},
and an operator $\Gr \in \End_\C(\Bri(F))$.
\end{enumerate}
\end{Definition}

By definition, $\Briequiv(F)$ is isomorphic to the rank-one free module
$\cO_\cY[z]$
equipped with the flat connection $\nabla = d + d(F/z)\wedge$.
By pushing it forward by $\pr \colon \cY \to \cM$ and
forgetting the action of the fibre co-ordinates $x_1,\dots,x_n$,
we shall regard $\Briequiv(F)$ as an $\cO_\cM\otimes R_\T[z]$-module;
then $\Briequiv(F)$ is not of rank one as such.\footnote{This is not coherent as an $\cO_\cM\otimes R_\T[z]$-module in general;
we shall take its completion in the next section so that it has a finite
expected rank.}
We shall also regard $\Briequiv(F)$ as a flat connection (i.e., D-module)
over $\cM$.
First we regard it as a module over the ring of differential operators
\[
\scrD= \cO_\cM[z]\left\langle z u_b \parfrac{}{u_b}
\colon b\in S \right\rangle,
\]
where $z u_b \parfrac{}{u_b}$ act by
$z\nabla_{u_b\parfrac{}{u_b}}$.
Note that $\scrD$ contains $\cO_\cM \otimes R_\T[z]$ as its centre
via the map $\bM_\C \ni \chi_i \mapsto z x_i\parfrac{}{x_i}$.
By choosing a splitting $\big(\C^S\big)^\star \cong \bM_\C \oplus \LL_\C^\star$
of the extended divisor sequence~\eqref{eq:ext_divseq} tensored with $\C$, we can
lift a vector field $\xi q \parfrac{}{q}$ on $\cM$ given by $\xi \in \LL_\C^\star$
to a vector field $\hxi u\parfrac{}{u} = \sum\limits_{b\in S} \hxi_b
u_b\parfrac{}{u_b}$ on $\cY$, where
$\hxi = \big(\hxi_b\big)_{b\in S} \in \big(\C^S\big)^\star$ denotes the lift of
$\xi$ under the splitting.
By this splitting, we can regard $\Briequiv(F)$ as a module over
\[
\scrD \cong \cO_\cM \otimes R_\T[z]\left\langle z \xi q\parfrac{}{q}
\colon \xi \in \LL^\star_\C \right\rangle.
\]
A different choice of splittings shifts the action of
$z \xi q \parfrac{}{q} \in \Theta_\cM$ by an element of $\bM_\C$.
When the choice of a splitting is understood, we write
$z \nabla_{\xi q\parfrac{}{q}}$ for $z \nabla_{\hxi u\parfrac{}{u}}$.
The grading operator $\Gr$ satisfies
$[\Gr, z \nabla_{\xi q\parfrac{}{q}}] = z \nabla_{\xi q\parfrac{}{q}}$ and
\begin{align*}
\Gr( f(q,\chi,z) \Omega) & =
\left( \left(\cE + z\parfrac{}{z}\right) f(q,\chi,z)\right)
\Omega + f(q,\chi,z) \Gr(\Omega)
\end{align*}
for any $\Omega \in \Briequiv(F)$ and $f(q,z,\chi)
\in \cO_\cM\otimes R_\T[z]$, where $\cE$ is the Euler vector field
defined by
\begin{equation}
\label{eq:Euler_B}
\cE = \left(\sum_{b\in S} D_b\right) q\parfrac{}{q} +
\sum_{i=1}^n \chi_i \parfrac{}{\chi_i}.
\end{equation}
Recall here that $D_b\in \LL^\star$ is the image of
$e_i^\star \in \big(\Z^S\big)^\star$ under the map $D$ in \eqref{eq:ext_divseq}
and $\{\chi_1,\dots,\chi_n\}$
denotes a basis of $H^2_\T(\pt)$ so that $R_\T = \C[\chi_1,\dots,\chi_n]$.

\begin{Remark}The Brieskorn modules here were called \emph{Gauss--Manin systems}
in our previous papers \cite{CCIT:MS, Iritani:shift_mirror}.
We changed the name because the Gauss--Manin system usually
refers to the localization of the Brieskorn module by~$z$;
the Brieskorn modules were also called \emph{Brieskorn lattices} in \cite{Douai-Sabbah:I,Douai-Sabbah:II, Sabbah:tame, SaitoM:Brieskorn}.
\end{Remark}

\begin{Remark}\label{rem:Seidel}
The action of $x_1,\dots,x_n$ forgotten in the above process corresponds to
the Seidel representation (or shift operator) on quantum cohomology.
This defines the structure of a~\emph{difference module} with respect to
the equivariant parameters~$\chi_i$, i.e., the action of~$x_i$ shifts
$\chi_j$ as $\chi_j \mapsto \chi_j -\delta_{i,j} z$
(note that we have the commutation
relation $[\chi_i, x_j] = z \delta_{i,j} x_j$ as operators acting on
$\Briequiv(F)$).
See \cite{Iritani:shift_mirror}.
\end{Remark}

\begin{Remark}\label{rem:twisted_de_Rham}As done in \cite{CCIT:MS, Iritani:shift_mirror}, we can define
the equivariant and non-equivariant Brieskorn modules
in terms of the twisted (logarithmic) de Rham complex. We have
\begin{gather*}
\Briequiv(F) \cong \pr_*H^n(\Omega^\bullet_{\cY/\cM}[z][\chi_1,\dots,\chi_n],
z d + dF_\T \wedge), \\
\Bri(F) \cong \pr_*H^n(\Omega^\bullet_{\cY/\cM}[z], zd + dF\wedge)
\end{gather*}
with $F_\T= F -\sum\limits_{i=1}^n \chi_i \log x_i$, where
$H^n(-)$ means the cohomology sheaf of a complex of sheaves
(not the hypercohomology $R^n\pr_*$).
This definition involves the choice of co-ordinates
$x_1,\dots,x_n$, which corresponds to the choice of a splitting
as above.
To see that the first isomorphism holds, note that the $n$th cohomology is
the cokernel of $zd + dF_\T \wedge \colon \Omega^{n-1}_{\cY/\cM}[z][\chi]
\to \Omega^n_{\cY/\cM}[z][\chi]$ and the relations given
by $\Image(zd+dF_\T\wedge)$
define the action of $\chi_i$ on $\Omega^n_{\cY/\cM}[z]$.
The second isomorphism follows from the first.
\end{Remark}

\begin{Remark}
\label{rem:GMconn_coordinates}
Using the local co-ordinates $\ttq_1,\dots,\ttq_r$, $t_b$
with $b\in G(\bSigma)$
on $\cM$ from Section~\ref{subsec:coord_localchart_LG} and
the expression \eqref{eq:LG_pot_co-ordinates} for $F$,
the flat connection $\nabla$ of $\Bri(F)$ and $\Briequiv(F)$
can be written as
\begin{alignat*}{3}
&\nabla_{\ttq_i \parfrac{}{\ttq_i}} (f \omega)
=\left( \ttq_i\parfrac{f}{\ttq_i} +\frac{f}{z}
\sum_{b\in S} t_b \lambda(b)_i
\ttq^{\lambda(b)} x^b\right) \omega
\qquad && (1\le i\le r), & \\
& \nabla_{\parfrac{}{t_b}} (f \omega)
=\left( \parfrac{f}{t_b} + \frac{f}{z} \ttq^{\lambda(b)} x^b \right)
\omega \qquad && (b\in G(\bSigma)),
\end{alignat*}
where $f \in \cO_\cY[z]$ and $\lambda(b)_i$ denotes
the $i$th component of $\lambda(b) \in \Laa^\bSigma$
with respect to the chosen basis of $\Laa^\bSigma$.
Since $\ttq^{\lambda(b)} x^b = u^{(\Psi^\bSigma(b),b)}
\in \C[\OO(\bSigma)_+]$, it follows that $\nabla$
has no singularities along the divisor
$t_b = 0$ with $b\in G(\bSigma)$.
Therefore, a ``smaller'' log structure
(than the one given by toric boundaries) suffices to
describe the logarithmic singularities of $\nabla$.
Note that in the equivariant case, the choice of co-ordinates
$\ttq_i$, $t_b$, $x_i$ determines the splitting of the extended divisor
sequence.
\end{Remark}

\begin{Remark}\label{rem:conn_z}
In the non-equivariant Brieskorn module, the connection
$\nabla$ and the grading operator $\Gr$ together define
the connection $\nabla_{z\parfrac{}{z}} =
\Gr- \nabla_{\cE} - \dim \frX/2$ in the $z$-direction
as in the case of non-equivariant
quantum D-modules, where
$\cE = \big(\sum\limits_{b\in S} D_b\big) q\parfrac{}{q}$
denotes the non-equivariant Euler vector field.
Cf.~\eqref{eq:conn_z}.
\end{Remark}

\subsection{Completion and mirror isomorphism}
\label{subsec:completion}
We introduce a completion of the Brieskorn module
at the large radius limit point $0_\bSigma\in \cM_\bSigma$
of~$\frX_\bSigma$ (see Definition~\ref{def:large_radius_limit})
and recall a statement on mirror symmetry from~\cite{CCIT:MS}.

Let $\bSigma\in \Fan(S)$ be a stacky fan adapted to~$S$.
Let $\Briequiv(F)_\bSigma$, $\Bri(F)_\bSigma$ denote the
$\Pic^\st(\frX_\bSigma)$-equivariant
modules corresponding to $\Briequiv(F)$, $\Bri(F)$
on the affine chart $\cY_\bSigma \to \cM_\bSigma$
(see Proposition~\ref{prop:chart_LG}):
\begin{gather*}
\Briequiv(F)_\bSigma := \C[z][\OO(\bSigma)_+] \cdot \omega, \\
\Bri(F)_\bSigma := \Briequiv(F)_\bSigma\big/\bM_\C \cdot\Briequiv(F)_\bSigma.
\end{gather*}
Let $\frakm_\bSigma\subset \C[\Laa(\bSigma)_+]$
denote the maximal ideal corresponding to $0_\bSigma$,
i.e., the ideal generated by~$q^\lambda$ with $\lambda
\in \Laa(\bSigma)_+\setminus \{0\}$.
\begin{Definition}
The \emph{completed $($equivariant and non-equivariant$)$
Brieskorn modules at $0_\bSigma$} are defined to be
\begin{gather*}
\Briequiv(F)\sphat_\bSigma := \varprojlim_{k} \Briequiv(F)_\bSigma
/\frakm_\bSigma^k \Briequiv(F)_\bSigma, \\
\Bri(F)\sphat_\bSigma := \varprojlim_k
\Bri(F)_\bSigma /\frakm_\bSigma^k \Bri(F)_\bSigma.
\end{gather*}
The completed equivariant Brieskorn module $\Briequiv(F)\sphat$
is a free $R_\T[z][\![\Laa(\bSigma)_+]\!]$-module of rank
$\dim H^*_\CR(\frX)$ \cite[Theorem~4.26]{CCIT:MS}.
\end{Definition}

We write $\cM_\T = \cM \times \Spec R_\T=\cM\times \Lie \T$
and let
\[
\cM_{\T,\bSigma}\sphat := \Spf R_\T[\![\Laa(\bSigma)_+]\!]
\]
denote the formal neighbourhood of $0_\bSigma\times \Spec R_\T$
in $\cM_\T$.
We regard $\Briequiv(F)\sphat_\bSigma$
as a $\Pic^\st(\frX)$-equivariant module over $\cM_{\T,\bSigma}\sphat$.
We again choose a splitting $\varsigma \colon \bN \to \OO^\bSigma$ of
the refined fan sequence \eqref{eq:refined_fanseq}; via
the decomposition \eqref{eq:ext_decomp}, $\varsigma$ induces
a splitting of the extended refined fan sequence \eqref{eq:ext_refined_fanseq}
and that of the extended divisor sequence \eqref{eq:ext_divseq} over $\C$.
As explained in the previous section (Section~\ref{subsec:Bri}),
this splitting enables us to regard $\Briequiv(F)\sphat_\bSigma$
as a partial connection over $\cM_{\T,\bSigma}\sphat$.
(This partial connection was explicitly described in
Remark~\ref{rem:GMconn_coordinates} by choosing
co-ordinates $\ttq_i$, $t_b$, $x_i$ given by $\varsigma$.)

The splitting $\varsigma$ also defines a splitting
$H^2_{\T}(\frX_\bSigma) \cong H^2(\frX_\bSigma) \oplus H^2_\T(\pt)$
via the isomorphisms~\eqref{eq:second_coh}.
We choose an $R_\T$-basis $\{\phi_i\}_{i=0}^s$ of
$H_{\CR,\T}^*(\frX_\bSigma)$ so that the condition~\eqref{eq:basis_equiv_lift} is satisfied \emph{and} that
the splitting $H^2_\T(\frX_\bSigma) \cong \bigoplus\limits_{i=1}^r \C \phi_i
\oplus H^2_\T(\pt)$ given by this basis is the same as the splitting
induced by $\varsigma$.
This basis defines the equivariant K\"ahler moduli space
$\cM_{\rm A,\T}(\frX_\bSigma)$ and the equivariant
quantum D-module $\QDM_\T(\frX)$ \eqref{eq:QDM}, see
Sections~\ref{subsec:Kaehler_moduli}--\ref{subsec:QDM}.
Recall that $\QDM_\T(\frX)$
is an $H^2(\frX_\bSigma,\Z)/\big(\Laa^\bSigma\big)^\star$-equivariant module
over the formal neighbourhood of the origin in
$\cM_{\rm A,\T}(\frX_\bSigma)$; we denote this formal neighbourhood
by
\[
\cM_{\rm A,\T}(\frX_\bSigma)\sphat :=
\Spf R_\T\big[\!\big[\Laa^\bSigma_+\big]\!\big]
\big[\!\big[\tau^0,\tau^{r+1},\dots,\tau^s\big]\!\big].
\]
We have $H^2(\frX_\bSigma,\Z)/\big(\Laa^\bSigma\big)^\star \cong
\Pic^\st(\frX_\bSigma)$ because
$H^2(\frX_\bSigma,\Z) \cong \Pic(\frX_\bSigma)$
and
$\big(\Laa^\bSigma\big)^\star \cong \Pic(X_\Sigma)$
by \cite[Lemma~4.8]{CCIT:MS}.

\begin{Theorem}[{\cite[Theorems 4.28 and~6.11]{CCIT:MS}}]
\label{thm:mirror_isom}
Let $\frX_\bSigma$ be a smooth toric DM stack from
Defini\-tion~$\ref{def:stacky_fan_adapted_to_S}$.
There exists a $\Pic^\st(\frX_\bSigma)$-equivariant map
$($mirror map$)$
$\mir \colon \cM_{\T,\bSigma}\sphat \to
\cM_{\rm A,\T}(\frX_\bSigma)\sphat$
over $R_\T$ and a $\Pic^\st(\frX_\bSigma)$-equivariant
isomorphism $($mirror isomorphism$)$
\[
\Mir \colon \ \Briequiv(F)\sphat_\bSigma
\cong \mir^* \QDM_\T(\frX_\bSigma),
\]
such that
\begin{itemize}\itemsep=0pt
\item[{\rm (1)}] $\Mir$ identifies the Gauss--Manin connection on
$\Briequiv(F)\sphat_\bSigma$
with the quantum connection on $\mir^*\QDM_\T(\frX_\bSigma)$;
\item[{\rm (2)}] $\mir$
preserves the Euler vector fields $\mir_*(\cE) = \cE$
$($see \eqref{eq:Euler_A}, \eqref{eq:Euler_B}$)$
and $\Mir$ intertwines the grading operators
$\Gr \circ \Mir = \Mir \circ \Gr$
$($see \eqref{eq:grading_A}, \eqref{eq:grading_B}$)$;
\item[{\rm (3)}] $\Mir$ identifies the higher residue pairing
$($see {\rm \cite[Section~6]{CCIT:MS}} and Section~$\ref{subsubsec:higher_residue})$
with the pairing~$P$ \eqref{eq:pairing_A}
induced by the orbifold Poincar\'e pairing.
\end{itemize}
\end{Theorem}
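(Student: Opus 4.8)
The plan is to deduce Theorem~\ref{thm:mirror_isom} from the mirror theorem of \cite{CCIT:MS} by transporting that statement along the identification of local charts from Section~\ref{subsec:localchart_LG}. First I would recall that, by Proposition~\ref{prop:chart_LG} together with the isomorphisms \eqref{eq:LG_local_chart}, the affine chart $\pr\colon\cY_\bSigma \to \cM_\bSigma$ is the quotient by the finite group $\Pic^\st(\frX_\bSigma)$ of the family $\Spec\C[\OO^\bSigma_+] \to \Spec\C[\Laa^\bSigma_+]$ carrying the potential \eqref{eq:LG_pot_co-ordinates}, and that this family, together with the deformation over $\C^{G(\bSigma)}$, is exactly the uniformizing chart of the LG model studied in \cite[Section~4]{CCIT:MS}; the dictionary between the two notations is spelled out in Remark~\ref{rem:notation_difference}. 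Under this dictionary the completed equivariant Brieskorn module $\Briequiv(F)\sphat_\bSigma$ becomes the $\frakm_\bSigma$-adically completed Gauss--Manin system of \cite{CCIT:MS}, which by \cite[Theorem~4.26]{CCIT:MS} is free of rank $\dim H^*_\CR(\frX_\bSigma)$ over $R_\T[z][\![\Laa(\bSigma)_+]\!]$, and the formal base $\cM_{\T,\bSigma}\sphat$ becomes the formal completion at the origin of the parameter space of \cite[Section~4]{CCIT:MS}.

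Next I would invoke \cite[Theorem~4.28]{CCIT:MS} on the uniformizing chart: it produces a mirror map $\mir$ from $\cM_{\T,\bSigma}\sphat$ to $\cM_{\rm A,\T}(\frX_\bSigma)\sphat$ over $R_\T$ and an isomorphism $\Mir$ from $\Briequiv(F)\sphat_\bSigma$ onto $\mir^*\QDM_\T(\frX_\bSigma)$ intertwining the Gauss--Manin and quantum connections, which is part~(1). Part~(2) is then bookkeeping: the grading operators \eqref{eq:grading_B} and \eqref{eq:grading_A} are both built from the respective Euler vector fields together with $z\partial_z$ and a constant half-degree shift; the term $\sum_{b\in S}D_b$ in \eqref{eq:Euler_B} corresponds to $c_1(\frX_\bSigma)$ under $H^2(\frX_\bSigma,\Q)\cong(\LL^\bSigma_\Q)^\star$; and both Euler fields act on the equivariant parameters by $\sum_i\chi_i\partial_{\chi_i}$. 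The homogeneity of the $I$-function underlying $\mir$ then yields $\mir_*(\cE)=\cE$ and $\Gr\circ\Mir=\Mir\circ\Gr$. Part~(3) is \cite[Theorem~6.11]{CCIT:MS}, applied after checking — as arranged in Section~\ref{subsec:completion} — that the $R_\T$-basis $\{\phi_i\}_{i=0}^s$ and the splitting $\varsigma$ are chosen compatibly, so that the higher residue pairing and $P$ are compared in the same coordinates $(\ttq,t,x)$.

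The one genuinely new point is the $\Pic^\st(\frX_\bSigma)$-equivariance, since \cite{CCIT:MS} work on the uniformizing chart rather than on $\cM_\bSigma$. Here I would argue as follows. The action of $\Pic^\st(\frX_\bSigma)$ on $\C[\OO(\bSigma)_+]$ is the one of \eqref{eq:Picst_action}, induced by the age pairing; a monomial $u^{(\Psi^\bSigma(b),b)}$ with $b\in\Bx(\bSigma)$ is multiplied by $e^{2\pi\iu\,\age(\xi,(\Psi^\bSigma(b),b))}$, which by \cite[Lemma~4.7]{CCIT:MS} equals the phase by which $\xi\in\Pic^\st(\frX_\bSigma)$ acts on the twisted sector $\frX_{\bSigma,b}$. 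On the A-side the corresponding action is the Galois symmetry $dg(\xi)\times g(\xi)$ of Section~\ref{subsec:Kaehler_moduli} restricted to $\Pic^\st(\frX_\bSigma)\cong H^2(\frX_\bSigma,\Z)/(\Laa^\bSigma)^\star$. Since $\mir$ is given by an $I$-function whose monomials pair with the twisted sectors exactly through these phases, $\mir$ intertwines the two actions; and once $\mir$ is equivariant, $\Mir$ can be made $\Pic^\st$-equivariant (it is the unique isomorphism reducing to the obvious identification in the $\frakm_\bSigma\to 0$ limit, or alternatively one averages over the finite group). Passing to $\frakm_\bSigma$-adic completions commutes with taking invariants under the finite group $\Pic^\st(\frX_\bSigma)$, so no subtlety arises there.

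The main obstacle I expect is not any single step but the bookkeeping of the two middle paragraphs: making the dictionary of Remark~\ref{rem:notation_difference} precise enough that the equivariance of both $\mir$ and $\Mir$ is manifest, and verifying that all splitting-dependent choices on the two sides — the section $\varsigma$ of \eqref{eq:refined_fanseq}, the lift of $\xi q\partial_q$ to $\cY$, the homogeneous $R_\T$-basis $\{\phi_i\}$, and the induced splitting of \eqref{eq:ext_divseq} over $\C$ — are matched so that parts~(1)--(3) transport from the uniformizing chart of \cite{CCIT:MS} verbatim.
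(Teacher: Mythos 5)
Your proposal follows exactly the route the paper takes: Theorem~\ref{thm:mirror_isom} is simply imported from \cite[Theorems~4.28 and~6.11]{CCIT:MS}, with the paper's only added content being the chart identification of Proposition~\ref{prop:chart_LG}/\eqref{eq:LG_local_chart}, the dictionary of Remark~\ref{rem:notation_difference}, and the restriction $y_b=1$ for $b\in R(\bSigma)$ noted in the remark after the theorem. Your elaboration of the $\Pic^\st(\frX_\bSigma)$-equivariance via the age pairing and the Galois symmetry is consistent with how the paper handles it elsewhere (e.g.\ the proof of Lemma~\ref{lem:diag_pullback}), so the proposal is correct and essentially the same argument, just spelled out in more detail.
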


\begin{Remark}The mirror map $\mir$ and the mirror isomorphism $\Mir$ are
obtained from those in~\cite{CCIT:MS}
by the restriction to $y_1 = \cdots =y_m =1$.
See also Remark~\ref{rem:notation_difference}.
\end{Remark}
\begin{Remark}[{\cite[(60)]{Iritani:Integral}}]
\label{rem:mirror_map_asymptotic}
We have
$\cM_{\T,\bSigma}\sphat \cong \Spf\big(R_\T\big[\!\big[\Laa^\bSigma_+\big]\!\big]\big)
\times \big(\C^{G(\bSigma)},0\big)\sphat$ \
by~\eqref{eq:LG_local_chart}.
Writing $(\ttq, t=\{t_v\}_{v\in G(\bSigma)})$
for the co-ordinates on $\cM_{\T,\bSigma}\sphat$
as in Section~\ref{subsec:coord_localchart_LG},
we have that the mirror map
$(\ttq, t) \mapsto \big(\hat\ttq, \tau' =
\tau^0\phi_0+\sum\limits_{i=r+1}^s \tau^i \phi_i\big)$
has the following asymptotic form
\begin{gather*}
\sum_{i=1}^r \phi_i \log \hat\ttq_i +\tau'
= \sum_{i=1}^r \phi_i \log \ttq_i +
\sum_{v\in G(\bSigma)} t_v \frD_v + O\big(\ttq,t^2\big),
\end{gather*}
 where
 \begin{gather*}
 \frD_v =
\prod_{b\in R(\bSigma)} \overline{D}_{b}^{\floor{\Psi_{b}(v)}}
\unit_{\langle v\rangle} \qquad \text{with}\quad
\langle v\rangle = v-\sum_{b\in R(\bSigma)} \floor{\Psi_b(v)} b
\in \Bx(\bSigma).
\end{gather*}
Here we assumed that
$\{\phi_i\}_{i=1}^r\subset \big(\LL^\bSigma_\C\big)^\star$
is chosen to be a $\Z$-basis of $\big(\Laa^\bSigma\big)^\star$
and that its dual basis defines co-ordinates $\ttq_i$, $i=1,\dots,r$ as in
Section~\ref{subsec:coord_localchart_LG};
$O\big(\ttq,t^2\big)$ denotes an element of the ideal
generated by $\ttq^\lambda$ $\big(\lambda \in \Laa^\bSigma_+\setminus\{0\}\big)$
and $t_{b_1} t_{b_2}$ ($b_1,b_2 \in G(\bSigma)$).
\end{Remark}

The above mirror isomorphism shows that
$\Briequiv(F)\sphat_\bSigma$ is a free
$R_\T[z][\![\Laa(\bSigma)_+]\!]$-module of rank
$\dim H_{\CR}^*(\frX_\bSigma)$.
This fact shows the following two propositions. The first one
proves a non-equivariant version of Theorem \ref{thm:mirror_isom}.
\begin{Proposition}
\label{prop:nonequiv_completion}
We have an isomorphism
\begin{equation*}
\Bri(F)\sphat_\bSigma
\cong
\Briequiv(F)_\bSigma\sphat/\bM_\C \Briequiv(F)_\bSigma\sphat.
\end{equation*}
In particular, the completed Brieskorn module
$\Bri(F)\sphat_\bSigma$ is a free module
of rank $\dim H^*_{\CR}(\frX)$ over
$\C[z][\![\Laa(\bSigma)_+]\!]$ and we have a
non-equivariant mirror isomorphism
\[
\Mir|_{\chi=0} \colon \
\Bri(F)\sphat_\bSigma \cong \mir^*\QDM(\frX_\bSigma),
\]
where $\mir$ denotes the non-equivariant limit
of the mirror map in Theorem~$\ref{thm:mirror_isom}$.
\end{Proposition}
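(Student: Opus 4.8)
The plan is to deduce both assertions from the already-recorded fact that the \emph{equivariant} completed Brieskorn module $\Briequiv(F)\sphat_\bSigma$ is free of rank $N:=\dim H^*_\CR(\frX_\bSigma)$ over $R_\T[z][\![\Laa(\bSigma)_+]\!]$ (Theorem~\ref{thm:mirror_isom}, cf.\ \cite[Theorem~4.26]{CCIT:MS}). The real content of the proposition is that $\frakm_\bSigma$-adic completion commutes with the non-equivariant limit $(-)\otimes_{R_\T}\C=(-)/\bM_\C$, and this commutation is forced by that freeness.

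First I would reduce $\Briequiv(F)\sphat_\bSigma$ modulo $\bM_\C$. Since $R_\T=\Sym(\bM_\C)$ and $R_\T/\bM_\C R_\T=\C$, freeness over $R_\T[z][\![\Laa(\bSigma)_+]\!]$ gives at once that $\Briequiv(F)\sphat_\bSigma/\bM_\C\Briequiv(F)\sphat_\bSigma$ is free of rank $N$ over $\C[z][\![\Laa(\bSigma)_+]\!]$; in particular it is $\frakm_\bSigma$-adically separated and complete, so the submodule $\bM_\C\Briequiv(F)\sphat_\bSigma$ of $\Briequiv(F)\sphat_\bSigma$ is $\frakm_\bSigma$-adically closed. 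Next I would use that $\frakm_\bSigma$-adic completion carries the short exact sequence $0\to\bM_\C\Briequiv(F)_\bSigma\to\Briequiv(F)_\bSigma\to\Bri(F)_\bSigma\to0$ to a short exact sequence: reducing modulo $\frakm_\bSigma^k$ exhibits the kernels as $\bM_\C\Briequiv(F)_\bSigma/(\bM_\C\Briequiv(F)_\bSigma\cap\frakm_\bSigma^k\Briequiv(F)_\bSigma)$, an inverse system with surjective transition maps, hence with vanishing $\varprojlim^1$; passing to the limit yields $\Bri(F)\sphat_\bSigma\cong\Briequiv(F)\sphat_\bSigma/\overline{\bM_\C\Briequiv(F)_\bSigma}$, where the bar denotes $\frakm_\bSigma$-adic closure inside $\Briequiv(F)\sphat_\bSigma$. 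Because $\Briequiv(F)_\bSigma$ is $\frakm_\bSigma$-adically dense in $\Briequiv(F)\sphat_\bSigma$ and the action of each $\chi_i$ preserves the $\frakm_\bSigma$-adic filtration (hence is continuous), that closure coincides with the closure of $\bM_\C\Briequiv(F)\sphat_\bSigma$, which by the previous step is $\bM_\C\Briequiv(F)\sphat_\bSigma$ itself. This establishes the displayed isomorphism, and combined with the first reduction it shows that $\Bri(F)\sphat_\bSigma$ is free of rank $N=\dim H^*_\CR(\frX)$ over $\C[z][\![\Laa(\bSigma)_+]\!]$.

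For the non-equivariant mirror isomorphism I would apply $(-)\otimes_{R_\T}\C$ to the equivariant mirror isomorphism $\Mir\colon\Briequiv(F)\sphat_\bSigma\cong\mir^*\QDM_\T(\frX_\bSigma)$ of Theorem~\ref{thm:mirror_isom}. The source becomes $\Bri(F)\sphat_\bSigma$ by the isomorphism just proved; for the target, the mirror map $\mir$ is a morphism over $R_\T$, so its non-equivariant limit $\mir|_{\chi=0}$ is defined, pull-back commutes with the base change $(-)\otimes_{R_\T}\C$, and $\QDM_\T(\frX_\bSigma)\otimes_{R_\T}\C=\QDM(\frX_\bSigma)$ holds by the very definition of the non-equivariant quantum D-module (using equivariant formality, assumption~(a)). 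Hence $\mir^*\QDM_\T(\frX_\bSigma)\otimes_{R_\T}\C=(\mir|_{\chi=0})^*\QDM(\frX_\bSigma)$, and the induced isomorphism $\Mir|_{\chi=0}$ automatically intertwines the Gauss--Manin and quantum connections and the grading operators, these structures being compatible with the non-equivariant limit.

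The only point requiring something beyond formal manipulation---and the crux of the matter---is showing that $\bM_\C\Briequiv(F)\sphat_\bSigma$ is $\frakm_\bSigma$-adically closed in $\Briequiv(F)\sphat_\bSigma$, equivalently that completion commutes with $(-)/\bM_\C$ here. This is delicate because $\Briequiv(F)_\bSigma$ is free of \emph{infinite} rank over $\C[z][\Laa(\bSigma)_+]$ (Remark~\ref{rem:flatness_pr}) and is not finitely generated over $R_\T[z]$, so no Noetherian finiteness applies to the uncompleted module; the argument above transfers the entire burden onto that closedness statement, which is exactly what the freeness of $\Briequiv(F)\sphat_\bSigma$ over $R_\T[z][\![\Laa(\bSigma)_+]\!]$---the one genuinely deep input, due to \cite{CCIT:MS}---supplies. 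Alternatively one can verify directly that $\Briequiv(F)_\bSigma/\frakm_\bSigma^k\Briequiv(F)_\bSigma\cong\Briequiv(F)\sphat_\bSigma/\frakm_\bSigma^k\Briequiv(F)\sphat_\bSigma$, using that $\frakm_\bSigma$ is a finitely generated ideal of the Noetherian ring $\C[\Laa(\bSigma)_+]$ and that $\Briequiv(F)_\bSigma$ is $\C[\Laa(\bSigma)_+]$-flat, and then pass to the inverse limit in $\Bri(F)_\bSigma/\frakm_\bSigma^k\Bri(F)_\bSigma\cong\Briequiv(F)\sphat_\bSigma/(\bM_\C+\frakm_\bSigma^k)\Briequiv(F)\sphat_\bSigma$.
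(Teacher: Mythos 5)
Your proposal is correct and is essentially the paper's own argument: the ``alternative'' you sketch at the end---identifying $\Briequiv(F)_\bSigma/\frakm_\bSigma^k\Briequiv(F)_\bSigma$ with $\Briequiv(F)\sphat_\bSigma/\frakm_\bSigma^k\Briequiv(F)\sphat_\bSigma$ via the finite generation of $\frakm_\bSigma$ and then passing to the inverse limit---is exactly how the paper proceeds, namely by applying its Lemma~\ref{lem:completion} on completions of cokernels to $f=\bigoplus_{i=1}^n\chi_i\colon \Briequiv(F)_\bSigma^{\oplus n}\to\Briequiv(F)_\bSigma$ and observing that the finite free $\C[z][\![\Laa(\bSigma)_+]\!]$-module $\Briequiv(F)\sphat_\bSigma/\bM_\C\Briequiv(F)\sphat_\bSigma$ is already $\frakm_\bSigma$-adically complete. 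Your main topological phrasing is a harmless repackaging of the same input, but be aware that it silently uses the same finite-generation fact $\frakm_\bSigma^k\Briequiv(F)\sphat_\bSigma=\Ker\big(\Briequiv(F)\sphat_\bSigma\to\Briequiv(F)_\bSigma/\frakm_\bSigma^k\Briequiv(F)_\bSigma\big)$ (cf.\ \cite[Lemma~05GG]{stacks-project}) to identify the inverse-limit topology, in which your closure computation in step~2 takes place, with the $\frakm_\bSigma$-adic topology, in which your freeness argument in step~1 shows that $\bM_\C\Briequiv(F)\sphat_\bSigma$ is closed.
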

\begin{proof}We apply Lemma \ref{lem:completion} below
to $K= \C[\Laa(\bSigma)_+]$, $I= \frakm_\bSigma$,
$N = \Briequiv(F)_\bSigma^{\oplus n}$,
$M= \Briequiv(F)_\bSigma$ and the map
$f = \bigoplus\limits_{i=1}^n \chi_i$. Then we find that
$\Cok(f)\sphat = \Bri(F)\sphat_\bSigma$
is the $\frakm_\bSigma$-adic completion of $\Cok(\hat{f}) =
\Briequiv(F)_\bSigma\sphat/
\bM_\C \Briequiv(F)_\bSigma\sphat$.
On the other hand, $\Cok(\hat{f})$ is a free
$\C[z][\![\Laa(\bSigma)_+]\!]$-module of finite rank,
and in particular
$\frakm_\bSigma$-adically complete.
This proves the proposition.
\end{proof}

\begin{Proposition}
\label{prop:Briequiv_mod_z}
We have
\[
\Briequiv(F)\sphat_\bSigma/z \Briequiv(F)\sphat_\bSigma \cong
\varprojlim_k \C[\OO(\bSigma)_+]/\frakm_\bSigma^k.
\]
\end{Proposition}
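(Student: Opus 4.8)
The plan is to strip $\Briequiv(F)_\bSigma$ down to its underlying algebra. By definition $\Briequiv(F)_\bSigma=\C[z][\OO(\bSigma)_+]\cdot\omega$, which is a \emph{free $\C[z]$-module}, and the completion defining $\Briequiv(F)\sphat_\bSigma$ is taken with respect to the ideal $\frakm_\bSigma$, which lives entirely in the base ring $\C[\Laa(\bSigma)_+]$ (acting on $\C[\OO(\bSigma)_+]$ through $\Laa(\bSigma)_+\hookrightarrow\OO(\bSigma)_+$). Multiplication by $z$ is injective on $\Briequiv(F)_\bSigma$ with cokernel $\C[\OO(\bSigma)_+]$, so the statement to be proved is precisely that passing to the $\frakm_\bSigma$-adic completion commutes with forming the cokernel of multiplication by $z$.

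The main route I would take is to invoke the cokernel-completion lemma already used in the proof of Proposition~\ref{prop:nonequiv_completion} (``Lemma~\ref{lem:completion}''), this time with $K=\C[\Laa(\bSigma)_+]$, $I=\frakm_\bSigma$, $M=N=\Briequiv(F)_\bSigma$ and $f$ equal to multiplication by $z$. It gives that $\Cok(f)\sphat=\varprojlim_k\C[\OO(\bSigma)_+]/\frakm_\bSigma^k$ — the right-hand side — is the $\frakm_\bSigma$-adic completion of $\Cok(\hat f)=\Briequiv(F)\sphat_\bSigma/z\,\Briequiv(F)\sphat_\bSigma$ — the left-hand side. To conclude that the left-hand side is \emph{already} complete, I would use the fact (Theorem~4.26 of \cite{CCIT:MS}, recalled above) that $\Briequiv(F)\sphat_\bSigma$ is free of rank $\dim H^*_\CR(\frX_\bSigma)$ over $R_\T[z][\![\Laa(\bSigma)_+]\!]$: hence $\Briequiv(F)\sphat_\bSigma/z$ is free of finite rank over $R_\T[\![\Laa(\bSigma)_+]\!]$, a Noetherian $\frakm_\bSigma$-adically complete ring, so it is $\frakm_\bSigma$-adically complete and therefore equals its own completion. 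This yields the asserted isomorphism. An alternative, fully elementary route avoiding finiteness: the decomposition $\C[z]\otimes_\C V=V\oplus z\,(\C[z]\otimes_\C V)$ (take the $z^0$-coefficient) is functorial in $V$, hence compatible with the transition maps $\C[\OO(\bSigma)_+]/\frakm_\bSigma^{k+1}\to\C[\OO(\bSigma)_+]/\frakm_\bSigma^{k}$; since a finite biproduct commutes with $\varprojlim$ and multiplication by $z$ identifies $\varprojlim_k z\,(\C[z]\otimes_\C V_k)$ with $z\,\Briequiv(F)\sphat_\bSigma$, one gets
\[
\Briequiv(F)\sphat_\bSigma=\Bigl(\varprojlim_k\C[\OO(\bSigma)_+]/\frakm_\bSigma^k\Bigr)\oplus z\,\Briequiv(F)\sphat_\bSigma ,
\]
and quotienting by $z$ finishes it; here one only uses that $z$ is a non-zero-divisor on $\C[z]\otimes_\C V_k$ and that the transition maps are surjective (Mittag--Leffler).

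Finally I would record that the isomorphism is compatible with the extra structure. The $\C[\Laa(\bSigma)_+]$-module structures agree tautologically; the equivariant parameter $\chi_i=z\nabla_{x_i\partial/\partial x_i}$, which acts on $\Briequiv(F)_\bSigma$ by $f\omega\mapsto\bigl(z\,x_i\partial f/\partial x_i+\sum_{b\in S}(\chi_i\cdot b)u_b f\bigr)\omega$, reduces modulo $z$ to multiplication by $\sum_{b\in S}(\chi_i\cdot b)u_b=x_i\,\partial F/\partial x_i\in\C[\OO(\bSigma)_+]$, so the $R_\T$-action on the left corresponds to this multiplication on the right; and $\Pic^\st(\frX_\bSigma)$-equivariance is automatic since every object in sight is equivariant. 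There is no deep obstacle in the proof. The one genuine subtlety is that $\Briequiv(F)_\bSigma$ is very far from Noetherian (it is $\C[z]$ adjoined to an infinite-rank free $\C[\Laa(\bSigma)_+]$-module), so one cannot simply cite exactness of completion; this is exactly why the argument should be anchored either on the previously established finiteness of $\Briequiv(F)\sphat_\bSigma$ or on the explicit freeness over $\C[z]$, in both of which the facts one needs are transparent.
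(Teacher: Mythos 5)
Your main argument is exactly the paper's proof: apply Lemma~\ref{lem:completion} with $K=\C[\Laa(\bSigma)_+]$, $I=\frakm_\bSigma$, $M=N=\Briequiv(F)_\bSigma$, $f=z$, and then observe that $\Briequiv(F)\sphat_\bSigma/z\Briequiv(F)\sphat_\bSigma$ is a finite free $R_\T[\![\Laa(\bSigma)_+]\!]$-module (by Theorem~\ref{thm:mirror_isom}), hence already $\frakm_\bSigma$-adically complete. The additional elementary route via the splitting $\C[z]\otimes_\C V = V\oplus z(\C[z]\otimes_\C V)$ and the compatibility remarks are correct but go beyond what the paper records; the core reasoning is the same.
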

\begin{proof}
We apply Lemma \ref{lem:completion} to $K=\C[\Laa(\bSigma)_+]$,
$I=\frakm_\bSigma$, $N=M=\Briequiv(F)_\bSigma$ and the map
$f = z$. Then we find that the $\frakm_\bSigma$-adic completion of
$\Cok(f)=\C[\OO(\bSigma)_+]$ is isomorphic to the $\frakm_\bSigma$-adic
completion of $\Cok(\hat{f}) =
\Briequiv(F)\sphat_\bSigma/z \Briequiv(F)\sphat_\bSigma$.
Here $\Cok(\hat{f})$ is a free $R_\T[\![\Laa(\bSigma)_+]\!]$-module of finite rank,
and in particular $\frakm_\bSigma$-adically complete. The conclusion follows.
\end{proof}

\begin{Lemma}\label{lem:completion}
Let $K$ be a ring and $I\subset K$ be a finitely generated ideal.
Let $N$, $M$ be $K$-modules and $f\colon N \to M$ be a
homomorphism of $K$-modules.
Let $\widehat{N}$, $\widehat{M}$ denote the
$I$-adic completions of~$N$ and~$M$ respectively
and let $\hat{f} \colon \widehat{N} \to \widehat{M}$ denote the
induced homomorphism. Then the $I$-adic completion of~$\Cok(f)$ is
isomorphic to the $I$-adic completion of~$\Cok(\hat{f})$.
\end{Lemma}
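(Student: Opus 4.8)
The plan is to reduce the statement to a direct comparison of the two inverse systems that define the completions, the only substantive input being a standard property of $I$-adic completion along a \emph{finitely generated} ideal. Write $L := f(N) \subseteq M$, so that $\Cok(f) = M/L$ and, using $I^k(M/L) = (L + I^kM)/L$,
\[
\widehat{\Cok(f)} \;=\; \varprojlim_k M\big/\bigl(L + I^kM\bigr).
\]
On the completed side, put $L' := \hat f(\widehat N) \subseteq \widehat M$; then $\Cok(\hat f) = \widehat M/L'$ and, likewise, $I^k\Cok(\hat f) = (L' + I^k\widehat M)/L'$, whence
\[
\widehat{\Cok(\hat f)} \;=\; \varprojlim_k \widehat M\big/\bigl(L' + I^k\widehat M\bigr).
\]
So it suffices to produce, for each $k$, an isomorphism $\widehat M/(L' + I^k\widehat M) \cong M/(L + I^kM)$ compatible with the projection maps of the two towers.

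The key step — and the one place where finiteness of $I$ is genuinely used — is the classical fact that for a finitely generated ideal $I$ the canonical map $\widehat M \to M/I^kM$ is surjective with kernel exactly $I^k\widehat M$, so that $\widehat M/I^k\widehat M \cong M/I^kM$ canonically (and the same with $M$ replaced by $N$). I would cite this lemma and then identify the relevant submodules: under the isomorphism $\widehat M/I^k\widehat M \cong M/I^kM$, the submodule $(L' + I^k\widehat M)/I^k\widehat M$ corresponds to the image of $L' = \hat f(\widehat N)$ in $M/I^kM$. By the very definition of $\hat f$ as the inverse limit of the reductions of $f$, the composite $\widehat N \xrightarrow{\ \hat f\ } \widehat M \to M/I^kM$ equals $\widehat N \to N/I^kN \xrightarrow{\ f \bmod I^k\ } M/I^kM$, and $\widehat N \to N/I^kN$ is surjective; hence the image of $L'$ in $M/I^kM$ is exactly the image of $L = f(N)$, namely $(L + I^kM)/I^kM$. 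Passing to quotients yields the desired isomorphism $\widehat M/(L' + I^k\widehat M) \cong M/(L + I^kM)$.

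Finally I would check that these isomorphisms commute with the transition maps of the two inverse systems — immediate from the naturality statements used above — and take the inverse limit over $k$ to conclude $\widehat{\Cok(\hat f)} \cong \widehat{\Cok(f)}$. I expect the only non-formal ingredient to be the completion lemma for finitely generated ideals; this is standard (one constructs the required lifts by successive approximation, using that a power $I^k$ is generated by finitely many monomials in the generators of $I$), and once it is granted the whole argument is a short diagram chase. One should also note that no Noetherian hypothesis on $K$ (nor finite generation of $N$ or $M$) is needed, which is exactly what is required for the applications in Propositions~\ref{prop:nonequiv_completion} and~\ref{prop:Briequiv_mod_z}, where the modules in question are of infinite rank.
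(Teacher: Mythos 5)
Your proof is correct and follows essentially the same route as the paper: both arguments rest on the fact (for finitely generated $I$) that $\widehat{M}/I^k\widehat{M}\cong M/I^kM$ and $\widehat{N}/I^k\widehat{N}\cong N/I^kN$, compare the two cokernels modulo $I^k$ for each $k$, and then pass to the inverse limit. The only difference is presentational: the paper obtains the level-$k$ isomorphism $\Cok(f)/I^k\Cok(f)\cong\Cok(\hat f)/I^k\Cok(\hat f)$ by right-exactness of $\otimes_K(K/I^k)$, whereas you carry out the equivalent identification of images $(L+I^kM)/I^kM$ and $(L'+I^k\widehat M)/I^k\widehat M$ by hand.
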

\begin{proof}
Since $I$ is finitely generated, we have $\widehat{N}/I^k \widehat{N}
\cong N/I^k N$ and $\widehat{M}/I^k \widehat{M} \cong M/I^k M$
(see \cite[\href{http://stacks.math.columbia.edu/tag/05GG}{Lemma 05GG}]{stacks-project}).
Therefore the exact sequences $N \to M \to \Cok(f)\to 0$ and $\widehat{N} \to
\widehat{M} \to \Cok(\hat{f}) \to 0$ imply
(by the right-exactness of $\otimes_K(K/I^k)$) that
$\Cok(f)/I^k \Cok(f) \cong \Cok(\hat{f})/I^k \Cok(\hat{f})$.
The conclusion follows.
\end{proof}

\begin{Remark}[cf.~Remark \ref{rem:twisted_de_Rham}]\label{rem:twisted_de_Rham_formal}
In \cite{CCIT:MS, Iritani:shift_mirror},
the completed Brieskorn modules are described
as twisted de Rham cohomology. We have
\begin{gather*}
\Briequiv(F)\sphat_\bSigma
 \cong \pr_*H^n(\Omega^\bullet_{\cY/\cM}[z]\sphat \ [\chi_1,\dots,\chi_n],
z d + dF_\T \wedge), \\
\Bri(F)\sphat_\bSigma
 \cong \pr_*H^n(\Omega^\bullet_{\cY/\cM}[z]\sphat\ , zd + dF\wedge),
\end{gather*}
where $\sphat$ \ means the $\frakm_\bSigma$-adic completion.
These isomorphisms follow from the same argument as
in Remark~\ref{rem:twisted_de_Rham}.
Note that $(\Omega^\bullet_{\cY/\cM}[z]\sphat\ , zd+dF\wedge)$
is the Koszul complex associated with the action of $\chi_1,\dots,\chi_n$
on $\Briequiv(F)\sphat_\bSigma = \cO_\cY[z]\sphat\cdot \omega$.
Since we know that $\Briequiv(F)\sphat_\bSigma$
is a free $R_\T[z][\![\Laa(\bSigma)_+]\!]$-module,
$\chi_1,\dots,\chi_n$ form a regular sequence for the
module $\Briequiv(F)\sphat_\bSigma$. Thus we have
\[
H^i(\Omega^\bullet_{\cY/\cM}[z]\sphat\ , zd + dF\wedge) =0 \qquad
\text{for $i\neq n$.}
\]
Therefore, we have $\Bri(F)\sphat_\bSigma \cong
R^n\pr_*(\Omega^\bullet_{\cY/\cM}[z]\sphat\ , zd +d F\wedge)$.
\end{Remark}

\subsection{Analytification of the completed Brieskorn module}
\label{subsec:analytified_Bri}
In this section, we construct an analytification
of the completed Brieskorn module.
There is a trade-off\footnote{By \looseness=-1 the convergence result from \cite{CCIT:mirrorthm} reviewed in
Section~\ref{subsec:an_mirror_isom}, we can make the analytified
Brieskorn module fully analytic
both in the $\cM_\T$-direction and in the $z$-direction;
the full analytification is given by the analytic quantum D-module.
This analytic structure is in general different from that
of the original (algebraic) Brieskorn module;
the gauge transformation $(M_i^j(q,\chi,z))$
(see Part~\ref{item:Mir_matrix} in Section~\ref{subsec:an_mirror_isom})
connecting the Brieskorn module and the quantum D-module
is only a formal power series in~$z$. This stems from the
fact that the (extended) $I$-function~\cite{CCIT:mirrorthm}
is not necessarily convergent: see
\cite[Proposition~5.13]{Iritani:coLef} for a relevant
statement in the non-weak-Fano case.}
between the analyticity along $\cM_\T=\cM\times \Lie \T$
and that along the $z$-plane: the analytified Brieskorn module
is analytic in the $\cM_\T$-direction but formal in the variable $z$.

\subsubsection{Analytification of algebras}
\label{subsubsec:analytification_algebras}
First we study the restriction of $\Briequiv(F)$ to $z=0$.
By definition, $\Briequiv(F)/z \Briequiv(F)$ is isomorphic to
$\cO_\cY$ and the $R_\T \otimes \cO_\cM$-module structure on it
is induced by the map
\begin{equation}\label{eq:tpr}
\tpr := \left(\pr, x_1 \parfrac{F}{x_1},\dots, x_n \parfrac{F}{x_n}\right)
\colon \ \cY \to \cM_\T = \cM \times \Lie \T,
\end{equation}
where $\Lie \T = \Spec R_\T$ is identified with $\C^n$ via
the basis $\chi_1,\dots,\chi_n$ chosen in Section~\ref{subsec:Bri}.
Let $\tzero = \tzero_\bSigma \in \cY_\bSigma \subset \cY$ denote the
(unique) torus-fixed point on the chart $\cY_\bSigma$
such that $\pr(\tzero_\bSigma) = 0_\bSigma$:
it is defined by $u^{(\lambda,v)} =0$ for all non-torsion elements
$(\lambda,v) \in \OO(\bSigma)_+$.
On the uniformizing chart $\Spec \C[\OO(\bSigma)_+]$ of $\cY_\bSigma$,
$\tzero_\bSigma$ corresponds to
$|\bN_{\rm tor}|$ many points $\bN_{\rm tor}\sphat \cong
\Spec \C[\bN_{\rm tor}]$ which form a single $\Pic^\st(\frX_\bSigma)$-orbit
(recall the sequence \eqref{eq:G_Gprime}).
We mean by $\tzero_\bSigma$ either a single point in $\cY_\bSigma$
or the corresponding finite subset in $\Spec \C[\OO(\bSigma)_+]$
depending on the context.
We have $\tpr(\tzero_\bSigma) = (0_\bSigma, 0)$.
\emph{In order to avoid the heavy notation,
in this Section~$\ref{subsec:analytified_Bri}$, we will sometimes
omit the subscript $\bSigma$ for $0_\bSigma$ and $\tzero_\bSigma$,
writing $0 \in \cM_\bSigma$ for $0_\bSigma$
and $\tzero\in \cY_\bSigma$ for $\tzero_\bSigma$}.

\begin{Lemma}
\label{lem:fibre_at_0}
The set-theoretic fibre of $\tpr$ at $(0,0)$ is $\tzero$,
i.e., $\tzero = \tpr^{-1}(0,0)$.
\end{Lemma}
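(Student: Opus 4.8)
The plan is to verify the nontrivial inclusion $\tpr^{-1}(0,0)\subseteq\tzero$ by working on the uniformizing chart $\Spec\C[\OO(\bSigma)_+]$ of $\cY_\bSigma$ and showing that a point $p$ lying over $(0,0)$ must annihilate every non-torsion monomial $u^{(\mu,v)}$; this is precisely the condition cutting out $\tzero$. The reverse inclusion $\tzero\subseteq\tpr^{-1}(0,0)$ is immediate: at $\tzero$ each $u_b=u^{(e_b,b)}$ vanishes (the pair $(e_b,b)$ being non-torsion), whence $F=\sum_b u_b$ and $x_i\partial F/\partial x_i=\sum_{b\in S}(\chi_i\cdot\overline{b})u_b$ vanish too, and $\pr(\tzero)=0_\bSigma$ by definition. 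Since all conditions in sight are $\Pic^\st(\frX_\bSigma)$-invariant, it is harmless to argue on the uniformizing chart.

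First I would use $\pr(p)=0_\bSigma$, which forces $q^\lambda(p)=0$ for all $\lambda\in\Laa(\bSigma)_+\setminus\{0\}$. In particular, for $b\in G(\bSigma)$ we have $u_b=t_b\,u^{(\Psi^\bSigma(\overline{b}),b)}$ with $t_b=q^{\delta^\bSigma_b}$, $\delta^\bSigma_b\in\Laa(\bSigma)_+\setminus\{0\}$, and $u^{(\Psi^\bSigma(\overline{b}),b)}\in\C[\OO(\bSigma)_+]$ (as $\Psi^\bSigma(\overline{b})\in\hOEf(\frX_\bSigma)$), so $u_b(p)=0$. Hence the vanishing of the components $x_i\partial F/\partial x_i$ at $p$ reduces to
\[
\sum_{b\in R(\bSigma)}u_b(p)\,\overline{b}=0\quad\text{in }\overline{\bN}_\C ,
\]
because $\{\chi_i\}$ is a basis of $\bM$.

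Next I would reduce to the coordinate monomials. By Lemma~\ref{lem:OO_Laa_+}, every $(\mu,v)\in\OO(\bSigma)_+$ has $\mu-\Psi^\bSigma(\overline{v})\in\Laa(\bSigma)_+$, so $u^{(\mu,v)}=q^{\mu-\Psi^\bSigma(\overline{v})}\,u^{(\Psi^\bSigma(\overline{v}),v)}$ already vanishes at $p$ unless $\mu=\Psi^\bSigma(\overline{v})$; it thus suffices to show $u^{(\Psi^\bSigma(\overline{v}),v)}(p)=0$ for all $v\in\bN\cap\Pi$ with $\overline{v}\neq0$. Applying the same lemma to $\big(\sum_i\Psi^\bSigma(\overline{b_i}),\sum_i b_i\big)\in\OO(\bSigma)_+$ (with $b_i\in R(\bSigma)$) gives $u_{b_1}\cdots u_{b_k}=q^{\Delta}\cdot(\text{monomial})$, where $\Delta=\sum_i\Psi^\bSigma(\overline{b_i})-\Psi^\bSigma\big(\overline{\textstyle\sum_i b_i}\big)\in\Laa(\bSigma)_+$; comparing supports of these nonnegative vectors shows $\Delta=0$ exactly when the $\overline{b_i}$ lie in a common cone of $\Sigma$. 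Hence at $p\in\pr^{-1}(0_\bSigma)$ any product of the $u_b$, $b\in R(\bSigma)$, over a set not contained in a cone vanishes; as the product of $u_b(p)$ over the support $T:=\{b\in R(\bSigma):u_b(p)\neq0\}$ is nonzero, $T$ lies in a single cone $\sigma$. Since $\{\overline{b}:b\in R(\bSigma)\cap\sigma\}$ is linearly independent ($\Sigma$ simplicial), the displayed identity forces $u_b(p)=0$ for $b\in T$, so $T=\emptyset$ and $u_b(p)=0$ for all $b\in S$. Finally, for $\overline{v}\neq0$ write $\overline{v}=\sum_{b\in R(\bSigma)\cap\sigma}c_b\overline{b}$ with some $c_b>0$ and pick $m$ with $mc_b\in\Z$; the relation $\prod_b u_b^{mc_b}=u^{(0,\zeta)}\,\big(u^{(\Psi^\bSigma(\overline{v}),v)}\big)^m$ with $u^{(0,\zeta)}$ a torsion unit then yields $u^{(\Psi^\bSigma(\overline{v}),v)}(p)=0$. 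Therefore $p\in\tzero$.

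I expect the main obstacle to be the combinatorial bookkeeping on the central fibre $\pr^{-1}(0_\bSigma)$: proving that a monomial product of the $u_b$ over rays not lying in a common cone must vanish, and isolating the torsion-unit correction in the last identity. Both follow from Lemma~\ref{lem:OO_Laa_+} and the definition of $\Psi^\bSigma$, but need care because $\Psi^\bSigma$ takes rational values and $\bN$ may have torsion; the remaining input is the simpliciality of $\Sigma$ and the flatness decomposition of $\C[\OO(\bSigma)_+]$ from Remark~\ref{rem:flatness_pr}.
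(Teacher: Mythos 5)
Your proof is correct, but it takes a different route from the paper's. The paper computes the \emph{scheme-theoretic} fibre of $\tpr$ at $(0,0)$: modulo $\frakm_\bSigma$ the ring $\C[\OO(\bSigma)_+]$ becomes $\bigoplus_{v}\C w_v$ with the "same cone or zero" product (Remark~\ref{rem:flatness_pr}), and after further dividing by the linear relations $\sum_{b\in R(\bSigma)}(\chi_i\cdot b)u_b$ one recognizes exactly the Borisov--Chen--Smith presentation of $H^*_\CR(\frX_\bSigma)$; since positive-degree classes are nilpotent, the reduced fibre is $\Spec H^0_\CR(\frX_\bSigma)\cong\Spec\C[\bN_{\rm tor}]=\tzero$. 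You instead avoid quoting the orbifold cohomology presentation and argue pointwise on the uniformizing chart: Lemma~\ref{lem:OO_Laa_+} gives the factorization $u^{(\mu,v)}=q^{\mu-\Psi^\bSigma(\overline v)}u^{(\Psi^\bSigma(\overline v),v)}$ and the fact that a product of the $u_b$, $b\in R(\bSigma)$, acquires a nonzero $q$-factor (hence vanishes over $0_\bSigma$) unless the rays lie in a common cone; combined with the balancing condition $\sum_{b\in R(\bSigma)}u_b(p)\overline b=0$ coming from the critical-point equations and the linear independence of ray generators in a simplicial cone, this kills all $u_b(p)$, and your torsion-unit identity $\prod_b u_b^{mc_b}=u^{(0,\zeta)}\big(u^{(\Psi^\bSigma(\overline v),v)}\big)^m$ then kills every non-torsion monomial. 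The two arguments share the same combinatorial input (Remark~\ref{rem:flatness_pr}/Lemma~\ref{lem:OO_Laa_+}), but the decisive step differs: the paper leans on the known ring presentation and nilpotency, which is shorter and additionally identifies the scheme-theoretic fibre with $\Spec H^*_\CR(\frX_\bSigma)$, whereas your argument is self-contained and makes the vanishing mechanism (independence of rays within a cone) explicit. One cosmetic remark: you evaluate at a point as if it were a $\C$-point; this suffices because both sides are closed subsets of a finite-type $\C$-scheme (or one can run the identical argument with values in residue fields, which have characteristic zero), so no actual gap arises.
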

\begin{proof}
Note that
\[
x_i \parfrac{F}{x_i} = \sum_{b\in S} (\chi_i \cdot b) u_b
\in \sum_{b\in R(\bSigma)} (\chi_i\cdot b) u_b +
\frakm_\bSigma \C[\OO(\bSigma)_+],
\]
since $t_b \in \frakm_\bSigma$ for $b\in G(\bSigma)$ (see~\eqref{eq:LG_pot_co-ordinates}).
Therefore the scheme theoretic fibre at $(0,0)$ is the spectrum of
\begin{equation}\label{eq:scheme-theoretic_fibre}
\C[\OO(\bSigma)_+]\Big/
\bigg(\frakm_\bSigma \C[\OO(\bSigma)_+] + \bigg\langle
 \sum\limits_{b\in R(\bSigma)} (\chi_i \cdot b) u_b \colon 1\le i\le n
\bigg\rangle\bigg).
\end{equation}
By Remark \ref{rem:flatness_pr}, we have
\[
\C[\OO(\bSigma)_+]\Big/\frakm_\bSigma \C[\OO(\bSigma)_+]
\cong \bigoplus_{v\in \bN\cap \Pi} \C w_v \qquad
\text{with $w_v := \big[u^{(\Psi^\bSigma(\overline{v}),v)}\big]$},
\]
where $\Psi^\bSigma$ is given in Notation~\ref{nota:Psi} and
the product on the right-hand side is given by
\[
w_{v_1} \cdot w_{v_2} = \begin{cases}
w_{v_1+v_2}, & \text{if $v_1$, $v_2$ lie in the same cone of $\Sigma$}, \\
0, & \text{otherwise}.
\end{cases}
\]
It follows that the ring~\eqref{eq:scheme-theoretic_fibre} is precisely
the presentation of the orbifold cohomology ring
$H_{\CR}^*(\frX_\bSigma)$ due to Borisov--Chen--Smith~\cite{BCS}.
Since elements in $H_{\CR}^{>0}(\frX_\bSigma)$ are nilpotent,
the set-theoretical fibre at $(0,0)$ equals
$\Spec(H_\CR^0(\frX_\bSigma))
\cong \Spec(\C[\bN_{\rm tor}]) = \tzero$.
\end{proof}

We use the following elementary lemma on general topology.

\begin{Lemma}
\label{lem:topology}
Let $X$ and $Y$ be locally compact Hausdorff spaces and let
$f\colon X\to Y$ be a~continuous map.
Let $y_0\in Y$ be such that $f^{-1}(y_0)$ is compact.
Then
\begin{itemize}\itemsep=0pt
\item[{\rm (1)}] there exist an open neighbourhood $B$ of $f^{-1}(y_0)$
and an open neighbourhood $U$ of $y_0$
such that
$f(B)\subset U$ and $f|_B\colon B \to U$ is proper;
\item[{\rm (2)}] the family of open sets $\{f^{-1}(V)\cap B\colon \text{$V$
is an open neighbourhood of $y_0$}\}$
is a fundamental neighbourhood system of~$f^{-1}(y_0)$.
\end{itemize}
\end{Lemma}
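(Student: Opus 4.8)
The plan is to use a standard point-set topology argument based on local compactness. For part (1), I would start by covering the compact set $f^{-1}(y_0)$ by finitely many open sets with compact closure. Concretely: for each $x \in f^{-1}(y_0)$, since $X$ is locally compact Hausdorff, pick an open neighbourhood $W_x$ of $x$ with $\overline{W_x}$ compact. By compactness of $f^{-1}(y_0)$, finitely many of these, say $W_{x_1}, \dots, W_{x_N}$, cover $f^{-1}(y_0)$; set $W := \bigcup_{i=1}^N W_{x_i}$, so $\overline{W} = \bigcup_i \overline{W_{x_i}}$ is compact and contains $f^{-1}(y_0)$. Now the set $K := \overline{W} \setminus W$ is compact and disjoint from $f^{-1}(y_0)$, hence $f(K)$ is a compact subset of $Y$ not containing $y_0$. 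Since $Y$ is Hausdorff, $f(K)$ is closed, so $U := Y \setminus f(K)$ is an open neighbourhood of $y_0$. Put $B := W \cap f^{-1}(U)$; this is open, contains $f^{-1}(y_0)$, and satisfies $f(B) \subset U$ by construction. The key point is properness: I claim $f|_B \colon B \to U$ is proper. Given a compact $C \subset U$, the preimage $(f|_B)^{-1}(C) = B \cap f^{-1}(C)$ is contained in $\overline{W} \cap f^{-1}(C)$, which is closed in the compact set $\overline{W}$, hence compact; and this compact set actually lies in $W$ (not just $\overline{W}$) because any point of $\overline{W} \cap f^{-1}(C)$ lying in $K = \overline{W}\setminus W$ would have image in $f(K)$, which is disjoint from $U \supseteq C$ — contradiction. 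So $(f|_B)^{-1}(C)$ is a closed subset of a compact set, hence compact, proving properness.

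For part (2), I would argue directly that every open neighbourhood $O$ of $f^{-1}(y_0)$ contains some $f^{-1}(V) \cap B$ with $V$ a neighbourhood of $y_0$. Replacing $O$ by $O \cap B$, we may assume $O \subset B$. Then $B \setminus O$ is relatively closed in $B$; since $f|_B$ is proper (hence closed, as $U$ is locally compact Hausdorff — a proper map into a locally compact Hausdorff space is closed), the image $f(B \setminus O)$ is closed in $U$ and does not contain $y_0$ (because $f^{-1}(y_0) \subset O$ means nothing in $f^{-1}(y_0)$ lies in $B \setminus O$, but more to the point, any point mapping to $y_0$ inside $B$ is in $f^{-1}(y_0) \subset O$). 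Hence $V := U \setminus f(B\setminus O)$ is an open neighbourhood of $y_0$, and $f^{-1}(V) \cap B \subset O$: indeed if $b \in B$ with $f(b) \in V$ then $b \notin B \setminus O$, so $b \in O$. This shows the family in (2) is a fundamental system of neighbourhoods of $f^{-1}(y_0)$.

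The main obstacle, such as it is, is being careful with the distinction between $W$ and $\overline{W}$ in the properness argument — one needs that preimages of compacts inside $B$ not only are contained in a compact set but genuinely avoid the "boundary layer" $K$, which is exactly why $U$ was defined to exclude $f(K)$. The other mild subtlety is invoking that a proper continuous map to a locally compact Hausdorff space is closed; this is standard (one can also give a direct net/filter argument), so I would simply cite it or prove it inline in one sentence. Everything else is routine manipulation of open covers and compactness, so I would not belabour it.
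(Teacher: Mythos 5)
Your proof is correct and follows essentially the same route as the paper: a relatively compact open neighbourhood of the compact fibre (which you construct explicitly by a finite subcover), removal of the image of its boundary to define $U$, the identity $B\cap f^{-1}(C)=\overline{W}\cap f^{-1}(C)$ for properness, and then closedness of the proper map $f|_B$ to produce the neighbourhood $V$ in part (2). No gaps; the extra care you take with $W$ versus $\overline{W}$ matches the paper's step $f^{-1}(U)\cap C=f^{-1}(U)\cap\overline{C}$.
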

\begin{proof}Since $X$ is locally compact, we can find a relatively compact open
neighbourhood~$C$ of~$f^{-1}(y_0)$.
Then $\overline{C}\setminus C$ is compact and hence
$f(\overline{C} \setminus C)$ is a compact set not containing $y_0$.
Since~$Y$ is Hausdorff, $f(\overline{C} \setminus C)$ is closed;
thus there exists an open neighbourhood $U$ of
$y_0$ such that $U \cap f(\overline{C} \setminus C) = \varnothing$.
Then one sees easily that $f^{-1}(U) \cap C = f^{-1}(U)
\cap \overline{C}$.
For every compact subset $K$ in $U$, we have
that $f^{-1}(K) \cap C = f^{-1}(K) \cap \overline{C}$ is compact.
This shows that $f|_{C \cap f^{-1}(U)}$ is proper.
It now suffices to set $B = C \cap f^{-1}(U)$ to conclude Part~(1).

To see Part (2), take an open neighbourhood $W\subset B$ of $f^{-1}(y_0)$.
Then $B\setminus W$ is closed in~$B$.
Since proper maps between locally compact Hausdorff spaces are closed,
$f(B\setminus W)$ is closed in~$U$ and does not contain~$y_0$.
Thus there exists an
open neighbourhood $V\subset U$ of $y_0$ such that
$V \cap f(B\setminus W)=\varnothing$.
Then $f^{-1}(V) \cap B \subset W$ and Part (2) follows.
\end{proof}

\begin{Lemma}\label{lem:proper}
There exist an analytic open neighbourhood $\cB$ of $\tzero$
in $\cY_\bSigma$ and an analytic open neighbourhood $\cU$
of $(0,0)$ in $\cM_\bSigma \times \Lie\T\subset \cM_\T$ such that
$\tpr(\cB) \subset \cU$ and that
the map \mbox{$\tpr|_{\cB} \colon \cB \to \cU$} is proper.
Moreover, for any sheaf $\cF$ on $\cB$ $($with respect to
the complex-analytic topology$)$, we have
$(\tpr_*\cF)_{(0,0)} = \cF_{\tzero}$,
where $(\tpr_*\cF)_{(0,0)}$ and $\cF_{\tzero}$
denote the stalks respectively at~$(0,0)$ and~$\tzero$.
\end{Lemma}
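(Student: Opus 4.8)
The plan is to derive this directly from Lemma~\ref{lem:fibre_at_0} together with the point-set topology packaged in Lemma~\ref{lem:topology}. First I would observe that the underlying topological spaces (in the complex-analytic topology) of the finite-type Deligne--Mumford stacks $\cY_\bSigma$ and $\cM_\bSigma\times\Lie\T$ are locally compact and Hausdorff: each agrees with the underlying space of the corresponding coarse moduli space, which is a complex-analytic space of finite type and hence locally compact Hausdorff. The morphism $\tpr$ of~\eqref{eq:tpr} is given by regular functions and therefore induces a continuous map between these spaces; moreover $\tpr(\cY_\bSigma)\subset \cM_\bSigma\times\Lie\T$, since $\pr(\cY_\bSigma)=\cM_\bSigma$ and the remaining components $x_i\parfrac{F}{x_i}$ take values in $\Lie\T$.

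Next, by Lemma~\ref{lem:fibre_at_0} the set-theoretic fibre $\tpr^{-1}(0,0)$ equals $\tzero$, which is a finite set of points and hence compact. Thus Lemma~\ref{lem:topology} applies with $X=\cY_\bSigma$, $Y=\cM_\bSigma\times\Lie\T$, $f=\tpr$ and $y_0=(0,0)$: part~(1) produces an open neighbourhood $\cB$ of $\tzero$ in $\cY_\bSigma$ and an open neighbourhood $\cU$ of $(0,0)$ in $\cM_\bSigma\times\Lie\T$ with $\tpr(\cB)\subset\cU$ and $\tpr|_{\cB}\colon\cB\to\cU$ proper. This is the first assertion.

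For the statement on stalks, I would compute the stalk of the pushforward sheaf as the filtered colimit $(\tpr_*\cF)_{(0,0)}=\varinjlim_V (\tpr_*\cF)(V)=\varinjlim_V \cF\big(\tpr^{-1}(V)\cap\cB\big)$ over open neighbourhoods $V$ of $(0,0)$ contained in $\cU$. By Lemma~\ref{lem:topology}(2) the sets $\tpr^{-1}(V)\cap\cB$ form a fundamental neighbourhood system of $\tzero$ in $\cB$, so this colimit is naturally identified with $\varinjlim_W \cF(W)$ over open neighbourhoods $W$ of $\tzero$, which by definition is $\cF_{\tzero}$. Hence $(\tpr_*\cF)_{(0,0)}=\cF_{\tzero}$.

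Since the two preceding lemmas do essentially all of the work, I do not expect a serious obstacle here. The only points that require a little care are the identification $\tpr^{-1}(0,0)=\tzero$ — which is precisely Lemma~\ref{lem:fibre_at_0}, itself resting on the Borisov--Chen--Smith presentation of $H^*_\CR(\frX_\bSigma)$ together with the nilpotence of its positive-degree part — and the observation that the sheaf-theoretic conclusion is purely topological, so that one may safely replace the stacks $\cY_\bSigma$ and $\cM_\bSigma\times\Lie\T$ by their underlying analytic spaces when invoking Lemma~\ref{lem:topology}.
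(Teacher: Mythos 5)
Your proof is correct and follows essentially the same route as the paper: identify the fibre $\tpr^{-1}(0,0)=\tzero$ via Lemma~\ref{lem:fibre_at_0}, then apply Lemma~\ref{lem:topology}, with Part~(1) giving the properness statement and Part~(2), via the fundamental neighbourhood system, giving the identification of stalks. The extra details you supply (local compactness of the underlying analytic spaces and the filtered-colimit computation of the stalk) are exactly the steps the paper leaves implicit.
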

\begin{proof}We apply Lemma~\ref{lem:topology} for $f = \tpr$, $X= \cY$,
$Y= \cM_\T$ and $y_0= (0,0)$.
Note that Lemma~\ref{lem:fibre_at_0} gives $f^{-1}(y_0) = \tzero$.
The former statement follows from Part~(1) of Lemma~\ref{lem:topology} and
the latter statement on stalks follows from Part~(2).
\end{proof}

We will use the notation $\cOan$ to denote
the complex-analytic structure sheaf.
The following lemma shows that
$\tpr_*\cOan_{\cB}$ (which is coherent by Grauert's direct image theorem)
gives an analytification of
$\Briequiv(F)\sphat_\bSigma/z \Briequiv(F)\sphat_\bSigma$.
\begin{Lemma}\label{lem:analytification_algebras}
Let $\cB \subset \cY$ and $\cU\subset \cM_\T=\cM\times \Lie \T$
be as in Lemma~$\ref{lem:proper}$.
$($They are analytic open neighbourhoods of $\tzero$ and $(0,0)$
respectively.$)$
Let $(\tpr_*\cOan_{\cB})_{(0,0)}\sphat$
denote the completion of $\tpr_*\cOan_{\cB}$
at $(0,0)\in \cU$.
We have
\begin{equation}
\label{eq:tpr_B_completion}
(\tpr_*\cOan_{\cB})_{(0,0)}\sphat
\cong
\hcO_{\cY,\tzero}
\cong
\left( \Briequiv(F)\sphat_\bSigma/z \Briequiv(F)\sphat_\bSigma\right)
\otimes_{R_\T[\![\Laa(\bSigma)_+]\!]} \hcO_{\cU,
(0,0)}.
\end{equation}
Here
$\hcO_{\cU,(0,0)}$ is the
completion of $\cOan_{\cU}$ at $(0,0)$
and $\hcO_{\cY,\tzero}$ is
the completion of $\cO_{\cY}$ at $\tzero$;
more explicitly, they are
\[
\hcO_{\cU,(0,0)} =
\widehat{R_\T}[\![\Laa(\bSigma)_+]\!],
\qquad
\hcO_{\cY,\tzero} = \C[\![\OO(\bSigma)_+]\!],
\]
where $\widehat{R_\T}=\C[\![\chi_1,\dots,\chi_n]\!]$
is the completion of $R_\T = \C[\chi_1,\dots,\chi_n]$, and for a ring $K$,
$K[\![\OO(\bSigma)_+]\!]$ denotes the completion of
$K[\OO(\bSigma)_+]$ with respect to the ideal generated by
$u^{(\lambda,v)}$ with non-torsion $(\lambda,v)\in \OO(\bSigma)_+$.
\end{Lemma}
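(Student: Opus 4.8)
The plan is to show that each of the three rings in \eqref{eq:tpr_B_completion} is canonically isomorphic to $\C[\![\OO(\bSigma)_+]\!]$. Throughout I would pass to the uniformizing charts, replacing $\cY_\bSigma$ by $\Spec\C[\OO(\bSigma)_+]$ and $\cM_\bSigma\times\Lie\T$ by $\Spec\C[\Laa(\bSigma)_+]\times\Spec R_\T$; the $\Pic^\st(\frX_\bSigma)$-equivariance is carried along for free and plays no role in the ring-level identifications. On these charts $\tzero$ is the finite subscheme $V(\mathfrak{n})=\Spec\C[\bN_{\rm tor}]$, where $\mathfrak{n}\subset\C[\OO(\bSigma)_+]$ is the ideal generated by the non-torsion monomials $u^{(\lambda,v)}$, and $(0,0)=V\big(\frakm_\bSigma+(\chi_1,\dots,\chi_n)\big)$ (note that $\Laa(\bSigma)_+$ is a sharp monoid, so $0_\bSigma$ is a single point on its chart). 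The explicit descriptions $\hcO_{\cY,\tzero}=\C[\![\OO(\bSigma)_+]\!]$ and $\hcO_{\cU,(0,0)}=\widehat{R_\T}[\![\Laa(\bSigma)_+]\!]$ are then immediate from the definitions of these completion notations.

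For the first isomorphism in \eqref{eq:tpr_B_completion}, Lemma~\ref{lem:proper} gives $(\tpr_*\cOan_{\cB})_{(0,0)}=\cOan_{\cY,\tzero}$, the semilocal ring of germs of analytic functions along $\tzero$; by Grauert's direct image theorem $\tpr_*\cOan_{\cB}$ is coherent, so this stalk is a finite module over the local ring $\cOan_{\cU,(0,0)}$. Its reduction modulo the maximal ideal $\frakm_{(0,0)}$ is a finite-dimensional $\C$-algebra whose spectrum is $\tpr^{-1}(0,0)=\tzero$ by Lemma~\ref{lem:fibre_at_0}; hence $\frakm_{(0,0)}\cOan_{\cY,\tzero}$ is primary to the Jacobson radical $\frakm_{\tzero}$, and the $\frakm_{(0,0)}$-adic and $\frakm_{\tzero}$-adic topologies on $\cOan_{\cY,\tzero}$ coincide. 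Completing at $\frakm_{(0,0)}$ (this is base change to $\widehat{\cOan_{\cU,(0,0)}}$, legitimate since the module is finite) therefore yields the $\frakm_{\tzero}$-adic completion of $\cOan_{\cY,\tzero}$, which — by the same explicit computation on the affine toric chart that gives $\hcO_{\cY,\tzero}=\C[\![\OO(\bSigma)_+]\!]$ — equals $\C[\![\OO(\bSigma)_+]\!]$.

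For the third isomorphism, Proposition~\ref{prop:Briequiv_mod_z} identifies $A:=\Briequiv(F)\sphat_\bSigma/z\Briequiv(F)\sphat_\bSigma$ with the $\frakm_\bSigma$-adic completion of $\C[\OO(\bSigma)_+]$, and by \cite[Theorem~4.26]{CCIT:MS} the module $A$ is free of rank $\dim H^*_\CR(\frX_\bSigma)$ over $\Lambda:=R_\T[\![\Laa(\bSigma)_+]\!]$. By Definition~\ref{def:Brieskorn}, modulo $z$ the parameter $\chi_i$ acts on $A$ by multiplication by $x_i\parfrac{F}{x_i}=\sum_{b\in S}(\chi_i\cdot b)u_b$, i.e.\ the $\Lambda$-module structure on $A$ is the one induced by $\tpr^*$ via \eqref{eq:tpr}. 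Since $A$ is finite free over $\Lambda$, the base change $A\otimes_\Lambda\widehat{R_\T}[\![\Laa(\bSigma)_+]\!]$ is the completion of $A$ along $(\chi_1,\dots,\chi_n)A$, and iterating completions this is the completion of $\C[\OO(\bSigma)_+]$ along the ideal $\mathfrak{j}:=\frakm_\bSigma\C[\OO(\bSigma)_+]+\big\langle\sum_{b\in S}(\chi_i\cdot b)u_b:1\le i\le n\big\rangle$. By Lemma~\ref{lem:fibre_at_0}, $V(\mathfrak{j})=\tpr^{-1}(0,0)=\tzero=V(\mathfrak{n})$, and $\mathfrak{n}$ is radical since $\C[\OO(\bSigma)_+]/\mathfrak{n}=\C[\bN_{\rm tor}]$ is reduced; as $\C[\OO(\bSigma)_+]$ is Noetherian we get $\mathfrak{n}^N\subseteq\mathfrak{j}\subseteq\mathfrak{n}$ for some $N$, so the $\mathfrak{j}$-adic and $\mathfrak{n}$-adic completions agree. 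Hence $A\otimes_\Lambda\widehat{R_\T}[\![\Laa(\bSigma)_+]\!]=\C[\![\OO(\bSigma)_+]\!]=\hcO_{\cY,\tzero}$, which closes the chain of isomorphisms.

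The hard part will be keeping the three completions straight — the $\frakm_\bSigma$-adic one built into $\Briequiv(F)\sphat_\bSigma$, the completion at the point $(0,0)$ (equivalently, along $\mathfrak{n}$), and the analytic completion of $\tpr_*\cOan_{\cB}$ — and checking that tensoring with $\widehat{R_\T}$ exactly supplies the missing completion in the equivariant directions. The single geometric input that makes everything fit is Lemma~\ref{lem:fibre_at_0}: the equality $\tpr^{-1}(0,0)=\tzero$ is what forces the two adic topologies on the analytic side to coincide \emph{and} yields the Nullstellensatz-type relation $\sqrt{\mathfrak{j}}=\mathfrak{n}$ on the algebraic side, while the freeness of $\Briequiv(F)\sphat_\bSigma$ over $R_\T[z][\![\Laa(\bSigma)_+]\!]$ is what legitimizes replacing base changes by completions.
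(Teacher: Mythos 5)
Your proposal is correct and follows essentially the same route as the paper: both identify each side of \eqref{eq:tpr_B_completion} with $\hcO_{\cY,\tzero}=\C[\![\OO(\bSigma)_+]\!]$ by using Lemma~\ref{lem:proper} to rewrite the stalk, Lemma~\ref{lem:fibre_at_0} to show the two adic topologies (at $(0,0)$ and along $\tzero$) agree on both the analytic and the algebraic side, and Proposition~\ref{prop:Briequiv_mod_z} together with the finite-freeness from Theorem~\ref{thm:mirror_isom} and an iterated-completion argument for the third term. The only cosmetic difference is that you justify the equivalence of topologies via Grauert coherence and an Artinian-quotient argument, whereas the paper argues directly that $\frakman_{\tzero}$ (resp.\ $\frakm_{\tzero}$) is the radical of the extended ideal of $(0,0)$ — the same Nullstellensatz-type point.
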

\begin{proof}We show that the natural maps{\samepage
\begin{gather}\label{eq:pr_B_Y}
(\tpr_*\cOan_{\cB})_{(0,0)}\sphat \to
\hcO_{\cY,\tzero}, \\
\label{eq:Briequiv_Y}
\left( \Briequiv(F)\sphat_\bSigma/z \Briequiv(F)\sphat_\bSigma\right)
\otimes_{R_\T[\![\Laa(\bSigma)_+]\!]} \hcO_{\cU,
(0,0)}
 \to \hcO_{\cY,\tzero}
\end{gather}
are} isomorphisms. First we prove that the map~\eqref{eq:pr_B_Y} is an isomorphism.
The left-hand side of~\eqref{eq:pr_B_Y} is the $\frakman_{(0,0)}$-adic
completion of $(\tpr_*\cOan_{\cB})_{(0,0)}$, where $\frakman _{(0,0)}\subset \cOan_{\cU,
(0,0)}$ is the ideal of $(0,0)$.
Recall from Lemma~\ref{lem:fibre_at_0} that the set-theoretic fibre of $\tpr$ at
$(0,0)$ is $\tzero$; also from Lemma~\ref{lem:proper} that
$(\tpr_*\cOan_{\cB})_{(0,0)}
=\cOan_{\cB,\tzero}$.
Therefore the ideal $\frakman_{\tzero}\subset
\cOan_{\cB,\tzero}$
of $\tzero$ is the radical of the ideal generated by
$\frakman_{(0,0)}$,
and the $\frakman_{(0,0)}$-adic topology on
$(\tpr_*\cOan_{\cB})_{(0,0)}
=\cOan_{\cB,\tzero}$ is equivalent to
the $\frakman_{\tzero}$-adic topology on the same space.
The completion with respect to the latter topology equals
$\hcO_{\cB,\tzero} = \hcO_{\cY,\tzero}$.

We 
prove similarly that the map \eqref{eq:Briequiv_Y} is an isomorphism.
Recall that $\Briequiv(F)\sphat_\bSigma/z\Briequiv(F)\sphat_\bSigma$ is
a finite free $R_\T[\![\Laa(\bSigma)_+]\!]$-module by
Theorem~\ref{thm:mirror_isom}.
Also $\hcO_{\cU, (0,0)}$ is the completion
of $R_\T[\![\Laa(\bSigma)_+]\!]$ with respect to
the ideal $\widehat{\frakm}_{(0,0)}$
generated by $\chi_1,\dots,\chi_n$
and~$\frakm_\bSigma$.
Therefore the left-hand side of~\eqref{eq:Briequiv_Y} is
the $\widehat{\frakm}_{(0,0)}$-adic completion of
$\Briequiv(F)\sphat_\bSigma/z\Briequiv(F)\sphat_\bSigma$
(see, e.g., \cite[\href{http://stacks.math.columbia.edu/tag/00MA}{Lemma~00MA}]{stacks-project}).
Moreover $\Briequiv(F)\sphat_\bSigma/z\Briequiv(F)\sphat_\bSigma$
is the $\frakm_\bSigma$-adic completion of $\C[\OO(\bSigma)_+]$
by Proposition \ref{prop:Briequiv_mod_z}.
Thus\footnote{Here we used the following fact. Let $I\subset J$ be
ideals of a ring $R$, let $M$ be an $R$-module
and assume that $I$ is finitely generated.
Let $M\sphat_I$ denote the $I$-adic completion of $M$
and let $(M\sphat_I)\sphat_{J}$ denote the $J$-adic completion
of $M\sphat_I$. Then $(M\sphat_I)\sphat_J \cong M\sphat_J$.
This follows from $M\sphat_I/J^k M\sphat_I \cong
(M\sphat_I/I^k M\sphat_I)/J^k (M\sphat_I/I^k M\sphat_I)
\cong (M/I^k)/J^k(M/I^k) \cong M/J^k M$
(where we used \cite[\href{http://stacks.math.columbia.edu/tag/05GG}
{Lemma 05GG}]{stacks-project} in the middle step).}
the left-hand side of \eqref{eq:Briequiv_Y} is the
$\frakm_{(0,0)}$-adic completion
of $\C[\OO(\bSigma)_+]$, where $\frakm_{(0,0)}
:= \frakm_\bSigma + (\chi_1,\dots,\chi_n)$ is an ideal of
$R_\T[\Laa(\bSigma)_+]$.
Again Lemma \ref{lem:fibre_at_0} implies that
the $\frakm_{(0,0)}$-adic completion
of $\C[\OO(\bSigma)_+]$ equals the
$\frakm_{\tzero}$-adic completion of it,
where $\frakm_{\tzero} \subset \C[\OO(\bSigma)_+]$
is the ideal of $\tzero$. This shows that
the map \eqref{eq:Briequiv_Y} is an isomorphism.
The lemma is proved.
\end{proof}

Because $\Briequiv(F)\sphat_\bSigma/z \Briequiv(F)\sphat_\bSigma$
is a free $R_\T[\![\Laa(\bSigma)_+]\!]$-module of rank
$\dim H_{\CR}^*(\frX_\bSigma)$ (by Theo\-rem~\ref{thm:mirror_isom}),
we have the following corollary.
\begin{Corollary}\label{cor:finite_free}The modules \eqref{eq:tpr_B_completion} are free
$\hcO_{\cU,(0,0)}$-modules of rank $\dim H_\CR^*(\frX_\bSigma)$.
\end{Corollary}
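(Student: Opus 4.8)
The plan is to read the corollary off directly from Lemma~\ref{lem:analytification_algebras} together with the freeness statement that comes out of the mirror theorem. Write $d := \dim H_\CR^*(\frX_\bSigma)$. By Theorem~\ref{thm:mirror_isom} (equivalently, by the remark following it, or by Proposition~\ref{prop:Briequiv_mod_z} combined with \cite[Theorem~4.26]{CCIT:MS}), the module $\Briequiv(F)\sphat_\bSigma/z\Briequiv(F)\sphat_\bSigma$ is a \emph{free} $R_\T[\![\Laa(\bSigma)_+]\!]$-module of rank $d$. So the only thing to observe is that this freeness is inherited by the base change appearing as the third term of \eqref{eq:tpr_B_completion}.

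First I would record that $\hcO_{\cU,(0,0)} = \widehat{R_\T}[\![\Laa(\bSigma)_+]\!]$ is an $R_\T[\![\Laa(\bSigma)_+]\!]$-algebra (via the completion map $R_\T \to \widehat{R_\T}$), as spelled out in Lemma~\ref{lem:analytification_algebras}. Tensoring a free module of rank $d$ along any ring homomorphism again produces a free module of the same rank; hence
$\bigl(\Briequiv(F)\sphat_\bSigma/z\Briequiv(F)\sphat_\bSigma\bigr)\otimes_{R_\T[\![\Laa(\bSigma)_+]\!]}\hcO_{\cU,(0,0)}$
is a free $\hcO_{\cU,(0,0)}$-module of rank $d$. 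Finally, the chain of isomorphisms \eqref{eq:tpr_B_completion} established in Lemma~\ref{lem:analytification_algebras} identifies this module with $(\tpr_*\cOan_{\cB})_{(0,0)}\sphat$ and with $\hcO_{\cY,\tzero}$, so all three modules in \eqref{eq:tpr_B_completion} are free of rank $d$ over $\hcO_{\cU,(0,0)}$.

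I do not expect any genuine obstacle here: all the work has already been done in Theorem~\ref{thm:mirror_isom} and Lemma~\ref{lem:analytification_algebras}, and what remains is the elementary stability of ``free of rank $d$'' under base change. As a sanity check (and an alternative route that avoids quoting the rank from the mirror theorem) one could instead argue directly from $\hcO_{\cY,\tzero}=\C[\![\OO(\bSigma)_+]\!]$ and $\hcO_{\cU,(0,0)}=\widehat{R_\T}[\![\Laa(\bSigma)_+]\!]$: by Remark~\ref{rem:flatness_pr}, $\C[\OO(\bSigma)_+]$ is free over $\C[\Laa(\bSigma)_+]$ with basis $\{u^{(\Psi^\bSigma(\overline v),v)}\}_{v\in\bN\cap\Pi}$, and this basis survives $\frakm_\bSigma$-adic (and then $(\chi_1,\dots,\chi_n)$-adic) completion, so $\hcO_{\cY,\tzero}$ is free over $\hcO_{\cU,(0,0)}$ of rank $\#(\bN\cap\Pi)$; that this rank equals $\dim H_\CR^*(\frX_\bSigma)$ is precisely the Borisov--Chen--Smith count already invoked in the proof of Lemma~\ref{lem:fibre_at_0}. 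Either way the verification is routine.
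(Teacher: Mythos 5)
Your main argument is correct and is exactly the paper's: the corollary is recorded as an immediate consequence of the fact (Theorem~\ref{thm:mirror_isom}, i.e., \cite[Theorem~4.26]{CCIT:MS}) that $\Briequiv(F)\sphat_\bSigma/z\Briequiv(F)\sphat_\bSigma$ is a free $R_\T[\![\Laa(\bSigma)_+]\!]$-module of rank $\dim H^*_\CR(\frX_\bSigma)$, together with base change along $R_\T[\![\Laa(\bSigma)_+]\!]\to\hcO_{\cU,(0,0)}$ and the identifications of Lemma~\ref{lem:analytification_algebras}.

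One caution: the ``alternative route'' in your last sentences is not correct as stated. The basis $\{u^{(\Psi^\bSigma(\overline v),v)}\}_{v\in\bN\cap\Pi}$ of Remark~\ref{rem:flatness_pr} is a basis of $\C[\OO(\bSigma)_+]$ over $\C[\Laa(\bSigma)_+]$, i.e., for the module structure in which only the base variables $q$ act; the index set $\bN\cap\Pi$ is infinite and is certainly not $\dim H^*_\CR(\frX_\bSigma)$ (the Borisov--Chen--Smith count in Remark~\ref{rem:cohomology_drop} is a volume/box count, not a count of all lattice points of $\Pi$). Over $\hcO_{\cU,(0,0)}=\widehat{R_\T}[\![\Laa(\bSigma)_+]\!]$ the equivariant parameters $\chi_i$ act through $\tpr$, i.e., by multiplication by $x_i\parfrac{F}{x_i}$, and it is precisely the non-trivial content of the mirror theorem (freeness of the completed Brieskorn module) that this larger action cuts the rank down to the finite number $\dim H^*_\CR(\frX_\bSigma)$. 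The computation in Lemma~\ref{lem:fibre_at_0} only identifies the fibre of $\hcO_{\cY,\tzero}$ at the closed point of $\hcO_{\cU,(0,0)}$ with $H^*_\CR(\frX_\bSigma)$; that determines the rank \emph{once} freeness (or at least flatness) is known, but does not by itself give it, and a miracle-flatness shortcut is unavailable because $\hcO_{\cU,(0,0)}$ need not be regular ($\cM$ is in general a singular toric stack). So the first argument is the one to keep; the purported elementary bypass of Theorem~\ref{thm:mirror_isom} does not work.
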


\begin{Corollary}\label{cor:tpr_locally_free}
Let $\cB \subset \cY$ and $\cU \subset \cM_\T$
be as in Lemma~$\ref{lem:proper}$. By shrinking~$\cB$,
$\cU$ if necessary, we have that
$\tpr_*\cOan_{\cB}$ is a locally free
$\cOan_{\cU}$-module
of rank $\dim H_{\CR}^*(\frX_\bSigma)$.
In particular, $\tpr \colon \cB \to \cU$ is a finite flat morphism.
\end{Corollary}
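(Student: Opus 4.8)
The plan is to bootstrap from the formal statement in Corollary~\ref{cor:finite_free} to an analytic one, using coherence, faithful flatness of completion, and semicontinuity of fibre dimension. By Grauert's direct image theorem $\tpr_*\cOan_{\cB}$ is a coherent $\cOan_{\cU}$-module, so each stalk $(\tpr_*\cOan_{\cB})_y$ is a finite module over the Noetherian local ring $\cOan_{\cU,y}$; in particular this holds at $y=(0,0)$. First I would upgrade the freeness of the \emph{completion} to freeness of the \emph{stalk} at $(0,0)$. The ring $\hcO_{\cU,(0,0)}$ is the $\frakman_{(0,0)}$-adic completion of the Noetherian local ring $\cOan_{\cU,(0,0)}$, hence faithfully flat over it. By Corollary~\ref{cor:finite_free} together with the stalk identity $(\tpr_*\cF)_{(0,0)}=\cF_{\tzero}$ of Lemma~\ref{lem:proper}, the finite $\cOan_{\cU,(0,0)}$-module $M:=(\tpr_*\cOan_{\cB})_{(0,0)}$ has completion free of rank $r:=\dim H_\CR^*(\frX_\bSigma)$. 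A standard argument then shows $M$ itself is free of rank $r$: choose by Nakayama a surjection $\cOan_{\cU,(0,0)}^{\oplus r}\twoheadrightarrow M$ lifting a basis of $M\otimes\C$; after completion it becomes a surjection of free $\hcO_{\cU,(0,0)}$-modules of equal rank, hence an isomorphism, so the (finite) kernel has zero completion and therefore vanishes by faithful flatness.

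Next I would spread this over a neighbourhood using the elementary fact that a coherent analytic sheaf with free stalk of rank $r$ at a point is locally free of rank $r$ near that point. Concretely, extend a basis of $M$ to sections over a small polydisc-like neighbourhood $\cU'\subset\cU$ of $(0,0)$, giving a morphism $\cOan_{\cU'}^{\oplus r}\to \tpr_*\cOan_{\cB}\big|_{\cU'}$ which is an isomorphism on stalks at $(0,0)$; its kernel and cokernel are coherent with zero stalk at $(0,0)$, hence vanish on a possibly smaller neighbourhood $\cU''$. One then shrinks $\cU$ to $\cU''$ and replaces $\cB$ by $\tpr|_{\cB}^{-1}(\cU'')$ (still an open neighbourhood of $\tzero$, and $\tpr$ restricted to it is still proper); on these $\tpr_*\cOan_{\cB}$ is locally free of rank $r$, and in particular a flat $\cOan_{\cU}$-module.

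For the last assertion it remains to see that $\tpr|_{\cB}\colon\cB\to\cU$ is a finite morphism. It is proper by Lemma~\ref{lem:proper}, and by Lemma~\ref{lem:fibre_at_0} its fibre over $(0,0)$ is the zero-dimensional set $\tzero$. By upper semicontinuity of fibre dimension for the proper map $\tpr|_{\cB}$, all fibres over a neighbourhood of $(0,0)$ are zero-dimensional; a proper holomorphic map with discrete (hence, by properness, finite) fibres is finite. So after the shrinking above $\tpr|_{\cB}$ is finite, and since $\tpr_*\cOan_{\cB}$ is locally free it is finite flat. Alternatively, once we know $\tpr_*\cOan_{\cB}$ is a coherent $\cOan_{\cU}$-algebra, finiteness follows because the canonical morphism from $\cB$ to the analytic spectrum of $\tpr_*\cOan_{\cB}$ appearing in the Stein factorization of $\tpr|_{\cB}$ is an isomorphism when the fibres are finite.

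The main obstacle I anticipate is the descent step in the first paragraph—carefully checking that freeness of the relevant finite module over a Noetherian local ring is detected by its completion, and bookkeeping the fact that shrinking the base $\cU$ forces a compatible shrinking of $\cB=\tpr|_{\cB}^{-1}(\cU)$ under which properness (and the stalk identification) are preserved. Everything else—Grauert coherence, semicontinuity of fibre dimension, proper-plus-finite-fibres implies finite, and local freeness from a free stalk—is standard complex-analytic geometry that can be quoted.
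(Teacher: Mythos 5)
Your proposal is correct and follows essentially the same route as the paper: Grauert coherence, reduction to the stalk at $(0,0)$, and descent of freeness from the completion supplied by Corollary~\ref{cor:finite_free} (the paper quotes the standard fact via \cite[Theorem~22.4(1)]{Matsumura:comm_ring_theory}, while you reprove it by Nakayama plus faithful flatness of completion). Your extra care about spreading out from the stalk, shrinking $\cB$ compatibly, and deducing finiteness from properness and semicontinuity of fibre dimension fills in details the paper leaves implicit.
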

\begin{proof}Note that $\tpr_*\cOan_{\cB}$ is a coherent
$\cOan_{\cU}$-module by Grauert's Direct Image
Theorem and Lemma \ref{lem:proper}.
Thus it suffices to show that the stalk
$(\tpr_*\cOan_{\cB})_{(0,0)}$
is a free $\cOan_{\cU, (0,0)}$-module
of rank $\dim H_{\CR}^*(\frX_\bSigma)$.
By Corollary \ref{cor:finite_free}, we know that
the completion
$(\tpr_*\cOan_{\cB})\sphat_{(0,0)}$
is a free $\hcO_{\cU,(0,0)}$-module
of rank $\dim H_\CR^*(\frX_\bSigma)$.
The conclusion follows from the following (probably) well-known fact:
for a Noetherian local ring $(A,\frakm)$ and a finite $A$-module $M$,
$M$ is a free $A$-module of rank $r$ if and only if
its $\frakm$-adic completion
$\widehat{M} = M\otimes_A \widehat{A}$ is a free $\widehat{A}$-module of rank~$r$.
This follows, for instance, by combining the fact that a finite flat module over a local ring
is free, \cite[Theo\-rem~22.4(1)]{Matsumura:comm_ring_theory}
and $M/\frakm M \cong \widehat{M}/\frakm \widehat{M}$.
\end{proof}

\subsubsection{Analytification of D-modules}\label{subsubsec:analytification_D-module}

Next we construct an extension of the completed Brieskorn module $\Briequiv(F)_\bSigma\sphat$
to an analytic neighbourhood of $(0_\bSigma,0) \in \cM_\T = \cM\times \Lie \T$.
Let $\cU \subset \cM_\T$ and $\cB \subset \cY$
denote analytic open neighbourhoods of $(0,0)=(0_\bSigma,0)$ and
$\tzero=\tzero_\bSigma$
respectively as in Corollary \ref{cor:tpr_locally_free}.
Recall that $\tpr_*\cOan_{\cB}$ is a locally free
$\cOan_{\cU}$-module by Corollary \ref{cor:tpr_locally_free}.
\begin{Definition}[cf.~Definition \ref{def:Brieskorn}]
\label{def:analytic_Bri}
Let $\bSigma\in\Fan(S)$ be a stacky fan adapted to $S$.
\begin{enumerate}\itemsep=0pt
\item[(1)] The \emph{analytified equivariant Brieskorn module} around
the limit point $0=0_\bSigma$
is the sheaf over $\cU$
\[
\Briequivan(F)_\bSigma :=
\tpr_*\left(\cOan_{\cB}[\![z]\!]\right)\cdot \omega.
\]
We equip $\Briequivan(F)_\bSigma$ with an
$\cOan_{\cU}[\![z]\!]$-module
structure by the formula
\begin{equation}
\label{eq:chi_action_on_Bri}
\chi_i \cdot (f \omega) = \left(
x_i \parfrac{F}{x_i} f+ zx_i\parfrac{f}{x_i} \right) \omega
\end{equation}
for $f\in \tpr_*(\cOan_{\cB}[\![z]\!])$,
together with the usual multiplication of functions in $q$ and $z$
(where~$\chi_i$ is a co-ordinate on $\Lie \T$ and~$q$ is a co-ordinate on $\cM$,
see Sections~\ref{subsec:coord_localchart_LG} and~\ref{subsec:Bri}).
A flat connection $\nabla = d + d(F/z)\wedge$ on
$\cOan_{\cB}[\![z]\!]$
induces, via the choice of a splitting
$\big(\C^S\big)^\star \cong \bM_\C \oplus \LL_\C^\star$ of
the extended divisor sequence~\eqref{eq:ext_divseq} over $\C$,
a partial (logarithmic) flat connection on $\Briequivan(F)_\bSigma$
\[
\nabla \colon \ \Briequivan(F)_\bSigma \to z^{-1} \Briequivan(F)
\otimes_{\cOan_{\cU}} \Omega^{1,\rm an}_{\cU/\Lie \T}
\]
in the direction of $\cM$, see the discussion after Definition~\ref{def:Brieskorn}.
A \emph{grading operator}
$\Gr\in \End_\C(\Briequivan(F)_\bSigma)$ is defined
as before
\[
\Gr(f \omega) =\left(z\parfrac{f}{z} + \sum_{b\in S} u_b \parfrac{f}{u_b}\right) \omega.
\]
\item[(2)] The (non-equivariant) \emph{analytified Brieskorn module}
$\Brian(F)_\bSigma$ around the limit point~$0$
is defined to be the restriction of $\Briequivan(F)_\bSigma$ to
$\cV = (\cM \times \{0\}) \cap \cU$.
It is equipped with the flat connection $\nabla\colon
\Brian(F)_\bSigma \to z^{-1} \Brian(F)_\bSigma \otimes
\Omega^{1, \rm an}_{\cV}$ (independent of the
choice of a splitting) and the grading operator
$\Gr \in \End_\C\big(\Brian(F)_\bSigma\big)$. The connection $\nabla$
and $\Gr$ together give a flat connection in the $z$-direction
as in Remark~\ref{rem:conn_z}.
\end{enumerate}
\end{Definition}

\begin{Remark}
The overline for $\Briequivan(F)_\bSigma$,
$\Brian(F)_\bSigma$ indicates that they are completed in the
$z$-adic topology. Note also that these analytified Brieskorn modules depend
on the choice of $\bSigma \in \Fan(S)$.
\end{Remark}

\begin{Remark}\label{rem:module_structure}
The $\cOan_{\cU}\![\![z]\!]$-module structure on
$\Briequivan(F)_\bSigma$ is not a standard one on
$\tpr_*\!(\cOan_{\cB}\![\![z]\!])\omega.\!$
A~more precise definition of the module structure is described as follows:
we let a function $h=h(q,\chi_1,\dots,\chi_n,z)
\in \cOan_{\cU}[\![z]\!]$
act on a section $f(q,x,z) \omega$ of $\Briequivan(F)_\bSigma$ as
\[
h\cdot (f\omega) =
\left[ h\left(q, x_1\parfrac{F}{x_1} + z x_1\parfrac{}{x_1},\dots,
x_n\parfrac{F}{x_n} + z x_n \parfrac{}{x_n}, z\right) f(q,x,z) \right] \omega,
\]
where in the right-hand side we expand $h\big(
q, \textstyle x_1\parfrac{F}{x_1} + z x_1\parfrac{}{x_1},\dots,
x_n\parfrac{F}{x_n} + z x_n \parfrac{}{x_n}, z\big)$
in power series of $zx_i\parfrac{}{x_i}$ and apply it to $f$.
Note that $h\big(q,x_1\parfrac{F}{x_1},\dots,x_n\parfrac{F}{x_n},z\big)$
is the pull-back of $h(q,\chi_1,\dots,\chi_n,z)$
under $\tpr$ since $x_i \parfrac{F}{x_i} = \tpr^*(\chi_i)$.
The action is well-defined, since we allow
any formal power series in $z$.
\end{Remark}

\begin{Proposition}
\label{prop:Briequivan_finite_free}
$\Briequivan(F)_\bSigma$ is a locally free
$\cOan_{\cU}[\![z]\!]$-module of rank
$\dim H^*_{\CR}(\frX_\bSigma)$.
In particular $\Brian(F)_\bSigma$ is a locally free
$\cOan_{\cV}[\![z]\!]$-module of the same rank.
\end{Proposition}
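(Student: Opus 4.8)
The plan is to pass to $z=0$, where the twisted $\cOan_{\cU}[\![z]\!]$-module structure of Remark~\ref{rem:module_structure} degenerates to the ordinary pull-back structure along $\tpr$, and then to lift a local frame back up by a Nakayama/five-lemma argument along the $z$-adic filtration. Set $M:=\Briequivan(F)_\bSigma=\tpr_*(\cOan_\cB[\![z]\!])\cdot\omega$. The first thing I would record is that, by the choice of $\cB$ and $\cU$ (Corollary~\ref{cor:tpr_locally_free}), the map $\tpr|_\cB\colon\cB\to\cU$ is finite and flat and $\tpr_*\cOan_\cB$ is a locally free $\cOan_\cU$-module of rank $r:=\dim H^*_\CR(\frX_\bSigma)$. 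Since a finite morphism is affine, $\tpr_*$ is exact and, being a right adjoint, commutes with inverse limits; applying it to $0\to\cOan_\cB[\![z]\!]\xrightarrow{\,z\,}\cOan_\cB[\![z]\!]\to\cOan_\cB\to0$ and to $\cOan_\cB[\![z]\!]=\varprojlim_k\cOan_\cB[z]/z^k$ shows that, as a sheaf of abelian groups, $M\cong(\tpr_*\cOan_\cB)[\![z]\!]$: it is $z$-torsion free, $z$-adically separated and complete, with $M/z^kM\cong\tpr_*(\cOan_\cB[z]/z^k)$ for every $k$. In particular the ``expected rank'' $r$ is visible already at the level of abelian sheaves; the point of the proposition is that rank-$r$ freeness persists for the twisted $\cOan_\cU[\![z]\!]$-action.

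Second, I would check the compatibility of the two structures modulo $z$. By \eqref{eq:chi_action_on_Bri} and Remark~\ref{rem:module_structure}, $\chi_i$ acts on $f\omega$ by $\big(x_i\parfrac{F}{x_i}\big)f+z\,x_i\parfrac{f}{x_i}$, which reduces mod $z$ to multiplication by $x_i\parfrac{F}{x_i}=\tpr^*\chi_i$, and a function $h(q,\chi,z)$ acts mod $z$ by $\tpr^*h(q,\chi,0)$. Hence $M/zM\cong\tpr_*\cOan_\cB$ \emph{as an $\cOan_\cU$-module}, and this is locally free of rank $r$ by Corollary~\ref{cor:tpr_locally_free}.

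Third comes the lifting. Fix $p\in\cU$ and a connected open $V\ni p$ with $V\subset\cU$ on which $M/zM$ is $\cOan_V$-free with basis $\bar e_1,\dots,\bar e_r$; lift these to $e_1,\dots,e_r\in\Gamma(V,M)$ and let $\varphi\colon\cOan_V[\![z]\!]^{\oplus r}\to M|_V$ be the $\cOan_V[\![z]\!]$-linear map sending the $i$-th standard basis vector to $e_i$, where $M|_V$ carries the twisted structure. Modulo $z$, $\varphi$ is the chosen isomorphism onto $(M/zM)|_V$. Using that $z$ is injective on $M$ one gets (by the five lemma applied to the evident ladder relating $0\to z^kM/z^{k+1}M\to M/z^{k+1}M\to M/z^kM\to0$, the identification $z^kM/z^{k+1}M\cong M/zM$, and the corresponding sequences for $\cOan_V[\![z]\!]^{\oplus r}$, inducting on $k$) that the reductions $\varphi\bmod z^k$ are all isomorphisms; passing to the inverse limit, $\varphi$ is an isomorphism. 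Hence $M|_V\cong\cOan_V[\![z]\!]^{\oplus r}$, which proves that $\Briequivan(F)_\bSigma$ is a locally free $\cOan_\cU[\![z]\!]$-module of rank $r$. The non-equivariant statement is then immediate: $\Brian(F)_\bSigma=\Briequivan(F)_\bSigma|_\cV$ with $\cV=(\cM\times\{0\})\cap\cU$ (Definition~\ref{def:analytic_Bri}(2)), and restriction along the closed embedding $\cV\hookrightarrow\cU$ carries $\cOan_\cU[\![z]\!]$ to $\cOan_\cV[\![z]\!]$ and a locally free module of rank $r$ to one of rank $r$.

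The only genuinely delicate point — the main obstacle — is that local freeness must be established for the \emph{twisted} module structure of Remark~\ref{rem:module_structure}, not for the manifestly free pull-back structure $(\tpr_*\cOan_\cB)[\![z]\!]$; the resolution is the elementary observation that the two structures agree modulo $z$, so that $z$-adic completeness of $M$ (which rests on $\tpr|_\cB$ being finite, Corollary~\ref{cor:tpr_locally_free}) lets one lift an $\cOan_\cU$-frame of $M/zM$ to an $\cOan_\cU[\![z]\!]$-frame of $M$.
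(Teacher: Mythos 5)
Your proof is correct and follows essentially the same route as the paper: reduce modulo $z$, where the twisted module structure becomes the standard one on $\tpr_*\cOan_{\cB}$ (locally free of the expected rank by Corollary~\ref{cor:tpr_locally_free}), then lift a local frame using $z$-adic completeness and separatedness. The paper packages the lifting as a Nakayama-type statement for the complete ring $\cOan_{\cU}[\![z]\!]$ (generation) plus a lowest-order-coefficient argument (independence), whereas you run a five-lemma induction on $M/z^kM$ and pass to the inverse limit — the same idea in slightly different bookkeeping.
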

\begin{proof}
This follows from Corollary \ref{cor:tpr_locally_free}.
Recall that the $\cOan_{\cU}[\![z]\!]$-module structure
on $\Briequivan(F)_\bSigma$ equals the standard one
on $\pr_*(\cO_{\cB}[\![z]\!])$ modulo $z$.
We show that any local basis
$s_1,\dots,s_N$ of $\tpr_*\cOan_{\cB}$ over
$\cOan_{\cU}$ gives rise to a local basis of
$\Briequivan(F)_\bSigma$ over $\cOan_{\cU}[\![z]\!]$.
That $s_1,\dots,s_N$ generate $\Briequiv(F)_\bSigma$
over $\cOan_{\cU}[\![z]\!]$
follows from the following fact.
Let $K$ be a ring with an ideal $\frakm$ and let~$M$ be a $K$-module.
Suppose that $K$ is $\frakm$-adically complete
and $M$ is Hausdorff with respect to the $\frakm$-adic topology,
that is, $\bigcap\limits_{i\ge 0} \frakm^i M = \{0\}$.
If $s_1,\dots,s_N\in K$ generate $M/\frakm M$ over $K/\frakm$,
then $s_1,\dots,s_N$ generate $M$ over $K$.
See for instance \cite[Corollary 2, Section~3, Chapter~VIII]{Zariski-Samuel}.
We apply this fact for $K = \cOan_{\cU}[\![z]\!]$,
$\frakm=z K$,
$M = \Briequivan(F)_\bSigma$.
On the other hand, suppose we have a relation $\sum\limits_{i=1}^N c_i s_i =0$
with $c_i \in \cOan_{\cU}[\![z]\!]$ and
$(c_1,\dots,c_N) \neq 0$.
Setting $c_i = c_{i,m}z^m + O\big(z^{m+1}\big)$
for some $m\ge 0$ and $c_{i,m}\in \cOan_{\cU}$ with
$(c_{1,m},\dots,c_{N,m}) \neq 0$,
we obtain a non-trivial relation $\sum\limits_{i=1}^N c_{i,m} s_i =0$ in
$\tpr_* \cOan_{\cB}$ over $\cOan_{\cU}$.
This is a contradiction. Thus $s_1,\dots,s_N$ are linearly independent over $\cOan_{\cU}[\![z]\!]$.
\end{proof}

The next proposition shows that $\Briequivan(F)_\bSigma$
is an analytification of $\Briequiv(F)\sphat_\bSigma$
(and thus justifies the name).

\begin{Proposition}[cf.~Lemma \ref{lem:analytification_algebras}]
\label{prop:analytification_D_modules}
Let $\big(\Briequivan(F)_\bSigma\big)_{(0,0)}\sphat$ denote
the $\frakman_{(0,0)}$-adic completion of
$\big(\Briequivan(F)_\bSigma\big)_{(0,0)}$,
where $\frakman_{(0,0)} \subset \cOan_{\cU,(0,0)}$
is the ideal of $(0,0)$.
Then
\begin{itemize}\itemsep=0pt
\item[{\rm (1)}]
$\big(\Briequivan(F)_\bSigma\big)_{(0,0)}\sphat$ has the
structure of an
$\hcO_{\cU,(0,0)}[\![z]\!]$-module;
\item[{\rm (2)}]
we have an isomorphism of (finite, free)
$\hcO_{\cU,(0,0)}[\![z]\!]$-modules
\begin{equation*}
\big(\Briequivan(F)_\bSigma\big)_{(0,0)}\sphat
\cong
\hcO_{\cY,\tzero}[\![z]\!]
\cong
\Briequiv(F)\sphat_\bSigma\otimes_{R_\T[z][\![\Laa(\bSigma)_+]\!]}
\hcO_{\cU,(0,0)}[\![z]\!],
\end{equation*}
\end{itemize}
where $\hcO_{\cU,(0,0)}$ and
$\hcO_{\cY,\tzero}$ are as in
Lemma~$\ref{lem:analytification_algebras}$.
In Part~$(2)$, the $\hcO_{\cU,(0,0)}[\![z]\!]$-module structure on
$\hcO_{\cY,\tzero}[\![z]\!]$ is defined similarly
to Definition~$\ref{def:analytic_Bri}$:
$\chi_i$ acts on it by the formula~\eqref{eq:chi_action_on_Bri}
and functions in $q$ and $z$ act in the usual way
$($see also Remark $\ref{rem:module_structure})$.
\end{Proposition}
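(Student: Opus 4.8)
The plan is to imitate the proof of Lemma~\ref{lem:analytification_algebras}, carrying the formal variable $z$ throughout, and to deduce the statement from the already-established $z=0$ case. Write $N := \dim H^*_\CR(\frX_\bSigma)$. Part~(1) is formal. By Proposition~\ref{prop:Briequivan_finite_free} the stalk $\bigl(\Briequivan(F)_\bSigma\bigr)_{(0,0)}$ is a free $\cOan_{\cU,(0,0)}[\![z]\!]$-module of rank $N$, and $\frakman_{(0,0)}$ is generated by $\chi_1,\dots,\chi_n$ together with the finitely many $q^\lambda$ vanishing at $0_\bSigma$ (recall $\Laa(\bSigma)_+$ is a finitely generated monoid). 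Completing a module with respect to a finitely generated ideal of its ground ring yields a module over the completed ground ring, and since $\cOan_{\cU,(0,0)}[\![z]\!]/\frakman_{(0,0)}^k = \bigl(\cOan_{\cU,(0,0)}/\frakman_{(0,0)}^k\bigr)[\![z]\!]$, passing to the limit gives $\hcO_{\cU,(0,0)}[\![z]\!]$; hence $\bigl(\Briequivan(F)_\bSigma\bigr)_{(0,0)}\sphat$ is a free $\hcO_{\cU,(0,0)}[\![z]\!]$-module of rank $N$, the non-standard $\chi_i$-action being the scalar multiplication induced on the completion.

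For Part~(2) I would exhibit two natural morphisms of $\hcO_{\cU,(0,0)}[\![z]\!]$-modules
\[
\bigl(\Briequivan(F)_\bSigma\bigr)_{(0,0)}\sphat \;\longrightarrow\; \hcO_{\cY,\tzero}[\![z]\!] \;\longleftarrow\; \Briequiv(F)\sphat_\bSigma \otimes_{R_\T[z][\![\Laa(\bSigma)_+]\!]} \hcO_{\cU,(0,0)}[\![z]\!]
\]
and prove that both are isomorphisms. The right-hand arrow is the base change of the tautological map $\Briequiv(F)_\bSigma = \C[z][\OO(\bSigma)_+]\cdot\omega \to \C[\![\OO(\bSigma)_+]\!][\![z]\!] = \hcO_{\cY,\tzero}[\![z]\!]$, which is $\chi_i$-equivariant since both sides carry the twisted action of \eqref{eq:chi_action_on_Bri}; by Theorem~\ref{thm:mirror_isom} (freeness of $\Briequiv(F)\sphat_\bSigma$ over $R_\T[z][\![\Laa(\bSigma)_+]\!]$) and the iterated-completion fact used in the proof of Lemma~\ref{lem:analytification_algebras}, the source here is the $\bigl(\frakm_\bSigma + (\chi_1,\dots,\chi_n) + (z)\bigr)$-adic completion of $\Briequiv(F)_\bSigma$, free of rank $N$ over $\hcO_{\cU,(0,0)}[\![z]\!]$. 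The left-hand arrow is induced by the surjection onto the germ at $\tzero$, using $\bigl(\Briequivan(F)_\bSigma\bigr)_{(0,0)} = \cOan_{\cB,\tzero}[\![z]\!]\cdot\omega$ (definition of $\Briequivan(F)_\bSigma$ and Lemma~\ref{lem:proper}) and the tautological completion $\cOan_{\cB,\tzero}[\![z]\!] \to \hcO_{\cY,\tzero}[\![z]\!]$.

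To check that each arrow is an isomorphism I would reduce modulo $z$. Both source and target of each arrow are free of rank $N$ over $\hcO_{\cU,(0,0)}[\![z]\!]$ and $z$-adically complete and separated: here it matters that $z\notin\frakman_{(0,0)}$, so the $\frakman_{(0,0)}$-adic completion leaves the $z$-adic structure intact (the base change on the right does, however, complete in $z$, while $\bigl(\Briequivan(F)_\bSigma\bigr)_{(0,0)}\sphat$ is $z$-adically complete to start with, being built from $\tpr_*(\cOan_\cB[\![z]\!])$). Modulo $z$ the right-hand arrow becomes $\bigl(\Briequiv(F)\sphat_\bSigma/z\bigr)\otimes_{R_\T[\![\Laa(\bSigma)_+]\!]}\hcO_{\cU,(0,0)} \to \hcO_{\cY,\tzero}$, an isomorphism by Lemma~\ref{lem:analytification_algebras}; and the left-hand arrow becomes $(\tpr_*\cOan_\cB)_{(0,0)}\sphat \to \hcO_{\cY,\tzero}$, again an isomorphism by Lemma~\ref{lem:analytification_algebras} (the map \eqref{eq:pr_B_Y}). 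A morphism of $z$-adically complete and separated free rank-$N$ modules which is an isomorphism modulo $z$ is surjective (successive approximation in $z$), hence an isomorphism, a surjection between free modules of equal finite rank over a commutative ring being bijective. This yields the chain $\bigl(\Briequivan(F)_\bSigma\bigr)_{(0,0)}\sphat \cong \hcO_{\cY,\tzero}[\![z]\!] \cong \Briequiv(F)\sphat_\bSigma \otimes_{R_\T[z][\![\Laa(\bSigma)_+]\!]} \hcO_{\cU,(0,0)}[\![z]\!]$.

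The hard part is not conceptual but the careful handling of two interacting filtrations: one must track the non-standard $\chi_i$-module structure through the stalk, the $\frakman_{(0,0)}$-adic completion and the base change, and in particular verify that $\frakman_{(0,0)}$-completion does not collapse the $z$-adic structure; and one needs the radical computation of Lemma~\ref{lem:fibre_at_0}, performed modulo $z$, to know that on $\Briequiv(F)_\bSigma = \C[z][\OO(\bSigma)_+]$ the ideals generated by $\frakm_\bSigma$, the $\sum_{b}(\chi_i\cdot b)u_b$ and $z$ define the same topology as $\frakm_{\tzero}+(z)$, so that the right-hand target really is $\hcO_{\cY,\tzero}[\![z]\!]$; the passage between the algebraic and analytic fibre rings at $(0,0)$ is supplied by the finite flatness of $\tpr\colon\cB\to\cU$ (Corollary~\ref{cor:tpr_locally_free}).
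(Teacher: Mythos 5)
Your proposal is correct and follows essentially the same route as the paper's proof: the same two natural maps into $\hcO_{\cY,\tzero}[\![z]\!]$, reduction modulo $z$ to Lemma~\ref{lem:analytification_algebras}, and the same finite-freeness inputs (Proposition~\ref{prop:Briequivan_finite_free}, Theorem~\ref{thm:mirror_isom}, Corollary~\ref{cor:finite_free}), with your successive-approximation-in-$z$ step being exactly the Nakayama-type lifting the paper leaves implicit. The only slip is notational: the stalk $\big(\Briequivan(F)_\bSigma\big)_{(0,0)}$ is free over $(\cOan_{\cU}[\![z]\!])_{(0,0)}$, which is strictly smaller than $\cOan_{\cU,(0,0)}[\![z]\!]$ (the paper flags this in a footnote), but since both rings have the same $\frakman_{(0,0)}$-adic completion $\hcO_{\cU,(0,0)}[\![z]\!]$, nothing in your argument breaks.
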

\begin{proof}
$\big(\Briequivan(F)_\bSigma\big)\sphat_{(0,0)}$ is naturally
a module over the completion
$(\cOan_{\cU}[\![z]\!])\sphat_{(0,0)}$
of $(\cOan_{\cU}[\![z]\!])_{(0,0)}$
with respect to $\frakman_{(0,0)}$.
Hence Part~(1) follows from the fact\footnote{Note however that $(\cOan_{\cU}[\![z]\!])_{(0,0)}
\neq \cOan_{\cU,(0,0)}[\![z]\!]$.} that
$(\cOan_{\cU}[\![z]\!])_{(0,0)}\sphat =
\hcO_{\cU,(0,0)}[\![z]\!]$.

We show that the natural maps
\begin{equation}\label{eq:Briequivan_formal}
\big(\Briequivan(F)_\bSigma\big)\sphat_{(0,0)}
\to
\hcO_{\cY,\tzero}[\![z]\!]
\leftarrow
\Briequiv(F)\sphat_\bSigma\otimes_{R_\T[z][\![\Laa(\bSigma)_+]\!]}
\hcO_{\cU,(0,0)}[\![z]\!]
\end{equation}
are isomorphisms, where the first map is induced by the map
\[
\big(\Briequivan(F)_\bSigma\big)_{(0,0)} \cong
(\tpr_*\cOan_\cB[\![z]\!])_{(0,0)}
\overset{\text{Lemma \ref{lem:proper}}}{\cong}
(\cOan_{\cB}[\![z]\!])_{\tzero}
\to \hcO_{\cY,\tzero}[\![z]\!].
\]
That the maps in~\eqref{eq:Briequivan_formal} are isomorphisms
follows from the following
two facts: (a) all three mo\-dules in~\eqref{eq:Briequivan_formal}
are finite and free as $\hcO_{\cU,(0,0)}[\![z]\!]$-modules,
and~(b) the maps in~\eqref{eq:Briequivan_formal} are
isomorphisms modulo~$z$.
We already know that (b) holds by Lemma~\ref{lem:analytification_algebras}.
In fact, the maps in \eqref{eq:Briequivan_formal}
reduces to the isomorphisms in \eqref{eq:tpr_B_completion} modulo $z$.
Thus we only need to show (a).
Proposition \ref{prop:Briequivan_finite_free} implies
that $\big(\Briequivan(F)_\bSigma\big)_{(0,0)}$ is a
finite free $(\cO_{\cU}[\![z]\!])_{(0,0)}$-module,
and thus $\big(\Briequivan(F)_\bSigma\big)\sphat_{(0,0)}$
is a~finite free $(\cO_{\cU}[\![z]\!])\sphat_{(0,0)} =
\hcO_{\cU,(0,0)}[\![z]\!]$-module.
Also, since $\Briequiv(F)\sphat_\bSigma$ is a finite free
$R_\T[z][\![\Laa(\bSigma)_+]\!]$-module, the third term in
\eqref{eq:Briequivan_formal} is a finite free
$\hcO_{\cU,(0,0)}$-module.
The finite-freeness of $\hcO_{\cY,\tzero}[\![z]\!]$
follows from a discussion parallel to the proof of Proposition
\ref{prop:Briequivan_finite_free}. Indeed, we know from
Corollary \ref{cor:finite_free}
that $\hcO_{\cY,\tzero}$ is a finite free
$\hcO_{\cU,(0,0)}$-module, and
any basis of $\hcO_{\cY,\tzero}$
over $\hcO_{\cU,(0,0)}$
gives rise to a basis of $\hcO_{\cY,\tzero}[\![z]\!]$
over $\hcO_{\cU,(0,0)}[\![z]\!]$.
Part (2) is proved.
\end{proof}

By restricting the above isomorphism
to $\cV = \cU \cap (\cM\times \{0\})$
and using Proposition \ref{prop:nonequiv_completion}, we have
\begin{Corollary}
$\big(\Brian(F)_\bSigma\big)\sphat_{0}\cong\Bri(F)\sphat_\bSigma\otimes_{\C[z][\![\Laa(\bSigma)_+]\!]}
\hcO_{\cV,0}[\![z]\!]$.
\end{Corollary}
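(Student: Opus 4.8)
\textit{Proof strategy.} The plan is to read off the statement from the equivariant isomorphism of Proposition~\ref{prop:analytification_D_modules}(2) by passing to the non-equivariant limit (i.e.\ setting the equivariant parameters $\chi_1,\dots,\chi_n$ to zero) and checking that this operation is compatible with the completion at the limit point~$0$. Recall from Definition~\ref{def:analytic_Bri}(2) that $\Brian(F)_\bSigma$ is the restriction of $\Briequivan(F)_\bSigma$ to $\cV=\cU\cap(\cM\times\{0\})$; since $\cV$ is cut out in $\cU$ by $\chi_1,\dots,\chi_n$, this means $\Brian(F)_\bSigma=\Briequivan(F)_\bSigma/(\chi_1,\dots,\chi_n)\Briequivan(F)_\bSigma$ as a sheaf on~$\cV$. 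The first step is to establish the base-change identity
\[
\big(\Brian(F)_\bSigma\big)\sphat_0\cong\big(\Briequivan(F)_\bSigma\big)\sphat_{(0,0)}\otimes_{\hcO_{\cU,(0,0)}[\![z]\!]}\hcO_{\cV,0}[\![z]\!],
\]
using $\hcO_{\cV,0}[\![z]\!]\cong\hcO_{\cU,(0,0)}[\![z]\!]/(\chi_1,\dots,\chi_n)$. For this I would apply Lemma~\ref{lem:completion} (exactly as in the proofs of Propositions~\ref{prop:nonequiv_completion} and~\ref{prop:Briequiv_mod_z}) to the right-exact sequence
\[
\big(\Briequivan(F)_\bSigma\big)_{(0,0)}^{\oplus n}\longrightarrow\big(\Briequivan(F)_\bSigma\big)_{(0,0)}\longrightarrow\big(\Brian(F)_\bSigma\big)_0\longrightarrow 0,
\]
whose first arrow sends $(f_1,\dots,f_n)$ to $\sum_i\chi_i f_i$; Proposition~\ref{prop:Briequivan_finite_free} makes every module here finite and free, so passing to completions is a harmless base change, and on $\big(\Brian(F)_\bSigma\big)_0$ the $\frakman_{(0,0)}$-adic and $\frakman_0$-adic topologies coincide because the $\chi_i$ act by zero.

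With this in hand I would substitute the isomorphism $\big(\Briequivan(F)_\bSigma\big)\sphat_{(0,0)}\cong\Briequiv(F)\sphat_\bSigma\otimes_{R_\T[z][\![\Laa(\bSigma)_+]\!]}\hcO_{\cU,(0,0)}[\![z]\!]$ from Proposition~\ref{prop:analytification_D_modules}(2), so that associativity of the tensor product yields $\big(\Brian(F)_\bSigma\big)\sphat_0\cong\Briequiv(F)\sphat_\bSigma\otimes_{R_\T[z][\![\Laa(\bSigma)_+]\!]}\hcO_{\cV,0}[\![z]\!]$. Factoring the structure map $R_\T[z][\![\Laa(\bSigma)_+]\!]\to\hcO_{\cV,0}[\![z]\!]$ through $\C[z][\![\Laa(\bSigma)_+]\!]$ (first reducing modulo $\bM_\C$, then completing) and invoking Proposition~\ref{prop:nonequiv_completion}, which identifies $\Briequiv(F)\sphat_\bSigma\otimes_{R_\T}\C$ with $\Bri(F)\sphat_\bSigma$, then rewrites this as the desired $\Bri(F)\sphat_\bSigma\otimes_{\C[z][\![\Laa(\bSigma)_+]\!]}\hcO_{\cV,0}[\![z]\!]$. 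That the identification respects the Gauss--Manin connection and the grading operator (and the pairing, when $\frX$ is proper) follows from Proposition~\ref{prop:analytification_D_modules}, since the non-equivariant structures on $\Brian(F)_\bSigma$ are by construction the $\chi=0$ reductions of the equivariant ones.

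I expect the only real obstacle to be the bookkeeping in the first step: one must check that ``restrict to $\cV$, then complete at $0$'' agrees with ``complete at $(0,0)$, then reduce modulo $\chi_1,\dots,\chi_n$''. These commute precisely because of the finite-freeness provided by Proposition~\ref{prop:Briequivan_finite_free} combined with the general completion lemma (Lemma~\ref{lem:completion}); everything else is a routine chase of base changes.
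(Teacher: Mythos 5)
Your proposal is correct and follows essentially the same route as the paper, whose proof is the one-line remark that one restricts the isomorphism of Proposition~\ref{prop:analytification_D_modules} to $\cV=\cU\cap(\cM\times\{0\})$ and invokes Proposition~\ref{prop:nonequiv_completion}. The extra bookkeeping you supply -- that reduction modulo $\chi_1,\dots,\chi_n$ commutes with completion, via Lemma~\ref{lem:completion} and the finite-freeness from Proposition~\ref{prop:Briequivan_finite_free} -- is exactly the implicit content of that remark, argued in the same style as the proofs of Propositions~\ref{prop:nonequiv_completion} and~\ref{prop:Briequiv_mod_z}.
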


\begin{Remark}[cf.~Remarks \ref{rem:twisted_de_Rham} and \ref{rem:twisted_de_Rham_formal}]\label{rem:twisted_de_Rham_analytified}
As before, we can describe the analytified Brieskorn module
as a twisted de Rham cohomology.
The complex $\big(\Omega^{\bullet,\rm an}_{\cB/\cM}[\![z]\!],zd + dF\wedge\big)$
can be identified with the Koszul complex associated with
the action of $\chi_1,\dots,\chi_n$ on $\cOan_{\cB}[\![z]\!]$
given by~\eqref{eq:chi_action_on_Bri}.
Since $\chi_1,\dots,\chi_n$ form a regular sequence for
$\cOan_{\cB}[\![z]\!]$, we have
\[
H^i\big(\Omega^{\bullet,\rm an}_{\cB/\cM}[\![z]\!],zd + dF\wedge\big)
= \begin{cases}
0, & i \neq n, \\
\cOan_{\cB}[\![z]\!]/(\chi_1,\dots,\chi_n) \cOan_{\cB}[\![z]\!],
& i =n,
\end{cases}
\]
where the $n$th cohomology sheaf is supported on $\tpr^{-1}(\cM\times \{0\})
\cap \cB = \big\{u\in \cB \colon x_1\parfrac{F}{x_1}(u) =
\cdots = x_n \parfrac{F}{x_n}(u) = 0\big\}$.
Therefore we have
\[
\Brian(F)_\bSigma \cong
\pr_*H^n\big(\Omega^{\bullet,\rm an}_{\cB/\cM}[\![z]\!],zd + dF\wedge\big)
 \cong R^n\pr_*\big(
\Omega^{\bullet,\rm an}_{\cB/\cM}[\![z]\!],zd + dF\wedge\big).
\]
Note that the second and the third term is supported on $\cV = \cU \cap (\cM\times \{0\})$.
\end{Remark}

\subsubsection{The higher residue pairing on the analytified Brieskorn module}\label{subsubsec:higher_residue}
A version of K.~Saito's higher residue pairing~\cite{SaitoK:higherresidue}
on the completed equivariant Brieskorn module
$\Briequiv(F)\sphat_\bSigma$
was introduced in~\cite[Section~6]{CCIT:MS}
via the asymptotic expansions of oscillatory integrals.
We explain that the definition there can be extended to the
analytified Brieskorn module $\Briequivan(F)_\bSigma$.
Consider the equivariant potential function
(as appeared in Remark \ref{rem:twisted_de_Rham})
\begin{equation*}
F_\T = F_\T(x,q)= F(x,q) - \sum_{i=1}^n \chi_i \log x_i.
\end{equation*}
This is a multi-valued function on $\cY$ with parameter $\chi\in \Lie \T$.
For a fixed $(q,\chi) \in \cM_\T = \cM\times \Lie\T$,
(logarithmic) critical points of $F_\T|_{\pr^{-1}(q)}$ are solution
to the equation
\[
x_i \parfrac{F}{x_i}= \chi_i.
\]
Thus we can regard $\tpr \colon \cB \to \cU$ \eqref{eq:tpr}
as a family\footnote{More precisely, $\cB$ is the union of branches of
critical points that tend to $\tzero$ as $q \to 0=0_\bSigma$.}
of critical points of~$F_\T$.
It follows from the study \cite[Lemma~6.2]{CCIT:MS} of critical points
near~$0_\bSigma$ that
the fibre of the finite morphism $\tpr|_{\cB}
\colon \cB \to \cU$
at a generic point consists of $\dim H_\CR^*(\frX_\bSigma)$ many
reduced points.
We write
\[
\cUss= \big\{(q,\chi) \in \cU \colon
\text{$\tpr^{-1}(q,\chi) \cap \cB$ consists of only reduced points}\big\} \neq \varnothing,
\]
where ``ss'' means semisimplicity. The complement of $\cUss$ in $\cU$ (called \emph{caustic}) is an analytic
subvariety in $\cU$. Let $\phi \cdot \omega \in \Briequivan(F)_\bSigma$ be a section over $\cUss$,
where $\phi =\phi(x,q,\chi) \in \tpr_*(\cOan_{\cB}[\![z]\!])$. For $(q,\chi) \in \cUss$ and a critical point
$p \in \tpr^{-1}(q,\chi)\cap \cB$, we can define the \emph{formal asymptotic expansion}
of the oscillatory integral (see \cite[Section~6.2]{CCIT:MS})
\[
\int_{\Gamma(p)} e^{F_\T/z} \phi(x,q,\chi) \omega
\sim e^{F_\T(p)/z} (-2\pi z)^{n/2} \sum_{n=0}^\infty a_n(q,\chi) z^n
\qquad \text{as $z \to 0$.}
\]
We obtain the right-hand side by expanding the integrand $e^{F_\T/z} \phi(x,q,\chi)$ in Taylor series
at $p$ (with respect to the logarithmic co-ordinates $\log x_1,\dots,\log x_n$) and performing termwise (Gaussian) integration. More precisely, we have
\begin{equation}\label{eq:Asymp}
\sum_{n=0}^\infty a_n(q,\chi) z^n=
\frac{1}{|\bN_{\rm tor}| \sqrt{\det(h_{i,j})}}
\Big[ e^{-\frac{z}{2}\sum_{i,j} h^{i,j} \parfrac{}{s_i}
\parfrac{}{s_j}}
e^{F_\T^{\ge 3}/z} \phi(pe^s,q,\chi)
\Big]_{s=0},
\end{equation}
where $s_1,\dots,s_n$ are the
logarithmic co-ordinates centred at $p$
so that $x_j = x_j(p) e^{s_j}$,
\begin{equation}
\label{eq:Hessian_matrix}
h_{i,j} =
\parfrac{^2 F_\T}{\log x_i \partial \log x_j}(p)
\end{equation}
is the Hessian matrix at $p$, $(h^{i,j})$ are the coefficients
of the matrix inverse to $(h_{i,j})$ and
\[
F^{\ge 3}_\T = \sum_{k\ge 3} \frac{1}{k!}
\sum_{i_1,\dots,i_k}
\parfrac{^k F_\T}{\log x_{i_1} \cdots \partial \log x_{i_k}}(p)
s_{i_1} \cdots s_{i_k}
\]
is the truncated Taylor expansion of $F_\T$ at the critical point $p$.
\begin{Definition}
\label{def:Asymp}
We define $\Asymp_p(\phi\cdot \omega)$ to be the
right-hand side of \eqref{eq:Asymp}.
\end{Definition}

\begin{Remark}\label{rem:Asymp}\quad
\begin{enumerate}\itemsep=0pt
\item[(1)] When the critical point $p$ does not lie in the logarithmic
locus of $\cY$ and $\phi(x,q,\chi)$ is a~polynomial,
the above \emph{formal} asymptotic expansion
makes sense as an \emph{actual} asymptotic expansion:
for this we choose the integration cycle
$\Gamma(p)\subset \cY_q :=\pr^{-1}(q)$ to be a stable manifold
for the Morse function $x\mapsto \Re(F_\T(x,q))$ associated
with $p$ and assume that
$z$ approaches zero from the \emph{negative} real axis.
\item[(2)] More precisely, the above formal asymptotic expansion depends
on the choice of the square root of the Hessian.
This corresponds to the choice of an orientation of the Morse
cycle~$\Gamma(p)$ and a branch of~$(-2 \pi z)^{n/2}$.
\end{enumerate}
\end{Remark}

\begin{Definition}\label{def:higher_residue}
The \emph{higher residue pairing} of two sections
$s_1, s_2 \in \Briequivan(F)_\bSigma$ are defined as:
\[
P(s_1,s_2) (q,\chi) = \sum_{p\in \tpr^{-1}(q,\chi)}
\left[\Asymp_p(s_1)\right]_{z\to -z} \cdot \Asymp_p(s_2),
\]
where $(q,\chi) \in \cUss$.
\end{Definition}

The higher residue pairing gives a bilinear pairing
\[
P \colon \ \Briequivan(F)_\bSigma \bigr|_{\cUss}
\times \Briequivan(F)_\bSigma \bigr|_{\cUss}
\to \cOan_{\cUss}[\![z]\!],
\]
which satisfies the following properties \cite[Proposition~6.8]{CCIT:MS}:
\begin{itemize}\itemsep=0pt
\item[(a)] $P$ is
$\cOan_{\cUss}$-bilinear, non-degenerate, $z$-sesquilinear
and symmetric:
\begin{gather*}
P(f(-z) s_1,s_2) = P(s_1,f(z) s_2) = f(z) P(s_1,s_2),
\\
P(s_2,s_1) = P(s_1,s_2)|_{z\to -z} ,
\end{gather*}
where $f(z) \in \cO_{\cUss}[\![z]\!]$;
\item[(b)] $P$ is $\nabla$-flat, i.e., $d P(s_1,s_2) = P(\nabla s_1, s_2) +
P(s_1,\nabla s_2)$;
\item[(c)] $P$ is homogeneous of degree $-n$ ($n=\dim \frX_\bSigma$), i.e.,
\[
\left(z\parfrac{}{z}+\cE+n\right)P(s_1,s_2) = P(\Gr s_1, s_2) + P(s_1,\Gr s_2),
\]
where $\cE$ is the Euler vector field~\eqref{eq:Euler_B}. \item[(d)] along $z=0$, $P$ equals the Grothendieck residue pairing.
\end{itemize}

\begin{Definition}Suppose that $\cVss := \cUss \cap (\cM \times \{0\})$ is nonempty.
The non-equivariant higher residue pairing
$P \colon \Brian(F)_\bSigma|_{\cVss} \times \Brian(F)_\bSigma|_{\cVss}
\to \cO_{\cVss}[\![z]\!]$ is defined to be the restriction of
the above~$P$ to~$\cVss$.
If $\frX_\bSigma$ is compact, the argument in
\cite[Proposition~3.10]{Iritani:Integral}
shows that $\cVss$ is
an open dense subset of $\cV$, and hence the non-equivariant
higher residue pairing is defined.
\end{Definition}

\begin{Remark}The definition of the higher residue pairing in terms of oscillatory integrals
is originally due to Pham \cite[2\'eme Partie, 4]{Pham:Lefschetz}.
\end{Remark}

\begin{Remark}The higher residue pairing here does not necessarily extend to the caustic~$\cU \setminus \cUss$.
Under mirror symmetry, the higher residue pairing corresponds
to the Poincar\'e pairing \cite[Theorem 6.11]{CCIT:MS}, therefore it has poles
along $\chi=0$ when $\frX_\bSigma$ is noncompact.
When $\frX_\bSigma$ is compact, it extends to a
holomorphic and non-degenerate pairing
in a neighbourhood of $(0_\bSigma,0)$.
\end{Remark}

\subsection{Analytic mirror isomorphism}
\label{subsec:an_mirror_isom}
Using the convergence result \cite[Theorem 7.2]{CCIT:MS}
(which generalizes \cite[Theorem 1.2]{Iritani:coLef}),
we show that the mirror isomorphism in Theorem \ref{thm:mirror_isom}
extends to a neighbourhood of $0=0_\bSigma$
as an isomorphism between the \emph{analytified}
Brieskorn module $\Briequivan(F)_\bSigma$
and the \emph{analytic} quantum D-module.
Let $\cU \subset \cM_\T$ be an open neighbourhood of $(0_\bSigma,0)$
as in Corollary \ref{cor:tpr_locally_free}.
Recall that $\Briequivan(F)_\bSigma$ was defined on $\cU$.

The mirror isomorphism in Theorem \ref{thm:mirror_isom} induces,
via the isomorphism in Proposition \ref{prop:analytification_D_modules},
the following isomorphism
\begin{equation}\label{eq:formal_mirror_isom}
\hMir\colon \ \big(\Briequivan(F)_\bSigma\big)\sphat_{(0_\bSigma,0)}
\cong \mir^*\QDM_\T(\frX_\bSigma) \otimes_{R_\T[z][\![\Laa(\bSigma)_+]\!]}
\hcO_{\cU,(0_\bSigma,0)}[\![z]\!].
\end{equation}
This isomorphism extends to an analytic neighbourhood of $(0_\bSigma,0)$.
We recall the following facts from \cite[Section~7]{CCIT:MS}:
\begin{itemize}\itemsep=0pt
\item[(a)] the structure constants of the big equivariant
quantum product are convergent and analytic
in $q$, $\tau'$ and $\chi$
\cite[Corollary 7.3]{CCIT:MS} (where $q$ and $\tau'$ are parameters of
the K\"ahler moduli space, see Section~\ref{subsec:Kaehler_moduli}, and
$\chi$ is the equivariant parameter);
\item[\namedlabel{item:Mir_matrix}{(b)}]
choose a $\Gr$-homogeneous
basis $\{\Omega_i\}_{i=0}^s$ of the completed
Brieskorn module consisting of algebraic differential forms
(i.e., $\Omega_i \in \C[z][\OO(\bSigma)_+] \omega$)
and let $\{\phi_i\}_{i=0}^s$ be the basis of
$H_\CR^*(\frX_\bSigma)$ as in Section~\ref{subsec:qcoh};
then the matrix entries $M_i^j(q,\chi,z)$
of the mirror isomorphism $\Mir$ given by
\[
\Mir(\Omega_i) = \sum_{j=0}^s M_i^j(q,\chi,z) \phi_j
\]
belong to $\cOan(\cU')[\![z]\!]$ for some open neighbourhood
$\cU'$ of $(0_\bSigma,0)$ in $\cM_\T=\cM\times \Lie \T$
\cite[Theorem~7.1]{CCIT:MS};
\item[(c)] the mirror map $\mir$ is also analytic in a neighbourhood
of $(0_\bSigma,0)\in \cM_\T$ (\emph{ibid.}).
\end{itemize}
By Part~(a), the equivariant quantum D-module $\QDM_\T(\frX_\bSigma)$
(see~\eqref{eq:QDM})
extends to a small analytic neighbourhood $U$ of
$q=\tau'=\chi=0$ in the equivariant K\"ahler moduli space
\[
\big[\cM_{\rm A,\T}(\frX_\bSigma)/\Pic^\st(\frX_\bSigma)\big]
\]
(see~\eqref{eq:equiv_Kaehler_moduli}).
We denote it by
\[
\QDMan_\T(\frX_\bSigma) :=
\big(H^*_{\CR,\T}(\frX_\bSigma) \otimes \cOan_{\tU}[z],
\nabla, \Gr, P\big),
\]
where $\tU$ is the preimage of $U$ in $\cM_{\rm A, \T}(\frX_\bSigma)$;
$H^*_{\CR,\T}(\frX_\bSigma) \otimes \cOan_{\tU}[z]$ is a
$\Pic^\st(\frX_\bSigma)$-equivariant sheaf by the Galois symmetry
in Section~\ref{subsec:Kaehler_moduli}, and we regard it as a sheaf
on the stack $U = \big[\tU/\Pic^\st(\frX_\bSigma)\big]$.
By Part~(b), $\hMir^{-1}(\phi_i)$ extends to a section of
$\Briequivan(F)_\bSigma$ over a small analytic neighbourhood $\cU'\subset \cU$
of $(0_\bSigma,0)\in\cM_\T$.
By Part~(c), by shrinking $\cU'$ if necessary, we have an
analytic mirror map $\mir\colon \cU' \to U$.
We now have the following result.

\begin{Theorem}
\label{thm:an_mirror_isom}
The isomorphism $\hMir$ in \eqref{eq:formal_mirror_isom} extends
to an open neighbourhood
$\cU'\subset \cU$ of $(0_\bSigma,0)\in \cM_\T = \cM \times \Lie \T$
and yields an isomorphism $\Miran$ of
$\cOan_{\cU'}[\![z]\!]$-modules:
\[
\Miran\colon \ \Briequivan(F)_\bSigma\bigr|_{\cU'} \cong
\mir^* \overline{\QDMan_\T}(\frX_\bSigma),
\]
where $\overline{\QDMan_\T}(\frX_\bSigma)$
denotes the $z$-adic completion of $\QDMan_\T(\frX_\bSigma)$, i.e.,
\[
\overline{\QDMan_\T}(\frX_\bSigma) :=
\QDMan_\T(\frX_\bSigma) \otimes_{\cOan_{U}[z]}\cOan_{U}[\![z]\!].
\]
The analytic mirror isomorphism $\Miran$ satisfies
the same properties $(1)$--$(3)$ as in Theorem $\ref{thm:mirror_isom}$
$($see Section~$\ref{subsubsec:higher_residue}$ for the higher residue
pairing on $\Briequivan(F)_\bSigma)$.
\end{Theorem}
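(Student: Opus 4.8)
The plan is to transfer the formal mirror isomorphism $\hMir$ of \eqref{eq:formal_mirror_isom} to an analytic one over a small neighbourhood of $(0_\bSigma,0)$, using the convergence facts (a)--(c) recalled above together with the identity theorem for analytic functions. First I would fix, as in Part~\ref{item:Mir_matrix}, a $\Gr$-homogeneous basis $\{\Omega_i\}_{i=0}^s$ of $\Briequiv(F)\sphat_\bSigma$ consisting of algebraic differential forms $\Omega_i\in\C[z][\OO(\bSigma)_+]\,\omega$; these are in particular global algebraic, hence analytic, sections of $\Briequivan(F)_\bSigma$ over $\cU$. The first claim to establish is that, after shrinking $\cU$ to a connected (and $\Pic^\st(\frX_\bSigma)$-invariant) neighbourhood $\cU'$ of $(0_\bSigma,0)$, the $\Omega_i$ form an $\cOan_{\cU'}[\![z]\!]$-basis of $\Briequivan(F)_\bSigma|_{\cU'}$. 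Reducing modulo $z$, the images $\bar\Omega_i$ form an $R_\T[\![\Laa(\bSigma)_+]\!]$-basis of $\Briequiv(F)\sphat_\bSigma/z\Briequiv(F)\sphat_\bSigma$, which by Proposition~\ref{prop:Briequiv_mod_z} and Lemma~\ref{lem:analytification_algebras} is identified with the $(0_\bSigma,0)$-adic completion of $\tpr_*\cOan_\cB$; the Noetherian-local-ring criterion invoked in the proof of Corollary~\ref{cor:tpr_locally_free} then shows that, on a small enough $\cU'$, the $\bar\Omega_i$ are an $\cOan_{\cU'}$-basis of $\tpr_*\cOan_\cB$, and the $z$-adic Nakayama argument of Proposition~\ref{prop:Briequivan_finite_free} promotes this to an $\cOan_{\cU'}[\![z]\!]$-basis of $\Briequivan(F)_\bSigma$.

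Next I would define $\Miran$ on this basis by $\Miran(\Omega_i):=\sum_j M_i^j(q,\chi,z)\,\mir^*\phi_j$, where $M_i^j\in\cOan(\cU')[\![z]\!]$ by Part~\ref{item:Mir_matrix} and $\mir\colon\cU'\to U$ is the analytic mirror map of Part~(c) (shrink $\cU'$ so that (a), (b), (c) and the basis property all hold), and extend $\cOan_{\cU'}[\![z]\!]$-linearly. By construction the $\frakman_{(0_\bSigma,0)}$-adic completion of $\Miran$ is exactly $\hMir$, since the latter is given by the same matrix $(M_i^j)$ in the bases $\{\Omega_i\}$, $\{\mir^*\phi_j\}$ of the completed modules; in particular $(M_i^j)$ is invertible over $\hcO_{\cU',(0_\bSigma,0)}[\![z]\!]$, so $\det(M_i^j)|_{z=0}$ is a unit of the local ring $\hcO_{\cU',(0_\bSigma,0)}$, i.e.\ it does not vanish at $(0_\bSigma,0)$. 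By continuity and a further shrinking of $\cU'$ it is nowhere vanishing, hence $(M_i^j)$ is invertible over $\cOan_{\cU'}[\![z]\!]$ and $\Miran$ is an isomorphism of $\cOan_{\cU'}[\![z]\!]$-modules.

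For the compatibilities (1)--(3), the key observation is that each relevant \emph{defect} --- for instance $\Miran\circ\nabla-\nabla\circ\Miran$ and $\Miran\circ\Gr-\Gr\circ\Miran$ --- is $\cOan_{\cU'}[\![z]\!]$-linear (the non-linear parts cancel because $\nabla$ and $\Gr$ satisfy the same Leibniz rules with respect to $d$, resp.\ $\cE+z\partial_z$, on both sides), so it is a section of a locally free $\cOan_{\cU'}[\![z]\!]$-module whose $z$-coefficients are analytic on $\cU'$. Its $(0_\bSigma,0)$-adic completion is the corresponding defect of $\hMir$, which vanishes by Theorem~\ref{thm:mirror_isom}(1)--(2) together with $\widehat{\Miran}=\hMir$ and $\mir_*\cE=\cE$; hence each coefficient vanishes to infinite order at $(0_\bSigma,0)$ and therefore, $\cU'$ being connected, identically. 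The same scheme, now using Theorem~\ref{thm:mirror_isom}(3) (i.e.\ \cite[Theorem~6.11]{CCIT:MS}) and the fact that the analytified higher residue pairing of Section~\ref{subsubsec:higher_residue} is the analytic continuation of the formal one of \cite[Section~6]{CCIT:MS}, gives the identity $\Miran^*P=P$ on $\cUss$ by uniqueness of analytic continuation, where one uses that $\cUss$ is connected with $(0_\bSigma,0)$ in its closure. Finally, $\Pic^\st(\frX_\bSigma)$-equivariance of $\Miran$ follows from that of $\Mir$ once $\cU'$ is chosen invariant. The main obstacle I anticipate is the first step --- verifying cleanly that the algebraic basis $\{\Omega_i\}$ descends to an honest $\cOan_{\cU'}[\![z]\!]$-basis of $\Briequivan(F)_\bSigma$ near $(0_\bSigma,0)$ --- since everything after that is a formal transfer powered by the already-cited convergence theorems of \cite{CCIT:MS}.
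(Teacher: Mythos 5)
Your proposal is correct and follows essentially the same route as the paper: the paper's proof of Theorem~\ref{thm:an_mirror_isom} is exactly the discussion preceding its statement, namely the convergence facts (a)--(c) from \cite{CCIT:MS} (analyticity of the big equivariant quantum product, of the matrix entries $M_i^j$ of $\Mir$ in an algebraic $\Gr$-homogeneous basis, and of the mirror map), and you transfer the formal isomorphism with precisely these ingredients, only making explicit the basis/invertibility verifications and the identity-theorem argument for properties (1)--(3) that the paper leaves implicit. The one point you treat no more (and no less) carefully than the paper is the identification of the analytic higher residue pairing of Section~\ref{subsubsec:higher_residue} with the formal one of \cite[Section~6]{CCIT:MS}, which both you and the paper regard as immediate from the common asymptotic-expansion definition.
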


\begin{Remark}
\label{rem:choice_of_splitting}
As explained before Theorem \ref{thm:mirror_isom}, in the above theorem,
we choose a splitting $\bN \to \OO^\bSigma$ of the refined fan sequence
\eqref{eq:refined_fanseq}, which simultaneously defines
a partial connection $\nabla$ on $\Briequivan(F)_\bSigma$ and
the equivariant quantum D-module.
\end{Remark}

\begin{Remark}
The analytic mirror theorem above shows that the analytified Brieskorn
module can be further analytified in the $z$-direction
(since $\QDMan_\T(\frX_\bSigma)$ is analytic in $z$).
We can regard this as a solution to the Birkhoff problem (i.e., finding a normal
form of the Gauss--Manin connection)
which has been studied extensively in the construction of K.~Saito's
flat structure \cite{Barannikov:projective, Douai-Sabbah:I, Douai-Sabbah:II, Sabbah:tame, SaitoK:primitiveform, SaitoM:Brieskorn}.
\end{Remark}

\section{Discrepant wall-crossings}
\label{sec:discrepant}
We study the change of quantum cohomology of smooth toric DM stacks
under a ``discrepant'' wall crossing.
We show a decomposition of formal (i.e., completed in the variable $z$)
quantum D-modules under discrepant wall-crossings.
We work in the set-up of Section~\ref{subsec:data} and
fix the data $(\bN,\Pi,S)$ as usual.

\subsection{Discrepant transformation of smooth toric DM stacks}
\label{subsec:discrepant}
We describe birational transformations between smooth toric DM stacks
following \cite{GKZ:discriminants},
\cite[Sections~4 and~5]{Borisov-Horja:FM}, \cite[Section~5.1 and~6.3]{CIJ}
and \cite[Section~3]{Acosta-Shoemaker:toric_birational}.

Recall from Section~\ref{subsubsec:toric_stacks} that the
toric stacks $\frX_\bSigma$ with $\bSigma \in \Fan(S)$
arise as the GIT quotients of $\C^S$ by the torus $\LL_{\C^\times}$.
These toric stacks are birational to each other since they
contain the (stacky) torus $\big[(\C^\times)^S/\LL_{\C^\times}\big]$
as an open dense subset.
We can regard $\LL^\star_\R$ as the space of GIT stability conditions
for the $\LL_{\C^\times}$-action on~$\C^S$;
if we choose a stability condition from the interior of the
maximal cone $\cpl(\bSigma)$ of the secondary fan~$\Xi$
(see Definition~\ref{def:LG}), then the corresponding GIT quotient
is~$\frX_\bSigma$.
We choose two adjacent maximal cones $\cpl(\bSigma_+)$,
$\cpl(\bSigma_-)$ of $\Xi$ which are separated
by a hyperplane wall $W\subset \LL^\star_\R$.
Here we assume that
$W \cap \cpl(\bSigma_+) = W \cap \cpl(\bSigma_-)$ is a common codimension-one
face of $\cpl(\bSigma_\pm)$.
Let $\bw\in \LL$ be a primitive integral vector which is
perpendicular to the wall $W\subset \LL^\star_\R$
and is non-negative on the chamber $\cpl(\bSigma_+)$.
By the definition of $\LL$ (see~\eqref{eq:ext_fanseq}), the vector $\bw\in \LL$
gives rise to a linear relation
\begin{equation}\label{eq:circuit}
\sum_{b\in S} (D_b \cdot \bw) b =0,
\end{equation}
where recall that $D_b = D(e_b^\star)$ (see~\eqref{eq:ext_divseq}).
This linear relation defines a \emph{circuit}
$\{b \in S: D_b \cdot \bw \neq 0\}$ in the terminology of
Gelfand--Kapranov--Zelevinsky \cite{GKZ:discriminants},
where a `circuit' means a minimal linearly dependent set.
The transition between $\bSigma_+$ and $\bSigma_-$
can be described in terms of the circuit
(`modification along a circuit' \cite{GKZ:discriminants}).

\begin{figure}[htbp]\centering
\includegraphics{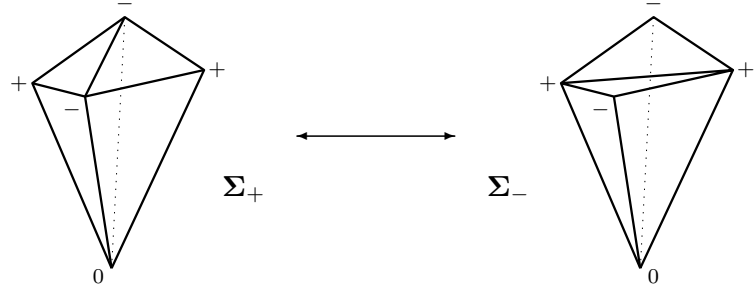}
\caption{Modification along a circuit. The signs $\pm$ mean rays belonging to $M_\pm = \{b \in S \colon \pm D_b \cdot \bw >0\}$.} \label{fig:modification_along_a_circuit}
\end{figure}

For $I\subset S$, we write\footnote{Note that $\sigma_I$ is a closed
cone, whereas $\angle_I$ is a relatively open cone.}
\begin{gather*}
\sigma_I := \sum_{b\in I} \R_{\ge 0} \overline{b} \subset \bN_\R, \qquad
\angle_I := \sum_{b\in I} \R_{>0} D_b \subset \LL^\star_\R.
\end{gather*}
We also set $\sigma_\varnothing = \{0\}$, $\angle_\varnothing = \{0\}$.
Consider the (not necessarily simplicial) fan $\Sigma_0$ on the vector
space $\bN_\R$ given by $\Sigma_0 := \{\sigma_I \colon I \in \scrS_0\}$,
where
\begin{gather*}
\scrS_0 = \{ I \subset S\colon \text{$\angle_{S\setminus I}$ contains the relative interior
of $\cpl(\bSigma_+) \cap \cpl(\bSigma_-)$} \}.
\end{gather*}
The simplicial fans $\Sigma_\pm$ underlying
$\bSigma_\pm$ arise as different subdivisions of $\Sigma_0$.
Set $M_\pm = \{b\in S \colon \allowbreak {\pm}D_b \cdot \bw >0\}$.
We have a decomposition $\scrS_0 = \scrS_0^{\rm simp}
\sqcup \scrS_0^{\rm circ}$ (disjoint union), where
\begin{gather*}
\scrS_0^{\rm simp} =
\{ I \in \scrS_0 \colon M_+ \not\subset I, M_- \not \subset I \}, \\
\scrS_0^{\rm circ} =
\{I \in \scrS_0 \colon M_+ \cup M_- \subset I\},
\end{gather*}
such that $\Sigma_\pm = \{\sigma_I \colon I \in \scrS_\pm\}$ with
\begin{gather*}
\scrS_\pm = \big\{ I \colon I \in \scrS_0^{\rm simp}
\big\} \sqcup
\big\{ I\setminus J\colon I \in \scrS_0^{\rm circ},\,
\varnothing \neq J\subset M_\pm \big\}.
\end{gather*}
The set of rays of $\bSigma_\pm$ is given by
$R_\pm := R(\bSigma_\pm) = \bigcup\limits_{I\in \scrS_\pm} I$.
See \cite[Lemma~5.2]{CIJ}\footnote{In \cite[Section~5.1]{CIJ}, the set of ``anti-cones''
$\{ S \setminus I \colon I \in \scrS_0\}$,
$\{ S \setminus I \colon I \in \scrS_0^{\rm simp}\}$,
$\{ S \setminus I \colon I \in \scrS_0^{\rm circ}\}$
are denoted by $\cA_0$, $\cA_0^{\rm thick}$,
$\cA_0^{\rm thin}$ respectively.}.
Here $I \in \scrS_0^{\rm simp}$ yields a simplicial cone
$\sigma_I\in \Sigma_0$ belonging to both $\Sigma_+$ and $\Sigma_-$,
and $I \in \scrS_0^{\rm circ}$ yields a
(not necessarily simplicial) cone $\sigma_I\in \Sigma_0$
containing the circuit $M_+\cup M_-$;
the cone $\sigma_I$ with $I\in \scrS_0^{\rm circ}$ is
subdivided into simplicial cones $\sigma_{I\setminus \{v\}}$, $v\in M_\pm$
in the fans $\Sigma_\pm$.
See Fig.~\ref{fig:modification_along_a_circuit}.
We also remark that $M_+ \cup M_- \in \scrS_0^{\rm circ}$
so that $(M_+ \cup M_-) \setminus \{v\} \in \scrS_\pm$
for every $v\in M_\pm$. (In particular, $M_\pm \subset R_\mp$.)

Let $\frX_\pm$ denote the toric stack corresponding to
$\bSigma_\pm$. As discussed in \cite[Section~5]{Borisov-Horja:FM},
\cite[Section~6.3]{CIJ}, the toric birational map
$\varphi\colon \frX_+\dasharrow \frX_-$ fits into a commutative diagram
\begin{equation}
\label{eq:roof}
\begin{aligned}
\xymatrix{ & \hfrX \ar[ld]_{f_+} \ar[rd]^{f_-} & \\
\frX_+ \ar@{-->}[rr]^{\varphi} \ar[dr]_{g_+} & & \frX_-, \ar[dl]^{g_-} \\
& X_0 & }
\end{aligned}
\end{equation}
where $X_0$ is the toric variety associated with $\Sigma_0$,
$\hfrX$ is another smooth toric DM stack and $f_\pm
\colon \hfrX \to \frX_\pm$, $g_\pm \colon
\frX_\pm \to X_0$ are projective birational toric
morphisms. Define $\hb\in \bN\cap \Pi$ to be the vector:
\[
\hb := \sum_{D_b\cdot \bw>0} (D_b \cdot \bw) b = - \sum_{D_b\cdot \bw<0}
(D_b\cdot \bw) b.
\]
The smooth toric DM stack $\hfrX$ is given by a stacky fan $\hbSigma$
adapted to $S \cup \{\hb\}$ (in the sense of
Definition \ref{def:stacky_fan_adapted_to_S}):
the set of rays of $\hbSigma$ is $\hR = (R_+ \cap R_-)
\cup \{\hb\}$ where $R_\pm := R(\bSigma_\pm)$;
the fan $\hSigma$ underlying $\hbSigma$ is a simultaneous
subdivision of $\Sigma_+$ and $\Sigma_-$ given by
\[
\hSigma =
\big\{\sigma_I \colon I \in \scrS_0^{\rm simp}\big\} \sqcup
\big\{\sigma_K\colon K = I \setminus (J_+\cup J_-) \cup \{\hb\}, \,
I \in \scrS_0^{\rm circ}, \,
\varnothing \neq J_\pm \subset M_\pm \big\}.
\]
The toric morphisms $f_\pm \colon \hfrX \to \frX_\pm$ are
induced by natural maps $\hbSigma \to \bSigma_\pm$ of stacky fans.
We refer the reader to \cite[Section~6.3]{CIJ} for a description
of $\hfrX$ and $f_\pm$ in terms of GIT quotients.

\begin{Lemma}Let $K_\pm = K_{\frX_\pm}$ denote the canonical class of $\frX_\pm$
and $E \subset \hfrX$ denote the toric divisor of $\hfrX$ corresponding to
the ray $\hb$. Then we have
\[
f_+^\star K_{+} = f_-^\star K_{-} +
\left(\sum_{b\in S} D_b \cdot \bw\right) [E].
\]
\end{Lemma}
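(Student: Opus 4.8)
The plan is to compute both sides of the equation directly in terms of the combinatorics of the rays and the circuit vector $\bw$. Recall that for a smooth toric DM stack $\frX_\bSigma$ the canonical class is $K_{\frX_\bSigma} = -\sum_{b\in R(\bSigma)} \overline{D}_b$, the sum being over all toric divisors. First I would compare the ray sets: by construction $R_\pm = R(\bSigma_\pm) = (R_+\cap R_-) \sqcup M_\mp$ (since $M_\pm \subset R_\mp$ and $M_\pm \cap R_\pm = \varnothing$, as the cones $\sigma_{(M_+\cup M_-)\setminus\{v\}}$ with $v\in M_\pm$ are precisely those subdividing the circuit on the $\Sigma_\pm$ side), while $\hR = (R_+\cap R_-)\cup\{\hb\}$. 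So the pullbacks $f_\pm^\star K_\pm$ differ, on $\hfrX$, only in the contributions of the divisors indexed by $M_\pm$, together with the way the common rays and the exceptional ray $\hb$ enter; everything indexed by $R_+\cap R_-$ matches tautologically under $f_\pm^\star$ and cancels.

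The key step is then to express $f_\pm^\star \overline{D}_b$ for $b\in M_\pm$, and to express the pullback of the divisors common to $R_+\cap R_-$, in terms of the divisor classes on $\hfrX$ — in particular to extract the coefficient of $[E]$, the divisor attached to the ray $\hb$. For a toric birational morphism $f\colon \hfrX\to \frX_\bSigma$ induced by a subdivision of stacky fans, the pullback of a toric divisor $\overline{D}_b$ ($b$ a ray of $\bSigma$) is $f^\star \overline{D}_b = \sum_{b'} c_{b',b}\,[D_{b'}]$ where the $b'$ range over rays of $\hbSigma$ and $c_{b',b}$ is read off from writing $\overline{b'}$ as a nonnegative combination of the rays of the cone of $\bSigma$ containing it; only $\hb$ among the new rays is relevant, and the coefficient of $[E]$ in $f_\pm^\star\overline{D}_b$ is exactly the coefficient of $\overline{b}$ when $\overline{\hb}$ is written in the rays $R_\pm\cap\sigma$. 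Using the defining relation $\hb = \sum_{D_b\cdot\bw>0}(D_b\cdot\bw)\,b = -\sum_{D_b\cdot\bw<0}(D_b\cdot\bw)\,b$ (i.e.\ the circuit relation \eqref{eq:circuit}), one reads off: on the $\frX_+$ side, $\overline{\hb}$ lies in the cone spanned by $R_+$ and equals $\sum_{D_b\cdot\bw<0}(-D_b\cdot\bw)\,\overline b$ (the rays $b$ with $D_b\cdot\bw<0$ lie in $R_+$), so $f_+^\star\sum_{b\in R_+}\overline{D}_b$ acquires a coefficient of $[E]$ equal to $\sum_{D_b\cdot\bw<0}(-D_b\cdot\bw)$; symmetrically on the $\frX_-$ side the coefficient of $[E]$ in $f_-^\star\sum_{b\in R_-}\overline{D}_b$ is $\sum_{D_b\cdot\bw>0}(D_b\cdot\bw)$. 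Taking $K_\pm=-\sum\overline{D}_b$ and subtracting, the contributions from $R_+\cap R_-$ cancel and the $[E]$-coefficient of $f_+^\star K_+ - f_-^\star K_-$ becomes $-\sum_{D_b\cdot\bw<0}(-D_b\cdot\bw) + \sum_{D_b\cdot\bw>0}(D_b\cdot\bw) = \sum_{b\in S} D_b\cdot\bw$, while all other divisor classes on $\hfrX$ cancel. (One also checks that no component of $f_\pm^\star K_\pm$ along $E$ comes from rays in $M_\pm$ or $R_+\cap R_-$ beyond what is accounted for, since $E$ is the unique new divisor on $\hfrX$ relative to either $\frX_\pm$.)

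The main obstacle I expect is bookkeeping rather than conceptual: one must be careful that on $\hfrX$ the only divisor introduced beyond the images of the $\frX_\pm$-divisors is $E$, i.e.\ that $f_\pm$ contracts $E$ and is an isomorphism in codimension one away from $E$ — this needs the explicit description of $\hSigma$ and the maps $\hbSigma\to\bSigma_\pm$ recalled just before the lemma — and that the coefficients $c_{\hb,b}$ extracted from the two subdivisions are governed by the single circuit relation and hence are mutually consistent. Once the combinatorics of "coefficient of $\overline{\hb}$ in the ray basis of the relevant maximal cone on each side" is pinned down via \eqref{eq:circuit}, the identity follows by the computation sketched above. (An alternative, perhaps cleaner, route is the adjunction/intersection-theoretic one: compare the discrepancy of $E$ over $X_0$ computed via $f_+$ versus via $f_-$, using $K_{\hfrX} = f_\pm^\star K_{\frX_\pm} + a_\pm[E]$ with $a_\pm$ the respective log discrepancies, and note $a_+ - a_-$ equals the $[E]$-coefficient above; but the direct ray computation is self-contained and I would present that.)
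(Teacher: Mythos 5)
Your computation is correct and yields the stated identity, but note that it is not the paper's route: the paper's proof is a one-line citation of Proposition~6.21 of~\cite{CIJ}, which contains precisely this discrepancy formula for a single toric wall-crossing. Your direct argument is therefore a self-contained replacement for that citation, and its core is sound: writing $K_\pm=-\sum_{b\in R_\pm}\overline{D}_b$, pulling back via the piecewise-linear support function, the coefficients at the common rays $\hR\setminus\{\hb\}=R_+\cap R_-$ of $\hbSigma$ are $-1$ for both $f_+^\star K_+$ and $f_-^\star K_-$, while at $\hb$ they are $\sum_{b\in M_-}D_b\cdot\bw$ and $-\sum_{b\in M_+}D_b\cdot\bw$ respectively, because the circuit relation gives the two expressions $\overline{\hb}=\sum_{b\in M_+}(D_b\cdot\bw)\overline{b}=\sum_{b\in M_-}(-D_b\cdot\bw)\overline{b}$ and because $\sigma_{M_\mp}$ is a cone of $\Sigma_\pm$ (take $I=M_+\cup M_-$ and $J=M_\pm$ in the description of $\scrS_\pm$). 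The difference of the $[E]$-coefficients is then $\sum_{b\in S}D_b\cdot\bw$, as claimed.

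Two of your auxiliary structural claims are, however, false in general and should be deleted, even though your final computation does not actually use them. First, the decomposition $R_\pm=(R_+\cap R_-)\sqcup M_\mp$ with $M_\pm\cap R_\pm=\varnothing$ fails outside the cases where $M_\pm$ is a singleton: for a flip (type (I) of Remark~\ref{rem:3_types}) one has $R_+=R_-$, so both $M_+$ and $M_-$ lie inside $R_+\cap R_-$. Second, the assertion that ``everything indexed by $R_+\cap R_-$ matches tautologically under $f_\pm^\star$'' is false divisor-by-divisor: for a flip and $b\in M_-\subset R_+\cap R_-$, the pullback $f_+^\star\overline{D}_b$ acquires the term $(-D_b\cdot\bw)[E]$ while $f_-^\star\overline{D}_b$ has no $[E]$-component. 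What is true, and what your computation really uses, is that the coefficients at the common rays of $\hbSigma$ cancel when one compares the pullbacks of the \emph{full} anticanonical sums; in the support-function formulation this is automatic. Likewise, your parenthetical claim that $f_\pm$ contracts $E$ and is an isomorphism in codimension one away from $E$ fails for $f_+$ in types (II-i) and (III), where $E$ maps onto a divisor of $\frX_+$ (a root-construction direction); again this is harmless for the coefficient comparison, but the justification should be phrased through the rays of $\hbSigma$ and the support functions rather than through a contraction statement.
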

\begin{proof}This follows immediately from \cite[Proposition 6.21]{CIJ}.
\end{proof}

In view of the lemma above, we say that the birational transformation $\frX_+ \dasharrow \frX_-$
is \emph{crepant} if $\sum\limits_{b\in S} D_b\cdot \bw = 0$
(i.e., $\sum\limits_{b\in S} D_b$ is on the wall $W$)
and \emph{discrepant} otherwise.
We shall restrict ourselves to the case where the transformation is
discrepant. By exchanging $\frX_+$ and $\frX_-$
if necessary, we may assume:
\begin{Assumption}
\label{assump:discrepancy}
The birational transformation $\frX_+ \dasharrow \frX_-$ satisfies
$f_+^\star K_+ > f_-^\star K_-$, i.e., $\sum\limits_{b\in S} D_b\cdot \bw> 0$.
\end{Assumption}
\begin{Remark}\label{rem:cohomology_drop}
Under the above assumption, the dimension of orbifold cohomology
decreases: $\dim H^*_{\CR}(\frX_+)>\dim H^*_\CR(\frX_-)$.
We can see this from the change of the fans, and
using the fact that $|\bN_{\rm tor}|^{-1}
\dim H^*_\CR(\frX_\bSigma)$ equals the sum of volumes
of simplices spanned by $\{0\} \cup \{\overline{b}\in R(\bSigma)\colon\allowbreak
\overline{b} \in \sigma\}$ over all maximal cones $\sigma$ of $\bSigma$,
where we normalize the volume of the standard simplex to be one
(see, e.g., \cite[Lemma~3.9]{Iritani:Integral}).
\end{Remark}

\begin{Remark}\label{rem:3_types} There are three types of discrepant wall-crossings:
(I) $\frX_+$ and $\frX_-$ are isomorphic in codimension one (``flip''),
(II) the birational map induces a map
(i) $\frX_+ \to X_-$ or (ii)~\mbox{$\frX_- \to X_+$}
contracting a divisor to a toric subvariety,
where $X_\pm$ is the coarse moduli space of $\frX_\pm$
(``discrepant resolution'') and
(III) $X_+ = X_-$ but the stack structures of $\frX_+$ and $\frX_-$ differ along
a~divisor. In terms of stacky fans, we have
\begin{itemize}\itemsep=0pt
\item[(I)] $R_+ = R_-$,
$\sharp M_+ \ge 2$ and $\sharp M_-\ge 2$ (rays are the same);
\item[(II-i)] $R_+ = R_-\sqcup M_-$, $\sharp M_-=1$ and
$\sharp M_+\ge 2$ (removing one ray);
\item[(II-ii)] $R_- = R_+ \sqcup M_+$, $\sharp M_+ = 1$ and $\sharp M_-\ge 2$
(adding one ray);
\item[(III)] $R_+ \setminus R_- = M_-$, $R_- \setminus R_+ =M_+$ and
$\sharp M_- = \sharp M_+ =1$
(replace a ray $b_- \in R_+$ with a~shorter and parallel ray $b_+\in R_-$, where
$M_\pm = \{b_\pm\}$).
\end{itemize}
This is similar to the classification of crepant transformations
given in \cite[Propositions~5.4 and~5.5]{CIJ} and can be shown by a parallel argument.
\end{Remark}

\begin{Example}Let $a_1,\dots,a_k$, $b_1,\dots,b_l$ be positive integers
with \mbox{$a_1+ \cdots + a_k<b_1+\cdots + b_l$}.
Consider the $\C^\times$-action on $\C^{k+l}$
given by the weights $(-a_1,\dots,-a_k, b_1,\dots,b_l)$.
The GIT variation gives a discrepant transformation between
the spaces:
\begin{gather*}
\frX_+ = \text{the total space of $\cO(-a_1)\oplus \cdots \oplus \cO(-a_k)$
over $\PP(b_1,\dots,b_l)$, and} \\
\frX_- = \text{the total space of $\cO(-b_1) \oplus \cdots \oplus \cO(-b_l)$
over $\PP(a_1,\dots,a_k)$.}
\end{gather*}
Following the classification in Remark \ref{rem:3_types}, we have:
(I) if $k, l \ge 2$, this is a flip;
(II-i) if $k=1$ and $l\ge 2$, this is a resolution
$\frX_+ \to X_- = \C^l/\mu_{a_1}$ with positive discrepancy;
(II-ii) if $k\ge 2$ and $l=1$, this is a resolution $\frX_- \to X_+=\C^k/\mu_{b_1}$
with negative discrepancy;
(III) if $k=l=1$, we have $\frX_+ = [\C/\mu_{b_1}]$
and $\frX_- = [\C/\mu_{a_1}]$; the stack structure at the origin changes.
Note that the example in Section~\ref{subsubsec:cyclic} is a special case
of the current example
with $k=2$, $l=1$, $(a_1,a_2,b_1) =(1,1,d)$.
\end{Example}

\begin{Example}A blow-up along a toric subvariety is an example of type (II) discrepant transformation.
\end{Example}

\begin{Example}A root construction~\cite{Cadman:tangency}
along a toric divisor is an example of type~(III) discrepant transformation.
\end{Example}

\subsection{The LG model along a curve} \label{subsec:LG_curve}
Consider the partially compactified LG model
$(\pr\colon \cY \to \cM, F)$ from Section~\ref{subsec:LG}. Recall that~$\cM$ is defined in terms of
the secondary fan $\Xi$ consisting of maximal cones $\cpl(\bSigma)$,
$\bSigma \in \Fan(S)$.
Let $\cC\subset \cM$ denote the 1-dimensional toric substack
corresponding to the codimension-1 cone
$\cpl(\bSigma_+) \cap \cpl(\bSigma_+)$.
The curve $\cC$ lies in the boundary of $\cM$ and connects the large radius limit points $0_{\bSigma_+}$ and $0_{\bSigma_-}$.

We cover $\cC$ by the two open sets $\cM_{\pm} :=
\cM_{\bSigma_\pm} = \big[\Spec \C[\Laa(\bSigma_\pm)_+]/\Pic^\st(\frX_\pm)\big]$;
in these local charts, the embedding $\cC \subset
\cM$ is given by the $\C$-algebra homomorphism:
\begin{equation}\label{eq:curve_embedding}
\C[\Laa(\bSigma_\pm)_+] \to \C\big[q^{\pm\bw/e_\pm}\big],
\qquad
q^\lambda \mapsto
\begin{cases}
q^\lambda, &\text{if $\lambda$ is proportional to $\bw$}, \\
0, & \text{otherwise},
\end{cases}
\end{equation}
where $e_\pm\in \N$ is the smallest common denominator of
$\{c\in \Q \colon c \bw \in \Laa(\bSigma_\pm)\}$.
(Recall that $\Laa(\bSigma_\pm)_+ = \Laa(\bSigma_\pm) \cap
\cpl(\bSigma_\pm)^\vee$,
see Lemma \ref{lem:dual_cone_lattice}.)

\begin{Lemma}
\label{lem:LG_on_curve}
The inverse image $\pr^{-1}(\cC)$ is covered by two charts
\[
\pr^{-1}(\cC \cap \cM_\pm) = \big[\Spec(A_\pm)/\Pic^\st(\frX_\pm)\big],
\]
where $A_\pm$ is the $\C\big[q^{\pm \bw/e_\pm}\big]$-algebra
$\bigoplus\limits_{v\in \bN \cap \Pi} \C\big[q^{\pm \bw/e_\pm}\big] w_v^\pm$
equipped with the product
\[
w_{v_1}^\pm w_{v_2}^\pm =
\begin{cases}
q^{\Psi^\pm(v_1) + \Psi^\pm(v_2) - \Psi^\pm(v_1+v_2)}
w_{v_1+v_2}^\pm, & \text{if $v_1$, $v_2$ lie in the same cone of
$\Sigma_0$}, \\
0 & \text{otherwise},
\end{cases}
\]
where $\Psi^\pm(v_1) + \Psi^\pm(v_2) - \Psi^\pm(v_1+v_2)$ is
proportional to $\bw$ in the first case,
$w_v^\pm$ is the restriction of $u^{(\Psi^{\pm}(\overline{v}),v)}$
to $\pr^{-1}(\cC\cap \cM_\pm)$
and $\Psi^\pm := \Psi^{\bSigma_\pm}$ $($see Notation $\ref{nota:Psi})$.
The two charts are glued by
\[
w_v^- =
\begin{cases}
q^{\Psi^-(v) - \Psi^+(v)}
w_v^+, & \text{if $v$ lies in a cone $\sigma_I$ with
$I\in \scrS_0^{\rm circ}$}, \\
w_v^+, &\text{otherwise}.
\end{cases}
\]
\end{Lemma}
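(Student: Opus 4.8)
The plan is to obtain both charts of $\pr^{-1}(\cC)$ by base-changing the local presentations of $\cY_{\bSigma_\pm}\to\cM_{\bSigma_\pm}$ from Proposition~\ref{prop:chart_LG} along the closed embedding $\cC\cap\cM_\pm\hookrightarrow\cM_\pm$ defined by~\eqref{eq:curve_embedding}. Since~\eqref{eq:curve_embedding} is $\Pic^\st(\frX_\pm)$-equivariant (its kernel is a monomial ideal, hence stable under the torus action~\eqref{eq:Picst_action}) and base change commutes with passing to the quotient stack, it is enough to compute the $\C[q^{\pm\bw/e_\pm}]$-algebra
\[
A_\pm:=\C[\OO(\bSigma_\pm)_+]\otimes_{\C[\Laa(\bSigma_\pm)_+]}\C\big[q^{\pm\bw/e_\pm}\big]
\]
with its residual $\Pic^\st(\frX_\pm)$-action, and then to check that the two resulting charts agree over $\pr^{-1}(\cC\cap\cM_+\cap\cM_-)$. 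First I would determine $A_\pm$ as a module: by Remark~\ref{rem:flatness_pr} (equivalently Lemma~\ref{lem:OO_Laa_+}) the monoid algebra $\C[\OO(\bSigma_\pm)_+]$ is free over $\C[\Laa(\bSigma_\pm)_+]$ on the elements $u^{(\Psi^\pm(\overline v),v)}$, $v\in\bN\cap\Pi$, so $A_\pm=\bigoplus_{v}\C[q^{\pm\bw/e_\pm}]w_v^\pm$ with $w_v^\pm$ the image of $u^{(\Psi^\pm(\overline v),v)}$. For the multiplication, the monoid-algebra identity gives
\[
u^{(\Psi^\pm(\overline v_1),v_1)}\,u^{(\Psi^\pm(\overline v_2),v_2)}=q^{\lambda}\,u^{(\Psi^\pm(\overline{v_1+v_2}),v_1+v_2)},\qquad \lambda=\Psi^\pm(\overline v_1)+\Psi^\pm(\overline v_2)-\Psi^\pm(\overline{v_1+v_2}),
\]
and $\lambda\in\Laa(\bSigma_\pm)_+$ by Lemma~\ref{lem:OO_Laa_+}, since both sides lie in the fibre of $\OO(\bSigma_\pm)_+\to\bN\cap\Pi$ over $v_1+v_2$. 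Applying~\eqref{eq:curve_embedding}, the product $w_{v_1}^\pm w_{v_2}^\pm$ equals $q^\lambda w_{v_1+v_2}^\pm$ when $\lambda$ is proportional to $\bw$ and vanishes otherwise (note $\lambda\in\Laa(\bSigma_\pm)_+\subset\hNE(\frX_{\bSigma_\pm})$, which meets $\R\bw$ only in $\R_{\ge0}(\pm\bw)$).

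The combinatorial heart is to show that $\lambda$ is proportional to $\bw$ exactly when $v_1,v_2$ lie in a common cone of $\Sigma_0$. I would argue this using a functional $\xi$ in the relative interior of the facet $\tau_\pm:=W\cap\cpl(\bSigma_\pm)$ of the maximal cone $\cpl(\bSigma_\pm)$ of $\Xi$: lifting $\xi$ to some $c\in\CPL_+(\bSigma_\pm)$, the piecewise linear function $\eta_c$ is strictly convex across every wall of $\Sigma_\pm$ except the circuit wall, hence descends to a piecewise linear function on $\Sigma_0$ that is strictly convex with respect to $\Sigma_0$. Since $\langle c,\Psi^\pm(v)\rangle=\eta_c(v)$ for all $v\in\Pi$ (by the defining property of $\Psi^\pm$ and the linearity of $\eta_c$ on cones of $\Sigma_\pm$), one gets $\langle\xi,\lambda\rangle=\eta_c(v_1)+\eta_c(v_2)-\eta_c(v_1+v_2)$, which is $\ge0$ with equality iff $v_1,v_2$ lie in a common cone of $\Sigma_0$. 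As $\lambda\in\hNE(\frX_{\bSigma_\pm})$ and $\R_{\ge0}(\pm\bw)$ is the face of $\hNE(\frX_{\bSigma_\pm})$ cut out by $\xi$, the pairing $\langle\xi,\lambda\rangle$ vanishes precisely when $\lambda$ is proportional to $\bw$; this simultaneously establishes the product formula and the assertion that the exponent there is proportional to $\bw$.

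For the gluing I would use that the charts $\cY_{\bSigma_+}$ and $\cY_{\bSigma_-}$ of $\cY$ are glued so that $u^{(\Psi^+(\overline v),v)}=q^{\Psi^+(\overline v)-\Psi^-(\overline v)}u^{(\Psi^-(\overline v),v)}$ on their overlap; restricting to $\pr^{-1}(\cC\cap\cM_+\cap\cM_-)$, where $q^{\bw}$ is invertible, yields $w_v^-=q^{\Psi^-(\overline v)-\Psi^+(\overline v)}w_v^+$. The exponent $\Psi^-(\overline v)-\Psi^+(\overline v)$ lies in $\LL_\Q=\ker(\beta)\otimes\Q$ and is supported on the rays of the minimal cone $\sigma_I\in\Sigma_0$ containing $v$: it vanishes when $I\in\scrS_0^{\rm simp}$ (there $\Psi^+$ and $\Psi^-$ both express $v$ inside the same simplicial cone $\sigma_I\in\Sigma_\pm$), and when $I\in\scrS_0^{\rm circ}$ it lies in $\Q\bw$ because $\dim\sigma_I=|I|-1$ forces the circuit relation to be the unique linear relation among $\{\overline b:b\in I\}$. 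All of this is $\Pic^\st(\frX_\pm)$-equivariant by construction, completing the identification.

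I expect the main obstacle to be the combinatorial step of the third paragraph: verifying that a relative-interior point of the wall facet $\tau_\pm$ corresponds to a piecewise linear function strictly convex on $\Sigma_0$, establishing the identity $\langle c,\Psi^\pm(v)\rangle=\eta_c(v)$, and bookkeeping which cones of $\Sigma_\pm$ and $\Sigma_0$ contain $v_1$, $v_2$ and $v_1+v_2$ (together with the sign of the resulting multiple of $\bw$ in the gluing). The module computation, the compatibility of base change with quotient stacks, and the monoid-algebra identity are routine.
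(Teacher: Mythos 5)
Your proposal is correct and follows essentially the same route as the paper: base-change the presentation of $\cY_{\bSigma_\pm}\to\cM_{\bSigma_\pm}$ from Proposition~\ref{prop:chart_LG} along the embedding~\eqref{eq:curve_embedding}, use the freeness of $\C[\OO(\bSigma_\pm)_+]$ over $\C[\Laa(\bSigma_\pm)_+]$ (Remark~\ref{rem:flatness_pr}, Lemma~\ref{lem:OO_Laa_+}) to get the module structure, and settle the product and gluing formulas by the circuit/secondary-fan combinatorics relating $\Sigma_0$ and $\Sigma_\pm$. The only difference is that you spell out the combinatorial step (via a piecewise linear function attached to a relative-interior point of the wall facet, strictly convex with respect to $\Sigma_0$), which the paper compresses into the phrase ``the description of cones of $\Sigma_0$, $\Sigma_\pm$ in terms of the circuit''.
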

\begin{proof}The space $\pr^{-1}(\cC\cap \cM_\pm)$ is the base change of
$\cY_{\bSigma_\pm} =
\big[\Spec \C[\OO(\bSigma_\pm)_+]/\Pic^\st(\frX_\pm)\big]$
via~\eqref{eq:curve_embedding}.
The conclusion follows from Remark~\ref{rem:flatness_pr}
and the description of cones of~$\Sigma_0$,~$\Sigma_\pm$ in terms of the circuit~\eqref{eq:circuit}.
\end{proof}

\begin{Remark}\label{rem:curve_asymptotics}We describe how the curve $\cC$ looks like in the K\"ahler
moduli space. Using the equation \eqref{eq:curve_embedding} for $\cC$ and the asymptotics of the mirror map
(Remark~\ref{rem:mirror_map_asymptotic}), we find that the image of $\cC$ under the (non-equivariant)
mirror map for~$\frX_+$ is asymptotically close to, near the large radius limit point~$0_{\bSigma_+}$,
\begin{description}\itemsep=0pt
\item[type (I) or (II-i) case:]
the curve given by $\tau' = 0$ and $q^d=0$
for all $d\in \Laa_+^{\bSigma_+} \setminus \Q_{\ge 0} \bw$
(i.e., the curve corresponding to the extremal class
$\bw \in H_2(\frX_+,\Q)$);
\item[type (II-ii) or (III) case:] the curve given by
$\tau' \in \C \frD_{b_+}$ and
$q^d = 0$ for all
$d \in\Laa_+^{\bSigma_+}\setminus \{0\}$,
where $b_+$ is the unique element of $M_+$
and $\frD_{b_+}\in H^{<2}_{\CR}(\frX_+)$
is as in Remark \ref{rem:mirror_map_asymptotic}.
\end{description}
Here we use the notation on the K\"ahler moduli space from Section~\ref{subsec:Kaehler_moduli}
and the classification in Remark~\ref{rem:3_types}.
Similarly, near $0_{\bSigma_-}$, the image of $\cC$
under the mirror map for $\frX_-$ is asymptotically close to
\begin{description}\itemsep=0pt
\item[type (I) or (II-ii) case:]
the curve given by $\tau' = 0$ and $q^d=0$
for all $d\in \Laa_+^{\bSigma_-} \setminus \Q_{\ge 0} (-\bw)$
(i.e., the curve corresponding to the extremal class $-\bw\in H_2(\frX_-,\Q)$);
\item[type (II-i) or (III) case:]
the curve given by $\tau' \in \C \frD_{b_-}$ and
$q^d = 0$ for all $d \in\Laa_+^{\bSigma_-}\setminus \{0\}$,
where $b_-$ is the unique element of $M_-$
and $\frD_{b_-} \in H^{>2}_\CR(\frX_-)$ is as in
Remark \ref{rem:mirror_map_asymptotic}.
\end{description}
The above classes $\frD_{b_\pm}$ are supported on the image
of the exceptional divisor. We note that they can be zero, and
in that case we need to examine the higher-order terms in
the mirror map to see the asymptotic behaviour of $\cC$.
\end{Remark}

\begin{Remark}
Note that $\pr^{-1}(\cC)\subset \cY$ is a possibly
reducible toric substack and its components are in one-to-one
correspondence with maximal cones of the fan $\Sigma_0$.
The LG potential restricted to $\pr^{-1}(\cC)$ is of the form
$F= \sum\limits_{b\in R_+\cup R_-} u_b$
(here $u_b$ with $b\in S\setminus (R_+\cup R_-)$ vanishes
on $\pr^{-1}(\cC)$).
\end{Remark}

\subsection{Decomposition of the Brieskorn module}\label{subsec:decomp_Bri}
In this section, we show that the analytified Brieskorn module
associated with $\bSigma_-$
is a direct summand of that associated with $\bSigma_+$
in a neighbourhood of $\cC$.
For this, we study the family~\eqref{eq:tpr}
\[
\tpr =\left(\pr, x_1\parfrac{F}{x_1},\dots,x_n \parfrac{F}{x_n}\right)
\colon \ \cY \to \cM_\T = \cM \times \Lie \T
\]
over an analytic neighbourhood of the curve $\cC \times \{0\}
\subset \cM \times \Lie \T$,
where $x_1,\dots,x_n$ are co-ordinates along fibres of $\cY \to \cM$
as in Section~\ref{subsec:Bri}.
Recall that this family can be regarded as the
relative critical scheme of $F_\T$ (see Section~\ref{subsubsec:higher_residue}).

The $\C^\times$-action generated by the Euler vector field \eqref{eq:Euler_B}
plays an important role in the fol\-lowing discussion.
Consider the elements $\sum\limits_{b\in S} e_b^\star \in \big(\Z^S\big)^\star$
and $\sum\limits_{b\in S} D_b \in \LL^\star$; they define
$\C^\times$-actions, respectively, on $\cY$ and $\cM$
such that $\pr \colon \cY \to \cM$ is $\C^\times$-equivariant.
In terms of the co-ordinates~$(u_b)_{b\in S}$, the $\C^\times$-action
is given by
\begin{align*}
s\cdot u_b = s u_b \qquad \text{with $s\in \C^\times$},
\end{align*}
and the potential function $F=\sum\limits_{b\in S} u_b$ is of weight 1
with respect to the action.
Introduce the $\C^\times$-action on $\Lie \T$ given by
the scalar multiplication; then the map $\tpr \colon \cY
\to \cM \times \Lie \T$ is $\C^\times$-equivariant.
Let $0_\pm := 0_{\bSigma_\pm} = \{q^{\pm \bw} = 0\} \in \cC$ denote
the large radius limit points of $\frX_\pm$
(see Definition~\ref{def:large_radius_limit}) and
let $\tzero_\pm = \tzero_{\bSigma_\pm}\in \pr^{-1}(\cC)$ denote
the torus-fixed points such that $\pr(\tzero_\pm) = 0_\pm$
as in Section~\ref{subsubsec:analytification_algebras}.
The $\C^\times$-action on the family $\pr^{-1}(\cC) \to \cC$ is given by
(with notation as in Lemma~\ref{lem:LG_on_curve})
\[
s\cdot w^\pm_v = s^{\sum_{b\in S} \Psi^\pm_b(v)} w^\pm_v,
\qquad
s\cdot q^\bw = s^{\sum_{b\in S} D_b \cdot \bw} q^\bw.
\]
By Assumption \ref{assump:discrepancy},
we have that $\lim\limits_{s\to 0} s \cdot x = 0_+$,
$\lim\limits_{s\to \infty} s\cdot x = 0_-$ for every
$x\in \cC \setminus \{0_+,0_-\}$
and
$\lim\limits_{s\to 0} s \cdot y = \tzero_+$ for every
$y\in \pr^{-1}(\cC \setminus \{0_-\})$.

We choose analytic open sets
$\cB_\pm \subset \cY$, $\cU_\pm \subset \cM_\T$
with $\tzero_\pm \in \cB_\pm$, $(0_\pm,0) \in \cU_\pm$
such that the conclusion of Corollary \ref{cor:tpr_locally_free}
holds.
Since $\tpr$ is $\C^\times$-equivariant, even after replacing
$\cB_\pm$ and $\cU_\pm$ with
\[
\bigcup_{s\in \C^\times} s \cdot \cB_\pm,
\qquad \text{and} \qquad
\bigcup_{s\in \C^\times} s \cdot \cU_\pm,
\]
we have that the conclusion of Corollary \ref{cor:tpr_locally_free}
still holds. \emph{We henceforth assume that $\cB_\pm$, $\cU_\pm$
are preserved by the $\C^\times$-action}.
Since every point in $\pr^{-1}(\cC\setminus \{0_-\})$
flows to $\tzero_+$ under the $\C^\times$-action
and $\cB_+$ is an open neighbourhood of $\tzero_+$,
we have that $\pr^{-1}(\cC\setminus \{0_-\}) \subset \cB_+$.
Similarly we have $(\cC \setminus \{0_+,0_-\})
\times \{0\}\subset \cU_\pm$.

\begin{Lemma}
\label{lem:subcover}
There exists an analytic open set $\cU_0$ of
$\cM_\T = \cM\times \Lie\T$ such that
\begin{itemize}\itemsep=0pt
\item[{\rm (1)}] $(\cC\setminus\{0_+,0_-\}) \times \{0\} \subset \cU_0
\subset \cU_+ \cap \cU_-$;
\item[{\rm (2)}]
$\tpr^{-1}(\cU_0) \cap \cB_+ = (\tpr^{-1}(\cU_0) \cap \cB_-)
\sqcup \cR$ for some open set $\cR$ of $\cY$.
\end{itemize}
\end{Lemma}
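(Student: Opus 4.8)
The plan is to produce $\cU_0$ by deleting a suitable closed subset from $\cU_+\cap\cU_-$, and then to derive the decomposition (2) from the observation that $\cB_-$ is closed in $\tpr^{-1}(\cU_-)$ — a soft consequence of the properness of $\tpr|_{\cB_-}$, which is guaranteed by Corollary \ref{cor:tpr_locally_free} (valid for the $\C^\times$-saturated neighbourhoods, as noted just before the lemma).

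First I would set $D:=\cB_-\cap(\cY\setminus\cB_+)$. Since $\cB_+$ is open, $D$ is closed in $\cB_-$; since $\tpr|_{\cB_-}\colon\cB_-\to\cU_-$ is finite, hence proper, the image $\tpr(D)$ is closed in $\cU_-$. I claim $\tpr(D)$ misses $(\cC\setminus\{0_+,0_-\})\times\{0\}$: if some $y\in D$ had $\tpr(y)=(q,0)$ with $q\in\cC\setminus\{0_+,0_-\}$, then $\pr(y)=q\in\cC\setminus\{0_-\}$, so $y\in\pr^{-1}(\cC\setminus\{0_-\})\subset\cB_+$, contradicting $y\in D$. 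I then take $\cU_0:=(\cU_+\cap\cU_-)\setminus\tpr(D)$. It is open in $\cM_\T$ because $\tpr(D)$ is closed in $\cU_-$, so $\tpr(D)\cap(\cU_+\cap\cU_-)$ is closed there; it contains $(\cC\setminus\{0_+,0_-\})\times\{0\}$ by the previous sentence together with the given inclusion $(\cC\setminus\{0_+,0_-\})\times\{0\}\subset\cU_\pm$; and it lies in $\cU_+\cap\cU_-$ by construction. This settles (1).

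For (2), I would first note $\tpr^{-1}(\cU_0)\cap\cB_-\subset\cB_+$: any $y$ in the left side has $\tpr(y)\in\cU_0$, hence $\tpr(y)\notin\tpr(D)$, hence $y\notin D$, hence — since $y\in\cB_-$ — $y\in\cB_+$. Next I would establish the key topological fact that $\cB_-$ is closed in $\tpr^{-1}(\cU_-)$: if $y$ lies in the closure of $\cB_-$ in $\cY$ with $\tpr(y)\in\cU_-$, pick a compact neighbourhood $K$ of $\tpr(y)$ in $\cU_-$; a sequence $y_k\to y$ with $y_k\in\cB_-$ eventually lies in the compact set $(\tpr|_{\cB_-})^{-1}(K)$, so a subsequence converges in $\cB_-$, and by the Hausdorff property its limit is $y$; thus $y\in\cB_-$. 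Since $\cU_0\subset\cU_-$, it follows that $\tpr^{-1}(\cU_0)\cap\cB_-$ is closed in $\tpr^{-1}(\cU_0)$, hence (by the inclusion just shown) closed in the open set $\tpr^{-1}(\cU_0)\cap\cB_+$; it is of course also open there. Therefore
\[
\cR:=\bigl(\tpr^{-1}(\cU_0)\cap\cB_+\bigr)\setminus\bigl(\tpr^{-1}(\cU_0)\cap\cB_-\bigr)
\]
is open in $\cY$, and $\tpr^{-1}(\cU_0)\cap\cB_+=\bigl(\tpr^{-1}(\cU_0)\cap\cB_-\bigr)\sqcup\cR$ by construction.

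The only point requiring genuine care is the properness input: Corollary \ref{cor:tpr_locally_free} was phrased for small $\cB_\pm,\cU_\pm$, so one must invoke the assertion preceding the lemma — that the conclusion of that corollary persists after $\C^\times$-saturation — in order to know $\tpr|_{\cB_-}$ is finite (hence proper) over the larger $\cU_-$ containing the generic part of $\cC$. Everything else is a routine compactness argument. Note that the dimension drop $\dim H^*_\CR(\frX_+)>\dim H^*_\CR(\frX_-)$ is not needed for the statement; it only guarantees $\cR\neq\varnothing$, which is not asserted here.
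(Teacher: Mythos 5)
Your proof is correct and follows essentially the same route as the paper: you define $\cU_0=(\cU_+\cap\cU_-)\setminus\tpr(\cB_-\setminus\cB_+)$ exactly as the paper does, relying on properness of $\tpr|_{\cB_-}$ (after $\C^\times$-saturation) and on the inclusion $\pr^{-1}(\cC\setminus\{0_-\})\subset\cB_+$ for part (1). The only cosmetic difference is in the openness of $\cR$: you deduce it from the closedness of $\cB_-$ in $\tpr^{-1}(\cU_-)$, proved by a sequential compactness argument (legitimate here, as these analytic spaces are first countable), whereas the paper constructs, for each point of the complement, an explicit open neighbourhood using a compact neighbourhood of its $\tpr$-image.
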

\begin{proof}
First note that $\cU_+\cap \cU_-$ contains
$(\cC\setminus \{0_+,0_-\})\times \{0\}$.
Since $\tpr\colon \cB_- \to \cU_-$ is proper and
$\cB_- \setminus \cB_+$ is closed in $\cB_-$,
$\tpr(\cB_-\setminus \cB_+)$ is closed in $\cU_-$.
Define $\cU_0:=(\cU_+ \cap \cU_-) \setminus \tpr(\cB_-\setminus \cB_+)$.
This is an open subset of $\cU_+\cap \cU_-$.
By definition we have $\tpr^{-1}(\cU_0) \cap \cB_- \subset
\tpr^{-1}(\cU_0) \cap \cB_+$.
Since we have $\tpr^{-1}((\cC\setminus \{0_+,0_-\}) \times \{0\})
\subset \pr^{-1}(\cC\setminus \{0_+,0_-\})
\subset \cB_+$, we conclude
$(\cC\setminus \{0_+,0_-\})\times \{0\} \subset \cU_0$.
Finally we show that the complement of $\tpr^{-1}(\cU_0) \cap \cB_-$
in $\tpr^{-1}(\cU_0) \cap \cB_+$ is open. Take any point
$y$ from the complement, and choose a compact neighbourhood $K$
of $\tpr(y)$ in $\cU_0$. Since $\tpr\colon \cB_-\to \cU_-$ is proper,
$\tpr^{-1}(K)\cap \cB_-$ is a compact set not containing $y$.
Let $K^\circ$ be the interior of $K$;
then $(\tpr^{-1}(K^\circ) \cap \cB_+) \setminus
(\tpr^{-1}(K)\cap \cB_-)$ is an open neighbourhood of $y$
which does not intersect with $\tpr^{-1}(\cU_0) \cap \cB_-$.
\end{proof}

We consider the analytified equivariant Brieskorn module
$\Briequivan(F)_\pm := \Briequivan(F)_{\bSigma_\pm}$
from Definition \ref{def:analytic_Bri}. Note that it is defined
over the $\cU_\pm$ above, since the only properties
we need in the construction are those in Corollary \ref{cor:tpr_locally_free}.

\begin{Corollary}
\label{cor:direct_summand}
Let $\cU_0,\cR$ be as in Lemma $\ref{lem:subcover}$.
\begin{itemize}\itemsep=0pt
\item[{\rm (1)}]
We have a direct sum decomposition of $\cOan_{\cU_0}$-algebras:
\[
(\tpr_*\cOan_{\cB_+})\big|_{\cU_0} \cong
(\tpr_*\cOan_{\cB_-})\big|_{\cU_0}\oplus (\tpr_*\cOan_\cR).
\]
\item[{\rm (2)}]
We have a direct sum decomposition of $\cOan_{\cU_0}[\![z]\!]$-modules:
\[
\Briequivan(F)_+\big|_{\cU_0}
\cong \Briequivan(F)_-\big|_{\cU_0}
\oplus \tpr_*(\cOan_\cR[\![z]\!]).
\]
Under this decomposition, the Gauss--Manin
connection $\nabla$, the grading operator
$\Gr$ and the higher residue pairing $P$ split into the direct sum.
\end{itemize}
\end{Corollary}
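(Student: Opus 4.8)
The plan is to deduce both decompositions directly from the disjoint-union statement of Lemma~\ref{lem:subcover}(2), exploiting that every structure carried by the analytified Brieskorn module is \emph{local} over $\cY$ and therefore respects a decomposition of $\tpr^{-1}(\cU_0)\cap\cB_+$ into disjoint open pieces. First I would record that, since $\tpr$ is equivariant for both the Euler $\C^\times$-action and the $\Pic^\st(\frX_\pm)$-action, the open sets $\cB_\pm\subset\cY$ and $\cU_\pm\subset\cM_\T$ may be taken invariant under both (the $\C^\times$-invariance having already been arranged); then the sets $\cU_0$ and $\cR$ produced by Lemma~\ref{lem:subcover} are again $\C^\times$- and $\Pic^\st(\frX_\pm)$-invariant, and $\cR$ is a union of connected components of $\tpr^{-1}(\cU_0)\cap\cB_+$ stable under the Euler $\C^\times$-action.

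For Part~(1) I would argue as follows. By Corollary~\ref{cor:tpr_locally_free} the map $\tpr|_{\cB_+}\colon\cB_+\to\cU_+$ is proper; restricting over the open subset $\cU_0$ and then to the open-and-closed pieces shows that $\tpr$ is proper on $\tpr^{-1}(\cU_0)\cap\cB_-$ and on $\cR$ over $\cU_0$, so the relevant pushforwards are coherent by Grauert's theorem. Since the structure sheaf of a disjoint union of open analytic subspaces is the direct sum of their structure sheaves, Lemma~\ref{lem:subcover}(2) gives
\[
\cOan_{\tpr^{-1}(\cU_0)\cap\cB_+}\cong\cOan_{\tpr^{-1}(\cU_0)\cap\cB_-}\oplus\cOan_{\cR}
\]
as sheaves of $\C$-algebras; applying $\tpr_*$, which commutes with restriction to an open of the base and with finite direct sums, yields the asserted decomposition $(\tpr_*\cOan_{\cB_+})|_{\cU_0}\cong(\tpr_*\cOan_{\cB_-})|_{\cU_0}\oplus\tpr_*\cOan_{\cR}$ of $\cOan_{\cU_0}$-algebras.

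For Part~(2) the same argument with $\cOan[\![z]\!]$ in place of $\cOan$ produces the $\cOan_{\cU_0}$-linear direct sum $\Briequivan(F)_+|_{\cU_0}\cong\Briequivan(F)_-|_{\cU_0}\oplus\tpr_*(\cOan_{\cR}[\![z]\!])$, and it remains only to check compatibility with the extra structures, which in each case follows from locality over $\cY$. For the $\cOan_{\cU_0}[\![z]\!]$-module structure of Definition~\ref{def:analytic_Bri}: multiplication by functions of $(q,z)$ is local, and $\chi_i$ acts by the relative differential operator $f\omega\mapsto\left(x_i\parfrac{F}{x_i}f+z\,x_i\parfrac{f}{x_i}\right)\omega$ on $\cY$, which preserves sections supported on disjoint opens; likewise the Gauss--Manin connection $\nabla=d+d(F/z)\wedge$ is a differential operator along $\cY$ and so splits. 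The grading operator $\Gr(f\omega)=\left(z\parfrac{f}{z}+\sum_{b\in S}u_b\parfrac{f}{u_b}\right)\omega$ uses the Euler vector field $\sum_{b\in S}u_b\parfrac{}{u_b}$, which is tangent to the $\C^\times$-invariant open set $\cR$, so $\Gr$ splits as well. Finally, for the higher residue pairing $P(s_1,s_2)(q,\chi)=\sum_{p\in\tpr^{-1}(q,\chi)}[\Asymp_p(s_1)]_{z\to-z}\cdot\Asymp_p(s_2)$: the formal asymptotic expansion $\Asymp_p$ depends only on the germ of the section at the critical point $p$ (it is the termwise Gaussian integration of the Taylor expansion at $p$; see \eqref{eq:Asymp}), so if $s_1$ lies in the $\Briequivan(F)_-$-summand and $s_2$ in the $\tpr_*(\cOan_{\cR}[\![z]\!])$-summand, every term of the sum vanishes --- $\Asymp_p(s_1)=0$ when $p\in\cR$, and $\Asymp_p(s_2)=0$ when $p\notin\cR$ --- whence $P$ decomposes into the higher residue pairings of the two summands.

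The whole argument is bookkeeping once Lemma~\ref{lem:subcover} is in hand; the point I expect to require genuine care is the last one, namely establishing that the higher residue pairing --- defined through the stationary-phase formula \eqref{eq:Asymp} rather than intrinsically --- is truly local at each critical point, so that the contribution of each $p$ falls entirely into one of the two summands. The equivariance bookkeeping, that $\cU_0$ and $\cR$ may be chosen $\C^\times$- and $\Pic^\st(\frX_\pm)$-invariant, is the only other mildly delicate ingredient.
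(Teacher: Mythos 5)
Your proof is correct and is exactly the argument the paper leaves implicit: the corollary is stated as an immediate consequence of Lemma~\ref{lem:subcover}(2), since the disjoint open decomposition of $\tpr^{-1}(\cU_0)\cap\cB_+$ splits the pushforward sheaves, and all the extra structures ($\chi_i$-action, $\nabla$, $\Gr$, and $P$ via the pointwise asymptotic expansions $\Asymp_p$) are local over $\cY$, so they respect the splitting, with the cross terms of $P$ vanishing just as you say. No gaps; the equivariance remarks are harmless but not needed beyond what Lemma~\ref{lem:subcover} already provides.
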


\begin{Definition}
\label{def:rest}
We define $\scrR := \tpr_*(\cOan_\cR[\![z]\!])$.
Since $\tpr\colon \cR \to \cU_0$ is a finite flat morphism,
we can define the $\cOan_{\cU_0}[\![z]\!]$-module structure,
the Gauss--Manin connection $\nabla^\scrR$,
the grading opera\-tor~$\Gr^\scrR$ and
the higher residue pairing~$P^\scrR$ on~$\scrR$
similarly to Definition~\ref{def:analytic_Bri} and
Section~\ref{subsubsec:higher_residue}.
By the same argument as in Proposition~\ref{prop:Briequivan_finite_free},
$\scrR$ is a free $\cOan_{\cU_0}[\![z]\!]$-module
of rank $\dim H^*_\CR(\frX_+) - \dim H^*_\CR(\frX_-)$.
For a generic $(q,\chi) \in \cU_0$, $\tpr^{-1}(q,\chi)\cap \cR$
consists of finitely many reduced points, and the asymptotic expansion
in Definition \ref{def:Asymp} defines an isomorphism
$\Asymp\colon \scrR_{(q,\chi)} \cong \C[\![z]\!]^{\oplus \rank \scrR}$.
\end{Definition}

\begin{Remark}
Since $\rank \tpr_*(\cOan_{\cB_+})
= \dim H_\CR^*(\frX_+) > \dim H_\CR^*(\frX_-)
= \rank \tpr_*(\cOan_{\cB_-})$
(by Corollary \ref{cor:tpr_locally_free} and
Remark \ref{rem:cohomology_drop}),
$\cR$ is never empty under Assumption~\ref{assump:discrepancy}.
\end{Remark}

\begin{figure}[htbp]\centering
\includegraphics{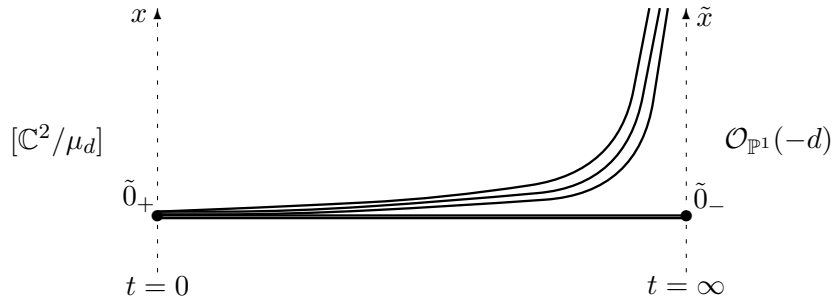}
\caption{Family of critical points. Out of $d$ critical points,
$d-2$ points go to infinity at the large radius limit of
$\cO_{\PP^1}(-d)$ ($d=5$ in the picture).} \label{fig:critical_points_transition}
\end{figure}

\begin{Example} Consider the example of the LG model in Section~\ref{subsubsec:cyclic}.
This corresponds to a
discrepant transformation between $\frX_+ = \big[\C^2/\mu_d\big]$
(of type $\frac{1}{d}(1,1)$) and its minimal resolution
$\frX_- = \cO_{\PP^1}(-d)$. The LG model was given by
the potential function
\[
F = x_2 + \frac{x_1^d}{x_2} + t x_1 = \tx_2+ q \frac{\tx_1^d}{\tx_2}
+ \tx_1,
\]
where $(x_1,x_2,t)$ and $(\tx_1,\tx_2,q)$ are related by
$\tx_1 = t x_1$, $\tx_2 = x_2$, $q=t^{-d}$.
Here $q=t^{-d}$ is a~co-ordinate on $\cM = \PP(1,d)$,
$\{t=0\}$ is the large radius limit point of $\frX_+$
and $\{q=0\}$ is the large radius limit point of $\frX_-$.
On the chart near $t=0$ (associated to $\frX_+$),
the precise domain of definition of the potential
function $F$ is given by (see Section~\ref{subsubsec:cyclic})
\[
\cY_{\bSigma_+}= \big\{(x_1,x_2,y,t) \in \C^4\colon x_2 y =x_1^d\big\},
\]
where $y$ is identified with the term $x_1^d/x_2$ in $F$.
The relative critical scheme of $F$ on this chart is given by
\begin{align*}
\left\{x_1\parfrac{F}{x_1}= x_2\parfrac{F}{x_2} = 0\right\}
& = \left\{
d \frac{x_1^d}{x_2}+ t x_1 = 0, \
x_2 - \frac{x_1^d}{x_2} =0 \right\}
\\
& = \big\{ (x_1,x_2,y,t) \in \C^4\colon
d y+ tx_1 =0, \ x_2 -y = 0, \ x_2 y = x_1^d \big\} \\
& = \left\{\left(x_1, - \frac{tx_1}{d}, -\frac{tx_1}{d},t\right) \in \C^4
\colon x_1^2\big(x_1^{d-2} - t^2/d^2\big) =0\right\}.
\end{align*}
This is schematically depicted in Fig.~\ref{fig:critical_points_transition}.
\end{Example}

\subsection{Comparison of quantum D-modules}\label{subsec:comparison_QDM}
Finally we obtain a comparison result between the quantum D-modules of
$\frX_+$ and $\frX_-$
combining Theorem~\ref{thm:an_mirror_isom} and Corollary~\ref{cor:direct_summand}.

Recall the analytic quantum D-module
$\QDMan_\T(\frX_\pm)$ of $\frX_\pm$
from Section~\ref{subsec:an_mirror_isom};
it is defined over an open neighbourhood $U_\pm$ of the origin in the
equivariant K\"ahler moduli space
\[
\big[\cM_{\rm A,\T}(\frX_\pm)/\Pic^\st(\frX_\pm)\big].
\]
By Theorem~\ref{thm:an_mirror_isom},
we have the analytic mirror isomorphism
\begin{equation}
\label{eq:an_mirror_isom_pm}
\Miran_\pm \colon \ \Briequivan(F)_\pm \big|_{\cU'_\pm}
\cong \mir_\pm^*\overline{\QDMan_\T}(\frX_\pm)
\end{equation}
over an open neighbourhood $\cU'_\pm\subset \cU_\pm$ of $(0_\pm,0)
\in \cM_\T = \cM \times \Lie \T$,
where $\mir_\pm \colon \cU'_\pm \to U_\pm$ denotes the mirror map
and the overline $\overline{\cdots}$ means the $z$-adic completion.
Let us observe that the analytic mirror isomorphism extends to a domain
which is closed under the $\C^\times$-action
(as discussed in Section~\ref{subsec:decomp_Bri}).
We have already seen in Section~\ref{subsec:decomp_Bri}
that $\Briequivan(F)_\pm$ extends to
$\C^\times \cU'_\pm = \bigcup\limits_{s\in \C^\times} s \cdot \cU'_\pm$.
Introduce the $\C^\times$-action on the equivariant K\"ahler moduli
space $[\cM_{\rm A,\T}(\frX_\pm)/\Pic^\st(\frX_\pm)]$
generated by the Euler vector field
\eqref{eq:Euler_A}; since the mirror map $\mir_\pm$ preserves
the Euler vector fields, it can be extended to a unique
$\C^\times$-equivariant map $\mir_\pm \colon \C^\times
\cU'_\pm \to \C^\times U_\pm$.
We may assume (by shrinking $\cU'$ if necessary) that
$\Briequivan(F)_\pm|_{\cU'}$ is generated by $\Gr$-homogeneous
sections $\Omega_0^\pm,\dots,\Omega_s^\pm$ over
$\cOan_{\cU'_\pm}[\![z]\!]$
(see \cite[Theorem 4.26]{CCIT:MS} and
Part~(b) in Section~\ref{subsec:an_mirror_isom}).
Since $\Miran$ intertwines the grading operators as
$\Miran\circ \Gr = \Gr \circ \Miran$,
the sections $\Miran(\Omega_i^\pm)$ extend to $\C^\times \cU'_\pm$,
and the mirror isomorphism $\Miran$ also extends there.

Henceforth we assume that the open set $\cU'_\pm$ (where
the analytic mirror isomorphism \eqref{eq:an_mirror_isom_pm}
is defined) is closed under the $\C^\times$-action.
By the same argument as in Section~\ref{subsec:decomp_Bri},
$\cU'_+\cap \cU'_-$ is an open set containing
$(\cC\setminus \{0_+,0_-\})\times \{0\}\subset \cM_\T$.
Setting $\cU_0' = \cU_0 \cap(\cU'_+ \cap \cU'_-)$
for the open set~$\cU_0$ from Lemma~\ref{lem:subcover},
we obtain the following result.

\begin{Theorem}\label{thm:decomp_QDM} There exist an open subset $\cU_0'$ of $\cM_\T$ containing
$(\cC \setminus \{0_+, 0_-\})\times \{0\}$ and mirror maps
$\mir_\pm \colon \cU_0' \to
[\cM_{\rm A,\T}(\frX_\pm)/\Pic^\st(\frX_\pm)]$
to the equivariant K\"ahler moduli spaces of $\frX_\pm$
such that the following decomposition of
$\cOan_{\cU'_0}[\![z]\!]$-modules holds:
\begin{equation}\label{eq:decomp_QDM}
\mir_+^* \overline{\QDMan_\T}(\frX_+)
\cong
\mir_-^* \overline{\QDMan_\T}(\frX_-)
\oplus \scrR\big|_{\cU'_0},
\end{equation}
where $\overline{\QDMan_\T}(\frX_\pm)$ denotes the $z$-adic completion
of the equivariant quantum D-module of $\frX_\pm$
$($see Section~$\ref{subsec:an_mirror_isom})$ and
$\scrR$ is as in Definition $\ref{def:rest}$.
Under this decomposition, the flat connection, the grading operator and
the pairing split as follows:
\begin{itemize}\itemsep=0pt
\item[{\rm (1)}]
$\mir_+^*\nabla^+ = (\mir_-^*\nabla^- + \alpha \cdot \id)
\oplus \nabla^\scrR$ for some $1$-form
$\alpha\in \Omega^1_{\cM_\T/\Lie \T}$;
\item[{\rm (2)}]
$\mir_+^*\Gr^+ = \mir_-^*\Gr^- \oplus \Gr^\scrR$;
\item[{\rm (3)}]
$\mir_+^*P^+ = \mir_-^*P^- \oplus P^\scrR$,
\end{itemize}
where
$\nabla^\pm$, $\Gr^\pm$, $P^\pm$ denote respectively
the quantum connection, the grading operator~\eqref{eq:grading_A} and
the pairing~\eqref{eq:pairing_A} on the quantum D-module of~$\frX_\pm$.
\end{Theorem}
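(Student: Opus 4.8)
The plan is to deduce the decomposition \eqref{eq:decomp_QDM} by splicing together the analytic mirror isomorphisms $\Miran_\pm$ of~\eqref{eq:an_mirror_isom_pm} with the Brieskorn-module decomposition of Corollary~\ref{cor:direct_summand}, all read over the common open set $\cU_0' = \cU_0 \cap \cU'_+ \cap \cU'_-$, where $\cU_0$ is as in Lemma~\ref{lem:subcover} and $\cU'_\pm$ are the (as explained in the discussion preceding the theorem, $\C^\times$-invariant) domains of $\Miran_\pm$. First I would check that $\cU_0'$ is a non-empty open subset of $\cM_\T$ containing $(\cC\setminus\{0_+,0_-\})\times\{0\}$: the set $\cU_0$ contains it by Lemma~\ref{lem:subcover}(1); each $\cU'_\pm$, being a $\C^\times$-invariant neighbourhood of $(0_\pm,0)$, contains it because, by Assumption~\ref{assump:discrepancy}, the $\C^\times$-orbit of a point $(x,0)$ with $x\in\cC\setminus\{0_+,0_-\}$ stays in $\cM\times\{0\}$ and flows from $(0_+,0)$ (as $s\to 0$) to $(0_-,0)$ (as $s\to\infty$); and $\cC\setminus\{0_+,0_-\}\cong\C^\times$ is non-empty since $\cC$ is a one-dimensional toric substack with exactly the two torus-fixed points $0_\pm$. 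In particular both mirror maps $\mir_\pm\colon\cU_0'\to[\cM_{\rm A,\T}(\frX_\pm)/\Pic^\st(\frX_\pm)]$ (the $\C^\times$-equivariant extensions of the maps of Theorem~\ref{thm:an_mirror_isom}) are defined on $\cU_0'$.

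Since $\cU_0'\subset\cU_0$, Corollary~\ref{cor:direct_summand}(2) gives a decomposition $\Briequivan(F)_+|_{\cU_0'}\cong\Briequivan(F)_-|_{\cU_0'}\oplus\scrR|_{\cU_0'}$ of $\cOan_{\cU_0'}[\![z]\!]$-modules under which the Gauss--Manin connection, the grading operator and the higher residue pairing split; and since $\cU_0'\subset\cU'_\pm$, Theorem~\ref{thm:an_mirror_isom} supplies $\Miran_\pm\colon\Briequivan(F)_\pm|_{\cU_0'}\cong\mir_\pm^*\overline{\QDMan_\T}(\frX_\pm)$. Concatenating $(\Miran_+)^{-1}$, this decomposition, and $\Miran_-$ yields \eqref{eq:decomp_QDM}, with $\rank\scrR = \dim H^*_\CR(\frX_+)-\dim H^*_\CR(\frX_-)>0$ by Remark~\ref{rem:cohomology_drop} (so $\scrR$ is genuinely nonzero). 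For the grading operators and the pairings there is nothing further to do: the formula for $\Gr$ on $\Briequivan(F)_+$ involves no splitting, and the higher residue pairing is a sum of local contributions over the points of $\tpr^{-1}(q,\chi)$, so under the decomposition of Corollary~\ref{cor:direct_summand} --- which at the level of critical loci is the partition $\tpr^{-1}(\cU_0')\cap\cB_+ = (\tpr^{-1}(\cU_0')\cap\cB_-)\sqcup\cR$ of Lemma~\ref{lem:subcover}(2) --- the $\frX_-$-summand inherits exactly the $\Gr$ and $P$ native to $\Briequivan(F)_-$; combined with the fact that $\Miran_\pm$ intertwine $\Gr$ with $\mir_\pm^*\Gr^\pm$ and $P$ with $\mir_\pm^*P^\pm$ (Theorem~\ref{thm:an_mirror_isom}, via properties (2) and (3) of Theorem~\ref{thm:mirror_isom}), this gives (2) and (3).

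The one point requiring care is the discrepancy $\alpha$ in (1). The partial connection on $\Briequivan(F)_\pm$ in the $\cM$-direction depends (Definition~\ref{def:analytic_Bri}) on a choice of splitting of the extended divisor sequence~\eqref{eq:ext_divseq} over $\C$, and the splittings used for $\bSigma_+$ and for $\bSigma_-$ --- induced from the chosen splittings $\bN\to\OO^{\bSigma_\pm}$ of the refined fan sequences, which are the data underlying $\mir_\pm$ and $\Miran_\pm$ --- generally differ. Hence, when the Gauss--Manin connection of $\Briequivan(F)_+$ is transported to the $\frX_-$-summand by Corollary~\ref{cor:direct_summand}, one obtains the partial connection on $\Briequivan(F)_-$ attached to the $\bSigma_+$-splitting, not the $\bSigma_-$-splitting that $\Miran_-$ identifies with $\mir_-^*\nabla^-$. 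By the discussion in Section~\ref{subsec:Bri}, two such partial connections differ by an operator of the shape ``a $\bM_\C$-valued $1$-form (pulled back from $\cM$) acting through the $\chi$-action''; since $\Miran_-$ identifies the $\chi$-action with scalar multiplication by $\chi\in R_\T$, this difference becomes multiplication by a relative $1$-form $\alpha\in\Omega^1_{\cM_\T/\Lie\T}$ (explicitly, $z^{-1}$ times a $\bM_\C$-valued relative $1$-form), which is precisely (1). I expect the genuinely substantial content to lie in the two results being invoked --- the extension of the mirror isomorphism to a neighbourhood of the curve $\cC$ (Theorem~\ref{thm:an_mirror_isom}) and the splitting-off of $\scrR$ from the relative critical scheme of $F_\T$ over $\cU_0$ (Corollary~\ref{cor:direct_summand}); within the present argument the only delicate bookkeeping is the $\C^\times$-invariance needed to make the two chart-local mirror isomorphisms overlap along $\cC$ and the tracking of splittings that produces $\alpha$.
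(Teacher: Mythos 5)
Your proposal is correct and follows essentially the same route as the paper: one sets $\cU_0'=\cU_0\cap\cU_+'\cap\cU_-'$ and transports the decomposition of Corollary~\ref{cor:direct_summand} through the analytic mirror isomorphisms of Theorem~\ref{thm:an_mirror_isom}, with the $1$-form $\alpha$ arising exactly as you say from the two different splittings of the extended divisor sequence chosen for $\bSigma_+$ and $\bSigma_-$. The only step you take on faith rather than argue is that $\mir_\pm$ and $\Miran_\pm$, which Theorem~\ref{thm:an_mirror_isom} provides only on a small neighbourhood of $(0_\pm,0)$, extend to $\C^\times$-invariant domains (without this, $\cU_+'\cap\cU_-'$ need not meet $\cC$ at all); the paper closes this point by extending $\mir_\pm$ uniquely and $\C^\times$-equivariantly because it preserves the Euler vector fields, and extending $\Miran_\pm$ along $\C^\times$-orbits by applying it to $\Gr$-homogeneous generators $\Omega_i^\pm$ and using the intertwining $\Miran\circ\Gr=\Gr\circ\Miran$.
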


\begin{Remark}
The one-form $\alpha$ arises from the difference of the splittings
of the extended divisor sequence~\eqref{eq:ext_divseq} over $\Q$,
one chosen for $\frX_+$ and the other for $\frX_-$
(see Remark~\ref{rem:choice_of_splitting}).
It is of the form $\alpha(\xi q\parfrac{}{q}) = \overline{\alpha}(\xi)$
for some $\overline{\alpha} \in \Hom(\LL_\Q^\star, \bM_\Q^\star)$;
in particular $\alpha$ vanishes in the non-equivariant limit.
Recall that the Gauss--Manin connection in the equivariant case
depends on the choice of a splitting (see Section~\ref{subsec:Bri}).
\end{Remark}

\begin{Remark}
By construction, the above decomposition \eqref{eq:decomp_QDM}
preserves the additional structure given by multiplication
by the mirror co-ordinates $x_1,\dots,x_n$.
They correspond to the equivariant shift operators
(Seidel representation), see Remark \ref{rem:Seidel}.
\end{Remark}

\subsection{Comparison of Gromov--Witten theories in all genera}\label{subsec:comp_all_genera}
Using a formula due to Givental \cite{Givental:quadratic, Givental:higher_genus}
and Teleman~\cite{Teleman:semisimple}
for higher genus Gromov--Witten potentials,
we show that the ancestor Gromov--Witten potential of $\frX_+$
is decomposed into the ancestor potential of $\frX_-$
and a product of Witten--Kontsevich tau functions, under
the action of a quantized symplectic operator.
We use Givental's formula for orbifolds
studied by Brini--Cavalieri--Ross \cite{BCR:crepant_open} and Zong~\cite{Zong:Givental_GKM}.

\subsubsection{Ancestor potentials}
Let $\frX$ be a smooth DM stack equipped
with a $\T$-action satisfying the assumptions
in Section~\ref{subsec:qcoh}; we use the notation there.
Consider the forgetful morphism
$p\colon \frX_{g,l+m,d} \to \overline{M}_{g,l+m} \to \overline{M}_{g,l}$
which forgets the map and the last $m$ marked points,
and let
$\overline{\psi}_i\in H^2(\frX_{g,l+m,d})$, $1\le i\le l$ denote the pull-back of
the universal cotangent class $\psi_i\in H^2(\overline{M}_{g,l})$
by $p$.
We define the \emph{ancestor Gromov--Witten invariants}
as the $\T$-equivariant integral
\[
\big\langle \alpha_1 \overline{\psi}^{k_1},\dots,\alpha_l \overline{\psi}^{k_l};
\beta_1,\dots,\beta_m\big\rangle_{g,l+m,d} :=
\int_{[\frX_{g,l+m,d}]_{\rm vir}}
\prod_{i=1}^l \ev_i^*(\alpha_i) \overline{\psi}_i^{k_i}
\cdot \prod_{i=1}^{m} \ev_{i+l}^*(\beta_i),
\]
where $\alpha_i,\beta_j \in H^*_{\CR,\T}(\frX)$.
When the moduli stack $\frX_{g,m+l,d}$ is not proper,
the right-hand side is defined by the virtual localization formula
and lies in $S_\T = \Frac(R_\T)$ as before.
We choose a homogeneous $R_\T$-basis $\{\phi_i\}_{i=0}^N$ of
$H^*_{\CR,\T}(\frX)$ satisfying \eqref{eq:basis_equiv_lift}
and introduce the infinite set
$\by=\{y_k^i\}_{k\ge 0, 0\le i\le s}$
of co-ordinates on $H^*_{\CR,\T}(\frX)[\![z]\!]$ given by
\[
\by \mapsto \by(z) = \sum_{k=0}^\infty \sum_{i=0}^s
y_k^i \phi_i z^k.
\]
The \emph{ancestor potential} of $\frX$ is the following
generating function of the ancestor Gromov--Witten invariants
\[
\scrA_{\frX,\tau} = \exp\left(
\sum_{d\in \Laa_+}
\sum_{\substack{g\ge 0, \, l\ge 0 \\ 2g-2+l>0}}
\sum_{m=0}^\infty
\corr{\by(\overline{\psi}),\dots,\by(\overline{\psi});
\tau,\dots,\tau}_{g,l+m,d} \hbar^{g-1} \frac{Q^d}{l!m!}
\right),
\]
which we regard as a function in the formal neighbourhood of $\by=0$.
Introduce another set of variables
$\bx=\{x_k^i\}_{k\ge 0, 0\le i\le N}$
that are related to $\by$ by the formula
(Dilaton shift)
\[
\bx(z) = \by(z) - \phi_0 z,
\]
where $\bx(z) = \sum\limits_{k=0}^\infty \sum\limits_{i=0}^N x_k^i \phi_i z^k$.
Using the co-ordinate $\bx(z)$, we shall regard $\scrA_{\frX,\tau}$ as a
formal function on the formal neighbourhood of $\bx(z) = -\phi_0 z$ in
$H^*_{\CR,\T}(\frX)[\![z]\!]$.

As in Section~\ref{subsec:Kaehler_moduli}, we can specialize
$Q$ to one in the ancestor potential
by using the divisor equation for $\tau$.
We have
\[
\scrA_{\frX,\tau} = \exp\left(
\sum_{d\in \Laa_+}
\sum_{\substack{g\ge 0, \, l\ge 0 \\ 2g-2+l>0}}
\sum_{m=0}^\infty
\corr{\by(\overline{\psi}),\dots,\by(\overline{\psi});
\tau',\dots,\tau'}_{g,l+m,d} \hbar^{g-1} \frac{e^{\sigma\cdot d}Q^d}{l!m!}
\right),
\]
when $\tau = \sigma + \tau'$ with $\sigma = \sum\limits_{i=1}^r
\tau^i \phi_i \in H^2(\frX)$ and $\tau' = \tau^0\phi_0
+ \sum\limits_{i=r+1}^s \tau^i \phi_i$.
Moreover, the Galois symmetry \eqref{eq:Galois} implies
\[
\scrA_{\frX,g(\xi)\tau}(d g(\xi) \bx) = \scrA_{\frX,\tau}(\bx), \qquad
\xi \in H^2(\frX,\Z),
\]
where $g(\xi)$, $dg(\xi)$ are as in Section~\ref{subsec:Kaehler_moduli}
(they act on the equivariant cohomology $H_{\CR,\T}^*(\frX)$
via the splitting $H_{\CR,\T}^*(\frX) \cong H^*_\CR(\frX) \otimes R_\T$
given by the basis $\{\phi_i\}$).
Therefore the specialization $\scrA_{\frX,\tau}|_{Q=1}$ makes sense as
an element of $S_\T(\!(\hbar)\!)[\![\Laa_+]\!][\![\tau',\by]\!]$ --
we regard it as an $(H^2(\frX,\Z)/\Laa^\star)$-invariant function
of
$(\tau,\bx) \in \cM_{\rm A,\T}(\frX) \times_{\Spec R_\T}
\bH_{\CR,\T}(\frX)[\![z]\!]$,
where $\bH_{\CR,\T}(\frX)[\![z]\!]$ denotes the
infinite-dimensional vector bundle over $\Spec R_\T$ associated with
the $R_\T$-module $H_{\CR,\T}^*(\frX)[\![z]\!]$
(see Remark \ref{rem:equiv_coh}).
Henceforth we assume that $Q$ is specialized to one
in the ancestor potential.

\subsubsection{Givental's quantization formalism}
\label{subsubsec:quantization}
We briefly review the quantization formalism of Givental
\cite{Givental:quadratic}. We also refer to
\cite[Section~3]{Coates-Iritani:convergence},
\cite[Section~5.1]{Coates-Iritani:Fock} for the exposition.

Let $V$ be a finite-dimensional $\C$-vector space equipped
with a symmetric non-degenerate pairing $(\cdot,\cdot)_V$.
Then $V(\!(z)\!)$ has the following symplectic form $\Omega_V$:
\[
\Omega_V(f,g) = \mathop{\operatorname{Res}}\limits_{z=0}(f(-z),g(z))_V dz.
\]
Choosing a basis $\{\phi_i\}$ on $V$,
we let $\bx = \{x_k^i\}_{k\ge 0, 0\le i\le s}$ denote the
co-ordinates on $V[\![z]\!]$ given by
$\bx \mapsto \bx(z) = \sum\limits_{k=0}^\infty \sum\limits_{i=0}^s
x_k^i \phi_i z^k$.

Let $W$ be another $\C$-vector space of the same dimension
equipped with a symmetric non-degenerate pairing $(\cdot,\cdot)_W$.
A $\C[\![z]\!]$-linear operator $U \colon V[\![z]\!]\to W[\![z]\!]$
is said to be \emph{unitary} when it satisfies
\begin{equation}\label{eq:unitarity}
(U(-z) v_1, U(z) v_2)_W = (v_1,v_2)_V.
\end{equation}
A unitary operator $U$ induces a $\C(\!(z)\!)$-linear
symplectic transformation
$U \colon V(\!(z)\!) \to W(\!(z)\!)$.
The quantized operator $\hU$ described below
sends a certain ``tame'' (spelled out below) formal
function $\scrA$ on $V[\![z]\!]$ to a tame formal function
$\hU\scrA$ on $W[\![z]\!]$.
For $\bD \in z V[\![z]\!]$,
let $\Fockan(V,\bD)$ denote the set of
formal power series $\scrA$ of the form
\begin{equation*}
\scrA = \exp\left(\sum_{g=0}^\infty \hbar^{g-1} F^g(\bx)\right)
\end{equation*}
with $F^g$ a formal power series in $\bx - \bD$
(i.e., a function in the formal neighbourhood of $\bx=\bD$
in $V[\![z]\!]$)
satisfying the following tameness condition
\[
\left.
\parfrac{^m F^g}{x_{k_1}^{i_1} \cdots \partial x_{k_m}^{i_m}}
\right|_{\bx = \bD}
= 0 \qquad \text{if $k_1+\dots+k_m > 3g-3+m$}
\]
and $F^0|_{\bx=\bD} = F^1|_{\bx=\bD}=0$
(see \cite[Definition~5.1]{Coates-Iritani:Fock}).
Then $\hU$ defines an operator \cite[Definition~5.7]{Coates-Iritani:Fock}:
\[
\hU \colon \ \Fockan(V,\bD) \to \Fockan(W, U(\bD)).
\]
For $\bs \in z V[\![z]\!]$, let $T_\bs$ denote the operator
acting on tame functions on $V[\![z]\!]$
which shifts the base point $\bD$ by $\bs$
\cite[Definition 5.5]{Coates-Iritani:Fock};
for a tame function $\scrA\in \Fockan(V,\bD)$,
$T_\bs \scrA\in \Fockan(V,\bD+\bs)$ denotes the Taylor expansion
of $e^{-F_1(\bD+\bs)} \scrA$ at the new base point
$\bD+\bs$ (whenever it makes sense).
The tameness condition implies that $T_\bs\scrA$ is well-defined
for all $\scrA \in \Fockan(V,\bD)$
if $\bs \in z^2 V[\![z]\!]$;
under the additional assumption that $\scrA$ is rational
\cite[Definition 5.2]{Coates-Iritani:Fock}, $T_\bs\scrA$ is
well-defined for $\bs \in z V[\![z]\!]$
\cite[Definition 5.5]{Coates-Iritani:Fock}.
All tame functions appearing in this section are rational,
since they are obtained by the Givental formula
\cite[Proposition 3.20]{Coates-Iritani:convergence}.

When the ancestor potential $\scrA_{\frX,\tau}$ is convergent
and analytic in the variable $\tau$, it defines an element of
$\Fockan(\bH_{\CR,\T}(\frX)_\chi,-z \phi_0)$ where
$\bH_{\CR,\T}(\frX)_\chi$ denotes the fibre of
the vector bundle $\bH_{\CR,\T}(\frX) \to \Spec R_\T$
(see Remark \ref{rem:equiv_coh})
at $\chi$ and $\chi$ is the image of
$\tau \in \cM_{\rm A,\T}(\frX)$ in $\Spec R_\T$.

The \emph{Witten--Kontsevich tau-function} is the ancestor potential
for a point. It is a function on the formal neighbourhood
of $-z$ in $\C[\![z]\!]$ given by
\begin{align*}
\scrT & = \exp\left(\sum_{g=0}^\infty \hbar^{g-1}
\sum_{l\colon 2g-2+l>0} \frac{1}{l!}
\sum_{k_1,\dots,k_l\ge 0}
\big\langle\psi^{k_1},\dots,\psi^{k_l}\big\rangle_{g,l} y_{k_1} \cdots y_{k_l}
\right) \\
& = (-x_1)^{-\frac{1}{24}}\exp\left(
\sum_{g=0}^\infty \hbar^{g-1}
\sum_{l\colon 2g-2+l>0} \frac{1}{l!}
\sum_{\substack{k_1,\dots,k_l\ge 0,\\ k_j \neq 1}}
\big\langle \psi^{k_1},\cdots,\psi^{k_l}\big\rangle_{g,l}
\frac{x_{k_1}\cdots x_{k_l}}{(-x_1)^{2g-2+l}}
\right),
\end{align*}
where
\[
\big\langle \psi^{k_1},\dots,\psi^{k_l}\big\rangle_{g,l}
= \int_{\overline{M}_{g,l}} \psi^{k_1} \cdots \psi^{k_l},
\]
$\sum\limits_{k=0}^\infty x_k z^k$ denotes
the co-ordinate on $\C[\![z]\!]$ and $y_k = x_k + \delta_{k,1}$.
We used the dilaton equation \cite[Theorem 8.3.1]{AGV:GW}
in the second line. The Witten--Kontsevich tau-function
defines a rational element of $\Fockan(\C,-z)$.

\subsubsection{A relationship between ancestor potentials}
\label{subsubsec:rel_ancestors}

We fix a homogeneous $R_\T$-basis
$\{\phi_i\}$ of $H^*_{\CR,\T}(\frX_\pm)$ satisfying
\eqref{eq:basis_equiv_lift} as above.
Let $\cU_0'\subset \cM_\T$ be an open set from
Theorem \ref{thm:decomp_QDM}.
As discussed in Section~\ref{subsubsec:higher_residue},
there exists an open dense subset ${\cU'_0}^{\rm ss}$
of $\cU_0'$ such that for each $(q,\chi)\in {\cU'_0}^{\rm ss}$,
$\tpr^{-1}(q,\chi) \cap \cB_+$ consists of $\dim H^*_\CR(\frX_+)$
many reduced points.
Choose a point $(q,\chi) \in {\cU'_0}^{\rm ss}$.
Combining the decomposition
\eqref{eq:decomp_QDM} at $(q,\chi)$
and the isomorphism
$\Asymp\colon \scrR_{(q,\chi)} \cong \C[\![z]\!]^{\oplus \rank \scrR}$
given by the asymptotic expansion
(see Definition~\ref{def:rest}), we obtain a
$\C[\![z]\!]$-linear isomorphism
\[
\tU_{q,\chi} \colon \ \bH_{\CR,\T}(\frX_+)_\chi[\![z]\!]
\xrightarrow{\cong} \bH_{\CR,\T}(\frX_-)_\chi[\![z]\!] \oplus
\C[\![z]\!]^{\oplus \rank \scrR},
\]
where $\bH_{\CR,\T}(\frX_\pm)_\chi$ denotes the
fibre of the vector bundle $\bH_{\CR,\T}(\frX_\pm)
\to \Spec R_\T$ at $\chi$ (see Remark~\ref{rem:equiv_coh}).
The equivariant orbifold Poincar\'e pairing~\eqref{eq:orb_Poincare_pairing}
defines a $\C$-bilinear pairing on $\bH_{\CR,\T}(\frX_\pm)_\chi$
for a generic $\chi$. With respect to these pairings
and the diagonal pairing on $\C[\![z]\!]^{\oplus \rank \scrR}$,
$U = \tU_{q,\chi}$ satisfies the unitarity \eqref{eq:unitarity};
this follows from Theorem \ref{thm:decomp_QDM} and the definition of the
higher residue pairing (see Section~\ref{subsubsec:higher_residue}).
We flip the sign of $z$ and set $
U_{q,\chi} := \tU_{q,\chi}\big|_{z\to -z}$.

When functions $\scrA_i$ on $V_i[\![z]\!]$ $(i=1,\dots,k)$ are given,
we write $\scrA_1\otimes \cdots \otimes \scrA_k$ for the function
on $V_1[\![z]\!] \times \cdots \times V_k[\![z]\!]$
given by $\big(\bx^{(1)},\dots,\bx^{(k)}\big) \mapsto
\prod\limits_{i=1}^k\scrA_i(\bx^{(i)})$.

\begin{Theorem}\label{thm:ancestor}
Let $\scrA_{\pm,\tau}$ denote the ancestor potential
of $\frX_\pm$.
For $(q,\chi) \in {\cU_0'}^{\rm ss}$, we have
\[
T_{\bs} \hU_{q,\chi} \scrA_{+,\mir_+(q,\chi)} =
\scrA_{-,\mir_-(q,\chi)} \otimes
\scrT^{ \otimes\rank \scrR},
\]
where $\bs :=(-z \phi_0, (-z,\dots,-z)) + U_{q,\chi}(z\phi_0)
\in \bH_{\CR,\T}(\frX_-)_\chi[\![z]\!] \times \C[\![z]\!]^{\oplus
\rank \scrR}$.
\end{Theorem}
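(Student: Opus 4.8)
The strategy is to invoke the Givental--Teleman higher-genus reconstruction theorem for semisimple cohomological field theories, together with the decomposition of quantum D-modules from Theorem~\ref{thm:decomp_QDM}. The decomposition \eqref{eq:decomp_QDM} of $\cOan_{\cU_0'}[\![z]\!]$-modules, specialized at a semisimple point $(q,\chi)\in {\cU_0'}^{\rm ss}$ and after flipping the sign of $z$, gives the unitary $\C(\!(z)\!)$-linear symplectic transformation $U_{q,\chi}\colon \bH_{\CR,\T}(\frX_+)_\chi(\!(z)\!) \to \bH_{\CR,\T}(\frX_-)_\chi(\!(z)\!)\oplus \C(\!(z)\!)^{\oplus \rank\scrR}$. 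First I would recall that since $(q,\chi)$ lies in the semisimple locus, the genus-zero data (the Frobenius manifold structure together with the calibration / fundamental solution) of each side is determined near $\mir_\pm(q,\chi)$ by a semisimple point, and hence by Teleman's theorem the entire ancestor potential $\scrA_{\pm,\mir_\pm(q,\chi)}$ is recovered from the $R$-matrix and the translation/dilaton data via the Givental quantization formula. The Witten--Kontsevich tau-function $\scrT$ is exactly the trivial (point) contribution.

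\textbf{Key steps.} The core of the argument is that the Givental formula $\scrA_{\frX,\tau} = \widehat{R^{-1}} (\text{trivial theory})$ is ``functorial'' with respect to block-diagonal decompositions of the underlying symplectic vector space: if the fundamental solution of the quantum connection of $\frX_+$ at $(q,\chi)$ decomposes, under $U_{q,\chi}$, into the fundamental solution of $\frX_-$ plus $\rank\scrR$ copies of the trivial quantum D-module (which is what Theorem~\ref{thm:decomp_QDM}(1)--(3) delivers, the residual piece $\scrR$ having the formal structure of $\rank\scrR$ points because its caustic-generic fibres are reduced and its connection/grading/pairing match the $\C$-model), then the corresponding $R$-matrices and the asymptotic-expansion blocks multiply. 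I would: (i) identify the fundamental solution of $\mir_+^*\QDMan(\frX_+)$ with the product of the $S$-matrix and the $R$-matrix in Givental's formalism (the $R$-matrix being the formal asymptotic fundamental solution at the irregular singularity $z=0$, which over the semisimple locus is governed precisely by the critical values of $F_\T$ — this is why the mirror LG description is used); (ii) observe that the decomposition of the Brieskorn module in Corollary~\ref{cor:direct_summand} respects $\Asymp_p$ critical-point-by-critical-point, so the $R$-matrix of $\frX_+$ is block-diagonal with respect to the partition of critical points into those flowing to $\tzero_+$ that survive (giving $\frX_-$) and those in $\cR$ (giving the $\scrT$ factors); (iii) feed this into the quantization formula, using that $\widehat{(\,\cdot\,)}$ is multiplicative on block-diagonal operators and that a one-dimensional block with the standard $R$-matrix quantizes to $\scrT$; (iv) track the translation/dilaton shift: the dilaton shifts $-z\phi_0$ on each side and $(-z,\dots,-z)$ on the $\scrT$ factors do not match under $U_{q,\chi}$, the discrepancy being exactly $\bs = (-z\phi_0,(-z,\dots,-z)) + U_{q,\chi}(z\phi_0)$, which is absorbed by the shift operator $T_\bs$; one checks $\bs \in z\bH_{\CR,\T}(\frX_-)_\chi[\![z]\!]\times \C[\![z]\!]^{\oplus\rank\scrR}$ (indeed $U_{q,\chi}(z\phi_0)$ has no negative powers of $z$ since $U_{q,\chi}$ is defined over $\C[\![z]\!]$ after the sign flip), so $T_\bs$ is well-defined on rational tame functions.

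\textbf{Main obstacle.} The technical heart, and the step I expect to be most delicate, is the precise matching of the \emph{ancestor} (rather than descendant) potentials and the bookkeeping of the dilaton shift under $U_{q,\chi}$. The Givental--Teleman formula produces descendant potentials; passing to ancestors at a point $\tau$ involves the $S$-matrix (the $L$-operator) and the ancestor/descendant relation, and one must verify that $\mir_\pm$ being a genuine mirror map — so that $\Miran_\pm$ intertwines the Gauss--Manin and quantum connections \emph{and} the grading — makes the $S$-matrices cancel correctly in the comparison, leaving only the $R$-matrix block structure. One must also be careful that the higher-residue / Poincaré pairing identification (Theorem~\ref{thm:an_mirror_isom}(3) and Corollary~\ref{cor:direct_summand}(2)) is exactly the unitarity statement \eqref{eq:unitarity} needed to apply the quantization formalism of Section~\ref{subsubsec:quantization}, and that rationality of $\scrA_{\pm,\tau}$ (needed for $T_\bs$ with $\bs\in z\,V[\![z]\!]$ rather than $z^2 V[\![z]\!]$) holds because these potentials arise from the Givental formula, as noted after \cite[Definition~5.5]{Coates-Iritani:Fock}. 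Once these compatibilities are in place, the identity follows by applying $\widehat{U_{q,\chi}}$ and $T_\bs$ to both sides of the Givental formula for $\scrA_{+,\mir_+(q,\chi)}$ and recognizing the right-hand side.
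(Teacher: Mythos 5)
Your proposal is correct and follows essentially the same route as the paper: Givental's formula for toric DM stacks with the $R$-operator built from $\Miran\circ\Asymp^{-1}$ at the critical points of $F_\T$, the observation that the Brieskorn-module decomposition respects $\Asymp$ critical-point-by-critical-point so that $U_{q,\chi}\circ R^+_{q,\chi}=R^-_{q,\chi}\times\id$, and then the chain rule of Givental quantization together with the dilaton-shift bookkeeping that produces $T_\bs$. The only (harmless) detour is the worry about descendant-to-ancestor passage and $S$-matrices, which does not arise because the Givental--Teleman formula is invoked directly for ancestor potentials.
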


\begin{Remark}
Implicit in the above theorem is the convergence and
analyticity of $\scrA_{\pm,\tau}$ with respect to $\tau$.
This follows from the Givental formula.
We refer to \cite[Theorem~1.4, Defini\-tion~3.13]{Coates-Iritani:convergence}
for the discussion on the convergence of ancestor potentials.
\end{Remark}

\begin{Remark}
The ancestor potentials of $\frX_\pm$ define sections
of the Fock sheaves \cite{Coates-Iritani:Fock} associated
with the quantum D-modules of $\frX_\pm$, and the
above relationship can be interpreted in this language.
\end{Remark}

\begin{Remark}
We can state the relationship between the Gromov--Witten theories
of~$\frX_+$ and~$\frX_-$
in terms of cohomological field theory (CohFT): \emph{the CohFT of
$\frX_+$ transforms into the product of the CohFT of~$\frX_-$
and the trivial semisimple CohFT under the Givental action
of~$U_{q,\chi}$}. We refer the reader to~\cite{PPZ:relations, Shadrin:BCOV, Teleman:semisimple} for the Givental group action on CohFTs.
\end{Remark}

\subsubsection{Givental's formula and the proof of
Theorem $\ref{thm:ancestor}$}
Teleman's classification theorem \cite{Teleman:semisimple}
implies that
the ancestor potential $\scrA$ of a semisimple cohomological
field theory on a vector space $H$
lies in the orbit of $\scrT^{\otimes N}$ ($N=\dim H$)
with respect to the action of symplectic operators
(via the Givental quantization).
That is, there exists a unitary operator $R \colon \C[\![z]\!]^{\oplus N}
\to H[\![z]\!]$ such that
\[
\scrA = T_{-ze+R(z,\dots, z)}\hR \scrT^{\otimes N},
\]
where $e\in H$ is the identity element of the Frobenius algebra $H$.
In our context, $R$ depends on $\tau\in \bH_{\CR,\T}(\frX)$ and
gives a formal fundamental solution of the quantum connection
in the $\tau$-direction.
It is ambiguous up to the right multiplication by a constant diagonal matrix
\cite[Remarks after Proposition~1.1]{Givental:elliptic}; as
\cite{BCR:crepant_open,Zong:Givental_GKM} did,
the ambiguity can be fixed by Tseng's orbifold quantum
Riemann-Roch theorem \cite{Tseng:QRR}.

We state Givental's formula for a semiprojective toric DM stack
in terms of the mirror LG model.
Let $\frX_\bSigma$ be a smooth toric DM stack
with $\bSigma \in \Fan(S)$ and let $(\pr\colon \cY \to \cM, F)$
denote the mirror LG model from Definition \ref{def:LG}.
Let $\cB\subset \cY$, $\cU\subset \cM_\T :=\cM\times \Lie \T$
be analytic open neighbourhoods of $\tzero_\bSigma$
and $(0_\bSigma,0)$ respectively such that the
conclusion of Corollary \ref{cor:tpr_locally_free} holds, as usual.
Let $\cUss\subset \cU$ denote the open dense subset
consisting of $(q,\chi)\in\cU$ such that
$\tpr^{-1}(q,\chi)\cap \cB$ is reduced
(recall, as discussed in Section~\ref{subsubsec:higher_residue}, that
$\tpr^{-1}(q,\chi)$ consists of relative critical points
of the equivariant potential $F_\T=F - \sum\limits_{i=1}^n \chi_i \log x_i$).
For $(q,\chi) \in \cUss$,
the asymptotic expansion of equivariant oscillatory integrals
(see Definition \ref{def:Asymp})
defines an isomorphism
\[
\Asymp \colon \ \Briequivan(F)_\bSigma\big|_{(q,\chi)} \to
\C[\![z]\!]^{\oplus N}, \qquad
\Asymp := \bigoplus_{p\in \tpr^{-1}(q,\chi)\cap \cB} \Asymp_p,
\]
where $N =\deg(\tpr|_\cB)= \dim H^*_\CR(\frX)$.
Composing the inverse map with the mirror isomorphism
from Theorem~\ref{thm:an_mirror_isom},
we obtain a linear map
\[
\tR_{q,\chi} \colon \ \C[\![z]\!]^{\oplus N}
\xrightarrow{\Asymp^{-1}}
\Briequivan(F)_\bSigma\big|_{(q,\chi)}
\xrightarrow{\Miran}
\overline{\QDMan_\T}(\frX_\bSigma) \big|_{\mir(q,\chi)}
\cong \bH_{\CR,\T}(\frX_\bSigma)_\chi[\![z]\!],
\]
where $\mir \colon \cU \to
[\cM_{\rm A,\T}(\frX_\bSigma)/\Pic^\st(\frX_\bSigma)]$
is the mirror map. We flip the sign of $z$ and
set $R_{q,\chi} := \tR_{q,\chi}|_{z\to -z}$.

\begin{Proposition}[Givental's formula for toric DM stacks]
\label{prop:Givental_formula_toric_stacks}
The ancestor potential of $\frX_\bSigma$ is given by
$\scrA_{\frX_\bSigma,\mir(q,\chi)}
= T_{-z \phi_0+R(z,\dots,z)} \hR \scrT^{\otimes N}$
with $R = R_{q,\chi}$ above.
\end{Proposition}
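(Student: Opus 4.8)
The plan is to combine Givental--Teleman's formula for semisimple cohomological field theories with the mirror isomorphism of Theorem \ref{thm:an_mirror_isom}, matching the abstract $R$-matrix appearing in Teleman's classification with the concrete matrix $R_{q,\chi}$ built from stationary-phase asymptotics of the mirror oscillatory integrals. The key point is that both the quantum connection and its formal fundamental solution have a mirror description, and the stationary-phase expansion produces precisely a formal fundamental solution at $z=0$ adapted to the critical points of $F_\T$.

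\medskip

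First I would recall that, since $\frX_\bSigma$ is a semiprojective toric DM stack, its equivariant quantum cohomology is generically semisimple (over $\cUss$ the relative critical scheme $\tpr^{-1}(q,\chi)\cap\cB$ is reduced with $N=\dim H^*_\CR(\frX_\bSigma)$ points), so Teleman's theorem \cite{Teleman:semisimple} applies: the ancestor potential equals $T_{-ze+\mathsf{R}(z,\dots,z)}\widehat{\mathsf{R}}\,\scrT^{\otimes N}$ for some unitary $\mathsf{R}\colon \C[\![z]\!]^{\oplus N}\to \bH_{\CR,\T}(\frX_\bSigma)_\chi[\![z]\!]$ which is a formal fundamental solution of the quantum connection in the $\tau$-direction, normalized at a semisimple point, and where $e=\phi_0=\unit$ is the identity. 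The ambiguity of $\mathsf{R}$ up to right multiplication by a constant diagonal matrix is fixed, following \cite{BCR:crepant_open, Zong:Givental_GKM}, using Tseng's orbifold quantum Riemann--Roch theorem \cite{Tseng:QRR}; this is what pins down the higher-genus formula for orbifolds. The second step is to identify this $\mathsf{R}$ with $R_{q,\chi}$. By construction $R_{q,\chi}$ is the composition $\Briequivan(F)_\bSigma|_{(q,\chi)}\xrightarrow{\Miran}\overline{\QDMan_\T}(\frX_\bSigma)|_{\mir(q,\chi)}$ precomposed with $\Asymp^{-1}$ and with $z\to-z$; since $\Miran$ intertwines the Gauss--Manin connection with the quantum connection (Theorem \ref{thm:an_mirror_isom}(1)) and the higher residue pairing with $P$ (Theorem \ref{thm:an_mirror_isom}(3)), it suffices to check that $\Asymp$ itself is a formal fundamental solution of the Gauss--Manin connection at $z=0$ and is unitary with respect to the higher residue pairing. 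Unitarity is essentially the definition of $P$ in terms of asymptotic expansions (Definition \ref{def:higher_residue}, and property (a) of Section \ref{subsubsec:higher_residue}). That $\Asymp$ solves the Gauss--Manin connection asymptotically is the standard stationary-phase computation: $\nabla_V(e^{F_\T/z}\phi\,\omega)=e^{F_\T/z}(V(\phi)+z^{-1}(V(F_\T)-V(F_\T)(p))\phi)\omega$ restricted to a neighbourhood of a critical point $p$ yields, after termwise Gaussian integration, that $\Asymp_p$ conjugates $\nabla_V$ into $d + z^{-1}\,dF_\T(p)\wedge$ plus lower-order corrections --- i.e.\ the canonical form. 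This is exactly the content encoded in formula \eqref{eq:Asymp}.

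\medskip

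The third step is to verify the normalization: Teleman's $R$-matrix is required to satisfy $R\to\id$ as $z\to 0$ in the canonical basis of idempotents, and the diagonal ambiguity must be fixed consistently with the orbifold QRR theorem. Here I would invoke the compatibility, proved in \cite{CCIT:MS, Iritani:Integral} and used in \cite{BCR:crepant_open, Zong:Givental_GKM}, between the $\hGamma$-integral structure (equivalently the natural integral structure of the Brieskorn module) and the QRR-corrected $R$-matrix; this is precisely why the mirror-theoretic $R_{q,\chi}$ is \emph{the} correct normalization and not merely \emph{a} fundamental solution. Concretely, one checks that the leading term of $\Asymp_p$ --- the reciprocal square root of the Hessian of $F_\T$ at $p$ together with the factor $|\bN_{\rm tor}|^{-1}$ in \eqref{eq:Asymp} --- matches the one produced by the orbifold QRR correction; this is a local computation near each critical branch tending to $\tzero_\bSigma$, and the relevant critical-point analysis is already available in \cite[Lemma~6.2]{CCIT:MS}. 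Granting this, $R_{q,\chi}=\mathsf{R}$ and the proposition follows by substituting into Teleman's formula, recalling $e=\phi_0$ so that $-ze+\mathsf{R}(z,\dots,z)=-z\phi_0+R(z,\dots,z)$.

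\medskip

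The main obstacle I anticipate is the normalization/QRR bookkeeping in the third step rather than the stationary-phase analysis, which is routine. One must be careful that the diagonal ambiguity in Teleman's $\mathsf{R}$ is fixed in exactly the same way on the mirror side --- i.e.\ that the choice of branch of $(-2\pi z)^{n/2}$ and of square roots of the Hessians in Definition \ref{def:Asymp}, together with the orientations of the Morse cycles, conspires to reproduce the QRR-corrected solution. This is where one genuinely uses that the toric stack is semiprojective (so that the Givental formula of \cite{BCR:crepant_open, Zong:Givental_GKM} applies via the virtual localization formula for the ancestor invariants, even in the non-compact case) and that the $\hGamma$-class appears consistently on both sides. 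Once the normalization is matched at one semisimple point, flatness of the quantum connection propagates it over all of $\cUss$, giving the stated identity.
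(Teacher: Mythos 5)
Your proposal is correct and follows essentially the same route as the paper: invoke the orbifold Givental--Teleman formula of Brini--Cavalieri--Ross and Zong (with the diagonal ambiguity fixed by Tseng's orbifold QRR), use the mirror isomorphism plus the fact that $e^{F_\T(p)/z}\Asymp_p$ solves the Gauss--Manin connection to see that $R_{q,\chi}$ is a formal fundamental solution, and then pin down the normalization at the large radius limit. The only nuance is that the paper verifies the BCR/Zong classical-limit condition at $q=0_\bSigma$ using the explicit asymptotics of the oscillatory integrals in \cite[Proposition~6.9]{CCIT:MS} (an all-orders-in-$z$ statement), rather than the leading Hessian term and \cite[Lemma~6.2]{CCIT:MS} as you suggest.
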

\begin{proof}
The mirror isomorphism $\Miran$ identifies the Gauss--Manin
connection with the quantum connection, and it was shown
in \cite[Lemma 6.7]{CCIT:MS} that $e^{F_\T(p)/z}\Asymp_p$ gives a
solution to the Gauss--Manin connection
for each critical branch $p$ of $F_\T$.
Let $\bU$ denote the diagonal matrix whose entries are
relative critical values of $F_\T$.
It follows that $\tR_{q,\chi} e^{-\bU/z}$ gives a formal fundamental
solution for $\mir^*\nabla$ in the sense of
\cite[Proposition 1.1]{Givental:elliptic},
\cite[Theorem 5.1]{Zong:Givental_GKM}
(our $R_{q,\chi}$ corresponds to $\Psi R$ in \cite{Givental:elliptic,
Zong:Givental_GKM}; see also
Proposition \ref{prop:formal_decomposition} below).
We need to flip the sign of $z$ because we use the sign convention
for the quantum connection opposite to
\cite{Givental:elliptic, Zong:Givental_GKM}.
It now suffices to check that $R_{q,\chi}$ satisfies
the classical limit condition at the large radius limit $q=0_\bSigma$
given in
\cite[Lemma 6.5, equation (156), Remark~6.6]{BCR:crepant_open},
\cite[Theorem~6.2]{Zong:Givental_GKM}
(which generalize \cite[Theorem~9.1]{Givental:quadratic} to the
orbifold setting).
We can easily check that $R_{q,\chi}$ satisfies this condition by using
the computation in \cite[Proposition 6.9]{CCIT:MS}.
\end{proof}

\begin{Remark}
It has been observed by Givental \cite{Givental:quadratic}
that the operator $R$ can be constructed by equivariant mirrors
(for Fano toric manifolds).
\end{Remark}
\begin{Remark}
Recall from Remark \ref{rem:Asymp} that the asymptotic
expansion is ambiguous up to sign. Therefore $R_{q,\chi}$
is ambiguous up to the right multiplication by a signed
permutation matrix.
The right-hand side of the Givental formula is, however,
independent of the choice (see,
e.g., \cite[Proposition 4.3]{Coates-Iritani:convergence}).
\end{Remark}

\begin{proof}[Proof of Theorem $\ref{thm:ancestor}$]
Let $R_{q,\chi}^\pm$ denote the $R$-operators for
$\frX_\pm$ introduced above.
By the construction of $U_{q,\chi}$ in Section~\ref{subsubsec:rel_ancestors},
we have the following commutative diagram for
$(q,\chi) \in {\cU'_0}^{\rm ss}$ (when we order the critical points
of $F_\T$ appropriately):
\[
\xymatrix{
\bH_{\CR,\T}(\frX_+)_\chi [\![z]\!] \ar[r]^{U_{q,\chi}\phantom{ABCDE}} &
\bH_{\CR,\T}(\frX_-)_\chi[\![z]\!] \oplus \C[\![z]\!]^{\oplus \rank \scrR} \\
\C[\![z]\!]^{\oplus N_+} \ar[u]^{R_{q,\chi}^+}
\ar@{=}[r]
& \C[\![z]\!]^{\oplus N_-} \times
\C[\![z]\!]^{\oplus \rank \scrR}, \ar[u]^{R_{q,\chi}^- \times \id}
}
\]
where $N_\pm = \dim H^*_\CR(\frX_\pm)$.
The conclusion follows by Proposition \ref{prop:Givental_formula_toric_stacks}
and the `chain rule' of the Givental quantization
(see, e.g., \cite[Remark~3.22]{Coates-Iritani:convergence}).
\end{proof}

\section{Formal decomposition and analytic lift}\label{sec:lift}
We discuss the formal decomposition and its analytic lift for
quantum D-modules or Brieskorn modules.
This is known as the Hukuhara--Turrittin theorem for irregular
differential equations.
For mirrors of the small quantum cohomology of
weak Fano compact toric stacks, the analytic lift is described
explicitly in terms of oscillatory integrals.
In this section, we restrict ourselves to the
\emph{non-equivariant} quantum cohomology
and Brieskorn module, and assume that the toric stacks
are compact.

\subsection{Hukuhara--Turrittin type result}
\label{subsec:Hukuhara-Turrittin}
Recall that the non-equivariant quantum connection
(or, equivalently, the non-equivariant \linebreak Gauss--Manin connection) can be
extended in the direction of $z$.
The connection in the $z$-direction is of the form
(see~\eqref{eq:conn_z})
\begin{equation*}
\nabla_{z\parfrac{}{z}} = z\parfrac{}{z} - \frac{1}{z}
(E\star_\tau)+ \mu.
\end{equation*}
When the quantum product $\star_\tau$ is semisimple,
$\nabla$ decomposes over $\C[\![z]\!]$ into the sum of
rank-one connections $d+ d(\bu_i/z)$, $i=1,\dots,N$
where $\bu_1,\dots,\bu_N$ are eigenvalues of $E\star_\tau$.
Moreover this formal decomposition admits an analytic lift over
a certain angular sector: this is an instance of
the \emph{Hukuhara--Turrittin theorem}
(see, e.g., \cite[Theorem~12.3]{Wasow:book},
\cite[Theorem~A]{BJL79}, \cite[Chapter~II, Section~5.d]{Sabbah:isomonodromic}
for more general statement).
We state a version of this theorem
following Hertling--Sevenheck \cite[Section~8]{Hertling-Sevenheck:nilpotent} and
Galkin--Golyshev--Iritani \cite[Section~2.5]{GGI:gammagrass}.
We restrict our attention to the quantum cohomology of toric stacks, but
the discussion in this section can be equally applied to a general
semisimple Frobenius manifold \cite{Dubrovin:Painleve}.

Let $\frX_\bSigma$ be a semiprojective smooth toric DM stack from
Section~\ref{subsubsec:toric_stacks}.
In this section (Section~\ref{subsec:Hukuhara-Turrittin}), we assume that
$\frX_\bSigma$ is compact but not necessarily weak Fano.
Let $\cM_{\rm A}(\frX_\bSigma)$ be
the non-equivariant K\"ahler moduli space of
$\frX_\bSigma$ from Section~\ref{subsec:Kaehler_moduli}.
By the convergence result \cite[Corollary 7.3]{CCIT:MS},
the non-equivariant quantum product is analytic over
an open neighbourhood $V\subset
[\cM_{\rm A}(\frX_\bSigma)/\Pic^\st(\frX_\bSigma)]$
of the large radius limit point.
This defines the (non-equivariant) quantum connection $\nabla$
over $V$
and the \emph{analytic quantum D-module}
(cf.~Sections~\ref{subsec:QDM} and~\ref{subsec:an_mirror_isom}):
\begin{equation}
\label{eq:analytic_QDM}
\QDMan(\frX_\bSigma)
= (H^*_{\CR}(\frX_\bSigma)\otimes \cOan_{\tV}[z], \nabla, P),
\end{equation}
where $\nabla$ is understood as being extended in the $z$-direction
by \eqref{eq:conn_z} and~$\tV$ is the preimage of~$V$ in
$\cM_{\rm A}(\frX_\bSigma)$;
$H^*_\CR(\frX_\bSigma)\otimes \cOan_{\tV}[z]$ is
a $\Pic^\st(\frX_\bSigma)$-equivariant sheaf,
which shall be regarded as a sheaf on the stack
$V=[\tV/\Pic^\st(\frX_\bSigma)]$.
We omitted the grading operator from the data because it can be
recovered from the other data as $\Gr = \nabla_\cE + \nabla_{z\parfrac{}{z}}
+\frac{1}{2} \dim \frX_\bSigma$.
We also write
\begin{equation}
\label{eq:z-completed_QDM}
\overline{\QDMan}(\frX_\bSigma) =
(H^*_{\CR}(\frX_\bSigma)\otimes \cOan_{\tV}[\![z]\!],
\nabla, P)
\end{equation}
for the quantum D-module completed in $z$.
We set
\begin{gather*}
V^\times := V \cap \big(\text{the image
of the natural map
$H^*_\CR(\frX_\bSigma) \to
[\cM_{\rm A}(\frX_\bSigma)/\Pic^\st(\frX_\bSigma)]$}\big), \\
\Vss := \big\{\tau \in V^\times\colon \text{$(H^*_\CR(\frX_\bSigma),\star_\tau)$ is semisimple as a ring}\big\} \subset V^\times.
\end{gather*}
Here recall from Section~\ref{subsec:Kaehler_moduli}
that $\cM_{\rm A}(\frX_\bSigma)$ is
a partial compactification of
$H^*_\CR(\frX_\bSigma)/2\pi\iu \big(\Laa^\bSigma\big)^\star$
and hence we have a natural map $H^*_\CR(\frX_\bSigma)
\to \cM_{\rm A}(\frX_\bSigma) \to
[\cM_{\rm A}(\frX_\bSigma)/\Pic^\st(\frX_\bSigma)]$.
When $\frX_\bSigma$ is compact,
$\Vss$ is open and dense in $V^\times$
\cite[Remark~7.11]{CCIT:MS}.
The following proposition describes a Levelt--Turrittin normal form
\cite[Chapter II, Theorem 5.7]{Sabbah:isomonodromic} of the
quantum connection in the semisimple case.

\begin{Proposition}\label{prop:formal_decomposition}
Let $\tau_0 \in \Vss$ be a semisimple point.
We have the following decomposition in an open neighbourhood
of $\tau_0$:
\[
\hPhi \colon \
\overline{\QDMan}(\frX_\bSigma)
\overset{\cong}{\longrightarrow} \bigoplus_{i=1}^N
\left( \cOan[\![z]\!], d + d(\bu_i/z), P_{\std} \right),
\]
where
$\bu_1,\dots,\bu_N$ are eigenvalues of $E\star_\tau$
with $N = \dim H^*_\CR(\frX_\bSigma)$
and $P_{\std}$ is the pairing given by the
multiplication $P_{\std}(f,g) = f(-z) g(z)$ with
$f,g \in \cOan[\![z]\!]$. Once we fix the ordering of $\bu_1,\dots,\bu_N$,
the isomorphism $\hPhi$ is unique up to a right multiplication
by $\diag(\pm 1,\dots, \pm 1)$.
\end{Proposition}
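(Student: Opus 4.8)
The plan is to reduce the statement to the standard Hukuhara--Turrittin/Levelt--Turrittin theory in the variable $z$ at $z=0$, exploiting that at a semisimple point $\tau_0$ the leading term $-z^{-1}(E\star_{\tau_0})$ of $\nabla_{z\partial_z}$ has pairwise distinct eigenvalues $-\bu_i$. First I would fix $\tau_0\in\Vss$ and note that, since semisimplicity is an open condition and the eigenvalues $\bu_i(\tau)$ of $E\star_\tau$ depend holomorphically on $\tau$, there is a connected open neighbourhood $U\ni\tau_0$ in $\Vss$ on which the $\bu_i(\tau)$ remain distinct and can be labelled holomorphically. On such a $U$ the connection operator $\nabla_{z\partial_z}=z\partial_z-z^{-1}(E\star_\tau)+\mu$ acting on $H^*_\CR(\frX_\bSigma)\otimes\cOan_U[\![z]\!]$ has a pole of Poincaré rank one whose residue $-E\star_\tau$ (the coefficient of $z^{-1}$) is regular semisimple. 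The classical formal decomposition theorem (e.g.\ \cite[Chapter~II, Theorem~5.7]{Sabbah:isomonodromic}, or the version in \cite[Section~8]{Hertling-Sevenheck:nilpotent}, \cite[Section~2.5]{GGI:gammagrass}) then applies verbatim, with the $\tau$-dependence carried along as holomorphic parameters: one constructs, recursively in powers of $z$, a gauge transformation $\hPhi\in\End(H^*_\CR(\frX_\bSigma))\otimes\cOan_U[\![z]\!]$ with $\hPhi|_{z=0}$ the eigenbasis change for $E\star_\tau$, conjugating $\nabla_{z\partial_z}$ into $\bigoplus_i(z\partial_z+\bu_i/z+c_i)$ for scalars $c_i$; the distinctness of the $\bu_i$ is exactly what makes every step of the recursion uniquely solvable.

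Next I would upgrade this to an isomorphism of quantum D-modules, i.e.\ incorporate the $\tau$-connection $\nabla_{\partial/\partial\tau^j}$ and the pairing $P$. For the connection in the $\tau$-direction, flatness of $\nabla$ (the commutation relations $[\nabla_{z\partial_z},\nabla_{\partial_{\tau^j}}]=0$) forces the gauge-transformed $\tau$-connection to be compatible with the decomposition, so after possibly absorbing the constants $c_i$ and a further rescaling one checks that each summand is $(\cOan[\![z]\!],d+d(\bu_i/z))$ with $\bu_i$ now the actual eigenvalue function, which by the structure of the Frobenius manifold is (locally) a flat coordinate; the standard argument here is that the canonical coordinates $\bu_i$ of a semisimple Frobenius manifold are precisely the eigenvalues of Euler multiplication. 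For the pairing, one uses that $P$ is $\nabla$-flat and $z$-sesquilinear, hence in the decomposed frame it is block-diagonal with each block a unit of $\cOan[\![z]\!]$ satisfying $P(f,g)=\bar h\,f(-z)g(z)$ for some even $h=h(z^2)$; rescaling each summand by a square root of $h$ (which exists in $\cOan[\![z]\!]$ up to sign since $h(0)\neq0$ by non-degeneracy) brings $P$ to the standard form $P_\std$.

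For uniqueness: given two such isomorphisms $\hPhi,\hPhi'$, the composite $\hPhi'\circ\hPhi^{-1}$ is an automorphism of $\bigoplus_i(\cOan[\![z]\!],d+d(\bu_i/z),P_\std)$; writing it as a matrix $(g_{ij})$ over $\cOan[\![z]\!]$, flatness with respect to $d+d(\bu_i/z)$ forces $g_{ij}(z\partial_z \text{-type condition})$, concretely $z^2\partial_z g_{ij}=(\bu_i-\bu_j)g_{ij}$, which for $i\neq j$ has only the zero solution in $\cOan[\![z]\!]$ because $\bu_i-\bu_j$ is a nonzero constant; hence the composite is diagonal, and compatibility with $P_\std$ pins each diagonal entry to $\pm1$. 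This yields the claimed ambiguity $\diag(\pm1,\dots,\pm1)$ once the ordering of $\bu_1,\dots,\bu_N$ is fixed.

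\textbf{Main obstacle.} The genuinely delicate point is not the pointwise formal decomposition — that is classical — but keeping everything holomorphic in $\tau$ \emph{and} simultaneously matching the $z$-direction connection, the $\tau$-direction connection, and the pairing into one coherent normal form; in particular one must verify that the scalar corrections $c_i$ and the rescaling factors can be chosen holomorphically in $\tau$ (so that the summands really are $d+d(\bu_i/z)$ with no residual $\tau$-dependent regular part) and that this is compatible with the identification of $\bu_i$ as canonical coordinates. I expect to handle this by invoking the semisimple Frobenius-manifold structure (following \cite{Dubrovin:Painleve}): at a semisimple point the existence of canonical coordinates and of the associated formal/analytic decomposition of the whole second structure connection is exactly Dubrovin's normal form, so the work reduces to citing that theory and checking our sign/grading conventions (notably the sign flip $z\mapsto -z$ relating our conventions to the standard ones, already flagged in the excerpt).
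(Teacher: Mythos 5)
There is a genuine gap at the very first step: you assume that the eigenvalues $\bu_i(\tau_0)$ of $E\star_{\tau_0}$ are pairwise distinct and stay distinct on a whole neighbourhood of $\tau_0$. But the hypothesis of the proposition is only that $\star_{\tau_0}$ is \emph{semisimple as a ring}, which does not force the eigenvalues of the Euler multiplication to be distinct at $\tau_0$ (see Remark~\ref{rem:generically_tame_semisimple}: the $\bu_i$ are canonical coordinates, hence only \emph{generically} distinct, and the coalescence locus can pass through $\tau_0$). Your construction of $\hPhi$ is the classical Hukuhara--Turrittin/Levelt--Turrittin recursion, and every step of that recursion divides by differences $\bu_i(\tau)-\bu_j(\tau)$; when some of these vanish at $\tau_0$ (while the idempotents remain distinct), the recursion does not produce coefficients holomorphic at $\tau_0$, and your off-diagonal-vanishing argument in the uniqueness step likewise presupposes $\bu_i-\bu_j$ is a nonzero \emph{constant}, which it is not. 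So your proof covers only the case of pairwise distinct eigenvalues, whereas the proposition is stated (and later used, e.g.\ in the discussion around Proposition~\ref{prop:Hukuhara-Turrittin}) precisely so as to include coalescing eigenvalues at semisimple points.

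The paper handles the distinct-eigenvalue case exactly as you do, by citing \cite{Dubrovin:Painleve, Givental:elliptic, Sabbah:isomonodromic, Coates-Iritani:Fock}; the additional input needed for the coalescing case is Teleman's classification result \cite[Theorem~8.15]{Teleman:semisimple} (see also \cite{Bridgeland--Toledano-Laredo:multilog}), which gives existence of the formal splitting from semisimplicity of the Frobenius algebra rather than regularity of the residue, together with the argument of \cite[Remark~2.5.7]{GGI:gammagrass} ensuring that $\hPhi$ depends analytically on $\tau$ across the locus where eigenvalues coalesce (cf.\ also \cite{Cotti-Dubrovin-Guzzetti}). Your treatment of the pairing (block-diagonality by flatness, rescaling by a square root, sign ambiguity) and the resulting uniqueness statement are fine once existence and holomorphy in $\tau$ are secured, and they agree with the paper's appeal to \cite[Proposition~7.2]{Coates-Iritani:Fock}; but without the Teleman-type input the existence claim at a general semisimple point is not proved.
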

\begin{proof}
When $\bu_1,\dots,\bu_N$ are mutually distinct, this is discussed in
\cite[Lecture 4]{Dubrovin:Painleve}, \cite[Proposition 1.1]{Givental:elliptic},
\cite[Chapter II, Theorem 5.7]{Sabbah:isomonodromic},
\cite[Proposition 7.2]{Coates-Iritani:Fock}.
When $\bu_1,\dots,\bu_N$ are not mutually distinct (but $\star_\tau$
is still semisimple), the existence of $\hPhi$
was shown by Teleman \cite[Theorem~8.15]{Teleman:semisimple}
(see also \cite[Section~8]{Bridgeland--Toledano-Laredo:multilog}).
That $\hPhi$ depends analytically on $\tau$ (along the locus
where some of $\bu_1,\dots,\bu_N$ coalesce) was discussed
in \cite[Remark 2.5.7]{GGI:gammagrass}.
The pairing fixes the isomorphism~$\hPhi$ up to sign (on each factor),
see the argument in \cite[Proposition~7.2]{Coates-Iritani:Fock}.
\end{proof}

\begin{Remark}\label{rem:generically_tame_semisimple}
In this proposition, we do not require that eigenvalues
of $E\star_\tau$ are mutually distinct.
Note however that $\bu_1,\dots,\bu_N$
form a co-ordinate system in a neighbourhood of a~semisimple
point $\tau_0$ \cite[Lecture~3]{Dubrovin:Painleve}, and thus
they are \emph{generically} mutually distinct.
\end{Remark}

\begin{Remark}\label{rem:normalized_idempotents}
The isomorphism $\hPhi$ modulo $z$ is given by a
\emph{normalized idempotent basis} of
$(H^*_\CR(\frX_\bSigma),\star_\tau)$,
i.e., a basis $\{\Psi_1,\dots,\Psi_N\}$
such that $\Psi_i \star_\tau \Psi_j$ is a non-zero
scalar multiple of $\delta_{i,j} \Psi_i$ and that $(\Psi_i,\Psi_j) = \delta_{i,j}$.
\end{Remark}

In plain words, Proposition \ref{prop:formal_decomposition}
means that the differential equation $\nabla_{z\parfrac{}{z}} f=0$
for a coho\-mo\-lo\-gy-valued function $f=f(z)$ admits a formal
power series solution
\begin{equation}\label{eq:formal_solution}
f = e^{-\bu_i/z}\big(\Psi_{i,0} + \Psi_{i,1} z + \Psi_{i,2} z^2 + \cdots \big)
\end{equation}
with $\Psi_{i,n} \in H^*_\CR(\frX_\bSigma)$,
where $\Psi_{i,0} = \Psi_i$ is the
normalized idempotent in Remark \ref{rem:normalized_idempotents}.
These formal power series are typically divergent.
Over an appropriate angular sector in the $z$-plane, however,
we can find an actual analytic solution whose asymptotic expansion
is given by \eqref{eq:formal_solution};
moreover the actual solution with prescribed asymptotics
is unique if the angle of the sector is bigger than $\pi$.

To state this analytic lift in a sheaf-theoretic language, we follow
Sabbah \cite{Sabbah:isomonodromic}
and introduce a sheaf $\cA$ of ``holomorphic'' functions
on the real blowup of $\C$.
An (open) \emph{sector} is a subset of $\C$ of the form
$\{z\in \C^\times \colon \phi_1 < \arg z < \phi_2, |z|<\delta \}$
for some $\phi_1, \phi_2 \in \R$ and $\delta\in (0,\infty]$.
A~holomorphic function $f(z)$ on the sector $I=\{z\in \C^\times \colon
\phi_1 <\arg z <\phi_2, |z| <\delta \}$ is
said to have the \emph{asymptotic expansion
$f\sim \sum\limits_{k=0}^\infty a_k z^k$
as $z\to 0$ along $I$} if for every $\epsilon>0$ and for every $m\ge 0$,
there exists a constant $C_{\epsilon,m}>0$ such that
\[
\left | f(z) - \sum_{k=0}^m a_k z^k\right| \le C_{\epsilon,m} |z|^{m+1}
\]
for all $z$ with $\phi_1 +\epsilon \le \arg z \le \phi_2 -\epsilon$
and $|z|\le \delta/2$.
\begin{Definition}[{\cite[Chapter II, Section~5.c]{Sabbah:isomonodromic}}]
Let $\tCC:=[0,\infty)\times S^1$ denote the oriented real blowup
of $\C$ at the origin.
This is a smooth manifold with boundary
and is equipped with the map $\pi \colon \tCC \to \C$,
$\pi\big(r,e^{\iu\theta}\big) = r e^{\iu\theta}$.
Let $C^\infty_\tCC$ denote the sheaf of complex-valued
$C^\infty$-functions on $\tCC$ and define $\cA_\tCC$
to be the subsheaf of $C^\infty_\tCC$
of germs annihilated by the Cauchy-Riemann
operator $\overline{z} \parfrac{}{\overline{z}}
= \frac{1}{2}\big(r \parfrac{}{r} + \iu \parfrac{}{\theta}\big)$.
Sections of $\cA_\tCC$ over the open set $\{(r,e^{\iu\theta}) \in \tCC \colon
0\le r<\delta,\, \phi_1<\theta<\phi_2\}$ are precisely those holomorphic
functions $f(z)$ on the sector $\{z \in \C^\times \colon
\phi_1<\arg z<\phi_2,\, |z|<\delta\}$
which admit asymptotic expansions $f(z) \sim \sum\limits_{k=0}^\infty a_k z^k$
along this sector.

The same construction works in families: for a complex
manifold $M$, we similarly
define the sheaf $\cA_{M\times \tCC}$ over $M\times \tCC$
to be the subsheaf of $C^\infty_{M\times \tCC}$ of germs
annihilated by the Cauchy--Riemann operators
$\overline{z}\parfrac{}{\overline{z}}$, $\overline\partial_M$.
\end{Definition}

The asymptotic expansion yields a map
\begin{equation}\label{eq:formalization}
i_\xi^{-1} \cA_{M\times \tCC} \rightarrow \cOan_M[\![z]\!]
\end{equation}
for any $\xi\in S^1$, where $i_\xi \colon M \cong
M\times \{(0,\xi)\} \hookrightarrow
M\times \tCC$ is the inclusion.
This map is known to be surjective (the Borel--Ritt lemma;
see, e.g., \cite{Wasow:book}).
The natural map $\pi\colon M\times \tCC \to M\times \C$ is a~map of ringed spaces, i.e., we have a~map
$\pi^{-1}\cOan_{M\times \C} \to \cA_{M\times \tCC}$ of
sheaves of rings.
In particular, we can define the pull-back of an
$\cOan_{M\times \C}$-module $\cF$ to be
$\pi^*\cF := \pi^{-1}\cF
\otimes_{\pi^{-1}\cOan_{M\times \C}} \cA_{M\times \tCC}$.

For a multi-set $\{\bu_i\}=\{\bu_1,\dots,\bu_N\}$
of complex numbers, we say that
a direction $e^{\iu\phi}\in S^1$ or a phase $\phi$
 is \emph{admissible} for $\{\bu_i\}$ if $e^{\iu\phi}$
is not parallel to any non-zero difference $\bu_i -\bu_j$,
i.e., $e^{\iu\phi} \notin \R(\bu_i-\bu_j)$ for all $i$, $j$.

\begin{Proposition}
\label{prop:Hukuhara-Turrittin}
Let $\tau_0\in \Vss$ be a semisimple point
and let $e^{\iu\phi} \in S^1$ be an admissible direction
for the spectrum $\big\{\bu_1^0,\dots,\bu_N^0\big\}$ of $E\star_{\tau_0}$.
Let $\pi \colon \Vss \times \tCC \to \Vss \times \C$
denote the oriented real blowup along $\Vss \times \{0\}$.
There exist an open neighbourhood $B$ of $\tau_0$ in $\Vss$,
a positive number $\epsilon>0$,
and an isomorphism
\[
\Phi_{\phi} \colon \ \pi^*\QDMan(\frX_\bSigma) \big|_{B\times I_\phi}
\cong \bigoplus_{i=1}^N (\cA_{B\times I_\phi}, d + d(\bu_i/z))
\]
over the sector
$I_\phi=\big\{\big(r,e^{\iu\theta}\big)\colon |\theta -\phi|<\frac{\pi}{2}
+\epsilon\big\}$
such that $\Phi_\phi$ induces, via~\eqref{eq:formalization},
the formal decomposition $\hPhi$ in Proposition~$\ref{prop:formal_decomposition}$.
Here we exclude the data of the pairing from
$\QDMan(\frX_\bSigma)$.
Moreover such a $\Phi_\phi$ is unique. We call $\Phi_\phi$
the \emph{analytic lift} of~$\hPhi$.
\end{Proposition}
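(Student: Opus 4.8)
The plan is to deduce Proposition~\ref{prop:Hukuhara-Turrittin} from the classical Hukuhara--Turrittin--Levelt (or Sibuya) sectorial normalization theorem, using the formal decomposition $\hPhi$ of Proposition~\ref{prop:formal_decomposition} as the ``target'' of the sectorial lift. First I would fix the semisimple point $\tau_0$ and recall that, by Remark~\ref{rem:generically_tame_semisimple}, the eigenvalues $\bu_i=\bu_i(\tau)$ of $E\star_\tau$ are analytic near $\tau_0$ and form a local coordinate system; shrinking to a small polydisc $B\ni\tau_0$ in $\Vss$ we may assume the $\bu_i$ are single-valued holomorphic on $B$. The connection $\nabla_{z\partial_z}=z\partial_z-\tfrac1z(E\star_\tau)+\mu$ then has an irregular singularity of Poincar\'e rank one along $z=0$ with leading term $-\tfrac1z(E\star_\tau)$, whose residue (the matrix $E\star_\tau$) is semisimple with eigenvalues $\bu_i$; this is exactly the situation of the Hukuhara--Turrittin theorem. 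I would cite the precise sectorial statement in the form used by Sabbah \cite[Chapter~II, Theorem~5.7 and Section~5.d]{Sabbah:isomonodromic} (or \cite[Theorem~12.3]{Wasow:book}, \cite[Theorem~A]{BJL79}), which asserts: given the formal decomposition $\hPhi$ over $\cOan[\![z]\!]$ and an open sector $I$ of angular width $<2\pi$ whose bisecting direction is admissible, there is a \emph{unique} isomorphism $\Phi$ over $\cA_{B\times I}$ lifting $\hPhi$ provided $I$ contains no Stokes direction in its interior that would obstruct uniqueness; uniqueness holds once the width exceeds $\pi$, because two lifts would differ by a flat section of $\bigoplus(\cA_{B\times I},d+d(\bu_i/z))$ that is asymptotic to $\mathrm{id}$, and such a section is forced to be $\mathrm{id}$ on a sector wider than $\pi$ (the classical Watson-type lemma / the fact that the only bounded flat section of $d+d((\bu_i-\bu_j)/z)$ asymptotic to $0$ on a $(>\pi)$-sector is $0$).

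The key steps, in order, would be: (1) reduce to the standard setting by trivializing $\QDMan(\frX_\bSigma)$ over $B$ and writing $\nabla_{z\partial_z}$ as a system $z\partial_z Y = \big(-\tfrac1z A(\tau)+\mu\big)Y$ with $A=E\star_\tau$ holomorphic and semisimple-with-eigenvalues-$\bu_i$ on $B$; (2) invoke the formal gauge transformation $\hPhi$ from Proposition~\ref{prop:formal_decomposition} which block-diagonalizes (in fact diagonalizes) the system into $\bigoplus_i(d+d(\bu_i/z))$ over $\cOan[\![z]\!]$; (3) choose $\epsilon>0$ small enough that the sector $I_\phi=\{|\arg z-\phi|<\tfrac\pi2+\epsilon\}$ still has admissible bisecting direction $e^{\iu\phi}$ for $\{\bu_i^0\}$ and, by shrinking $B$, for $\{\bu_i(\tau)\}$ for all $\tau\in B$ — here admissibility of $e^{\iu\phi}$ at $\tau_0$ is an open condition in $\tau$, so this is possible; (4) apply the Hukuhara--Turrittin existence theorem to produce a holomorphic (in $z$, with asymptotic expansion) gauge transformation $\Phi_\phi$ over $I_\phi$ whose asymptotic expansion along $I_\phi$ equals $\hPhi$; (5) prove uniqueness using that the width of $I_\phi$ is $\pi+2\epsilon>\pi$. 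Steps (4) and (5) are where I would lean entirely on the cited classical results, following the treatment of \cite[Section~8]{Hertling-Sevenheck:nilpotent} and \cite[Section~2.5]{GGI:gammagrass}, who package exactly this statement for semisimple quantum connections; the only new point relative to the literature is the parameter dependence on $\tau\in B$, which is handled because the formal lift $\hPhi$ is already analytic in $\tau$ (Proposition~\ref{prop:formal_decomposition} and \cite[Remark~2.5.7]{GGI:gammagrass}) and the sectorial normal form theorem can be applied with holomorphic parameters.

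The main obstacle I anticipate is the bookkeeping around the case of coalescing eigenvalues: when some $\bu_i^0=\bu_j^0$ at $\tau_0$ but $\star_{\tau_0}$ is still semisimple, the ``formal'' object is genuinely a direct sum of possibly-equal exponential factors $e^{-\bu_i/z}$, and the classical Hukuhara--Turrittin theorem in its simplest form assumes distinct leading exponents. This is precisely the refinement Teleman \cite[Theorem~8.15]{Teleman:semisimple} and Bridgeland--Toledano-Laredo \cite[Section~8]{Bridgeland--Toledano-Laredo:multilog} address for the formal decomposition; for the analytic lift one must observe that near such a point the $\bu_i$ are \emph{generically} distinct (Remark~\ref{rem:generically_tame_semisimple}), apply the distinct-eigenvalue sectorial theorem on the open dense locus, and then use the continuity/analyticity statement of \cite[Remark~2.5.7]{GGI:gammagrass} to extend $\Phi_\phi$ holomorphically across the coalescence locus inside $B$. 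Once that is in place, uniqueness on a $(>\pi)$-sector is robust and does not see the coalescence. I would also need to be slightly careful that the sheaf-theoretic pull-back $\pi^*\QDMan(\frX_\bSigma)=\pi^{-1}\QDMan(\frX_\bSigma)\otimes_{\pi^{-1}\cOan}\cA_{B\times\tCC}$ is the right object for the statement, but this is exactly Sabbah's formalism as recalled in the excerpt, so it requires no extra work beyond citing it.
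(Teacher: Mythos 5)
Your reduction and citations for the case where $\bu_1^0,\dots,\bu_N^0$ are pairwise distinct coincide with what the paper does (the classical sectorial theorems of Wasow, Balser--Jurkat--Lutz, Sabbah, in the formulation of Hertling--Sevenheck, plus the standard width-$>\pi$ uniqueness argument), and that part of your proposal is fine. The genuine gap is in your treatment of coalescing eigenvalues, which is exactly the case the proposition is designed to cover (cf.\ Remark~\ref{rem:generically_tame_semisimple}). First, your step~(3) asserts that admissibility of $e^{\iu\phi}$ is an open condition in $\tau$, so that after shrinking $B$ the phase $\phi$ stays admissible for $\{\bu_i(\tau)\}$ for all $\tau\in B$. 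This is false precisely when $\bu_i^0=\bu_j^0$ at $\tau_0$: admissibility at $\tau_0$ places no constraint on that pair, but at nearby points the small nonzero difference $\bu_i(\tau)-\bu_j(\tau)$ can point in any direction, so there are points arbitrarily close to $\tau_0$ at which $e^{\iu\phi}$ is \emph{not} admissible. Consequently you cannot simply apply the distinct-eigenvalue sectorial theorem ``on the open dense locus'' with the same sector $I_\phi$ and expect a single-valued family: as $\tau$ crosses the walls where $\phi$ becomes non-admissible for the separating pair, the sectorial lift is a priori subject to a Stokes jump, and your proposed gluing does not rule this out.

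Second, the tool you invoke to cross the coalescence locus --- the analyticity statement of \cite[Remark~2.5.7]{GGI:gammagrass} --- concerns the \emph{formal} decomposition $\hPhi$ (this is how the paper uses it in the proof of Proposition~\ref{prop:formal_decomposition}); it does not by itself give holomorphic extension of the \emph{analytic} lift $\Phi_\phi$ across the locus where eigenvalues coalesce. The missing input is the statement that the Stokes factors between eigenvalues that coalesce at $\tau_0$ are trivial, so that no jump occurs and a fundamental solution $Y_\phi(\tau,z)$ with the prescribed asymptotics $Y_\phi(\tau,z)e^{-\bU/z}\to(\Psi_1,\dots,\Psi_N)$ exists and is holomorphic for all $\tau$ in a full neighbourhood $B$ of $\tau_0$. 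This is a nontrivial result about isomonodromic deformations with coalescing eigenvalues; it is precisely \cite[Proposition~2.5.1]{GGI:gammagrass} (see also Cotti--Dubrovin--Guzzetti \cite{Cotti-Dubrovin-Guzzetti}), and it is the result the paper's proof actually cites for the general case. Your argument would become correct if you replaced the appeal to \cite[Remark~2.5.7]{GGI:gammagrass} by this construction (or proved the vanishing of the relevant Stokes entries yourself); the uniqueness argument via the width of $I_\phi$ exceeding $\pi$ is then unaffected, as you say.
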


\begin{proof}In the case where $\big\{\bu_1^0,\dots,\bu_N^0\big\}$ are pairwise distinct,
similar results are given in \cite[Theorem 12.3]{Wasow:book},
\cite[Theorem A]{BJL79},
\cite[Theorem 4.2]{Dubrovin:Painleve},
\cite[Chapter II, Theorem 5.12]{Sabbah:isomonodromic}.
We closely follow Hertling--Sevenheck
\cite[Lemma 8.3]{Hertling-Sevenheck:nilpotent} for the formulation.
The general case follows from
\cite[Proposition 2.5.1]{GGI:gammagrass}, where
a fundamental solution matrix $Y_\phi(\tau,z)$ for $\nabla$
with the asymptotics $Y_\phi(\tau,z) e^{-\bU/z} \to (\Psi_1,\dots,\Psi_N)$
as $z\to 0$ along the sector $I_\phi$ is constructed for $\tau \in B$.
Here $\bU = \diag[\bu_1,\dots,\bu_N]$ and $\Psi_i$ is the normalized idempotent
in Remark \ref{rem:normalized_idempotents}.
\end{proof}

\begin{Remark}The uniqueness of the analytic lift $\Phi_\phi$ is ensured by
the fact that the angle of the sector $I_\phi$ is bigger than $\pi$:
see \cite[Remark~1.4]{BJL79}. The lift $\Phi_\phi$ depends
on $\tau_0$ and $e^{\iu\phi}$, and it depends continuously
on $\big(\tau_0,e^{\iu\phi}\big)$ unless $\big(\tau_0,e^{\iu\phi}\big)$
crosses the locus where $e^{\iu\phi}$ is non-admissible for
the spectrum of~$E\star_{\tau_0}$.
\end{Remark}

\begin{Remark}\label{rem:pairing_analytic_lift}
The analytic lift $\Phi_\phi$ preserves the pairing in the following sense:
consider the analytic lift $\Phi_{\phi+\pi}$ associated with
the opposite direction $-e^{\iu\phi}$, then
\[
P(s_-,s_+) (x,z) = \sum_{i=1}^N \Phi^i_{\phi+\pi}(s_-)(x,-z)
\Phi^i_{\phi}(s_+)(x,z)
\]
for sections $s_-$, $s_+$ of $\pi^*\QDMan(\frX_\bSigma)$, respectively,
over $B\times I_{\phi+\pi}$, $B\times I_\phi$.
This follows from the fact that the asymptotic expansions of both sides
coincide by Proposition~\ref{prop:Hukuhara-Turrittin}, and
that the pairings are flat.
\end{Remark}

\begin{Remark}For a thorough discussion on the isomonodromic deformation theory of
irregular differential equations (of Poincar\'e rank 1)
with coalescing eigenvalues $\bu_1,\dots,\bu_N$, see Cotti--Dubrovin--Guzzetti~\cite{Cotti-Dubrovin-Guzzetti}.
\end{Remark}

\subsection{Asymptotic basis and marked reflection system}\label{subsec:MRS}
The sectorial decomposition in Proposition~\ref{prop:Hukuhara-Turrittin}
gives rise to a linear algebraic data which we call
the \emph{marked reflection system} \cite[Section~4.3]{GGI:gammagrass};
this notion is equivalent to the central connection matrix
together with canonical co-ordinates in Dubrovin's theory
\cite[Lecture~4]{Dubrovin:Painleve}.
We briefly review it for our later purposes.

Introduce a pairing $[\cdot,\cdot)$ on the $\C$-vector space
$H^*_\CR(\frX_\bSigma)$ by
\[
[\alpha,\beta) := \frac{1}{(2\pi)^n}
\big(e^{\pi\iu\mu} e^{-\pi\iu c_1(\frX_\bSigma)} \alpha, \beta\big),
\]
where $(\cdot,\cdot)$ is the orbifold Poincar\'e pairing~\eqref{eq:orb_Poincare_pairing}.

Recall the fundamental solution $L(\tau,z) z^{-\mu} z^{c_1(\frX_\bSigma)}$
of the quantum connection introduced in Section~\ref{subsec:integral}.
The map $\alpha \mapsto (2\pi)^{-n/2}L(\tau,z) z^{-\mu}
z^{c_1(\frX_\bSigma)}
\alpha$ intertwines the pairing $[\cdot,\cdot)$ on $H^*_\CR(\frX_\bSigma)$
with the orbifold Poincar\'e pairing in the sense that
\[
(s_1(\tau,e^{-\iu\pi}z), s_2(\tau,z)) = [\alpha_1,\alpha_2),
\]
when $s_i(\tau,z)= (2\pi)^{-n/2} L(\tau,z) z^{-\mu} z^{c_1(\frX_\bSigma)}
\alpha_i$ (where $n= \dim \frX_\bSigma$ as usual),
see \cite[(20)]{Iritani:Integral}.
The pairing $[\cdot,\cdot)$
is also related to the Euler pairing $\chi(\cdot,\cdot)$ on
the $K$-group by \eqref{eq:Euler_Poincare}, i.e.,
\[
\chi(V_1,V_2) = [\alpha_1,\alpha_2),
\]
when $\alpha_i = \hGamma_{\frX_\bSigma} \cup (2\pi\iu)^{\deg_0/2}
\inv^*\tch(V_i)$ and $V_i \in K(\frX_\bSigma)$.

Let $\tau_0\in \Vss$ be a semisimple point and let~$\phi$ be an admissible
phase for the eigenvalues of~$E\star_{\tau_0}$.
Let $\Phi_\phi$ be the sectorial decomposition
associated with $\tau_0$ and $\phi$
as in Proposition \ref{prop:Hukuhara-Turrittin}:
\[
\Phi_{\phi} \colon \ \pi^*\QDMan(\frX_\bSigma) \Big|_{B\times I_\phi}
\cong \bigoplus_{i=1}^N
(\cA_{B\times I_\phi}, d + d(\bu_i/z)).
\]
We choose a base point $\tau_\star \in V^\times$
corresponding to a real cohomology class and choose
a path connecting $\tau_\star$ and $\tau_0$ in $V^\times$
(see Remark~\ref{rem:branch_L} for the choice of a base point).
Let $s_i$ be the flat section of $\QDMan(\frX_\bSigma)$
on a neighbourhood of $\big(\tau_0,e^{\iu\phi}\big)$
satisfying $\Phi_\phi(s_i) = e^{-\bu_i/z} e_i$,
where $e_i$ is the $i$th standard basis of $\cA_{B\times I_\phi}^{\oplus N}$.
Then we have vectors $v_i\in H^*_\CR(\frX_\bSigma)$
such that
\[
s_i(\tau,z) = (2\pi)^{-n/2} L(\tau,z)z^{-\mu} z^{c_1(\frX_\bSigma)} v_i,
\]
where the determination of the fundamental solution is given by
$\arg z= \phi$ at $z= e^{\iu\phi}$ and the chosen path
(see Remark \ref{rem:branch_L}).
The \emph{marked reflection system} associated with $\tau_0$,
$\phi$ (and the path from $\tau_\star$ to $\tau_0$) is
a tuple
$\big(H^*_\CR(\frX_\bSigma), [\cdot,\cdot), \{v_1,\dots,v_N\},
m, e^{\iu\phi}\big)$, where
\begin{itemize}\itemsep=0pt
\item $\{v_1,\dots,v_N\}$ is the basis of $H^*_\CR(\frX_\bSigma)$
defined as above, called the \emph{asymptotic basis};
\item $m\colon \{v_1,\dots,v_N\} \to \C$ is the map
given by $m(v_i) = \bu_i$, called the \emph{marking}.
\end{itemize}
The asymptotic basis satisfies the following semiorthogonality
condition \cite[Proposi\-tion~2.6.4, Section~4.4]{GGI:gammagrass}
\begin{equation}\label{eq:semiorthogonal}
[v_i,v_j) =
\begin{cases}
1, & \text{if $i=j$}, \\
0, & \text{if $i\neq j$ and
$\Im\big(e^{-\iu\phi}\bu_i\big) \le \Im\big(e^{-\iu\phi}\bu_j\big)$}.
\end{cases}
\end{equation}
Because of the ambiguity of $\hPhi$ in
Proposition~\ref{prop:formal_decomposition},
the asymptotic basis $\{v_1,\dots,v_n\}$ is not
a priori ordered and each $v_i$ is determined up to sign
$v_i \to \pm v_i$.
On the other hand, given a phase~$\phi$,
we can order $\{v_i\}$ in such a way that
$\Im\big(e^{-\iu\phi} \bu_1\big) \ge \Im\big(e^{-\iu\phi} \bu_2\big) \ge
\cdots \ge \Im\big(e^{\iu\phi}\bu_N\big)$; then the Gram matrix
$([v_i,v_j))$ is upper-triangular with diagonal entries all equal
to one, and gives the Stokes matrix of the quantum connection
at the irregular singular point $z=0$
(see \cite[Lecture~4]{Dubrovin:Painleve},
\cite[Proposition~2.6.4]{GGI:gammagrass}).

When $\tau_0$ and $\phi$ vary and cross the locus where the
corresponding eigenvalues
$\{\bu_i\}$ become non-admissible for~$\phi$, the corresponding
marked reflection system undergoes mutation.
We refer to \cite[Section~4.2]{GGI:gammagrass} for the full details
of the deformation theory of marked reflection systems.
We illustrate an example of mutation in Fig.~\ref{fig:mutation};
the figure describes a typical procedure where~$\{\bu_i\}$ varies
in the configuration space of $N$ points in $\C$ and
crosses the wall of non-admissible configurations.
In the picture, we drew the half-ray $\bu_j+\R_{\ge 0} e^{\iu\phi}$
from each $\bu_j$ to show the direction $e^{\iu\phi}$.
Suppose that the asymptotic basis $\{v_1,\dots,v_N\}$
is marked by $\{\bu_1,\dots,\bu_N\}$ in the leftmost picture,
i.e., $\bu_j= m(v_j)$. We assume that the basis is ordered
so that $\Im\big(e^{-\iu\phi}\bu_1\big) > \Im\big(e^{-\iu\phi}\bu_2\big)
>\cdots > \Im\big(e^{-\iu\phi}\bu_N\big)$.
After passing through the non-admissible configuration in the
middle picture, the vector~$v_i$ marked by~$\bu_i$ is transformed
into
\begin{equation} \label{eq:right_mutation}
v_i' = v_i - [v_i, v_{i+1}) v_{i+1}
\end{equation}
and the other vectors remain the same
(i.e., the marking is given by $v_i'\mapsto \bu_i$
and $v_j \mapsto \bu_j$ for $j\neq i$ in the
rightmost picture).
This is called the \emph{right mutation}
of $v_i$ with respect to $v_{i+1}$.
The inverse procedure is the \emph{left mutation}
of $v_i'$ with respect to $v_{i+1}$:
\begin{equation}
\label{eq:left_mutation}
v_i = v_i' - [v_{i+1}, v_i') v_{i+1}.
\end{equation}
The two operations \eqref{eq:right_mutation},
\eqref{eq:left_mutation} are inverse to each other
because of the semiorthogonality condition \eqref{eq:semiorthogonal}.
\begin{Remark}
The result \cite[Theorem 4.11]{Iritani:Integral} implies that
the asymptotic basis $\{v_1,\dots,v_N\}$
for the quantum cohomology of weak-Fano, compact toric stacks
$\frX_\bSigma$ is of the form
\[
v_i = \hGamma_{\frX_\bSigma} \cup (2\pi\iu)^{\deg_0/2} \inv^* \tch(V_i)
\]
for some classes $V_i\in K(\frX_\bSigma)$ in the $K$-group,
i.e., the corresponding flat section $s_i$ lies in the $\hGamma$-integral
structure (see also Proposition~\ref{prop:lift_str_sheaf_+}).
The Gamma conjecture II \cite[Section~4.6]{GGI:gammagrass}
(recently proved by~\cite{Fang-Zhou} for Fano toric manifolds) says
that $V_i$ comes from a full exceptional collection in the derived
category of coherent sheaves. In this situation, mutation of
asymptotic basis corresponds to that of full exceptional collections.
\end{Remark}
\begin{figure}[htbp]\centering
\includegraphics{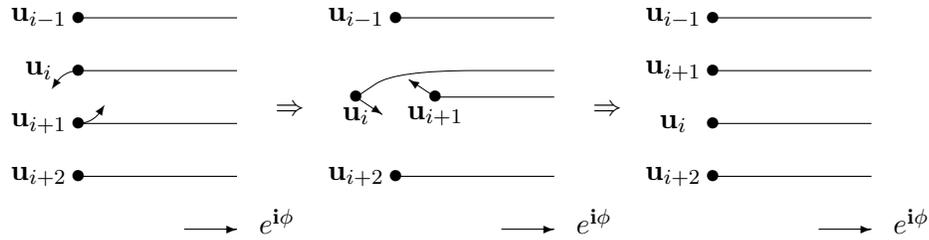}
\caption{Right mutation: the ``wall-crossing'' from the left picture to the right one in the configuration space yields a right mutation of $v_i$.}\label{fig:mutation}
\end{figure}

\subsection{Sectorial decomposition of the Brieskorn module}
\label{subsec:sectorial_decomp_Bri}
In this section, we describe the
Hukuhara--Turrittin sectorial decomposition
(Propositions \ref{prop:formal_decomposition} and~\ref{prop:Hukuhara-Turrittin}) explicitly
for the non-equivariant Brieskorn modules.
First we observe that the
analytified Brieskorn module $\Brian(F)_\bSigma$
(Definition \ref{def:analytic_Bri})
admits a formal decomposition over the locus where $F$ has
only non-degenerate critical points, via the formal asymptotic
expansion in Section~\ref{subsubsec:higher_residue}.
When a toric stack $\frX_\bSigma$ is \emph{weak Fano},
the Brieskorn module $\Bri(F)$ over the small quantum
cohomology locus of $\frX_\bSigma$
has the expected rank equal to $\dim H^*_\CR(\frX_\bSigma)$
and gives a fully analytic (i.e., analytic both in the $\cM$ direction
and in the $z$-direction) D-module mirror to the small
quantum D-module of $\frX_\bSigma$.
In this case, we can describe the analytic lift of the formal decomposition
using oscillatory integrals.

\subsubsection{Formal decomposition of the Brieskorn module}
\label{subsubsec:formal_decomp_Bri}
Let $\frX_\bSigma$ be a toric stack
with $\bSigma \in \Fan(S)$ and let $(\pr\colon \cY \to \cM, F)$
denote the LG model from Definition \ref{def:LG}.
Let $\cB\subset \cY$, $\cU\subset \cM_\T=\cM\times \Lie\T$
be open neighbourhoods of (respectively)~$\tzero_\bSigma$
and $(0_\bSigma,0)$
as in Corollary~\ref{cor:tpr_locally_free}. Recall that
the analytified equivariant Brieskorn module $\Briequivan(F)_\bSigma$
is defined over $\cU$ and its
non-equivariant version $\Brian(F)_\bSigma$
is defined over $\cV = \cU \cap (\cM\times \{0\})$
(see Definition \ref{def:analytic_Bri}).
When $\frX_\bSigma$ is compact, the family of relative critical points
of the LG potential $F$
\[
\Cr := \left\{p \in \cB \colon
x_1\parfrac{F}{x_1}(p) = \cdots = x_n \parfrac{F}{x_n}(p) =0
\right\} \xrightarrow{\pr} \cV
\]
is a finite morphism whose generic fibre is reduced
(see \cite[Proposition~3.10]{Iritani:Integral}), i.e., there exists
an open dense subset $\cVss$ of $\cV$ such that for any $q\in \cVss$,
$F|_{\cB \cap \pr^{-1}(q)}$ has only non-degenerate
critical points.
We set $\Crss := \Cr \cap \pr^{-1}(\cVss)$.
By definition, $\pr \colon \Crss \to \cVss$ is a finite
covering. The following result describes a formal decomposition
parallel to Proposition~\ref{prop:formal_decomposition}
for the analytified Brieskorn module.

\begin{Proposition}\label{prop:formal_decomp_Bri}
The formal asymptotic expansion $\Asymp$ in Definition~$\ref{def:Asymp}$ defines an isomorphism
\[
\Asymp\colon \ \left. \Brian(F)_\bSigma \right|_{\cVss}
\cong \pr_*\left( \cOan_{\Crss}[\![z]\!] \otimes
\ori \right),
\]
where $\ori$ is the $\mu_2$-local system over $\Crss$
defined by the monodromy of the square root $\sqrt{\det(h_{i,j})}$
of the logarithmic Hessian of $F$ $($see \eqref{eq:Hessian_matrix}
for $h_{i,j})$.
This map identifies the Gauss--Manin
connection $\nabla$ with $d+ d(F|_{\Crss}/z)\wedge$ and
the higher residue pairing with the diagonal pairing:
\[
P_{\std}(f,g)(x)
= \sum_{p \in \pr^{-1}(x) \cap \Crss}f(p,-z) g(p,z),
\]
where $f = f(x,z), g= g(x,z) \in \pr_*(\cOan_{\Crss}[\![z]\!])$.
\end{Proposition}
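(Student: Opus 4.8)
The plan is to use the analytic mirror isomorphism (Theorem~\ref{thm:an_mirror_isom}), which identifies $\Brian(F)_\bSigma$ (or rather the formal-in-$z$ version) with the quantum D-module $\overline{\QDMan}(\frX_\bSigma)$, and then transport the statement of Proposition~\ref{prop:formal_decomposition} back to the Brieskorn side. However, a cleaner and more self-contained route — the one I would actually carry out — is to work directly with oscillatory integrals, since the $\Asymp$ map is \emph{defined} termwise by the Gaussian integration formula \eqref{eq:Asymp}. Concretely: first I would fix $q \in \cVss$ and observe that $F|_{\cB\cap\pr^{-1}(q)}$ is a holomorphic Morse function on the (finite) set $\pr^{-1}(q)\cap\Crss$, so near each critical point $p$ one has logarithmic coordinates $s_1,\dots,s_n$ centred at $p$ with $F_\T = F_\T(p) + \tfrac12\sum h_{i,j}s_is_j + F^{\ge 3}_\T$. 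The key structural point is that for each $p$, the assignment $\phi\omega \mapsto \Asymp_p(\phi\omega)$ is, by the explicit formula \eqref{eq:Asymp}, a surjective $\cOan_{\cVss}[\![z]\!]$-linear map onto $\cOan[\![z]\!]$ (modulo $z$ it is division by $|\bN_{\rm tor}|\sqrt{\det(h_{i,j})}$ followed by evaluation at $p$, which is surjective), and summing over $p$ gives a map to $\pr_*(\cOan_{\Crss}[\![z]\!]\otimes\ori)$.

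The heart of the argument is then that this map is an \emph{isomorphism}. I would prove this by the standard ``iso modulo $z$ $+$ Nakayama/completeness'' technique already used repeatedly in the excerpt (e.g.\ in the proof of Proposition~\ref{prop:Briequivan_finite_free}). Modulo $z$, the map $\Asymp$ reduces to the Grothendieck residue isomorphism between $\Brian(F)_\bSigma/z\Brian(F)_\bSigma \cong \pr_*\cOan_{\Crss}$ (the relative Jacobian ring of $F$, restricted to the semisimple locus it is the structure sheaf of the reduced relative critical scheme) and $\pr_*(\cOan_{\Crss}\otimes\ori)$; the twist by $\ori$ exactly accounts for the ambiguity of $\sqrt{\det(h_{i,j})}$. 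Both source and target are locally free $\cOan_{\cVss}[\![z]\!]$-modules of rank $\dim H^*_\CR(\frX_\bSigma) = \deg(\pr|_{\Crss})$ — the source by Proposition~\ref{prop:Briequivan_finite_free} (restricted to $\cV$, hence to $\cVss$), the target because $\pr|_{\Crss}$ is a finite covering of that degree. Since the source is $z$-adically complete and the map is an isomorphism modulo $z$, it is an isomorphism. I would note that the $\mu_2$-local system structure: over $\cVss$ the critical points and the Hessian depend analytically, and $\det(h_{i,j})$ is a nowhere-vanishing analytic function on $\Crss$, so its square root defines an honest rank-one $\mu_2$-local system $\ori$; the formal asymptotic expansion depends on the choice of square root precisely through $\ori$.

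Next I would check the compatibility with the connection and pairing. For the connection: \cite[Lemma~6.7]{CCIT:MS} (cited in the excerpt's proof of Proposition~\ref{prop:Givental_formula_toric_stacks}) states that $e^{F_\T(p)/z}\Asymp_p(\cdot)$ solves the Gauss--Manin connection along each critical branch $p$; differentiating the defining integral $\int_{\Gamma(p)} e^{F_\T/z}\phi\,\omega$ under the integral sign and using $\nabla_V(\phi\omega) = (V(\phi) + z^{-1}V(F)\phi)\omega$ shows that $\Asymp$ intertwines $\nabla$ with $d + d(F|_{\Crss}/z)\wedge$ (the factor $e^{F_\T(p)/z}$ contributes the $d(F/z)$ twist, and on the component supported at $p$ the relevant value is $F(p)$). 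For the pairing: this is immediate from Definition~\ref{def:higher_residue}, which \emph{defines} $P(s_1,s_2) = \sum_p [\Asymp_p(s_1)]_{z\to-z}\cdot\Asymp_p(s_2)$, so under $\Asymp$ it becomes the diagonal pairing $P_{\std}$ verbatim; the only point to record is that passing to the non-equivariant limit ($\chi=0$) on the semisimple locus $\cVss$ is harmless because $\cVss\subset\cV$ and the higher residue pairing is defined there.

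The main obstacle, I expect, is making rigorous the claim that $\Asymp$ is \emph{globally} well-defined as a map of sheaves over $\cVss$ (not just pointwise), including the correct identification of the twist by $\ori$ and the local freeness of the target. The local picture at a single Morse critical point is classical, but one must patch: the critical points can collide at the boundary $\cV\setminus\cVss$ (the caustic) but not over $\cVss$, so $\Crss\to\cVss$ is a genuine finite étale-like covering of analytic spaces, and $\sqrt{\det(h_{i,j})}$ is multivalued precisely around the nontrivial loops in $\Crss$ (which may permute sheets of the covering) — this is where the $\mu_2$-local system $\ori$ enters and must be handled with care rather than waved away. A secondary, more bookkeeping-level obstacle is verifying that the $\cOan_{\cVss}[\![z]\!]$-module structure on $\Brian(F)_\bSigma$ (which, per Remark~\ref{rem:module_structure}, is the nonstandard ``twisted'' one) matches the module structure on $\pr_*(\cOan_{\Crss}[\![z]\!]\otimes\ori)$ under $\Asymp$; but on the \emph{non-equivariant} Brieskorn module the equivariant parameters $\chi_i$ are set to zero, so the module structure reduces to ordinary multiplication by functions of $q$ and $z$, and this compatibility is automatic from the definition of $\Asymp$ as termwise Gaussian integration.
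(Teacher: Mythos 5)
Your proposal is correct and follows essentially the same route as the paper's proof: one checks that modulo $z$ the map $\Asymp$ is division by $|\bN_{\rm tor}|\sqrt{\det(h_{i,j})}$ on $\Brian(F)_\bSigma|_{z=0}\cong\pr_*\cOan_{\Crss}$ (hence an isomorphism, with the $\ori$-twist absorbing the square-root ambiguity), concludes by locally-freeness of both sides over $\cOan_{\cVss}[\![z]\!]$, and handles the connection via \cite[Lemma~6.7]{CCIT:MS} and the pairing directly from Definition~\ref{def:higher_residue}. No essential differences from the paper's argument.
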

\begin{proof}As we remarked in Remark \ref{rem:Asymp}, we need to choose
a square root of the Hessian when defining the formal
asymptotic expansion; hence we need the
$\mu_2$-local system~$\ori$.
Along $z=0$, the map $\Asymp$ is given by
$\phi \cdot \omega \mapsto \phi|_{\Crss}
/(|\bN_{\rm tor}|\sqrt{\det(h_{i,j})})$.
On the other hand,
\begin{align*}
\left. \Brian(F)_\bSigma\right|_{z=0}
& \cong \pr_*
\left(\cOan_\cB[\![z]\!]/(\chi_1,\dots,\chi_n,z) \cOan_\cB[\![z]\!]
\right)
\\
& \cong \pr_*\left( \cOan_\cB\Big/
\left( x_1 \parfrac{F}{x_1}, \dots, x_n \parfrac{F}{x_n}\right)
\right)
\cong \pr_* \cOan_{\Cr},
\end{align*}
where $\chi_i$ acts on $\cOan_\cB[\![z]\!]$ as in~\eqref{eq:chi_action_on_Bri}. Hence $\Asymp$ is an isomorphism along $z=0$. Since both sides are locally free $\cO_{\cVss}[\![z]\!]$-modules, $\Asymp$ is an isomorphism. That $\Asymp$ identifies the connections follows from \cite[Lemma~6.7]{CCIT:MS}. That $\Asymp$ identifies the pairings is obvious from Definition~\ref{def:higher_residue}.
\end{proof}

\begin{Remark}The eigenvalues $\bu_1,\dots,\bu_N$ of $E\star_\tau$ in Proposition~\ref{prop:formal_decomposition} correspond to critical values
of $F$.
\end{Remark}

\subsubsection{Brieskorn module over the small quantum cohomology locus}\label{subsubsec:Bri_sm}
To discuss an analytic lift of the above formal decomposition, we restrict the Brieskorn module to the small quantum cohomology locus. We assume that $\frX_\bSigma$ is a \emph{weak Fano} (i.e., $-K_{\frX_\bSigma}$ is nef) and \emph{compact} toric stack. Furthermore, we assume that
\begin{equation}
\label{eq:generation}
\text{$S_-:=S \cap \Delta =\{b\in S \colon \overline{b} \in \Delta\}$
generates $\bN$ as a group},
\end{equation}
where $\Delta\subset \bN_\R$
denotes the convex hull of ray vectors
$\{\overline{b}\colon b\in R(\bSigma)\}$.
The compactness implies that $\Delta$
contains the origin in its interior and
the weak-Fano condition implies that
all rays $b\in R(\bSigma)$ lie in the boundary of $\Delta$.
By replacing $S$ with $S_- = S \cap \Delta$ in
the construction of the (partially compactified) LG model
in Section~\ref{subsec:LG}, we obtain an LG model
\begin{equation}\label{eq:LG_sm}
\begin{CD}
\cYsm @>{F}>> \C, \\
@V{\pr}VV @. \\
\cMsm @.
\end{CD}
\end{equation}
which we call \emph{the LG model mirror to the small
quantum cohomology of $\frX_\bSigma$}.
We also call $\cMsm$ the \emph{small quantum
cohomology locus} of $\frX_\bSigma$.
Under the mirror map, $\cMsm$
maps to $H^{\le 2}_\CR(\frX_\bSigma)$.

\begin{Lemma}
\label{lem:pullback}
The total space $\cYsm$ is a closed toric substack
of $\cY$ corresponding to the cone $(\R_{\ge 0})^{S\setminus S_-}$
of $\tXi$; similarly $\cMsm$ is a closed toric substack
of $\cM$ corresponding to the cone
$D\big((\R_{\ge 0})^{S\setminus S_-}\big)\in \Xi$.
Moreover, we have the pull-back diagram:
\begin{equation}\label{eq:cMsm_pullback}
\begin{aligned}
\xymatrix{
\cYsm \ar@{^(->}[r] \ar[d] & \cY \ar[d]\\
\cMsm \ar@{^(->}[r] & \cM.
}
\end{aligned}
\end{equation}
\end{Lemma}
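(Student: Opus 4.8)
The plan is to recognize that the statement is essentially a bookkeeping exercise with the combinatorics of the toric data, once one unwinds the definition of the partially compactified LG model from Definition~\ref{def:LG}. The key observation is that replacing $S$ by $S_-=S\cap\Delta$ does not change the fan sequences or the secondary/GKZ fan "essentially", but rather restricts everything to a coordinate face. First I would check that for the weak-Fano compact toric stack $\frX_\bSigma$ with the generation assumption \eqref{eq:generation}, the stacky fan $\bSigma$ is adapted to $S_-$ (all rays $R(\bSigma)$ lie in $\partial\Delta\subset S_-$ by the weak-Fano condition, and $S_-$ generates $\bN$ by \eqref{eq:generation}, so Assumption~\ref{assump:S_generates_N} holds for $S_-$), hence $\cYsm\to\cMsm$ is well-defined. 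Then I would compare the two extended divisor sequences (for $S$ and for $S_-$): the inclusion $S_-\hookrightarrow S$ induces a surjection $(\Z^S)^\star\to(\Z^{S_-})^\star$ whose kernel is spanned by $\{e_b^\star: b\in S\setminus S_-\}$, and correspondingly $\LL_{S_-}^\star=\LL^\star/\langle D_b: b\in S\setminus S_-\rangle$ fits with the map $D$.

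The main technical point is to identify $\cYsm$ and $\cMsm$ as the specific toric substacks named in the statement, i.e., those corresponding to the cones $(\R_{\ge0})^{S\setminus S_-}\in\tXi$ and $D\big((\R_{\ge0})^{S\setminus S_-}\big)\in\Xi$. For this I would recall from Tyomkin's construction (Section~\ref{subsec:localchart_LG}) that a torus orbit closure in a singular toric DM stack $(\Z^S)^\star,\tXi,\tdaleth)$ corresponding to a cone $\tau\in\tXi$ is itself a singular toric DM stack, whose toric stacky data is obtained by quotienting by the span of $\tau$ and taking the star fan. So I need to verify two things: (a) the cone $(\R_{\ge0})^{S\setminus S_-}$ is indeed a face of some maximal cone $\CPL_+(\bSigma')$ of $\tXi$ — this holds because $c_b=0$ for $b\in S\setminus S_-$ is compatible with $\eta_c$ being piecewise linear on any $\bSigma'$ (since $S\setminus S_-$ consists of ghost vectors lying in the interior of $\Delta$, and one checks $\eta_c(\overline b)\le 0\le c_b$ is automatic there, actually $\eta_c(\overline b)$ can be made arbitrarily negative); (b) the star fan of $\tXi$ along this cone, together with the induced integral structure, recovers exactly the fan $\tXi_{S_-}$ built from $S_-$. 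Step (b) amounts to the statement that passing to the quotient $(\Z^S)^\star/\langle e_b^\star: b\in S\setminus S_-\rangle\cong(\Z^{S_-})^\star$ sends the cones $\CPL_+(\bSigma',R',\sigma)$ meeting the face $(\R_{\ge0})^{S\setminus S_-}$ bijectively to the cones $\CPL_+$ built from $S_-$, and similarly for the integral structures $\tdaleth$. This is a direct unwinding of the definition of $\eta_c$: on the face $c_b=0$ ($b\in S\setminus S_-$), the function $\eta_c$ depends only on $\{c_b:b\in S_-\cap R(\bSigma')\}=\{c_b:b\in R(\bSigma')\}$, so the defining conditions literally coincide.

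For the pull-back diagram \eqref{eq:cMsm_pullback} I would then argue that $\pr:\cY\to\cM$ is the toric morphism induced by $D:(\Z^S)^\star\to\LL^\star$, which sends the face $(\R_{\ge0})^{S\setminus S_-}$ onto its image $D\big((\R_{\ge0})^{S\setminus S_-}\big)$; hence $\pr$ restricts to a morphism $\cYsm\to\cMsm$ of the corresponding orbit closures. To see the square is Cartesian, note that the closed toric substack $\cMsm\subset\cM$ is cut out (ideal-theoretically, on each chart $\cM_\bSigma$) by the monomials $q^\lambda$ with $\lambda\in\Laa(\bSigma)_+$ not lying in the span of the face, equivalently by $\{t_b:b\in S\setminus S_-\}$ in the coordinates of Section~\ref{subsec:coord_localchart_LG}; pulling these back to $\cY_\bSigma$ under $\pr^*$ and using the flatness/freeness from Remark~\ref{rem:flatness_pr} (the decomposition $\C[\OO(\bSigma)_+]=\bigoplus_v \C[\Laa(\bSigma)_+]u^{(\Psi^\bSigma(\overline v),v)}$) shows the fibre product $\cYsm\times_{\cM}\cM\cong\cY\times_{\cM}\cMsm$ is reduced and equals the orbit closure $\cYsm$. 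The hard part will be handling the stack structures rather than just coarse moduli spaces — i.e., checking that the integral structure $\tdaleth$ (and $\daleth$) restricts correctly to the face, since a priori an orbit closure in a Tyomkin stack could carry a coarser or finer stack structure than the one built from $S_-$. I expect this to come down to the elementary lattice identity $\overline{\OO(\bSigma)}=\PLZ(\bSigma)^\star$ from Lemma~\ref{lem:dual_cone_lattice} applied both to $S$ and to $S_-$, together with compatibility of the splittings; once that is in place, the rest is formal.
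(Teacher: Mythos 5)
Your proposal is correct and follows essentially the same route as the paper: identify the fan built from $S_-$ with the star of the cone $(\R_{\ge 0})^{S\setminus S_-}$ inside $\tXi$ (and its $D$-image inside $\Xi$), and then verify the Cartesian property on the local charts associated with stacky fans adapted to $S_-$, where both closed substacks are cut out by the coordinates $t_b$, $b\in S\setminus S_-$, which split off as product factors as in \eqref{eq:LG_local_chart}. The only slip is the aside that $\eta_c(\overline{b})$ ``can be made arbitrarily negative'' for $b\in S\setminus S_-$ on that face --- in fact $\eta_c\equiv 0$ there, so $\eta_c(\overline{b})=0\le c_b$ --- but this does not affect the argument.
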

\begin{proof}
Let $\tXi_-$ denote the fan defining $\cYsm$.
Recall that maximal cones of $\tXi$ are in one-to-one
correspondence with stacky fans adapted
to $S$; likewise, maximal cones of $\tXi_-$ are in one-to-one
correspondence with stacky fans adapted to $S_-$.
Thus the set of maximal cones of $\tXi_-$ can be identified
with a subset of the set of maximal cones of $\tXi$.
We can see that this subset consists of maximal cones
$\CPL_+(\bSigma)$ of $\tXi$ that contain
$(\R_{\ge 0})^{S\setminus S_-}$ as a face; moreover
the corresponding maximal cone of $\tXi_-$ is given by
the image of $\CPL_+(\bSigma)$ under the
projection $\big(\R^S\big)^\star \to \big(\R^{S_-}\big)^\star$.
Therefore $\tXi_-$ is a fan obtained as the star of the
cone $(\R_{\ge 0})^{S\setminus S_-}$ in~$\tXi$.
This shows the first statement.
A similar argument shows that $\cMsm$ is
a closed toric substack corresponding to
$D\big((\R_{\ge 0})^{S\setminus S_-}\big)$.
To see the pull-back diagram, we recall the description
of the uniformizing chart in~\eqref{eq:LG_local_chart}.
When $\bSigma$ is adapted to~$S_-$, $G(\bSigma)$ contains~$S\setminus S_-$. In the local chart associated with~$\bSigma$,
the diagram~\eqref{eq:cMsm_pullback} is of the form
\[
\xymatrix{
\Spec \C\big[\OO^\bSigma_+\big] \times
\C^{G(\bSigma) \setminus (S\setminus S_-)}
\ar@{^(->}[r] \ar[d] &
\Spec \C\big[\OO^\bSigma_+\big] \times \C^{G(\bSigma)}
\ar[d] \\
\Spec \C\big[\Laa^\bSigma_+\big] \times \C^{G(\bSigma)
\setminus (S\setminus S_-)}
\ar@{^(->}[r] &
\Spec \C\big[\Laa^\bSigma_+\big] \times \C^{G(\bSigma)},
}
\]
which is clearly a pull-back diagram.
\end{proof}

\begin{Remark}
The small quantum cohomology locus $\cMsm
\subset \cM$ depends on the choice of $\bSigma\in \Fan(S)$.
\end{Remark}

\begin{Remark}The condition \eqref{eq:generation} ensures that
Assumption \ref{assump:S_generates_N} holds for
$S_- = S\cap \Delta$.
This condition is, however, not necessary at this point;
we can define $\cYsm$ (or $\cMsm$) as the substack
corresponding to $(\R_{\ge 0})^{S\setminus S_-}$
(resp.~to $D\big((\R_{\ge 0})^{S\setminus S_-}\big)$).
We shall need this condition later when we apply the
results on the $\hGamma$-integral structure for toric stacks
in~\cite{Iritani:Integral} (see Section~3.1.4 \emph{ibid} where
the same assumption was made).
\end{Remark}

We observe that the non-equivariant
Brieskorn module $\Bri(F)$ over $\cMsm$
already has the expected rank; hence
the completion and the analytification studied in
Sections~\ref{subsec:completion}--\ref{subsec:analytified_Bri}
are unnecessary over $\cMsm$ in the weak-Fano case.

\begin{Proposition}
\label{prop:Bri_weak_Fano}
Let $\frX_\bSigma$ be a weak Fano compact toric stack
satisfying \eqref{eq:generation}.
There exists a Zariski-open subset
$\cMsmlf$ of $\cMsm$ containing $0_\bSigma$
such that the non-equivariant Brieskorn module $\Bri(F)|_{\cMsmlf}$
is a locally free $\cO_{\cMsmlf\times \C_z}$-module of
rank $\dim H^*_\CR(\frX_\bSigma)$.
\end{Proposition}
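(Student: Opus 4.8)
The plan is to reduce the statement to the behaviour of the Brieskorn module at the large radius limit point $0_\bSigma$ and then spread out over a Zariski neighbourhood by coherence. First I would observe that by Proposition~\ref{prop:nonequiv_completion} the completed Brieskorn module $\Bri(F)\sphat_\bSigma$ is a free $\C[z][\![\Laa(\bSigma)_+]\!]$-module of rank $\dim H^*_\CR(\frX_\bSigma)$; since in the weak-Fano case we are working on $\cMsm$, where $\bSigma$ is adapted to $S_- = S\cap \Delta$ and $G(\bSigma)\supset S\setminus S_-$, the relevant lattice and monoid are $\Laa^\bSigma$ and $\Laa^\bSigma_+$. Then I would show that $\Bri(F)|_{\cMsm}$ is already \emph{coherent} over $\cO_{\cMsm\times \C_z}$ near $0_\bSigma$. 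For this I would use Remark~\ref{rem:flatness_pr}: on the affine chart, $\C[\OO^\bSigma_+]$ is a free $\C[\Laa^\bSigma_+]$-module with basis $u^{(\Psi^\bSigma(\overline v),v)}$ indexed by $v\in \bN\cap\Pi$, but on $\cYsm$ only finitely many box vectors $v$ occur (those in the cones of the fan defining $\cYsm$), so $\Bri(F)|_{\cMsm}$ is a finitely generated $\cO_{\cMsm}[z]$-module.

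Next I would argue freeness in a neighbourhood of $0_\bSigma$. The completion of $\Bri(F)|_{\cMsm}$ at $0_\bSigma$ agrees with $\Bri(F)\sphat_\bSigma$, which is free of the expected rank over $\C[z][\![\Laa^\bSigma_+]\!]$. Applying the standard fact that a finite module over a Noetherian local ring is free if and only if its completion is free (exactly the fact quoted in the proof of Corollary~\ref{cor:tpr_locally_free}, via \cite[Theorem~22.4(1)]{Matsumura:comm_ring_theory}), the stalk of $\Bri(F)|_{\cMsm\times\C_z}$ at $(0_\bSigma,0)$ is free of rank $\dim H^*_\CR(\frX_\bSigma)$ over the local ring there; one also needs freeness at points $(0_\bSigma,z_0)$ with $z_0\neq 0$, which follows because the Brieskorn module is already free of rank one over $\cO_\cY[z]$ before pushing forward and $\pr$ is flat with the fibre count stable in a neighbourhood. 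By upper semicontinuity of the fibre dimension of a coherent sheaf, the locus where $\Bri(F)$ fails to be locally free of rank $\dim H^*_\CR(\frX_\bSigma)$ is Zariski closed and does not meet $\{0_\bSigma\}\times\C_z$; its complement gives the desired Zariski-open $\cMsmlf$ (the superscript ``lf'' for ``locally free'').

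The main obstacle I expect is the bookkeeping needed to see that only finitely many basis elements $u^{(\Psi^\bSigma(\overline v),v)}$ survive on $\cYsm$ — i.e.\ that $\pr\colon\cYsm\to\cMsm$ has finite relative dimension-zero fibres with a uniformly bounded number of sheets — so that coherence over $\cO_{\cMsm}[z]$ genuinely holds; this uses the compactness of $\frX_\bSigma$ (which forces $\Delta$ to contain $0$ in its interior, so the cones of the defining fan of $\cYsm$ are strictly convex and contain finitely many box vectors) together with Lemma~\ref{lem:pullback} identifying $\cYsm,\cMsm$ as the toric substacks cut out by the cone $(\R_{\ge 0})^{S\setminus S_-}$. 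Once coherence is in hand, everything else is the local-to-global argument above, which is routine. I would also remark, as the proposition will be used that way, that $\cMsmlf$ can be taken $\Pic^\st(\frX_\bSigma)$-invariant and $\C^\times$-invariant (closed under the Euler flow), since $\Bri(F)$ and its rank are equivariant for both actions.
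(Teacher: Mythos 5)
There is a genuine gap at the coherence step, and it is the heart of the matter. Your claim that ``on $\cYsm$ only finitely many box vectors $v$ occur, so $\Bri(F)|_{\cMsm}$ is a finitely generated $\cO_{\cMsm}[z]$-module'' is false: the basis of $\C[\OO^\bSigma_+]$ over $\C[\Laa^\bSigma_+]$ furnished by Remark~\ref{rem:flatness_pr} is indexed by \emph{all} of $\bN\cap\Pi$, an infinite set, not by box vectors, and the fibres of $\pr\colon\cYsm\to\cMsm$ are $n$-dimensional (over $\cMsmtimes$ they are $\Hom(\bN,\C^\times)$), so $\pr$ is nowhere near a finite morphism and $\pr_*\Omega^n_{\cYsm/\cMsm}[z]$ has infinite rank as an $\cO_{\cMsm}[z]$-module. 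The finiteness of $\Bri(F)$ comes only from dividing by the image of $zd+dF\wedge$ (equivalently, from the GKZ-type relations), and establishing that this quotient is $\cO$-coherent on a Zariski neighbourhood of $0_\bSigma$ is precisely the nontrivial content: in the paper this is done in Appendix~\ref{append:Bri_weak_Fano} (Proposition~\ref{prop:weakFano_Bri_locallyfree}) by a D-module argument, showing finite generation over the ring of differential operators, computing principal symbols of the relations $P_{v,\lambda}$, and proving that the characteristic variety meets the fibre over a Zariski neighbourhood of $0_\bSigma$ only in the zero section~-- and it is exactly here that the weak-Fano hypothesis (convexity of $\Delta$ together with $S\subset\Delta$) is used. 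Your heuristic that compactness forces finite fibres confuses $\pr$ with the relative critical-point map $\tpr$, and even for $\tpr$ finiteness is obtained in the paper only over an \emph{analytic} neighbourhood $\cB$ of $\tzero_\bSigma$ (Section~\ref{subsec:analytified_Bri}), not Zariski-locally.

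Once coherence is granted, your remaining steps are close in spirit to the paper's (completion at $0_\bSigma$ via Proposition~\ref{prop:nonequiv_completion} to pin down the rank, then spreading out), but two further points need care. First, the paper does not get local freeness from the completion alone: it uses the mirror isomorphism to identify the restriction to the large radius limit and then the convergent $I$-function solutions (the map ``$\Loc$'', which converges precisely because of weak Fano) to produce $\dim H^*_\CR(\frX_\bSigma)$ independent solutions, forcing the rank from below. Second, upper semicontinuity only gives a Zariski-open subset of $\cMsm\times\C_z$ containing $\{0_\bSigma\}\times\C_z$, which need not contain any product $U\times\C_z$ since the projection along the non-proper factor $\C_z$ of the bad locus need not be closed; the paper obtains the product form by exploiting the Euler $\C^\times$-action together with the non-negativity of the grading on $\C[\OO(\bSigma)_+]$ (Lemma~\ref{lem:non-negative_grading}, again a weak-Fano statement). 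Your closing remark about $\C^\times$-invariance is therefore not an optional refinement but a needed ingredient.
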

\begin{proof}
This essentially follows from a result of
Mann-Reichelt \cite[Theorem 4.10]{Mann-Reichelt} on the GKZ system;
we give a proof of a more general statement (including the equivariant
case and without assumption \eqref{eq:generation})
in Appendix \ref{append:Bri_weak_Fano}.
\end{proof}

Let $\cV\subset \cM$ denote the base of the
analytified Brieskorn module $\Brian(F)_\bSigma$ as before.
This is an analytic open neighbourhood of $0_\bSigma$.

\begin{Proposition}\label{prop:Bri_weak_Fano_cor}\quad
\begin{enumerate}\itemsep=0pt
\item[$(1)$] Set $\cVsm = \cV \cap \cMsmlf$. The natural map
\[
\Bri(F) \otimes_{\cOan_{\cVsm\times \C_z}}
\cOan_{\cVsm}[\![z]\!]
\to \Brian(F)_\bSigma
\]
is an isomorphism.
\item[$(2)$] The mirror isomorphism $\Mir|_{\chi=0}$
in Proposition~$\ref{prop:nonequiv_completion}$ extends to an isomorphism
\[ \Bri(F)|_{{\cVsm}'} \cong \mir^*\QDMan(\frX)\] over
an open neighbourhood ${\cVsm}' \subset \cVsm$
of $0_\bSigma$, where
$\mir \colon {\cVsm}' \to V$ is the analytic mirror map
$($see Section~$\ref{subsec:an_mirror_isom}$
for the convergence of the mirror map$)$.
\end{enumerate}
\end{Proposition}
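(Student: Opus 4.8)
The plan is to derive both statements from the structural results already established in the excerpt, using the weak-Fano hypothesis only through Proposition~\ref{prop:Bri_weak_Fano} (locally freeness of the algebraic Brieskorn module over $\cMsmlf$).

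For Part~$(1)$, first I would recall that by Proposition~\ref{prop:Bri_weak_Fano} the module $\Bri(F)|_{\cMsmlf}$ is locally free of rank $N=\dim H^*_\CR(\frX_\bSigma)$ over $\cO_{\cMsmlf\times\C_z}$, hence its analytification is locally free of the same rank over $\cOan_{\cVsm\times\C_z}$, and therefore $\Bri(F)\otimes\cOan_{\cVsm}[\![z]\!]$ (completing in $z$) is locally free of rank $N$ over $\cOan_{\cVsm}[\![z]\!]$. On the other side, $\Brian(F)_\bSigma$ is locally free of rank $N$ over $\cOan_{\cV}[\![z]\!]$ by Proposition~\ref{prop:Briequivan_finite_free}. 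So there is a natural map of locally free modules of equal rank, and it suffices to check it is an isomorphism. The cleanest route is to check it modulo $z$ and apply a Nakayama-type argument: modulo $z$, both sides are computed by Lemma~\ref{lem:analytification_algebras} / Proposition~\ref{prop:Briequivan_finite_free} and the Jacobian ring presentation, and the map is the identity on $\pr_*\cOan$ of the relative critical locus; alternatively, one can pass to the $\frakm_\bSigma$-adic (or $\frakman_{0_\bSigma}$-adic) completion at $0_\bSigma$, where by Proposition~\ref{prop:analytification_D_modules} and the completed mirror isomorphism both sides become canonically $\Bri(F)\sphat_\bSigma\otimes\hcO_{\cV,0}[\![z]\!]$, so the map is an isomorphism after completion; since a map between finite locally free sheaves that is an isomorphism on the completed stalk at one point is an isomorphism in a neighbourhood, and since by $\C^\times$-equivariance of the construction (the Euler $\C^\times$-action contracts a neighbourhood of the small quantum cohomology locus towards $0_\bSigma$; cf.\ the argument in Section~\ref{subsec:decomp_Bri}) an isomorphism near $0_\bSigma$ propagates over all of $\cVsm$, we conclude. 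I would phrase this carefully: locally free of rank $N$ on a connected space, isomorphism at one point in the appropriate (completed) sense, plus the $\C^\times$-action forcing the locus of non-isomorphism (a closed analytic set) to be either empty or to contain $0_\bSigma$.

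For Part~$(2)$, I would combine Part~$(1)$ with Theorem~\ref{thm:an_mirror_isom}. By Theorem~\ref{thm:an_mirror_isom} the analytic mirror isomorphism $\Miran$ identifies $\Briequivan(F)_\bSigma$ with $\mir^*\overline{\QDMan_\T}(\frX_\bSigma)$ over an open neighbourhood $\cU'$ of $(0_\bSigma,0)$; restricting to the non-equivariant locus $\chi=0$ and to $\cVsm$ gives $\Brian(F)_\bSigma|_{{\cVsm}'}\cong\mir^*\overline{\QDMan}(\frX_\bSigma)$. Now $\overline{\QDMan}(\frX_\bSigma)$ is the $z$-adic completion of $\QDMan(\frX_\bSigma)$, which over the small quantum cohomology locus is already analytic in $z$ (the structure constants of the small quantum product converge, by \cite[Corollary 7.3]{CCIT:MS}, and in the weak-Fano case the $I$-function is convergent so the mirror isomorphism matrix $(M_i^j(q,z))$ is polynomial—indeed the relevant gauge transformation is a genuine analytic—actually algebraic—gauge transformation over $\cMsmlf$). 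Using Part~$(1)$, the left side $\Brian(F)_\bSigma|_{{\cVsm}'}$ is the $z$-adic completion of the analytic locally free $\cOan_{\cVsm\times\C_z}$-module $\Bri(F)|_{\cVsm}$. Therefore the isomorphism $\Brian(F)_\bSigma|_{{\cVsm}'}\cong\mir^*\overline{\QDMan}(\frX_\bSigma)$ is obtained by $z$-adic completion from a morphism $\Bri(F)|_{{\cVsm}'}\to\mir^*\QDMan(\frX_\bSigma)$ of locally free $\cOan_{{\cVsm}'\times\C_z}$-modules (both of rank $N$), and a morphism of finite locally free $\cO[\![z]\!]$-modules that is an isomorphism after $z$-adic completion and whose source and target are $z$-adically separated completions of finite locally free $\cO[z]$-modules is itself already an isomorphism before completion—because its determinant, a section of a line bundle, is a unit in $\cO[\![z]\!]$ iff it is a unit in $\cO[z]$ iff its value at $z=0$ is a unit. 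Shrinking ${\cVsm}'$ if needed so that this determinant is nowhere vanishing, we get the stated isomorphism $\Bri(F)|_{{\cVsm}'}\cong\mir^*\QDMan(\frX_\bSigma)$.

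The main obstacle I anticipate is bookkeeping the descent from the equivariant, $z$-formal picture of Theorem~\ref{thm:an_mirror_isom} to the non-equivariant, $z$-analytic picture over the small quantum cohomology locus: one must ensure that the convergence of the (extended) $I$-function in the weak-Fano case—which is what makes the mirror map and the gauge transformation matrix $(M_i^j)$ honestly analytic (indeed regular) in $z$ rather than merely formal—is invoked correctly, so that $z$-adic completion can be undone. This is where Proposition~\ref{prop:Bri_weak_Fano} (via Mann--Reichelt, as cited, and Appendix~\ref{append:Bri_weak_Fano}) is essential: it guarantees the algebraic Brieskorn module already has the expected rank, which is precisely what lets us "un-complete" in $z$. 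Once that point is pinned down, the rest is the standard Nakayama/determinant argument for morphisms of locally free sheaves sketched above.
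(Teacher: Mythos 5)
Your Part~(1) is essentially the paper's own proof: reduce modulo $z$, identify both sides with $\pr_*\cOan_{\Cr}$ via the Jacobian-ring description of the Brieskorn module along $z=0$, and conclude from surjectivity together with equality of ranks of locally free $\cOan[\![z]\!]$-modules (Propositions~\ref{prop:Bri_weak_Fano} and~\ref{prop:Briequivan_finite_free}); the alternative completion-at-$0_\bSigma$ plus $\C^\times$-flow route you sketch is not needed and is the less clean of your two options, but your primary argument matches the paper.

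Part~(2) has a genuine gap. The paper does not obtain~(2) by un-completing the formal mirror isomorphism of Theorem~\ref{thm:an_mirror_isom}: it appeals to the direct construction of the small mirror isomorphism from the \emph{convergent} $I$-function via Birkhoff factorization (\cite[Proposition~4.8]{Iritani:Integral}), which is what produces a gauge transformation honestly analytic in $z$. In your argument the decisive assertion --- that the isomorphism $\Brian(F)_\bSigma|_{{\cVsm}'}\cong\mir^*\overline{\QDMan}(\frX_\bSigma)$ ``is obtained by $z$-adic completion from a morphism $\Bri(F)|_{{\cVsm}'}\to\mir^*\QDMan(\frX_\bSigma)$ of locally free $\cOan_{{\cVsm}'\times\C_z}$-modules'' --- is exactly the content of~(2) and is asserted rather than proved: Theorem~\ref{thm:an_mirror_isom} only gives matrix entries $M_i^j\in\cOan(\cU')[\![z]\!]$, and nothing in Part~(1) forces these formal series to be convergent, let alone polynomial, in $z$. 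The un-completion lemma you then invoke is false as stated: a determinant that is a unit in $\cOan[\![z]\!]$ need not be a unit in $\cOan[z]$ (already $1+z$ is a counterexample), and shrinking ${\cVsm}'$ only shrinks the base direction, so it cannot prevent the determinant of an analytic morphism from vanishing at points $(q,z)$ with $z\neq 0$. The correct way to rescue your route is the grading, not a determinant lemma: the mirror isomorphism and its inverse intertwine the grading operators, so their matrix entries are homogeneous elements of $\cOan[\![z]\!]$; by the weak-Fano hypothesis all $q$-degrees are non-negative (Lemma~\ref{lem:non-negative_grading}) while $z$ has degree one, so any homogeneous element of fixed degree is automatically a polynomial in $z$, and both $M$ and $M^{-1}$ descend to the $z$-polynomial lattices, giving the claimed isomorphism near $0_\bSigma$. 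Either this homogeneity argument or the cited $I$-function/Birkhoff construction must be supplied; as written, Part~(2) is not established.
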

\begin{proof}
For Part (1), it suffices to show that the map is an isomorphism
along $z=0$.
As discussed in the proof of Proposition \ref{prop:formal_decomp_Bri},
the analytified Brieskorn module along $z=0$ is isomorphic to
$\pr_*\cOan_{\Cr}$. Thus the natural map in (1) is surjective
along $z=0$; it is an isomorphism since the ranks are the same.
Part (2) follows from the convergence of the $I$-function in
the weak-Fano case,
see, e.g., \cite[Proposition 4.8]{Iritani:Integral}.
\end{proof}

\subsubsection{Analytic lift of the formal decomposition}
\label{subsubsec:analytic_lift}
We continue to assume that $\frX_\bSigma$ is a compact,
weak-Fano toric stack and that the condition \eqref{eq:generation} holds.
Consider the LG model \eqref{eq:LG_sm} over $\cMsm$.
As a toric stack, $\cMsm$ contains the open dense torus orbit\footnote{The following discussion works if we replace $\cMsmtimes$ with
the slightly bigger subspace $\{q\in \cMsm_\bSigma\colon\allowbreak
\text{$q^\lambda \neq 0$ for all $\lambda
\in \Laa^\bSigma_+$}\}$: this bigger space parametrizes
Laurent polynomials with Newton polytope $\Delta$.}
\[
\cMsmtimes = (\LL')^\star \otimes \C^\times \subset \cMsm,
\]
where $\LL'=\Ker(\Z^{S_-} \to \bN)$ is the lattice appearing
in the leftmost term of the extended fan sequence~\eqref{eq:ext_fanseq}
with $S$ replaced with $S_-=S\cap \Delta$.
The fibre of $\pr\colon \cYsm\to \cMsm$ at a point $q\in \cMsmtimes$
is isomorphic to $\Hom(\bN,\C^\times)$
(i.e., the fibre of the uncompactified LG model, see Section~\ref{subsec:LG}).
In fact, since $\Delta$ contains the origin in its interior, we have
a linear relation $\sum\limits_{b\in S_-} \lambda_b b =0$
with $\lambda_b \in \Z_{>0}$;
this shows that $\prod\limits_{b\in S_-} u_b^{\lambda_b} = q^\lambda \neq 0$
on the fibre at $q\in \cMsmtimes$ and
in particular that $u_b\neq 0$ for all $b\in S_-$.

The locally freeness of $\Bri(F)|_{\cMsmlf}$ in Proposition~\ref{prop:Bri_weak_Fano} implies
(by the restriction to $z=0$)
that the family of relative critical points of $F$
\begin{equation}\label{eq:Crsm}
\Crsm := \left\{p \in \cYsm \colon x_1 \parfrac{F}{x_1}(p) = \cdots
= x_n \parfrac{F}{x_n}(p) = 0 \right\} \xrightarrow{\pr} \cMsm
\end{equation}
is finite flat of degree $N=\dim H^*_\CR(\frX_\bSigma)$
over the base $\cMsmlf$.
The generic fibre of the family $\Crsm\to \cMsm$ is reduced
\cite[Proposition 3.10]{Iritani:Integral}, and thus
\[
\cMsmss = \big\{q\in \cMsmtimes \cap \cMsmlf \colon
\text{$F|_{\pr^{-1}(q)}$ has only non-degenerate critical points}\big\}
\]
is a non-empty Zariski open subset, where ``ss'' means
semisimplicity.

For $q\in \cMsmss$, let $\{c_1,\dots c_N\}$
be the set of (mutually distinct) critical points of
$F_q :=F|_{\pr^{-1}(q)}$, and
let $\bu_i = F_q(c_i)$ be the critical value.
For an admissible phase $\phi$ for $\{\bu_1,\dots,\bu_N\}$,
let $\Gamma_i^\phi \subset \pr^{-1}(q)$ denote the Lefschetz
thimble of $F_q$ emanating from the critical point $c_i$
whose image under $F_q$ is the half-line $\bu_i - \R_{\ge 0} e^{\iu\phi}$;
it is given as the stable manifold of the Morse function
$\Re\big(e^{-\iu\phi}F_q\big)\colon \pr^{-1}(q) \to \R$:
\begin{equation}\label{eq:Lefschetz}
\Gamma_i^\phi = \big\{x \in \pr^{-1}(q)\colon
\lim\limits_{t\to \infty} \varphi_t(x) = c_i \big\} \cong \R^n,
\end{equation}
where $\varphi_t$ is the upward gradient flow\footnote{The gradient flow of $\Re\big(e^{-\iu\phi}F_q\big)$ equals the Hamiltonian flow of $\Im\big(e^{-\iu\phi} F_q\big)$ and thus preserves $\Im\big(e^{-\iu\phi}F_q\big)$.}
of $\Re\big(e^{-\iu\phi} F_q\big)$ with respect to the complete K\"ahler
metric $\frac{\iu}{2}\sum\limits_{j=1}^n d \log x_j \wedge d \overline{\log x_j}$
on $\pr^{-1}(q)$.
The cycles $\Gamma_1^\phi,\dots,\Gamma_N^\phi$
form a basis of the relative homology
$H_n\big(\pr^{-1}(q), \{\Re(e^{-\iu\phi}F_q)\le -M\};\Z\big)$
for sufficiently large~$M$,
see \cite[Section~3.3.1]{Iritani:Integral} for more details
(see also Section~\ref{subsubsec:Kouchnirenko_Morse}).
\begin{figure}[htbp]\centering
\includegraphics{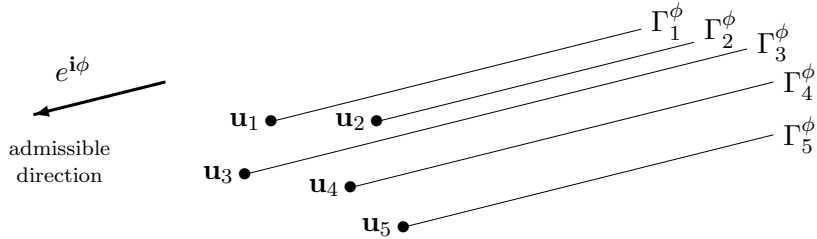}
\caption{The images of the Lefschetz thimbles $\Gamma_i^\phi$ by $F_q = F|_{\pr^{-1}(q)}$.}
\end{figure}

\begin{Remark}
For a non-admissible phase $\phi$, the Lefschetz thimble $\Gamma_i^\phi$
is not always defined because
$\Gamma_i^\phi$ may hit other critical points $c_j$
with $\bu_j \in \bu_i - \R_{>0} e^{\iu\phi}$.
On the other hand, when some of the critical values $\bu_1,\dots,\bu_N$ coalesce
at $q_0\in \cMsmss$
and $\phi$ is an admissible phase
for the critical values of $F_{q_0}$, the Lefschetz thimbles
$\Gamma_1^\phi,\dots, \Gamma_N^\phi$ are well-defined
in a neighbourhood of $q=q_0$ despite the possibility that
$\phi$ can be non-admissible at a nearby point.
This is because different Lefschetz thimbles associated with
the same critical value do not intersect each other, and
no non-trivial Picard--Lefschetz transformations occur
among these thimbles around $q=q_0$.
\end{Remark}

\begin{Proposition}\label{prop:analytic_lift_Bri}
Let $q_0$ be a point in $\cMsmss$. Choose an admissible phase $\phi$
for the critical values $\{\bu_{1,0},\dots,\bu_{N,0}\}$ of $F_{q_0}$.
Choose a sufficiently small open neighbourhood $B \subset \cMsmss$
of~$q_0$
and a sufficiently small number $\epsilon>0$ such that
$e^{-\iu\phi'} (\bu_i - \bu_j) \notin \R$ whenever $q \in B$,
$|\phi-\phi'|<\epsilon$ and $\bu_{i,0} \neq \bu_{j,0}$.
Let $\pi\colon \cMsmss \times \tCC \to \cMsmss \times \C$ denote
the oriented real blowup along $\cMsmss \times \{0\}$.
Define the map
\[
\Phi_\phi \colon \
\pi^*\Bri(F)\big|_{B \times I_\phi} \to
\cA_{B\times I_\phi}^{\oplus N}
\]
by
\begin{equation}\label{eq:lift_by_oscillatory}
\Phi_\phi(s) (q,z)=
\left( (-2\pi z)^{-n/2} e^{-\bu_i/z}\int_{\Gamma_i^{\phi+\delta}}
e^{F_q/z} s \right)_{i=1}^N, \qquad
(q,z) \in B\times I_\phi,
\end{equation}
where $I_\phi = \big\{\big(r,e^{\iu\theta}\big) \colon |\theta - \phi| < \frac{\pi}{2} +
\epsilon\big\}$ and
we choose $\delta \in (-\epsilon,\epsilon)$
depending on the argument of~$z$ so that the integral converges.
Then $\Phi_\phi$ is an isomorphism that identifies the Gauss--Manin
connection $\nabla$ with $\bigoplus\limits_{i=1}^N (d + d(\bu_i/z))$ and
induces the formal decomposition in Proposition~$\ref{prop:formal_decomp_Bri}$
$($combined with Proposition~$\ref{prop:Bri_weak_Fano_cor})$
if $B \subset \cVss$:
\[
\Asymp \colon \
\Bri(F)\otimes_{\cOan_{B\times \C_z}}
\cOan_B[\![z]\!] \cong \cOan_B[\![z]\!]^{\oplus N}.
\]
\end{Proposition}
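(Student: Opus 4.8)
The plan is to combine the classical Hukuhara--Turrittin theory (Proposition~\ref{prop:Hukuhara-Turrittin}) with the explicit mirror-oscillatory-integral description of flat sections. First I would use Proposition~\ref{prop:Bri_weak_Fano_cor}(1)--(2) to transport everything to the quantum D-module side: over a neighbourhood of $q_0$ contained in $\cVsm{}'$ the Brieskorn module $\Bri(F)$ is isomorphic to $\mir^*\QDMan(\frX_\bSigma)$, so $\pi^*\Bri(F)|_{B\times I_\phi}$ carries the data to which Proposition~\ref{prop:Hukuhara-Turrittin} applies (with $\tau = \mir(q)$ ranging over a neighbourhood of the semisimple point $\mir(q_0)$, and the admissible phase $\phi$ for $\{\bu_{i,0}\}$ being admissible for the eigenvalues of $E\star_{\mir(q_0)}$ since critical values of $F_q$ are exactly those eigenvalues by the Remark after Proposition~\ref{prop:formal_decomp_Bri}). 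This already produces a unique analytic isomorphism $\pi^*\QDMan(\frX_\bSigma)|_{B\times I_\phi}\cong\bigoplus_i(\cA_{B\times I_\phi},d+d(\bu_i/z))$ inducing the formal decomposition $\hPhi$; pulling back along $\Mir$ gives the sought $\Phi_\phi$ \emph{abstractly}. The substance of the proposition is that this unique lift is represented by the oscillatory-integral formula \eqref{eq:lift_by_oscillatory}.

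To identify $\Phi_\phi$ with the integral formula I would proceed in three steps. (i) \emph{Convergence and well-definedness.} For $(q,z)\in B\times I_\phi$ one has $\arg z$ within $\pi/2+\epsilon$ of $\phi$; the Lefschetz thimble $\Gamma_i^{\phi+\delta}$ is defined for $\delta$ in the admissible range (using the Remark that thimbles for coalescing critical values remain well-defined near $q_0$, and the shrinking of $B$, $\epsilon$), and along it $\Re((F_q-\bu_i)/z)\to-\infty$, so the integral $\int_{\Gamma_i^{\phi+\delta}}e^{F_q/z}s$ converges absolutely and is holomorphic in $(q,z)$; choosing $\delta=\delta(\arg z)\in(-\epsilon,\epsilon)$ suitably lets the same formula cover the whole sector $I_\phi$, and independence of the chosen $\delta$ follows from contour deformation (no critical values crossed). (ii) \emph{Gauss--Manin flatness.} Differentiating under the integral sign, $\nabla_V\bigl(\int_{\Gamma_i}e^{F_q/z}s\bigr)=\int_{\Gamma_i}e^{F_q/z}\nabla_V s$ because the boundary terms vanish (rapid decay on $\Gamma_i$); hence $e^{-\bu_i/z}\int_{\Gamma_i^{\phi+\delta}}e^{F_q/z}(-)$ intertwines $\nabla$ with $d+d(\bu_i/z)$, so each component is a flat section of the twisted connection, and $\Phi_\phi$ as defined is a horizontal morphism. (iii) \emph{Asymptotics.} By stationary phase / Laplace's method applied at the non-degenerate critical point $c_i$, as $z\to 0$ along $I_\phi$ the $i$-th component has the asymptotic expansion $(-2\pi z)^{-n/2}e^{(F_q(c_i)-\bu_i)/z}\int_{\Gamma_i}e^{(F_q-F_q(c_i))/z}s \sim \sum_{n\ge0}a_n(q)z^n$ with leading term the Gaussian integral computed in \eqref{eq:Asymp} against the logarithmic Hessian; comparing with Definition~\ref{def:Asymp} this is exactly $\Asymp_{c_i}(s)$ (up to the choice of square root of the Hessian, i.e. the orientation of $\Gamma_i^\phi$, which is the $\ori$-twist in Proposition~\ref{prop:formal_decomp_Bri}). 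Thus $\Phi_\phi$ induces $\Asymp$ via the formalization map \eqref{eq:formalization}, and by the uniqueness clause of Proposition~\ref{prop:Hukuhara-Turrittin} (angle $>\pi$) it must coincide with the abstract analytic lift; in particular it is an isomorphism.

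The main obstacle I expect is step (iii) in the presence of \emph{coalescing} critical values: one must justify that the oscillatory integral over $\Gamma_i^{\phi+\delta}$ still has a \emph{uniform} asymptotic expansion in $q$ near $q_0$ even where two critical values collide, and that the expansion depends analytically on $q$. The point is that the thimbles attached to distinct critical \emph{points} with the same critical value do not interact (no Picard--Lefschetz monodromy among them near $q_0$), so Laplace's method can be applied branch-by-branch with estimates uniform on $B$; this is precisely the content invoked from \cite[Proposition~2.5.1]{GGI:gammagrass} and \cite[Section~2.5]{GGI:gammagrass} in the proof of Proposition~\ref{prop:Hukuhara-Turrittin}, and I would cite it rather than reprove it. A secondary technical point is matching normalization constants — the factor $(-2\pi z)^{-n/2}$, the $|\bN_{\rm tor}|$ in $\omega$, and the branch of the Hessian square root — which is bookkeeping already carried out in \cite[Section~6]{CCIT:MS}; I would simply refer to that computation (as in the proof of Proposition~\ref{prop:Givental_formula_toric_stacks}) to fix signs and constants, so that $\Phi_\phi$ modulo $z$ is literally the normalized-idempotent basis of Remark~\ref{rem:normalized_idempotents}.
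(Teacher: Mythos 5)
Your steps (i)--(iii) are exactly the content of the paper's proof: convergence of the oscillatory integral for a suitable $\delta$ depending on $\arg z$, compatibility with the Gauss--Manin connection by differentiating under the integral sign (the paper cites \cite[equation~(54), Lemma~3.15]{Iritani:Integral} for this, and also notes that the $n/2$ shift in $\nabla_{z\parfrac{}{z}}$ from Remark~\ref{rem:conn_z} is absorbed by the prefactor $(-2\pi z)^{-n/2}$), and the identification of the sectorial asymptotics of the $i$th component with $\Asymp_{c_i}(s)$ from Definition~\ref{def:Asymp}. So the analytic core of your argument is sound and matches the paper.

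The gap is in the scaffolding you use to conclude that $\Phi_\phi$ is an isomorphism. You transport the problem to the quantum D-module via Proposition~\ref{prop:Bri_weak_Fano_cor}(2) and then invoke Proposition~\ref{prop:Hukuhara-Turrittin} together with its uniqueness clause. But the mirror isomorphism of Proposition~\ref{prop:Bri_weak_Fano_cor}(2) is only established over a small neighbourhood ${\cVsm}'$ of the large radius limit point $0_\bSigma$, and Proposition~\ref{prop:Hukuhara-Turrittin} is stated for semisimple points $\tau_0\in\Vss$ inside the convergence domain of the quantum product; by contrast, $q_0$ in Proposition~\ref{prop:analytic_lift_Bri} is an \emph{arbitrary} point of the Zariski-open set $\cMsmss$, possibly far from $0_\bSigma$ (indeed the statement only compares with the formal decomposition of Proposition~\ref{prop:formal_decomp_Bri} under the extra hypothesis $B\subset\cVss$). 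So the ``abstract analytic lift'' you want to match against is not available in the stated generality, and the uniqueness argument cannot be run there. The fix is already contained in your step (iii) and is what the paper does implicitly: since the asymptotic expansion of $\Phi_\phi(s)$ is $\Asymp(s)$, and $\Asymp$ is invertible (modulo $z$ it is restriction to the reduced critical fibre divided by $|\bN_{\rm tor}|\sqrt{\det(h_{i,j})}$), the map $\Phi_\phi$ between free $\cA_{B\times I_\phi}$-modules has invertible ``constant term'' and is therefore itself an isomorphism over the sector -- no appeal to the quantum D-module or to Hukuhara--Turrittin uniqueness is needed (the latter enters only later, e.g.\ in Section~\ref{subsec:functoriality}, where uniqueness of lifts on sectors of angle $>\pi$ is used to compare decompositions).
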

\begin{proof} First observe that the oscillatory integral $\Phi_\phi(s)$ converges
for a suitable choice of $\delta\in (-\epsilon,\epsilon)$, and
does not depend on $\delta$ as far as it converges.
If $|\arg(z)-\phi|<\frac{\pi}{2}$, we can choose $\delta =0$
because $\Re(F_q/z) \to -\infty$ in the end of $\Gamma_i^\phi$;
if not we can choose a suitable $\delta$ so that
$\Re(F_q/z)\to -\infty$ in the end of $\Gamma_i^{\phi+\delta}$.
The fact that $\Phi_\phi$ identifies the connections
follows from the definition of the Gauss--Manin connection, see
\cite[equation~(54), Lemma~3.15]{Iritani:Integral}.
Note that the shift of the connection $\nabla_{z\parfrac{}{z}}$ by $n/2$ in
Remark~\ref{rem:conn_z} is compensated by the prefactor
$(-2\pi z)^{-n/2}$.
The last statement follows from the
fact that $\Asymp_{c_i}(s)$ gives the asymptotic expansion
of the $i$th component of $\Phi_\phi(s)$ along the sector
$I_\phi$.
\end{proof}

\begin{Remark} The choice of an orientation of
$\Gamma_i^{\phi+\delta}$ and the choice of
a branch of $(-2\pi z)^{-n/2}$ in \eqref{eq:lift_by_oscillatory}
together give rise to a section
of the $\mu_2$-local system $\ori$ in
Proposition \ref{prop:formal_decomp_Bri}.
\end{Remark}

\section{Functoriality under toric birational morphisms}\label{sec:functoriality}
We study the analytic lift of the formal decomposition of
the quantum D-modules in Theo\-rem~\ref{thm:decomp_QDM}
in the case where the birational map
$\frX_+ \dasharrow \frX_-$ extends to a \emph{morphism}.
We show that the analytic lift associated with a certain
deformation parameter $\tau_+$ and a phase is induced by the pull-back between
the $K$-groups via the $\hGamma$-integral structure.
Moreover, the sectorial decomposition of the quantum D-module of $\frX_+$
at some $\tau_+$ corresponds to
an Orlov-type semiorthogonal decomposition of the $K$-group.
We assume that both $\frX_+$ and $\frX_-$ are compact weak-Fano
smooth toric DM stacks and restrict ourselves to the non-equivariant quantum D-modules.

\subsection{Notation and assumption}\label{subsec:functoriality_nota}
Consider a discrepant transformation $\frX_+ \dasharrow \frX_-$
arising from a codimension-one wall crossing as in Section~\ref{subsec:discrepant}.
Let $\bSigma_\pm$ be the stacky fan of $\frX_\pm$.
We assume that $\frX_+ \dasharrow \frX_-$ extends to a~birational morphism $\varphi \colon \frX_+\to \frX_-$. In this case, the common blowup $\hfrX$ in~\eqref{eq:roof}
is isomorphic to~$\frX_+$, and~$\varphi$ is necessarily a type~(II-i) or~(III) discrepant transformation
in the classification of Remark~\ref{rem:3_types}, i.e., $\varphi$
is a divisorial contraction or a root construction.
We further assume that
\begin{itemize}\itemsep=0pt
\item $\frX_+$ and $\frX_-$ are compact weak Fano toric stacks;
we write $\Delta_\pm \subset \bN_\R$ for the fan polytopes
of $\frX_\pm$; they are convex polytopes
containing the origin in their interiors and we have
$\Delta_- \subset \Delta_+$;
\item $S_- := S \cap \Delta_-=\{b \in \bN \colon \overline{b} \in \Delta_-\}$
generates $\bN$ over $\Z$.
\end{itemize}
Here the fan polytope $\Delta_\pm$ means the convex hull of ray vectors of the stacky fan $\bSigma_\pm$
and $S$ is a finite subset of $\bN$ as in Section~\ref{subsec:data} such that both $\bSigma_+$ and $\bSigma_-$ are adapted to $S$ in the sense of Definition~\ref{def:stacky_fan_adapted_to_S}.
We need these assumptions so that we can apply the results~\cite{Iritani:Integral} on the $\hGamma$-integral structure for toric stacks, where the same assumptions were made (see Section~3.1.4 \emph{ibid}).

\begin{figure}[ht]\centering
\includegraphics{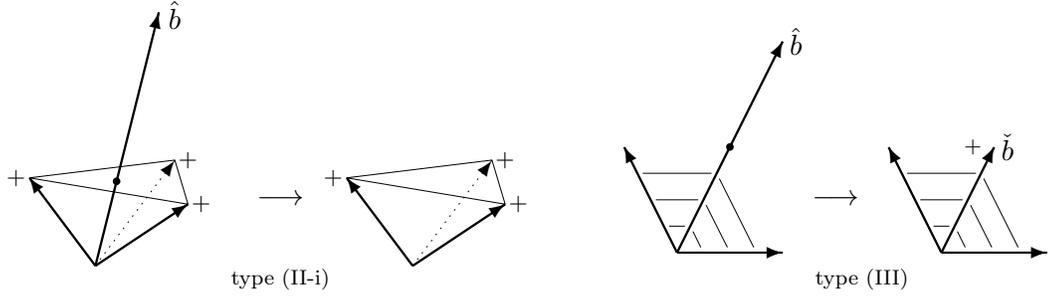}
\caption{The change of fans associated with $\frX_+ \to \frX_-$.
The sign $+$ means ray vectors from $M_+\subset R_-=R(\bSigma_-)$
and $\hb$ is a positive integral linear combination of
$\{v\colon v\in M_+\}$.} \label{fig:fan_morphism}
\end{figure}

As in Section~\ref{subsec:discrepant},
let $W$ denote the hyperplane wall between the maximal cones
$\cpl(\bSigma_+)$ and $\cpl(\bSigma_-)$
of the secondary fan $\Xi$,
and let $\bw\in \LL$ denote the primitive normal vector
of the wall $W$ pointing towards $\cpl(\bSigma_+)$.
Set $M_\pm = \{ b\in S\colon \pm D_b\cdot \bw >0\}$ as before.
When the wall-crossing induces a morphism
$\frX_+ \to \frX_-$,
$M_-$ is a singleton $\{\hb\}$ with
$D_{\hb} \cdot \bw = -1$ and the corresponding circuit is (see \eqref{eq:circuit})
\[
\hb = \sum_{b\in M_+} k_b b \qquad \text{with $k_b := D_b \cdot \bw$}.
\]
Assumption \ref{assump:discrepancy} gives
$\sum\limits_{b\in M_+} k_b>1$.
In the type~(II-i) case, we have $\sharp M_+\ge 2$ and
the stacky fan~$\bSigma_+$ is obtained from~$\bSigma_-$
by adding the new ray~$\hb$;
the cone $\sigma_{M_+}=\sum\limits_{b\in M_+} \R_{\ge 0} \overline{b}$
of $\bSigma_-$ is subdivided into cones
$\sigma_{M_+\cup \{\hb\} \setminus \{v\}}$ with $v\in M_+$.
In the type~(III) case, $M_+$ is also a singleton~$\{\chb\}$ and
$\bSigma_+$ is obtained from $\bSigma_-$
by replacing $\chb$ with $\hb = k_{\chb} \chb$:
see Fig.~\ref{fig:fan_morphism}.
We write $R_\pm = R(\bSigma_\pm)$ for the set of rays.
Then $R_+ = R_- \sqcup \{\hb\}$ in the type (II-i) case and
$R_+ \sqcup \{\chb\} = R_- \sqcup \{\hb\}$ in the type (III) case,
where $\sqcup$ denotes disjoint union.
We also note that $M_+ \subset R_-$.
For simplicity of notation, we assume that $S$ is chosen to be a
minimal extension of $S_-$:
\begin{itemize}\itemsep=0pt
\item $S = S_- \cup \{\hb\}$.
\end{itemize}
We do not lose any generality by this assumption:
the base $\cM$ of the LG model for a larger $S$
always contains the locus corresponding to $S_- \cup \{\hb\}$.

The smooth toric DM stacks $\frX_+$, $\frX_-$
are the GIT quotients of $\C^S$.
The toric birational morphism $\varphi \colon \frX_+ \to \frX_-$
is induced by the self-map
\begin{equation}
\label{eq:varphi_coord}
\big(z_\hb, (z_v)_{v\in S\setminus \{\hb\}}\big)
\longmapsto
\big(1, \big(z_\hb^{k_v} z_v\big)_{v\in S\setminus \{\hb\}}\big)
\end{equation}
on $\C^S \cong \C^{\{\hb\}} \times \C^{S\setminus \{\hb\}}$,
where note that $k_v \ge 0$ for $v\neq \hb$.
It is easy to check that this sends the stable locus
for $\cpl(\bSigma_+)$ to the stable locus for $\cpl(\bSigma_-)$.
The map $\varphi$ contracts the divisor $\{z_{\hb} = 0\}$
onto the toric substack $Z= \bigcap\limits_{v\in M_+} \{z_v=0\}$.
In the type (II-i) case, $\varphi$ is a weighted blowup
along the codimension $\ge 2$ substack~$Z$;
in the type (III) case where $M_+$ is a singleton~$\{\chb\}$,
$\varphi$ exhibits $\frX_+$ as a root stack of $\frX_-$
with respect to the divisor $Z=\{z_{\chb} =0\}$.

We write $(\pr\colon \cY\to \cM, F)$, $(\pr\colon \cYsm \to \cMsm, F)$
for the LG models associated with $S$ and $S_-=S\cap \Delta_-$
respectively (see Definition \ref{def:LG}).
These two LG models are related by the pull-back
(Lemma \ref{lem:pullback}):
\[
\xymatrix{
\cYsm \ar@{^(->}[r] \ar[d] & \cY \ar[d] \ar[r]^{F} & \C \\
\cMsm \ar@{^(->}[r] & \cM. &
}
\]
We write $\Bri(F)$, $\Brism(F)$ for the non-equivariant Brieskorn modules
(Definition \ref{def:Brieskorn})
associated with the LG models $(\pr\colon \cY\to \cM, F)$,
$(\pr\colon \cYsm\to\cMsm, F)$ respectively.
On the affine chart associated with $\bSigma_-$,
$\cMsm$ is cut out from $\cM$
by the equation $t_\hb =0$, where
$t_\hb = q^{\delta_\hb^{\bSigma_-}}$
is the co-ordinate introduced in Section~\ref{subsec:coord_localchart_LG}.
We have $t_{\hb}= q^{-\bw}$ since
$\delta_\hb^{\bSigma_-} =
e_{\hb} - \Psi_-(\hb) = e_{\hb} - \sum\limits_{b\in M_+} k_b e_b
= -\bw\in \LL$,
where $\Psi_- = \Psi^{\bSigma_-}$ (see Notation \ref{nota:Psi}).

In this section, we fix an isomorphism
$\bN \cong \Z^n \times \bN_{\rm tor}$
and a splitting $\varsigma \colon \bN\to \OO^{\bSigma_-}$ of
the refined fan sequence \eqref{eq:refined_fanseq} for $\bSigma_-$
and use the associated co-ordinates $x_1,\dots,x_n$ along
the fibres of the LG model as introduced
in Section~\ref{subsec:coord_localchart_LG}.

\begin{Caution}
Under our assumption $S=S_- \cup \{\hb\}$,
$\cM$ is the small quantum cohomology locus
$($see $Section~\ref{subsubsec:Bri_sm})$ of $\frX_+$
and $\cMsm$ is
the small quantum cohomology locus of $\frX_-$.
\end{Caution}

\subsection{Critical points along a curve}
\label{subsec:crit_curve}
Consider the 1-dimensional toric substack $\cC$ of $\cM$
corresponding to $W\cap \cpl(\bSigma_+)=
W\cap \cpl(\bSigma_-)$.
Recall that uniformizing affine charts of $\cC$ and $\pr^{-1}(\cC)$ have been
described explicitly in Section~\ref{subsec:LG_curve}:
$q^{-\bw/e} = t_\hb^{1/e}$ gives a rational co-ordinate
of $\cC \cap \cM_{\bSigma_-}$,
where $e=e_-\in \N$ is the smallest common denominator
of $\{c\in \Q\colon c\bw \in \Laa(\bSigma_-)\}$, and
$w_v = w_v^- = u^{(\Psi_-(v),v)}$ with $v\in \bN$ generate
the co-ordinate ring $A = \bigoplus\limits_{v\in\bN} \C[t_\hb^{1/e}] w_v$
of $\pr^{-1}(\cC) \cap \cY_{\bSigma_-}$
(see Lemma~\ref{lem:LG_on_curve}).
Recall also that
$F = \sum\limits_{b\in R_+ \cup R_-} u_b$ on $\pr^{-1}(\cC)$.
We study the family of relative critical points
\begin{equation}
\label{eq:relative_critical}
x_1\parfrac{F}{x_1} = \cdots = x_n \parfrac{F}{x_n} =0
\end{equation}
over the curve $\cC$.

\begin{Proposition}
\label{prop:critv_on_curve}
A relative critical point of the LG potential $F$
over $\cC$ is given by an assignment of a complex
number $w_v$ to every $v\in \bN$
satisfying $w_0=1$,
\begin{align*}
w_{v_1} w_{v_2}=w_{v_1+v_2}
\qquad \text{if $\overline{v_1},\overline{v_2} \in \sigma_{M_+}$} \qquad
\text{and} \qquad
w_v = \begin{cases}
0, & \text{if $\overline{v} \notin \sigma_{M_+}$}, \\
k_v \gamma, & \text{if $v\in M_+$},
\end{cases}
\end{align*}
where $k_v = D_v \cdot \bw$ for $v\in S$ and
\[
\gamma=0 \qquad \text{or} \qquad
\gamma = \left(-\frac{1}{Kt_\hb}\right)^{1/J}
\qquad \text{when $t_\hb \neq 0$}
\]
with $J:= \big(\sum\limits_{b\in M_+} k_b\big)-1>0$ and $K:= \prod\limits_{b\in M_+} k_b^{k_b}$.
The corresponding critical value of $F$ is given by $J \gamma$.
\end{Proposition}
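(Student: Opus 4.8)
The plan is to restrict the relative critical equation \eqref{eq:relative_critical} to the curve $\cC$ and solve it explicitly using the co-ordinates $w_v$, $t_\hb$ from Lemma \ref{lem:LG_on_curve}. First I would recall that on $\pr^{-1}(\cC) \cap \cY_{\bSigma_-}$ the LG potential takes the form $F = \sum_{b\in R_+\cup R_-} u_b$, and that along $\cC$ the monomial $u_b = u^{(\Psi_-(b),b)}$ restricts to $w_b^-$ for $b\in R_-$ while it picks up a power of $t_\hb$ for $b=\hb$ (the unique element of $M_-$); more precisely, since $\delta_\hb^{\bSigma_-} = e_\hb - \Psi_-(\hb) = -\bw$, the variable $u_\hb$ restricts to $t_\hb\, w_\hb^-$ along $\cC$. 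Thus, writing $w_v := w_v^-$, the restricted potential becomes $F|_\cC = \sum_{b\in R_- \setminus \{\hb\}} w_b + t_\hb w_\hb = \sum_{b\in S_-} w_b + t_\hb w_\hb$. Applying the logarithmic fibre vector field $x_i\partial/\partial x_i$, which acts on $u^{(\lambda,v)}$ by multiplication by $\chi_i\cdot v$, the critical equations \eqref{eq:relative_critical} say that $\sum_b (\chi_i\cdot b) u_b = 0$ for all $i$, i.e.\ $\sum_{b\in S_-}(\chi_i\cdot b) w_b + t_\hb(\chi_i\cdot\hb) w_\hb = 0$. Since $\hb = \sum_{b\in M_+} k_b b$, this is equivalent to the vanishing in $\bN_\C$ of $\sum_{b\in S_-} b\, w_b + t_\hb \big(\sum_{b\in M_+} k_b b\big) w_\hb$.

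Next I would exploit the toric-algebra structure of $A = \bigoplus_{v\in\bN} \C[t_\hb^{1/e}] w_v$, where $w_{v_1} w_{v_2} = q^{\Psi_-(v_1)+\Psi_-(v_2)-\Psi_-(v_1+v_2)} w_{v_1+v_2}$ if $v_1, v_2$ lie in a common cone of $\Sigma_0$ and $=0$ otherwise. On the curve $\cC$ the only nontrivial $q$-power is $t_\hb$, and the combinatorics forces $w_v = 0$ unless $\overline v \in \sigma_{M_+}$ (since all rays other than those in $M_+$ are "$+$-side" rays that contract). A relative critical point thus corresponds to an algebra homomorphism from the fibre ring to $\C$, i.e.\ an assignment $v\mapsto w_v$ satisfying $w_0 = 1$ and $w_{v_1}w_{v_2} = w_{v_1+v_2}$ for $\overline{v_1},\overline{v_2}\in\sigma_{M_+}$, together with the critical relations. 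The critical relations, after restricting to the subring generated by $\{w_b : b\in M_+\}$, reduce (using $\hb = \sum_{b\in M_+} k_b b$ and that $w_\hb = \prod_{b\in M_+} w_b^{k_b}$ by multiplicativity) to a single scalar equation: setting $w_b = k_b\gamma$ for $b\in M_+$, the term $\sum_{b\in M_+} b\,w_b + t_\hb\big(\sum_{b\in M_+}k_b b\big)w_\hb$ becomes $\gamma\big(\sum_{b\in M_+} k_b b\big)\big(1 + t_\hb \gamma^{J}\prod_{b\in M_+} k_b^{k_b}\big)$, where $J = (\sum_{b\in M_+}k_b) - 1$. This vanishes precisely when $\gamma = 0$ or $1 + K t_\hb \gamma^J = 0$ with $K = \prod_{b\in M_+} k_b^{k_b}$, giving $\gamma^J = -1/(Kt_\hb)$. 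I would check that the ansatz $w_b = k_b\gamma$ is forced (not just sufficient) by combining the critical relation with the requirement that it hold simultaneously for the basis of $\bM_\C$; this uses that $M_+$ spans the cone $\sigma_{M_+}$ and the primitivity of $\bw$.

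Finally I would compute the critical value. On the critical locus, $F|_\cC = \sum_{b\in S_-} w_b + t_\hb w_\hb$; all $w_b$ with $b\notin M_+$ vanish (since $\overline b\notin\sigma_{M_+}$ for $b\in S_-\setminus M_+$ when $\hb$ is interior, and this needs a brief check from the fan picture in Fig.~\ref{fig:fan_morphism}), so $F|_\cC = \sum_{b\in M_+} w_b + t_\hb w_\hb = \gamma\sum_{b\in M_+} k_b + t_\hb\gamma^{\sum k_b} K$. Using $\sum_{b\in M_+} k_b = J+1$ and $K t_\hb\gamma^J = -1$ on the second branch, this equals $(J+1)\gamma - \gamma = J\gamma$; on the branch $\gamma = 0$ it is $0 = J\cdot 0$, consistent. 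Hence the critical value is $J\gamma$ in all cases.

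The main obstacle I expect is the bookkeeping in the second step: verifying that the assignment $w_b = k_b\gamma$ for $b\in M_+$ (with all other $w_v$ determined by multiplicativity or forced to zero) is the \emph{complete} list of solutions, and in particular that no critical points escape $\sigma_{M_+}$. This requires carefully translating the scheme-theoretic critical equations into the combinatorial language of $\Sigma_0$ and $\scrS_0^{\rm circ}$, and using that $\pr|_{\cB}$ (from Corollary \ref{cor:tpr_locally_free}) has the right degree so that the branches we found account for all of them — the branch $\gamma = 0$ carries the multiplicity of the limit $\tzero_{\bSigma_+}$ as $t_\hb\to 0$, while the $J$ branches with $\gamma^J = -1/(Kt_\hb)$ are the ones that run off to infinity at the large-radius limit of $\frX_-$ (cf.\ Fig.~\ref{fig:critical_points_transition}). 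Everything else is routine toric algebra and a Gaussian-free computation of the critical value.
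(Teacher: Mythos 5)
Your overall strategy is the same as the paper's (restrict the critical equation \eqref{eq:relative_critical} to $\pr^{-1}(\cC)$ in the coordinates of Lemma~\ref{lem:LG_on_curve}, solve for $\gamma$, read off the value $J\gamma$), and your computations of $\gamma^J=-1/(Kt_{\hb})$ and of the critical value are correct \emph{once the support of the critical point is known}; but exactly that support statement, which is the heart of the proposition, is missing. Your justification that ``the combinatorics forces $w_v=0$ unless $\overline{v}\in\sigma_{M_+}$, since all rays other than those in $M_+$ are `$+$-side' rays that contract'' is not an argument: over an interior point of $\cC$ the fibre $\pr^{-1}(q,t)$ has components indexed by \emph{all} maximal cones of $\Sigma_0=\Sigma_-$, and on the component attached to a cone $\sigma$ the functions $w_v$ with $\overline{v}\in\sigma$ are generically nonzero, so a priori critical points could be supported on any cone. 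What is needed (and what the paper does) is: the support $\{\overline{v}\colon w_v\neq 0\}$ equals $\bN\cap\sigma$ for some $\sigma\in\Sigma_-$ by the torus-orbit decomposition; the balance equation $\sum_{b\in R_-}w_b\overline{b}+t_{\hb}w_{\hb}\overline{\hb}=0$ is then a nontrivial linear relation among $\{\overline{b}\colon b\in (R_-\cup\{\hb\})\cap\sigma\}$; since $\Sigma_-$ is simplicial the rays of $R_-$ in $\sigma$ are linearly independent, so either this set is empty (hence $\sigma=\{0\}$, the branch $\gamma=0$) or $\overline{\hb}\in\sigma$, whence $\sigma\supset\sigma_{M_+}$, and coefficient comparison via the circuit $\hb=\sum_{b\in M_+}k_b b$ forces $w_b=-k_bt_{\hb}w_{\hb}$ for $b\in M_+$ and $w_b=0$ for $b\in(R_-\setminus M_+)\cap\sigma$, so $\sigma=\sigma_{M_+}$. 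Your proposed fallback, a degree count via Corollary~\ref{cor:tpr_locally_free}, cannot replace this: that corollary concerns the equivariant family $\tpr\colon\cB\to\cU$ near the large-radius limit and counts points with multiplicity, whereas over $\cC$ the critical scheme of the non-equivariant $F$ is highly non-reduced (cf.\ Lemma~\ref{lem:fibre_at_0}), so matching a set-theoretic list of branches against $\dim H^*_{\CR}(\frX_+)$ proves nothing unless you also compute multiplicities, which you do not.

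Separately, your restriction of $F$ to the curve is wrong as written: you sum $w_b$ over all $b\in S_-$, but the correct restriction is $F|_{\pr^{-1}(\cC)}=\sum_{b\in R_-}w_b+t_{\hb}w_{\hb}$ (Remark after Lemma~\ref{lem:LG_on_curve}), because for a ghost vector $b\in S_-\setminus R_-$ one has $u_b=t_b\,\ttq^{\lambda(b)}x^b$ with $t_b=q^{\delta_b^{\bSigma_-}}$ and $\delta_b^{\bSigma_-}$ not proportional to $\bw=-\delta_{\hb}^{\bSigma_-}$, so $u_b$ vanishes identically on $\pr^{-1}(\cC)$ even though $w_b$ does not. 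The hypotheses of Section~\ref{subsec:functoriality_nota} allow $S_-\supsetneq R_-$ (ghost vectors such as box elements may be needed for $S_-$ to generate $\bN$), and such vectors can lie inside $\sigma_{M_+}$, where $w_b\neq 0$ at the nontrivial critical points; with your formula these spurious terms would corrupt both the equation for $\gamma$ and the value $J\gamma$, and your patch ``$\overline{b}\notin\sigma_{M_+}$ for $b\in S_-\setminus M_+$'' is false precisely for those vectors. With the correct restriction and the support argument above supplied, your computation reduces to the paper's proof.
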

\begin{proof}
Take a relative critical point and let $w_v$ denote the value of the function $w_v$ at that point.
We have $w_0=1$. The equation \eqref{eq:relative_critical} for relative critical points reads
\begin{equation}\label{eq:balance}
\sum_{b\in R_+ \cup R_-} u_b \overline{b} = 0 \qquad
\text{in $\bN_\R$.}
\end{equation}
Note that $R_+ \cup R_- = R_- \sqcup \{\hb\}$,
$u_b= w_b$ for $b\in R_-$
and $u_\hb = t_\hb w_\hb$.
By Lemma~\ref{lem:LG_on_curve}, we have
\[
w_v \cdot w_{v'} =
\begin{cases}
w_{v+v'}, & \text{if $\overline{v}$ and $\overline{v'}$ belong to
a common cone of $\Sigma_-= \Sigma_0$}, \\
0, & \text{otherwise},
\end{cases}
\]
where note that $\Sigma_0$ in Lemma \ref{lem:LG_on_curve}
coincides with the underlying fan $\Sigma_-$ of $\bSigma_-$
in the current setting.
Therefore there exists a cone $\sigma \in \Sigma_-$
such that $\{\overline{v}\colon w_v \neq 0\} = \bN \cap \sigma$.
Then equation~\eqref{eq:balance} implies that
$(R_-\cup \{\hb\}) \cap \sigma$ is linearly dependent.
Therefore we have either $(R_-\cup \{\hb\}) \cap \sigma=\varnothing$
or $\overline{\hb} \in \sigma$.
In the former case, we have $\sigma=0$; this corresponds
to the case where $\gamma=0$ in the proposition.
In the latter case, we have $\sigma \supset \sigma_{M_+}$;
the linear relation $\hb = \sum\limits_{b\in M_+} k_b b$
together with~\eqref{eq:balance} implies
\[
\begin{cases}
w_b + k_b t_\hb w_{\hb} = 0
& \text{for $b\in M_+$}, \\
w_b =0 & \text{for $b\in (R_- \setminus M_+) \cap \sigma$},
\end{cases}
\qquad \text{and}
\qquad
w_{\hb} = \prod_{b\in M_+} w_{b}^{k_b}.
\]
Since $w_b \neq 0$ for $\overline{b} \in \sigma$, we have
$(R_-\setminus M_+) \cap \sigma = \varnothing$
and thus $\sigma = \sigma_{M_+}$.
Setting $\gamma = w_b/k_b = -t_\hb w_\hb$ (with $b\in M_+$),
we find that $\gamma$ satisfies
\[
-\gamma/t_\hb = \prod_{b\in M_+}(k_b \gamma)^{k_b}
\ \Longleftrightarrow \
-1/(K t_\hb) = \gamma^J.
\]
This proves the proposition.
\end{proof}

\begin{Remark}
Proposition {\rm \ref{prop:critv_on_curve}} implies that
the critical values of $F$ along $\cC$ do not belong to $\R_{>0}$
when $t_\hb >0$. This phenomenon is closely related to
the Conjecture $\cO$
{\rm \cite[Conjecture 3.1.2]{GGI:gammagrass}}.
\end{Remark}

\subsection{Identifying $\cO$}
\label{subsec:identifying_O}
Recall from Section~\ref{subsec:integral} that
the $\hGamma$-integral structure identifies the $K$-group of
a smooth DM stack with a lattice of flat sections of the quantum D-module.
In this section, we introduce a `positive real' Lefschetz thimble $\Gamma_\R$
defined along a `positive real' locus $\cM_\R$ (or $\cMsm_\R$).
The flat section $\frs_\cO$ given by the structure sheaf $\cO$
corresponds to $\Gamma_\R$
under mirror symmetry, and thus spans a component of
the analytic lift of the formal decomposition
(as discussed in
Propositions~\ref{prop:Hukuhara-Turrittin} and~\ref{prop:analytic_lift_Bri})
associated with the so-called conifold point.
The content in this section is essentially an adaptation of the main result of~\cite{Iritani:Integral} to the current setting.

\subsubsection[The structure sheaf of $\frX_+$]{The structure sheaf of $\boldsymbol{\frX_+}$}\label{subsubsec:str_sheaf_+}

By definition (see Section~\ref{subsec:localchart_LG}),
$\cM$ and $\cY$ contain open dense tori
$\cMtimes=\LL^\star \otimes \C^\times$,
$\cYtimes=(\C^\times)^S$ respectively.
We define the positive real loci $\cM_\R \subset \cMtimes$,
$\cY_\R \subset \cYtimes$ to be
\begin{gather*}
\cM_{\R} := \LL^\star \otimes \R_{>0}, \qquad
\cY_\R := (\R_{>0})^S.
\end{gather*}
We write $\cY_q$ for the fibre of $\pr\colon \cY \to \cM$ at $q\in \cM$;
and $\Gamma_\R=\Gamma_\R(q)$ for the fibre of $\cY_\R \to \cM_\R$ at $q\in \cM_\R$:
\[
\Gamma_\R(q) = \cY_q \cap \cY_\R \cong \Hom(\bN,\R_{>0}) \cong
(\R_{>0})^n.
\]
Consider the restriction of $F_q = F|_{\cY_q}$
to the real positive locus $\Gamma_\R$. Then
\begin{itemize}\itemsep=0pt
\item it is a strictly
convex function since the Hessian
$\parfrac{^2 F_q}{\log x_i\partial \log x_j} = \sum\limits_{b\in S}
u_b b_i b_j$ is positive definite on $\Gamma_\R$ (where $b_i\in \Z$
is the $i$th entry of $\overline{b}\in \overline{\bN}\cong \Z^n$);
\item it is proper and bounded from below
since $0$ is in the interior of the
convex hull $\Delta_+$ of $\{\overline{b}\colon b\in S\}$.
\end{itemize}
Therefore, $F_q|_{\Gamma_\R}$ attains a global minimum
at a unique critical point $\crit_\R = \crit_\R(q) \in \Gamma_\R(q)$; the point
$\crit_\R$ is called
the \emph{conifold point} \cite{Galkin:conifold,GGI:gammagrass}.
Since $\Gamma_\R$ is preserved by the gradient flow of~$\Re(F_q)$ (with respect to the
K\"ahler metric $\frac{\iu}{2} \sum\limits_{i=1}^n
d\log x_i \wedge d\overline{\log x_i}$)
on $\cY_q=\pr^{-1}(q)$, we have the following:
\begin{Lemma}\label{lem:real_Lefschetz}
The positive real locus $\Gamma_\R$ of $\cY_q$ is the
Lefschetz thimble~\eqref{eq:Lefschetz} of $F_q$ associated with the conifold point $\crit_\R$ and the phase $\phi=\pi$.
\end{Lemma}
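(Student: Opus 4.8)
Looking at this statement, the claim is that the positive real locus $\Gamma_\R$ of a fiber $\cY_q$ (for $q \in \cM_\R$) coincides with the Lefschetz thimble of $F_q$ associated to the conifold point $\crit_\R$ and the phase $\phi = \pi$.

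\medskip

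\noindent\textbf{Proof plan.}

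The plan is to verify directly that $\Gamma_\R$ satisfies the defining property \eqref{eq:Lefschetz} of a Lefschetz thimble, namely that it is the stable manifold of the upward gradient flow of $\Re(e^{-\iu\pi} F_q) = -\Re(F_q)$ emanating from $\crit_\R$, which amounts to showing it is the stable set of the \emph{downward} gradient flow of $\Re(F_q)$. First I would recall (as already established in the paragraph preceding the lemma) that $F_q|_{\Gamma_\R}$ is a strictly convex, proper, bounded-below function on $\Gamma_\R \cong (\R_{>0})^n \cong \Hom(\bN,\R_{>0})$, so it has a unique global minimum at the conifold point $\crit_\R$; in particular $\crit_\R$ is the unique critical point of $F_q$ lying on $\Gamma_\R$. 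Next I would invoke the key geometric fact, stated in the footnote to \eqref{eq:Lefschetz}, that the gradient flow of $\Re(e^{-\iu\phi}F_q)$ with respect to the K\"ahler metric $\frac{\iu}{2}\sum_{j} d\log x_j \wedge d\overline{\log x_j}$ equals the Hamiltonian flow of $\Im(e^{-\iu\phi}F_q)$ and hence preserves the level sets of $\Im(e^{-\iu\phi}F_q)$. For $\phi = \pi$, this says the flow preserves $\Im(F_q)$.

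\medskip

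The heart of the argument is then: since $q \in \cM_\R$ and the coordinates $u_b$ on $\cY$ are positive on $\cY_\R$, the locus $\Gamma_\R$ is precisely the set $\{\Im(u_b) = 0,\ \Re(u_b) > 0 \text{ for all } b\in S\} \cap \cY_q$, and on this locus $F_q = \sum_{b} u_b$ is real. Moreover $\Gamma_\R$ is totally real of real dimension $n$ inside $\cY_q$. I would argue that $\Gamma_\R$ is invariant under the gradient flow of $\Re(F_q)$: on $\Gamma_\R$ the differential $d\Re(F_q)$ is real, and because the K\"ahler form restricted to the real locus makes $\Gamma_\R$ a Lagrangian-type submanifold on which $\nabla\Re(F_q)$ is tangent (this is the standard fact that for a real K\"ahler metric, the gradient of a real function on a totally real submanifold stays tangent to it — concretely, with respect to the metric $\frac{\iu}{2}\sum d\log x_j\wedge d\overline{\log x_j}$ the gradient of $\Re(F_q)$ at a point of $\Gamma_\R$ has components $\overline{\partial F_q/\partial \log x_j}$, which are real there, so the gradient vector field is tangent to the real directions). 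Thus the descending flow from any point of $\Gamma_\R$ stays in $\Gamma_\R$, and by properness and strict convexity of $F_q|_{\Gamma_\R}$ it converges to $\crit_\R$. This shows $\Gamma_\R$ is contained in the stable set of $\crit_\R$ for the downward $\Re(F_q)$-flow, i.e. in the Lefschetz thimble $\Gamma_{\crit_\R}^\pi$.

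\medskip

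For the reverse inclusion, I would use a dimension/uniqueness count: the Lefschetz thimble $\Gamma_{\crit_\R}^\pi$ is a real $n$-dimensional submanifold (by \eqref{eq:Lefschetz}), it is connected, and it is the stable manifold of the single critical point $\crit_\R$; since $\Gamma_\R$ is a connected, closed (in $\cY_q$ away from the boundary toric strata) $n$-dimensional flow-invariant submanifold through $\crit_\R$ whose image under $F_q$ is exactly the half-line $[\,\min F_q|_{\Gamma_\R},\infty) = \crit_\R(F_q) - \R_{\le 0}\cdot e^{\iu\pi}\cdot(-1)$... more precisely $F_q(\crit_\R) + \R_{\ge 0}$, which matches $\bu_i - \R_{\ge 0}e^{\iu\phi}$ with $\phi=\pi$ and $\bu_i = F_q(\crit_\R)$ — the two coincide by the uniqueness of stable manifolds for the (Morse) function $\Re(e^{-\iu\pi}F_q)$ near its nondegenerate critical point. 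The main obstacle I anticipate is making the ``gradient is tangent to the real locus'' step fully rigorous in the presence of the logarithmic coordinates and the partial compactification $\cY$ (one must check $\Gamma_\R$ does not run into the toric boundary of $\cY_q$ before converging, but this follows from properness of $F_q|_{\Gamma_\R}$ established earlier, and from the fact that $F_q\to+\infty$ approaching the boundary of $\Delta_+$); once that is in place the identification with the thimble is essentially formal. This argument is exactly the content of \cite[Section~3.3]{Iritani:Integral} adapted to the present notation, so I would cite that for the detailed verification.
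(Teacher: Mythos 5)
Your proposal is correct and follows essentially the same route as the paper, which gives no separate proof beyond the sentence preceding the lemma: $\Gamma_\R$ is preserved by the gradient flow of $\Re(F_q)$ (exactly your tangency computation), and the strict convexity, properness and boundedness-below of $F_q|_{\Gamma_\R}$ listed just before the lemma force every downward flow line in $\Gamma_\R$ to converge to $\crit_\R$. Your additional dimension/uniqueness argument for the reverse inclusion is the standard point the paper leaves implicit, and citing \cite[Section~3.3]{Iritani:Integral} for the details is consistent with the paper's own references.
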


By Proposition~\ref{prop:Bri_weak_Fano_cor}, there exist
an analytic neighbourhood $\cV_+$ of $0_+ := 0_{\bSigma_+}
\in \cM$, a mirror map $\mir_+ \colon \cV_+ \to
[\cM_{\rm A}(\frX_+)/\Pic^\st(\frX_+)]$
and an isomorphism
\[
\Mir_+ \colon \ \Bri(F)|_{\cV_+} \cong \mir_+^* \QDMan(\frX_+),
\]
where $\QDMan(\frX_+)$ denotes the analytic quantum D-module as in~\eqref{eq:analytic_QDM}.
The following result says that $\Gamma_\R(q)$ corresponds to the flat section
$\frs_\cO$ under mirror symmetry:

\begin{Theorem}[{\cite[Theorems 4.11 and 4.14, Section~4.3.1]{Iritani:Integral}}]\label{thm:main_intpaper}
Let $P(\alpha,\beta) = (\alpha(-z),\beta(z))$ denote the pairing~\eqref{eq:pairing_A} of the quantum D-module induced by the orbifold Poincar\'e pairing.
We write $\Omega=\Omega_{q,z}$ for a local section of~$\Bri(F)$.
\begin{itemize}\itemsep=0pt
\item[{\rm (1)}] For $q\in \cV_+\cap \cM_\R$, we have an isomorphism
\[
H_n(\cY_q, \{\Re(-F_q)\ll 0\};\Z) \cong K(\frX_+), \qquad \Gamma \mapsto V(\Gamma),
\]
which varies locally constantly in $q$ such that
\[
(2\pi z)^{-n/2} \int_{\Gamma} e^{-F_q/z} \Omega_{q,-z}
= P\left( \Mir_+(\Omega), \frs_{V(\Gamma)}(\mir_+(q),z) \right)
\]
for $\Gamma \in H_n(\cY_q, \{\Re(-F_q)\ll 0\};\Z)$,
$\Omega \in \Bri(F)$ and $z>0$,
where $\frs_V(\tau,z)$ is the flat section of the
$\hGamma$-integral structure $($see Definition $\ref{def:s})$.
\item[{\rm (2)}] In {\rm (1)},
$V(\Gamma_\R(q))$ is given by the structure sheaf $\cO$ of $\frX_+$.
\end{itemize}
In other words, the integral structure of the Brieskorn module
$\Bri(F)$ dual to the lattice
\[
H_n(\cY_q, \{\Re(F_q/z)\ll 0\};\Z)
\] corresponds to the $\hGamma$-integral structure of the quantum D-module under the mirror isomorphism.
\end{Theorem}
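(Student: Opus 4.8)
The plan is to reduce the statement entirely to the cited results of \cite{Iritani:Integral}, since the theorem is essentially a translation of the main theorem of that paper into the language of the global LG model constructed here. First I would observe that, under our running assumption $S = S_- \cup \{\hb\}$, the base $\cM$ of the LG model $(\pr\colon \cY\to \cM,F)$ is precisely the small quantum cohomology locus of $\frX_+$ (this is the content of the Caution in Section~\ref{subsec:functoriality_nota}, which itself follows from Lemma~\ref{lem:pullback} applied to $\frX_+$). Hence the Brieskorn module $\Bri(F)$ here coincides with the Brieskorn module associated with the standard (uncompactified-then-partially-compactified) LG mirror of $\frX_+$ over a neighbourhood of $0_+$, and Proposition~\ref{prop:Bri_weak_Fano_cor} gives the mirror isomorphism $\Mir_+\colon \Bri(F)|_{\cV_+}\cong \mir_+^*\QDMan(\frX_+)$ together with the convergence of the mirror map. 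Once this identification is in place, Part~(1) is exactly \cite[Theorem~4.11]{Iritani:Integral}: the oscillatory integral pairing $(2\pi z)^{-n/2}\int_\Gamma e^{-F_q/z}\Omega_{q,-z}$ between the relative homology cycle $\Gamma$ and the Brieskorn class $\Omega$ is identified with the Poincar\'e pairing $P(\Mir_+(\Omega),\frs_{V(\Gamma)})$ against the flat section coming from the $\hGamma$-integral structure, and the isomorphism $H_n(\cY_q,\{\Re(-F_q)\ll 0\};\Z)\cong K(\frX_+)$, $\Gamma\mapsto V(\Gamma)$, varies locally constantly in $q$.

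For Part~(2) I would argue as follows. By Lemma~\ref{lem:real_Lefschetz}, the positive real locus $\Gamma_\R(q)$ is the Lefschetz thimble of $F_q$ emanating from the conifold point $\crit_\R(q)$ with phase $\phi=\pi$; in particular it is a well-defined class in the relative homology $H_n(\cY_q,\{\Re(-F_q)\ll 0\};\Z)$ for $q\in\cV_+\cap\cM_\R$, so $V(\Gamma_\R(q))\in K(\frX_+)$ makes sense. The identification $V(\Gamma_\R(q))=[\cO_{\frX_+}]$ is \cite[Theorem~4.14]{Iritani:Integral} (combined with the discussion in Section~4.3.1 there on the choice of branch and orientation): the asymptotic expansion of $\int_{\Gamma_\R}e^{-F_q/z}\omega$ as $z\to 0_+$ matches, under $\Mir_+$, the flat section $\frs_{\cO}$ normalized by the $\hGamma$-class and the Chern character of $\cO$. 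The only points requiring care in the translation are bookkeeping: (i) matching the sign conventions for $z$ and the prefactor $(2\pi z)^{-n/2}$ between Definition~\ref{def:s}, the definition of the Gauss--Manin connection in Definition~\ref{def:Brieskorn}, and the conventions of \cite{Iritani:Integral} (the shift of $\nabla_{z\partial_z}$ by $n/2$ in Remark~\ref{rem:conn_z} is absorbed by the prefactor, exactly as in the proof of Proposition~\ref{prop:analytic_lift_Bri}); and (ii) checking that the normalization of the volume form $\omega=|\bN_{\rm tor}|^{-1}\,d\log x_1\wedge\cdots\wedge d\log x_n$ used here agrees with that of \cite{Iritani:Integral} so that the integral $\hGamma$-lattice, not merely the $\Q$- or $\R$-lattice, is matched.

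The final clause of the theorem, that the integral structure of $\Bri(F)$ dual to the lattice $H_n(\cY_q,\{\Re(F_q/z)\ll 0\};\Z)$ corresponds to the $\hGamma$-integral structure, is then immediate from Part~(1): the right-hand side $P(\Mir_+(\Omega),\frs_{V(\Gamma)}(\mir_+(q),z))$ is, for fixed $\Gamma$, an $\cOan$-linear functional on Brieskorn classes $\Omega$ whose kernel and values are controlled by the flat section $\frs_{V(\Gamma)}$; since $P$ is a perfect pairing (non-degeneracy of the Poincar\'e pairing on the compact stack $\frX_+$) and $\{V(\Gamma)\}$ ranges over a $\Z$-basis of $K(\frX_+)$ as $\Gamma$ ranges over a basis of the relative homology lattice, the dual lattice to $\{[\Gamma]\}$ inside the space of sections of $\Bri(F)$ is carried by $\Mir_+$ onto the dual lattice to $\{\frs_{V}\}$, which is by definition the $\hGamma$-integral structure.

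I expect the main obstacle to be purely expository rather than mathematical: carefully propagating the two sign/normalization conventions above so that the statement as phrased (with the specific prefactors $(2\pi z)^{-n/2}$ and the cycle $\{\Re(-F_q)\ll 0\}$ versus $\{\Re(F_q/z)\ll 0\}$) matches \cite[Theorems~4.11 and~4.14]{Iritani:Integral} on the nose, and confirming that the local constancy in $q$ of the isomorphism $\Gamma\mapsto V(\Gamma)$ extends over all of $\cV_+\cap\cM_\R$ (not just near $0_+$), which follows because the relative homology local system is flat and the conifold point $\crit_\R(q)$ depends continuously on $q\in\cM_\R$ by the strict convexity and properness of $F_q|_{\Gamma_\R}$ established above.
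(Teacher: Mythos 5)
Your proposal is correct and follows essentially the same route as the paper: the theorem is stated there precisely as an import of the cited results of \cite{Iritani:Integral} (Theorems~4.11 and~4.14 and Section~4.3.1), translated into the present setting via the analytic mirror isomorphism $\Mir_+$ of Proposition~\ref{prop:Bri_weak_Fano_cor} and the identification of $\cM$ with the small quantum cohomology locus of $\frX_+$, with the final clause being only a rephrasing of Part~(1). Your attention to the sign/normalization bookkeeping and to local constancy over $\cV_+\cap\cM_\R$ is exactly the kind of translation the paper implicitly relies on.
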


\begin{Remark}[{\cite[Section~4.3]{Iritani:Integral}}] \label{rem:intersection_Euler}
Since the mirror isomorphism $\Mir_+$ preserves the pairing,
the map $\Gamma \mapsto V(\Gamma)$ above
satisfies
\[
(-1)^{n(n-1)/2}\#\big(e^{-\pi\iu} \Gamma_1 \cdot \Gamma_2\big) = \chi(V(\Gamma_1),V(\Gamma_2))
\]
for $\Gamma_1,\Gamma_2 \in H_n(\cY_q,\{\Re(-F_q)\ll 0\};\Z)$,
where
$\chi(V_1,V_2)$ is the Euler pairing,
\[
e^{-\pi\iu} \Gamma_1\in H_n\big(\cY_{q_*}, \{\Re(F_{q_*})\ll 0\};\Z\big)
\] is the parallel translate
of $\Gamma_1$ in the local system
\[
\bigcup_{\theta \in [0,\pi]}
H_n\big(\cY_{q_*}, \big\{\Re\big({-}e^{\iu\theta}F_{q_*}\big) \ll 0\big\};\Z\big)
\]
from $\theta = 0$ to $\theta =\pi$, and $\#\big(e^{-\pi\iu}\Gamma_1\cdot \Gamma_2\big)$
denotes the algebraic intersection number.
Here we use the fact that the higher residue pairing corresponds
to the intersection pairing on relative homology
(see \cite[Sections~3.3.1--3.3.2]{Iritani:Integral}); note however
that the sign factor $(-1)^{n(n-1)/2}$ was missing in \cite{Iritani:Integral},
see \cite[footnote (16)]{Iritani:periods} for the correction.
\end{Remark}

\begin{Remark}[cf.~Remark \ref{rem:branch_L}]
\label{rem:standard_determination}
We need to specify a branch of the
multi-valued section $\frs_V$
in the above theorem. We have a standard
choice for the branch of $\frs_V(\mir_+(q),z)$ with
$V\in K(\frX_+)$ when $q\in \cM_\R$, and the above
theorem holds for this choice.
By the argument preceding \cite[Proposition 4.8]{Iritani:Integral},
the fundamental solution $L(\mir(q),z)$ can be obtained from
the $I$-function via the Birkhoff factorization; the $I$-function
has a standard determination on the positive real locus
(by requiring $\log q_a\in \R$ in the formula
\cite[(59)]{Iritani:Integral} of the $I$-function).
We also have a standard determination of $z^{-\mu}
z^{c_1(\frX)}$ given by $\log z \in \R$ for $z>0$.
Hence we obtain a standard identification of the space of
flat sections of $\Bri(F)$ over $(q,z)\in (\cM_\R \cap \cV_+)\times \R_{>0}$
with the $K$-group $K(\frX_+)\otimes \C$.
\end{Remark}

Introduce the following subsets of $\cV_+ \subset \cM$:
\begin{gather*}
\cVss_+ :=\{q\in \cV_+ \cap \cMtimes\colon \text{$F_q = F|_{\cY_q}$ has only non-degenerate critical points} \}, \\
\cVss_{+,\R} := \cVss_+\cap \cM_\R.
\end{gather*}
Note that $\cVss_+$ is the intersection of $\cV_+$ and
a non-empty Zariski-open subset of $\cM$ by the discussion
in Section~\ref{subsubsec:analytic_lift}; hence
$\cVss_+$ is open dense in $\cV_+$, and $\cVss_{+,\R}$ is
open dense in $\cV_+\cap \cM_\R$.

Choose $q_0\in \cVss_{+,\R}$ and let
$\crit_1(q), \dots,\crit_{N_+}(q)$
denote all branches of critical points of $F_q$ near $q=q_0$,
where $N_+ = \dim H^*_\CR(\frX_+)$.
We may assume that $\crit_1(q_0)$ is the conifold point $\crit_\R(q_0)$.
Let $\bu_i(q) = F_q(\crit_i(q))$ be the corresponding critical value.
Suppose that $\phi\in \R$ is an admissible phase for
$\{\bu_1(q_0),\dots,\bu_{N_+}(q_0)\}$.
By Proposition \ref{prop:analytic_lift_Bri},
there exist an open neighbourhood~$B$ of~$q_0$ in~$\cVss_+$,
a sector $I_\phi =
\big\{\big(r,e^{\iu\theta}\big) \in \tCC\colon
|\theta-\phi|<\frac{\pi}{2}+\epsilon\big\}$
(with small $\epsilon>0$) and an isomorphism (analytic lift)
\[
\Phi^+_\phi \colon \ \pi^* \Bri(F)\big|_{B\times I_\phi}
\cong \bigoplus_{i=1}^{N_+}
\left (\cA_{B\times I_\phi},
d + d(\bu_i(q)/z) \right)
\]
that induces the formal decomposition
$\Asymp\colon
\Bri(F) \otimes_{\cOan_{B\times \C_z}} \cOan_B[\![z]\!] \cong
\cOan_B[\![z]\!]^{\oplus N_+}$, where
$\pi \colon \cVss_+ \times \tCC \to \cVss_+ \times \C$ is the oriented
real blow-up. Composing this with
the mirror isomorphism, we also get the analytic lift
of the formal decomposition of the
quantum D-module (cf.~Proposition \ref{prop:Hukuhara-Turrittin}):
\[
\tPhi_\phi^+ \colon \
\pi^* \mir_+^*\QDMan(\frX_+)\big |_{B\times I_\phi}
\cong
\bigoplus_{i=1}^{N_+}
\left(\cA_{B\times I_\phi}, d+d(\bu_i(q)/z) \right),
\]
where $\tPhi_\phi^+=\Phi_\phi^+ \circ \Mir_+^{-1}$.

\begin{Proposition}\label{prop:lift_str_sheaf_+}
For $q_0 \in \cVss_{+,\R}$,
there exists $\alpha_0\in (0,\pi/2)$ such that the following holds.
For every admissible phase $\phi \in (-\alpha_0,\alpha_0)$
for $\{\bu_1(q_0),\dots,\bu_{N_+}(q_0)\}$,
there exists a basis $\{V_i\}_{i=1}^{N_+}$ of $K(\frX_+)$
with $V_1$ being the structure sheaf $\cO$ of $\frX_+$,
such that the corresponding
flat section $s_i = \frs_{V_i}(\mir_+(q),z)$ satisfies
$\tPhi^+_\phi\big(e^{\bu_i(q)/z} s_i\big) = e_i$,
where~$e_i$ denotes the $i$th standard basis of~$\cA_{B\times I_\phi}^{\oplus N_+}$.
\end{Proposition}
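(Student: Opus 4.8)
The plan is to identify, among the $N_+$ sectorial flat sections produced by $\tPhi^+_\phi$, the one associated with the conifold branch $\crit_1=\crit_\R$, and show that it equals (up to sign) $\frs_\cO$. First I would work on the Brieskorn module side, where Proposition~\ref{prop:analytic_lift_Bri} gives the analytic lift $\Phi^+_\phi$ in terms of oscillatory integrals over Lefschetz thimbles $\Gamma^{\phi+\delta}_i$. The $i$-th component of $\Phi^+_\phi(s)$ is $(-2\pi z)^{-n/2}e^{-\bu_i(q)/z}\int_{\Gamma^{\phi+\delta}_i}e^{F_q/z}s$. By Lemma~\ref{lem:real_Lefschetz}, the positive real locus $\Gamma_\R(q)$ is the Lefschetz thimble of $F_q$ for the conifold point and the phase $\phi=\pi$; and for small $\phi\in(-\alpha_0,\alpha_0)$ the thimble $\Gamma^{\phi+\delta}_1$ (for $\delta$ chosen so the integral converges) is a deformation of $\Gamma_\R$ that represents the same relative homology class. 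Concretely, I would take $\alpha_0$ small enough that the conifold critical value $\bu_1(q_0)$ remains admissible and its thimble is isotopic to $e^{-\pi\iu}\Gamma_\R$ once we rotate the phase from $\pi$ down through the admissible directions. The point is that $\Gamma^{\phi+\delta}_1$ and $\Gamma_\R$ define the same element of the relative homology lattice $H_n(\cY_q,\{\Re(F_q/z)\ll0\};\Z)$ after the appropriate analytic continuation of the parameter.

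Next I would invoke Theorem~\ref{thm:main_intpaper}: the integral structure of $\Bri(F)$ dual to the relative homology lattice corresponds, under the mirror isomorphism $\Mir_+$, to the $\hGamma$-integral structure, with $V(\Gamma_\R(q))=\cO_{\frX_+}$ by part~(2). Thus the flat section $s_1$ of $\mir_+^*\QDMan(\frX_+)$ characterized by $\tPhi^+_\phi(e^{\bu_1(q)/z}s_1)=e_1$ is precisely $\frs_{V(\Gamma^{\phi+\delta}_1)}(\mir_+(q),z)=\frs_\cO(\mir_+(q),z)$, once we match the normalizations: the prefactor $(-2\pi z)^{-n/2}$ in the oscillatory integral matches $(2\pi z)^{-n/2}$ in the pairing formula of Theorem~\ref{thm:main_intpaper} up to the branch of $(-1)^{-n/2}$, which is absorbed into the $\ori$-local-system ambiguity (i.e. the sign $\pm V_i$) already present in Proposition~\ref{prop:Hukuhara-Turrittin}. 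For the remaining indices $i=2,\dots,N_+$, the general theory of asymptotic bases (Section~\ref{subsec:MRS}, together with \cite[Theorem~4.11]{Iritani:Integral}) shows that each $s_i$ lies in the $\hGamma$-integral structure, i.e. $s_i=\frs_{V_i}$ for some $V_i\in K(\frX_+)$; and since the $s_i$ form a basis of the space of flat sections (being the sectorial flat sections of a Hukuhara--Turrittin decomposition), the $V_i$ form a basis of $K(\frX_+)$. Reindexing so that the conifold branch is first gives $V_1=\cO$.

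The main obstacle I expect is the careful bookkeeping of branches and orientations: one must (a) choose $\alpha_0$ and the path in phase space so that the conifold thimble $\Gamma^{\phi+\delta}_1$ is unambiguously the analytic continuation of the positive real cycle $e^{-\pi\iu}\Gamma_\R$ across the admissible region, which requires knowing that no Picard--Lefschetz transformation involving $\crit_1$ occurs for $\phi\in(-\alpha_0,\alpha_0)$ near $q_0$; and (b) fix the standard determination of $\frs_\cO(\mir_+(q),z)$ on the positive real locus (Remark~\ref{rem:standard_determination}) and check it is consistent with the branch implicitly used in defining $\Phi^+_\phi$ via $z$ in the sector $I_\phi$ around $\arg z=\phi$. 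Here the fact, from Proposition~\ref{prop:critv_on_curve} together with the weak-Fano hypothesis (Conjecture~$\cO$-type positivity), that the conifold critical value is the one of smallest modulus and lies on the positive real axis is what makes phases near $0$ (equivalently, the rotation from $\phi=\pi$ on the dual cycle) admissible and pins down the correct branch. Once these branch choices are regimented, the identification $V_1=\cO$ follows formally from Theorem~\ref{thm:main_intpaper}(2) and the naturality of the mirror isomorphism with respect to the pairing.
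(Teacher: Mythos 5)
There is a genuine gap in the way you identify the dual flat section $s_1$ with $\frs_\cO$. The thimble $\Gamma_1^{\phi+\delta}$ that appears in the integral formula for $\Phi^+_\phi$ (Proposition~\ref{prop:analytic_lift_Bri}) has $F_q$-image $\bu_1(q)-\R_{\ge 0}e^{\iu\phi}$, i.e.\ it runs off towards $-\infty$; it is \emph{not} a deformation of the positive real cycle $\Gamma_\R(q)$, whose image is $\bu_1(q)+\R_{\ge 0}$ (Lemma~\ref{lem:real_Lefschetz} concerns the phase $\pi$, not phases near $0$). The two cycles live in different relative homology groups, and the map $\Gamma\mapsto V(\Gamma)$ of Theorem~\ref{thm:main_intpaper} is only defined on $H_n(\cY_q,\{\Re(-F_q)\ll 0\};\Z)$, so the expression $\frs_{V(\Gamma_1^{\phi+\delta})}$ does not make sense as written; identifying the phase-$0$ thimble with $e^{-\pi\iu}\Gamma_\R$, as you suggest, is exactly the half-monodromy that makes the Euler pairing \eqref{eq:Euler_Poincare} non-symmetric (Remark~\ref{rem:intersection_Euler}), and it changes the associated $K$-class, so your argument does not yield $\tPhi^+_\phi\big(e^{\bu_1/z}\frs_\cO\big)=e_1$.

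The missing mechanism is the duality between the two opposite sectorial lifts (Remark~\ref{rem:pairing_analytic_lift}): the section $\varphi_i$ with $\tPhi^+_\phi(\varphi_i)=e_i$ is characterized by $P(\Mir_+(\Omega),\varphi_i)(q,z)=\Phi^i_{\phi+\pi}(\Omega)(q,-z)$, i.e.\ by an oscillatory integral of $e^{-F_q/z}$ over the thimble $\Gamma_i^{\phi+\pi}(q)$ at the \emph{opposite} phase. Theorem~\ref{thm:main_intpaper}(1) and non-degeneracy of $P$ then give $\varphi_i=e^{\bu_i(q)/z}\frs_{V_i}$ with $V_i=V\big(\Gamma_i^{\phi+\pi}(q)\big)$; since these thimbles form a $\Z$-basis of the relative homology, the $V_i$ form a genuine basis of $K(\frX_+)$ (your appeal to the marked reflection system only gives linear independence over $\C$). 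It is $\Gamma_1^{\phi+\pi}(q)$, not $\Gamma_1^{\phi+\delta}$, that deforms continuously to $\Gamma_1^\pi(q_0)=\Gamma_\R(q_0)$ for $|\phi|<\alpha_0$ — and this is precisely how $\alpha_0$ is chosen: since the only critical point of $F_{q_0}$ on $\Gamma_\R(q_0)$ is the conifold point, $\Gamma_1^{\pi+\phi}(q)$ varies continuously near $(q_0,0)$, so no Picard--Lefschetz jump occurs and $V_1=V(\Gamma_\R(q_0))=\cO$ by Theorem~\ref{thm:main_intpaper}(2). With this correction your outline essentially becomes the intended argument; the Kouchnirenko/Conjecture-$\cO$ positivity considerations you invoke are not needed at this step.
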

\begin{proof}Let $\Gamma^\phi_i(q)$ denote the Lefschetz thimble \eqref{eq:Lefschetz} of $F_q$
associated with the critical point $\crit_i(q)$ and phase~$\phi$.
By Lemma~\ref{lem:real_Lefschetz}, $\Gamma_1^\pi(q_0)
=\Gamma_\R(q_0)$.
Since there are no critical points of~$F_{q_0}$ on~$\Gamma_\R(q_0)$
other than the conifold point $\crit_\R(q_0)$,
$\Gamma^{\pi+\phi}_1(q)$ varies continuously
in a neighbourhood of $(q,\phi) = (q_0,0)$.
Let $\alpha_0\in (0,\pi/2)$ be such that $\Gamma_1^{\pi+\phi}(q)$
depends continuously on $(q,\phi)$ as $(q,\phi)$ varies
in a neighbourhood of $\{q_0\} \times (-\alpha_0,\alpha_0)$
in $\cVss_+ \times \R$.

Choose an admissible phase $\phi \in (-\alpha_0,\alpha_0)$
for $\{\bu_1(q_0),\dots,\bu_{N_+}(q_0)\}$ and let
$\Phi^+_\phi$, $\tPhi^+_\phi$ be the associated analytic lifts
over a neighbourhood $B\times I_\phi$ of $\big(q_0,\big(0,e^{\iu\phi}\big)\big)$
as above.
Let $\varphi_i$ denote a section of
$\pi^*\mir_+^*\QDMan(\frX_+)|_{B\times I_\phi}$
such that $\tPhi_\phi^+(\varphi_i) = e_i$.
Take a local section $\Omega = \Omega_{q,z}$ of $\Bri(F)$ near
$q=q_0$.
By the definition~\eqref{eq:lift_by_oscillatory} of $\Phi_\phi^+$,
for $q\in B$ and $z\in\C^\times$ with
$|\arg(z)-\phi|<\pi/2$,
we have\footnote{Recall from Proposition \ref{prop:analytic_lift_Bri}
that $B$ is chosen sufficiently small so that $\Gamma_i^{\phi+\pi}(q)$ deforms continuously as $q$ varies in $B$.}
\begin{align*}
(2\pi z)^{-n/2} e^{\bu_i(q)/z}
\int_{\Gamma_i^{\phi+\pi}(q)} e^{-F_q/z} \Omega_{q,-z} &=
\Phi_{\phi+\pi}^i(\Omega)(q,-z) \\
& = P_\std(\Phi_{\phi+\pi}(\Omega), e_i)(q,z) \\
& = P_\std\big(\tPhi_{\phi+\pi}(\Mir_+(\Omega)),
\tPhi_{\phi}(\varphi_i)\big)(q,z) \\
&= P(\Mir_+(\Omega),\varphi_i)(q,z),
\end{align*}
where $P_\std$ is the diagonal pairing as in
Proposition \ref{prop:formal_decomp_Bri}
and we used Remark \ref{rem:pairing_analytic_lift} in the last step.
By Theorem~\ref{thm:main_intpaper}(1), the left-hand side
of the above equation equals
\[
e^{\bu_i(q)/z} P(\Mir_+(\Omega), \frs_{V_i}(\mir_+(q),z))
\qquad
\text{with $V_i = V\big(\Gamma^{\phi+\pi}_i(q)\big)$}.
\]
Since the above equation holds for all $\Omega$, we have $\varphi_i = e^{\bu_i(q)/z}\frs_{V_i}(\mir_+(q),z)$.

We now set $i=1$. By the choice of $\alpha_0$ and $\phi$, the cycle $\Gamma_1^{\phi+\pi}(q_0)$ (and hence
$\Gamma_1^{\phi+\pi}(q)$ with $q\in B$) is a continuous deformation of $\Gamma_1^\pi(q_0) = \Gamma_\R(q_0)$.
Thus we get $V_1 = V_1(\Gamma_\R(q_0)) = \cO$ by Theorem \ref{thm:main_intpaper}(2).
\end{proof}

\begin{Remark}\label{rem:unambiguous_component} Recall from Propositions \ref{prop:formal_decomposition} and~\ref{prop:formal_decomp_Bri} that the formal decomposition and its analytic lift $\Phi^+_\phi$ are ambiguous up to multiplication by $\diag(\pm 1,\dots,\pm 1)$, and this ambiguity is fixed once we give a local trivialization of the $\mu_2$-local system~$\ori$.
We note that there is a~standard trivialization of $\ori$ at the
conifold point $\crit_\R$ since the Hessian of $F_q$
at $\crit_\R$ is positive-definite; hence the component of the
analytic lift $\Phi^+_\phi$ corresponding to $\crit_\R(q)$
(which is the first component) is unambiguous.
\end{Remark}

\begin{Remark} Proposition \ref{prop:lift_str_sheaf_+}
says that $\frs_\cO$ is characterized by the exponential asymptotics
$\frs_\cO \sim e^{-\bu_1(q)/z} \Psi_\R(q)$ as $z\to 0$
along the sector $\arg z \in (-\frac{\pi}{2}-\alpha_0,
\frac{\pi}{2}+\alpha_0)$
when $q$ lies in a neighbourhood of the positive real locus, where
$\Psi_\R(q)$ is the normalized idempotent
(see Remark~\ref{rem:normalized_idempotents})
corresponding to the conifold point $\crit_\R(q)$.
In terms of the marked reflection system in Section~\ref{subsec:MRS},
it also says that $\hGamma_{\frX_+} \cup (2\pi\iu)^{\deg_0/2} \inv^*
\tch(V_i^+)$, $1\le i\le N_+$ give the asymptotic basis at~$q_0$ and~$\phi$.
\end{Remark}

\begin{Remark} At various places in Section~\ref{sec:functoriality}, we work around
for the fact that $u_1(q)+\R_{\ge 0}$ may contain
other critical values; the argument would become much
simpler if otherwise.
\end{Remark}

\subsubsection{The structure sheaf of $\frX_-$}
\label{subsubsec:str_sheaf_-}
We repeat the same discussion for $\frX_-$.
The difference is that we consider the analytic lift
over a region $\cV_-$ which protrudes from the small
quantum cohomology locus $\cMsm$ of $\frX_-$.

Let $\cMsmtimes:=\LL'^\star \otimes \C^\times$,
$\cYsmtimes:=(\C^\times)^{S_-}$ denote the open dense tori
in $\cMsm$ and $\cYsm$ respectively and set:
\begin{gather*}
\cMsm_\R := \LL'^\star \otimes \R_{>0} \subset
\cMsmtimes, \\
\cYsm_\R := (\R_{>0})^{S_-} \subset
\cYsmtimes, \\
\Gamma_\R = \Gamma_\R(q) := \text{the fibre of $\cYsm_\R \to \cMsm_\R$
at $q\in \cMsm_\R$ (as defined before)},
\end{gather*}
where $\LL'=\Ker\big(\Z^{S_-}\to \bN\big)$
is the lattice as in Section~\ref{subsubsec:analytic_lift}.
For $q\in \cMsm_\R$, $\Gamma_\R(q)$ is
the Lefschetz thimble of $F_q$ associated
with the conifold point $\crit_\R(q)$ and the phase $\pi$
as in Lemma~\ref{lem:real_Lefschetz}.
The analytified Brieskorn module $\Brian(F)_\bSigma$
is defined over an analytic open neighbourhood $\cV_-$ of
$0_-:=0_{\bSigma_-}$
in $\cM$ (see Definition~\ref{def:analytic_Bri}).
By the non-equivariant limit of Theorem~\ref{thm:an_mirror_isom},
by shrinking $\cV_-$ if necessary, we have a mirror map
$\mir_- \colon \cV_- \to
[\cM_{\rm A}(\frX_-)/\Pic^\st(\frX_-)]$ and a~mirror isomorphism
\begin{equation}\label{eq:Mir_-_completed}
\Mir_- \colon \ \Brian(F)_\bSigma \cong
\mir_-^*\overline{\QDMan}(\frX_-).
\end{equation}
By Proposition~\ref{prop:Bri_weak_Fano_cor},
by shrinking $\cV_-$ further if necessary,
this mirror isomorphism can be lifted to a fully analytic mirror isomorphism
over $\cVsm_-=\cV_-\cap \cMsm$
\begin{equation}\label{eq:Mir_-}
\Mir_- \colon \ \Brism(F)|_{\cVsm_-} \cong \mir_-^*\QDMan(\frX_-).
\end{equation}
\begin{Theorem}[{\cite[Theorems 4.11 and 4.14, Section~4.3.1]{Iritani:Integral}}]
\label{thm:main_intpaper_2}
The conclusions of Theorem $\ref{thm:main_intpaper}$ hold true
even when $\frX_+$, $\cV_+$, $\Mir_+$, $\mir_+$, $\cM_\R$
there are replaced with
$\frX_-$, $\cV_-$, $\Mir_-$, $\mir_-$, $\cMsm_\R$ respectively.
\end{Theorem}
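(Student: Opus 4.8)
The plan is to deduce Theorem~\ref{thm:main_intpaper_2} from Theorem~\ref{thm:main_intpaper} by observing that the proof of the latter (as given in \cite{Iritani:Integral}) is entirely local on the toric data and depends only on the fact that $\frX_\bSigma$ is a weak-Fano compact toric stack satisfying the generation assumption \eqref{eq:generation}. Since $\frX_-$ is assumed in Section~\ref{subsec:functoriality_nota} to be a weak-Fano compact toric stack and $S_- = S\cap \Delta_-$ generates $\bN$, the results of \cite[Theorems 4.11 and 4.14, Section~4.3.1]{Iritani:Integral} apply verbatim with $\frX_\bSigma$ replaced by $\frX_-$, $\cM$ replaced by $\cMsm$, the LG model replaced by $(\pr\colon \cYsm\to\cMsm,F)$, and $\Bri(F)$ replaced by $\Brism(F)$. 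The only substantive point to address is that the mirror isomorphism $\Mir_-$ in Theorem~\ref{thm:main_intpaper_2} is to be understood as the fully analytic mirror isomorphism \eqref{eq:Mir_-} over $\cVsm_- = \cV_-\cap\cMsm$, which exists by Proposition~\ref{prop:Bri_weak_Fano_cor}(2); this is precisely the object for which \cite{Iritani:Integral} proves the period integral formula.

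\textbf{Step 1.} First I would recall that, since $\frX_-$ is compact and weak Fano and $S_-$ generates $\bN$, Proposition~\ref{prop:Bri_weak_Fano} gives a Zariski-open $\cMsmlf$ containing $0_-$ over which $\Brism(F)$ is locally free of rank $\dim H^*_\CR(\frX_-)$, and Proposition~\ref{prop:Bri_weak_Fano_cor}(2) upgrades the formal mirror isomorphism to an analytic isomorphism $\Mir_-\colon \Brism(F)|_{\cVsm_-}\cong \mir_-^*\QDMan(\frX_-)$ over an analytic neighbourhood $\cVsm_-$ of $0_-$, where $\mir_-$ is the convergent mirror map. This is the exact analogue of Proposition~\ref{prop:Bri_weak_Fano_cor}(2) used on the $\frX_+$ side before Theorem~\ref{thm:main_intpaper}.

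\textbf{Step 2.} Next I would set up the positive real locus: by Section~\ref{subsubsec:str_sheaf_-} we already have $\cMsm_\R = \LL'^\star\otimes\R_{>0}\subset\cMsmtimes$, the fibre $\Gamma_\R(q) = \cYsm_q\cap\cYsm_\R$, and Lemma~\ref{lem:real_Lefschetz} identifying $\Gamma_\R(q)$ with the Lefschetz thimble of $F_q$ at the conifold point with phase $\phi=\pi$; all of these are defined because $0$ lies in the interior of $\Delta_-$ (compactness of $\frX_-$). One then verifies, exactly as in \cite[Section~4.3.1]{Iritani:Integral}, that $F_q|_{\Gamma_\R}$ is strictly convex (positive-definite logarithmic Hessian $\sum_{b\in S_-} u_b\, b_i b_j$ on the positive locus), proper, and bounded below, so a unique conifold point $\crit_\R(q)$ exists. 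Then the identification $H_n(\cYsm_q,\{\Re(-F_q)\ll 0\};\Z)\cong K(\frX_-)$, $\Gamma\mapsto V(\Gamma)$, and the period integral formula $(2\pi z)^{-n/2}\int_\Gamma e^{-F_q/z}\Omega_{q,-z} = P(\Mir_-(\Omega),\frs_{V(\Gamma)}(\mir_-(q),z))$, together with $V(\Gamma_\R(q)) = \cO_{\frX_-}$, all follow word for word from \cite[Theorems 4.11, 4.14]{Iritani:Integral}: these theorems are stated for an arbitrary weak-Fano compact toric stack presented as a GIT quotient with a generating finite subset of $\bN$, which is exactly the situation of $(\frX_-,S_-)$. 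The standard branch conventions of Remark~\ref{rem:standard_determination} carry over identically ($\log q_a\in\R$ in the $I$-function, $\log z\in\R$ for $z>0$).

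\textbf{Step 3 and the main obstacle.} The only genuine subtlety — and the step I expect to require care — is bookkeeping about \emph{which} mirror isomorphism appears on which space. On the $\frX_-$ side there are two: the $z$-completed isomorphism $\Mir_-\colon\Brian(F)_\bSigma\cong\mir_-^*\overline{\QDMan}(\frX_-)$ of \eqref{eq:Mir_-_completed}, defined over the analytic neighbourhood $\cV_-$ of $0_-$ inside the \emph{full} base $\cM$, and the fully analytic isomorphism \eqref{eq:Mir_-} over $\cVsm_- = \cV_-\cap\cMsm$. The period integral formula of Theorem~\ref{thm:main_intpaper_2} concerns oscillatory integrals $\int_\Gamma e^{-F_q/z}\Omega$ over cycles in $\cYsm_q = \pr^{-1}(q)$ for $q\in\cMsm_\R$, hence it is a statement about $\Brism(F)$ and the isomorphism \eqref{eq:Mir_-}; the compatibility of the two mirror isomorphisms under the closed embedding $\cVsm_-\hookrightarrow\cV_-$ is exactly Proposition~\ref{prop:Bri_weak_Fano_cor}(1) (the natural map $\Brism(F)\otimes\cOan[\![z]\!]\to\Brian(F)_\bSigma$ is an isomorphism over $\cVsm$) combined with the pull-back diagram \eqref{eq:cMsm_pullback}. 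Once this identification is made explicit, the argument of \cite{Iritani:Integral} applies without change, and I would simply record that substitution. No new computation is needed; the content is that every ingredient in \cite{Iritani:Integral}'s proof — convergence of the $I$-function, Birkhoff factorization producing $L(\tau,z)$, the mirror theorem of \cite{CCIT:MS}, and the topology of the vanishing cycles — holds for $(\frX_-,S_-)$ under the hypotheses imposed in Section~\ref{subsec:functoriality_nota}.
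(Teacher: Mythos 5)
Your proposal is correct and matches the paper's treatment: the paper gives no separate argument for Theorem~\ref{thm:main_intpaper_2} but simply cites \cite[Theorems~4.11 and~4.14, Section~4.3.1]{Iritani:Integral}, relying on the fact that the hypotheses of Section~\ref{subsec:functoriality_nota} (compact weak-Fano $\frX_-$ with $S_-=S\cap\Delta_-$ generating $\bN$) place $(\frX_-,S_-)$ squarely within the setting of that reference, exactly as you argue. Your Step~3 bookkeeping about using the fully analytic isomorphism~\eqref{eq:Mir_-} over $\cVsm_-$ (via Proposition~\ref{prop:Bri_weak_Fano_cor}) is consistent with how the theorem is subsequently applied in Proposition~\ref{prop:lift_str_sheaf_-}.
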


By the construction of $\Brian(F)_{\bSigma_-}$
in Section~\ref{subsec:analytified_Bri},
we have an open neighbourhood $\cB_-$
of $\tzero_- :=\tzero_{\bSigma_-} \in \cY$ such that
the family
\[
\Cr_- := \cB_- \cap \tpr^{-1}(\cV_-\times \{0\}) \xrightarrow{\pr} \cV_-
\]
of relative critical points (of $F$) in $\cB_-$
is a finite flat morphism of degree
$N_- = \dim H^*_\CR(\frX_-)$ (see also the discussion
in Section~\ref{subsubsec:formal_decomp_Bri}),
where $\tpr$ is the map in \eqref{eq:tpr}.
Along the small quantum cohomology locus $\cVsm_- =\cV_-\cap \cMsm$,
this family contains all relative critical points, i.e., $\cB_- \cap \tpr^{-1}
(\cVsm_-\times \{0\}) = \tpr^{-1}(\cVsm_-\times \{0\})$
(since the Brieskorn module $\Brism(F)$ has the rank $N_-$
by Proposition \ref{prop:Bri_weak_Fano}).
We define
\begin{gather*}
\cVss_- := \{q\in \cV_- \colon
\text{the fibre of $\Cr_- \to \cV_-$ at $q\in \cV_-$ is reduced} \},\\
\cVsmss_- := \cVss_- \cap \cMsm, \\
\cVsmss_{-,\R} := \cVsmss_- \cap \cMsm_\R,
\end{gather*}
where $\cVss_-$ is open dense in $\cV_-$, $\cVsmss_-$ is open dense in
$\cVsm_-$ and $\cVsmss_{-,\R}$ is open dense in $\cVsm_- \cap \cMsm_\R$.

Choose $q_0 \in \cVsmss_{-,\R}$.
Let $\crit_1(q),\dots,\crit_{N_-}(q)$ be all branches of critical points
of~$F_q$ contained in~$\cB_-$ and defined in a neighbourhood of
$q=q_0$ in $\cVss_-$. We may assume that
$\crit_1(q_0) = \crit_\R(q_0)$ and
write $\bu_i(q) = F_q(\crit_i(q))$ as before.
By combining the mirror isomorphism~\eqref{eq:Mir_-_completed}
and the formal decomposition of the analytified Brieskorn module
in Proposition~\ref{prop:formal_decomp_Bri}, we obtain
a~formal decomposition
\[
\hPhi \colon \ \mir_-^*\overline{\QDMan}(\frX_-) \cong
\bigoplus_{i=1}^{N_-}
\left(\cOan[\![z]\!], d+d(\bu_i/z)\right)
\]
over a neighbourhood of $q_0$ in $\cVss_-$.
By Proposition~\ref{prop:Hukuhara-Turrittin},
for an admissible phase $\phi$ for $\{\bu_1(q_0),\dots,\allowbreak \bu_{N_-}(q_0)\}$,
we have a connected open neighbourhood $B$ of $q_0$ in $\cVss_-$,
a sector
$I_\phi = \big\{\big(r,e^{\iu\theta}\big)\colon\allowbreak |\theta -\phi|<\frac{\pi}{2}+\epsilon\big\}$
(with small $\epsilon>0$) and an analytic lift of $\hPhi$:
\[
\tPhi^-_\phi \colon \ \pi^*\mir_-^*\QDMan(\frX_-) \Big|_{B\times I_\phi}
\cong \bigoplus_{i=1}^{N_-}\left
(\cA_{B\times I_\phi}, d + d(\bu_i/z) \right),
\]
where $\pi \colon \cVss_- \times \tCC \to \cVss_- \times \C$
is the oriented real blow-up.
By the uniqueness of the analytic lift,
this coincides with the analytic lift of the Brieskorn module
from Proposition~\ref{prop:analytic_lift_Bri}
over the small quantum cohomology locus $B\cap \cMsm$, via
the mirror isomorphism \eqref{eq:Mir_-}.
Each component of~$\hPhi$,~$\tPhi^-_\phi$ is ambiguous up to sign,
but recall from Remark~\ref{rem:unambiguous_component}
that the sign of the \emph{first} component
(corresponding to the conifold point~$\crit_\R(q)$)
is determined canonically.
We have the following result for $\frX_-$
parallel to Proposition~\ref{prop:lift_str_sheaf_+}:

\begin{Proposition}\label{prop:lift_str_sheaf_-}
For $q_0\in \cVsmss_{-,\R}$, there exists
$\alpha_0\in (0,\pi/2)$ such that the following holds.
For every admissible phase $\phi \in (-\alpha_0,\alpha_0)$
for $\{\bu_1(q_0),\dots,\bu_{N_-}(q_0)\}$,
there exists a basis $\{V_i \}_{i=1}^{N_-}$ of
$K(\frX_-)$ with $V_1$ being the structure sheaf $\cO$ of $\frX_-$,
such that the corresponding
flat section $s_i = \frs_{V_i}(\mir_-(q),z)$ satisfies
$\tPhi^-_\phi(e^{\bu_i(q)/z} s_i) = e_i$.
\end{Proposition}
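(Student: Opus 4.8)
The statement for $\frX_-$ is the exact analogue of Proposition~\ref{prop:lift_str_sheaf_+}, so the plan is to run the same argument with the roles adjusted: the analytic lift $\tPhi^-_\phi$ on the small quantum cohomology locus comes from $\Phi_\phi$ of Proposition~\ref{prop:analytic_lift_Bri} applied to $\Brism(F)$ via the mirror isomorphism~\eqref{eq:Mir_-}, and the identification of the resulting flat sections with $K$-classes comes from Theorem~\ref{thm:main_intpaper_2}. First I would note that by Lemma~\ref{lem:real_Lefschetz} (applied to the LG model $(\pr\colon\cYsm\to\cMsm,F)$) the positive real locus $\Gamma_\R(q_0)$ is the Lefschetz thimble of $F_{q_0}$ at the conifold point $\crit_\R(q_0)=\crit_1(q_0)$ with phase $\phi=\pi$. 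Since $\crit_\R(q_0)$ is the unique critical point of $F_{q_0}$ on $\Gamma_\R(q_0)$, the deformed thimble $\Gamma_1^{\pi+\phi}(q)$ varies continuously in a neighbourhood of $(q,\phi)=(q_0,0)$; I would fix $\alpha_0\in(0,\pi/2)$ so that this continuity holds for $(q,\phi)$ near $\{q_0\}\times(-\alpha_0,\alpha_0)$ inside $\cVsmss_-\times\R$ (and so that the thimble stays within the region where $B$ is valid).

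Next, for an admissible phase $\phi\in(-\alpha_0,\alpha_0)$ for $\{\bu_1(q_0),\dots,\bu_{N_-}(q_0)\}$, I would take the analytic lift $\Phi^-_\phi$ of $\Brism(F)$ over $B\times I_\phi$ given by formula~\eqref{eq:lift_by_oscillatory} of Proposition~\ref{prop:analytic_lift_Bri}; composing with $\Mir_-^{-1}$ from~\eqref{eq:Mir_-} gives $\tPhi^-_\phi$, and by the uniqueness clause of Proposition~\ref{prop:Hukuhara-Turrittin} this agrees with the lift of the formal decomposition $\hPhi$ described just before the statement. Let $\varphi_i$ be the section of $\pi^*\mir_-^*\QDMan(\frX_-)|_{B\times I_\phi}$ with $\tPhi^-_\phi(\varphi_i)=e_i$. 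Exactly as in the proof of Proposition~\ref{prop:lift_str_sheaf_+}, for $q\in B$ and $|\arg(z)-\phi|<\pi/2$ and any local section $\Omega=\Omega_{q,z}$ of $\Brism(F)$ I would compute
\[
(2\pi z)^{-n/2} e^{\bu_i(q)/z}\int_{\Gamma_i^{\phi+\pi}(q)} e^{-F_q/z}\Omega_{q,-z}
= P(\Mir_-(\Omega),\varphi_i)(q,z),
\]
using the definition~\eqref{eq:lift_by_oscillatory} of $\Phi^-_{\phi+\pi}$, the $z$-sesquilinearity of $P_\std$, and Remark~\ref{rem:pairing_analytic_lift} for the pairing between opposite sectors; then by Theorem~\ref{thm:main_intpaper_2}(1) the left-hand side equals $e^{\bu_i(q)/z}P(\Mir_-(\Omega),\frs_{V_i}(\mir_-(q),z))$ with $V_i=V(\Gamma_i^{\phi+\pi}(q))\in K(\frX_-)$, so $\varphi_i=e^{\bu_i(q)/z}\frs_{V_i}(\mir_-(q),z)$. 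Since $\{\varphi_i\}$ is a frame and $V\mapsto$ its flat section is the $\hGamma$-integral structure, $\{V_i\}$ is a basis of $K(\frX_-)$. Finally, setting $i=1$: by the choice of $\alpha_0$ and $\phi$, $\Gamma_1^{\phi+\pi}(q)$ is a continuous deformation of $\Gamma_1^\pi(q_0)=\Gamma_\R(q_0)$, so Theorem~\ref{thm:main_intpaper_2}(2) gives $V_1=V(\Gamma_\R(q_0))=\cO_{\frX_-}$; the sign ambiguity of the first component is resolved by the canonical trivialization of $\ori$ at the conifold point (Remark~\ref{rem:unambiguous_component}).

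The one genuinely new point compared with Proposition~\ref{prop:lift_str_sheaf_+} is bookkeeping about the domains: here $\tPhi^-_\phi$ is defined over the possibly larger region $\cV_-$ (using the analytified, $z$-formal module $\Brian(F)_{\bSigma_-}$), while the oscillatory-integral representation and Theorem~\ref{thm:main_intpaper_2} live on the small quantum cohomology locus $\cVsm_-=\cV_-\cap\cMsm$; so I would carry out the computation above on $B\cap\cMsm$, where Proposition~\ref{prop:Bri_weak_Fano_cor} and the fully analytic isomorphism~\eqref{eq:Mir_-} are available, and then invoke uniqueness of the analytic lift to transport the conclusion about $\tPhi^-_\phi$ back to the original $B\subset\cVss_-$. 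This domain-matching — together with verifying that the conifold point and its thimble indeed stay on (or deform compatibly near) $\cMsm_\R$ — is the only real obstacle; the rest is a verbatim repetition of the $\frX_+$ argument.
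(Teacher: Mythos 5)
Your main computation is exactly the paper's route: the paper's proof literally says ``the same argument as Proposition~\ref{prop:lift_str_sheaf_+} (using Theorem~\ref{thm:main_intpaper_2} in place of Theorem~\ref{thm:main_intpaper})'' gives the classes $V_i$ with $V_1=\cO$ and the identity $\tPhi^-_\phi\big(e^{\bu_i(q)/z}s_i\big)=e_i$ \emph{over $B\cap\cMsm$}, and your use of the uniqueness clause of Proposition~\ref{prop:Hukuhara-Turrittin} to identify $\tPhi^-_\phi$ restricted to the small quantum cohomology locus with the oscillatory-integral lift of Proposition~\ref{prop:analytic_lift_Bri} via \eqref{eq:Mir_-} is also what the paper does (in the paragraph preceding the proposition). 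The gap is in your very last step, which is precisely the one piece of content this proposition adds beyond Proposition~\ref{prop:lift_str_sheaf_+}: you propose to ``invoke uniqueness of the analytic lift to transport the conclusion about $\tPhi^-_\phi$ back to the original $B\subset\cVss_-$.'' Uniqueness of the Hukuhara--Turrittin lift cannot do this transport. To apply it you would need a second analytic lift over all of $B\times I_\phi$ (say the $\cA$-linear map $\Psi$ determined by $s_i\mapsto e^{-\bu_i(q)/z}e_i$) \emph{already known to induce the formal decomposition $\hPhi$ over all of $B$}; but what you know is only that its induced formal decomposition agrees with $\hPhi$ along $B\cap\cMsm$, which is a positive-codimension analytic subset of $B$ (locally $t_{\hb}=0$), and holomorphic data agreeing on such a subset need not agree on $B$. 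So the uniqueness statement gives you nothing off the small locus without a further argument.

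The correct (and one-line) mechanism, which is what the paper uses, is flatness rather than uniqueness: $\tPhi^-_\phi(s_i)$ and $e^{-\bu_i(q)/z}e_i$ are both flat sections of $\bigoplus_j\big(\cA_{B\times I_\phi},\,d+d(\bu_j/z)\big)$ over the connected set $B\times I_\phi$ (the first because $s_i$ is $\nabla$-flat and $\tPhi^-_\phi$ intertwines the connections), and they coincide on the non-empty subset $(B\cap\cMsm)\times I_\phi$; since a flat section of a flat connection over a connected base is determined by its value at a single point, they coincide on all of $B\times I_\phi$, i.e., $\tPhi^-_\phi\big(e^{\bu_i(q)/z}s_i\big)=e_i$ holds over the whole of $B$. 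With this replacement your argument is complete and agrees with the paper's; the continuity of the conifold thimble near $\cMsm_\R$ that you worry about is handled exactly as in Proposition~\ref{prop:lift_str_sheaf_+} and needs nothing extra here.
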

\begin{proof}
The same argument as Proposition \ref{prop:lift_str_sheaf_-}
(using Theorem \ref{thm:main_intpaper_2} in place of
Theorem \ref{thm:main_intpaper})
shows that there exist $V_i \in K(\frX_-)$, $i=1,\dots,N_-$
with $V_1 = \cO$ such that
$\tPhi^-_\phi\big(e^{\bu_i(q)/z}s_i\big) = e_i$ over $B \cap \cMsm$,
where $s_i = \frs_{V_i}(\mir_-(q),z)$.
It follows from the flatness of $s_i$ and $e^{-\bu_i(q)/z} e_i$
that $\tPhi^-_\phi\big(e^{\bu_i(q)/z} s_i\big) =e_i$ holds
over the whole~$B$.
\end{proof}

\subsection{Inclusion of the local systems of Lefschetz thimbles}\label{subsec:inclusion}
The Brieskorn module over the small quantum cohomology locus
is underlain by a local system of Lefschetz thimbles. In this section,
we observe an inclusion of the local system over $\cMsm$
in a neighbourhood of $0_- =0_{\bSigma_-}$ (mirror to $\frX_-$)
to the local system over $\cM$ (mirror to $\frX_+$)
under a slide in the `positive real' direction.
The inclusion shall be identified with the pull-back $\varphi^* \colon
K(\frX_-) \to K(\frX_+)$ in $K$-theory in Section~\ref{subsec:functoriality}.

\subsubsection{Convergent and divergent critical branches}
\label{subsubsec:conv_div}
Let $\cV_\pm$ be (sufficiently small) analytic open neighbourhoods
of $0_\pm\in \cM$ as in
the previous Section~\ref{subsec:identifying_O}.
Recall the $\C^\times$-action on $\pr\colon \cY\to \cM$
generated by the Euler vector field
considered in Sections~\ref{subsec:decomp_Bri}--\ref{subsec:comparison_QDM}.
As discussed there, we may assume that
$\cV_\pm$ is $\C^\times$-invariant, because the mirror map $\mir_\pm$,
the mirror isomorphism $\Mir_\pm$ and the analytified Brieskorn
module can be extended to the orbit $\C^\times \cV_\pm$
($\cV_\pm$ is the intersection of
$\cU_\pm\subset \cM\times \Lie \T$ in Section~\ref{subsec:comparison_QDM}
with $\cM\times \{0\}$).
Let $\Cr_\pm \to \cV_\pm$ denote the (finite, flat) family of relative critical points
over $\cV_\pm$:
\begin{gather*}
\Cr_+ = \left\{p \in \pr^{-1}(\cV_+)
\colon x_i\parfrac{F}{x_i}(p) = 0 \, (\forall\, i) \right\}
= \tpr^{-1}(\cV_+ \times \{0\}), \\
\Cr_- = \left\{p \in \cB_- \cap \pr^{-1}(\cV_-) \colon
x_i \parfrac{F}{x_i}(p) = 0 \, (\forall\, i) \right\}
= \tpr^{-1}(\cV_-\times \{0\}) \cap \cB_-,
\end{gather*}
where $\cB_-$ is the subset of $\cY$ appearing in the construction of
$\Brian(F)_{\bSigma_-}$ in Section~\ref{subsec:analytified_Bri}
and $\tpr$ is the map in~\eqref{eq:tpr}.
Since all the
relative critical points over $\cV_+$ are contained in $\cB_+$,
we do not need to take the intersection with $\cB_+$ in the first formula.
By Lemma \ref{lem:subcover}, we have an open subset
$\cV_0 = \cU_0 \cap (\cM\times \{0\})
\subset \cV_+ \cap \cV_-$ containing $\cC \setminus \{0_+,0_-\}$
such that the ramified covering $\Cr_+|_{\cV_0}\to \cV_0$
decomposes as:
\[
\Cr_+|_{\cV_0} = \Cr_-|_{\cV_0} \sqcup \cD,
\]
where $\cD\subset \Cr_+$ is an open subset giving
a subcover of $\Cr_+|_{\cV_0}$.
By taking the $\C^\times$-orbit, we may assume
that $\cV_0$ is also $\C^\times$-invariant.
The subcover $\cD \to \cV_0$ consists
of branches of critical points that diverge at $0_-$.
We call critical points corresponding to $\cD$ \emph{divergent}
and those corresponding to $\Cr_-$ \emph{convergent}.
Among the critical points over $\cC$ described in
Proposition~\ref{prop:critv_on_curve}, those corresponding to
$\gamma\neq 0$ are divergent.

\subsubsection[Local co-ordinate system around $0_-=0_{\bSigma_-}$]{Local co-ordinate system around $\boldsymbol{0_-=0_{\bSigma_-}}$}\label{subsubsec:local_around_0-}

Recall from Proposition \ref{prop:chart_LG}
that the local chart of $\cM$ around $0_-$
is given by
\[
\cM_- = \big[\tcM_- /\Pic^\st(\frX_-)\big] \qquad
\text{with} \quad
\tcM_- = \Spec (\C[\Laa(\bSigma_-)_+])
\]
Set $R_- = R(\bSigma_-)$.
By the decomposition \eqref{eq:cone_monoid_decomp},
we have $\Laa(\bSigma_-)_+
\cong \Laa^{\bSigma_-}_+ \times (\Z_{\ge 0})^{S\setminus R_-}$.
Let $\Laa'(\bSigma_-)_+ \cong \Laa^{\bSigma_-}_+\times
(\Z_{\ge 0})^{S_-\setminus R_-}$ denote the monoid
that we obtain from $\Laa(\bSigma_-)_+$
by replacing~$S$ with~$S_-$.
Then we can decompose $\tcM_-$ as
\begin{equation}\label{eq:chart_two_factors}
\tcM_- =
\Spec (\C[\Laa'(\bSigma_-)_+]) \times \C^{\{\hb\}}.
\end{equation}
We denote by $(q,t) = (q, t_\hb)$ a point on $\tcM_-$,
where $q\in \Spec(\C[\Laa'(\bSigma_-)_+])$.
Note that the local chart of $\cMsm$ around $0_-$ is the
substack $\{t=0\}$ of $\cM_-$:
\[
\cMsm_- = \big[
\tcMsm_- /\Pic^\st(\frX_-)\big]
\qquad \text{with} \quad
\tcMsm_- = \Spec(\C[\Laa'(\bSigma_-)_+]).
\]
Recall that $\Pic^\st(\frX_-)$ acts on the chart $\tcM_-$~\eqref{eq:chart_two_factors} by the age pairing~\eqref{eq:Picst_action}; since $t_\hb = q^{-\bw}$
and $\bw \in \LL$, $\Pic^\st(\frX_-)$ acts trivially on
the last co-ordinate~$t_\hb$.

On the chart \eqref{eq:chart_two_factors},
the $\C^\times$-action generated by the Euler vector field
is given by
\begin{equation}\label{eq:Euler_action}
s\cdot (q, t) = \big(s\cdot q, s^{-J} t\big),
\end{equation}
where
$J= \big(\sum\limits_{b\in M_+} D_b\cdot \bw\big)-1$
(as in Proposition \ref{prop:critv_on_curve}).
Note that the $\C^\times$-weight of the co-ordinate~$q^\lambda$ with $\lambda \in \Laa'(\bSigma_-)_+$ is non-negative by Lemma~\ref{lem:non-negative_grading}.

\subsubsection[Sliding out $\cMsm$]{Sliding out $\boldsymbol{\cMsm}$}\label{subsubsec:slide}
Let $\cC\subset \cM$ be the toric curve
connecting $0_-$ and $0_+$ as in Section~\ref{subsec:crit_curve};
in the above chart $\tcM_-$, $\cC$~consists of points $(0,t)$.
By Proposition~\ref{prop:critv_on_curve},
we can choose a closed polydisc $\tD
\subset \{t_\hb=1\} \subset \tcM_-$ of radius $0<\varepsilon<1$
centred at $(0,1)\in \cC$ such that
\begin{itemize}\itemsep=0pt
\item $\tD=\tDsm \times \{1\}$ with
$\tDsm \subset \tcMsm_- = \Spec(\C[\Laa'(\bSigma_-)_+])$
a neighbourhood of $0_-$
given by
\[
\tDsm := \big\{q\colon \big|q^\lambda\big|\le \varepsilon, \, \forall\, \lambda \in
\Laa'(\bSigma_-)_+
\setminus \{0\} \big \};
\]
\item $\D:=\tD/\Pic^\st(\frX_-)$
is contained in $\cV_0$;
\item $\Dsm :=\tDsm/\Pic^\st(\frX_-)$ is contained in
$\cV_-$ (when regarded as a subset of $\cMsm_-$);
\item $F\big(\Cr_- \cap \pr^{-1}(\D)\big)
\subset B_{\rho_0}(0)$;
\item $F\big(\cD\cap \pr^{-1}(\D)\big)
\subset \bigcup\limits_{k=1}^{J} B_{\rho_1}\big(3\rho_0 e^{(2k-1)\pi\iu/J}\big)$,
\end{itemize}
where $\rho_0 := \frac{1}{3} K^{-1/J}$,
$\rho_1 := \min\big(1,3\sin\big(\frac{\pi}{2J}\big)\big) \rho_0$,
$B_\rho(z)$ denotes the open disc of radius~$\rho$
centred at $z\in \C$,
and $K$, $J$ are as in Proposition~\ref{prop:critv_on_curve}.
The last two conditions imply that convergent critical values over $\D$
are contained in $\{|\bu|<\rho_0\}$ and divergent
critical values over $\D$ are contained in
$\{|\bu| > 2\rho_0\}$
and away from the sector $-\frac{\pi}{2J}\le \arg(\bu)
\le \frac{\pi}{2J}$ with vertex at the origin,
see Fig.~\ref{fig:conv_div_critv}.
See also Fig.~\ref{fig:moduli_space}.

\begin{figure}[th!]\centering
\includegraphics[scale=0.92]{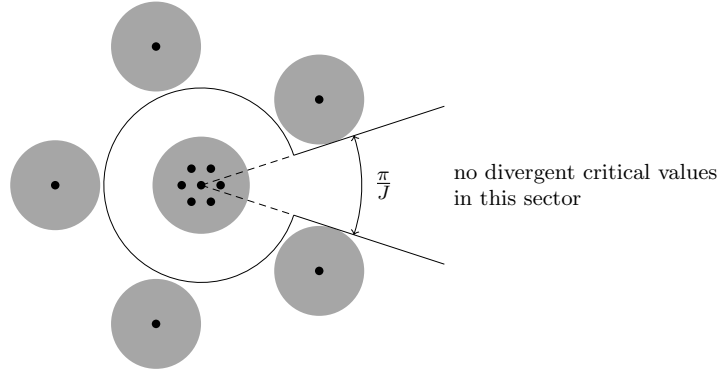}
\caption{Distribution of critical values over $\D$ for $J=5$:
convergent critical values are in the shaded disc of radius $\rho_0$
in the centre; divergent critical values are in the `satellite' discs.} \label{fig:conv_div_critv}
\end{figure}

\begin{figure}[th!]\centering
\includegraphics[scale=0.92]{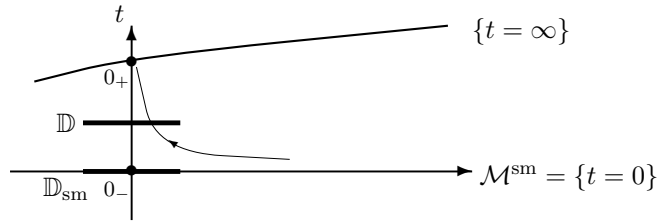}
\caption{The base space $\cM$ of the LG model and the discs $\D$, $\Dsm$:
the vertical $t$-axis is the curve $\cC$ connecting $0_+$ and $0_-$;
the thin curve denotes the negative Euler flow (the flow as $s\to 0$ in~\eqref{eq:Euler_action}).}\label{fig:moduli_space}
\end{figure}

\begin{Definition}\label{def:slide}
Define the \emph{sliding map} $\slide_t \colon \Dsm \to \cM_-$
with $0\le t\le 1$ by
$\slide_t(q) = (q, t)$.
Note that the map $\tilde{\slide}_t \colon
\tDsm \to \tcM_-$, $q\mapsto (q,t)$ between
the uniformizing charts is
$\Pic^\st(\frX_-)$-equivariant and thus descends to the map $\slide_t$.
Note also that $\slide_0 = \id_{\Dsm}$.
\end{Definition}
\begin{Lemma}\label{lem:slide}\quad
\begin{enumerate}\itemsep=0pt
\item[{\rm (1)}] For $0<t\le 1$ and $q\in \Dsm^\times :=
\Dsm\cap \cMsmtimes$, we have $\slide_t(q)\in \cMtimes$.
\item[{\rm (2)}] For $0<t\le 1$, we have $\Image(\slide_t) \subset
t^{-1/J} \cdot \D \subset \cV_0$, where $t^{-1/J}\cdot(-)$ denotes
the $\C^\times$-action generated by the Euler vector field.
\item[{\rm (3)}]
There exists a constant $\rho_2>0$ such that
convergent critical values over
$\Image(\slide_t)$ are contained in $B_{\rho_2}(0) \cap t^{-1/J}\cdot B_{\rho_0}(0)$ and
divergent critical values over $\Image(\slide_t)$ are contained in
$\bigcup\limits_{k=1}^J t^{-1/J} \cdot
B_{\rho_1}\big(3\rho_0 e^{(2k-1)\pi\iu/J}\big)$.
\end{enumerate}
\end{Lemma}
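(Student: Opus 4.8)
The three assertions of Lemma~\ref{lem:slide} are all direct consequences of the explicit description of the $\C^\times$-action on the chart $\tcM_-$ given in \eqref{eq:Euler_action}, together with the bounds on critical values established in the bullet points preceding Definition~\ref{def:slide}. I would organize the proof around the single algebraic identity
\[
s\cdot \slide_t(q) = s\cdot(q,t) = \big(s\cdot q, s^{-J} t\big) = \slide_{s^{-J} t}(s\cdot q),
\]
which follows immediately from \eqref{eq:Euler_action} and the definition $\slide_t(q)=(q,t)$. In particular, taking $s=t^{-1/J}$ (for $0<t\le 1$, so $s\ge 1$) gives $t^{-1/J}\cdot\slide_t(q) = \slide_1(t^{-1/J}\cdot q) = (t^{-1/J}\cdot q, 1)$, i.e.\ $\Image(\slide_t) = t^{1/J}\cdot\big(\tilde\slide_1(t^{-1/J}\cdot\tDsm)\big)$ up to the quotient by $\Pic^\st(\frX_-)$. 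Then I would check that $t^{-1/J}\cdot\tDsm \subset \tDsm$: by Lemma~\ref{lem:non-negative_grading} (invoked in Section~\ref{subsubsec:local_around_0-}) every co-ordinate $q^\lambda$ with $\lambda\in\Laa'(\bSigma_-)_+\setminus\{0\}$ has non-negative $\C^\times$-weight, so applying $s=t^{-1/J}\ge 1$ scales $|q^\lambda|$ by $s$ raised to a non-negative power; since $t\le 1$ this could \emph{increase} $|q^\lambda|$—so I must be careful about the direction. Re-examining: I actually want $\slide_t(q) = t^{1/J}\cdot(t^{-1/J}\cdot q, 1)$ with the inner point lying in $\tD = \tDsm\times\{1\}$; this requires $t^{-1/J}\cdot q \in \tDsm$, i.e.\ scaling \emph{down}. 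Since $t^{-1/J}\le$ nothing useful when $t\le 1$, the correct reading is that the weights of $q^\lambda$ being non-negative means $s\cdot q$ for $s$ \emph{small} ($s\to 0$) moves toward $0_-$; for $0<t\le 1$ we have $t^{-1/J}\ge 1$, so we need the flow with parameter $\le 1$, which is $t^{1/J}\le 1$. Thus the honest statement is $\slide_t(q) = t^{-1/J}\cdot\big((t^{1/J}\cdot q, t)\big)$—I would recompute the exponent bookkeeping carefully and then conclude $\Image(\slide_t)\subset t^{-1/J}\cdot\D$ because $t^{1/J}\cdot q \in \tDsm$ for $q\in\tDsm$ (scaling down, using non-negativity of weights) and hence $(t^{1/J}\cdot q, t)\in\tD$...

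\textbf{Steps, in order.} (1) For Part (1): a point $(q,t)\in\tcM_-$ lies in the open dense torus $\cMtimes$ iff every co-ordinate $q^\lambda$ ($\lambda\in\Laa'(\bSigma_-)_+$) and $t_\hb$ is non-zero; by \eqref{eq:chart_two_factors} the monoid $\Laa(\bSigma_-)_+$ is generated by $\Laa'(\bSigma_-)_+$ and $e_\hb$, and $q\in\Dsm^\times$ means all the $q^\lambda\ne 0$, while $0<t\le 1$ means $t_\hb=t\ne 0$; hence $\slide_t(q)=(q,t)\in\cMtimes$. (2) For Part (2): use the Euler-flow identity above to write $\slide_t(q)$ as a point of the $\C^\times$-orbit of $\D$, namely $\slide_t(q)\in t^{-1/J}\cdot\D$, after verifying $(t^{\pm 1/J}\cdot q, 1)\in\tD$ for $q\in\tDsm$ using the non-negativity of weights (Lemma~\ref{lem:non-negative_grading}) and $0<t\le 1$; the inclusion $t^{-1/J}\cdot\D\subset\cV_0$ then follows because $\D\subset\cV_0$ (one of the bullet conditions) and $\cV_0$ is $\C^\times$-invariant (arranged in Section~\ref{subsubsec:conv_div}). (3) For Part (3): since $\tpr$ is $\C^\times$-equivariant with $\C^\times$ acting on the critical-value coordinate by the scalar action of weight~$1$ (the potential $F$ has $\C^\times$-weight $1$), the critical values over $\slide_t(q)\in t^{-1/J}\cdot\D$ are exactly $t^{-1/J}$ times the critical values over the corresponding point of $\D$; the bullet-point bounds give convergent critical values over $\D$ inside $B_{\rho_0}(0)$ and divergent ones inside $\bigcup_{k=1}^J B_{\rho_1}(3\rho_0 e^{(2k-1)\pi\iu/J})$, so the convergent ones over $\Image(\slide_t)$ lie in $t^{-1/J}\cdot B_{\rho_0}(0)$ and the divergent ones in $\bigcup_k t^{-1/J}\cdot B_{\rho_1}(3\rho_0 e^{(2k-1)\pi\iu/J})$; finally, to get a \emph{uniform} containment $B_{\rho_2}(0)$ for the convergent critical values, I would use that $\Dsm$ is a fixed compact (closed polydisc) neighborhood of $0_-$ inside $\cV_-$, on which the finitely many convergent critical branches are bounded, hence bounded on the sliding image $\{\slide_t(q): 0\le t\le 1, q\in\Dsm\}$ as well (this is a compact set), so $\rho_2 = \sup$ of $|F|$ over $\Cr_-$ restricted to this compact set works.

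\textbf{Main obstacle.} The genuinely delicate point is the exponent bookkeeping in Part (2): showing that sliding in the $t$-direction by $\tilde\slide_t$ is, up to the Euler $\C^\times$-action, the same as staying inside the fixed disc $\tD$ on the slice $\{t_\hb=1\}$. This rests on the sign of the $\C^\times$-weights: the co-ordinate $t_\hb$ has weight $-J<0$ (from \eqref{eq:Euler_action}), while all $q^\lambda$, $\lambda\in\Laa'(\bSigma_-)_+$, have non-negative weight (Lemma~\ref{lem:non-negative_grading}). I would need to state precisely the orbit equation $s\cdot(q,t)=(s\cdot q, s^{-J} t)$, solve $s^{-J} t = 1$ to get $s=t^{1/J}$, and then verify $t^{1/J}\cdot q\in\tDsm$ whenever $q\in\tDsm$ and $0<t\le 1$—this is where non-negativity of weights is used, since $s=t^{1/J}\le 1$ scales each $|q^\lambda|$ down (or fixes it), keeping it $\le\varepsilon$. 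Everything else (Parts (1) and (3)) is then a straightforward transport of the already-established bounds along the $\C^\times$-action, using only equivariance of $\tpr$ and the $\C^\times$-invariance of $\cV_0$. No new analytic input is needed; the lemma is purely a bookkeeping consequence of the explicit toric/Euler structure.
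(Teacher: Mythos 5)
Your proposal is correct and follows essentially the same route as the paper's proof: apply the Euler action with $s=t^{1/J}$ (solving $s^{-J}t=1$) to see that $t^{1/J}\cdot \Image(\slide_t)\subset \D$ using the non-negativity of the $\C^\times$-weights on $\C[\Laa'(\bSigma_-)_+]$, then transport the critical-value bounds by $\C^\times$-equivariance, with the uniform radius $\rho_2$ coming from properness of $\Cr_-\to\cV_-$ together with compactness of $\bigcup_{0\le t\le 1}\Image(\slide_t)$. The only blemish is that the two tentative identities in your plan, $t^{-1/J}\cdot\slide_t(q)=\big(t^{-1/J}\cdot q,1\big)$ and $\slide_t(q)=t^{-1/J}\cdot\big(t^{1/J}\cdot q,\,t\big)$, are wrong as stated (the correct relation is $\slide_t(q)=t^{-1/J}\cdot\big(t^{1/J}\cdot q,\,1\big)$), but your final paragraph resolves exactly this bookkeeping and coincides with the paper's argument.
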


\begin{proof}
Part (1) is obvious. By \eqref{eq:Euler_action},
$t^{1/J}\cdot \Image(\slide_t)$
consists of points $\big(t^{1/J}\cdot q, 1\big)$ with
$q\in \tDsm$. Since $0<t\le 1$ and the $\C^\times$-weights
on $\C[\Laa'(\bSigma_-)_+]$ are non-negative,
$\big|\big(t^{1/J}\cdot q\big)^\lambda\big|\le \big|q^\lambda\big|\le \varepsilon$ for $q\in \tDsm$
and $\lambda \in \Laa'(\bSigma_-)_+$.
Hence $t^{1/J} \cdot \Image(\slide_t) \subset \D$.
Part (2) follows.
Part (2) and the fact that the action of $t^{-1/J}$
scales the critical values by $t^{-1/J}$ imply that
convergent critical values over $\Image(\slide_t)$ are contained
in $t^{-1/J} \cdot B_{\rho_0}(0)$ and
divergent ones over $\Image(\slide_t)$ are contained in
$\bigcup\limits_{k=1}^{J} t^{-1/J} \cdot
B_{\rho_1}\big(3\rho_0 e^{(2k-1)\pi\iu/J}\big)$.
Because the convergent critical branches form a proper family
$\Cr_- \to \cV_-$, the corresponding critical values are
uniformly bounded in a neighbourhood of $\Dsm=\Image(\slide_0)$.
Part~(3) follows.
\end{proof}

\subsubsection{Kouchnirenko's condition and a local system of Lefschetz thimbles} \label{subsubsec:Kouchnirenko_Morse}
Let $(q,t) = (q,t_\hb)$ be the local co-ordinates
on $\cM_-$ in Section~\ref{subsubsec:local_around_0-}.
By abuse of notation, we mean by~$q$ (resp.~$(q,t)$) either a point on
the uniformizing chart~$\tcMsm_-$ (resp.~$\tcM_-$)
or its image in~$\cMsm_-$ (resp.~$\cM_-$)
depending on the context.
We write $\cY_{q,t} := \pr^{-1}(q,t) \subset \cY$,
$\cYsm_q := \pr^{-1}(q) \subset \cYsm$ for the fibres
of the LG model; note that $\cYsm_q = \cY_{q,0}$.
The LG potential
$F_{q,t}=F|_{\cY_{q,t}}$ with $q \in \cMsmtimes$
is a Laurent polynomial of the form (cf.~\eqref{eq:LG_pot_co-ordinates})
\begin{equation}
\label{eq:LG_pot_q_t}
F_{q,t} = \sum_{b\in S} u_b = \left(\sum_{b\in S_-} q^{\ell_b} x^b
\right)
+ t q^{\hat\ell} x^{\hb}.
\end{equation}

Following Kouchnirenko \cite[D\'{e}finition 1.19]{Kouchnirenko:Newton},
we make the following definition:
\begin{Definition}[\cite{Kouchnirenko:Newton}]
Let $T\subset \bN$ be a finite set and let
$f(x) = \sum\limits_{b\in T}c_b x^b$ be a Laurent polynomial function
on $\Hom(\bN,\C^\times)$. Suppose that $c_b \neq 0$ for all
$b\in T$ and that the Newton polytope $\Delta$ of~$f$, i.e., the
convex hull of $\{\overline{b}\colon b\in T\}$
in $\bN_\R$, contains the origin in its interior.
We say that $f$ is (Newton) \emph{non-degenerate} if for every proper face
$\diamondsuit$ of $\Delta$ (of any dimension),
$f_\diamondsuit(x) := \sum\limits_{b \in T \cap \diamondsuit} c_b x^b$ has no
critical points on $\Hom(\bN,\C^\times)$.
\end{Definition}

We are interested in the case where $T$ is $S$ or $S_-$, and
$f$ is $F_{q,t}$ with $(q,t)\in \cMtimes$ or $F_{q,0}$ with
$q\in \cMsmtimes$. The compactness of $\frX_{\pm}$
implies that the Newton polytope contains the origin in its interior
in these cases.
We define
\begin{gather*}
\cMsmnd := \{q \in \cMsmtimes\colon
\text{$F_{q,0}$ is Newton non-degenerate} \}, \\
\cMnd := \{(q,t) \in \cMtimes\colon
\text{$F_{q,t}$ is Newton non-degenerate} \}.
\end{gather*}

\begin{Proposition}\label{prop:non_deg_locus}\quad
\begin{enumerate}\itemsep=0pt
\item[{\rm (1)}] $\cMsmnd$ is Zariski-open and dense in $\cMsmtimes$.
\item[{\rm (2)}] There exists a neighbourhood $B$ of $0_-$ in $\cMsm$ such that
$\cMsmnd$ contains $B \cap \cMsmtimes$.
\item[{\rm (3)}] The same conclusions as {\rm (1)}, {\rm (2)}
hold for $\cMnd$ when we replace $\cMsmtimes$ with $\cMtimes$
and $0_-$ with $0_+$.
\end{enumerate}
\end{Proposition}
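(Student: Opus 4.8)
The plan is to establish the three parts of Proposition~\ref{prop:non_deg_locus} by reducing everything to Kouchnirenko's non-degeneracy condition and exploiting the fact that being Newton non-degenerate is an open condition that holds at the ``central'' Laurent polynomial.

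\textbf{Part (1).} First I would show that $\cMsmnd$ is Zariski-open in $\cMsmtimes$. Non-degeneracy is the condition that for each proper face $\diamondsuit$ of $\Delta_-$, the restricted polynomial $f_\diamondsuit$ has no critical point on $\Hom(\bN,\C^\times)$. The coefficients of $F_{q,0}$ are monomials $q^{\ell_b}$ in the torus co-ordinates on $\cMsmtimes$, which are nowhere zero; for fixed $q\in\cMsmtimes$ all coefficients are non-zero, so $F_{q,0}$ is an honest Laurent polynomial with full Newton polytope $\Delta_-$. For each face $\diamondsuit$, the locus of $q$ where $f_\diamondsuit$ \emph{does} have a torus critical point is the image under $\pr$ of the relative critical scheme of the (quasi-homogeneous) family $f_\diamondsuit$; since $\pr$ restricted there is proper (the critical scheme of $f_\diamondsuit$ on a torus is finite or empty over each point by the quasi-homogeneity argument in \cite{Kouchnirenko:Newton}), this image is Zariski-closed, and its complement is open. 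Intersecting over the finitely many faces $\diamondsuit$ gives that $\cMsmnd$ is Zariski-open. For density, I need to exhibit \emph{one} point of $\cMsmnd$; this follows from Part (2) once established, or more directly from Kouchnirenko's theorem that a generic choice of coefficients is non-degenerate together with the fact that the monomial map $q\mapsto (q^{\ell_b})_{b\in S_-}$ has image containing a Zariski-dense set of coefficient tuples (its image is the torus $\cMsmtimes$ viewed inside $(\C^\times)^{S_-}$, which is Zariski-dense modulo the torus rescaling that does not affect non-degeneracy).

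\textbf{Part (2).} The key point is that at the large radius limit $0_-$, the relevant degeneration is governed by the ``leading terms'': as $q\to 0_-$, the polynomial $F_{q,0}$ degenerates, but after the $\C^\times$-rescaling generated by the Euler vector field each coefficient $q^{\ell_b}$ of a ray vector $b\in R(\bSigma_-)$ tends to a non-zero limit (these are the terms appearing in \eqref{eq:LG_pot_co-ordinates} with $\lambda(b)=0$ on the chart), while the other coefficients $q^{\ell_b}$ for ghost vectors tend to zero. I would argue that for $q$ in a small neighbourhood of $0_-$ in $\cMsm$ (intersected with the torus), the face polynomials $f_\diamondsuit$ are \emph{close} to the face polynomials of the ``monomial-ideal'' limit, which by the compactness and weak-Fano hypotheses (all ray vectors lie on the boundary of $\Delta_-$) is exactly the toric mirror LG model of $\frX_-$ whose non-degeneracy is part of mirror symmetry. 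Concretely, one shows that at $0_-$ itself (on the torus boundary chart, where the ghost coordinates are switched off and the ray coordinates are units) the Laurent polynomial is non-degenerate by a direct check using the simpliciality of $\Sigma_-$; then openness of the non-degeneracy condition (Part (1)'s proof gives a Zariski-open hence open-in-the-analytic-topology set) gives a neighbourhood. The main obstacle here will be making precise the ``limit at $0_-$'' — one must work on a well-chosen uniformizing chart and track that the ray-vector coefficients do not degenerate while checking that the resulting limiting Laurent polynomial (essentially $\sum_{b\in R(\bSigma_-)} x^b$ up to units, supported on vertices of $\Delta_-$) is Kouchnirenko non-degenerate, which reduces to a combinatorial statement about the simplicial fan $\Sigma_-$.

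\textbf{Part (3).} This is entirely parallel: replace $S_-$ by $S=S_-\cup\{\hb\}$, $\Delta_-$ by $\Delta_+$, and $0_-$ by $0_+$, using that $\frX_+$ is also a compact weak-Fano toric stack adapted to $S$, so $\Delta_+$ contains the origin in its interior and all rays of $\bSigma_+$ lie on $\partial\Delta_+$. The same two arguments — Zariski-openness of non-degeneracy via properness of the face-critical loci, and non-degeneracy at the large radius limit $0_+$ via the limiting vertex-supported polynomial — apply verbatim. I would simply remark that the proof of (1) and (2) used only the structural properties ``compact weak-Fano toric stack adapted to the given finite set'', which $\frX_+$ shares, so no new work is needed beyond changing notation.
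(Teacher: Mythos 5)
Your Part (1) is essentially the paper's: the paper disposes of it by citing Kouchnirenko's Th\'eor\`eme~6.1, and your observation that the constrained family over $\cMsmtimes$ is the full coefficient torus $(\C^\times)^{S_-}$ modulo rescaling of the fibre coordinates (under which Newton non-degeneracy is invariant) is the right way to transfer that theorem. However, the openness argument you actually give is not correct as written: the critical locus of a face polynomial $f_\diamondsuit$ on the fibre torus over a fixed $q$ is in general \emph{not} finite (quasi-homogeneity only forces critical points to lie on $\{f_\diamondsuit=0\}$, not to be isolated), and even quasi-finiteness would not make the projection to the base proper, so closedness of the degenerate locus does not follow from your argument. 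This is harmless only because Kouchnirenko's theorem (equivalently, non-vanishing of the principal $A$-determinant) already gives both Zariski-openness and density; you should simply cite it for both, as the paper does.

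The genuine gap is Part (2), which is precisely what does \emph{not} follow from Part (1) and which the paper handles by invoking the (nontrivial) proof of Lemma~3.8 of \cite{Iritani:Integral}. Your plan --- check non-degeneracy of a limit polynomial ``essentially $\sum_{b\in R(\bSigma_-)}x^b$ up to units'' at $0_-$ and conclude by openness --- fails on two counts. First, $0_-$ lies on the boundary of $\cMsm$, not in $\cMsmtimes$: in any fixed trivialization of the fibre torus, the coefficients of all monomials except (at most) those of the rays of a single maximal cone tend to $0$ as $q\to 0_-$; the single Euler $\C^\times$ cannot normalize all ray coefficients simultaneously (indeed coordinates such as $t_b$ attached to box elements have Euler weight zero), so the Newton polytope degenerates in the limit and there is no limiting Laurent polynomial with polytope $\Delta_-$ at $0_-$. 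Consequently ``openness of non-degeneracy'', which is openness \emph{inside} the fixed-polytope family over $\cMsmtimes$, says nothing about whether the closure of the degenerate locus in $\cMsm$ contains the boundary point $0_-$ --- and ruling that out is exactly the content of Part (2). Second, the combinatorial statement you propose to ``directly check using simpliciality'' is false in general: the unit-coefficient polynomial supported on the rays of a simplicial weak-Fano fan need not be Newton non-degenerate. For instance, if a two-dimensional face of $\Delta_-$ is a square all four of whose vertices are rays (as for the fan over the cube with each square face triangulated), the corresponding face polynomial is $(1+u)(1+v)$ up to a monomial factor and has torus critical points. What makes Part (2) true is the specific hierarchy of coefficient sizes near $0_-$ imposed by the triangulation $\Sigma_-$ (in the example, one diagonal pair of the square is suppressed by a K\"ahler parameter), and establishing this uniformly over all faces and all ways of approaching $0_-$ is the substance of the argument of \cite[Lemma~3.8]{Iritani:Integral} (alternatively one could argue via the vertex structure of the principal $A$-determinant/secondary polytope); some such argument must replace your step. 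Part (3) is indeed parallel, but it inherits the same gap in its analogue of Part (2).
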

\begin{proof}
Part (1) is due to Kouchnirenko
\cite[Th\'eor\`eme 6.1]{Kouchnirenko:Newton}.
Part (2) follows from the proof of
\cite[Lemma 3.8]{Iritani:Integral}; there we stated and
proved a similar result using a particular co-ordinate system
$\C^r \to \cMsm_-$,
but the argument works verbatim for the possibly singular base
$\cMsm_-$.
\end{proof}

\begin{Remark}
\label{rem:Vpm_contained_in_nd_locus}
In view of Proposition \ref{prop:non_deg_locus}(2),
after shrinking the open sets $\cV_\pm$ in Section~\ref{subsubsec:conv_div}
if necessary, we may assume that
$\cV_+ \cap \cMtimes \subset \cMnd$ and
$\cV_- \cap \cMsmtimes
\subset \cMsmnd$.
This is possible because
$\cV_\pm$ was given as the $\C^\times$-orbit of an arbitrarily
small neighbourhood of $0_\pm$, and the non-degenerate loci
$\cMsmnd$, $\cMnd$ are $\C^\times$-invariant.
\end{Remark}

We define a function $H(x)$ of $(x_1,\dots,x_n)\in (\C^\times)^n$ as
\begin{equation}\label{eq:H}
H(x) :=\sqrt{\sum_{b\in S_-} |x^b|^2}.
\end{equation}
Since the convex hull $\Delta_-$ of $\{\overline{b}\colon b\in S_-\}$
contains the origin in its interior,
$H(x)$ is proper and bounded from below.
By the splitting $\varsigma$ chosen at the end of Section~\ref{subsec:functoriality_nota},
we can regard $x_1,\dots,x_n$ and $H(x)$
as functions on the preimage of
$\{(q,t)\colon q \in \cMsmtimes\}\subset \cM_-$ under
$\pr \colon \cY \to \cM$.

\begin{Proposition}\label{prop:strong_tame}\quad
\begin{enumerate}\itemsep=0pt
\item[{\rm (1)}] For every compact set $K\subset \cMsmnd$, there exist a compact
set $B \subset \pr^{-1}(K)$ and $\epsilon>0$ such that
\[
\|dF_{q,0}(x)\|\ge \epsilon H(x)
\qquad
\text{for all $q\in K$ and $x \in \cYsm_q\setminus B$},
\]
where $\|\cdot \|$ is the norm with respect to the
K\"ahler metric $\frac{\iu}{2}\sum\limits_{j=1}^n d\log x_j \wedge
d\overline{\log x_j}$, i.e., $\|dF_{q,0}(x)\| = \big(\sum\limits_{j=1}^n
|\partial F_{q,0}/\partial \log x_j|^2\big)^{1/2}$.
\item[{\rm (2)}] The same estimate holds for $\|dF_{q,t}\|$
by replacing $\cMsmnd$ with $\cMnd$, $\cYsm$ with $\cY$,
and $H(x)$ with $\big(\sum\limits_{b\in S} |x^b|^2\big)^{1/2}$.
\end{enumerate}
\end{Proposition}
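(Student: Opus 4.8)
\textbf{Proof proposal for Proposition~\ref{prop:strong_tame}.}
The statement is the ``strong tameness'' (or ``M-tameness'' in the sense of Parusi\'nski) of the Laurent-polynomial family $F_{q,t}$ over its Newton-non-degenerate locus, together with uniformity in the parameter $(q,t)$ over compact sets. The plan is to prove Part~(1) in detail and then obtain Part~(2) by the same argument with $S$ in place of $S_-$; the two statements are formally identical once one observes that $F_{q,t}$ has fixed support $S$ and that $\cMnd$ is exactly the locus where $F_{q,t}$ is Newton non-degenerate with respect to $\Delta_+$. The core case is thus: given a finite set $T\subset \bN$ whose convex hull $\Delta$ contains the origin in its interior, and a compact set $K$ of $(q)$-parameters on which all coefficients $c_b(q)$ are non-zero and $F_q=\sum_{b\in T}c_b(q)x^b$ is Newton non-degenerate, show that $\|dF_q(x)\|\ge \epsilon\, H_T(x)$ outside a compact subset of $\pr^{-1}(K)$, with $H_T(x)=\big(\sum_{b\in T}|x^b|^2\big)^{1/2}$.

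First I would reduce to a statement ``at infinity'' on the torus. Writing $x_j=e^{w_j}$ with $w=(w_j)\in\C^n$, the quantity $H_T(x)$ is a convex-type exponential sum $\sum_{b\in T}e^{2\langle\overline b,\Re w\rangle}$, and as $|\Re w|\to\infty$ in a direction $\xi\in S^{n-1}$ the dominant monomials of both $H_T$ and the partial derivatives $\partial F_q/\partial\log x_j=\sum_b (\overline b\cdot e_j)c_b(q)x^b$ are those $b\in T$ maximizing $\langle\overline b,\xi\rangle$; these lie on a proper face $\diamondsuit=\diamondsuit(\xi)$ of $\Delta$ (proper because $0$ is interior, so $\langle\cdot,\xi\rangle$ is not constant on $\Delta$). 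The key step is then the standard dichotomy from Kouchnirenko/Broughton: if $x=e^w$ goes to infinity along $\xi$ in such a way that $x^{b_1}/x^{b_2}$ stays bounded away from $0$ and $\infty$ for all $b_1,b_2\in T\cap\diamondsuit$ (the ``sub-direction'' converging to a point $x_\infty$ of the torus of the face $\diamondsuit$), then $\|dF_q(x)\|/H_T(x)$ converges to $\|dF_{q,\diamondsuit}(x_\infty)\|/H_{T\cap\diamondsuit}(x_\infty)>0$, which is strictly positive precisely by Newton non-degeneracy of $F_q$; otherwise some monomial ratios blow up, and a direct count of leading exponents shows the gradient still dominates $H_T$. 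Making this into a clean uniform bound is the part that requires care: I would cover $S^{n-1}$ by finitely many cones of directions refining the normal fan of $\Delta$, handle each cone by a compactness argument (the closure of the relevant ``face torus'' together with the parameter $q$ ranging over $K$ is compact, and $\|dF_{q,\diamondsuit}\|/H_{T\cap\diamondsuit}$ is continuous and nowhere zero there), and glue.

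For the uniformity in $q\in K$, the point is that Newton non-degeneracy is an open condition and $K\subset\cMsmnd$ is compact, so the infimum of $\|dF_{q,\diamondsuit}\|/H_{T\cap\diamondsuit}$ over the (compact) product of $K$ with the face torus closures is attained and positive; this supplies a single $\epsilon>0$. Assembling: one gets $\|dF_q(x)\|\ge\epsilon H_T(x)$ for $x$ outside a $q$-uniform compact set $B_\xi\subset\cYsm_q$ associated to each direction cone, and finitely many of these cover a punctured neighbourhood of infinity; set $B=\bigcup B_\xi$ intersected with $\pr^{-1}(K)$. Part~(2) is then immediate by running the identical argument for the family $F_{q,t}$ with support $S$, fan polytope $\Delta_+$, and parameter space $\cMnd$; note that $(q,t)$ now plays the role of the single parameter, and compactness of a set $K\subset\cMnd$ is all that is used. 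The main obstacle I expect is not any single estimate but the bookkeeping of the stratification of directions at infinity and the verification that the ``bad'' (unbounded-ratio) strata also satisfy the bound --- this is where I would lean on Kouchnirenko \cite[Th\'eor\`eme~6.1]{Kouchnirenko:Newton} and the argument already used in \cite[Lemma~3.8]{Iritani:Integral} and Proposition~\ref{prop:non_deg_locus}(2), adapted to allow the base to be the possibly singular toric stack $\cMsm_-$ (resp.~$\cM_-$) rather than an affine chart, exactly as in the proof of Proposition~\ref{prop:non_deg_locus}.
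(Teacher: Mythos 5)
Your overall strategy (analyse the behaviour at infinity face-by-face on the Newton polytope and use Newton non-degeneracy to get a uniform lower bound) is the same mechanism the paper relies on, but the paper organizes it quite differently: it first reduces the statement to compactness of the sublevel set $\{x\in\pr^{-1}(K)\colon \|dF_{q,0}(x)\|\le \epsilon H(x)\}$, then argues by contradiction with a sequence $(q(k),x(k))$, orders the monomials so that $|x(k)^{b(1)}|\ge\cdots\ge|x(k)^{b(m)}|$, and invokes the argument of \cite[Section~A.2]{Iritani:Integral} (the proof of Lemma~3.11 there) to derive the contradiction; Part~(2) is identical with $S$ in place of $S_-$. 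Your direct ``cover the sphere of directions by cones of the normal fan'' plan could in principle be made to work, but as written it has a genuine gap exactly at the point where the real work lies.

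Concretely: (i) your treatment of the ``bad'' strata --- ``some monomial ratios blow up, and a direct count of leading exponents shows the gradient still dominates $H_T$'' --- is not a proof and, as a reason, is wrong. Counting leading exponents only shows that the gradient components and $H_T$ have monomials of comparable size; it does not exclude cancellation among the coefficients of the dominant monomials, which is precisely what could make $\|dF_q\|/H_T$ collapse. The correct handling is that when ratios along a face $\diamondsuit$ degenerate, the dominant part of $dF_q$ is governed by a smaller face $\diamondsuit'\subset\diamondsuit$ (still a proper face of $\Delta$), and one must again invoke non-degeneracy of $F_{q,\diamondsuit'}$ --- i.e., a recursion over faces, or equivalently the sequential ordering-of-monomials argument the paper cites. (ii) Even in the ``good'' case, your limit point $x_\infty$ is not automatically ``a point of the torus of the face'': the limits of the normalized ratios $x^{b}/x^{b(1)}$, $b\in T\cap\diamondsuit$, only define a character of the difference lattice of $T\cap\diamondsuit$, and one needs to extend it to a character of $\bN$ (using divisibility of $\C^\times$) to land at a point of $\Hom(\bN,\C^\times)$ where non-degeneracy can be applied. (iii) The compactness claim ``the closure of the relevant face torus together with $q\in K$ is compact'' is false as stated (the face torus is not compact); what is compact is the space of normalized monomial vectors (a closed subset of a polydisc) or, alternatively, one avoids the issue entirely by the paper's sequential compactness argument. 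These are repairable, and you point to the right sources, but the decisive estimate is currently asserted rather than proved.
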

\begin{proof}
This is a refinement of~\cite[Lemma 3.11]{Iritani:Integral},
which says that $\|dF_{q,0}(x)\|$ is proper
on $\pr^{-1}(K)$. In fact, a slight modification of the argument
there yields a proof of the proposition.

It suffices to show that there exists $\epsilon>0$ such that
$\big\{x\in \pr^{-1}(K)\colon \|dF_{q,0}(x)\|\le \epsilon H(x)\big\}$
is compact.
Suppose on the contrary that $\big\{x\in \pr^{-1}(K)\colon
\|dF_{q,0}(x)\| \le H(x)/k\big\}$ is non-compact for all
$k\ge 1$. Then we can find $q(k) \in K$ and
$x(k) \in \cYsm_{q(k)}$ such that $\|dF_{q(k),0}(x(k))\| \le H(x(k))/k$
and $H(x(k)) \ge k$.
Passing to a subsequence
we may assume that $q(k)$ converges in $K$;
we may also assume that we can label elements of $S_-$ as
$\{b(1),b(2),\dots,b(m)\}$ so that $\big|x(k)^{b(1)}\big| \ge \big|x(k)^{b(2)}\big| \ge
\cdots \ge \big|x(k)^{b(m)}\big|$ holds for all $k$.
Since $H(x(k)) \le \sqrt{m} \big|x(k)^{b(1)}\big|$, we have
$\lim\limits_{k\to \infty} \big|x(k)^{b(1)}\big| =\infty$ and
\[
\lim_{k\to \infty} \frac{\|dF_{q(k),0}(x(k))\|}{|x(k)^{b(1)}|} =0.
\]
Now the argument after the second displayed equation in
\cite[Section~A.2]{Iritani:Integral} yields a contradiction.
The proposition follows.
\end{proof}

\begin{Remark}
Proposition \ref{prop:strong_tame} shows that
the relative critical set of $F$ is proper over the non-degenerate
loci $\cMsmnd$, $\cMnd$.
\end{Remark}

\begin{Corollary}\label{cor:locally_trivial}\quad
\begin{enumerate}\itemsep=0pt
\item[{\rm (1)}] The family $\big\{F_{q,0}^{-1}(\bu) \subset \cYsm_q\big\}_{q,\bu}$ of affine varieties
is a locally trivial family of $C^\infty$ manifolds
over $\{(q,\bu)\colon q\in \cMsmnd, \text{$\bu$ is a regular value of $F_{q,0}$}\}$.
\item[{\rm (2)}] The same conclusion holds for $F_{q,t}^{-1}(\bu)\subset \cY_{q,t}$
when we replace $\cMsmnd$ with $\cMnd$.
\end{enumerate}
\end{Corollary}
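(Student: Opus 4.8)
The plan is to deduce Corollary~\ref{cor:locally_trivial} from Proposition~\ref{prop:strong_tame} by a standard Ehresmann-type fibration argument, adapted to the non-compact (Laurent-polynomial) setting. The key point is that the lower bound $\|dF_{q,0}(x)\|\ge \epsilon H(x)$ outside a compact set gives precisely the kind of ``tameness at infinity'' that prevents vanishing cycles from escaping to infinity as $(q,\bu)$ varies, so that the map $(F,\pr)$ restricted to the relevant open set is a proper submersion onto its image after removing critical values.

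First I would set up the total map. Fix a compact set $K\subset\cMsmnd$ and consider the restriction of $\pr$ together with $F$ to obtain a map $\Phi\colon \cYsm|_K \to K\times\C$, $x\mapsto(\pr(x),F(x))$. The base of the family in part~(1) is the open set $\scrU=\{(q,\bu)\colon q\in\cMsmnd,\ \bu \text{ a regular value of } F_{q,0}\}$; over $\scrU$ (intersected with $K\times\C$) the differential of $\Phi$ is surjective, i.e.\ $\Phi$ is a submersion there. To apply Ehresmann's theorem in families we need $\Phi$ restricted over a neighbourhood of any given $(q_0,\bu_0)\in\scrU$ to be proper onto that neighbourhood. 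This is where Proposition~\ref{prop:strong_tame} enters: by Part~(1), there is a compact $B\subset\pr^{-1}(K)$ and $\epsilon>0$ with $\|dF_{q,0}(x)\|\ge\epsilon H(x)$ for $x\in\cYsm_q\setminus B$. Since $H$ is proper and bounded below on each fibre, the set $\{x\colon \|dF_{q,0}(x)\|\le\delta\}\cap\pr^{-1}(K)$ is contained in $B\cup\{H(x)\le\delta/\epsilon\}$, hence compact; this forces the ``critical locus at infinity'' to be empty and shows, by the usual argument (integrating a controlled gradient-like vector field, or directly bounding fibres $F_{q,0}^{-1}(\bu)\cap\pr^{-1}(K')$ for $K'\Subset K$ and $\bu$ in a bounded set away from critical values), that $\Phi^{-1}(\text{compact in }\scrU\cap (K\times\C))$ is compact. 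Ehresmann's fibration theorem then yields local $C^\infty$-triviality of $\{F_{q,0}^{-1}(\bu)\}$ over $\scrU$.

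The technical heart of the matter is the properness statement, and more precisely making the gradient-flow / vector-field trivialization work despite non-compact fibres. Concretely, one fixes $(q_0,\bu_0)\in\scrU$, chooses small balls $B_1\ni q_0$ in $\cMsmnd$ and $B_2\ni\bu_0$ in $\C$ avoiding critical values of $F_{q,0}$ for $q\in\overline{B_1}$; the estimate of Proposition~\ref{prop:strong_tame} (uniform over the compact $\overline{B_1}$) gives that on $F^{-1}(\overline{B_1}\times\overline{B_2})$ the function $H$ is bounded, hence this preimage is compact; then a partition-of-unity construction produces vector fields lifting $\partial/\partial q^i$ and $\partial/\partial\bu$, whose flows are complete because they are supported in a compact set, and integrating them trivializes the family. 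This is exactly the argument behind \cite[Lemma 3.11]{Iritani:Integral} upgraded by the sharper bound, so I would present it as a mild elaboration of that lemma rather than reproving everything.

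For part~(2) I would simply observe that the identical reasoning applies with $S_-$ replaced by $S$, $\cYsm$ by $\cY$, $\cMsmnd$ by $\cMnd$, and $H(x)$ by $\bigl(\sum_{b\in S}|x^b|^2\bigr)^{1/2}$, invoking Proposition~\ref{prop:strong_tame}(2) in place of~(1); no new ideas are needed, since the $t$-parameter is absorbed into the base. The main obstacle, as noted, is purely the careful verification of uniform properness over compacts of the non-degenerate locus — but Proposition~\ref{prop:strong_tame} was tailored for precisely this purpose, so the corollary follows essentially formally.
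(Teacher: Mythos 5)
There is a genuine gap at the heart of your argument: the map $\Phi=(\pr,F)$ is \emph{not} proper over the regular locus, and the step where you claim that Proposition~\ref{prop:strong_tame} forces $H$ to be bounded on $F^{-1}\big(\overline{B_1}\times\overline{B_2}\big)$, hence that this preimage is compact, is a non sequitur. The estimate $\|dF_{q,0}(x)\|\ge\epsilon H(x)$ outside a compact set bounds the \emph{gradient} from below by $H$; it says nothing about the values of $H$ on a fibre. In fact each fibre $F_{q,0}^{-1}(\bu)$ is a closed affine hypersurface in the algebraic torus $\Hom(\bN,\C^\times)$, hence non-compact for $n\ge 2$, and $H$ (being proper on the torus) is unbounded on it; so $\Phi^{-1}$ of a compact set is never compact and Ehresmann's theorem in its properness form cannot be invoked. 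Similarly, your assertion that no ``critical points at infinity'' implies properness away from critical values conflates properness of the relative critical set (which Proposition~\ref{prop:strong_tame} does give) with properness of the fibration, and the partition-of-unity vector fields you describe are not compactly supported, so their completeness does not come for free.

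The argument the paper actually runs — and which you mention only parenthetically (``integrating a controlled gradient-like vector field'') without developing — replaces properness by boundedness of explicit lifts in a complete metric. One trivializes the ambient family as $B\times\Hom(\bN,\C^\times)$ and lifts $\alpha\,\partial/\partial q^i$ and $\alpha\,\partial/\partial\bu$ tangentially to the fibres by the explicit formulas involving $\grad F_{q,0}/\|dF_{q,0}\|^2$; then the bound $|\partial F_{q,0}/\partial q^i|\le C\,H(x)$ (valid because $F_{q,0}$ is linear in the coefficients $q^{\ell_b}$) combined with $\|dF_{q,0}\|\ge\epsilon H(x)$ shows these lifts are \emph{bounded} with respect to the complete K\"ahler metric $\frac{\iu}{2}\sum_j d\log x_j\wedge d\overline{\log x_j}$, so their flows exist for as long as the base flow does; integrating them gives the local $C^\infty$-trivialization with no properness needed. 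If you rewrite your proof along these lines (and note where the specific form \eqref{eq:LG_pot_q_t} of $F$ enters, to get the $q$-derivative bound), the deduction of part (2) by replacing $S_-$, $\cYsm$, $\cMsmnd$, $H$ with their counterparts is indeed immediate, as you say.
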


\begin{proof}
Take $(q_0,\bu_0)$ such that $q_0\in\cMsmnd$ and
$\bu_0$ is a regular value of $F_{q_0,0}$.
Choose a~sufficiently small co-ordinate neighbourhood
$\big(B; q^1,\dots,q^r, \bu\big)$ of $(q_0,\bu_0)$
which does not intersect the discriminant locus.
The ambient family $\bigcup\limits_{(q,\bu)\in B}
\cYsm_q$ is trivial over $B$, and is identified with
$B \times \Hom(\bN,\C^\times)$ through the
co-ordinates $x_1,\dots,x_n$.
It suffices to shows that the co-ordinate
vector fields\footnote{Here we identify (real) vector fields of
a complex manifold with $(1,0)$ vector fields.}
$\alpha \partial/\partial q^i$, $\alpha \partial/\partial \bu$
with $\alpha \in \C$ on $B$
can be lifted to integrable vector fields tangent to the family
$\bigcup\limits_{(q,u)\in B} F_{q,0}^{-1}(\bu)$.
Lifts of $\alpha \partial/\partial q^i$, $\alpha \partial/\partial \bu$ are
given under the trivialization $\bigcup\limits_{(q,\bu) \in B}
\cYsm_q \cong B \times \Hom(\bN,\C^\times)$ by
\[
\left(\alpha \parfrac{}{q^i},
-\alpha \parfrac{F_{q,0}}{q^i} \frac{\grad F_{q,0}}{\|dF_{q,0}\|^2}
\right), \qquad
\left(\alpha \parfrac{}{\bu},
\alpha\frac{\grad F_{q,0} }{\|dF_{q,0}\|^2}
\right)
\]
where
\[
\grad F_{q,0} := \sum_{j=1}^n \overline{\parfrac{F_{q,0}}{\log x_j}}
\parfrac{}{\log x_j}.
\]
Since the potential $F_{q,0}$ is of the form \eqref{eq:LG_pot_q_t},
we have $|\partial F_{q,0}/\partial q^i|\le C \cdot H(x)$ for some
constant $C>0$ over $B$.
Thus the estimate in Proposition~\ref{prop:strong_tame}
shows that these lifts are bounded on
the family $\bigcup\limits_{(q,\bu)\in B} F_{q,0}^{-1}(\bu)$.
Therefore the flows of these vector fields exist as long as
the corresponding flows on the base $B$ exists.
\end{proof}

Proposition~\ref{prop:strong_tame} implies that the improper
function $\Re(F_{q,0}(x))$
satisfies the so-called Palais--Smale condition\footnote
{$\|d F_{q,0}(x)\|$ is bounded away
from zero outside a neighbourhood of the critical set.} when
$q\in \cMsmnd$,
and hence the usual Morse theory can be applied to it.
It follows (see \cite[Section~3.3.1]{Iritani:Integral})
that the relative homology group
$H_n\big(\cYsm_q, \{x \colon \Re(F_{q,0}(x)) \ge M\};\Z\big)$
is a free $\Z$-module of rank $\dim H^*_\CR(\frX_-)$
when $M$ is large enough so that all critical values of $F_{q,0}$ are
contained in $\{\Re(z)< M\}$.
This group is independent of the choice of
sufficiently large $M$, and we denote it by
$H_n\big(\cYsm_q,\{x\colon \Re(F_{q,0}(x)) \gg 0\};\Z\big)$.
By the local triviality in Corollary~\ref{cor:locally_trivial},
we have
\begin{align*}
\Lefsm_q := H_n\big(\cYsm_q, \{x\colon \Re( F_{q,0}(x)) \gg 0 \} ; \Z\big)
\cong H_n\big(\cYsm_q, F_{q,0}^{-1}(\bu);\Z\big)
\end{align*}
for any $\bu>0$ such that all critical values of $F_{q,0}$
are contained in $\{z\colon \Re(z)<\bu\}$, and this forms a local
system over $\cMsmnd$.
Similarly, the relative homology groups
\begin{equation}\label{eq:Lef}
\Lef_{q,t} :=
H_n\big(\cY_{q,t}, \{x\colon \Re(F_{q,t}(x)) \gg 0 \} ; \Z\big)
\cong H_n\big(\cY_{q,t}, F_{q,t}^{-1}(\bu);\Z\big)
\end{equation}
form a local system of rank $\dim H^*_\CR(\frX_+)$
over $\cMnd$, where $\bu>0$ is such that all critical values of $F_{q,t}$
are contained in $\{z\colon \Re(z)<\bu\}$.
These two local systems have different ranks and we will relate them below.

\subsubsection{Inclusion of the local systems: statement and proof}
Let $\slide_t$ be the sliding map in Definition~\ref{def:slide}
and set $\Dsm^\times := \Dsm \cap \cMsmtimes$.
Let $\slide_{>0}$ denote the map $(0,1] \times \Dsm^\times
\to \cMnd$ given by $\slide_{>0}(t,q) =\slide_t(q)$.
Since $(0,1]$ is contractible, we have
\[
\slide_{>0}^{-1} \Lef \cong \ttp^{-1}\big(\big(\slide_t^{-1}\Lef\big)|_{\Dsm^\times}\big)
\qquad \forall\, t\in (0,1],
\]
where $\ttp \colon (0,1]\times\Dsm^\times \to \Dsm^\times$
is the projection to the second factor.
Let $\ttj \colon (0,1]\times \Dsm^\times \rightarrow
[0,1]\times \Dsm^\times$ and
$\tti \colon \Dsm^\times \cong \{0\} \times \Dsm^\times
\to [0,1]\times \Dsm^\times$ denote the inclusions.
Then we have
\[
\tti^{-1} \ttj_* \slide_{>0}^{-1} \Lef \cong
\tti^{-1} \ttj_* \ttp^{-1} \big(\big(\slide_t^{-1}\Lef\big)|_{\Dsm^\times}\big)
\cong
\big(\slide_t^{-1} \Lef\big)|_{\Dsm^\times}.
\]

\begin{Theorem}\label{thm:inclusion}We have an inclusion of the local systems:
\[
\iota \colon \ \Lefsm|_{\Dsm^\times} \to
\tti^{-1} \ttj_* \slide_{>0}^{-1} \Lef \cong
\big(\slide_t^{-1}\Lef\big) |_{\Dsm^\times}, \qquad t\in (0,1].
\]
The map $\iota$ maps the positive real
Lefschetz thimble $($introduced in Section~$\ref{subsec:identifying_O})$
to the positive real one, i.e., $\iota(\Gamma_\R(q))
= \Gamma_\R(\slide_t(q))$ when $q \in \Dsm^\times\cap\cMsm_\R$.
\end{Theorem}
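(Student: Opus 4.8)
The plan is to construct the inclusion $\iota$ by studying the behaviour of the LG potential $F_{q,t}$ as $t$ shrinks to $0$, and to show that the relative cycles over $\cMsm$ survive, as closed subsets, inside the fibres over $\Image(\slide_t)$. First I would fix $t\in(0,1]$ and $q\in\Dsm^\times\cap\cMsmtimes$ and compare the two Laurent polynomials: by \eqref{eq:LG_pot_q_t}, $F_{\slide_t(q)}=F_{q,0}+t\,q^{\hat\ell}x^{\hb}$ on $\cY_{\slide_t(q)}\cong\Hom(\bN,\C^\times)$, so $F_{q,0}$ is a ``partial limit'' of $F_{\slide_t(q)}$. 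The key geometric input is Lemma~\ref{lem:slide}(3): over $\Image(\slide_t)$ the convergent critical values lie in a bounded disc $B_{\rho_2}(0)$ while the divergent critical values escape to infinity along the $J$ satellite directions $t^{-1/J}e^{(2k-1)\pi\iu/J}$, hence stay far from the positive real axis. Therefore, for $\bu>0$ larger than all convergent critical values but smaller than the escaping divergent ones, the relative homology $\Lef_{\slide_t(q)}=H_n(\cY_{\slide_t(q)},\{\Re F\gg 0\};\Z)$ of \eqref{eq:Lef} receives a well-defined map from the ``convergent part''. Concretely, I would use Proposition~\ref{prop:strong_tame}(2) and Corollary~\ref{cor:locally_trivial}(2) to apply Morse theory to $\Re(F_{\slide_t(q)})$ and split off, via the long exact sequence of the pair and excision, the subspace generated by Lefschetz thimbles attached to convergent critical points; this produces the map $\iota$ on fibres. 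I would then check continuity in $q$ (so that $\iota$ is a map of local systems over $\Dsm^\times$) by invoking the local triviality of Corollary~\ref{cor:locally_trivial} together with the uniform bounds from Proposition~\ref{prop:strong_tame}, which make the gradient-flow construction of thimbles depend continuously on parameters.

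Next I would identify the source of $\iota$ with $\Lefsm|_{\Dsm^\times}$. For this I use that $F_{q,0}$ is Newton non-degenerate for $q\in\Dsm^\times$ (Proposition~\ref{prop:non_deg_locus}(2), after shrinking as in Remark~\ref{rem:Vpm_contained_in_nd_locus}), so $\Lefsm_q$ has rank $\dim H^*_\CR(\frX_-)$ and is computed by the Morse theory of $\Re(F_{q,0})$ on $\cYsm_q$. The deformation $t\mapsto t\,q^{\hat\ell}x^{\hb}$ is a perturbation supported near the toric boundary component the extra ray $\hb$ cuts out; away from that boundary, i.e.\ on a fixed compact exhaustion of $\cY_{q,t}$, $F_{\slide_t(q)}\to F_{q,0}$ uniformly with all derivatives as $t\to 0$. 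Using the uniform tameness estimate of Proposition~\ref{prop:strong_tame} I would show that the stable manifolds of the convergent critical points of $F_{\slide_t(q)}$ converge, as $t\to0$, to those of $F_{q,0}$ inside $\cYsm_q=\cY_{q,0}$ (this is where the sheaf-theoretic formulation $\tti^{-1}\ttj_*\slide_{>0}^{-1}\Lef$ does its work: it encodes exactly the $t\to 0$ limit of the local system of thimbles). This gives a canonical isomorphism between $\Lefsm_q$ and the convergent part of $\Lef_{\slide_t(q)}$, i.e.\ completes the definition of $\iota$ and shows it is injective.

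Finally, for the statement about the positive real thimble: when $q\in\Dsm^\times\cap\cMsm_\R$ and $t>0$ is real, both $F_{q,0}$ on $\cYsm_q$ and $F_{\slide_t(q)}$ on $\cY_{\slide_t(q)}$ restrict to proper, bounded-below, strictly convex functions on the positive real locus $\Gamma_\R$ (the Hessian $\sum_{b}u_b\,b_i b_j$ is positive definite there, and all coefficients $q^\ell,\,t q^{\hat\ell}$ are positive reals), exactly as in the discussion around Lemma~\ref{lem:real_Lefschetz}. Hence in both cases $\Gamma_\R$ \emph{is} the Lefschetz thimble attached to the conifold point with phase $\phi=\pi$, and the real positive locus $\cYsm_\R\subset\cY_\R$ is compatible with the inclusion $\cYsm_q\hookrightarrow\cY_{\slide_t(q)}$ coming from adding the $\hb$-coordinate. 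Tracking this through the identification above gives $\iota(\Gamma_\R(q))=\Gamma_\R(\slide_t(q))$.

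The main obstacle I anticipate is the second step: making precise, and uniform in $q\in\Dsm^\times$, the claim that the stable manifolds (Lefschetz thimbles) of the convergent critical points of $F_{\slide_t(q)}$ limit onto those of $F_{q,0}$ as $t\to 0$. The perturbation term $t\,q^{\hat\ell}x^{\hb}$ is unbounded on $\cY_{\slide_t(q)}$, so one cannot argue by naive $C^0$-smallness of the perturbation; instead one must combine the quantitative tameness estimate of Proposition~\ref{prop:strong_tame} (which controls the gradient flow uniformly outside a compact set) with the separation of convergent from divergent critical values in Lemma~\ref{lem:slide}(3) to confine the relevant thimbles to a region where the deformation is genuinely small, and then pass to the limit. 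Handling the orbifold/stacky structure (the $\Pic^\st(\frX_-)$-quotient, and the possibly non-reduced special fibre) requires only passing to uniformizing charts as in the cited arguments of \cite{Iritani:Integral}, and should be routine once the $C^\infty$ local-triviality statement is in place.
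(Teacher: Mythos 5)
Your overall strategy (separate convergent from divergent critical values via Lemma~\ref{lem:slide}(3), build $\iota$ by degenerating $t\to 0$, and get the $\Gamma_\R$ statement from convexity on the positive real locus) is in the right spirit, but the core step of your plan has a genuine gap: the identification of $\Lefsm_q$ with the ``convergent part'' of $\Lef_{\slide_t(q)}$ via convergence of stable manifolds. The Lefschetz thimbles in question are non-compact: for the relevant phase the image of a thimble under $F$ is a half-line $\bu_i+\R_{\ge 0}e^{\iu\phi}$, so every thimble (including $\Gamma_\R$) escapes to infinity in the fibre, precisely into the region where $H(x)\to\infty$ and where the perturbation $t\,q^{\hat\ell}x^{\hb}$ is unbounded and in fact comparable to the unperturbed terms (e.g.\ on the positive real locus $x^{\hb}$ grows). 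Hence there is no region containing the full thimbles on which the deformation is ``genuinely small'', and the proposed combination of Proposition~\ref{prop:strong_tame} with Lemma~\ref{lem:slide}(3) cannot confine them; moreover, even granting set-theoretic convergence of stable manifolds, you would still need to produce a well-defined, Gauss--Manin flat map on \emph{relative homology}, uniformly in $q$, which your argument does not supply. Since injectivity in your scheme is deduced from this identification, it is also left unproved.

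The paper's proof circumvents exactly this difficulty by never comparing semi-infinite thimbles directly. It cuts at a finite regular value $\bu_0$ and works with the pairs $\big(\cY_{q,t},F_{q,t}^{-1}(\bu_0)\big)$; the decisive technical input is Lemma~\ref{lem:M-tame} (proved in Appendix~\ref{append:M-tame} via the curve selection lemma), which gives transversality of $F_{q,t}^{-1}(\bu)$ with $\partial A_{q,t}(\eta)$ uniformly in the parameters, so that the relative homology is computed inside the \emph{compact} truncation $A_{q,t}(\eta_0)$. The family of these compact pairs is locally trivial near $(q_0,0)$ by Ehresmann, which transports classes from $t=0$ to $\slide_t(q)$; only then is Lemma~\ref{lem:slide}(3) used, namely to slide the cut $\bu_0$ to $+\infty$ canonically because no divergent critical values lie in the positive-real sector. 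Independence of the choices of $\eta_0,\bu_0$, flatness, and the $\Gamma_\R$ statement follow from this construction, while injectivity is a separate argument at a semisimple $q_0$ using \emph{finite} thimbles (which are compact and hence do deform continuously) that map to part of a basis of $\Lef_{\slide_t(q)}$. Your proposal never invokes Lemma~\ref{lem:M-tame} or any substitute compact-truncation mechanism, and without it the $t\to 0$ transfer of classes is not justified. (A minor further slip: there is no inclusion $\cYsm_q\hookrightarrow\cY_{\slide_t(q)}$ ``adding the $\hb$-coordinate''; these are distinct fibres of $\cY\to\cM$ over $(q,0)$ and $(q,t)$, both isomorphic to $\Hom(\bN,\C^\times)$, with $u_{\hb}=t\,q^{\hat\ell}x^{\hb}$ determined by the remaining coordinates.)
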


\begin{Remark}By Remark \ref{rem:Vpm_contained_in_nd_locus},
we may assume that $\cV_+\cap \cMtimes
\subset \cMnd$ and $\cV_- \cap \cMsmtimes \subset \cMsmnd$.
Then by the choice of $\Dsm$, $\D$ in Section~\ref{subsubsec:slide}
and Lemma~\ref{lem:slide},
we have that $\Dsm^\times \subset \cMsmnd$ and
$\slide_t(\Dsm^\times) \subset \cV_0 \cap \cMtimes
\subset \cV_+ \cap \cMtimes \subset \cMnd$ for $t>0$.
Hence $\Lefsm$ and $\Lef$ are local systems over
$\Dsm^\times$ and $\Image(\slide_t)$ (with $0<t\le 1$) respectively,
and the statement of Theorem \ref{thm:inclusion} makes sense.
\end{Remark}

For $(q,t) \in \cM_-$ and $\eta>0$, we set
\[
A_{q,t}(\eta) := \cY_{q,t} \cap \{H(x) \le \eta\},
\]
where $H(x)$ is as in \eqref{eq:H}.
When $\eta> \min_{x\in \cY_{q,t}} H(x)$,
$A_{q,t}(\eta)$ is a compact region
such that the inclusion $A_{q,t}(\eta) \hookrightarrow \cY_{q,t}$ is
a homotopy equivalence.
Indeed, via the $\Hom\big(\bN,S^1\big)$-action, we have
\[
A_{q,t}(\eta) \cong
\Hom\big(\bN,S^1\big) \times \big\{(x_1,\dots,x_n) \in
(\R_{>0})^n\colon H(x) \le \eta\big\}
\]
and the second factor is contractible since $H|_{(\R_{>0})^n}$
is strictly convex.
For any $(q,\bu)\in \cMsmtimes\times \C$, the real algebraic function~$H(x)^2$ restricted to $F_{q,0}^{-1}(\bu)$ has finitely many critical values
by \cite[Corollary~2.8]{Milnor:singular}, and thus there exists
$\eta_0>0$ such that $F_{q,0}^{-1}(\bu)$ and $\partial A_{q,0}(\eta)$ intersect
transversally for all $\eta$ with $\eta\ge \eta_0$.
We will show in the following lemma that
such an $\eta_0$ can be chosen independently
of $(q,\bu)$ as far as $(q,\bu)$ varies in a compact subset of
$\cMsmnd\times \C$.
We note that Nemethi--Zaharia \cite{Nemethi-Zaharia:Milnor}
and Parusinski~\cite{Parusinski} have obtained
analogous results for a single polynomial function
on~$\C^n$ (and the proof is similar).

\begin{Lemma}\label{lem:M-tame}\quad
\begin{enumerate}\itemsep=0pt
\item[{\rm (1)}] For a compact subset $K\subset \cMsmnd \times \C$,
there exists $\eta_0$ such that for all $\eta\ge \eta_0$
and all $(q,\bu) \in K$,
$F_{q,0}^{-1}(\bu)$ and $\partial A_{q,0}(\eta)$ intersect transversally.
\item[{\rm (2)}] The same result on the transversality of
$F_{q,t}^{-1}(\bu)$ and $\partial A_{q,t}(\eta)$ holds when we
replace $\cMsmnd$ with $\cMnd$.
\end{enumerate}
\end{Lemma}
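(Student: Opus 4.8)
\textbf{Proof strategy for Lemma \ref{lem:M-tame}.}
The plan is to argue by contradiction, following the same pattern as the proof of Proposition \ref{prop:strong_tame} (itself a refinement of \cite[Lemma 3.11]{Iritani:Integral}). We prove (1); statement (2) is identical after replacing $S_-$ by $S$, $\cYsm$ by $\cY$, and the function $H$ by $\big(\sum_{b\in S}|x^b|^2\big)^{1/2}$. First I would record the transversality criterion: $F_{q,0}^{-1}(\bu)$ and $\partial A_{q,0}(\eta)=\{H(x)^2=\eta^2\}\cap\cYsm_q$ fail to meet transversally at a point $x$ precisely when $x$ is a critical point of $H(x)^2|_{F_{q,0}^{-1}(\bu)}$, which by Lagrange multipliers means that $d(H^2)(x)$ is a complex-linear combination of $dF_{q,0}(x)$ and $\overline{dF_{q,0}(x)}$ (the latter because $F_{q,0}^{-1}(\bu)$ is a complex submanifold, so its real conormal is spanned by $dF_{q,0}$ and $\overline{dF_{q,0}}$). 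Equivalently, the component of $\grad H^2$ orthogonal to the span of $\grad F_{q,0}$ and $\overline{\grad F_{q,0}}$ vanishes; I would phrase this uniformly using the K\"ahler metric $\tfrac{\iu}{2}\sum_j d\log x_j\wedge d\overline{\log x_j}$ as in Proposition \ref{prop:strong_tame}.

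Next I would set up the contradiction hypothesis: suppose no uniform $\eta_0$ exists, so there are sequences $\eta(k)\to\infty$, $(q(k),\bu(k))\in K$, and $x(k)\in F_{q(k),0}^{-1}(\bu(k))$ with $H(x(k))=\eta(k)\to\infty$ at which the intersection is non-transversal. Passing to a subsequence, $(q(k),\bu(k))$ converges in $K$ to some $(q_\infty,\bu_\infty)$ with $q_\infty\in\cMsmnd$, and (as in Proposition \ref{prop:strong_tame}) I may relabel $S_-=\{b(1),\dots,b(m)\}$ so that $|x(k)^{b(1)}|\ge\cdots\ge|x(k)^{b(m)}|$ for all $k$; since $H(x(k))\le\sqrt m\,|x(k)^{b(1)}|$ we get $|x(k)^{b(1)}|\to\infty$. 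I would then pass to a further subsequence so that the ratios $|x(k)^{b(j)}|/|x(k)^{b(1)}|$ converge in $[0,1]$ and the arguments of all $x(k)^{b(j)}$ converge on $S^1$; let $T\subset S_-$ be the set of indices $j$ for which the ratio limit is positive. A standard toric/Newton-polytope argument (the one in \cite[Section A.2]{Iritani:Integral}) shows that $\{\overline b: b\in T\}$ lies on a proper face $\diamondsuit$ of $\Delta_-$, and that after rescaling $x(k)$ by $|x(k)^{b(1)}|$ and extracting the dominant terms, the non-transversality condition forces a limiting point to be a critical point of the face polynomial $(F_{q_\infty,0})_\diamondsuit$ on $\Hom(\bN,\C^\times)$. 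Similarly, $H^2$ degenerates in the limit to (a multiple of) its own face polynomial $H^2_\diamondsuit=\sum_{b\in S_-\cap\diamondsuit}|x^b|^2$, and the Lagrange condition survives to the limit. This contradicts the Newton non-degeneracy of $F_{q_\infty,0}$ (which says $(F_{q_\infty,0})_\diamondsuit$ has no critical points on the torus), giving the desired uniform bound.

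The main obstacle will be making precise the limiting/rescaling step where one simultaneously controls the degeneration of $F_{q,0}$, of $H^2$, and of the transversality (Lagrange) equation, and checking that the surviving limiting equation really is the critical-point equation for the \emph{same} face $\diamondsuit$ in both $F$ and $H$ --- this is where the hypothesis $q\in\cMsmnd$ and the geometry of $\Delta_-$ (compactness of $\frX_-$, so the origin is interior to $\Delta_-$) are used. The rest is bookkeeping: extracting convergent subsequences of ratios and phases, identifying the face, and invoking \cite[Th\'eor\`eme 6.1]{Kouchnirenko:Newton} / the argument of \cite[Section A.2]{Iritani:Integral}. Once (1) is established, (2) follows verbatim since $\cMnd$ is defined by the same Newton non-degeneracy condition for the bigger exponent set $S=S_-\cup\{\hb\}$ and Proposition \ref{prop:strong_tame}(2) supplies the analogous a priori gradient estimate; I would also remark that, by Proposition \ref{prop:non_deg_locus} and Remark \ref{rem:Vpm_contained_in_nd_locus}, the compact sets of interest (e.g. $\overline{\Dsm^\times}$ and $\overline{\slide_t(\Dsm^\times)}$) lie in $\cMsmnd$ respectively $\cMnd$, so the lemma applies where it is needed in the proof of Theorem \ref{thm:inclusion}.
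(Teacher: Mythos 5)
Your setup (the transversality criterion, the contradiction with sequences $x(k)$ having $H(x(k))\to\infty$ over the compact set $K$, and the reduction of part~(2) to the same argument) matches the paper, but the pivotal step of your sketch has a genuine gap. You claim that, after identifying a face $\diamondsuit$ of $\Delta_-$ and rescaling, ``the non-transversality condition forces a limiting point to be a critical point of the face polynomial $(F_{q_\infty,0})_\diamondsuit$ on $\Hom(\bN,\C^\times)$'', which would contradict Newton non-degeneracy. That deduction is not justified: failure of transversality of $F_{q,0}^{-1}(\bu)$ with $\partial A_{q,0}(\eta)$ at $x$ means that $H$ restricted to the fibre has a critical point at $x$ (equivalently, $\grad F_{q,0}(x)$ and $\grad H(x)$ are $\C$-linearly dependent), and in the limit this only says that the limit point is a critical point of $H^2_\diamondsuit$ along a level set of the face polynomial of $F$ --- not that $(F_{q_\infty,0})_\diamondsuit$ itself has a critical point. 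In fact, on every fibre $F_{q,0}^{-1}(\bu)$ the proper function $H$ attains a minimum, so non-transversal points with $H$ arbitrarily large do exist once $\bu$ is allowed to leave a compact set; hence no argument that uses the Lagrange condition alone, without using the boundedness of $\bu(k)=F_{q(k),0}(x(k))$ quantitatively, can reach a contradiction, and your outline never uses it (beyond ensuring $(q_\infty,\bu_\infty)\in K$). This is precisely where the paper's proof goes differently: it invokes the curve selection lemma of \cite{Nemethi-Zaharia:Milnor} to replace the sequence by a real-analytic curve $s\mapsto(q(s),\bu(s),x(s))$ with Laurent expansions, differentiates the constraint $\bu(s)=F_{q(s),0}(x(s))$ in $s$, and compares pole orders to conclude $\|\grad F_{q(s),0}(x(s))\|=O\big(s^{-m+1}\big)$, contradicting the lower bound $\|\grad F_{q,0}(x)\|\ge \epsilon H(x)\ge \epsilon' s^{-m}$ supplied by Proposition~\ref{prop:strong_tame}. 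Newton non-degeneracy enters only through that proposition, not through a torus critical point of a face polynomial.

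There is also a technical issue with your purely sequential rescaling: convergence of the ratios $|x(k)^{b(j)}|/|x(k)^{b(1)}|$ and of the phases of the monomials does not by itself produce a limit point in $\Hom(\bN,\C^\times)$ at which face polynomials and their derivatives can be evaluated, nor does it control the Lagrange multipliers $\alpha_k$ (which may tend to $0$ or $\infty$); one would have to normalize by a suitable one-parameter subgroup and show all error terms are uniformly of lower order. The curve selection lemma plus Laurent expansions is exactly the device that legitimizes these comparisons, which is why both Proposition~\ref{prop:strong_tame} (sequences suffice there because the conclusion is an inequality) and Lemma~\ref{lem:M-tame} (where one must differentiate along the family) are organized the way they are. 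If you want to salvage your route, you can use the boundedness of $\bu(k)$ to force the face value of $F$ to vanish at the limit and then pair the Lagrange relation with the Euler vector field of the weight defining $\diamondsuit$; but controlling the multiplier in that computation again requires the gradient bound of Proposition~\ref{prop:strong_tame}, at which point you are re-deriving the paper's argument in a less tractable form. Your final remarks on part~(2) and on where the lemma is applied (via Proposition~\ref{prop:non_deg_locus} and Remark~\ref{rem:Vpm_contained_in_nd_locus}) are fine.
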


The proof of Lemma \ref{lem:M-tame}
will be given in Appendix \ref{append:M-tame}.
In the situation of Lemma \ref{lem:M-tame}(1),
$A_{q,0}(\eta) \cap F_{q,0}^{-1}(\bu)$ is a deformation
retract of $F_{q,0}^{-1}(\bu)$ for $\eta \ge \eta_0$;
we can see this using the Morse flow for the function~$H(x)$ on $F_{q,0}^{-1}(\bu)$.
In particular, the inclusion of pairs
\[
\big(A_{q,0}(\eta), F_{q,0}^{-1}(\bu)\cap A_{q,0}(\eta)\big) \rightarrow
\big(\cYsm_q, F_{q,0}^{-1}(\bu)\big)
\]
induces an isomorphism of relative homology.
This fact will be used in the following proof.

\begin{proof}[Proof of Theorem $\ref{thm:inclusion}$]
We shall construct an inclusion of
local systems:
\[
\iota \colon \
\Lefsm|_{\Dsm^\times} \to \tti^{-1} \ttj_* \slide_{>0}^{-1} \Lef.
\]
Note that the stalk of $\tti^{-1} \ttj_*\slide_{>0}^{-1}\Lef$
at $q_0\in \Dsm^\times$ consists of a Gauss--Manin flat
family of relative homology classes in
$\Lef_{\slide_t(q)}$ with $t>0$ sufficiently small
and $q$ in a~small contractible neighbourhood of $q_0$
in $\cMsmnd$. The construction of $\iota$ will be
done in the following 5 steps.

\proofstep{{\rm (1)}
Construction of $\iota$ on the stalk at $q_0\in \Dsm^\times$}
Let $\rho_2>0$ be the constant in Lemma \ref{lem:slide}(3)
and let $\bu_0>\rho_2$ be such that all critical values of $F_{q_0,0}$
are contained in $\{\bu\colon \Re(\bu) < \bu_0\}$.
By Lemma~\ref{lem:M-tame}, there exists $\eta_0>0$
such that $F_{q,0}^{-1}(\bu) \pitchfork \partial A_{q,0}(\eta)$
for all $\eta\ge \eta_0$ and $(q,\bu)$ in a~neighbourhood
of $(q_0,\bu_0)$. Then by the remark preceding the proof,
the inclusion induces an isomorphism of relative homology
(we use $\Z$ coefficients unless otherwise mentioned):
\begin{equation}
\label{eq:iota_first}
H_n\big(A_{q_0,0}(\eta_0), F_{q_0,0}^{-1}(\bu_0)
\cap A_{q_0,0}(\eta_0)\big)
\xrightarrow{\cong}
H_n\big(\cYsm_{q_0}, F_{q_0,0}^{-1}(\bu_0)\big) \cong \Lefsm_{q_0}.
\end{equation}
Since $\{\partial A_{q,t}(\eta_0)\}_{q,t}$ is a proper family,
$F_{q,t}^{-1}(\bu_0)$ and $\partial A_{q,t}(\eta_0) $
intersect transversally for $(q,t)$ in a sufficiently small contractible
neighbourhood $B$ of $(q_0,0)$ in $\cM_-$.
The Ehresmann fibration theorem
implies that $F^{-1}_{q,t}(\bu_0)\cap A_{q,t}(\eta_0)$
is a trivial family of $C^\infty$-manifolds (with boundary)
when $(q,t)$ varies in~$B$.
Therefore, whenever $\slide_t(q)$ lies in $B$, the inclusion
of pairs
\[
\big(A_{\slide_t(q)}(\eta_0), F^{-1}_{\slide_t(q)}(\bu_0)
\cap A_{\slide_t(q)}(\eta_0)\big)
\hookrightarrow
\big(A_B(\eta_0), F^{-1}(\bu_0) \cap A_B(\eta_0) \big)
\]
induces an isomorphism of relative homology, where we set
$A_B(\eta_0) := \bigcup\limits_{(q,t) \in B} A_{q,t}(\eta_0)$.
Thus we obtain a Gauss--Manin flat isomorphism
\begin{equation}\label{eq:iota_second}
\begin{aligned}
\xymatrix@C=-100pt{
 & H_n\big(A_B(\eta_0), F^{-1}(\bu_0) \cap A_B(\eta_0)\big) & \\
H_n\big(A_{q_0,0}(\eta_0), F_{q_0,0}^{-1}(\bu_0) \cap A_{q_0,0}(\eta_0)\big)
\phantom{ABCDE}
\ar[ur]^(.4){\cong} & & \ar[ul]_(.4){\cong}
\phantom{ABCDE}
H_n\big(A_{\slide_t(q)}(\eta_0), F_{\slide_t(q)}^{-1}(\bu_0)
\cap A_{\slide_t(q)}(\eta_0)\big)
}
\end{aligned}
\end{equation}
for $t>0$ sufficiently small and
$q$ in a small neighbourhood of $q_0$.
Composing \eqref{eq:iota_first}, \eqref{eq:iota_second}
and the natural map induced by the inclusion:
\[
H_n\big(A_{\slide_t(q)}(\eta_0), F_{\slide_t(q)}^{-1}(\bu_0)
\cap A_{\slide_t(q)}(\eta_0)\big)
\longrightarrow
H_n\big(\cY_{\slide_t(q)}, F_{\slide_t(q)}^{-1}(\bu_0)\big)
\]
we obtain
\begin{equation}\label{eq:iota_comp}
\Lefsm_{q_0} \longrightarrow
H_n\big(\cY_{\slide_t(q)}, F_{\slide_t(q)}^{-1}(\bu_0)\big).
\end{equation}
Now recall from Lemma~\ref{lem:slide}(3) that
there are no critical values of $F_{\slide_t(q)}$
in the region
$\big\{\bu\in \C\colon \allowbreak |\bu|\ge \rho_2, \, \arg(\bu)
\in \big[{-}\frac{\pi}{2J}, \frac{\pi}{2J}\big]\big\}$.
Since $\bu_0> \rho_2$, the parallel transportation along a straight path
defines a canonical isomorphism
\[
H_n\big(\cY_{\slide_t(q)}, F_{\slide_t(q)}^{-1}(\bu_0)\big)
\cong
H_n\big(\cY_{\slide_t(q)}, F_{\slide_t(q)}^{-1}(\bu_1)\big)
\]
for all $\bu_1> \bu_0$. Thus~\eqref{eq:iota_comp} gives
a map $\Lefsm_{q_0} \to \Lef_{\slide_t(q)}$
for $t>0$ sufficiently small and $q$ in a~small neighbourhood of $q_0$.
This map is Gauss--Manin flat as~$(t,q)$ varies and defines
the map~$\iota_{q_0}$ on the stalks.

\proofstep{{\rm (2)} Independence of the choice of $\eta_0$, $\bu_0$}
We show that the map $\iota_{q_0}$ is independent of
the choices made. That replacing $\eta_0$ with a bigger $\eta_1>\eta_0$
does not change the map $\iota_{q_0}$ follows from
the commutative diagram:
\begin{multline*}
\xymatrix@C=10pt{
\big(\cYsm_{q_0}, F_{q_0,0}^{-1}(\bu_0) \big)
& \big(A_{q_0,0}(\eta_0), F_{q_0,0}^{-1}(\bu_0) \cap A_{q_0,0}(\eta_0)\big)
\ar[r] \ar[d] \ar[l] &
\big(A_B(\eta_0), F^{-1}(\bu_0) \cap A_B(\eta_0) \big) \ar[d]
& \ar[l]
\\
& \ar[r] \big(A_{q_0,0}(\eta_1), F_{q_0,0}^{-1}(\bu_0) \cap A_{q_0,0}(\eta_1)\big)
\ar[ul]
&
\big(A_B(\eta_1), F^{-1}(\bu_0) \cap A_B(\eta_1) \big) & \ar[l]
}
\\
\xymatrix{
& \big(A_{\slide_t(q)}(\eta_0), F^{-1}_{\slide_t(q)}(\bu_0)
\cap A_{\slide_t(q)}(\eta_0)\big)
\ar[l] \ar[d] \ar[rd] & \\
&
\big(A_{\slide_t(q)}(\eta_1), F^{-1}_{\slide_t(q)}(\bu_0)
\cap A_{\slide_t(q)}(\eta_1)\big)
\ar[l] \ar[r] & \big(\cY_{\slide_t(q)}, F_{\slide_t(q)}^{-1}(\bu_0)\big).
}
\end{multline*}
Next we show that the map is independent of $\bu_0$.
Suppose that we choose $\bu_1>\bu_0$ in place of~$\bu_0$.
By Lemma \ref{lem:M-tame} again, we can choose
$\eta_0>0$ such that
$F_{q,0}^{-1}(\bu) \pitchfork \partial A_{q,0}(\eta)$
for all $\eta\ge \eta_0$, all $q$ in a neighbourhood of $q_0$
and all $\bu$ in the interval $[\bu_0,\bu_1]$.
Then the family of $C^\infty$-manifolds
$F_{q,t}^{-1}(\bu) \cap A_{q,t}(\eta_0)$ is trivial
as $\bu$ varies in $[\bu_0,\bu_1]$ and $(q,t)$ varies in a
contractible neighbourhood $B$
of $(q_0,0)$.
The various maps in the construction of $\iota_{q_0}$
can be compared with the following sequence of maps:
\begin{gather*}
\big(\cYsm_{q_0} \times [\bu_0,\bu_1],
F^{-1}_{q_0,0}[\bu_0,\bu_1]\big) \\
\qquad{}\leftarrow
\big(A_{q_0,0}(\eta_0) \times [\bu_0,\bu_1],
F^{-1}_{q_0,0}[\bu_0,\bu_1] \cap
(A_{q_0,0}(\eta_0) \times [\bu_0,\bu_1]) \big) \\
\qquad{} \rightarrow
\big(A_B(\eta_0) \times [\bu_0,\bu_1],
F^{-1}[\bu_0,\bu_1] \cap (A_B(\eta_0) \times [\bu_0,\bu_1])
\big) \\
\qquad{} \leftarrow
\big( A_{\slide_t(q)}(\eta_0) \times [\bu_0,\bu_1],
F_{\slide_t(q)}^{-1}[\bu_0,\bu_1] \cap
(A_{\slide_t(q)}(\eta_0) \times [\bu_0, \bu_1])
\big) \\
\qquad{} \rightarrow
\big( \cY_{\slide_t(q)} \times [\bu_0, \bu_1],
F^{-1}_{\slide_t(q)} [\bu_0,\bu_1]
\big),
\end{gather*}
where $F_{q,t}^{-1}[\bu_0,\bu_1]$ is regarded as a subset of
$\cY_{q,t} \times [\bu_0,\bu_1]$ via $x \mapsto (x, F_{q,t}(x))$
and the maps between the first four pairs induce isomorphisms
in relative homology. It follows that the different choices $\bu_0$, $\bu_1$
give the same map~$\iota_{q_0}$.

\proofstep{{\rm (3)} Gauss--Manin flatness of $\iota$}
This is obvious from the construction in (1); note that $q_0$
there can vary in a small open subset of $\Dsm^\times$.

\proofstep{{\rm (4)} Injectivity of $\iota$.}
Choose $q_0\in \Dsm^\times\cap \cVsmss_-$,
where $\cVsmss_-$ is an open dense subset of $\cVsm_-$
appearing in Section~\ref{subsubsec:str_sheaf_-}, so that
all critical points $\crit_1,\dots,\crit_{N_-}$
of $F_{q_0,0}$ are non-degenerate.
It suffices to show that $\iota_{q_0}$ is injective.
Let $\bu_0>0$ be as in (1).
In this case, a basis of $\Lefsm_{q_0}$ is given by Lefschetz thimbles:
setting $\bu_i = F_{q_0,0}(\crit_i)$ with $1\le i\le N_-$
and choosing a system of mutually non-intersecting paths
$\gamma_i$ connecting $\bu_i$ and $\bu_0$,
we have finite Lefschetz thimbles
$\Gamma_i \cong D^n$ emanating from the critical point $\crit_i$ and
fibred over the path $\gamma_i$; then $\Gamma_1,\dots,\Gamma_{N_-}$
define relative cycles in the pair $\big(\cYsm_{q_0}, F_{q_0,0}^{-1}(\bu_0)\big)$ and
form a basis of $\Lefsm_{q_0}$. We choose $\eta_0>0$ big enough
so that $\Gamma_i$ are contained in $A_{q_0,0}(\eta_0)$
and that the conditions in (1) are satisfied.
The critical points $\crit_i$ belong to convergent critical branches,
and as such, vary continuously in a neighbourhood of $(q_0,0)$
in $\cM_-$; the path $\gamma_i$ can be also continuously
deformed to a family of paths $\gamma_i(q,t)$
connecting the critical value $F_{q,t}(\crit_i)$ and $\bu_0$.
Due to the compactness, the finite thimble
$\Gamma_i$ can be continuously deformed\footnote
{Away from the critical point, we use a symplectic connection
to deform the thimble; around the critical point we use
a family version of the Morse lemma.}
to a finite thimble $\Gamma_i(q,t)\subset A_{q,t}(\eta_0)$
for~$F_{q,t}$ fibred over~$\gamma_i(q,t)$
when $(q,t)$ is sufficiently close to $(q_0,0)$;
$\Gamma_i(q,t)$ gives a relative cycle of
$\big(A_{q,t}(\eta_0), F_{q,t}^{-1}(\bu_0) \cap A_{q,t}(\eta_0)\big)$.
As relative cycles in
$\big(\cY_{\slide_t(q)}, F_{\slide_t(q)}^{-1}(\bu_0)\big)$,
$\Gamma_1(\slide_t(q)),\dots,\Gamma_{N_-}(\slide_t(q))$
form part of a basis of the $n$th relative homology
(corresponding to the convergent critical points)
when $t>0$ is sufficiently small and~$q$ is sufficiently
close to~$q_0$.
Thus $\iota_{q_0}$ sends a basis to part of a basis, and
is injective.

\proofstep{{\rm (5)} That $\iota$ maps $\Gamma_\R$ to $\Gamma_\R$
along $\Dsm^\times \cap \cMsm_\R$}
The positive real Lefschetz thimble~$\Gamma_\R$
(see Section~\ref{subsec:identifying_O}) defines a global section of
$\Lefsm$ over $\Dsm^\times \cap \cMsm_\R$
and a global section of $\slide_{>0}^{-1}\Lef$ over
$(0,1]\times (\Dsm^\times \cap \cMsm_\R)$.
It is obvious from the construction that the map
$\iota_{q_0}$ with $q_0\in \Dsm^\times \cap \cMsm_\R$
sends $\Gamma_\R(q_0)$ to the germ in
$\big(\tti^{-1} \ttj_*\slide_{>0}^{-1} \Lef\big)_{q_0}$
given by $\{\Gamma_\R(\slide_t(q))\}_{t>0,q}$.
\end{proof}

\subsection{Functoriality}\label{subsec:functoriality}
Let $\cV_0$ be an open neighbourhood of $\cC\setminus \{0_+,0_-\}$
as in Section~\ref{subsubsec:conv_div}. We consider the open dense subset $\cVss_0$ of $\cV_0$:
\[
\cVss_0:=\{q \in \cV_0\cap \cMtimes\colon
\text{$F_q=F|_{\pr^{-1}(q)}$ has only non-degenerate critical points}
\}.
\]
Take $q_0\in \cVss_0$ and choose an admissible phase $\phi$
for the critical values of $F_{q_0}$.
By composing the formal decomposition of the analytified Brieskorn module
in Proposition \ref{prop:formal_decomp_Bri} and
(the non-equivariant version of) the mirror isomorphism
in Theorem~\ref{thm:an_mirror_isom},
we get formal decompositions of the pulled-back quantum D-modules
over an analytic open neighbourhood $B$ of $q_0$:
\[
\mir_\pm^*\overline\QDMan(\frX_\pm)\big|_B \cong
\bigoplus_{i=1}^{N_\pm} (\cO_B[\![z]\!], d+d(\bu_i/z),P_\std),
\]
where $N_\pm= \dim H^*_\CR(\frX_\pm)$ and $\bu_i$ are
relative critical values of $F$.
By Proposition~\ref{prop:Hukuhara-Turrittin}, by shrinking~$B$ if necessary,
we have analytic lifts of these formal decompositions
over $B\times I_\phi$, where
$I_\phi = \big\{\big(r,e^{\iu\theta}\big) \in\tCC\colon
|\theta-\phi|<\frac{\pi}{2} + \epsilon\big\}$ (for some small $\epsilon>0$):
\begin{equation}\label{eq:analytic_lifts_pm}
\pi^*\mir_\pm^*\QDMan(\frX_\pm)\big|_{B\times I_\phi}
\cong \bigoplus_{i=1}^{N_\pm}
(\cA_{B\times I_\phi}, d+d(\bu_i/z)),
\end{equation}
where $\pi\colon \cMtimes \times \tCC \to \cMtimes \times \C$
denotes the oriented real blowup along $\cM\times \{0\}$.
Summands of this sectorial decomposition are indexed by relative
critical points of~$F$; those for $\frX_+$ are indexed by
all critical points and those for $\frX_-$ are indexed by the subset
of convergent critical points.
Combining these two decompositions, we get a sectorial decomposition:
\begin{equation}\label{eq:analytic_lift_decomp}
\pi^*\mir_+^*\QDMan(\frX_+) \big|_{B\times I_\phi}
\cong \pi^*\mir_-^*\QDMan(\frX_-)\big|_{B\times I_\phi}
\oplus
\cA_{B\times I_\phi}^{\oplus (N_+-N_-)}.
\end{equation}
This gives an analytic lift of the formal decomposition
in Theorem~\ref{thm:decomp_QDM}.
We consider the inclusion
\begin{equation}
\label{eq:analytic_lift_incl}
\pi^*\mir_-^*\QDMan(\frX_-)\big|_{B\times I_\phi}
\hookrightarrow
\pi^*\mir_+^*\QDMan(\frX_+) \big|_{B\times I_\phi}
\end{equation}
induced by \eqref{eq:analytic_lift_decomp}.
Note that the maps
\eqref{eq:analytic_lift_decomp}, \eqref{eq:analytic_lift_incl} are
associated with a semisimple point $q_0$ and
an admissible direction $e^{\iu\phi}$.

\begin{Theorem}
\label{thm:functoriality}
Let $\varphi\colon \frX_+ \to \frX_-$ be a toric
birational morphism as in Section~$\ref{subsec:functoriality_nota}$.
For each point $q_*\in \cVsmss_{-,\R} \cap \intDsmtimes
\subset \cMsm_\R$,
there exists a contractible open neighbourhood $W_*$ of $q_*$
in $\cVsmss_{-,\R}\cap \intDsmtimes$
and positive numbers
$t_*\in (0,1)$, $\alpha_0\in \big(0,\frac{\pi}{2J}\big)$ such
that the following holds.
For each $q_0 \in \bigcup\limits_{0<t<t_*} \slide_t(W_*) \cap \cVss_0$
and each $\phi\in (-\alpha_0,\alpha_0)$ that is admissible for
the critical values of~$F_{q_0}$,
the inclusion~\eqref{eq:analytic_lift_incl} associated with $q_0$
and $e^{\iu\phi}$ is induced by the pull-back in $K$-theory
via the $\hGamma$-integral structure, i.e., it sends $\frs_V$ to
$\frs_{\varphi^*V}$ for $V\in K(\frX_-)$.
\end{Theorem}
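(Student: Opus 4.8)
The plan is to identify the inclusion \eqref{eq:analytic_lift_incl} with the topological inclusion of Lefschetz thimble local systems from Theorem~\ref{thm:inclusion}, and then to use the two ``Identifying $\cO$'' results (Propositions~\ref{prop:lift_str_sheaf_+} and~\ref{prop:lift_str_sheaf_-}) together with the main theorem of \cite{Iritani:Integral} (Theorems~\ref{thm:main_intpaper} and~\ref{thm:main_intpaper_2}) to recognize this inclusion as the pull-back $\varphi^*$ in $K$-theory. The first step is bookkeeping: fix $q_*\in\cVsmss_{-,\R}\cap\intDsmtimes$, and use the constants $\rho_0,\rho_1,\rho_2$ and the disc $\Dsm$ from Section~\ref{subsubsec:slide} and Lemma~\ref{lem:slide} to separate convergent and divergent critical values of $F$ over $\Image(\slide_t)$; by Proposition~\ref{prop:non_deg_locus} and Remark~\ref{rem:Vpm_contained_in_nd_locus} one arranges $\Dsm^\times\subset\cMsmnd$ and $\slide_t(\Dsm^\times)\subset\cVss_0\cap\cMnd$ for $0<t<t_*$. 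Choosing $\alpha_0$ small enough (less than $\tfrac{\pi}{2J}$) guarantees that for $q_0$ on a slide of $W_*$ and $\phi\in(-\alpha_0,\alpha_0)$, the direction $e^{\iu\phi}$ separates the ``convergent cluster'' near $0$ from the ``satellite discs'' of divergent critical values, so that the Lefschetz thimbles $\Gamma_i^{\phi+\pi}(q_0)$ emanating from convergent critical points deform continuously from those over $\Dsm^\times\cap\cMsm_\R$.

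Next, I would recall that over the small quantum cohomology locus, $\Brism(F)$ and its completion are underlain by the local system $\Lefsm$ of Lefschetz thimbles (Section~\ref{subsubsec:Kouchnirenko_Morse}), and likewise $\Bri(F)$ over $\cMnd$ is underlain by $\Lef$; via the mirror isomorphisms $\Mir_\pm$ and Theorems~\ref{thm:main_intpaper}, \ref{thm:main_intpaper_2}, the dual lattices $H_n(\cY_q,\{\Re(F_q/z)\ll0\};\Z)$ are identified with $K(\frX_\pm)$. The key claim is then that the inclusion \eqref{eq:analytic_lift_incl}, read on flat sections (equivalently, on the dual homology lattices), coincides with the inclusion $\iota$ of Theorem~\ref{thm:inclusion}. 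This is because both are characterized by the same property: a flat section of $\mir_-^*\QDMan(\frX_-)$ lies in the image of \eqref{eq:analytic_lift_incl} iff, under $\Asymp$, it only involves the ``convergent'' summands of the sectorial decomposition \eqref{eq:analytic_lifts_pm}; and the Lefschetz thimble over $\cY_{\slide_t(q)}$ lies in $\Image(\iota)$ iff it is a Gauss--Manin-flat limit of cycles supported (after the slide) on the compact region $A_{q,t}(\eta_0)$, which by construction carries exactly the homology of the convergent critical points. Matching the oscillatory integral formula \eqref{eq:lift_by_oscillatory} for $\Phi_\phi^\pm$ with the defining integrals in Theorem~\ref{thm:main_intpaper}, one sees that the summands indexed by convergent critical points of $F_{\slide_t(q)}$ are precisely those ``visible'' from the fibre $\cYsm_q$, so the two inclusions agree on the dual lattices.

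Having identified \eqref{eq:analytic_lift_incl} with $\iota$, the final step is to show $\iota$ is $\varphi^*\colon K(\frX_-)\to K(\frX_+)$. The plan here is to use the characterization of $\varphi^*$ by multiplicativity and functoriality of the $\hGamma$-class and Chern character under the toric morphism $\varphi$, exactly as in \cite[Section~5]{Iritani:Integral} (crepant case) and \cite[Section~6.3]{CIJ}: both $\iota$ and $\varphi^*$ are lattice maps $K(\frX_-)\to K(\frX_+)$ intertwining the Euler pairings (on the $\iota$-side this is Remark~\ref{rem:intersection_Euler} combined with the fact that $\iota$ preserves intersection numbers of thimbles, which follows from the compact-region construction in the proof of Theorem~\ref{thm:inclusion}), and both send $\cO_{\frX_-}$ to $\cO_{\frX_+}$ — for $\iota$ this is the ``$\iota(\Gamma_\R)=\Gamma_\R$'' statement in Theorem~\ref{thm:inclusion} combined with Propositions~\ref{prop:lift_str_sheaf_+} and~\ref{prop:lift_str_sheaf_-}(2). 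To promote agreement on $\cO$ and compatibility with pairings to agreement on all of $K(\frX_-)$, I would invoke the Galois/monodromy equivariance \eqref{eq:Galois_line_tensor}: both maps are equivariant for the $\Pic$-action (line bundle twists), and tensoring $\cO_{\frX_-}$ by line bundles together with the push-forward from the exceptional divisor generates $K(\frX_-)$ rationally; a dimension count (Remark~\ref{rem:cohomology_drop}) and compatibility with the Euler form then pins down the map on the nose over $\Z$. The main obstacle I anticipate is the second step — rigorously matching the \emph{analytic} sectorial inclusion \eqref{eq:analytic_lift_incl} with the \emph{topological} inclusion $\iota$ of Theorem~\ref{thm:inclusion}, because one must track carefully how the slide $\slide_t$, the choice of the base point $\bu_0>\rho_2$, the Euler-flow rescaling, and the admissible direction $\phi\in(-\alpha_0,\alpha_0)$ interact so that ``convergent critical point'' on the D-module side matches ``cycle surviving in $A_{q,t}(\eta_0)$'' on the homology side; the uniqueness of analytic lifts (angle $>\pi$, Proposition~\ref{prop:Hukuhara-Turrittin}) is what ultimately makes this rigid, but setting up the comparison of determinations and orientations (Remark~\ref{rem:unambiguous_component}, Remark~\ref{rem:standard_determination}) is delicate.
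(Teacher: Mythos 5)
Your overall strategy is the paper's: reduce the statement to the topological inclusion $\iota$ of Theorem~\ref{thm:inclusion}, pin down the structure-sheaf component via Propositions~\ref{prop:lift_str_sheaf_+} and~\ref{prop:lift_str_sheaf_-}, and then identify $\iota$ with $\varphi^*$ in $K$-theory using monodromy equivariance (this last step is exactly the paper's Lemma~\ref{lem:diag_pullback}). However, the step you yourself flag as the main obstacle --- matching the analytic sectorial inclusion~\eqref{eq:analytic_lift_incl} at $(q_0,\phi)$ with the topological inclusion $\iota$ based at $q_*$ --- is left at the level of a heuristic (``convergent summands'' versus ``cycles surviving in $A_{q,t}(\eta_0)$''), and this is where the bulk of the actual proof lives. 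Concretely, three things are missing: (i) a uniform choice of $\alpha_0$ valid for all $q_0$ in a slid neighbourhood of $q_*$ (the paper's Lemma~\ref{lem:uniform_alpha0}), which is needed because Proposition~\ref{prop:lift_str_sheaf_+} a priori gives an $\alpha_0$ depending on $q_0$; (ii) the proof that $\iota$ sends the thimble $\Gamma_i^{\pi+\phi}(q_*)$ to $\Gamma_i^{\pi+\phi}(q_0)$ for \emph{every} $i\le N_-$, not just for the conifold thimble --- this requires the bend--slide--straighten deformation of finite thimbles through a fixed base value $\bu_0\gg 0$ together with the verification that no Picard--Lefschetz jumps occur, which in turn rests on the vanishing of intersection numbers of vanishing cycles attached to critical values that coalesce on the half-line $\bu_1(q_*)+\R_{\ge 0}$ (conditions (b), (c) in the paper's proof); and (iii) the analytic lift for $\frX_-$ is constructed at a phase $\phi_*$ admissible at $q_*$ and only serves directly for $\phi$ in a small subinterval; covering all admissible $\phi\in(-\alpha_0,\alpha_0)$ requires the closing mutation argument showing that the $\frX_-$-basis and the corresponding part of the $\frX_+$-basis undergo the same mutations, so that $V_i^+=\varphi^*V_i^-$ is preserved. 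Your proposal does not address (iii) at all, and treats (i), (ii) as rigidity consequences of the uniqueness of analytic lifts, which by itself does not produce the thimble identification.

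There is also a flaw in your final step. To conclude $\iota=\varphi^*$ you invoke ``tensoring $\cO_{\frX_-}$ by line bundles together with the push-forward from the exceptional divisor generates $K(\frX_-)$ rationally'', followed by a dimension count and Euler-form compatibility to upgrade to $\Z$. The push-forward from the exceptional divisor has nothing to do with $K(\frX_-)$ (it produces the complementary summands of $K(\frX_+)$ in Theorem~\ref{thm:decomp_Lef_K}), and no rational-generation or dimension-count argument is needed or available here: the correct and simpler argument (Lemma~\ref{lem:diag_pullback}) uses that $K(\frX_-)$ is \emph{integrally} generated by line bundles (Borisov--Horja), that $\iota$ is equivariant for the monodromy of $\Lefsm$ and $\Lef$, that $\iota(\Gamma_\R)=\Gamma_\R$ identifies $\cO\mapsto\cO$, and --- a check you omit --- that the monodromy on the $\frX_+$ side along the slid loop is tensoring by $\varphi^*L_\xi$ rather than by some other lift of $\xi$; this last point requires identifying the lift $\hxi$ of $\xi$ determined by the splitting $\LL=\LL'\oplus\Z\delta_{\hb}^{\bSigma_-}$ with the pull-back of line bundles under the explicit coordinate description~\eqref{eq:varphi_coord} of $\varphi$. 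Without that verification, ``Pic-equivariance'' of the two sides is not yet comparing the same actions.
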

Recall from Section~\ref{subsubsec:str_sheaf_-} that
$\cVsmss_{-,\R}$ is the intersection of the semisimple locus
$\cVss_-$ of~$\cV_-$ and the real positive locus $\cMsm_\R$
of $\cMsm$. Also, $\intDsmtimes$ denotes
the interior of $\Dsm^\times=\Dsm\cap \cMsmtimes$ as a~subset of~$\cMsm$.
Recall that $\slide_t$ is the sliding map from Definition~\ref{def:slide}
and $J\ge 1$ is the natural number in Proposition~\ref{prop:critv_on_curve}.
Note that $\bigcup\limits_{0<t<t_*}\slide_t(W_*) \cap \cVss_0$ is non-empty
because $\cVss_0 \cap \cM_\R$ is open dense in $\cV_0\cap \cM_\R$
and $\bigcup\limits_{0<t<t_*} \slide_t(W_*)$ is an open subset of
$\cV_0\cap \cM_\R$ by Lemma \ref{lem:slide}(2).

\begin{Remark}As we explained in Remark~\ref{rem:standard_determination},
we have a standard identification between $K$-classes of $\frX_+$ (resp.~$\frX_-$)
and flat sections over $(\cM_\R \cap \cV_+) \times \R_{>0}$
(resp.~$(\cMsm_\R \cap \cV_-)\times \R_{>0}$).
We use these identifications in the above theorem;
we also use the analytic continuation
along the sliding homotopy $\slide_t$ for $\frX_-$.
\end{Remark}

\begin{Remark}By the duality of the analytic lift discussed in
Remark \ref{rem:pairing_analytic_lift}, if the inclusion \eqref{eq:analytic_lift_incl} corresponds to the pull-back $\varphi^* \colon K(\frX_-) \to K(\frX_+)$ in $K$-theory, the projection
associated with the opposite direction $-e^{\iu\phi}= e^{\iu (\phi+\pi)}$
\[
\pi^*\mir_+^*\QDMan(\frX_+)\big|_{B\times I_{\phi+\pi}}
\twoheadrightarrow
\pi^*\mir_-^*\QDMan(\frX_-)\big|_{B\times I_{\phi+\pi}}
\]
corresponds to the push-forward $\varphi_* \colon K(\frX_+) \to K(\frX_-)$
in $K$-theory.
\end{Remark}

\begin{Lemma}
\label{lem:diag_pullback}
Let $q_*$ be a point in $\cMsm_\R \cap \Dsm^\times$.
We have a commutative diagram
\begin{equation}
\label{eq:diag_K_Lef_pullback}
\begin{aligned}
\xymatrix{
\Lefsm_{q_*} \ar[r]^(0.25)\iota \ar[d]^{\cong} &
\big(\tti^{-1}\ttj_* \slide_{>0}^{-1} \Lef\big)_{q_*} \cong
\big(\slide_t^{-1}\Lef\big)_{q_*}
 \ar[d]^{\cong} \\
K(\frX_-) \ar[r]^{\varphi^*} & K(\frX_+)
}
\end{aligned}
\qquad \text{with $t\in (0,1]$},
\end{equation}
where the vertical arrows are induced by the isomorphisms
in Theorems $\ref{thm:main_intpaper}$ and~$\ref{thm:main_intpaper_2}$,
the top horizontal arrow is the inclusion in Theorem~$\ref{thm:inclusion}$.
\end{Lemma}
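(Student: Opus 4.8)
\textbf{Proof plan for Lemma \ref{lem:diag_pullback}.}
The plan is to compare the two horizontal arrows in \eqref{eq:diag_K_Lef_pullback} by tracking a common set of generators and using compatibility of all constructions with the toric coordinate map \eqref{eq:varphi_coord}. First I would recall that under the identification $\LL' = \Ker(\Z^{S_-}\to\bN)\hookrightarrow \LL = \Ker(\Z^S\to\bN)$, the pull-back $\varphi^*\colon\Pic(\frX_-)\to\Pic(\frX_+)$ is dual to the natural surjection $\LL^\star\to\LL'^\star$ which on the level of toric divisors sends $\overline D_b^{\,-}$ to $\overline D_b^{\,+}$ for $b\in R_-\cap R_+$, and sends $\overline D_\hb^{\,+}$ (the exceptional divisor $E$) to the appropriate combination; on $K$-groups, $\varphi^*$ is the natural ring homomorphism $K(\frX_-)\to K(\frX_+)$ induced by the flat pull-back of coherent sheaves along $\varphi$. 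The key is that, on the mirror side, the inclusion $\iota$ of local systems from Theorem \ref{thm:inclusion} is itself induced by the coordinate map \eqref{eq:varphi_coord}: at $t=1$ (hence for all $t\in(0,1]$ by the Gauss--Manin flat identification) the fibre $\cY_{q,1}\cap\cYtimes$ maps isomorphically to $\cYsm_q\cap\cYsmtimes$ under the change of variables $(z_\hb,(z_v))\mapsto(1,(z_\hb^{k_v}z_v))$, and the slid potential $F_{\slide_1(q)}$ pulls back to $F_{q,0}$ plus the small extra term $q^{\hat\ell}x^\hb$; the construction of $\iota$ via the compact exhaustions $A_{q,t}(\eta_0)$ exactly realises the inclusion of the relative homology of the $S_-$-fibre into that of the $S$-fibre compatible with this map.

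Second, I would invoke Theorems \ref{thm:main_intpaper} and \ref{thm:main_intpaper_2}: the vertical isomorphisms $\Lefsm_{q_*}\cong K(\frX_-)$ and $\big(\slide_t^{-1}\Lef\big)_{q_*}\cong K(\frX_+)$ are characterised by the oscillatory-integral pairing with the $\hGamma$-integral flat sections $\frs_V$. Concretely, a relative cycle $\Gamma$ corresponds to the class $V(\Gamma)$ determined by $(2\pi z)^{-n/2}\int_\Gamma e^{-F_q/z}\Omega_{q,-z} = P(\Mir(\Omega),\frs_{V(\Gamma)})$. To prove commutativity it therefore suffices to check that for a \emph{single} well-chosen cycle, namely the positive real Lefschetz thimble $\Gamma_\R$, the diagram commutes, and then to propagate this to all of $K(\frX_-)$ by a monodromy/spanning argument. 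By Theorem \ref{thm:inclusion}, $\iota(\Gamma_\R(q_*)) = \Gamma_\R(\slide_t(q_*))$; by Theorem \ref{thm:main_intpaper}(2) and Theorem \ref{thm:main_intpaper_2}, the left cycle corresponds to $\cO_{\frX_-}$ and the right cycle corresponds to $\cO_{\frX_+}$; and $\varphi^*\cO_{\frX_-} = \cO_{\frX_+}$. So the diagram commutes on the distinguished generator.

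Third, to upgrade from one generator to all of $K(\frX_-)$, I would use the Galois/monodromy equivariance \eqref{eq:Galois_line_tensor}: both $\frs_\bullet$ and the local systems $\Lefsm$, $\Lef$ carry compatible actions of $\Pic(\frX_-)$ (resp.\ $\Pic(\frX_+)$) given by monodromy of $\mir_-$ (resp.\ $\mir_+$) around the large radius limit, and the inclusion $\iota$ intertwines these via $\varphi^*$ on Picard groups (this is where the coordinate-level description of $\iota$ is used again: tensoring by a line bundle $L_\xi$ on $\frX_-$ corresponds to pulling back to $L_{\varphi^*\xi}$ on $\frX_+$). Since $K(\frX_-)\otimes\C$ is generated as a module over the group ring $\C[\Pic(\frX_-)]$ by the class $\cO_{\frX_-}$ (indeed the classes $\{L_\xi\}$ span $K(\frX_-)\otimes\C$ for a toric stack), commutativity on $\cO_{\frX_-}$ together with equivariance forces commutativity everywhere. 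The main obstacle I anticipate is the careful verification that the geometric inclusion $\iota$ built from compact exhaustions in Theorem \ref{thm:inclusion} really is the one induced by \eqref{eq:varphi_coord} at the level of relative homology of the \emph{non-compact} Milnor-type fibres --- i.e.\ checking that no relative cycle classes are lost or created under the degeneration $t\to 0$ and that the convergent critical points genuinely match the critical points of $F_{q,0}$; this is essentially contained in the "convergent vs.\ divergent critical branches" analysis of Section \ref{subsubsec:conv_div} and the transversality Lemma \ref{lem:M-tame}, but assembling it into the precise commutativity statement requires some care with base points and determinations as flagged in Remarks \ref{rem:branch_L} and \ref{rem:standard_determination}.
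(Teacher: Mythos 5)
Your overall strategy is the same as the paper's: first pin down the image of the structure sheaf, using that $\iota$ sends the positive-real thimble $\Gamma_\R(q_*)$ to $\Gamma_\R(\slide_t(q_*))$ (Theorem~\ref{thm:inclusion}) and that $\Gamma_\R$ corresponds to $\cO$ on both sides (Theorems~\ref{thm:main_intpaper} and~\ref{thm:main_intpaper_2}), and then propagate to all of $K(\frX_-)$ by monodromy equivariance of $\iota$ together with the fact that $K(\frX_-)$ is generated by line bundle classes. That skeleton is exactly the paper's proof.

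The gap is in how you justify the equivariance step, i.e., the claim that $\iota$ intertwines the two Picard actions via $\varphi^*$. Your justification rests on the assertion that $\iota$ is ``induced by the coordinate map~\eqref{eq:varphi_coord}'' on the fibres; but \eqref{eq:varphi_coord} is a self-map of the A-side GIT chart $\C^S$ presenting $\frX_\pm$, whereas on the mirror side the fibres $\cY_{\slide_t(q)}$ and $\cYsm_q$ are literally the same torus $\Hom(\bN,\C^\times)$, the potentials differing only by the extra monomial $t\,q^{\hat\ell}x^{\hb}$ as in~\eqref{eq:LG_pot_q_t}; no change of fibre variables occurs, so this description cannot by itself determine how tensoring by line bundles on the two sides is matched under $\iota$. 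What is actually needed (and what the paper does) is a computation on the base: the slid loop $\theta\mapsto\slide_t\big(e^{\iu\theta\xi}\cdot q_*\big)$ keeps the coordinate $t=t_{\hb}$ constant, so its class in $\pi_1(\cMtimes)\cong\LL^\star$ is the unique lift $\hxi$ of $\xi\in(\LL')^\star$ with $\hxi\cdot\delta_{\hb}^{\bSigma_-}=0$; by the monodromy property of the mirror maps and~\eqref{eq:Galois_line_tensor}, the monodromies of $\Lefsm$ and of $\Lef$ around these loops act by $\otimes L_\xi^{-1}$ and $\otimes L_{\hxi}^{-1}$ respectively; and one must then check $L_{\hxi}\cong\varphi^*L_\xi$, which follows from the splitting $\LL=\LL'\oplus\Z\delta_{\hb}^{\bSigma_-}$ of~\eqref{eq:LL_LL'} and the equivariance of~\eqref{eq:varphi_coord} with respect to the dual homomorphism of tori. (Relatedly, $\varphi^*$ on $\Pic$ is realized by this particular section $(\LL')^\star\to\LL^\star$, $\xi\mapsto\hxi$, not by ``the dual of the natural surjection $\LL^\star\to(\LL')^\star$'' as you wrote.) Once this lattice computation is inserted in place of the fibre-level claim, your argument closes and coincides with the paper's proof.
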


\begin{proof}Note that the left vertical arrow is induced, via the $\hGamma$-integral structure,
by the isomorphism of local systems over $\Dsm^\times$:
\begin{equation}\label{eq:Lefsm_QDM-}
\Lefsm\otimes \C\big|_{\Dsm^\times}
\cong \big(\mir_-^*\QDMan(\frX_-)|_{\Dsm^\times \times \{1\}} \big)^\nabla
\end{equation}
sending a class $[\Gamma] \in H_n(\cY_q, \{\Re(F_q)\gg 0\})$
to a flat section $s_\Gamma$ of $\mir_-^*\QDMan(\frX_-)|_{\Dsm^\times \times\{1\}}$
satisfying
\[
\frac{1}{(2\pi)^{n/2}} \int_\Gamma
e^{-F_q} \Omega_{q,-1} = P(\Mir(\Omega), s_\Gamma)
\]
for every local section $\Omega_{q,z}$ of $\Brism(F)$.
Here $(\cdots)^\nabla$ denotes the local system defined
by the kernel of $\nabla$.
Also, the right vertical arrow in~\eqref{eq:diag_K_Lef_pullback}
is induced by the isomorphism of local systems
over $\Image(\slide_{>0})$:
\begin{equation}\label{eq:Lef_QDM+}
\Lef\otimes \C\big|_{\Image(\slide_{>0})} \cong
\big(\mir_+^*\QDMan(\frX_+)
|_{\Image(\slide_{>0})\times \{1\}}\big)^\nabla,
\end{equation}
which is defined similarly.
We conclude the lemma by studying the monodromy
of these local systems.
Let $\LL'=\Ker\big(\Z^{S_-}\to \bN\big)$ be as in Section~\ref{subsubsec:analytic_lift}.
The inclusion $\Dsm^\times \subset \cMsmtimes = (\LL')^\star
\otimes \C^\times$
induces an isomorphism
$\pi_1(\Dsm^\times) \cong (\LL')^\star$;
an element $\xi \in (\LL')^\star$ corresponds to
the loop $[0,2\pi]\ni \theta \mapsto
e^{\iu\theta \xi} \cdot q_*$ based at $q_*\in \Dsm^\times$.
We claim that the monodromy of $\Lefsm$ around the loop
$e^{\iu\theta\xi} \cdot q_*$
corresponds to tensoring by $L_\xi^{-1}$ in $K(\frX_-)$
on the $\hGamma$-integral structure
under the isomorphism \eqref{eq:Lefsm_QDM-},
where $L_\xi$ denotes the orbi-line bundle as in
Section~\ref{subsubsec:Pic_2nd}.
We also claim that the monodromy of $\Lef$ around
$\slide_t(e^{\iu\theta\xi} \cdot q_*) = (e^{\iu\theta\xi} \cdot q_*,t)$
corresponds to tensoring by $\varphi^*L_\xi^{-1}$ in $K(\frX_+)$
under the isomorphism \eqref{eq:Lef_QDM+}.
These two claims prove the lemma.
In fact, since the map $\iota$ is monodromy-equivariant,
the composition
\begin{equation}
\label{eq:iota_in_K}
K(\frX_-) \cong \Lefsm_{q_*}
\overset{\iota}{\longrightarrow} \big(\slide_t^{-1}\Lef\big)_{q_*} \cong K(\frX_+)
\end{equation}
identifies tensoring by $L_\xi$ with tensoring by $\varphi^*L_\xi$;
also this map \eqref{eq:iota_in_K}
sends the structure sheaf to the structure sheaf
since $\iota$ sends the positive real Lefschetz thimble
$\Gamma_\R(q_*)$ to the positive-real one
$\Gamma_\R(\slide_t(q_*))$ (see Theorem \ref{thm:inclusion});
since $K(\frX_-)$ is generated by line bundles \cite{Borisov-Horja:K},
we conclude that the map \eqref{eq:iota_in_K}
equals $\varphi^*$.

It remains to prove the above two claims.
Let $[\xi]$ denote the class in $\Pic(\frX_-)$ of $L_\xi$.
By \cite[(61)]{Iritani:Integral}, the mirror map satisfies
\[
\mir_-\big(e^{2\pi \iu \xi} \cdot q_*\big) = g(-[\xi]) \mir_-(q_*)
\]
(see Section~\ref{subsec:Kaehler_moduli} for $g(-[\xi])$)
and hence the flat section $\frs_V(\mir_-(q_*),z=1)$
is analytically continued,
along the path $\theta \mapsto e^{\iu\theta\xi} \cdot q_*$,
to $\frs_V(g(-[\xi])\mir_-(q_*),1)$, which is identified with
$dg(-[\xi])^{-1} \frs_V(g(-[\xi])\mir_-(q_*),1) =
\frs_{V\otimes L_\xi^{-1}}(\mir_-(q_*),1)$ by the Galois action
in Section~\ref{subsec:QDM}; here we
used the formula \eqref{eq:Galois_line_tensor}.
This proves the first claim.
To prove the second claim, it suffices to show that
the homotopy class $\hxi \in \LL^\star \cong \pi_1(\cMtimes)$
of the loop $\slide_t(e^{\iu\theta\xi} \cdot q_*)$
defines a line bundle $L_\hxi$ isomorphic to $\varphi^*L_\xi$.
Observe that we have a direct sum decomposition
\begin{equation}
\label{eq:LL_LL'}
\LL = \LL' \oplus \Z \delta_\hb^{\bSigma_-},
\end{equation}
where $\delta_\hb^{\bSigma_-} = e_\hb - \Psi_-(\hb) = -\bw$
is an element introduced in Section~\ref{subsec:ext_refined_fanseq}
(see also Section~\ref{subsec:functoriality_nota})
which corresponds to the co-ordinate $t=t_\hb$.
The class $\hxi\in \LL^\star$ is a unique
lift of $\xi\in (\LL')^\star$ such that
$\hxi \cdot \delta_\hb^{\bSigma_-}=0$.
The map \eqref{eq:varphi_coord} inducing the birational
morphism $\varphi\colon \frX_+ \to \frX_-$
is equivariant with respect to the homomorphism
$\LL\otimes \C^\times \to \LL'\otimes \C^\times$,
$\exp(\lambda) \mapsto \exp(\lambda + (D_\hb \cdot \lambda) \bw)$
(where $\lambda \in \LL_\C$), which is dual to the above lift
$(\LL')^\star \to \LL^\star$, $\xi \mapsto \hxi$.
In view of the definition of $L_\hxi$ in Section~\ref{subsubsec:Pic_2nd},
we see that $L_\hxi \cong \varphi^*L_\xi$.
The lemma is proved.
\end{proof}

We show that $\alpha_0$ appearing in Proposition~\ref{prop:lift_str_sheaf_+} can be chosen independently of $q_0\in \cVss_{+,\R}$ if~$q_0$ is close to a given point $q_*$ in $\cVsmss_{-,\R}
\cap \intDsmtimes$.

\begin{Lemma}\label{lem:uniform_alpha0} Let $q_*$ be a point in $\cVsmss_{-,\R} \cap \intDsmtimes$.
There exist a contractible open neighbourhood~$W_*$ of~$q_*$ in
$\cVsmss_{-,\R}\cap \intDsmtimes$ and positive numbers $t_*\in (0,1)$, $\alpha_0\in \big(0,\frac{\pi}{2J}\big)$ such that the conclusion of Proposition~$\ref{prop:lift_str_sheaf_+}$
holds for all $q_0\in \bigcup\limits_{0<t<t_*} \slide_t(W_*)
\cap \cVss_0 \subset \cVss_{+,\R}$ with this same $\alpha_0$.
\end{Lemma}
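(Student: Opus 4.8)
The plan is to combine the construction in Proposition~\ref{prop:lift_str_sheaf_+} with the uniform geometry of the critical values established in Section~\ref{subsubsec:slide}, and then use the fact that the conifold point is a \emph{proper} (convergent) critical branch to get control that is independent of $q_0$. First I would recall from Proposition~\ref{prop:lift_str_sheaf_+} that the number $\alpha_0\in(0,\pi/2)$ there is determined by the requirement that the positive-real Lefschetz thimble $\Gamma_1^{\pi+\phi}(q)$ vary continuously as $(q,\phi)$ ranges over a neighbourhood of $\{q_0\}\times(-\alpha_0,\alpha_0)$ in $\cVss_+\times\R$; equivalently, that no critical value $\bu_j(q)$ other than $\bu_1(q)$ lie on the half-line $\bu_1(q)-\R_{\ge0}e^{\iu(\pi+\phi)}$, i.e.\ that $e^{\iu\phi}$ not be parallel to $\bu_1(q)-\bu_j(q)$ for such $(q,\phi)$. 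So the task reduces to showing that there is a single $\alpha_0$, a single $t_*$, and a neighbourhood $W_*$ of $q_*$ such that for \emph{all} $q_0\in\bigcup_{0<t<t_*}\slide_t(W_*)\cap\cVss_0$ and all $\phi\in(-\alpha_0,\alpha_0)$ admissible for $F_{q_0}$, this non-parallelism persists along the whole sector.

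Second I would invoke Lemma~\ref{lem:slide}(3): over $\Image(\slide_t)$ the convergent critical values lie in $B_{\rho_2}(0)\cap t^{-1/J}\cdot B_{\rho_0}(0)$, which is contained in the disc $\{|\bu|<\rho_0 t^{-1/J}\}$, while the divergent critical values lie in $\bigcup_{k=1}^J t^{-1/J}\cdot B_{\rho_1}(3\rho_0 e^{(2k-1)\pi\iu/J})$, hence are at distance $\ge 2\rho_0 t^{-1/J}$ from the origin and, by the choice of $\rho_1$ in Section~\ref{subsubsec:slide}, stay out of the sector $|\arg\bu|\le\frac{\pi}{2J}$. The key point is that by the uniform boundedness of the convergent critical branches (properness of $\Cr_-\to\cV_-$), after shrinking $W_*$ to a relatively compact neighbourhood of $q_*$ in $\cVsmss_{-,\R}\cap\intDsmtimes$, the constant $\rho_2$ can be taken uniform over $W_*$; moreover the conifold critical value $\bu_1$ of $F_{q_*,0}$ is real and $>0$ (by Section~\ref{subsec:identifying_O}, $F_q|_{\Gamma_\R}$ attains a global minimum there, and for $q$ real positive this minimum is real positive), so after shrinking $W_*$ we may assume $\bu_1(q)$ lies in a small disc about a fixed positive real number for all $q\in W_*$, and under the $t^{-1/J}$ scaling on $\Image(\slide_t)$, $\bu_1(q_0)$ lies in the cone $|\arg\bu|<\frac{\pi}{4J}$, say, uniformly. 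Then I would choose $\alpha_0\in(0,\frac{\pi}{2J})$ small enough that for every $q_0$ as above, the open sector $\arg z\in(-\frac{\pi}{2}-\alpha_0,\,\frac{\pi}{2}+\alpha_0)$ rotated appropriately still misses all the lines $\R(\bu_1(q_0)-\bu_j(q_0))$: for divergent $\bu_j(q_0)$ this is guaranteed since $\bu_1(q_0)-\bu_j(q_0)$ points (roughly) away from the small cone containing $\bu_1(q_0)$, into one of the satellite sectors, and the angular separation is bounded below by a quantity depending only on $J$; for convergent $\bu_j(q_0)$ (of which there are $N_--1$ many besides $\bu_1$), these stay in the shrinking disc $B_{\rho_2 t^{-1/J}}(0)$ which does not contain $\bu_1(q_0)$ for $t$ small, and continuity of the finitely many branches over the compact closure of $W_*$ gives a uniform lower bound on the admissible angular window.

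Third, having fixed $\alpha_0$, $t_*$, $W_*$, the continuity statement for $\Gamma_1^{\pi+\phi}(q_0)$ in a neighbourhood of $\{q_0\}\times(-\alpha_0,\alpha_0)$ — which is all that the proof of Proposition~\ref{prop:lift_str_sheaf_+} used — now holds for every such $q_0$ simultaneously, because the obstruction (a critical value crossing the thimble) has been ruled out uniformly; the rest of the argument of Proposition~\ref{prop:lift_str_sheaf_+} (the oscillatory-integral identity, Theorem~\ref{thm:main_intpaper}(1)--(2), and the identification $V_1=\cO$) applies verbatim at each $q_0$, so I conclude that the basis $\{V_i^+\}_{i=1}^{N_+}$ with $V_1^+=\cO_{\frX_+}$ exists with this common $\alpha_0$. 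I expect the main obstacle to be the uniformity over the \emph{non-compact} slide family $\bigcup_{0<t<t_*}\slide_t(W_*)$: as $t\to0$ the base point $q_0$ runs off toward $\cVsm_-$ and the divergent critical values run off to infinity, so one must argue that the relevant \emph{angular} data nevertheless stabilize. This is handled precisely by the $\C^\times$-equivariance (Euler scaling) which acts on critical values by the homothety $\bu\mapsto t^{-1/J}\bu$ — a homothety is angle-preserving — so the configuration of directions $\{\arg(\bu_i-\bu_j)\}$ converges as $t\to0$ to that of the curve $\cC$ described in Proposition~\ref{prop:critv_on_curve}, where the convergent critical values coalesce at $0$ and the divergent ones sit at $J\gamma$ with $\gamma^J=-1/(Kt_\hb)$, symmetrically distributed; one reads off from this limiting picture that a fixed punctured angular neighbourhood of the positive real direction is admissible, and shrinking $t_*$ makes the actual configuration as close to the limit as desired. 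The verification that all the shrinkings ($W_*$, $t_*$, $\epsilon$ in the sector, the neighbourhood $B$ of $q_0$) can be made consistently is routine but needs to be written carefully in that order.
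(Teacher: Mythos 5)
There is a genuine gap. Your whole strategy rests on the reduction in your first step: that it suffices to arrange, uniformly over $\bigcup_{0<t<t_*}\slide_t(W_*)$ and over $\phi\in(-\alpha_0,\alpha_0)$, that $e^{\iu\phi}$ is never parallel to $\bu_1(q)-\bu_j(q)$, so that the thimble $\Gamma_1^{\pi+\phi}(q)$ never meets another critical point and hence varies continuously. This uniform non-parallelism cannot be arranged in general: at $q_*$ itself other \emph{convergent} critical values may lie exactly on the ray $\bu_1(q_*)+\R_{\ge 0}$ (the paper explicitly allows this, cf.\ the remark after Proposition~\ref{prop:lift_str_sheaf_+} that ``$u_1(q)+\R_{\ge 0}$ may contain other critical values''), so as $(q,\phi)$ varies the walls $\arg\bigl(\bu_i(q)-\bu_1(q)\bigr)=\phi$ pass through every interval $(-\alpha_0,\alpha_0)$, however small $\alpha_0$ is; the set of admissible phases is genuinely disconnected and Picard--Lefschetz jumps are a priori possible. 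Your attempt to dispose of the convergent values by saying they ``stay in the shrinking disc $B_{\rho_2 t^{-1/J}}(0)$ which does not contain $\bu_1(q_0)$'' misreads Lemma~\ref{lem:slide}(3): $\bu_1$ is itself a convergent critical value, all convergent values are bounded by the fixed radius $\rho_2$ (the disc is not shrinking; $t^{-1/J}>1$), and the Euler scaling only separates divergent from convergent values, not $\bu_1$ from the other convergent ones. Consequently the asserted ``uniform lower bound on the admissible angular window'' does not exist, and your third step (``the obstruction has been ruled out uniformly'') fails precisely in the case $k\ge 2$ that the lemma must cover.

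The paper's proof does not try to avoid wall-crossing; it shows instead that the relative homology class $[\Gamma_1^{\pi+\phi}(q)]$ stays constant even when walls are crossed. One fixes a closed sector $I_*$ with vertex $\bu_1(q_*)$ and small opening $\alpha_0<\frac{\pi}{2J}$ containing only the critical values $\bu_1(q_*),\dots,\bu_k(q_*)$ on the ray, arranges (shrinking $W_*$, $t_*$, and using Lemma~\ref{lem:slide}(3) for the divergent branches --- this part of your proposal is in the right spirit) that over $\bigcup_{0<t<t_*}\slide_t(W_*)$ only $\bu_1,\dots,\bu_k$ enter $-\epsilon+I_*$, and then observes that any Picard--Lefschetz jump of $\Gamma_1^{\pi+\phi}$ could only involve the vanishing cycles at $\crit_i$, $2\le i\le k$. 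Their intersection numbers with the vanishing cycle at $\crit_1$ are zero: at $q_*$ the real thimble $\Gamma_\R(q_*)$ contains no critical point other than $\crit_\R(q_*)$, and this vanishing of intersections is transported along the slide by deforming finite thimbles inside the compact sets $A_q(\eta_0)$, exactly as in Parts (1) and (4) of the proof of Theorem~\ref{thm:inclusion}. This Picard--Lefschetz/vanishing-cycle step is the missing idea in your proposal; without it the uniform $\alpha_0$ cannot be obtained.
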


\begin{proof}Let $\crit_1(q),\dots,\crit_{N_-}(q)$ be branches of ``convergent''
critical points of $F_q$ contained in~$\cB_-$ and defined in a
neighbourhood of $q=q_*$ in $\cVss_-$.
We assume that $\crit_1(q_*) = \crit_\R(q_*)$ and
write $\bu_i(q) = F_q(\crit_i(q))$ as before.
We may also assume that
$\bu_i(q_*)\in \bu_1(q_*)+\R_{\ge 0}$ if and only if $1\le i\le k$.
Choose $\alpha_0\in \big(0,\frac{\pi}{2J}\big)$ so that the closed sector
$I_*:= \bu_1(q_*) + \big\{r e^{\iu\theta}\colon r\ge 0, |\theta|\le \alpha_0\big\}$
with vertex at $\bu_1(q_*)$
does not contain critical values other than $\bu_1(q_*),\dots,\bu_k(q_*)$,
see Fig.~\ref{fig:alpha_star}.
We can find a contractible open neighbourhood $W_*$ of $q_*$
in $\cVsmss_{-,\R}\cap \intDsmtimes$,
$0<t_*<1$, and $\epsilon>0$ such that
for all $q\in \bigcup\limits_{0\le t<t_*}\slide_t(W_*)$,
$\bu_i(q) \in -\epsilon + I_*$ if and only if $1\le i\le k$.
Note that $W_* \subset \cVsmss_- \subset \cMsmnd$
and $\bigcup\limits_{0<t<t_*}\slide_t(W_*) \subset \cV_0 \subset \cMnd$
by Lemma~\ref{lem:slide} and Remark~\ref{rem:Vpm_contained_in_nd_locus}.

\begin{figure}[ht]\centering
\includegraphics{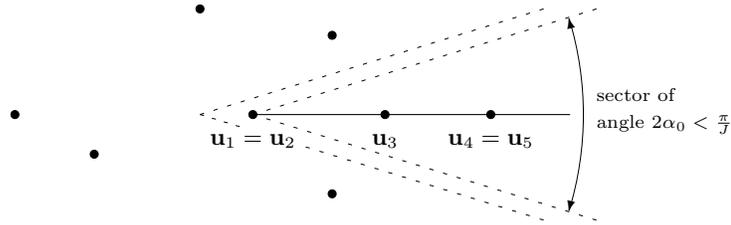}
\caption{Critical values at $q_*$: only $\bu_1,\dots,\bu_k$ (critical values on the half-line $\bu_1+\R_{\ge 0}$) lie in the sector $-\alpha_0\le \arg(\bu-\bu_1) \le \alpha_0$ ($k=5$).}\label{fig:alpha_star}
\end{figure}

We also have divergent critical points over
$\bigcup\limits_{0<t<t_*} \slide_t(W_*)$; let
$\crit_j(q)$, $j=N_- +1,\dots,N_+$ denote
divergent critical branches over $\bigcup\limits_{0<t<t_*} \slide_t(W_*)$
(they can be ramified and may not be single-valued).
By taking smaller $t_*>0$ if necessary, by Lemma \ref{lem:slide}(3),
we may assume that the divergent critical values $\bu_j(q)
= F_q(\crit_j(q))$, $N_-+1 \le j\le N_+$ do not lie in
the sector $-\epsilon + I_*$ as long as
$q \in \bigcup\limits_{0<t<t_*}\slide_t(W_*)$.

Let $\Gamma^\phi_i(q)$ denote the Lefschetz thimble
\eqref{eq:Lefschetz} of $F_q$ associated with
the critical point $\crit_i(q)$ and the phase $\phi$.
Here $q$ can be either in $W_*$
or in $\bigcup\limits_{0<t<t_*}\slide_t(W_*)$~-- we have $1\le i\le N_-$
in the former case and $1\le i\le N_+$ in the latter case.
In view of the proof of Proposition \ref{prop:lift_str_sheaf_+},
it suffices to show that the relative homology class
\[
\big[\Gamma_1^{\pi+\phi}(q)\big] \in H_n\big(\cY_q, \{\Re(F_q) \gg 0\}; \Z\big)
\]
represented by the thimble $\Gamma_1^{\pi+\phi}(q)$
is constant as $(q,\phi)$ varies in $\bigcup\limits_{0<t<t_*}\slide_t(W_*)
\times (-\alpha_0,\alpha_0)$.
The homology class $\big[\Gamma_1^{\pi+\phi}(q)\big]$ can jump
by the Picard--Lefschetz transformation
as $(q,\phi)$ varies (see, e.g., \cite[Chapter~I]{AGV:Singularity_II}).
By our choice of $W_*$, $t_*$ and~$\alpha_0$, it suffices to check that the
intersection numbers of vanishing cycles at~$\crit_1(q)$
and $\crit_i(q)$ (with $2\le i\le k$) are zero at some
$q\in \bigcup\limits_{0<t<t_*}\slide_t(W_*)$; here we
consider vanishing cycles associated with
paths from $\bu_i(q)$ to a base point $\bu_0\gg 0$
\emph{inside} the sector $-\epsilon + I_*$.
At $(q,\phi) = (q_*,0)$, the Lefschetz thimble
$\Gamma^\pi_1(q_*) = \Gamma_\R(q_*)$ lying over
$\bu_1(q_*) + \R_{\ge 0}$ does not contain
the critical points $\crit_i(q_*)$, $2\le i\le k$,
and therefore the vanishing cycles at $\crit_1(q_*)$ and
$\crit_i(q_*)$ (with $2\le i\le k$) do not intersect.
For $1\le i\le k$, choose a path $\gamma_i$
starting from $\bu_i(q_*)$ and ending at $\bu_0$
which avoids other critical points,
and let $\Gamma_i$ denote the finite Lefschetz thimble
(with boundary in $F_{q_*}^{-1}(\bu_0)$)
associated with $\crit_i(q_*)$ and the path $\gamma_i$.
We choose a sufficiently big $\eta_0>0$ such that
$\Gamma_i$'s are contained in $A_{q_*}(\eta_0)$ and
that $\partial A_{q_*}(\eta_0) \pitchfork F_{q_*}^{-1}(\bu_0)$.
Arguing as in Parts~(1),~(4)
in the proof of Theorem~\ref{thm:inclusion}, we see that
$\Gamma_i$ can be continuously deformed to
a relative cycle $\Gamma_i(q)$ of
$\big(A_q(\eta_0), A_q(\eta_0)\cap F_q^{-1}(\bu_0)\big)$
as $q$ varies in a small neighbourhood of $q_*$ in $\cVss_-$.
This shows that the vanishing cycles $\partial \Gamma_1(q)$ and
$\partial \Gamma_i(q)$ (with $2\le i\le k$) have zero intersection
number. The lemma is proved.
\end{proof}

\begin{proof}[Proof of Theorem $\ref{thm:functoriality}$]
Choose $q_* \in \cVsmss_{-,\R}\cap \intDsmtimes$.
Let $W_* \subset \cVsmss_{-,\R} \cap \intDsmtimes$,
$t_*\in (0,1)$, $\alpha_0\in \big(0,\frac{\pi}{2J}\big)$
as in Lemma~\ref{lem:uniform_alpha0}.
By taking smaller $\alpha_0$ if necessary,
we may assume that the conclusion of Proposition~\ref{prop:lift_str_sheaf_-}
holds for the same $\alpha_0$ at $q_0 = q_*$.
As in the proof of Lemma~\ref{lem:uniform_alpha0},
let $\crit_1(q),\dots,\crit_{N_-}(q)$ denote the convergent
critical branches over a neighbourhood of $q_*$ in $\cVss_-$ such that
$\crit_1(q)$ is the conifold point when
$q\in \cM_\R \cup \cMsm_\R$.
Also, let $\crit_{N_-+1}(q),\dots,\crit_{N_+}(q)$
denote divergent critical branches over a neighbourhood
of $q_*$, defined away from $\cMsm$
(they can be ramified and may not be single-valued).
We write $\bu_i(q) = F_q(\crit_i(q))$ for the corresponding critical value.
Let $\phi_*\in (-\alpha_0,\alpha_0)$
be an admissible phase for $\{\bu_1(q_*),\dots, \bu_{N_-}(q_*)\}$.
Proposition~\ref{prop:lift_str_sheaf_-} gives an analytic decomposition
\[
\tPhi_{\phi_*}^- \colon \ \pi^*\mir_-^*\QDM(\frX_-)
\big|_{B' \times I_{\phi_*}}
\cong \bigoplus_{i=1}^{N_-} (\cA_{B'\times I_{\phi_*}},
d + d(\bu_i/z) ),
\]
where $B'$ is an open neighbourhood of $q_*$ in
$\cVss_-$, $I_{\phi_*} = \big\{\big(r,e^{\iu\theta}\big)\colon
|\theta-\phi_*| <\frac{\pi}{2} + \epsilon \big\}$ (for some $\epsilon>0$)
and $\pi \colon \cVss_- \times \tCC \to \cVss_-\times \C$ is the
oriented real blowup, with the following property:
there exists a basis $V_1^-,\dots,V_{N_-}^-$ of $K(\frX_-)$
such that $V_1^-$ is the structure sheaf of $\frX_-$ and that
$\tPhi^-_\phi\big(e^{-\bu_j(q)/z} s^-_j\big) = e_j$,
where $s^-_j$ is the flat section $\frs_{V_j^-}(\mir_-(q),z)$ associated
with~$V_j^-$.
Choose $\phi_1,\phi_2 \in \big(\phi_*-\frac{\epsilon}{2}, \phi_* +
\frac{\epsilon}{2}\big)$ (where $\epsilon$ is the one
appearing in $I_{\phi_*}$) such that $\phi_1<\phi_2$
and that all phases in $[\phi_1,\phi_2]$ are admissible
for $\{\bu_1(q_*),\dots,\bu_{N_-}(q_*)\}$.
By shrinking $W_*$ and taking smaller $t_*>0$ if necessary, we may assume that
\begin{itemize}\itemsep=0pt
\item[(a)] $\bigcup\limits_{0\le t<t_*} \slide_t(W_*) \subset B'$;
\item[(b)] $\bu_i(q) - \bu_j(q) \notin e^{\iu\theta}\R$ for all
$q\in \bigcup\limits_{0\le t<t_*} \slide_t(W_*)$, $\theta\in [\phi_1,\phi_2]$,
$i,j\in \{1,2,\dots,N_-\}$ provided that $\bu_i(q_*) \neq \bu_j(q_*)$;
\item[(c)] $\bigcup\limits_{1\le i\le N_-} \bu_i(q) + \big\{r e^{\iu\theta}\colon
r\ge 0, |\theta|\le \alpha_0\big\}$ does not contain divergent
critical values $\bu_j(q)$ with $N_-+1 \le j\le N_+$ for
$q\in \bigcup\limits_{0<t<t_*}\slide_t(W_*)$.
\end{itemize}
The third point (c) is possible by Lemma~\ref{lem:slide}(3) and
the fact that $\alpha_0<\frac{\pi}{2J}$.

Take $q_0 \in \bigcup\limits_{0<t<t_*}\slide_t(W_*) \cap \cVss_0$
and a phase $\phi\in (-\alpha_0,\alpha_0)$ which is
admissible for $\{\bu_1(q_0),\dots,\allowbreak \bu_{N_+}(q_0)\}$.
First we prove the conclusion of the theorem
\emph{assuming that $\phi$ lies in $(\phi_1,\phi_2)
\subset (-\alpha_0,\alpha_0)$}.
The discussion for any admissible $\phi\in (-\alpha_0,\alpha_0)$
will be postponed until the last paragraph of the proof.
Since $\phi\in (-\alpha_0,\alpha_0)$,
Lemma~\ref{lem:uniform_alpha0} (Proposition~\ref{prop:lift_str_sheaf_+})
gives an analytic decomposition $\tPhi^+_\phi$
\begin{gather*}
\tPhi^+_\phi \colon \ \pi^*\mir_{+}^*\QDM(\frX_+)
\big|_{B\times I_\phi}
\cong \bigoplus_{i=1}^{N_+} (\cA_{B\times I_\phi}, d+d(\bu_i/z)),
\end{gather*}
where $B$ is a neighbourhood of $q_0$ in $\cVss_-$
and $I_\phi$ is a sector of the form
$\big\{\big(r,e^{\iu\theta}\big) \in \tCC\colon |\theta -\phi|
< \frac{\pi}{2}+\delta\big\}$ (for some $0<\delta\le \frac{\epsilon}{2}$)
with the following property:
there exists a basis $V_1^+,\dots,V_{N_+}^+$ of~$K(\frX_+)$
such that~$V_1^+$ is the structure sheaf of~$\frX_+$ and
that $\tPhi^+_\phi\big(e^{-\bu_j(q)/z} s_j^+\big) = e_j$,
where~$s_j^+$ is the flat section $\frs_{V_j^+}(\mir_+(q),z)$ associated
with~$V_j^+$.
We may assume that $B\subset B'$ by the condition~(a) above.
We have $I_\phi \subset I_{\phi_*}$
because $\phi \in (\phi_1,\phi_2) \subset
\big(\phi_*-\frac{\epsilon}{2}, \phi_*+\frac{\epsilon}{2}\big)$.
Therefore~$\tPhi^-_{\phi_*}$ gives the analytic lift associated
with the point~$q_0$ and the phase~$\phi$.
In particular, the decomposition~\eqref{eq:analytic_lift_decomp}
is induced by $\tPhi^+_\phi$ and $\tPhi^-_{\phi_*}$,
and the inclusion~\eqref{eq:analytic_lift_incl} is induced by
the map $\kappa \colon
K(\frX_-) \to K(\frX_+)$ sending $V_i^-$ to $V_i^+$
for $1\le i\le N_-$. It now suffices to show that $V_i^+= \varphi^* V_i^-$.
We already know that this is true for $i=1$.

Recall from the proof of Propositions \ref{prop:lift_str_sheaf_+},
\ref{prop:lift_str_sheaf_-} that $V_i^+$ (resp.~$V_i^-$)
corresponds to the Lefschetz thimble $\Gamma_i^{\pi+\phi}(q_0)$ of
$F_{q_0}$ (resp.~the Lefschetz thimble $\Gamma_i^{\pi+\phi}(q_*)$
of $F_{q_*}$)
under the isomorphism in Theorem \ref{thm:main_intpaper}
(resp.~Theorem \ref{thm:main_intpaper_2}).
We claim that the map sending $\Gamma_i^{\pi+\phi}(q_*)
\in \Lefsm_{q_*}$
to $\Gamma_i^{\pi+\phi}(q_0) \in \Lef_{q_0}$ coincides
with the map $\iota$ from Theorem \ref{thm:inclusion}.
Here we regard $\Gamma_i^{\pi+\phi}(q_0)$ as a germ
of $\tti^{-1}\ttj_* \slide_{>0}^{-1} \Lef$ at $q_*$
by extending it to a Gauss--Manin flat section over
$\bigcup\limits_{0<t<t_*} \slide_t(W_*)$.

Fix a sufficiently large $\bu_0\gg 0$.
Under the isomorphism
\[
H_n\big(\cY_{q_*}, \{\Re(F_{q_*})\gg 0\}\big)
\cong
H_n\big(\cY_{q_*}, F_{q_*}^{-1}(\bu_0)\big),
\]
the class of
$\Gamma_i^{\pi+\phi}(q_*)$
corresponds to the class of a finite Lefschetz thimble
fibred over a bent ray as shown in Fig.~\ref{fig:moving_thimbles}(ii).
We move $q_*$ along the sliding map.
For sufficiently small $t>0$, these finite Lefschetz thimbles
can be continuously deformed to finite thimbles for $F_{\slide_t(q_*)}$
with boundary in $F_{\slide_t(q_*)}^{-1}(\bu_0)$,
as discussed in Parts~(1) and~(4) in the proof of Theorem~\ref{thm:inclusion}.
By straightening these paths in the direction $e^{\iu\phi}$ again,
we see that these relative cycles in $\big(\cY_{\slide_t(q_*)},
F_{\slide_t(q_*)}^{-1}(\bu_0)\big)$ correspond to the Lefschetz
thimbles $\Gamma_i^{\pi+\phi}(\slide_t(q_*))$ over the
straight ray $\bu_i(\slide_t(q_*)) + e^{\iu\phi}\R_{\ge 0}$.
Therefore, the map $\iota$ sends $\Gamma_i^{\pi+\phi}(q_*)$
to $\Gamma_i^{\pi+\phi}(\slide_t(q_*))$ for sufficiently
small $t>0$.
The assumptions (b), (c) above ensure that the homology
class of $\Gamma_i^{\pi+\phi}(q)$ stays constant as
$q$ varies inside $\bigcup\limits_{0<t<t_*}\slide_t(W_*)$;
the Picard--Lefschetz transformation can only arise
from the intersection of vanishing cycles at $\crit_i(q)$ and $\crit_j(q)$ with
$i\neq j$, $\bu_i(q_*) = \bu_j(q_*)$, $i,j \in \{1,\dots,N_-\}$,
but these intersection numbers vanish since the vanishing cycles
do not intersect at $q_*$. This proves the claim.

\begin{figure}[htb]\centering
\includegraphics{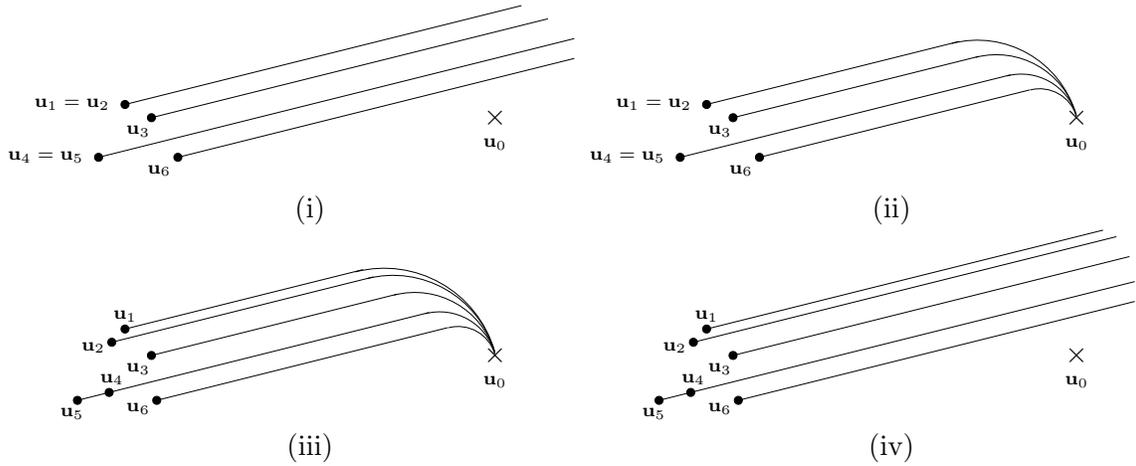}
\caption{Deforming Lefschetz thimbles: (i)~semi-infinite
Lefschetz thimbles fibred over the ray $\bu_j(q_*) + \R_{\ge 0} e^{\iu\phi}$;
(ii)~bent rays passing through $\bu_0$;
(iii)~moving from $q_*$ to a nearby point in
$\bigcup\limits_{0<t<t_*} \slide_t(W_*)$;
(iv)~straightening the paths again. In (iii) and (iv), the paths from
$\bu_4$ and $\bu_5$ overlaps, but it does not matter since the
relevant vanishing cycles do not intersect.}\label{fig:moving_thimbles}
\end{figure}

The above discussion gives a commutative diagram
\begin{equation*}
\begin{aligned}
\xymatrix{
\Lefsm_{q_*} \ar[r]^(0.25)\iota \ar[d]^{\cong} &
\big(\tti^{-1}\ttj_* \slide_{>0}^{-1} \Lef\big)_{q_*} \cong
\big(\slide_t^{-1}\Lef\big)_{q_*}
 \ar[d]^{\cong} \\
K(\frX_-) \ar[r]^{\kappa} & K(\frX_+),
}
\end{aligned}
\end{equation*}
where the vertical arrows are induced by the isomorphisms
in Theorems \ref{thm:main_intpaper} and \ref{thm:main_intpaper_2},
the top horizontal arrow is the inclusion in Theorem \ref{thm:inclusion}
and the bottom arrow $\kappa$
sends $V_i^-$ to $V_i^+$ for $1\le i\le N_-$.
Comparing this with the commutative diagram in Lemma \ref{lem:diag_pullback},
we conclude that $\kappa = \varphi^*$ as required.

Finally, we explain that the conclusion of the theorem
holds for every admissible phase $\phi\in (-\alpha_0,\alpha_0)$
for $\{\bu_1(q_0),\dots,\bu_{N_+}(q_0)\}$.
As we reviewed in Section~\ref{subsec:MRS},
the analytic decompositions~\eqref{eq:analytic_lifts_pm}
for $\frX_\pm$ change by mutation as the phase
$\phi$ varies; see \cite[Sections~2.6, 4.2 and~4.3]{GGI:gammagrass}.
In the case at hand, the analytic decompositions
are given by the basis $\{s_j^\pm\}$ of
flat sections associated with the $K$-classes $\{V_j^\pm\}$;
they give rise to an asymptotic basis in the sense of Section~\ref{subsec:MRS}.
Their all possible mutations are completely determined by the configuration
$\{\bu_1(q_0),\dots,\bu_{N_\pm}(q_0)\}$ of the critical values
and the Euler pairings $\chi(V_i^\pm,V_j^\pm)$.
We have $\chi(V_i^+,V_j^+) = \chi(V_i^-, V_j^-)$
for $1\le i,j\le N_-$ since $V_i^+ = \varphi^*V_i^-$
and $\varphi$ is birational.
The condition (c) above ensures that the basis
$\{s_1^-,\dots,s_{N_-}^-\}$ of flat sections for $\frX_-$
and the part $\{s_1^+,\dots,s_{N_-}^+\}$
of the basis of flat sections for $\frX_+$ undergo
the same mutation when $\phi$ varies in $(-\alpha_0,\alpha_0)$.
Hence the relation $V_i^+ = \varphi^*V_i^-$ is preserved
under mutation. The theorem is proved.
\end{proof}

\begin{Remark}We hope that we can analyze general discrepant transformations
$\frX_+ \leftarrow \hfrX \rightarrow \frX_-$ by using the
functoriality under blowups (Theorem~\ref{thm:functoriality})
twice. In the case of crepant toric wall crossings,
we can see how a Fourier--Mukai transformation (as discussed in \cite{Borisov-Horja:FM, CIJ})
arises from this result, see the slides from~\cite{Iritani:MIT}.
\end{Remark}

\subsection{Orlov's decomposition and analytic lift} \label{subsec:Orlov_lift}

In the previous section, we have observed that
the Lefschetz thimbles associated with
`convergent' critical points (at some point $q_0\in \cVss_0 \cap \cM_\R$
and for some phase $\phi$) correspond to $K$-classes from
$\varphi^*(K(\frX_-))\subset K(\frX_+)$.
In this section, we see that the remaining `divergent' critical points correspond
to $K$-classes supported on the exceptional divisor of $\varphi \colon
\frX_+ \to \frX_-$.
We will see the correspondence between
Orlov's semiorthogonal decomposition
of the derived category $D^b(\frX_+)$ and the analytic
decomposition \eqref{eq:analytic_lift_decomp} of the quantum D-module of $\frX_+$.

\subsubsection{Decomposition of the relative homology mirror to Orlov's decomposition}
Recall from Lemma \ref{lem:slide}(3) that convergent
critical values over $\slide_t(\Dsm)$ are contained in $B_{\rho_2}(0)$ and
divergent critical values over $\slide_t(\Dsm)$ are
contained in the $J$ `satellite' discs $\frB_k(t):= t^{-1/J} \cdot
B_{\rho_1}\big(3\rho_0 e^{(-2k-1)\pi\iu/J}\big)$, $k\in \Z/J\Z$
(see Fig.~\ref{fig:conv_div_critv}).
We choose $t_1>0$ small enough so that
$\overline{B_{\rho_2}(0)}+\R_{\ge 0} $ does not intersect
any other satellite discs $\overline{\frB_k(t)}$ when $t$ is real and
$0<t\le t_1$. Then, if $|\phi|$ is sufficiently small, we have that
\[
\cS_{\rm conv} := \overline{B_{\rho_2}(0)} + \R_{\ge 0} e^{\iu\phi}
\]
does not intersect with any $\overline{\frB_k(t)}$ when $0<t\le t_1$.
We choose a number $h\in \{0,1,2,\dots, J\}$ and
let $\cS_k\subset \C$, $k=-h,-h+1,\dots,J-h-1$ be mutually disjoint
strip regions as in Fig.~\ref{fig:connecting_domains};
each $\cS_k$ is a closed region disjoint from $\cS_{\rm conv}$,
emanating from $\overline{\frB_k(t)}$,
going around the origin by the angle $(2k-1)\pi/J+\phi$ anticlockwise,
and extending straight toward the direction $e^{\iu\phi}$ near the end.
These regions $\cS_{\rm conv}$, $\cS_k$ depend on $t$, $\phi$
and are defined when $0<t\le t_1$ and $|\phi|$ is sufficiently small.

\begin{figure}[ht]\centering
\includegraphics{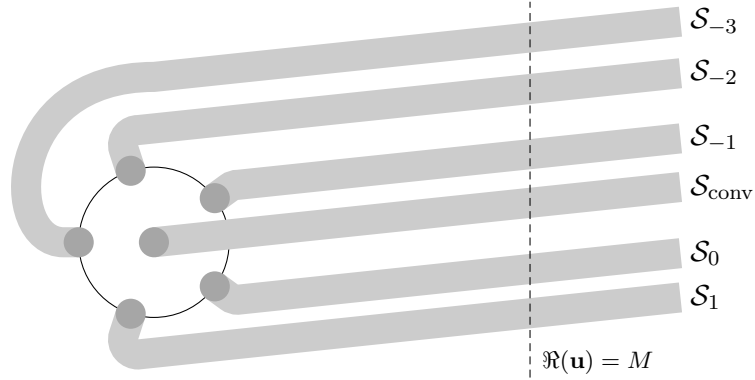}
\caption{Strip regions $\cS_{\rm conv}$ and $\cS_k$. In this picture, $J=5$, $h=3$ and $\phi=\frac{\pi}{30}$.}\label{fig:connecting_domains}
\end{figure}

For sufficiently large $M\gg 0$,
$\bigcup\limits_{k=-h}^{J-h-1} \cS_k \cup \cS_{\rm conv}
\cup \{\Re(\bu)\ge M\}$ is a strong deformation retract of $\C$.
The fibration given by $F_{q,t}$ is smoothly trivial outside this region
by Corollary \ref{cor:locally_trivial}, and hence we get
the decomposition of the relative homology $\Lef_{q,t}$ (see \eqref{eq:Lef}):
\begin{equation}
\label{eq:Orlov_mirror}
\Lef_{q,t} =
\Lef^{(-h)}_{q,t} \oplus \cdots \oplus \Lef^{(-1)}_{q,t}
\oplus
\Lef_{q,t}^{\rm conv}
\oplus
\Lef_{q,t}^{(0)} \oplus \cdots \oplus \Lef_{q,t}^{(J-h-1)}
\end{equation}
for $q\in \Dsm^\times$ and $0<t\le t_1$, where
we use the notation as in Sections~\ref{subsubsec:local_around_0-}--\ref{subsubsec:Kouchnirenko_Morse} and
\begin{gather*}
\Lef_{q,t}^{\rm conv} =
H_n\big(F_{q,t}^{-1}(\cS_{\rm conv}),
F_{q,t}^{-1}(\cS_{\rm conv})
\cap \{x\colon \Re(F_{q,t}(x))\ge M\}; \Z\big), \\
\Lef_{q,t}^{(k)} =
H_n\big(F_{q,t}^{-1}(\cS_k),
F_{q,t}^{-1}(\cS_k) \cap
\{x\colon \Re(F_{q,t}(x))\ge M\};
\Z\big).
\end{gather*}
This decomposition is independent of $\phi$
(with $|\phi|$ sufficiently small)
and is preserved by the Gauss--Manin connection.
We say that a direct sum decomposition $A=A_1\oplus \cdots \oplus A_m$
is \emph{semiorthogonal} with respect to a (not necessarily
symmetric) pairing
$[\cdot,\cdot)$ on $A$ if $[a_i,a_j) = 0$ for $a_i\in A_i$,
$a_j\in A_j$ whenever $i>j$.
Note that the decomposition \eqref{eq:Orlov_mirror} is semiorthogonal
with respect to the pairing $\#\big(e^{-\pi\iu}\Gamma_1\cdot \Gamma_2\big)$
discussed in Remark~\ref{rem:intersection_Euler}.

Let $E\subset \frX_+$ be the exceptional divisor of the
morphism $\varphi \colon \frX_+ \to \frX_-$; this
is the toric divisor corresponding to $\hb\in R_+$.
Let $Z = \varphi(E)$ denote the image of $E$;
this is a toric substack corresponding to the cone
$\sigma_{M_+}$ of $\bSigma_-$ (see~\eqref{eq:varphi_coord}).
We write $\varphi_E \colon E\to Z$ for the restriction of
$\varphi$ to $E$ and $i_E \colon E\to \frX_+$ for the inclusion.

\begin{Theorem}\label{thm:decomp_Lef_K}
For $q\in \Dsm^\times \cap \cMsm_\R$ and $0<t\le t_1$,
the decomposition~\eqref{eq:Orlov_mirror} of the relative
homology corresponds to the decomposition of the $K$-group
\begin{equation}\label{eq:SOD_K}
K(\frX_+) = K(Z)_{-h} \oplus \cdots \oplus K(Z)_{-1} \oplus
\varphi^*K(\frX_-) \oplus K(Z)_0 \oplus
\cdots \oplus K(Z)_{J-h-1}
\end{equation}
under the isomorphism
$\Lef_{q,t} \cong K(\frX_+)$
in Theorem $\ref{thm:main_intpaper}(1)$,
where $K(Z)_k$ denotes the subgroup
$\cO(-kE) \otimes i_{E*}\varphi_E^* K(Z)
\subset K(\frX_+)$.
\end{Theorem}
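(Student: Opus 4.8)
The plan is to transport the topological decomposition \eqref{eq:Orlov_mirror} to $K(\frX_+)$ through the isomorphism $\Lef_{q,t}\cong K(\frX_+)$ of Theorem \ref{thm:main_intpaper}(1), and then match the summands one by one. First I would treat the central summand $\Lef^{\rm conv}_{q,t}$. Its rank equals the number of convergent critical points of $F_{q,t}$, namely $N_-=\dim H^*_\CR(\frX_-)$, which equals $\rank\varphi^*K(\frX_-)$; and the inclusion $\iota$ of Theorem \ref{thm:inclusion} is injective with source of rank $N_-$ whose image, by the construction of $\iota$ (relative cycles of $(\cYsm_q,F_{q,0}^{-1}(\bu_0))$ are carried to cycles emanating from the convergent critical branches and fibred over paths inside the region $\cS_{\rm conv}$), lies in $\Lef^{\rm conv}_{q,t}$, and in fact carries a $\Z$-basis to a sub-basis. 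Hence $\iota$ is an isomorphism onto $\Lef^{\rm conv}_{q,t}$, and Lemma \ref{lem:diag_pullback} identifies $\iota$ with $\varphi^*\colon K(\frX_-)\to K(\frX_+)$ under the two mirror isomorphisms. So the central summand of \eqref{eq:Orlov_mirror} corresponds to $\varphi^*K(\frX_-)\subset K(\frX_+)$.

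Next I would analyze the ``divergent'' summands $\Lef^{(k)}_{q,t}$ by combining monodromy with a local-model computation. Consider the loop in $\pi_1(\cMtimes)$ sending $t_\hb\mapsto e^{2\pi\iu}t_\hb$ with the remaining coordinates fixed; in the splitting $\LL=\LL'\oplus\Z\delta_\hb^{\bSigma_-}$ from \eqref{eq:LL_LL'} it is dual to the generator of the second factor, and the corresponding class $\hxi\in\LL^\star\cong H^2(\frX_+,\Z)$ maps to $\pm[E]\in\Pic(\frX_+)$. By the monodromy computation in the proof of Lemma \ref{lem:diag_pullback} (i.e.\ \cite[(61)]{Iritani:Integral} together with \eqref{eq:Galois_line_tensor}), this loop acts on $K(\frX_+)$ by tensoring with $\cO(-E)^{\pm1}$. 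On the mirror side, as $t$ winds once $t^{-1/J}$ rotates by $-2\pi/J$, so the satellite discs are permuted cyclically, $\frB_k(t)\mapsto\frB_{k+1}(t)$, and the monodromy cyclically permutes the divergent summands up to the Picard--Lefschetz transformations that occur whenever a satellite disc sweeps past $\cS_{\rm conv}$ (and past the other satellite discs). Since $K(Z)_{k+1}=\cO(-E)\otimes K(Z)_k$, matching \eqref{eq:Orlov_mirror} with \eqref{eq:SOD_K} reduces to identifying a single summand, say $\Lef^{(0)}_{q,t}$, with $K(Z)_0=i_{E*}\varphi_E^*K(Z)$; one must then check that the above Picard--Lefschetz mutations are consistent with the labelling in \eqref{eq:SOD_K}, which is exactly where the content of Orlov's semiorthogonal decomposition enters.

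To identify $\Lef^{(0)}_{q,t}$ with $i_{E*}\varphi_E^*K(Z)$ I would use the local structure of the LG model near the satellite disc $\frB_0(t)$. By Proposition \ref{prop:critv_on_curve} and its deformation over $\Dsm^\times$, the divergent critical points contributing to $\frB_0(t)$ are concentrated on the cone $\sigma_{M_+}$ defining $Z$: in suitable coordinates the relative critical equations split off the one-variable equation for the weighted projective fibre $\PP(\{k_b\}_{b\in M_+})$, whose solution is the branch $\gamma_0=(-1/(Kt_\hb))^{1/J}$ up to a root of unity, leaving the defining critical equations of the LG model of $Z$ in the remaining directions; thus $F_{q,t}$ near these critical points is, after rescaling, $J\gamma_0$ plus $\gamma_0$ times the small-quantum-cohomology LG potential $F_{Z,q}$ of $Z$ plus higher-order terms. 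A Morse-theoretic deformation argument parallel to Steps (1) and (4) in the proof of Theorem \ref{thm:inclusion} then yields an isomorphism $\Lef^{(0)}_{q,t}\cong\Lef(F_{Z,q})$ with the relative homology lattice of the LG model of $Z$, and Theorem \ref{thm:main_intpaper} applied to $Z$ gives $\Lef(F_{Z,q})\cong K(Z)$. I would pin down the resulting map $K(Z)\to K(\frX_+)$ as $i_{E*}\varphi_E^*$ by (a) matching monodromy in the $Z$-directions, which on both sides is tensoring by $\varphi_E^*L_\zeta^{-1}=(\varphi^*L_{\overline{\zeta}})|_E^{-1}$, and (b) checking that the distinguished positive-real thimble maps to $i_{E*}\cO_E=i_{E*}\varphi_E^*\cO_Z$, by an argument parallel to Lemma \ref{lem:real_Lefschetz} and Theorem \ref{thm:main_intpaper}(2); since $K(Z)$ is generated by line bundles, (a) and (b) determine the map. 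Finally, semiorthogonality of the transported decomposition with respect to the Euler pairing is automatic from the mirror identification of the higher residue pairing with the intersection pairing (Remark \ref{rem:intersection_Euler}), so \eqref{eq:Orlov_mirror} matches \eqref{eq:SOD_K} including the ordering.

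The main obstacle is the crux of Step 3: making the local product structure of the LG model near a satellite disc rigorous --- the coordinate change exhibiting the splitting into (LG model of $Z$) $\times$ (weighted-$\PP$ direction), and the Morse-theoretic trivialization over the $Z$-directions --- and then correctly pinning down the resulting inclusion $K(Z)\to K(\frX_+)$ as exactly $i_{E*}\varphi_E^*$, tracking the line-bundle twist so that the window $\{-h,\dots,J-h-1\}$ emerges with the precise labelling of \eqref{eq:SOD_K}, and verifying that the Picard--Lefschetz mutations of Step 2 are consistent with it.
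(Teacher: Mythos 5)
Your treatment of the convergent summand is essentially the paper's: $\Lef^{\rm conv}_{q,t}$ is the image of the inclusion $\iota$ of Theorem~\ref{thm:inclusion}, and Lemma~\ref{lem:diag_pullback} identifies it with $\varphi^*K(\frX_-)$. The divergent summands, however, are where your argument has a genuine gap. Your Step~3 --- a Thom--Sebastiani-type local splitting near a satellite disc, an isomorphism $\Lef^{(0)}_{q,t}\cong \Lef(F_{Z,q})$ with the thimble lattice of an LG model for $Z$, and then ``Theorem~\ref{thm:main_intpaper} applied to $Z$'' --- rests on results that are not available: Theorem~\ref{thm:main_intpaper} is proved only for weak-Fano compact toric stacks satisfying the generation hypothesis, and the centre $Z$ need not be weak Fano; more to the point, the paper deliberately does not establish any identification of the residual pieces with the mirror of $Z$ (see item~(3) of the remark following Theorem~\ref{thm:analytic_lift_introd}), and making the local product structure rigorous at the level of relative homology is a substantial unproven step, not a routine parallel of the proof of Theorem~\ref{thm:inclusion}. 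Your normalization~(b) also fails as stated: the divergent critical values sit in the satellite discs $\frB_k(t)$, away from the positive real axis, so there is no conifold point and no positive-real thimble inside $\Lef^{(k)}_{q,t}$ to which Lemma~\ref{lem:real_Lefschetz} or Theorem~\ref{thm:main_intpaper}(2) could apply; without such a normalization your map $K(Z)\to K(\frX_+)$ is pinned down only up to a line-bundle twist, which is precisely the labelling $\{-h,\dots,J-h-1\}$ you need. Finally, the reduction by $t$-monodromy to a single summand is not clean: the monodromy carries $\Lef^{(k)}_{q,t}$ to $\Lef^{(k+1)}_{q,t}$ only up to Picard--Lefschetz corrections (you acknowledge this but give no argument), so the window does not simply propagate from one identification.

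The paper closes exactly these gaps with two elementary steps that are missing from your proposal. First, it applies the Picard--Lefschetz formula to the single class $[\Gamma_\R]\leftrightarrow[\cO]$ under the loops $t\mapsto e^{\pm 2\pi\iu}t$: since only $\frB_{-1}(t)$ sweeps through $\cS_{\rm conv}$, the monodromy difference produces explicit classes in $\Lef^{(0)}_{q,t}$ and $\Lef^{(-1)}_{q,t}$ corresponding to $[\cO_E]=[\cO]-[\cO(-E)]$ and $[\cO_E(E)]$, and iterating gives $[\cO_E(-kE)]\in K^{(k)}$ for $-h\le k\le J-h-1$; twisting by the $q$-monodromy (tensoring by $\varphi^*\Pic(\frX_-)$) and using that $K(\frX_-)$ and $K(Z)$ are generated by line bundles with $\Pic(\frX_-)\to\Pic(Z)$ surjective then gives the containment $K(Z)_k\subset K^{(k)}$. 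Second, equality is forced by a purely $K$-theoretic generation argument: from $\varphi^*L_b^- = L_b\cdot L^{k_b}$ and $\prod_{b\in M_+}(1-L_b)=0$ one obtains the monic relation $\prod_{b\in M_+}\big(L^{k_b}-\varphi^*L_b^-\big)=0$ of degree $J+1$ in $L=[\cO(-E)]$, whence $K(\frX_+)$ is spanned by $\varphi^*K(\frX_-)$ together with the $K(Z)_k$ in the stated window; combined with the direct-sum decomposition \eqref{eq:Orlov_mirror} this yields \eqref{eq:SOD_K}. Without these two ingredients --- or a complete proof of your local mirror statement for $Z$ together with control of the mutations in your monodromy reduction --- the argument does not close.
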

\begin{Remark}
The decomposition \eqref{eq:SOD_K} is semiorthogonal
with respect to the Euler pairing by Remark \ref{rem:intersection_Euler}
and the above theorem.
In view of this,
we expect that $D^b(\frX_+)$ admits a~semiorthogonal decomposition:
\[
D^b(\frX_+) = \big\langle D^b(Z)_{-h},\cdots, D^b(Z)_{-1},\varphi^* D^b(\frX_-), D^b(Z)_0, \dots, D^b(Z)_{J-h-1} \big\rangle,
\]
where $D^b(Z)_j$ denotes the full subcategory of $D^b(\frX_+)$
which is the image of the functor
$\cO(-j E) \otimes i_{E*} \varphi_E^*\colon
D^b(Z) \to D^b(\frX_+)$.
When $\frX_+$ is the blowup of a smooth \emph{variety}
$\frX_-$ along a smooth centre~$Z$, this is the semiorthogonal decomposition
proved by Orlov \cite[Theorem~4.3]{Orlov:proj};
Orlov stated the result for $h=J$, but the other cases $0\le h\le J-1$ follow
from this case by using the fact that $D^b(Z)_{-h} =
\bS\big(D^b(Z)_{J-h}\big)$, where $\bS$ is the Serre functor for $D^b(\frX_+)$.
\end{Remark}
\begin{proof}[Proof of Theorem $\ref{thm:decomp_Lef_K}$]
Observe that the subgroup $\Lef^{\rm conv}_{q,t}\subset \Lef_{q,t}$
is the image of the map $\iota \colon \Lefsm_q\to
\big(\slide_t^{-1}\Lef\big)_q$ in Theorem~\ref{thm:inclusion}
(e.g., by considering the case $\phi=0$).
Hence it corresponds to~$\varphi^*(K(\frX_-)) \subset K(\frX_+)$
by Lemma~\ref{lem:diag_pullback}.
It suffices to show that
$\Lef_{q,t}^{(k)}$ corresponds to~$K(Z)_k$ for $-h\le k\le J-h-1$.
Let $K^{(k)}\subset K(\frX_+)$ denote the subgroup
corresponding to $\Lef_{q,t}^{(k)}\subset \Lef_{q,t}$.
It suffices to show that
\begin{itemize}\itemsep=0pt
\item[(a)] $K(Z)_k \subset K^{(k)}$;
\item[(b)] the decomposition~\eqref{eq:SOD_K} of the $K$-group holds true.
\end{itemize}
As discussed, over $\slide_t(\Dsm)$, convergent critical values
are contained in $B_{\rho_2}(0)$ and divergent critical values are
contained in the $J$ satellite discs $\frB_k(t)$, $k\in \Z/J\Z$;
these statement still hold for a non-zero complex number~$t$ with $|t|\le 1$
by naturally extending the definition of~$\slide_t$ and~$\frB_k(t)$
to $t\in \C$ (see the proof of Lemma~\ref{lem:slide}).
Consider the class $[\Gamma_\R] \in \Lef_{q,t}$ of the positive
real Lefschetz thimble. It corresponds to the class $[\cO] \in K(\frX_+)$
of the structure sheaf.
We study the monodromy action on $\Lef_{q,t}$ along the loop
$[0,2\pi]\ni \theta \mapsto \big(q,e^{\iu\theta}t\big)$.
The monodromy corresponds to
the tensor product by $\cO(- E)$ on $K(\frX_+)$;
this follows from the same argument as in the proof of Lemma~\ref{lem:diag_pullback}
and the fact that this loop corresponds to the element of $\LL^\star$
given by the second projection in the decomposition~\eqref{eq:LL_LL'}
(which equals \mbox{$D_\hb\in \LL^\star$}).
Under this monodromy, $[\Gamma_\R]$ undergoes the Picard--Lefschetz
transformation. Among the satellite discs, only $\frB_{-1}(t)$ passes through
the strip region $\cS_{\rm conv}$ as $t$ rotates;
here $\frB_{-1}(t)$ becomes $\frB_{-1}\big(e^{2\pi\iu} t\big) = \frB_0(t)$
after rotation. Thus by the Picard--Lefschetz formula,
the monodromy transform of $[\Gamma_\R]$ equals
$[\Gamma_\R] - \alpha_0$ for some $\alpha_0 \in \Lef_{q,t}^{(0)}$.
Then we find that $\alpha_0$ corresponds to $[\cO] - [\cO(-E)] = [\cO_E]
\in K(\frX_+)$ and in particular that it is non-zero.
Similarly, by considering the monodromy around the inverse loop
$\theta \mapsto (q,e^{-\iu\theta} t)$, we find an element
$\beta_1 \in \Lef_{q,t}^{(-1)}$ that corresponds to
$[\cO(E)] - [\cO] = [\cO_E(E)]\in K(\frX_+)$.
By considering further monodromy actions on $\alpha_0,\beta_1$,
we find elements $\alpha_i \in \Lef_{q,t}^{(i)}$ with $0\le i\le J-h-1$
and $\beta_j \in \Lef_{q,t}^{(-j)}$ with $1\le j\le h$ such that
$\alpha_i$ corresponds to $[\cO_E(-i E)]$ and that
$\beta_j$ corresponds to $[\cO_E(jE)]$.
Hence we have
\begin{equation}
\label{eq:str_sheaf_E}
[\cO_E(-kE)] = \cO(-kE) \otimes [\cO_E] \in K^{(k)}
\end{equation}
for $-h\le k \le J-h-1$.
Note that the decomposition \eqref{eq:Orlov_mirror} is invariant
under monodromy in $q\in \Dsm^\times$.
As discussed in the proof of Lemma \ref{lem:diag_pullback},
the monodromy around loops in $\Dsm^\times$ corresponds
to the tensoring $\varphi^*L$ with $L\in \Pic(\frX_-)$.
Since $K(\frX_-)$ is generated by line bundles, we
conclude from \eqref{eq:str_sheaf_E} that
$\cO(-kE) \otimes \cO_E \otimes \varphi^*V \in K^{(k)}$
for all $V\in K(\frX_-)$. Part (a) follows from the fact that
$K(Z)$ is also generated by line bundles and
that the natural map $\Pic(\frX_-) \to \Pic(Z)$
is surjective.

It remains to prove Part~(b). In view of part (a) and
the decomposition~\eqref{eq:Orlov_mirror},
it suffices to show that $K(\frX_+)$ is generated by
the classes of
$\cO$ and
$\cO_E(-kE)$, $-h\le k\le J-h-1$ as a $K(\frX_-)$-module, where
the module structure is given by $\varphi^*\colon K(\frX_-)
\to K(\frX_+)$.
Let $L_b = L_{-D_b}\in K(\frX_+)$ denote the class of the line bundle
associated with $-D_b \in \LL^\star$ for $b\in S$
(see Section~\ref{subsubsec:Pic_2nd} for~$L_{-D_b}$);
$1-L_b$ is the
class of the structure sheaf of the toric divisor associated with~$b$
when $b\in R_+$.
Similarly, let $L_b^-\in K(\frX_-)$ denote the class of the line
bundle associated with $-D_b\in (\LL')^\star$ for $b\in S_-$.
We also set $L:= L_{\hb} = [\cO(-E)] \in K(\frX_+)$.
Recall from the proof of Lemma \ref{lem:diag_pullback} that
the splitting $(\LL')^\star\to \LL^\star$, $\xi\mapsto \hxi$
induced by the decomposition \eqref{eq:LL_LL'} corresponds to
the pull-back of line bundles, i.e., $\varphi^*L_{\xi} \cong L_{\hxi}$.
Since the splitting $(\LL')^\star \to \LL^\star$ sends
$D_b$ to $D_b + k_b D_\hb$
(with $k_b = -D_b\cdot \delta_\hb^{\bSigma_-}$ as before),
we have
\begin{equation}
\label{eq:pullback_Lb-}
\varphi^*L_b^- = L_b \cdot L^{k_b}.
\end{equation}
On the other hand, since the intersection of the toric divisors
corresponding to rays $b\in M_+$ is empty in $\frX_+$,
we have the following relation in $K(\frX_+)$
(see \cite{Borisov-Horja:K}):
\begin{equation}
\label{eq:M+_relation}
\prod_{b\in M_+} (1-L_b) = 0.
\end{equation}
Combining the two relations \eqref{eq:pullback_Lb-}, \eqref{eq:M+_relation}, we obtain
\begin{equation}\label{eq:relation_L}
\prod_{b\in M_+} \big(L^{k_b}- \varphi^*L_b^-\big) = 0.
\end{equation}
The left-hand side of the relation is a monic polynomial in~$L$
of degree $J+1 = \sum\limits_{b\in M_+}k_b$ with coefficients
in $K(\frX_-)$
whose constant term is an invertible element in $K(\frX_-)$.
Note that $\Pic(\frX_+)$ is generated by $\varphi^*\Pic(\frX_-)$
and $L^\pm$; therefore any element in $K(\frX_+)$ can be
written as a Laurent polynomial of $L$ with coefficients in
$K(\frX_-)$. Using the above relation \eqref{eq:relation_L},
we find that every element of $K(\frX_+)$ can be written in the form
\[
\sum_{k=-h}^{J-h} a_k \cdot L^k \qquad \text{with $a_k \in K(\frX_-)$},
\]
which can also be rewritten as a $K(\frX_-)$-linear combination
of $1$ and $(1-L) L^k=[\cO_E(-kE)]$ with $-h\le k\le J-h-1$. Part (b) follows. The theorem is proved.
\end{proof}

\subsubsection{Orlov's decomposition as a sectorial decomposition of the quantum D-module}
Next we see that this Orlov-type decomposition actually
arises from a sectorial decomposition of the quantum D-module of~$\frX_+$~-- as appears in Proposition~\ref{prop:Hukuhara-Turrittin}~--
associated with \emph{some} $\tau_+\in H^*_\CR(\frX_+)$;
the point $\tau_+$ can be very far from the large radius limit
point and is not explicit in general (see however Example~\ref{exa:blowup_P4}).

The analytic quantum D-module~\eqref{eq:analytic_QDM} is originally defined in a neighbourhood of the large radius limit point. In the following theorem, we consider analytic continuation of the quantum D-module along certain paths in $H^*_\CR(\frX_\pm)$; we choose a real class $\tau_{\star,\pm} \in H^*_\CR(\frX_\pm;\R)$ which is sufficiently close to the large radius limit point as a base point (see Remark~\ref{rem:branch_L}).

\begin{Theorem}\label{thm:sectorial_Orlov}There exist paths from $\tau_{\star,\pm}$ to $\tau_\pm \in H^*_\CR(\frX_\pm)$ and analytic continuation of the quantum D-modules $\QDMan(\frX_\pm)$ along these paths with the following properties.
\begin{itemize}\itemsep=0pt
\item[{\rm (1)}] The eigenvalues of the Euler multiplication $E\star_{\tau_+}$
at $\tau_+$ have mutually distinct imaginary parts, and
we have the following sectorial decomposition
as in Proposition $\ref{prop:Hukuhara-Turrittin}$
\[
\Phi^+ \colon \ \pi^*\QDMan(\frX_+)\big|_{W \times I} \cong \bigoplus_{i=1}^{N_+}
(\cA_{W\times I}, d + d(\bu_i/z))
\]
over an open neighbourhood $W$ of $\tau_+$ in $H_\CR^*(\frX_+)$,
where $\pi\colon H^*_\CR(\frX_+)\times \tCC \to H^*_\CR(\frX_+)
\times \C$ is the oriented real blow-up,
$I= \big\{\big(z,e^{\iu\theta}\big)\in \tCC\colon |\theta|<\frac{\pi}{2}+\epsilon\big\}$
$($for some $\epsilon>0)$ and $\bu_1,\dots,\bu_{N_+}$
are the eigenvalues of the Euler multiplication.

\item[{\rm (2)}] Moreover, there exists a holomorphic submersion
$f\colon W \to H^*_\CR(\frX_-)$ with $f(\tau_+) = \tau_-$
such that we have the similar sectorial decomposition for $\frX_-$
\[
\Phi^- \colon \ \pi^*f^*\QDMan(\frX_-)\big|_{W\times I}
\cong \bigoplus_{i=1}^{N_-} (\cA_{W\times I}, d + d(\bu_i/z)),
\]
where $W$ and $I$ are the same as Part~$(1)$ and
$\bu_1,\dots,\bu_{N_-}$ are the pull-backs along~$f$
of the eigenvalues of the Euler multiplication in the quantum
cohomology of $\frX_-$ which form a subset of
$\{\bu_1,\dots,\bu_{N_+}\}$ in Part~$(1)$.

\item[{\rm (3)}] The eigenvalues $\{\bu_1,\dots,\bu_{N_+}\}$ in Part~$(1)$
are divided into $J+1$ groups $\bigsqcup\limits_{k=-h}^{J-h-1} \big\{\bu^{(k)}_i\colon 1\le i\le r\big\}
\sqcup \{\bu_1,\dots,\bu_{N_-}\}$ satisfying
\[
\Im\big(\bu^{(-h)}_{i_1}\big) > \cdots > \Im\big(\bu^{(-1)}_{i_h}\big)>
\Im(\bu_j) > \Im\big(\bu^{(0)}_{i_{h+1}}\big) > \cdots >
\Im\big(\bu^{(J-h-1)}_{i_J}\big)
\]
for all $i_{k} \in\{1,\dots,r\}$ $($with $1\le k\le J)$
and $j\in \{1,\dots,N_-\}$.

\item[{\rm (4)}] There exist $K$-classes
$V_i^\pm\in K(\frX_\pm)$, $1\le i\le N_\pm$ such that
$\Phi^{\pm}\big(e^{\bu_i/z}s_i^\pm\big) = e_i$, where $s_i^\pm$
is the flat section associated with $V_i^\pm$
via the $\hGamma$-integral structure $($Definition~$\ref{def:s})$
$($which is analytically continued from $\tau_{\star,\pm}$
through the specified paths$)$
and~$e_i$ denotes the $i$th standard basis.
Moreover, we have
\begin{itemize}\itemsep=0pt
\item[$({\rm a})$] $V_i^-$, $1\le i\le N_-$ form a basis of $K(\frX_-)$;
\item[$({\rm b})$] $\big\{ V_i^+\colon
\text{$\bu_i = \bu^{(k)}_j$ for some $j$}\big\}$
gives a basis of $K(Z)_k\subset K(\frX_+)$;
\item[$({\rm c})$] $V_i^+ = \varphi^*V_i^-$ for
$1\le i\le N_-$.
\end{itemize}
\end{itemize}
In particular, we have a sectorial decomposition
\begin{gather*}
\pi^*\QDMan(\frX_+)\big|_{W \times I}\! \cong
\scrRan_{-h} \oplus \cdots \oplus \scrRan_{-1} \oplus
\pi^*f^*\QDMan(\frX_-)\big|_{W\times I} \oplus
\scrRan_{0}\oplus \cdots \oplus \scrRan_{J-h-1},
\end{gather*}
which corresponds to the decomposition~\eqref{eq:SOD_K}
of the $K$-group via the $\hGamma$-integral structure,
where $\scrRan_k = \bigoplus\limits_{i=1}^{r_k}
\big(\cA_{W\times I}, d + d\big(\bu_i^{(k)}/z\big)\big)$.
\end{Theorem}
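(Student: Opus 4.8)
The plan is to realise the decomposition at a point lying over the ``slid‑out'' region of Section~\ref{subsec:functoriality}, where all the needed $K$‑theoretic input is already available. First I would fix $q_*\in\cVsmss_{-,\R}\cap\intDsmtimes$ together with $W_*$, $t_*\in(0,1)$ and $\alpha_0\in(0,\frac{\pi}{2J})$ as in Lemma~\ref{lem:uniform_alpha0} (shrinking $\alpha_0$ and $t_*$ further so that the separation conditions of Section~\ref{subsec:Orlov_lift} also hold), pick $q_0\in\bigcup_{0<t<t_*}\slide_t(W_*)\cap\cVss_0$ and an admissible phase $\phi\in(-\alpha_0,\alpha_0)$ for the critical values of $F_{q_0}$, exactly as in the proof of Theorem~\ref{thm:functoriality}. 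The path from $\tau_{\star,\pm}$ to $\mir_\pm(q_0)$ is the one used there: the real‑positive locus near $0_\pm$ together with, for $\frX_-$, the sliding homotopy $\slide_t$. By the analytic mirror isomorphism (Theorem~\ref{thm:an_mirror_isom} and Proposition~\ref{prop:Bri_weak_Fano_cor}) the eigenvalues of $E\star_{\mir_+(q_0)}$ are the relative critical values of $F_{q_0}$, and those of $E\star_{\mir_-(q_0)}$ are the \emph{convergent} critical values. Since $q_0$ is semisimple these eigenvalues form a local coordinate system, so I may replace $\mir_+(q_0)$ by a nearby generic $\tau_+$ at which all $N_+$ eigenvalues have mutually distinct imaginary parts; by Lemma~\ref{lem:slide}(3) this perturbation does not disturb the clustering of the spectrum, and by local constancy of flat sections in the $\hGamma$‑integral structure it does not change the $K$‑classes constructed below. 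Proposition~\ref{prop:Hukuhara-Turrittin} then produces $\Phi^+$ over $W\times I$ for a neighbourhood $W$ of $\tau_+$ in $H^*_\CR(\frX_+)$, which is part~(1).

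Next I would read off the grouping of the spectrum. By Lemma~\ref{lem:slide}(3) the convergent eigenvalues lie in $B_{\rho_2}(0)$ while the divergent ones lie in the satellite discs $\frB_k(t)=t^{-1/J}\cdot B_{\rho_1}(3\rho_0 e^{(2k-1)\pi\iu/J})$; for small positive $t$ these sit at radius $\sim t^{-1/J}$ in the directions $(2k-1)\pi/J$, so their imaginary parts separate from each other and from the bounded convergent cluster. Letting $h\in\{0,\dots,J\}$ be the number of satellites lying above the cluster and re‑indexing $k=-h,\dots,J-h-1$ gives precisely the ordering in~(3), corresponding to the strip regions $\cS_k$, $\cS_{\rm conv}$ of Section~\ref{subsec:Orlov_lift}. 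For the $K$‑classes of~(4), Lemma~\ref{lem:uniform_alpha0} (the uniform form of Proposition~\ref{prop:lift_str_sheaf_+}), combined with the fact that for weak‑Fano compact toric stacks the asymptotic basis lies in the $\hGamma$‑integral structure, yields $V_i^+\in K(\frX_+)$ with $\Phi^+(e^{\bu_i/z}s_i^+)=e_i$ forming a basis of $K(\frX_+)$; each $s_i^+$ corresponds via Theorem~\ref{thm:main_intpaper} to a Lefschetz thimble, so grouping the thimbles by which of $B_{\rho_2}(0)$ or $\frB_k(t)$ their critical value lies in realises the decomposition~\eqref{eq:Orlov_mirror}, and Theorem~\ref{thm:decomp_Lef_K} identifies the convergent block with $\varphi^*K(\frX_-)$ and the $k$‑th satellite block with $K(Z)_k$, giving~(4a) and~(4b). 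The identity $V_i^+=\varphi^*V_i^-$ for the convergent indices is Theorem~\ref{thm:functoriality} applied at $q_0$ and $\phi$.

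It remains to build $f$ and $\Phi^-$. Writing $\bu_1,\dots,\bu_{N_-}$ for the convergent eigenvalues as holomorphic functions on $W$, I would set $f$ to be the composite $\tau\mapsto(\bu_1(\tau),\dots,\bu_{N_-}(\tau))$ followed by the inverse of the canonical‑coordinate chart of the quantum cohomology of $\frX_-$ near $\tau_-$; then $f(\tau_+)=\tau_-$, because over the image $\mir_+(\cM)$ of the small quantum cohomology locus of $\frX_+$ the convergent eigenvalues coincide with the eigenvalues of $E\star$ on $\frX_-$ pulled back by $\mir_-\circ\mir_+^{-1}$, and $f$ is a submersion since $(\bu_1,\dots,\bu_{N_+})$ is a coordinate system on $W$ at the semisimple point $\tau_+$. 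Pulling back by $f$ the sectorial decomposition of $\QDMan(\frX_-)$ at $\tau_-$ (Proposition~\ref{prop:Hukuhara-Turrittin}, using that the restricted spectrum is still $\{\bu_i\}_{i\le N_-}$ and that $\phi$ is admissible for it) gives $\Phi^-$. The composite $(\Phi^-)^{-1}\circ\Phi^+$, restricted to the convergent summands of $\pi^*\QDMan(\frX_+)|_{W\times I}$, is then an isomorphism onto $\pi^*f^*\QDMan(\frX_-)|_{W\times I}$ intertwining connections and, by Theorem~\ref{thm:decomp_QDM}(3) (a flat $z$‑sesquilinear pairing on a direct sum of rank‑one connections being determined by constants), pairings; its formalisation agrees over the dense sub‑locus $\mir_+(\cM)\cap W$ with the isomorphism supplied by the functoriality of Theorem~\ref{thm:decomp_QDM}, and since the analytic lift over a sector of angle $>\pi$ is unique, the convergent block of $\Phi^+$ is exactly $\pi^*f^*\QDMan(\frX_-)|_{W\times I}$. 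Regrouping the remaining summands as $\scrRan_k=\bigoplus_i(\cA_{W\times I},d+d(\bu_i^{(k)}/z))$ yields the final displayed decomposition, and that it matches~\eqref{eq:SOD_K} under the $\hGamma$‑integral structure is immediate from~(4a)--(4c).

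\emph{Main obstacle.} The delicate point is constructing $f$ and identifying the convergent block of $\Phi^+$ with $\pi^*f^*\QDMan(\frX_-)$ over the full $N_+$‑dimensional base $W$: the functoriality results only provide this over the lower‑dimensional locus $\mir_+(\cM)$, so one must propagate the identification off that locus, which I expect to do via the reconstruction of a semisimple (quantum) $D$‑module from its canonical coordinates and normalised idempotents together with the uniqueness of the Hukuhara--Turrittin lift. A secondary difficulty requiring care is pinning down the combinatorial dictionary between the satellite discs $\frB_k(t)$ and the Orlov pieces $K(Z)_k$ — in particular the value of $h$ and the signs in~(3) — which amounts to re‑running the monodromy and Picard--Lefschetz bookkeeping from the proof of Theorem~\ref{thm:decomp_Lef_K} in the notation of the present statement.
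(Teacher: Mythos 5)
Your overall strategy — start from the slid-out point $q_0\in\bigcup_{0<t<t_*}\slide_t(W_*)\cap\cVss_0$, use Theorems~\ref{thm:functoriality} and~\ref{thm:decomp_Lef_K} for the $K$-theoretic content, and then realise the sectorial decomposition at a nearby semisimple point — matches the paper's starting point, but there is a genuine gap at the step where you ``replace $\mir_+(q_0)$ by a nearby generic $\tau_+$ at which all $N_+$ eigenvalues have mutually distinct imaginary parts'' and then claim that counting which satellites lie above the convergent cluster ``gives precisely the ordering in~(3)''. A small perturbation cannot produce the configuration required by part~(3). At the slid-out point the divergent critical values sit in the discs $\frB_k(t)=t^{-1/J}\cdot B_{\rho_1}\big(3\rho_0e^{(2k-1)\pi\iu/J}\big)$: for odd $J$ one satellite sits on the negative real axis, so its eigenvalues have imaginary parts of size up to $t^{-1/J}\rho_1$ straddling those of the convergent cluster; for even $J\ge 4$ distinct satellites have centres with \emph{equal} imaginary parts (e.g., angles $\pi/4$ and $3\pi/4$), so the strict ordering between the groups $\big\{\bu^{(k)}_i\big\}$ is not available, and even when an ordering happens to hold there is no control of which group carries which label $k$, which is exactly what~(4b) (the identification with $K(Z)_k=\cO(-kE)\otimes i_{E*}\varphi_E^*K(Z)$) needs. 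Reaching a configuration satisfying~(3) forces a \emph{long-distance} motion of the eigenvalues — in the paper, each satellite cluster is transported along the strip $\cS_k$, winding around the origin by the angle $(2k-1)\pi/J+\phi$ — and this cannot be done inside a small neighbourhood of $\mir_+(q_0)$, nor inside the image of the mirror map used in Section~\ref{sec:functoriality} (there $S=S_-\cup\{\hb\}$, so $\cM$ is only the small quantum cohomology locus of $\frX_+$).

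This is why the paper's proof contains two ingredients absent from your proposal: first, $S$ is enlarged to a bigger set $\hS$ so that the mirror map $\mir_+$ becomes generically submersive, giving an honest germ of the big quantum cohomology Frobenius structure at a semisimple point $q^\circ$ near $q_0$; second, Dubrovin's isomonodromic deformation theorem is invoked to extend the quantum connection over (the complement of analytic hypersurfaces in) the universal cover of the configuration space $\tC_{N_\pm}$, so that the eigenvalues can be moved along the strips to the configuration $\big(\bu^\Diamond_1,\dots,\bu^\Diamond_{N_+}\big)$ of part~(3); the flat coordinates of this isomonodromic family define $\tau_\pm$, the paths, and the submersion $f$ (as the projection $g$ forgetting the divergent eigenvalues, written in flat coordinates), and the $K$-classes $V_i^\pm$ are obtained from those at $q^\circ$ by tracking the mutations/Picard--Lefschetz transformations along this long path, with~(4b) then read off from Theorem~\ref{thm:decomp_Lef_K} and~(4c) preserved because mutation formulas only involve the Euler pairings, which agree under $\varphi^*$. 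Your ``main obstacle'' paragraph gestures at reconstructing a semisimple D-module from canonical coordinates and idempotents, which is indeed the missing mechanism, but as written the proposal neither constructs this continuation nor repairs the false claim that a nearby $\tau_+$ already satisfies~(3); without it, parts~(2),~(3) and the labelling in~(4b) are not established.
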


\begin{proof}Throughout Section~\ref{sec:functoriality},
we have assumed that $S_- = S\cap \Delta_-$ and
$S= S_- \cup \{\hb\}$ so that~$\cM$ is the small quantum
cohomology locus of~$\frX_+$.
The construction of the sectorial decompositions~\eqref{eq:analytic_lifts_pm},
however, does not require this assumption
because $S$ can be arbitrarily large in the discussion of
Sections~\ref{sec:discrepant}--\ref{sec:lift}.
We choose a large enough finite set $\hS\subset \bN\cap \Pi$
containing $S$ such that
the corresponding mirror map $\mir_+$ is generically
submersive (this is possible by the argument in \cite[Section~7.4]{CCIT:MS}).
Let $\hcM\supset \cM$ denote the base of the LG model
corresponding to $\hS$.
The discussion at the beginning of Section~\ref{subsec:functoriality}
yields, for any $q_0\in \cVss_0$ and
an admissible phase $\phi$ for the critical values of $F_{q_0}$,
the following
decompositions
(similarly to~\eqref{eq:analytic_lifts_pm})
\begin{align}
\label{eq:analytic_lifts_decomp_large}
\pi^*\mir_\pm^*\QDMan(\frX_\pm)\big|_{\hB\times I_\phi}
& \cong \bigoplus_{i=1}^{N_\pm}
(\cA_{\hB\times I_\phi}, d+d(\bu_i/z))
\end{align}
over a neighbourhood $\hB$ of $q_0$ in $\hcM$.
Here $(\bu_1,\dots,\bu_{N_+})$ are the eigenvalues of $E\star_\tau$
in the quantum cohomology of $\frX_+$ at $\tau = \mir_+(q)$,
and first $N_-$ $(\bu_1,\dots,\bu_{N_-})$ of them are the eigenvalues
of $E\star_\tau$ in the quantum cohomology of $\frX_-$ at
$\tau = \mir_-(q)$, where $q$ varies in $\hB$.
By the choice of $\hS$, the eigenvalues $\bu_1,\dots,\bu_{N_+}$ are
pairwise distinct at generic points in $\hB$
(see Remark \ref{rem:generically_tame_semisimple}).

Suppose now that $q_0$ is a point from
$\bigcup\limits_{0<t<t_1}\slide_t\big(\cVsmss_{-,\R}\cap \intDsmtimes\big)
\cap \cVss_0$
and~$|\phi|$ is sufficiently small so that the conclusion of
Theorem~\ref{thm:functoriality} holds; then
there exist $K$-classes $V_i^{\circ \pm}\in K(\frX_\pm)$ such that the
associated flat sections $s_i^\pm$ (via the $\hGamma$-integral
structure) correspond to~$e^{-\bu_i/z} e_i$
under the decomposition~\eqref{eq:analytic_lifts_decomp_large}
and that $V_i^{\circ+} = \varphi^* V_i^{\circ-}$ for
$1\le i\le N_-$.
Choose a point $q^\circ \in \hB$ such that the corresponding
eigenvalues $\bu_1^\circ,\dots,\bu_{N_+}^\circ$
are pairwise distinct, and set $\tau^\circ_\pm = \mir_\pm(q^\circ)$;
we regard $\tau^\circ_\pm$ as elements of $H_\CR^*(\frX_\pm)$
(rather than their images in $[\cM_{\rm A}(\frX_\pm)/\Pic^\st(\frX_\pm)]$).
Let $C_N$ denote the configuration space of distinct $N$ points
in $\C$:
\[
C_N := \big\{(\bu_1,\dots,\bu_N)\in \C^N\colon
\text{$\bu_i \neq \bu_j$ if $i\neq j$}\big\}/\frS_N.
\]
Since the eigenvalues of $E\star_\tau$ form a local co-ordinate
system on $H^*(\frX_\pm)$ near a semisimple point,
we can identify a neighbourhood of
$\big(\bu_1^\circ,\dots,\bu_{N_\pm}^\circ\big)$ in $C_{N_\pm}$
with a neighbourhood of $\tau_\pm^\circ$ in $H^*_{\CR}(\frX_\pm)$.
Let $\tC_N$ denote the universal cover of $C_N$.
By isomonodromic deformation
\cite[Theorem 4.7]{Dubrovin:Painleve},
there exist analytic hypersurfaces\footnote
{In the original version of the present paper, we did not delete
the hypersurfaces $H_\pm$ from $\tC_{N_\pm}$. The need
for the deletion was pointed out by one of the referees. } $H_\pm \subset \tC_{N_\pm}$
such that the quantum connection in a neighbourhood of
$\tau_\pm^\circ \in H^*_\CR(\frX_\pm)$
can be extended to a meromorphic flat connection $\nabla$ on the trivial bundle
$H^*_\CR(\frX_\pm) \times \big(\tC_{N_\pm}^\circ \times \C\big)
\to \tC_{N_\pm}^\circ \times \C$ of the form
\[
\nabla = d + \frac{1}{z} A +\left(- \frac{1}{z^2} U + \frac{1}{z} V \right) dz,
\]
where $\tC_{N_\pm}^\circ := \tC_{N_\pm} \setminus H_\pm$,
$A$ is an $\End(H^*_\CR(\frX_\pm))$-valued 1-form
on $\tC_{N_\pm}^\circ$,
$U$ and $V$ are \linebreak $\End(H^*_\CR(\frX_\pm))$-valued
functions on $\tC_{N_\pm}^\circ$ and
$A$, $U$, $V$ are independent of~$z$.
Here the eigenva\-lues of $U$ give the co-ordinates
$(\bu_1,\dots,\bu_{N_\pm})$ on $\tC_{N_\pm}$.
Moreover, by taking bigger hypersurfaces~$H_\pm$
if necessary, we may assume that this defines a Frobenius manifold structure
on $\tC^\circ_{N_\pm}$.
By choosing a basis $\{\phi_i\}$ of $H^*_\CR(\frX_\pm)$,
we can determine the flat vector fields $\parfrac{}{\tau^i}$
by $A_{\partial/\partial \tau^i} 1 = \phi_i$. In particular,
we have a flat co-ordinate system
$\big(\tC^\circ_{N_\pm}\big)\sptilde \to H^*_\CR(\frX_\pm)$ on
the universal cover $\big(\tC^\circ_{N_\pm}\big)\sptilde$ which
equals $\tau^\circ_\pm$ at $\big(\bu^\circ_1,\dots,\bu^\circ_{N_\pm}\big)$.
We also have a submersion $g \colon \tC_{N_+} \to \tC_{N_-}$
sending $(\bu_1,\dots,\bu_{N_+})$ to $(\bu_1,\dots,\bu_{N_-})$;
this induces a submersion $g \colon \tC_{N_+}^{\circ\circ} \to \tC_{N_-}^\circ$,
where $\tC_{N_+}^{\circ\circ} := \tC_{N_+} \setminus \big(H_+ \cup g^{-1}(H_-)\big)\subset \tC_{N_+}^\circ$.

We may assume that $q^\circ$ is sufficiently close to $q_0$
so that $\big\{\bu_1^\circ,\dots,\bu_{N_+}^\circ\big\}$ are contained
in $B_{\rho_2}(0) \sqcup \bigsqcup\limits_{k\in \Z/J\Z} \frB_k(t)$,
where $0<t<t_1$ is the number such that $q_0\in \Image(\slide_t)$.
Moving $\bu_i^\circ$
along the strip regions $\cS_{\rm conv}\cup \cS_{-h}\cup
\cdots\cup \cS_{J-h-1}$,
we can connect $\big(\bu_1^\circ,\dots,\bu_{N_+}^\circ\big)$
by a continuous path $\gamma$ inside $\tC_{N_+}^{\circ\circ}$
with a point $\big(\bu^\Diamond_1,\dots,\bu^\Diamond_{N_+}\big)
\in \tC_{N_+}^{\circ\circ}$
having mutually distinct imaginary parts: see Fig.~\ref{fig:moving_u}.
The numbers $\bu^\Diamond_1,\dots,\bu^\Diamond_{N_+}$
are divided into $J+1$
groups as in Part~(3) of the statement, depending on which strip
regions they belong to.
Let $\tau_+\in H^*_\CR(\frX_+)$ be the analytically continued
(along the path $\gamma$)
flat co-ordinate at the point $\big(\bu^\Diamond_1,\dots,\bu^\Diamond_{N_+}\big)\in
\tC_{N_+}^{\circ\circ}$
and $\tau_-\in H^*_\CR(\frX_-)$ be the analytically continued
(along the path $g(\gamma)$)
flat co-ordinate at the
point $g\big(\bu^\Diamond_1,\dots,\bu^\Diamond_{N_+}\big)=
\big(\bu^\Diamond_1,\dots,\bu^\Diamond_{N_-}\big) \in \tC_{N_-}^\circ$.
By the above construction, the quantum D-modules of $\frX_\pm$
are analytically continued to $\tau_\pm$ along certain paths, and
the submersion~$g$ induces, when written in flat co-ordinates,
a submersion~$f$ from a neighbourhood of $\tau_+$ in $H^*_\CR(\frX_+)$ to a~neighbourhood of $\tau_-$
in $H^*_\CR(\frX_-)$ with $f(\tau_+) = \tau_-$.

\begin{figure}[htbp]\centering
\includegraphics{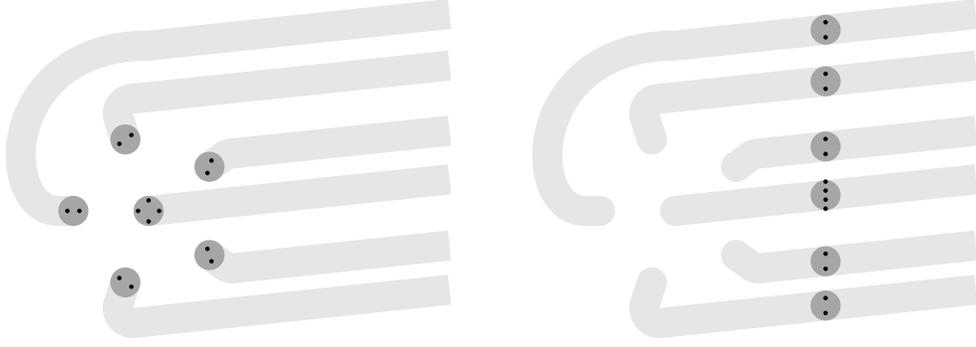}
\caption{A path from $\big(\bu_1^\circ,\dots,\bu_{N_+}^\circ\big)$
to $\big(\bu_1^\Diamond,\dots,\bu_{N_+}^\Diamond\big)$:
we move the left configuration $\big(\bu_i^\circ\big)$ to the
right one $\big(\bu_i^\Diamond\big)$ along the strip regions.} \label{fig:moving_u}
\end{figure}

We claim that the statement of the theorem holds for
$\tau_\pm$, $f$ and a sufficiently small neighbourhood $W$
of $\tau_+$. Parts (1), (2) follow from the construction
and the Hukuhara--Turrittin decomposition
(Proposition \ref{prop:Hukuhara-Turrittin}); note that $0$
is an admissible phase for $\big(\bu_1^\Diamond,\dots,\bu_{N_\pm}^\Diamond\big)$.
Part (3) has been already achieved.
We show Part~(4).
Recall that the decomposition~\eqref{eq:analytic_lifts_decomp_large}
corresponds to the basis $\big\{V_i^{\circ\pm}\big\}$ of $K(\frX_\pm)$
under the $\hGamma$-integral structure and it gives rise to
the asymptotic basis (see Section~\ref{subsec:MRS})
associated with $\tau_\pm^\circ$ and $\phi$.
As we move $(\bu_1,\dots,\bu_{N_\pm})$
along the path~$\gamma$ (or~$g(\gamma)$) and change
the phase from $\phi$ to zero, this
asymptotic basis undergoes mutation as explained in Section~\ref{subsec:MRS}.
The basis $\big\{V_i^\pm\big\}\subset K(\frX_\pm)$ in Part~(4)
arises from $\big\{V_i^{\circ\pm}\big\}$ by this mutation.
Part~(4-a) is obvious from this construction.
By the same argument as in the last paragraph of the proof
of Theorem~\ref{thm:functoriality}, we see that the relation
$V_i^{\circ +} = \varphi^* V_i^{\circ-}$ is preserved by
mutation, and we get $V_i^+ = \varphi^* V_i^-$ for
$1\le i\le N_-$ as required in Part~(4-c).
Let $\Gamma_i^{\pi+\phi}$ denote the Lefschetz thimbles~\eqref{eq:Lefschetz} of $F_{q_0}$ corresponding to the $i$th component of the decomposition~\eqref{eq:analytic_lifts_decomp_large}
(recall that components in~\eqref{eq:analytic_lifts_decomp_large}
are naturally indexed by critical points of $F_{q_0}$;
see Proposition~\ref{prop:analytic_lift_Bri}).
As discussed in the proof of Proposition~\ref{prop:lift_str_sheaf_+},
$V_i^{\circ+}$ equals the $K$-class $V\big(\Gamma_i^{\pi+\phi}\big)$ associated
with $\Gamma_i^{\pi+\phi}$ under the correspondence in
Theorem \ref{thm:main_intpaper}.
Note by Remark \ref{rem:intersection_Euler} that
\[
\chi\big(V_i^{\circ+},V_j^{\circ+}\big) =
(-1)^{n(n-1)/2}\#\big(e^{-\pi\iu} \Gamma_i^{\pi+\phi}
\cdot \Gamma_j^{\pi+\phi}\big).
\]
Therefore, mutation of $\big\{V_i^{\circ+}\big\}$ corresponds to
Picard--Lefschetz transformation (see \cite[Chapter~I]{AGV:Singularity_II})
of thimbles $\big\{\Gamma_i^{\pi+\phi}\big\}$; here
we consider Picard--Lefschetz transformation
with $q_0\in \cM$ fixed.
We transform $\big\{\Gamma_i^{\pi+\phi}\big\}$ by the
sequence of Picard--Lefschetz transformations
corresponding to the sequence of mutations that
$\big\{V_i^{\circ+}\big\}$ undergoes and obtain
a new basis $\{\Gamma_i\}$ of $\Lef_{q_0}$.
The basis~$\{\Gamma_i\}$ is the union of bases
of $\Lef^{(k)}_{q_0}$ and $\Lef^{\rm conv}_{q_0}$;
$\big\{\Gamma_i\colon \text{$\bu_i = \bu_j^{(k)}$ for some $j$}\big\}$
gives a~basis of~$\Lef_{q_0}^{(k)}$.
Since we have $V_i^+ = V(\Gamma_i)$, Part~(4-b) follows
from Theorem~\ref{thm:decomp_Lef_K}. The theorem is proved.
\end{proof}

\begin{Example}\label{exa:blowup_P4} We give an example where the Orlov-type decomposition occurs
at an explicit~$\tau_+$. Set $X_- := \PP^4$ and
let $X_+$ be the blowup of $X_-$
along a line $Z=\PP^1 \subset X_-$.
Both $X_+$ and $X_-$ are Fano, and their small quantum
cohomologies are defined over polynomial rings.
Planes in $X_-=\PP^4$ containing the line~$Z$
are parametrized by~$\PP^2$, and hence
we have a natural projection $X_+ \to \PP^2$.
Thus $X_+$ is a $\PP^2$-bundle
over $\PP^2$:
$X_+ \cong \PP_{\PP^2}(\cO\oplus \cO\oplus \cO(-1))$.
Let $p_1$ be the pull-back of the ample class in $H^2\big(\PP^2;\Z\big)$
and let $p_2=\varphi^*(p)$ be the pull-back of the ample class~$p$ in $H^2(X_-;\Z)$.
They form a nef basis of $H^2(X_+;\Z)$.
The class of the exceptional divisor is given by $[E] = p_1-p_2$.
The uncompactified LG mirror of $X_+$ is given by the
family of Laurent polynomials
\[
F_{q_1,q_2} = x_1+x_2+x_3+x_4 + \frac{q_1q_2}{x_1x_2x_3x_4} +
\frac{1}{q_1} x_1x_2 x_3
\]
parametrized by $(q_1,q_2)\in (\C^\times)^2$.
The large radius limit (LRL) point for $X_+$ corresponds to
$q_1=q_2=0$.
The mirror map for $X_+$ is trivial: $
\mir_+(q_1,q_2) = p_1 \log q_1 + p_2 \log q_2$.
On the other hand, the uncompactified LG mirror of $X_-$ is given by
the family
\[
F_{q,t} = x_1+x_2+x_3+x_4+ \frac{q}{x_1x_2x_3x_4} + t x_1x_2x_3,
\]
where the LRL point for $X_-$ is $q=t=0$.
The two families are related by the change of co-ordinates
$q= q_1q_2$, $t=q_1^{-1}$.
The small quantum cohomology locus for $X_-$ is $t=0$.
It is not easy to find a closed formula for the mirror map
of $X_-$, but we know that it has
the asymptotic form $\mir_-(q,t) \sim p \log q + t \cdot p^3 $ as
$(q,t) \to (0,0)$
(see Remark~\ref{rem:mirror_map_asymptotic})
and that $\mir_-(q,0) = p \log q$.
The Euler vector field gives a grading
$\deg q_1 = 2$, $\deg q_2 =3$ and
$\deg t = -2$, $\deg q = 5$ (in complex unit).
Define a dimensionless parameter
$\lambda := q_1^{-\frac{3}{2}} q_2 = t^{\frac{5}{2}}q$.
Critical points/values of $F_{q_1,q_2} = F_{q,t}$ are given by
\[
x_1=x_2=x_3= t^{-\frac{1}{2}} x, \qquad x_4 = t^{-\frac{1}{2}}
\big(x+ x^3\big), \qquad F_{q,t}(\crit) = t^{-\frac{1}{2}}\big(5x + 3x^3\big),
\]
where $x$ is a root of $x^5\big(x^2+1\big)^2 = \lambda$. The discriminant locus (where $x$ has multiple roots) is $\lambda =0$ or $\pm 400 \sqrt{5}\iu/3^9$.
Therefore the quantum cohomology is semisimple over
the positive real locus $q_1>0$, $q_2>0$. The critical values have the asymptotics
\begin{equation}\label{eq:criticalvalue_lambda_0}
F_{q,t}(\crit) =
\begin{cases}
 t^{-\frac{1}{2}}
\big(5 \lambda^{\frac{1}{5}}+ \lambda^{\frac{3}{5}}
+ O(\lambda)\big), \\
 t^{-\frac{1}{2}}
\big(2\iu \pm 2\sqrt{\iu} \lambda^{\frac{1}{2}}
+ O(\lambda)\big), \\
 t^{-\frac{1}{2}}
\big({-}2\iu \pm 2\sqrt{-\iu} \lambda^{\frac{1}{2}}
+O(\lambda) \big)
\end{cases}
=
\begin{cases}
 5 q^{\frac{1}{5}} + t q^{\frac{3}{5}} + O\big(t^2\big), \\
 2\iu t^{-\frac{1}{2}}
\pm 2\sqrt{\iu} q^{\frac{1}{2}} t^{\frac{3}{4}} + O\big(t^2\big),
\\
 -2\iu t^{-\frac{1}{2}}
\pm 2\sqrt{-\iu} q^{\frac{1}{2}} t^{\frac{3}{4}} + O\big(t^2\big).
\end{cases}
\end{equation}
as $\lambda \to 0$ (or $t\to 0$ with $q$ fixed)
and
\begin{align*}
F_{q,t}(\crit) = t^{-\frac{1}{2}}
\big(3 \lambda^{\frac{1}{3}}
+ 3 \lambda^{\frac{1}{9}}+O(\lambda^{-\frac{1}{9}}) \big)
= 3 q_2^{\frac{1}{3}} + 3 q_1^{\frac{1}{3}} q_2^{\frac{1}{9}}
+ O\big(q_1^{\frac{2}{3}}\big)
\end{align*}
as $\lambda \to \infty$ (or $q_1\to 0$ with $q_2$ fixed).
The first line of~\eqref{eq:criticalvalue_lambda_0} gives
5 `convergent' critical values (depending on the choice of~$q^{\frac{1}{5}}$)
and the next two lines give 4 `divergent' critical values
around $t=0$. They have mutually distinct imaginary parts
when $t>0$ is sufficiently close to zero and $q>0$.
We will identify the corresponding elements in the $K$-group
for the phase $\phi =0$.

\begin{figure}[htb]\centering
\includegraphics{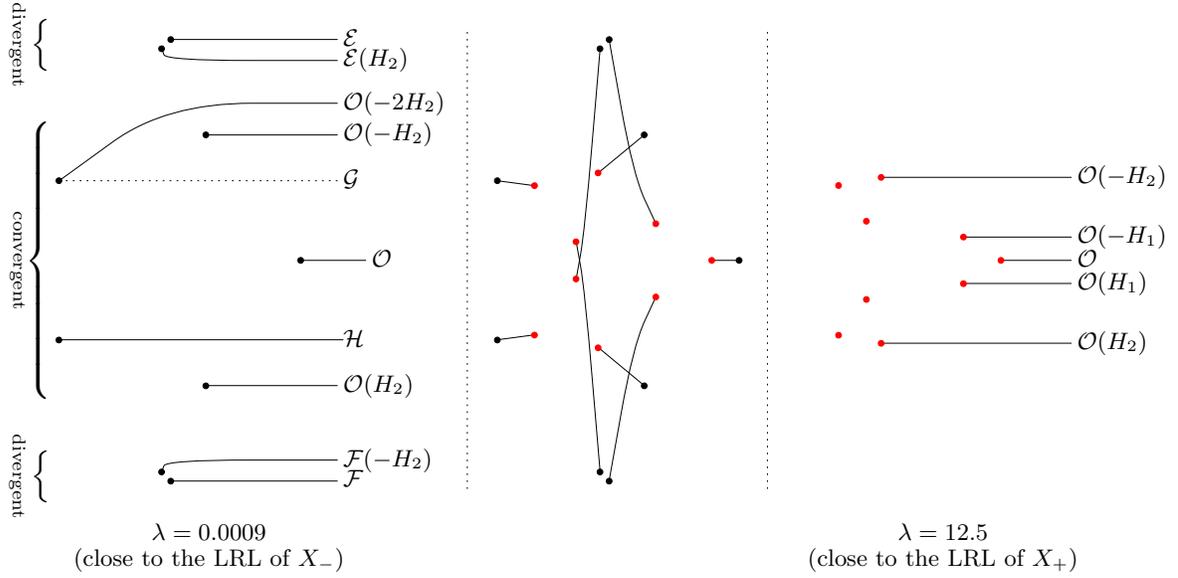}
\caption{Move of critical values: the trajectories of the critical values
are shown in the middle picture; $\cE$ is
the left mutation of $\cO(-H_1)$ with respect to $\cO(-H_2)$;
similarly $\cF$ is the right mutation of $\cO(H_1)$ with respect to
$\cO(H_2)$; $\cG$ is the right mutation of $\cO(-2H_2)$
with respect to $\cO(-H_2)$.}\label{fig:move_critical_values}
\end{figure}

We consider a path where the parameter $\lambda\in\R$ increases
from $0$ to $\infty$.
It is convenient to take $t=t(\lambda) = \lambda^{\frac{2}{3}}
+ \lambda^{\frac{2}{5}}$ so that $(q_1,q_2) \sim
\big(\lambda^{-\frac{3}{2}}, 1\big)$ as $\lambda \to \infty$ and
$(q,t) \sim \big(1,\lambda^{\frac{2}{5}}\big)$ as $\lambda \to 0$.
The move of the critical values is shown in
Fig.~\ref{fig:move_critical_values}.
When $\lambda$ is large, say $\lambda = 12.5$, we can determine
the mirror partners (in the $K$-group) of some Lefschetz thimbles
by using the monodromy action on the $(q_1,q_2)$-space:
see the rightmost picture of Fig.~\ref{fig:move_critical_values}.
Here $H_1$, $H_2$ denote the
pull-backs of the hyperplanes on~$\PP^2$ and~$\PP^4$
respectively (so that~$p_i$ is the Poincar\'e dual of~$H_i$).
A numerical calculation on computer shows that, as
$\lambda$ decreases from $\lambda = 12.5$ to $\lambda = 0.0009$,
the thimble corresponding to $\cO(-H_1)$ undergoes
a Picard--Lefschetz transformation with respect to
that corresponding to~$\cO(-H_2)$.
As we saw in the proof of Theorem~\ref{thm:sectorial_Orlov},
this corresponds to mutation in $K$-group.
After (left) mutation, $\cO(-H_1)$ becomes
\[
\cE = \Cone\big(
\Hom^\bullet (\cO(-H_2),\cO(-H_1) )\otimes \cO(-H_2)
\to \cO(-H_1)\big) = \cO_E(E-H_2).
\]
Considering the monodromy action near $t=0$ and performing
further mutation, we find that the exceptional collection
(adapted to the decomposition \eqref{eq:SOD_K} with $J=2$, $h=1$)
\begin{gather*}
\cE = \cO_E(E-H_2), \quad \cO_E(E), \quad \cO(-H_2), \quad
\cG=\cO(-2H_2) \otimes \varphi^*T_{\PP^4}[-1], \quad \cO, \\ \mathcal{H}=\cO(2H_2)\otimes \varphi^*\Omega^1_{\PP^4},\quad \cO(H_2),\quad \cO_E[-1], \quad
\cF= \cO_E(H_2)[-1]
\end{gather*}
corresponds to the Lefschetz thimbles
$\Gamma_1^\pi,\dots,\Gamma_9^\pi$~\eqref{eq:Lefschetz} ordered
in such a way that the imaginary parts of the
corresponding critical values decrease
$\Im(F(\crit_1))>\Im(F(\crit_2)) >\cdots >\Im(F(\crit_9))$.
These are the classes $V_i^+$ in Theorem~\ref{thm:sectorial_Orlov}(4)
in this case.
Note that the corresponding sectorial decomposition occurs at a point
$\tau_+ = [E] \log t + p_2 \log q\in H^2(X_+)$, $q, t>0$
such that $t$ is sufficiently small
(in the leftmost picture of Fig.~\ref{fig:move_critical_values},
$t\approx 0.0698$, $q\approx 0.698$).
\end{Example}

\begin{Remark}We can use Theorem \ref{thm:sectorial_Orlov}
to prove Gamma conjecture~II \cite[Section~4.6]{GGI:gammagrass}
in some cases. By applying Theorem~\ref{thm:sectorial_Orlov} to the case
where $Z$ is a (non-stacky) point, we know that
if the Gamma conjecture~II holds for a weak-Fano compact
toric stack $\frX_-$, then it also holds for a weighted blowup $\frX_+$ of $\frX_-$
at a non-stacky torus-fixed point (as long as $\frX_+$ is weak-Fano).
\end{Remark}

\section{Conjecture and discussion}\label{subsec:conj_discussion}

\subsection{General conjecture}\label{subsec:conjecture}
In view of our main results (Theorems~\ref{thm:decomp_QDM},
\ref{thm:ancestor},
\ref{thm:functoriality}, \ref{thm:decomp_Lef_K}, and~\ref{thm:sectorial_Orlov}), we conjecture the following phenomena
for more general discrepant birational transformations.
Suppose that we have a birational
transformation $\varphi \colon \frX_+ \dasharrow \frX_-$
between smooth (not necessarily toric)
DM stacks which fits into the diagram
\[
\xymatrix{
& \hfrX \ar[ld]_{f_+}\ar[rd]^{f_-} & \\
\frX_+ \ar@{-->}[rr]^{\varphi}& & \frX_-
}
\]
with $\hfrX$ smooth and $f_\pm$ projective birational morphisms,
such that $f_+^*K_{\frX_+} - f_-^*K_{\frX_-}$ is an
effective divisor.
We assume that the coarse moduli spaces of $\frX_\pm$
are projective, but some of the discussions below can be
also adapted to non-compact cases.

We choose a base point $\tau_{\star,\pm} \in H^*_\CR(\frX_\pm;\R)$
which is real and sufficiently close to the large radius limit point.
\begin{Conjecture}[formal decomposition]\label{conj:formal}
There exist paths from $\tau_{\star,\pm}$ to
$\tau_\pm \in H^*_\CR(\frX_\pm)$
and a~holomorphic map $f$ from a~neighbourhood $W$ of $\tau_+$
in $H^*_\CR(\frX_+)$ to $H^*_\CR(\frX_-)$ with $f(\tau_+) = \tau_-$
such that the quantum D-modules $\QDMan(\frX_\pm)$ are analytically
continued along these paths, and that
we have the decomposition of the quantum D-modules
completed in $z$ $($see \eqref{eq:z-completed_QDM}$)$:
\begin{equation}
\label{eq:formal_decomp_conj}
\overline{\QDMan}(\frX_+)|_W \xrightarrow{\cong}
f^*\overline{\QDMan}(\frX_-)|_W \oplus \scrR.
\end{equation}
Here $\scrR$ is a locally free $\cO_W[\![z]\!]$-module equipped
with a meromorphic flat connection $\nabla^\scrR$
and a $z$-sesquilinear pairing $P^\scrR$,
and the formal decomposition respects both the connection and the
pairing $($in particular it is orthogonal
with respect to the pairing \eqref{eq:pairing_A}$)$.
\end{Conjecture}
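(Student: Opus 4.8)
The plan is to reduce the general statement to the case of a single smooth blowup and, in that case, to reproduce the mechanism behind Theorems~\ref{thm:decomp_QDM} and~\ref{thm:sectorial_Orlov}: build a one-parameter family of B-model data interpolating between a (mirror of a) neighbourhood of the large radius limit point of $\frX_+$ and one of $\frX_-$, show that the relative critical locus of the family splits into ``convergent'' branches (those limiting to critical points of the $\frX_-$-mirror) and ``divergent'' branches along a curve joining the two limit points, and then transport this splitting to the quantum D-modules through the mirror isomorphisms. A point worth emphasising at the outset is that \eqref{eq:formal_decomp_conj} is the statement \emph{completed in $z$}, so only the partition of the critical values and their formal asymptotic expansions (Section~\ref{subsubsec:higher_residue}, equivalently the Hukuhara--Turrittin normal form of Proposition~\ref{prop:formal_decomposition}) enter; no Stokes data are needed, which is why the decomposition may be taken orthogonal for the pairing~\eqref{eq:pairing_A}, and the compatibility of the splitting with $\nabla$ and $P$ will then follow, exactly as in Proposition~\ref{prop:formal_decomp_Bri} and the proof of Theorem~\ref{thm:decomp_QDM}, from the fact that over a semisimple parameter the higher residue pairing is diagonal.

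Concretely, I would proceed as follows. \emph{Step 1.} Invoke a stacky weak factorization theorem to write $\varphi$ as a composite of blowups and blowdowns along smooth centres between smooth DM stacks with projective coarse spaces, handle the blowdown steps by exchanging the roles of source and target, treat any crepant step by analytic continuation in the spirit of Ruan's conjecture, and compose the resulting decompositions (keeping track, at each elementary step, of the path to the base points $\tau_\pm$ and of the submersion $f$). \emph{Step 2.} For an elementary blowup $\varphi\colon\frX_+\to\frX_-$ along a smooth substack $Z$ (a root construction when $Z$ is a divisor), construct the interpolating family from the degeneration of $\frX_-$ to the normal cone: $\frX_-$ degenerates to $\frX_-\cup_E \PP(N_{Z/\frX_-}\oplus\cO)$, whose two components meet along the exceptional divisor $E=\PP(N_{Z/\frX_-})$ of $\frX_+$, with $\frX_+$ sitting as the total space of the family. \emph{Step 3.} Feed this into the degeneration/relative Gromov--Witten formalism (Li--Ruan, Maulik--Pandharipande, and its orbifold versions): this should express $\overline{\QDMan}(\frX_+)$, after analytic continuation into the smoothing parameter, as an extension built from $\overline{\QDMan}(\frX_-)$ and the relative theory of the pair $(\PP(N_{Z/\frX_-}\oplus\cO),E)$; the ``convergent'' part reconstructs $f^*\overline{\QDMan}(\frX_-)$ and the complement is the residual module $\scrR$ as in Definition~\ref{def:rest}, with induced $\nabla^\scrR$ and $P^\scrR$. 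An alternative, closer to the methods of this paper, is to assume a \emph{big} quantum mirror theorem for $\frX_\pm$ and glue local Landau--Ginzburg models as in Section~\ref{subsec:LG}; then the splitting of the relative critical scheme is literally the analogue of Corollary~\ref{cor:direct_summand}. \emph{Step 4.} Having the decomposition at one semisimple parameter, run an isomonodromic deformation argument as in the proof of Theorem~\ref{thm:sectorial_Orlov}: move the critical values (in the universal cover of the configuration space, with the relevant hypersurfaces deleted) to a configuration with pairwise distinct imaginary parts, analytically continue the flat coordinates to the resulting $\tau_\pm$, and take $f$ to be the submersion induced on flat coordinates by forgetting the divergent critical values.

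The main obstacle is that, outside the toric (or otherwise ``computable'') setting, there is neither a general mirror theorem controlling \emph{big} quantum cohomology nor a canonical global family of models; constructing the interpolating family with the correct critical-locus behaviour --- equivalently, proving a degeneration formula strong enough to guarantee that the convergent branches recover \emph{all} of $\overline{\QDMan}(\frX_-)$ rather than a proper sub-D-module --- is the crux of the matter. A secondary difficulty is organising the analytic continuation (choosing the paths and the hypersurfaces to be removed) so that $f$ is well defined on an open set $W$ and $\scrR$ is locally free over $W$ with the stated structures. Finally, there is a parallel, purely symplectic route via the expected semiorthogonal decomposition of Fukaya--Seidel categories under blowups together with the open--closed map onto (negative cyclic) homology; its own obstacle is that the requisite structural results on Fukaya categories and the open--closed map, and their compatibility with homological mirror symmetry, are not available in this generality.
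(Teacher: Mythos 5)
You are attempting to prove a statement that the paper itself does not prove: Conjecture \ref{conj:formal} is stated as an open conjecture, and the paper establishes decompositions of this kind only for toric DM stacks under toric wall-crossings (Theorems \ref{thm:decomp_QDM} and \ref{thm:sectorial_Orlov}), where everything is mediated by the explicit global LG mirror, the mirror theorems of Coates--Corti--Iritani--Tseng, and the splitting of the relative critical scheme (Corollary \ref{cor:direct_summand}). Your proposal correctly identifies the mechanism suggested by those toric results, but as written it is a research plan whose load-bearing steps are exactly the open points, so it does not constitute a proof.

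Concretely: Step 1 does not reduce the conjecture to a single smooth blowup. Weak factorization interposes blowups and blowdowns in both directions, so along the factorization you must also handle steps where the canonical class moves the wrong way (the decomposition then runs in the opposite direction) and crepant steps, for which the Ruan-type analytic continuation you invoke is itself unproven in this generality; moreover the decompositions produced at the individual steps live over neighbourhoods of different base points reached by different analytic continuations of objects that are only formal in $z$, and you give no argument that these can be matched and composed, nor that the intermediate stacks satisfy the standing hypotheses (projective coarse moduli, existence and analyticity of the big quantum product, which is needed even to state \eqref{eq:formal_decomp_conj}). Steps 2--3 assume that deformation to the normal cone plus an orbifold degeneration formula yields a direct-sum splitting of $\overline{\QDMan}(\frX_+)$ at a parameter $\tau_+$ far from the large radius limit, with the convergent part recovering \emph{all} of $f^*\overline{\QDMan}(\frX_-)$ and with $\scrR$ locally free over an open $W$ carrying $\nabla^\scrR$ and $P^\scrR$; but the degeneration formula gives relations among (relative) invariants, not a decomposition of the completed D-module after analytic continuation in the big quantum parameters, and it says nothing about orthogonality with respect to \eqref{eq:pairing_A}. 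The alternatives you mention (a big mirror theorem with glued LG models as in Section \ref{subsec:LG}, or a Fukaya--Seidel semiorthogonal decomposition transported by the open-closed map) are precisely what is unavailable outside the toric case, as you acknowledge. Finally, Step 4 needs the quantum D-modules to be semisimple, or at least of exponential type, at the relevant points, which is not known in general. So every step beyond the toric case is either conjectural or missing, and the statement remains, as in the paper, a conjecture.
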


Building on the above conjecture, we can also state a conjecture
comparing higher genus Gromov--Witten theories. This involves
quantization of the formal decomposition \eqref{eq:formal_decomp_conj}.
See Section~\ref{subsec:comp_all_genera} for the notation.

\begin{Conjecture}[comparison in all genera]
\label{conj:all_genera}
The ancestor potentials $\scrA_{\pm,\tau}$ of $\frX_\pm$
can be analytically continued along the paths from $\tau_{\star,\pm}$
to $\tau_\pm$ in Conjecture $\ref{conj:formal}$.
There exist a family $\scrA'_\tau \in \Fockan(V_\tau,\bD_\tau)$
of tame functions such that
\[
T_{\bs} \hU_\tau \scrA_{+,\tau} = \scrA_{-,f(\tau)} \otimes \scrA'_\tau
\]
for $\tau\in W$, where $\bs =(-z\phi_0,\bD_\tau) + U_\tau(z \phi_0)$,
$V_\tau\subset \scrR_\tau$ is a $\C$-vector subspace
such that $\scrR_\tau = V_\tau[\![z]\!]$ and that $P^\scrR$ restricts
to the $\C$-valued pairing $V_\tau \times V_\tau \to \C$,
$\bD_\tau \in z \scrR_\tau$
and $U_\tau \colon H^*_\CR(\frX_+)[\![z]\!] \cong H^*_\CR(\frX_-)[\![z]\!]
\oplus V_\tau[\![z]\!]$ is the unitary
isomorphism obtained from \eqref{eq:formal_decomp_conj}
by restricting to $\tau$ and flipping the sign of $z$.
\end{Conjecture}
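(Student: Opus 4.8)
The plan is to deduce Conjecture~\ref{conj:all_genera} from Conjecture~\ref{conj:formal} by the same mechanism that yields Theorem~\ref{thm:ancestor} from Theorem~\ref{thm:decomp_QDM} in the toric case, namely the Givental--Teleman reconstruction of higher-genus potentials over a semisimple locus. First I would restrict attention to the dense open subset $W^{\rm ss}\subset W$ on which the quantum product $\star_\tau$ of $\frX_+$, and hence via $f$ that of $\frX_-$, is semisimple; here one must assume, or know, that the analytically continued quantum cohomology of $\frX_+$ is \emph{generically semisimple}. On $W^{\rm ss}$ the formal decomposition~\eqref{eq:formal_decomp_conj} restricts to a decomposition of semisimple TEP-type structures, and I would upgrade it to a decomposition of \emph{products}: namely produce from it a fibrewise idempotent-diagonalizable Frobenius-type multiplication on $\scrR|_{W^{\rm ss}}$ compatible with $\nabla^\scrR$ and $P^\scrR$, so that $\scrR$ carries a well-defined $R$-matrix $R^\scrR_\tau$ and a tame function $\scrA'_\tau := T_{-z\phi_0 + R^\scrR_\tau(z,\dots,z)}\,\hR^\scrR_\tau\,\scrT^{\otimes\rank\scrR}\in\Fockan(V_\tau,\bD_\tau)$. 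This upgrade is the crux, since~\eqref{eq:formal_decomp_conj} a priori only respects the connection and the pairing, not the product.

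Next I would invoke Teleman's classification theorem \cite{Teleman:semisimple} to write $\scrA_{+,\tau} = T_{-z\phi_0 + R^+_\tau(z,\dots,z)}\,\hR^+_\tau\,\scrT^{\otimes N_+}$ and $\scrA_{-,f(\tau)} = T_{-z\phi_0 + R^-_{f(\tau)}(z,\dots,z)}\,\hR^-_{f(\tau)}\,\scrT^{\otimes N_-}$, where $R^\pm$ are formal fundamental solutions of the respective quantum connections normalized by the classical-limit condition, fixed as in Proposition~\ref{prop:Givental_formula_toric_stacks} via Tseng's orbifold quantum Riemann--Roch theorem \cite{Tseng:QRR} following \cite{BCR:crepant_open, Zong:Givental_GKM}. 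Because~\eqref{eq:formal_decomp_conj} identifies the quantum connection of $\frX_+$ with $f^*(\text{quantum connection of }\frX_-)\oplus\nabla^\scrR$ and the pairing with $f^*P^-\oplus P^\scrR$, the block form of $R^+_\tau$ is $f^*R^-_{f(\tau)}\oplus R^\scrR_\tau$ up to a constant diagonal ambiguity on each block, which the chosen normalizations remove. Applying the chain rule for the Givental quantization (as in \cite[Remark~3.22]{Coates-Iritani:convergence}) to this block decomposition then produces $T_{\bs}\hU_\tau\scrA_{+,\tau} = \scrA_{-,f(\tau)}\otimes\scrA'_\tau$ with $\bs = (-z\phi_0,\bD_\tau) + U_\tau(z\phi_0)$, exactly as in the proof of Theorem~\ref{thm:ancestor}. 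Finally I would propagate the identity from $W^{\rm ss}$ to all of $W$ by analyticity: both sides are analytic in $\tau$ (ancestor potentials extend analytically by \cite[Theorem~1.4]{Coates-Iritani:convergence}, and $\hU_\tau$, $\bs$, $\scrA'_\tau$ extend because~\eqref{eq:formal_decomp_conj} does), so equality on a dense open set forces it everywhere.

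A more structural variant of the whole argument would bypass the explicit $R$-matrices and work in the Fock-sheaf formalism of \cite{Coates-Iritani:Fock}: a decomposition of (mini-versal) cTEP structures induces a factorization of the associated Fock sheaves, under which the canonical section of $\frX_+$ (its ancestor potential) maps to the product of those of $\frX_-$ and $\scrR$. This is cleaner and conceptually explains why $\scrA'_\tau$ exists, but in the present state of the art it still relies on a semisimplicity input to characterize ancestor potentials inside the Fock sheaf.

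The main obstacle is twofold. First, Conjecture~\ref{conj:all_genera} is genuinely conditional on Conjecture~\ref{conj:formal}, which this paper establishes only in the toric case. Second, and more seriously, outside the toric (or Fano-semisimple) setting the quantum cohomology of $\frX_+$ need not be generically semisimple, and then there is at present no reconstruction of the higher-genus potential from the quantum D-module alone; in that generality the statement may need an explicit hypothesis of generic semisimplicity, or a non-semisimple analogue of the Givental--Teleman formula. Even granting semisimplicity, the delicate technical point is equipping $\scrR$ with the structure needed to define $\scrA'_\tau$ — passing from a flat connection with pairing to a genuine semisimple CohFT-type structure, and checking that the resulting Givental data glue analytically across the caustic in $W$, where idempotents degenerate.
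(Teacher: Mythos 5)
The statement you are addressing is Conjecture~\ref{conj:all_genera}: the paper does not prove it, and offers no proof to compare against. What the paper does prove is the toric analogue, Theorem~\ref{thm:ancestor}, whose argument (Givental--Teleman reconstruction at semisimple points of the mirror, the $R$-matrices coming from $\Asymp$ at non-degenerate critical points, and the chain rule for quantization) is exactly the mechanism you propose to transplant. So your sketch is faithful to the paper's philosophy, and you are right that it cannot be more than a conditional strategy: Conjecture~\ref{conj:formal} is only established here for toric wall-crossings, and the higher-genus reconstruction you invoke requires generic semisimplicity, which Conjecture~\ref{conj:all_genera} does not assume. Your honest flagging of both points is appropriate; but be clear that what you outline proves at best a weaker statement (the conjecture under an added generic-semisimplicity hypothesis on the analytically continued quantum cohomology of $\frX_+$), not the conjecture as stated, and in the non-semisimple case no known tool produces $\scrA'_\tau$ from the D-module data alone.

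One technical remark on your ``crux'': in the semisimple regime the upgrade to a Frobenius-type product on $\scrR$ is not actually needed. If $E\star_\tau$ has pairwise distinct eigenvalues, the Hukuhara--Turrittin decomposition of $\overline{\QDMan}(\frX_+)_\tau$ into rank-one pieces $(\cOan[\![z]\!], d+d(\bu_i/z))$ with the standard pairing (Proposition~\ref{prop:formal_decomposition}) is automatically compatible with the summand $\scrR$, because the exponential factors separate the summands; this induced decomposition of $\scrR_\tau$, together with $P^\scrR$, already supplies the $R$-matrix data for the residual block, which is precisely how the paper proceeds in the toric case via $\Asymp$ (Definition~\ref{def:rest} and Proposition~\ref{prop:Givental_formula_toric_stacks}). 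The remaining genuine issues are the ones you name: fixing the sign/constant ambiguities of the block $R$-matrices consistently (in the toric case this is done through the mirror and the classical-limit condition, and the Givental formula is insensitive to the residual sign ambiguity), controlling the behaviour across the caustic where eigenvalues coalesce (handled in the paper by analyticity of the Givental formula in $\tau$, cf.\ \cite[Remark~2.5.7]{GGI:gammagrass}), and, above all, the absence of any reconstruction theorem when semisimplicity fails.
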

In this conjecture we implicitly assume that the action of the operator $T_\bs$ on
$\hU_\tau \scrA_{+,\tau}$ is well-defined;
this holds if $\scrA_{+,\tau}$ is rational (see Section~\ref{subsubsec:quantization}).
Note also that the space $\Fockan(V_\tau,\bD_\tau)$ itself
depends only on $\scrR_\tau$ and $\bD_\tau$ and does not
depend on the choice of $V_\tau$, but that $\scrA'_\tau$ depends on
the choice of $V_\tau$.

Next we state a conjecture relating the analytic lift of
the formal decomposition \eqref{eq:formal_decomp_conj}
with a~semiorthogonal decomposition of the $K$-group.
Recall from~\eqref{eq:conn_z} that the action of $-\nabla_{z^2\partial_z}$
on $\QDMan(\frX_\pm)|_{z=0}$ equals the Euler multiplication
$E\star_\tau$. In particular, the formal decomposition~\eqref{eq:formal_decomp_conj} implies that
$E\star_\tau$ on $H_\CR^*(\frX_+)$ is conjugate to $E\star_{f(\tau)}\oplus \big({-}\nabla^\scrR_{z^2\partial_z}\big)$
on $H^*_\CR(\frX_-) \oplus \scrR|_{z=0}$.

\begin{Conjecture}[analytic lift]\label{conj:lift}
We can arrange $\tau_\pm$ and the paths from $\tau_{\star,\pm}$ to $\tau_\pm$ in Conjecture~$\ref{conj:formal}$ so that $($in addition to Conjecture~$\ref{conj:formal})$
the following holds.
\begin{itemize}\itemsep=0pt
\item[{\rm (1)}] We have a decomposition
$\big(\scrR,\nabla^\scrR,P^\scrR\big) = \big(\scrR_1,\nabla^{\scrR_1},P^{\scrR_1}\big)
\oplus \big(\scrR_2,\nabla^{\scrR_2},P^{\scrR_2}\big)$, where
$\scrR_i$ is a locally free $\cO_W[\![z]\!]$-module
equipped with a flat connection $\nabla^{\scrR_i}$
and a $z$-sesquilinear pairing $P^{\scrR_i}$.
There exist a phase $\phi\in \R$ and real numbers $l_2<l_1$ such that
\begin{itemize}\itemsep=0pt
\item[{\rm (a)}] all eigenvalues $\bu$ of $\big({-}\nabla^{\scrR_1}_{z^2\partial_z}\big)$
on $\scrR_1|_{z=0}$ satisfy $\Im\big(e^{-\iu\phi} \bu\big)>l_1$,
\item[{\rm (b)}] all eigenvalues $\bu$ of $\big({-}\nabla^{\scrR_2}_{z^2\partial_z}\big)$
on $\scrR_2|_{z=0}$ satisfy $\Im\big(e^{-\iu\phi} \bu\big)<l_2$ and
\item[{\rm (c)}] all eigenvalues $\bu$ of $E\star_{f(\tau)}$ on $H^*_\CR(\frX_-)$
satisfy $l_2<\Im\big(e^{-\iu\phi}\bu\big)<l_1$.
\end{itemize}

\item[{\rm (2)}] Moreover, we have a sector $I_\phi
= \big\{\big(r,e^{\iu\theta}\big) \in\tCC\colon |\theta-\phi|<\frac{\pi}{2}+\epsilon\big\}$
with some $\epsilon>0$ and an analytic decomposition
\begin{equation}\label{eq:analytic_lift_conj}
\pi^*\QDMan(\frX_+)\big|_{W\times I_\phi}
\cong \scrRan_1 \oplus
\pi^*f^*\QDMan(\frX_-) \big|_{W\times I_\phi} \oplus
\scrRan_2,
\end{equation}
where $\pi\colon W\times \tCC \to W\times \C$ is the oriented
real blow-up and $\scrRan_i$ is a locally free $\cA_{W\times I_\phi}$-module
equipped with a flat connection, such that
$\scrR_i \cong \scrRan_i\otimes_{\cA_{W\times I_\phi}}\cO_W[\![z]\!]$,
$i=1,2$
and that the formal decomposition \eqref{eq:formal_decomp_conj}
is induced by \eqref{eq:analytic_lift_conj}.

\item[{\rm (3)}] Via the $\hGamma$-integral structure for $\frX_+$ and $\frX_-$,
the decomposition \eqref{eq:analytic_lift_conj} is induced by a~semi\-orthogonal decomposition of the topological $K$-groups:
\begin{equation} \label{eq:SOD_K_conj}
K(\frX_+) \cong K_1 \oplus K(\frX_-) \oplus K_2
\end{equation}
such that the associated inclusion $K(\frX_-)\hookrightarrow K(\frX_+)$
respects the Euler pairing.
\end{itemize}
\end{Conjecture}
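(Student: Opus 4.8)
The plan is to take Conjecture~\ref{conj:formal} as the starting point: after analytic continuation along suitable paths we have the formal decomposition $\overline{\QDMan}(\frX_+)|_W \cong f^*\overline{\QDMan}(\frX_-)|_W \oplus \scrR$ over $\cO_W[\![z]\!]$, respecting connection and pairing. The residual piece $\scrR$ is a meromorphic connection with a pole of Poincar\'e rank one at $z=0$; over the generic (semisimple) locus it admits, by the Levelt--Turrittin normal form (the analogue of Proposition~\ref{prop:formal_decomposition} applied to the irregular part), a formal decomposition into summands carrying distinct exponential factors $e^{\bu/z}$, and the same holds for $f^*\overline{\QDMan}(\frX_-)$. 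The aim is then to (a) move $\tau_+$ so that the spectrum of $-\nabla^{\scrR}_{z^2\partial_z}$, split into two groups, and the spectrum of $E\star_{f(\tau)}$ are separated into three parallel bands relative to a direction $e^{\iu\phi}$; (b) lift the formal decomposition analytically over a sector of angle larger than $\pi$; and (c) match the resulting decomposition of flat sections with a semiorthogonal decomposition of $K(\frX_+)$ through the $\hGamma$-integral structure.

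For (a) and (b) I would argue as in the proof of Theorem~\ref{thm:sectorial_Orlov}. Near a point where the exponents are pairwise distinct they form a local coordinate system on a Frobenius manifold, and by isomonodromic deformation one may transport the connection along a path in the universal cover of the configuration space $C_N$ to any prescribed configuration; one therefore chooses the path so that at the endpoint all eigenvalues of $-\nabla^{\scrR_1}_{z^2\partial_z}$ satisfy $\Im(e^{-\iu\phi}\bu)>l_1$, all eigenvalues of $E\star_{f(\tau)}$ lie in $l_2<\Im(e^{-\iu\phi}\bu)<l_1$, and all eigenvalues of $-\nabla^{\scrR_2}_{z^2\partial_z}$ satisfy $\Im(e^{-\iu\phi}\bu)<l_2$, where $\scrR_1$, $\scrR_2$ collect the summands of $\scrR$ landing above, respectively below, the band of $\frX_-$. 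The Hukuhara--Turrittin theorem (Proposition~\ref{prop:Hukuhara-Turrittin}, together with the Levelt--Turrittin version cited there for coalescing exponents) then produces the analytic lift $\Phi^+$ over $\pi^*\QDMan(\frX_+)|_{W\times I_\phi}$, unique because $I_\phi$ has angle $>\pi$; uniqueness forces it to be block-diagonal for the three bands, giving~\eqref{eq:analytic_lift_conj}. The compatibility of the lift with the pairing is Remark~\ref{rem:pairing_analytic_lift}, and with $z$-sesquilinearity it yields the orthogonality, hence that the associated inclusion $K(\frX_-)\hookrightarrow K(\frX_+)$ respects the Euler pairing via~\eqref{eq:Euler_Poincare}.

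Step (c) is the main obstacle, and it splits into two problems. The analytic lift automatically decomposes the local system of flat sections, hence, through the $\hGamma$-integral structure (Definition~\ref{def:s}), the complexified group $K(\frX_+)\otimes\C = K_1^\C\oplus(\text{middle})\oplus K_2^\C$; one must show this is defined over $\Z$ and that the middle summand is $\varphi^*K(\frX_-)$. For the middle summand I would establish a functoriality statement generalizing Theorem~\ref{thm:functoriality}: the middle piece of~\eqref{eq:analytic_lift_conj} is by construction the analytic lift of $f^*\overline{\QDMan}(\frX_-)$, and one wants its flat sections to be exactly $\frs_{\varphi^*V}$, $V\in K(\frX_-)$. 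Using the weak factorization theorem one reduces $\varphi$ to a composition of blowups and blowdowns along smooth centres, so it suffices to treat a single blowup, where the compatibility of the $\hGamma$-integral structure with $\varphi^*$ should follow from Hirzebruch--Riemann--Roch with the Gamma class and the monodromy-equivariance~\eqref{eq:Galois_line_tensor}, exactly as Lemma~\ref{lem:diag_pullback} and Theorem~\ref{thm:inclusion} do in the toric case. For the integrality and the identification with the Orlov-type semiorthogonal decomposition $D^b(\frX_+) = \langle\mathcal A,\varphi^*D^b(\frX_-),\mathcal B\rangle$ (known by Orlov, Bondal--Orlov, Kawamata and Ballard--Favero--Katzarkov in wide generality), one expects the asymptotic basis at $\tau_+$ to consist of Gamma-classes of objects of a semiorthogonal collection adapted to this decomposition --- a Gamma-conjecture-II type statement, supplied in the toric case by Theorem~\ref{thm:decomp_Lef_K} via the explicit Lefschetz-thimble computation.

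The hard part is therefore twofold and essentially categorical-to-analytic: on one hand Conjecture~\ref{conj:formal} itself, which in the toric situation comes from the explicit global Landau--Ginzburg mirror and the Brieskorn-module splitting (Corollary~\ref{cor:direct_summand}) but in general seems to demand either such a mirror or the identification of the quantum D-module with the negative cyclic homology of the Fukaya category; on the other hand, granting the formal decomposition, the matching of the analytic lift with the Orlov/Kawamata semiorthogonal decomposition of $K(\frX_+)$, which amounts to a Riemann--Hilbert problem with Stokes data prescribed from the categorical side. A plausible intermediate route is a toric degeneration --- degenerate $\frX_\pm$ and $\varphi$ to toric data, apply Theorems~\ref{thm:functoriality}, \ref{thm:decomp_Lef_K} and~\ref{thm:sectorial_Orlov} there, and deform the decomposition back --- but controlling the Stokes structure along the degeneration is itself a substantial difficulty.
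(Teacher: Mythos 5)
This statement is a conjecture: the paper offers no proof of it, only evidence in the toric case (Theorems~\ref{thm:functoriality}, \ref{thm:decomp_Lef_K} and~\ref{thm:sectorial_Orlov}), so there is no argument of the paper's to compare yours against. Your proposal is an honest strategy sketch rather than a proof, and you say so yourself; but it is worth being precise about where the genuine gaps are, because at several points the toric tools you invoke do not transfer. First, the band-separation in Part~(1) is not something you can simply ``arrange'' by isomonodromic deformation: moving the canonical coordinates $(\bu_1,\dots,\bu_N)$ along the universal cover of the configuration space only makes sense away from the analytic hypersurfaces where the isomonodromic extension degenerates (as in the proof of Theorem~\ref{thm:sectorial_Orlov}), and even then you must guarantee that the endpoint is reached by a path along which the formal decomposition of Conjecture~\ref{conj:formal} persists and along which the spectra of $-\nabla^{\scrR}_{z^2\partial_z}$ and of $E\star_{f(\tau)}$ actually separate into three disjoint bands. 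In the toric case this separation is proved by explicit estimates on critical values of the mirror Landau--Ginzburg potential (Proposition~\ref{prop:critv_on_curve} and Lemma~\ref{lem:slide}(3)); without a mirror or some replacement for those estimates, the existence of such a $\tau_+$ and $\phi$ is precisely part of the conjecture's content, not a consequence of Hukuhara--Turrittin. Relatedly, the sectorial lift and its uniqueness in Part~(2) need at least an exponential-type hypothesis off the semisimple locus (Proposition~\ref{prop:Hukuhara-Turrittin} as stated is for semisimple points; compare the hypothesis in Proposition~\ref{prop:RH}), which you do not address.

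Second, your route to Part~(3) does not close. The weak factorization reduction to single blowups does not obviously help: the intermediate varieties need not respect the discrepancy ordering $K_{\frX_+}>K_{\frX_-}$, and, more fundamentally, the identification of the middle block with $\varphi^*K(\frX_-)$ in the paper's toric theorems is obtained through the mirror-theoretic identification of the $\hGamma$-integral structure with the lattice of Lefschetz thimbles (Theorems~\ref{thm:main_intpaper} and~\ref{thm:main_intpaper_2}, Theorem~\ref{thm:inclusion}, Lemma~\ref{lem:diag_pullback}), for which no general analogue is available; a Grothendieck--Riemann--Roch computation with the Gamma class gives compatibility of $\varphi^*$ with Chern characters but does not by itself show that the flat sections singled out by the analytic lift are the $\frs_{\varphi^*V}$, nor that the complexified decomposition of the local system is defined over $K$-theory integrally. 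That last step is a Gamma-conjecture-II type statement, which is open in general. So the proposal correctly reproduces the architecture of the paper's toric evidence, but each of the three parts of the conjecture still rests on unproved inputs (Conjecture~\ref{conj:formal} itself, the spectral band separation, and the integral/functorial identification of the analytic lift), and the suggested shortcuts (isomonodromic transport, weak factorization, toric degeneration) each hide exactly the difficulty they are meant to remove.
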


\begin{Remark}We expect that the semiorthogonal decomposition~\eqref{eq:SOD_K_conj} arises naturally from geo\-metry. In our setting, for example, we could hope that there is a semiorthogonal decomposition of the bounded derived category
of $\frX_+$ of the form (see \cite[Conjecture~4.3.7]{BFK:VGIT}):
\begin{equation}\label{eq:SOD_conj}
D^b(\frX_+) = \big\langle \scrC_1, D^b(\frX_-), \scrC_2\big\rangle.
\end{equation}
When $\frX_\pm$ are toric stacks arising from the variation of
GIT quotients as in Section~\ref{sec:discrepant},
Ballard--Favero--Katzarkov \cite[Theorem~5.2.1]{BFK:VGIT}
showed that such a~semiorthogonal decomposition exists
(see also \cite{Halpern-Leistner:GIT, Kawamata:log_crepant}).
A semiorthogonal summand of $D^b(\frX_+)$ gives a~$K$-motive (see \cite[Section~4]{Gorchinskiy-Orlov:geometric_phantom}),
which in turn defines a direct summand of the topological $K$-group
of $\frX_+$\footnote{I thank Sergey Galkin for this remark and
pointing to the reference~\cite{Gorchinskiy-Orlov:geometric_phantom}.}.
We expect that~\eqref{eq:SOD_K_conj} arises from~\eqref{eq:SOD_conj} in this way (cf.~\cite{Kuznetsov:HH_SOD} for the discussion on Hochschild homology). On the other hand, in view of the deformation invariance of Gromov--Witten invariants, it is more natural to state conjectures in terms of topological $K$-groups instead of derived categories.
\end{Remark}

\subsection{Functoriality and Riemann--Hilbert problem}\label{subsec:functoriality_RH}
We discuss how to recover the quantum cohomology of $\frX_-$
from the quantum cohomology of $\frX_+$,
assuming Conjecture~\ref{conj:lift}.
This involves solving a Riemann--Hilbert boundary value problem.
\begin{Proposition}\label{prop:RH}
Assume that the quantum D-modules $\QDMan(\frX_\pm)$
are of exponential type.
Suppose that we are given the analytic continuation of
the quantum D-module $\QDMan(\frX_+)$
to a~neighbourhood $W$ of $\tau_+\in H^*_\CR(\frX_+)$
and its formal decomposition
\[
\overline{\QDMan}(\frX_+)|_W = \scrQ\oplus \scrR
\]
corresponding to the decomposition~\eqref{eq:formal_decomp_conj}
for which Conjecture~$\ref{conj:lift}$ holds. Then we can recover the map $f$ in Conjecture~$\ref{conj:lift}$
and the quantum D-module $f^*\QDMan(\frX_-)$ of $\frX_-$ $($trivialized
as a~vector bundle over $W\times \C)$ together with an isomorphism
$f^*\overline{\QDMan}(\frX_-)|_W \cong \scrQ$.
\end{Proposition}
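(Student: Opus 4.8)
The plan is to reconstruct, in order, the map $f$, the analytic-in-$z$ connection $f^*\QDMan(\frX_-)$, and the gluing isomorphism, using the residual ($z=0$) structure of $\scrQ$ together with the Stokes structure that $\scrQ$ inherits from $\QDMan(\frX_+)$. First I would recover $f$ from the restriction of the formal decomposition to $z=0$. Since $-\nabla_{z^2\partial_z}|_{z=0}$ on $\QDMan(\frX_+)$ is the Euler multiplication $E\star$ (see \eqref{eq:conn_z}) and, under the identification $\scrQ\cong f^*\overline{\QDMan}(\frX_-)$ supplied by Conjecture~\ref{conj:formal}, the pair $(\scrQ|_{z=0},\,z\nabla^\scrQ|_{z=0})$ is $f^*$ of the quantum multiplication on $H^*_\CR(\frX_-)$ together with its unit section $f^*\unit$ (intrinsically characterised by $z\nabla^\scrQ_X(f^*\unit)|_{z=0}=df(X)$ under the self-identification of the bundle with the tangent bundle), one reads off the differential $df$, and hence, by integration along the prescribed path, the map $f$ itself; its initial value $\tau_-=f(\tau_+)$ is then pinned down by matching $-\nabla^\scrQ_{z^2\partial_z}|_{z=0}$ at $\tau_+$ with $E\star_{\tau_-}$. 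The key point is that, since $\QDMan(\frX_-)$ is assumed known, the germ of its quantum D-module at a point, regarded inside the known global quantum D-module of $\frX_-$, determines that point. (When the big quantum cohomology of $\frX_-$ is generically semisimple, one may equivalently read $f$ off directly on the dense locus where the eigenvalues of $-\nabla^\scrQ_{z^2\partial_z}|_{z=0}$ are pairwise distinct: these eigenvalues are the canonical coordinates of the Frobenius manifold $H^*_\CR(\frX_-)$, and $f$ extends by analyticity.)

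With $f$ in hand, $f^*\QDMan(\frX_-)$ is of course the pull-back of the known quantum D-module of $\frX_-$; the content of the statement is that this analytic-in-$z$ object is already determined by the data we started from, as the solution of a Riemann--Hilbert (Birkhoff) problem for the formal summand $\scrQ$. The input is the Stokes structure. By the exponential-type hypothesis and Conjecture~\ref{conj:lift}(2), over the sector $I_\phi$ and over the opposite sector $I_{\phi+\pi}$ the formal decomposition lifts to analytic decompositions \eqref{eq:analytic_lift_conj} of $\pi^*\QDMan(\frX_+)$; the imaginary-part separation of Conjecture~\ref{conj:lift}(1)(a)--(c) together with the semiorthogonality \eqref{eq:semiorthogonal} of the associated marked reflection systems (Section~\ref{subsec:MRS}) ensures that the sub-sum corresponding to the eigenvalues of the $\frX_-$-block defines a well-defined analytic object over each sector, asymptotic to $\scrQ$. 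Tracking how the asymptotic bases over the two sectors are related by the mutations \eqref{eq:right_mutation}--\eqref{eq:left_mutation} of Section~\ref{subsec:MRS} --- which are controlled by the critical-value configuration and the Euler pairings $\chi(V_i,V_j)$, the latter coinciding for the $\frX_-$-block with those of $\frX_-$ by Conjecture~\ref{conj:lift}(3) --- one equips $\scrQ$ with a Stokes-filtered structure over $S^1$; by the Riemann--Hilbert correspondence for irregular meromorphic connections (Deligne--Malgrange--Sabbah), this Stokes-filtered structure together with the formal type $\scrQ$ determines a unique analytic meromorphic connection on $W\times\C$ restricting formally to $\scrQ$, which by Conjecture~\ref{conj:lift} is $f^*\QDMan(\frX_-)$.

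It remains to recover the gluing isomorphism. An isomorphism $f^*\overline{\QDMan}(\frX_-)|_W\cong\scrQ$ of formal D-modules respecting the connection and the $z$-sesquilinear pairing exists by Conjecture~\ref{conj:formal}; any two such differ by an automorphism of $\scrQ$ commuting with $\nabla^\scrQ$ and $P^\scrQ$, and by Proposition~\ref{prop:formal_decomposition} (applied to the exponential summands of $\scrQ$) these form a finite group $\diag(\pm1,\dots,\pm1)$, so the isomorphism is recovered up to this harmless discrete ambiguity, which one may further rigidify by the pairing and a leading-coefficient normalisation.

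The hard part will be Step~2. The summand $\varphi^*K(\frX_-)\subset K(\frX_+)$ appearing in \eqref{eq:SOD_K_conj} is \emph{not} invariant under the monodromy of $\QDMan(\frX_+)$ around $z=0$ (under the Serre-type monodromy the semiorthogonal decomposition gets cyclically mutated), so one cannot simply take a global flat sub-bundle over $W\times\C^\times$; the reconstruction must instead pass through the Stokes-filtered local system attached to $\scrQ$, and the real work is to verify that the filtration and the Stokes factors cut out from $\QDMan(\frX_+)$ are self-consistent over all of $S^1$ --- this is precisely where the interplay of Conjecture~\ref{conj:lift}(1) with the mutation rules is used --- before invoking the Riemann--Hilbert correspondence. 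A subsidiary point is Step~1 at a non-semisimple $\tau_-$, which is handled by a limiting argument from the semisimple locus.
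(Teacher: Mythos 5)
There is a genuine gap, and it sits exactly where the paper's proof does its real work. Your Step~1 claims that $f$ can be read off from the residual data of $\scrQ$ at $z=0$ by ``intrinsically'' locating the unit section $f^*\unit$; but the formal isomorphism $\scrQ\cong f^*\overline{\QDMan}(\frX_-)$ of Conjecture~\ref{conj:formal} is only required to respect the connection and the pairing, not to match unit sections, and since $\rank\scrQ=\dim H^*_\CR(\frX_-)<\dim W$ there is no ``self-identification of the bundle with the tangent bundle'' available on the $\frX_-$ side. The characterization $z\nabla^\scrQ_X(f^*\unit)|_{z=0}=df(X)$ is circular: it presupposes both the unknown $f$ and an identification of the fibres of $\scrQ|_{z=0}$ with $H^*_\CR(\frX_-)$. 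Nor does the semisimple fallback work: the eigenvalues of $-\nabla^\scrQ_{z^2\partial_z}|_{z=0}$ give canonical coordinates only locally and do not single out a point of the analytically continued Frobenius manifold of $\frX_-$ (many points, on many sheets, share the same spectrum). In the paper, $f$ is recovered \emph{last}, not first: one reconstructs the calibration $L(f(\tau),z)$ and then reads off $f(\tau)$ from $L(f(\tau),z)^{-1}\unit=\unit+f(\tau)/z+O\big(z^{-2}\big)$. The remark following the paper's proof stresses that this works only because the $\hGamma$-integral structure ``remembers'' the large radius normalization; the residual $z=0$ data alone cannot determine $f$ or the calibration.

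Relatedly, your Step~2 stops too early. The mutation-system/Stokes argument (which is essentially the first half of the paper's proof, phrased via Sanda--Shamoto's mutation systems rather than two opposite sectors) only recovers the analytic \emph{germ} $\scrQan$ at $z=0$. The proposition, however, asks for $f^*\QDMan(\frX_-)$ \emph{trivialized over} $W\times\C$, and saying it ``is of course the pull-back of the known quantum D-module'' begs the question once the recovery of $f$ has failed. The missing step is the Birkhoff-type gluing at $z=\infty$: one glues $\scrQan$ to the trivial bundle with connection $\nabla^{(\infty)}_{z\partial_z}=z\partial_z-z^{-1}c_1(\frX_-)\cup{}+\mu$ by sending a flat section $s\in V_\scrQ$ to $(2\pi)^{-n/2}z^{-\mu}z^{c_1(\frX_-)}\big(\hGamma_{\frX_-}\cup(2\pi\iu)^{\deg_0/2}\inv^*\tch(\Phi(s))\big)$, where $\Phi\colon V_\scrQ\cong K(\frX_-)\otimes\C$ comes from Conjecture~\ref{conj:lift}(3); the glued bundle over $\PP^1$ is then identified with the trivial extension of $\QDMan(\frX_-)_{f(\tau)}$, which yields triviality, the global trivialization, and the fundamental solution $L(f(\tau),z)$. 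This is precisely where the $K$-theoretic input of Conjecture~\ref{conj:lift}(3) is used, whereas in your argument it only enters to compare Euler pairings for mutations. Finally, a smaller point: the claimed $\diag(\pm1,\dots,\pm1)$ ambiguity of the formal isomorphism relies on Proposition~\ref{prop:formal_decomposition}, which is a semisimple statement; for general exponential-type modules the regular singular pieces $\cF_\bu$ can have rank $>1$ and a larger automorphism group, so this normalization argument also needs the $\hGamma$/calibration input rather than a sign count.
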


Before giving a proof of this proposition, we review the exponential type assumption
(see \cite[Definition~2.12]{KKP:Hodge}). This was originally introduced by Hertling--Sevenheck \cite[Definition~8.1]{Hertling-Sevenheck:nilpotent}
under the name ``require no ramification''. We also review a mutation system of Sanda--Shamoto \cite[Definition~2.30]{Sanda-Shamoto}.

The quantum D-module of $\frX = \frX_\pm$ is of exponential type
if for each $\tau\in H^*_\CR(\frX)$,
we have a formal decomposition of the
quantum D-module $\QDMan(\frX)_\tau :=
\QDMan(\frX)|_{\{\tau\}\times \C}$
(see \cite[Lemma~8.2]{Hertling-Sevenheck:nilpotent}):
\begin{equation} \label{eq:formal_decomp_QDM_general}
\overline{\QDMan}(\frX)_{\tau}
\cong \bigoplus_{\bu\in \Spec(E\star_\tau)} \big( e^{\bu/z}
\otimes \cF_\bu \big)\otimes_{\C\{z\}} \C[\![z]\!],
\end{equation}
where $\Spec(E\star_\tau)$ denotes the set of
(mutually distinct) eigenvalues of $E\star_\tau$,
$e^{\bu/z}$ denotes the rank one connection
$(\C\{z\}, d+d(\bu/z))$ and
$\cF_\bu$ is a free $\C\{z\}$-module equipped with
a regular singular meromorphic connection.
The decomposition is (automatically) orthogonal with respect to
the pairing $P$, and induces a $z$-sesquilinear pairing $P_\bu$
on each $\cF_\bu$;
$\cF_\bu$ is called the \emph{regular singular piece} in
\cite[Lemma 8.2]{Hertling-Sevenheck:nilpotent}.
Moreover, the Hukuhara--Turrittin theorem
(see \cite[Lemma 8.3]{Hertling-Sevenheck:nilpotent} in this context)
implies that,
for each phase $\phi$ admissible for $\Spec(E\star_\tau)$,
the formal decomposition admits a unique analytic lift
over the sector $I_\phi = \big\{\big(r,e^{\iu\theta}\big)\in \tCC \colon
|\theta -\phi|<\frac{\pi}{2}+\epsilon\big\}$ (for some $\epsilon>0$)
\begin{equation}\label{eq:analytic_lift_QDM_general}
\pi^*\left(\QDMan(\frX)_\tau\right)\big|_{I_\phi}
\cong \bigoplus_{\bu\in \Spec(E\star_\tau)}
\pi^*(e^{\bu/z} \otimes \cF_\bu)\big|_{I_\phi},
\end{equation}
where $\pi\colon \tCC\to \C$ is the oriented real blowup.
In Section~\ref{subsec:Hukuhara-Turrittin},
we discussed in details the special case of these decompositions
where the quantum cohomology is semisimple.

The analytic germ at $z=0$
of the quantum D-module $\QDMan(\frX)_\tau$
can be determined by the formal decomposition~\eqref{eq:formal_decomp_QDM_general} and the Stokes data.
Sanda--Shamoto \cite[Definition~2.30]{Sanda-Shamoto}
encoded the Stokes data in linear-algebraic data
which they called a \emph{mutation system}.
This can be viewed as a generalization of a marked reflection system
in Section~\ref{subsec:MRS}.
Let $V$ denote the space of flat sections of $\QDMan(\frX)_\tau$
over the sector $I_\phi^\times=
\big\{z\in \C^\times\colon |\arg z -\phi|<\frac{\pi}{2}+\epsilon\big\}$.
We define a pairing $[\cdot,\cdot)$ on $V$ by (cf.~Section~\ref{subsec:MRS})
\[
[s_1,s_2) = P\big(s_1\big(e^{-\pi\iu} z\big), s_2(z)\big),
\]
where $s_1\big(e^{-\pi\iu}z\big)$ denotes the analytic continuation
of $s_1(z)$ along the path $[0,\pi]\ni \theta \mapsto e^{-\iu\theta} z$.
The mutation system (see \cite[Proposition~2.5, Section~2.7]{Sanda-Shamoto})
for $\QDMan(\frX)_\tau$ associated with the admissible
phase $\phi$ is given by the data\footnote{We omitted the data of a labelling $\tau$ in
\cite[Definition~2.30]{Sanda-Shamoto} since it can be recovered from the phase~$\phi$.}
\begin{itemize}\itemsep=0pt
\item the tuple $(V,[\cdot,\cdot))$ of a vector space and a pairing;
\item the decomposition $V \cong \bigoplus\limits_{\bu\in \Spec(E\star_\tau)}
V_\bu$ induced by the analytic lift~\eqref{eq:analytic_lift_QDM_general},
where $V_\bu$ is the space of flat sections of $e^{\bu/z}\otimes \cF_\bu$
over the sector $I_\phi^\times$
\end{itemize}
satisfying the semiorthogonality:
\[
[v_1,v_2)=0 \qquad \text{if $v_1\in V_{\bu_1}$, $v_2\in V_{\bu_2}$
and $\Im\big(e^{-\iu\phi}\bu_1\big) < \Im\big(e^{-\iu\phi}\bu_2\big)$}.
\]
The pairing $[\cdot,\cdot)$ restricted to $V_\bu$ is induced by
the pairing $P_\bu$ on $\cF_\bu$
\cite[Lemma~8.4]{Hertling-Sevenheck:nilpotent}.
It was shown \cite[Proposition~2.5, Section~2.5]{Sanda-Shamoto}
that a mutation system is equivalent to Stokes data
(or a~Stokes filtered local system)
equipped with a pairing.
By the Riemann--Hilbert correspondence (see \cite[Section~2.7]{Sanda-Shamoto}
in this context), the formal structure~\eqref{eq:formal_decomp_QDM_general}
and the mutation system together recover the analytic
germ at $z=0$ of $\QDM(\frX)_\tau$.

\begin{proof}[Proof of Proposition~\ref{prop:RH}]
Fix $\tau \in W$. We may assume that~$\phi$
in Conjecture~\ref{conj:lift} is admissible for the set
$\Spec(E\star_\tau)$ of eigenvalues of $E\star_\tau$
on $H^*_\CR(\frX_+)$, by perturbing $\phi$ if necessary.
Under the assumption that $\QDMan(\frX_+)_\tau$ is of exponential type,
the summands $\scrQ,\scrR_1,\scrR_2$ of $\overline{\QDMan}(\frX_+)_\tau$
admit decompositions similar to \eqref{eq:formal_decomp_QDM_general}.
We write $\Spec(\scrQ)$ (resp.~$\Spec(\scrR_i)$) for the set of
the eigenvalues of the operators $-\nabla_{z^2\partial_z}$
on $\scrQ|_{z=0}$ (resp.~on $\scrR_i|_{z=0}$).
By Conjecture \ref{conj:lift}(1), the sets
$\Spec(\scrQ)$, $\Spec(\scrR_1)$, $\Spec(\scrR_2)$
are mutually distinct; therefore
$\scrQ$, $\scrR_1$, $\scrR_2$ are partial sums of the right-hand
side of the formal decomposition \eqref{eq:formal_decomp_QDM_general}
for $\frX=\frX_+$.
Writing
$V \cong \bigoplus\limits_{\bu\in \Spec(E\star_\tau)} V_\bu$
for the mutation system of $\QDMan(\frX_+)_\tau$,
we can decompose $V$ as
\begin{equation}
\label{eq:decomp_mutation_system}
V \cong V_{\scrR_1} \oplus V_{\scrQ} \oplus V_{\scrR_2}
\end{equation}
with $V_\scrQ = \bigoplus\limits_{\bu\in \Spec(\scrQ)} V_\bu$,
$V_{\scrR_i} = \bigoplus\limits_{\bu\in \Spec(\scrR_i)} V_\bu$.
By Conjecture \ref{conj:lift}(2), the decomposition
\[
V_\scrQ = \bigoplus_{\bu\in \Spec(\scrQ)} V_\bu
\]
gives the mutation system for $\QDMan(\frX_-)_{f(\tau)}$.
The $\hGamma$-integral structure identifies
$(V,[\cdot,\cdot))$ with $(K(\frX_+)\otimes \C,\chi)$
and the decomposition~\eqref{eq:decomp_mutation_system}
is identified with that~\eqref{eq:SOD_K_conj} of $K(\frX_+)$
by Conjecture~\ref{conj:lift}(3). Then we obtain an isomorphism
\begin{equation}\label{eq:VQ_K}
\Phi\colon V_\scrQ \cong K(\frX_-)\otimes \C.
\end{equation}
Since the inclusion $K(\frX_-) \to K(\frX_+)$ respects
the Euler pairing, we see that the restriction of the pairing $[\cdot,\cdot)$
on~$V$ to~$V_\scrQ$ coincides with the pairing on~$V_\scrQ$
as a mutation system for~$\QDMan(\frX_-)_{f(\tau)}$.
Therefore the mutation system for $\QDMan(\frX_-)_{f(\tau)}$
can be recovered as a~summand of the mutation system
for $\QDMan(\frX_+)_\tau$.
Thus we recover the analytic germ at $z=0$ of
$\QDMan(\frX_-)_{f(\tau)}$ by the Riemann--Hilbert correspondence.
We write $\scrQan$ for the germ so reconstructed.

We extend $\scrQan$ to the trivial vector bundle
$\QDMan(\frX_-)_{f(\tau)}$ over $\C_z$
and construct the fundamental solution
$L(f(\tau),z)$ for~$\frX_-$ (see Section~\ref{subsec:integral})
and the map~$f$.
Consider the trivial bundle
$Q^{(\infty)} := H^*(\frX_-)\times \big(\PP^1\setminus \{0\}\big)\to \big(\PP^1\setminus \{0\}\big)$
equipped with the meromorphic connection
\[
\nabla^{(\infty)}_{z\partial_z} = z\parfrac{}{z}
- \frac{c_1(\frX_-)\cup}{z}+ \mu.
\]
This has $z^{-\mu}z^{c_1(\frX)}$ as a fundamental solution
and the facts recalled in Section~\ref{subsec:integral} imply that
the quantum connection on $\{f(\tau)\}\times \C_z$
is gauge equivalent to $\nabla^{(\infty)}$
via the gauge transformation by $L(f(\tau),z)$
(which is regular and the identity at $z=\infty$).
We glue this trivial bundle $Q^{(\infty)}$
with the germ $\scrQan$ of vector bundle at $z=0$
to get a vector bundle $\hQ$ over $\PP^1$.
The gluing is given by sending a flat section $s\in V_\scrQ$
over the sector $I_\phi^\times$ to
the flat section for $\nabla^{(\infty)}$:
\[
(2\pi)^{-n/2}z^{-\mu} z^{c_1(\frX_-)}
\big( \hGamma_{\frX_-}\cdot (2\pi\iu)^{\deg_0/2} \inv^*\tch(\Phi(s))\big)
\]
with $n=\dim\frX_-$, where $\Phi$ is the isomorphism in
\eqref{eq:VQ_K}.
In view of the definition of the $\hGamma$-integral
structure, this glued bundle $\hQ$ must be isomorphic to the trivial
extension of $\QDMan(\frX_-)_{f(\tau)}$ to $\PP^1$
(with respect to the given trivialization).
In particular, $\hQ$ is trivial, and the identification
$Q^{(\infty)}|_{\infty} \cong H^*_\CR(\frX_-)$ at infinity
induces a
global trivialization $\hQ \cong H^*_\CR(\frX_-)\times \PP^1$.
The trivial bundle $\hQ$ equipped with a meromorphic connection
gives the quantum D-module $\QDMan(\frX_-)_{f(\tau)}$.
Moreover, the isomorphism $\hQ|_{\PP^1\setminus \{0\}}
\cong Q^{(\infty)}$ written in the respective trivializations
gives the fundamental solution $L(f(\tau),z)$ as an
$\End(H^*_\CR(\frX_-))$-valued function.
Varying $\tau$ in $W$, we recover the full quantum connection
for $f^*\QDMan(\frX_-)$ from $L(f(\tau),z)$.
We also recover $f(\tau)$ from the expansion
\[
L(f(\tau),z)^{-1} 1 = 1 + \frac{f(\tau)}{z} + O\big(z^{-2}\big).
\]
The proposition is proved.
\end{proof}

\begin{Remark}Choosing fundamental solutions for regular singular pieces,
we can formulate the above reconstruction in terms of a~Riemann--Hilbert boundary value problem for the triple $(Y,Y^-,L)$ of fundamental solutions, where $Y$ is a~fundamental solution on the sector $I_\phi^\times$ with prescribed asymptotics, $Y^-$ is a fundamental
solution on the opposite sector $I_{\phi+\pi}^\times$
with prescribed asymptotics and~$L$ is the fundamental solution around $z=\infty$.
In the semisimple case, this was explained in details by
Dubrovin \cite[Lecture~4]{Dubrovin:Painleve}
and the method there applies to the situation of our main
Theorem~\ref{thm:sectorial_Orlov}. In the semisimple case,
the formal structure~\eqref{eq:formal_decomp_QDM_general}
is determined only by the eigenvalues of $E\star_\tau$,
and therefore the asymptotic basis (or, if any, the corresponding
exceptional collection) reconstructs the quantum D-module.
\end{Remark}
\begin{Remark}When a candidate formal decomposition $\overline{\QDMan}(\frX_+)_\tau
\cong \scrR_1\oplus \scrQ \oplus \scrR_2$ and a
decomposition~\eqref{eq:SOD_K_conj} of the $K$-group
(corresponding to the analytic lift via the $\hGamma$-integral structure)
are given, what is non-trivial in the above reconstruction
is the triviality of the glued bundle $\hQ$.
\end{Remark}
\begin{Remark}The fundamental solution $L(\tau,z)$ in Section~\ref{subsec:integral}
is called a \emph{calibration} in the theory of Frobenius manifolds.
A calibration of a Frobenius manifold is not unique in general and
its ambiguity was discussed in \cite[Lemma~4.1]{Dubrovin:Painleve}.
The above procedure recovers, not only (the germ of) the quantum
cohomology Frobenius manifold of $\frX_-$ (if~$f$ is submersive), but also its calibration.\footnote{I thank Vasily Golyshev for asking me a question which led me to this observation.}
Since the $\hGamma$-integral structure was normalized at the large radius limit point, it somehow `remembers' the limit point.
\end{Remark}

\begin{Remark}Conjecture \ref{conj:lift} is closely related to Dubrovin's conjecture
\cite[Conjecture~4.2.2]{Dubrovin:ICM}
(or Gamma conjecture \cite[Section~4.6]{GGI:gammagrass} or
Dubrovin-type conjecture
\cite[Definition 5.2]{Sanda-Shamoto}).
It can be viewed as a relative version of these conjectures.
\end{Remark}

\appendix
\section{The Brieskorn module in the weak Fano case}\label{append:Bri_weak_Fano}

We discuss the coherence and the locally-freeness
of the equivariant Brieskorn module $\Briequiv(F)$ mirror to
the small quantum cohomology of a weak-Fano smooth toric DM stack $\frX$,
near the large radius limit point $0_\bSigma$.
We stated this in Proposition \ref{prop:Bri_weak_Fano}.
We can describe the Brieskorn module
as a certain variant of the GKZ system (see \cite[Section~5.2]{CCIT:MS}).
The coherence of the relevant GKZ system near the large radius limit point
has been discussed by the author \cite[Proposition~4.4]{Iritani:Integral}
(on a neighbourhood of $0_\bSigma$ with the logarithmic locus deleted),
Reichelt--Sevenheck \cite[Theorem~3.7]{Reichelt-Sevenheck:logFrob}
(on a neighbourhood of $0_\bSigma$ for toric manifolds) and
Mann--Reichelt \cite[Theorem~4.10]{Mann-Reichelt}
(similarly for toric orbifolds).
We adapt the argument of~\cite{Mann-Reichelt} to our Brieskorn module
with minor modifications.
Note that we impose only minimal assumptions on~$\frX$ and~$S$;
we do not assume compactness of $\frX$
nor a generation condition for $S$.
We also deal with the equivariant case. In this sense our result
is slightly more general than \cite{Mann-Reichelt}.

Let $\frX=\frX_\bSigma$
be a semiprojective smooth toric DM stack from Section~\ref{subsubsec:toric_stacks}
which is weak Fano, i.e., $-K_\frX$ is nef.
Let $\Delta\subset \bN_\R$ denote the fan polytope, that is, the
union of simplices spanned by $\{0\} \cup \{b
\in R(\bSigma)\colon \overline{b} \in \sigma\}$ over all maximal cones $\sigma$
of $\bSigma$. The weak Fano assumption implies that
$\Delta$ is a convex polytope.
We assume that the set $S$ (which is used to construct
the LG mirror) is contained in the fan polytope:
\begin{equation}
\label{eq:S_is_in_Delta}
S \subset \bN \cap \Delta.
\end{equation}
This means that the base space $\cM$ of the LG model
corresponds to the \emph{small quantum cohomology locus}
of $\frX$ in Section~\ref{subsec:sectorial_decomp_Bri}.
The definition of the Brieskorn module on
the affine chart $\Spec \C[\Laa(\bSigma)_+]$ does not
rely on Assumption \ref{assump:S_generates_N} and
we do not need this assumption in the following discussion.

The equivariant Brieskorn module $\Briequiv(F)$ on the affine chart
$\Spec \C[\Laa(\bSigma)_+]$
is given by the module
\[
\sfG := \C[\OO(\bSigma)_+][z].
\]
Recall from Section~\ref{subsec:Bri} that, by choosing
a splitting $\LL_\C^\star \to \big(\C^S\big)^\star$, $\xi \mapsto
\hxi$ of \eqref{eq:ext_divseq}, $\sfG$ has the
structure of a module over
the ring of differential operators:
\[
\sfD := R_\T[\Laa(\bSigma)_+][z] \left\langle z \xi q \parfrac{}{q}\colon \xi\in \LL^\star_\C
\right\rangle,
\]
where $\chi_i\in R_\T$ acts by
$z x_i \parfrac{}{x_i} + x_i\parfrac{F}{x_i}$
and $z \xi q\parfrac{}{q}$ acts by
$z \hxi u \parfrac{}{u} + \hxi u\parfrac{F}{u}$.
For convenience, we choose co-ordinates
$q_1,\dots,q_m\in
\C[\Laa(\bSigma)]$ corresponding to a $\Z$-basis of $\Laa(\bSigma)$
and write $\theta_i = q_i \parfrac{}{q_i}$;
then we have
\[
\sfD = R_\T[\Laa(\bSigma)_+][z]\langle
z\theta_1,\dots,z\theta_m\rangle.
\]
Let $\scrG$ and $\scrD$ denote the sheaves on
\[
B := \Spec(R_\T[\Laa(\bSigma)_+][z])
= \Spec(\C[\Laa(\bSigma)_+]) \times \Lie\T \times \C_z
\]
corresponding to $\sfG$ and $\sfD$ respectively.
The main result in the appendix is the following.
\begin{Proposition}\label{prop:weakFano_Bri_locallyfree}
Suppose, as above, that $\frX$ is a weak Fano semiprojective toric stack
and that~$S$ satisfies~\eqref{eq:S_is_in_Delta}.
There exists a Zariski open subset $U$ of $\Spec(\C[\Laa(\bSigma)_+])$
containing the large radius limit point $0_\bSigma$ such that
$\scrG|_{U\times \Lie\T\times \C_z}$ is a locally free
coherent $\cO_{U\times \Lie \T \times \C_z}$-module of rank $\dim H^*_\CR(\frX)$.
\end{Proposition}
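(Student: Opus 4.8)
The plan is to follow the strategy of Mann--Reichelt \cite{Mann-Reichelt} and Reichelt--Sevenheck \cite{Reichelt-Sevenheck:logFrob}, adapting it to our equivariant Brieskorn module and the weaker hypotheses on $\frX$ and $S$. The module $\sfG = \C[\OO(\bSigma)_+][z]$ is a quotient of the ``universal'' GKZ-type $\sfD$-module presented by toric relations (box relations among the $u^{(\lambda,v)}$) together with Euler-type relations coming from the action of $\sfD$; the first step would be to write down this presentation explicitly, using the decomposition $\C[\OO(\bSigma)_+] = \bigoplus_{v\in\bN\cap\Pi} \C[\Laa(\bSigma)_+] u^{(\Psi^\bSigma(\overline v),v)}$ from Remark~\ref{rem:flatness_pr} to identify a natural set of $\C[\Laa(\bSigma)_+][z]$-module generators indexed by $\bN\cap\Delta$ (there are $|\bN_{\rm tor}|^{-1}\dim H^*_\CR(\frX)$-many up to the $\bN_{\rm tor}$-grading). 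The weak-Fano condition \eqref{eq:S_is_in_Delta} enters here: it guarantees that every element of $\bN\cap\Pi$ appearing with nonzero $u$-coordinate is comparable (in the monoid $\OO(\bSigma)_+$) to a generator indexed by $\bN\cap\Delta$, so that $\sfG$ is generated over $\C[\Laa(\bSigma)_+][z]$ by these finitely many elements; this is essentially the coherence part.

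Next I would stratify: since $\scrG$ is coherent over $\cO_B$ after the previous step, its rank is upper semicontinuous, so it suffices to exhibit a Zariski-dense set on which the fibre has dimension exactly $\dim H^*_\CR(\frX)$, together with a neighbourhood of $0_\bSigma$ on which it does not jump. For the generic fibre one computes $\scrG$ restricted to a point $(q,\chi,z)$ with $z\neq 0$ and $q$ in the open torus and identifies it, after clearing $z$, with the Jacobian ring of the equivariant potential $F_\T = F - \sum \chi_i\log x_i$ on the fibre $\cY_q \cong \Hom(\bN,\C^\times)$; by Kouchnirenko's theorem (as used in Proposition~\ref{prop:non_deg_locus} and \cite[Th\'eor\`eme 6.1]{Kouchnirenko:Newton}) the Newton-nondegenerate locus is Zariski-dense, and there the Jacobian ring has dimension equal to the normalized volume of $\Delta$, which equals $|\bN_{\rm tor}|^{-1}\dim H^*_\CR(\frX)$ by Remark~\ref{rem:cohomology_drop}. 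Restoring $z$ and the $\bN_{\rm tor}$-action gives rank $\dim H^*_\CR(\frX)$ on a dense open set of $B$.

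For the behaviour at $0_\bSigma$ I would use Proposition~\ref{prop:Briequiv_mod_z}, which gives $\Briequiv(F)\sphat_\bSigma/z\Briequiv(F)\sphat_\bSigma \cong \varprojlim_k \C[\OO(\bSigma)_+]/\frakm_\bSigma^k$, together with Lemma~\ref{lem:fibre_at_0}: the scheme-theoretic fibre of $\tpr$ at $(0,0)$ is the Borisov--Chen--Smith presentation of $H^*_\CR(\frX)$, which is Artinian of the correct length. Thus the fibre of $\scrG$ at $(0_\bSigma,0,z_0)$ has dimension $\dim H^*_\CR(\frX)$ for \emph{every} $z_0\in\C$ (the presentation is graded with $z$ acting in a way compatible with the age grading, so specializing $z$ to any value does not change the dimension). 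Combining: the rank function on $B$ is $\le \dim H^*_\CR(\frX)$ by coherence and the generic computation, equals $\dim H^*_\CR(\frX)$ at $0_\bSigma\times\Lie\T\times\C_z$, hence by semicontinuity is locally constant $=\dim H^*_\CR(\frX)$ on an open $U\times\Lie\T\times\C_z$ with $0_\bSigma\in U$; a coherent sheaf with locally constant fibre dimension over a reduced scheme is locally free, which finishes the proof. The main obstacle I anticipate is the first step — setting up the finite presentation of $\sfG$ over $\C[\Laa(\bSigma)_+][z]$ carefully enough to see coherence and to control what happens along the toric boundary divisors of $\Spec\C[\Laa(\bSigma)_+]$ (where Reichelt--Sevenheck and Mann--Reichelt work, and where the $\frakm_\bSigma$-adic and actual structures must be matched); the nondegeneracy/volume computations and the semicontinuity argument are then comparatively routine, modulo bookkeeping with $\bN_{\rm tor}$ and the equivariant parameters.
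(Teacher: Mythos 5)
The central step of your argument fails: $\sfG=\C[\OO(\bSigma)_+][z]$ is \emph{not} finitely generated over $\C[\Laa(\bSigma)_+][z]$. The decomposition of Remark~\ref{rem:flatness_pr} that you invoke says precisely that $\sfG$ is a \emph{free} $\C[\Laa(\bSigma)_+][z]$-module with basis $\{w_v\}_{v\in\bN\cap\Pi}$, an infinite set; the passage from $w_v$ to $w_{v+b}$ (for $b\in R(\bSigma)$ in the same cone as $v$) is effected by the operator $zD_b q\parfrac{}{q}+\chi(b)-z\Psi^\bSigma_b(v)$ as in Lemma~\ref{lem:relation_M}, i.e., by a differential operator in $\sfD$, not by an element of the coefficient ring, and no choice of $S\subset\Delta$ changes this. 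So $\sfG$ is only finitely generated over $\sfD$, and coherence over $\cO_{U\times\Lie\T\times\C_z}$ is exactly the nontrivial point: in the paper it is obtained by bounding the characteristic variety, namely by showing that the principal symbols $\sigma(P_{v,\lambda})$ of the GKZ-type relations~\eqref{eq:relation_in_M} have, over $q=0_\bSigma$, only the trivial common zero $\xi=0$ — and it is here (via convexity of $\Delta$ and $S\subset\Delta$) that the weak-Fano hypothesis is used. Since your semicontinuity and ``constant fibre dimension $\Rightarrow$ locally free'' steps presuppose coherence, the whole scheme collapses at this first step; this is the obstacle you flagged, but the mechanism you propose for it is false. (Incidentally, $\#(\bN\cap\Delta)$ is not $|\bN_{\rm tor}|^{-1}\dim H^*_\CR(\frX)$ in general — lattice-point count is not normalized volume.)

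Two further gaps are worth noting, even granting coherence. First, semicontinuity around points of $\{0_\bSigma\}\times\Lie\T\times\C_z$ only yields \emph{some} open neighbourhood of that non-compact slice, not one of the product form $U\times\Lie\T\times\C_z$ demanded by the statement; the paper obtains the product form from the $\C^\times$-action generated by the Euler vector field together with Lemma~\ref{lem:non-negative_grading} (non-negativity of the grading on $\C[\OO(\bSigma)_+]$, again a weak-Fano input). Second, your control of the fibre at the limit point uses only Proposition~\ref{prop:Briequiv_mod_z} and Lemma~\ref{lem:fibre_at_0}, i.e., the fibre at $(0_\bSigma,0,0)$, and the claim that specializing $z$ (and $\chi$) does not change the dimension is not justified by the grading alone; the paper instead uses the mirror isomorphism (Theorem~\ref{thm:mirror_isom}) to get $\sfG/\frakm_\bSigma\sfG\cong H^*_{\CR,\T}(\frX)[z]$, hence freeness of the correct rank along the whole slice, and then bounds the rank from below at generic $(\chi,z)$ by the convergent ``$\Loc$'' solutions (your Kouchnirenko/Adolphson computation of the generic twisted de Rham rank could serve the same purpose, but only after coherence is in place).
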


\subsection{Generators and relations}
By Remark \ref{rem:flatness_pr}, the equivariant Brieskorn module
$\sfG$ is generated by
$w_v := u^{(\Psi^\bSigma(\overline{v}),v)}$ with $v\in \bN \cap \Pi$
as a $\C[\Laa(\bSigma)_+][z]$-module.
Relations among these generators as a $\sfD$-module
are given as follows.

\begin{Lemma}
\label{lem:relation_M}
For $b\in S$, we write $e_b^\star = \widehat{D}_b+ \chi(b)$ with
$\chi(b) \in \bM$ $($recall that $D_b = D(e_b^\star) \in \LL^\star)$.
We have the following relation in $\sfG$:
\begin{gather*}
\left( \prod_{b\in S} \prod_{c=0}^{\nu_b-1}
\left(z D_b q\parfrac{}{q}+ \chi(b) -\big(\Psi^\bSigma_b(v)+c\big) z\right)
\right) w_v\\
\qquad{} = \left( \prod_{b\in S} u_b^{\nu_b} \right) w_v
= q^{\Psi(v)+\sum_{b\in S} \nu_b e_b - \Psi(v+ \sum_{b\in S}
\nu_b b)} w_{v+\sum_{b\in S} \nu_b b}
\end{gather*}
for every $(\nu_b)_{b\in S} \in (\Z_{\ge 0})^S$
and $v\in \bN \cap \Pi$.
\end{Lemma}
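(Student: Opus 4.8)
The plan is to verify both equalities by a direct computation in the monoid algebra $\C[\OO(\bSigma)_+]$, using the explicit description of the $\sfD$-action from Definition~\ref{def:Brieskorn}. The key point is that the coordinate logarithmic vector field $u_b\parfrac{}{u_b}$ acts on a monomial $u^{(\lambda,w)}$ simply as multiplication by $\langle e_b^\star,\lambda\rangle=\lambda_b$. Since $u_b^c w_v = u^{(\Psi^\bSigma(\overline v)+c\,e_b,\;v+cb)}$, this gives $u_b\parfrac{}{u_b}\cdot(u_b^c w_v)=(\Psi^\bSigma_b(v)+c)\,(u_b^c w_v)$. Note that $u_b^c w_v$ is in general \emph{not} one of the distinguished generators $w_{v'}$ — the exponents $\Psi^\bSigma(\overline v)+c\,e_b$ and $\Psi^\bSigma(\overline{v+cb})$ differ by an element of $\Laa^\bSigma_+$ — so the whole argument is carried out at the level of monomials rather than of the generators $w_{v'}$.

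First I would identify the operator on $\sfG=\C[\OO(\bSigma)_+][z]$ representing the $\sfD$-action of $e_b^\star\in(\Z^S)^\star$. Because $F=\sum_{b'\in S}u_{b'}$ has $\parfrac{F}{u_b}=1$, Definition~\ref{def:Brieskorn} gives $z\nabla_{u_b\parfrac{}{u_b}}(f\omega)=\bigl(z\,u_b\parfrac{f}{u_b}+u_b f\bigr)\omega$; write $\sfu_b:=z\,u_b\parfrac{}{u_b}+u_b$ for this operator. Using the chosen splitting $(\C^S)^\star\cong\bM_\C\oplus\LL_\C^\star$ of~\eqref{eq:ext_divseq} to write $e_b^\star=\widehat{D}_b+\chi(b)$, and the fact that $\eta\mapsto z\nabla_{\eta u\parfrac{}{u}}$ is $\C$-linear on $(\C^S)^\star$, one gets $\sfu_b=z\,D_b q\parfrac{}{q}+\chi(b)$ as operators on $\sfG$ — precisely the first factor in the statement, before subtracting $(\Psi^\bSigma_b(v)+c)z$. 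Acting on the monomial $u_b^c w_v$ (whose $b$th exponent is $\Psi^\bSigma_b(v)+c$) we then obtain the basic recursion $\bigl(\sfu_b-(\Psi^\bSigma_b(v)+c)z\bigr)(u_b^c w_v)=z(\Psi^\bSigma_b(v)+c)(u_b^c w_v)+u_b^{c+1}w_v-(\Psi^\bSigma_b(v)+c)z(u_b^c w_v)=u_b^{c+1}w_v$.

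Iterating this over $c=0,1,\dots,\nu_b-1$ (the factors for a fixed $b$ all commute, being polynomials in $\sfu_b$ and the central element $z$) gives $\prod_{c=0}^{\nu_b-1}\bigl(\sfu_b-(\Psi^\bSigma_b(v)+c)z\bigr)\,w_v=u_b^{\nu_b}w_v$. To handle all $b\in S$ at once, two observations suffice: the operators $\sfu_b$ for distinct $b$ commute (they involve only the single variable $u_b$, with no cross terms), so the order of the per-$b$ blocks in the big product is irrelevant; and the $b'$th exponent of a monomial of the form $\bigl(\prod_{b}u_b^{\mu_b}\bigr)w_v=u^{(\Psi^\bSigma(\overline v)+\sum_b\mu_b e_b,\;v+\sum_b\mu_b b)}$ equals $\Psi^\bSigma_{b'}(v)+\mu_{b'}$ because $(e_b)_{b'}=\delta_{bb'}$. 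Peeling off the product one index at a time and applying the basic recursion inside each block therefore yields $\Bigl(\prod_{b\in S}\prod_{c=0}^{\nu_b-1}\bigl(\sfu_b-(\Psi^\bSigma_b(v)+c)z\bigr)\Bigr)w_v=\bigl(\prod_{b\in S}u_b^{\nu_b}\bigr)w_v$, the first asserted equality.

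Finally, the second equality is a bookkeeping identity: with $v':=v+\sum_{b\in S}\nu_b b\in\bN\cap\Pi$, one has $\bigl(\prod_b u_b^{\nu_b}\bigr)w_v=u^{(\Psi^\bSigma(\overline v)+\sum_b\nu_b e_b,\;v')}=q^{\Psi^\bSigma(\overline v)+\sum_b\nu_b e_b-\Psi^\bSigma(\overline{v'})}\,w_{v'}$, where the exponent lies in $\Laa(\bSigma)_+$ by Lemma~\ref{lem:OO_Laa_+}: it is the difference of the element $(\Psi^\bSigma(\overline v)+\sum_b\nu_b e_b,\,v')\in\OO(\bSigma)_+$ and the minimal element $(\Psi^\bSigma(\overline{v'}),v')$ of the fibre of $\OO(\bSigma)_+\to\bN\cap\Pi$ over $v'$. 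I expect the only delicate step to be the identification in the second paragraph — pinning down the twisted operator $\sfu_b$ and its translation into the $z\,D_b q\parfrac{}{q}+\chi(b)$ notation while correctly tracking the normalizations of $\nabla$ from Definition~\ref{def:Brieskorn} — together with keeping in mind throughout that $u_b^c w_v$ is not itself a generator, which is why the eigenvalue computations must be done with the monomials; everything else is routine.
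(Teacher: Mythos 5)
Your proof is correct and follows essentially the same route as the paper's (very terse) argument: identify the action of $z D_b q\parfrac{}{q}+\chi(b)$ as $z\,u_b\parfrac{}{u_b}+u_b$ using $\parfrac{F}{u_b}=1$, exploit the eigenvalue $\Psi^\bSigma_b(v)+c$ of $u_b\parfrac{}{u_b}$ on the monomial $u_b^c w_v$ to iterate, and treat the second equality as definitional bookkeeping (with Lemma~\ref{lem:OO_Laa_+} guaranteeing the exponent lies in $\Laa(\bSigma)_+$). The only quibble is the parenthetical claim that the exponents differ by an element of $\Laa^\bSigma_+$ — in general the difference lies in $\Laa(\bSigma)_+$ (e.g.\ when $b\in G(\bSigma)$) — which you use correctly in the final paragraph anyway.
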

\begin{proof}
By definition, $z D_b q\parfrac{}{q} + \chi(b)$ acts on
$\sfG$ by $z u_b \parfrac{}{u_b} + u_b$.
The first equality follows from this
and $u_b\parfrac{}{u_b} w_v= \Psi_b(v) w_v$.
The second equality is just by definition,
see Section~\ref{subsec:coord_localchart_LG}.
\end{proof}
When $b\in R(\bSigma)$ and $v\in \bN\cap \Pi$ lie
in the same cone of $\Sigma$, we have a relation
\[
w_{v+b} =
\left(zD_b q\parfrac{}{q} + \chi(b) - z \Psi_b^\bSigma(v)\right)
w_v
\]
in $\sfG$ by Lemma \ref{lem:relation_M}.
From this we can see that $\sfG$ is generated
by finitely many $w_{v}$ as a $\sfD$-module.
For example, the set $\{w_v\colon v\in \Bx(\bSigma)\}$ generates $\sfG$.
By Lemma~\ref{lem:relation_M},
$w_v$ is annihilated by $P_{v,\lambda}\in \sfD$
\begin{gather}
P_{v,\lambda} :=
\prod_{b\in S\colon \lambda_b>0} \prod_{c=0}^{\lambda_b-1}
\left(z D_b q\parfrac{}{q}+ \chi(b) -\big(\Psi^\bSigma_b(v)+c\big) z\right) \nonumber\\
\hphantom{P_{v,\lambda} :=}{}- q^\lambda \prod_{b\in S\colon \lambda_b<0} \prod_{c=0}^{-\lambda_b-1}
\left(z D_b q\parfrac{}{q}+ \chi(b) -\big(\Psi^\bSigma_b(v)+c\big) z\right)\label{eq:relation_in_M}
\end{gather}
for any $\lambda \in \LL \cap \hNE(\frX_\bSigma) \subset \Z^S$,
cf.~{\cite[Section~5.1]{CCIT:MS}}.

\subsection{Characteristic variety and coherence}

Define an increasing filtration $\cF_l(\scrD)$ of $\scrD$ by
the rank of differential operators, i.e., $\cF_l(\scrD)$ consists
of differential operators of the form
\[
\sum_{k_1+\cdots + k_m \le l} a_k(q,\chi,z) (z \theta_1)^{k_1} \cdots (z\theta_m)^{k_m}.
\]
Choose generators $w_{v_1},\dots,w_{v_k}$ of $\sfG$
as a $\sfD$-module and introduce a filtration on $\scrG$ by
\[
\cF_l(\scrG) := \sum_{i=1}^k \cF_l(\scrD) w_{v_i}.
\]
An easy argument shows that if $\gr_\cF(\scrG)|_V$
is finitely generated as an $\cO_V$-module
for an open set $V\subset B$, then
$\scrG|_V$ is also finitely generated as an $\cO_V$-module.
We shall show that $\gr_\cF(\scrG)$ is finitely generated
on a neighbourhood of $\{0_\bSigma\}\times \Lie \T\times \C_z$.

The associated graded module $\gr_\cF(\scrG)$ is a
$\gr_\cF(\scrD)$-module generated by $w_{v_1},\dots,
w_{v_k}$.
We have $\gr_\cF(\scrD) = \cO_B[\xi_1,\dots,\xi_m]$,
where $\xi_i$ denotes the image of $z \theta_i \in \cF_1(\scrD)$.
The \emph{characteristic variety} $\Ch(\scrG)$ of $\scrG$ is defined
to be the support of $\gr_\cF(\scrG)$ as an
$\cO_B[\xi_1,\dots,\xi_m]$-module. It is a~closed subset
of $B\times \C^m$ invariant under
the dilation $(\xi_1,\dots,\xi_m)\mapsto (\lambda \xi_1,\dots,
\lambda \xi_m)$ with $\lambda \in \C^\times$.
Then $\gr_\cF(\scrG)|_V$ is a finitely generated $\cO_V$-module
over the following set $V$:
\begin{equation}\label{eq:smooth_locus}
V = \{x \in B\colon (x,\xi) \in \Ch(\scrG) \Longrightarrow \xi=0\}.
\end{equation}
Note that $\Ch(\scrG)$ induces a closed subset
$C\subset B \times \PP^{m-1}$
and $V$ is the complement of $\pi(C)$,
where $\pi \colon B\times \PP^{m-1} \to B$ is the projection;
thus $V$ is Zariski-open.

For a differential operator $a\in \cF_l(\scrD)\setminus \cF_{l-1}(\scrD)$,
its \emph{principal symbol} $\sigma(a)$ is the image of~$a$ in~$\gr_\cF(\scrD)$; explicitly
\begin{gather*}
\sigma(a) := \sum_{k_1+\cdots + k_m =l}
a_{k}(q,\chi,z) \xi_1^{k_1} \cdots \xi_m^{k_m}
\qquad
\text{if $a= \sum\limits_{k_1+\dots+k_m \le l} a_k(q,\chi,z) (z\theta_1)^{k_1}
\cdots (z\theta_m)^{k_m}$.}
\end{gather*}
The principal symbol of the relation $P_{v,\lambda}$ in~\eqref{eq:relation_in_M} is given by
\[
\sigma(P_{v,\lambda}) =
\begin{cases}
\displaystyle \prod_{b\in S\colon \lambda_b >0} D_b(\xi)^{\lambda_b} -
q^\lambda \prod_{b\in S\colon \lambda_b<0} D_b(\xi)^{-\lambda_b}
& \displaystyle\text{if} \ \sum\limits_{b\in S} \lambda_b = 0, \\
\displaystyle \prod_{b\in S\colon \lambda_b >0} D_b(\xi)^{\lambda_b}
& \displaystyle\text{if} \ \sum\limits_{b\in S} \lambda_b >0, \\
\displaystyle - q^\lambda \prod_{b\in S\colon \lambda_b<0} D_b(\xi)^{-\lambda_b}
& \displaystyle\text{if} \ \sum\limits_{b\in S}\lambda_b <0,
\end{cases}
\]
where $D_b(\xi) := \sigma\big(z D_b q\parfrac{}{q}\big)$
is a linear form in $\xi_1,\dots,\xi_m$.
Because $\sigma(P_{v,\lambda})$ is independent of $v$,
it is an annihilator of $\gr_\cF(\scrG)$.
Therefore $\Ch(\scrG)$ is contained
in the closed subset of $B \times \C^m$
defined by $\sigma(P_{v,\lambda}) = 0$ for all
$\lambda \in \LL \cap \hNE(\frX)$.

\begin{Lemma}
There exists a Zariski-open subset $U$ of $\Spec \C[\Laa(\bSigma)_+]$
containing $0_\bSigma$ such that $U \times \Lie \T \times \C_z$
is contained in the locus $V$ in \eqref{eq:smooth_locus}.
\end{Lemma}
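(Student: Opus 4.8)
The plan is to bound the characteristic variety $\Ch(\scrG)$ near the large radius limit point and then use a properness argument. Since $\sfG$ is generated over $\sfD$ by finitely many of the monomial forms $w_v$, and the principal symbols $\sigma(P_{v,\lambda})$ of the relations \eqref{eq:relation_in_M} annihilate $\gr_\cF(\scrG)$ and are independent of $v$, one has
$\Ch(\scrG)\subseteq\{(q,\chi,z,\xi)\colon \sigma(P_{v,\lambda})(q,\xi)=0\ \text{for all}\ \lambda\in\LL\cap\hNE(\frX)\}$.
The rank-$1$ symbols dropping the order-zero terms, these defining equations involve only $q$ and $\xi$ (neither $\chi$ nor $z$), so this locus has the form $W_0\times\Lie\T\times\C_z$ with $W_0\subseteq\Spec(\C[\Laa(\bSigma)_+])\times\C^m$. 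Let $U$ be the complement in $\Spec(\C[\Laa(\bSigma)_+])$ of the image of the projectivisation of $W_0\setminus\{\xi=0\}$ under the projection to $\Spec(\C[\Laa(\bSigma)_+])$; this projection is proper, so $U$ is Zariski-open, and for $q\in U$ any point of $\Ch(\scrG)$ over $(q,\chi,z)$ has $\xi=0$, i.e.\ $U\times\Lie\T\times\C_z\subseteq V$ (the locus of \eqref{eq:smooth_locus}). It therefore suffices to prove $0_\bSigma\in U$, that is, the fibre of $W_0$ over $0_\bSigma$ reduces to the zero section.

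To handle the symbols at $q=0_\bSigma$ I would first invoke the weak-Fano hypothesis and the assumption $S\subset\Delta$ to see that $\sum_{b\in S}\lambda_b\ge 0$ for every $\lambda\in\LL\cap\hNE(\frX)$: nefness of $-K_\frX$ gives this on the curve classes, and $\overline b\in\Delta$ forces $\sum_{b'}\Psi^\bSigma_{b'}(b)\le 1$, which gives it for the ghost vectors $\delta^\bSigma_b$ (recall the decomposition \eqref{eq:cone_monoid_decomp}). Hence $\sigma(P_{v,\lambda})$ equals the monomial $\prod_{b\colon\lambda_b>0}D_b(\xi)^{\lambda_b}$ when $\sum_b\lambda_b>0$, the binomial $\prod_{b\colon\lambda_b>0}D_b(\xi)^{\lambda_b}-q^{\lambda}\prod_{b\colon\lambda_b<0}D_b(\xi)^{-\lambda_b}$ when $\sum_b\lambda_b=0$, and the case $\sum_b\lambda_b<0$ never occurs. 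Setting $q=0_\bSigma$ kills every $q^{\lambda}$ with $\lambda\neq0$, so a point $\xi$ of the fibre lies in $\LL_\C$ (with coordinates $\xi_b=D_b(\xi)$ satisfying $\sum_b\xi_b\overline b=0$) and must satisfy: for every non-zero $\lambda\in\LL\cap\hNE(\frX)$ there is some $b$ with $\lambda_b>0$ and $\xi_b=0$.

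The remaining step — the purely combinatorial assertion that no $\xi\in\LL_\C\setminus\{0\}$ has this last property — is the one I expect to be the main obstacle; I would follow Mann--Reichelt \cite{Mann-Reichelt}, Reichelt--Sevenheck \cite{Reichelt-Sevenheck:logFrob} and \cite[Proposition~4.4]{Iritani:Integral}. Assuming $\xi\neq0$, set $B=\{b\in S\colon\xi_b\neq0\}\neq\varnothing$; from $\sum_{b\in B}\xi_b\overline b=0$ with all coefficients non-zero it follows that the full-support locus of the $\Q$-subspace $\{\eta\in\LL_\Q\colon\eta_b=0\ \text{for}\ b\notin B\}$ is non-empty, so there exists $0\neq\mu\in\LL$ with support contained in $B$. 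One must then show, using the description $\hNE(\frX)=\sum_{\sigma\in\Sigma(n)}C_{\bSigma,\sigma}$ and the fan-polytope (weak-Fano) structure of $\bSigma$, that $\mu$ may be chosen inside $\hNE(\frX)$; granting this, $\{b\colon\mu_b>0\}\subseteq B$, contradicting the displayed property. Establishing this last implication is the technical heart of the argument and is essentially what is imported from \cite{Mann-Reichelt}; once it is in place, $\xi=0$ is forced, hence $0_\bSigma\in U$, and the lemma follows. (In the appendix this coherence on $U\times\Lie\T\times\C_z$ is then upgraded to local freeness of rank $\dim H^*_\CR(\frX)$ via finiteness and flatness of the relative critical scheme and the length computation at $0_\bSigma$.)
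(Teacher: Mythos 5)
Your reduction is sound and coincides with the paper's: you bound $\Ch(\scrG)$ by the common zero locus of the symbols $\sigma(P_{v,\lambda})$, observe that these involve neither $\chi$ nor $z$, and use properness of the projectivised projection to produce a Zariski-open $U$, so that everything comes down to showing that the fibre of the symbol locus over $0_\bSigma$ is $\{\xi=0\}$. Your identification of $\xi$ with an element of $\LL_\C$ (so that $\xi_b=D_b(\xi)$ and $\sum_b\xi_b\overline b=0$) is correct, and your derivation of $\sum_b\lambda_b\ge 0$ for $\lambda\in\LL\cap\hNE(\frX)$ from nefness of $-K_\frX$, the decomposition \eqref{eq:cone_monoid_decomp} and \eqref{eq:S_is_in_Delta} is an acceptable substitute for Lemma~\ref{lem:non-negative_grading}.

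The problem is that the entire content of the lemma lies in the step you do not carry out. Your plan is to produce a nonzero $\mu\in\LL\cap\hNE(\frX)$ whose \emph{full} support is contained in $B=\{b\colon\xi_b\neq 0\}$, and you defer this to Mann--Reichelt; but that is not the statement proved there (or here), and it is a genuinely stronger assertion than what the situation provides. The subspace $V_B=\{\eta\in\LL_\Q\colon \eta_b=0,\ b\notin B\}$ is merely some nonzero rational subspace, and a full-dimensional strictly convex cone such as $\hNE(\frX)=\cpl(\bSigma)^\vee$ need not meet a prescribed subspace away from the origin: whether $V_B\cap\hNE(\frX)\neq\{0\}$ depends on whether the circuit hyperplanes attached to $B$ cut through the chamber $\cpl(\bSigma)$, and the weak-Fano hypothesis does not enter your sketch at the point where it would have to. The paper's argument is shaped precisely to avoid needing your claim: take a point in the relative interior of $\operatorname{Conv}\{\overline b\colon b\in B\}$, let $\sigma\in\Sigma$ be the cone whose relative interior contains it, and form $\lambda=l\big(\sum_{b\in B}c_b e_b-\sum_{b\in R(\bSigma)\cap\sigma}f_b e_b\big)$. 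This $\lambda$ lies in $\LL\cap\hNE(\frX)$ because its only possibly negative entries sit on $R(\bSigma)\cap\sigma$ (cf.~\eqref{eq:tC_bSigma_sigma}), and the weak-Fano bound $\sum_b f_b\le 1$ gives $\sum_b\lambda_b\ge 0$, so \eqref{eq:symbol_vanish_at_0} applies; only the \emph{positive} part of $\lambda$ lies in $B$, which already yields the contradiction unless $\lambda=0$, and in that degenerate case $B=R(\bSigma)\cap\sigma$, whence $\xi=0$ because $\{D_b\colon b\notin R(\bSigma)\cap\sigma\}$ spans $\LL^\star_\C$. Until you either prove your intermediate claim (with the convexity of $\Delta$ entering explicitly) or replace it by an argument of this shape, the proof is incomplete at its technical heart.
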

\begin{proof}
Note that $\sigma(P_{v,\lambda})$ does not depend on $(\chi,z)
\in \Lie \T\times \C_z$.
Therefore it suffices to show that $\xi =0$ if $\xi \in \C^m$ satisfies
\begin{equation}
\label{eq:symbol_vanish_at_0}
\sigma(P_{v,\lambda})\big|_{q=0_\bSigma}= 0 \qquad
\text{for all $\lambda \in \LL \cap \hNE(\frX)$.}
\end{equation}

Suppose that $\xi\in \C^m$ satisfies~\eqref{eq:symbol_vanish_at_0}.
We first show that there exists a cone $\sigma \in \Sigma$
such that $\{b \in S\colon D_b(\xi) \neq 0\} = R(\bSigma) \cap \sigma$.
Let $\{b_1,\dots,b_s\}$ be the set of $b\in S$ such that $D_b(\xi) \neq 0$.
The relative interior of the convex hull of $\{b_1,\dots,b_s\}$ intersects
with the relative interior of some cone $\sigma$ of $\Sigma$.
Hence we get a relation of the form
\[
\sum_{i=1}^s c_i \overline{b}_i - \sum_{b\in R(\bSigma) \cap \sigma}
f_b \overline{b} = 0
\]
for some $c_i> 0$ satisfying $\sum\limits_{i=1}^s c_i =1$
and some $f_b>0$.
The convexity of $\Delta$ together with \eqref{eq:S_is_in_Delta}
implies that $\sum\limits_{b\in R(\bSigma) \cap
\sigma} f_b \le 1$.
We may further assume that $c_i$ and $f_b$ are rational
numbers. Then for some positive integer $l$,
\[
\lambda:= \sum_{i=1}^s l c_i e_{b_i} -
\sum_{b \in R(\bSigma) \cap \sigma}
l f_b e_b \in \Z^S
\]
belong to $\LL \cap \hNE(\frX)$ (recall the definition
of $\hNE(\frX)$ around \eqref{eq:tC_bSigma_sigma}).
Note that
\[ \sum\limits_{b\in S} \lambda_b
= l \left(\sum\limits_{i=1}^s c_i - \sum\limits_{b\in R(\bSigma) \cap \sigma} f_b\right) \ge 0.\]
Therefore, if $\lambda \neq 0$, we have the relation
\[
0= \sigma(P_{v,\lambda})\big|_{q=0_\bSigma} =
\prod_{i=1}^s D_{b_i}(\xi)^{l c_i}.
\]
This contradicts the fact that $D_{b_1}(\xi) \neq 0,\dots,
D_{b_s}(\xi) \neq 0$. Hence $\lambda=0$ and
we conclude that $\{b_1,\dots,b_s\} = R(\bSigma)\cap \sigma$.

Now we have $D_b(\xi) = 0$ for all $b\notin R(\bSigma) \cap \sigma$.
Since $\{D_b\in\LL_\C^\star\colon b\notin R(\Sigma) \cap \sigma\}$ spans
$\LL^\star_\C$, we have $\xi=0$. The lemma is proved.
\end{proof}

\begin{Corollary}
There exists a Zariski-open neighbourhood $U$ of $0_\bSigma$
in $\Spec \C[\Laa(\bSigma)_+]$ such that
$\scrG|_{U\times \Lie \T\times \C_z}$ is a coherent
$\cO_{U\times \Lie\T \times \C_z}$-module.
\end{Corollary}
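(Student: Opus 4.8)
The plan is to obtain the Corollary as an immediate consequence of the preceding lemma together with the filtered-to-associated-graded reduction set up above. First I would invoke that lemma to produce a Zariski-open neighbourhood $U$ of $0_\bSigma$ in $\Spec\C[\Laa(\bSigma)_+]$ with $U\times\Lie\T\times\C_z$ contained in the locus $V$ of \eqref{eq:smooth_locus}; recall that on $V$ the characteristic variety $\Ch(\scrG)$ meets each fibre of the projection to $B$ only at the origin of $\C^m$.

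Next I would argue that over $V$ the module $\gr_\cF(\scrG)$ is a finitely generated $\cO_V$-module. Indeed, by Lemma~\ref{lem:relation_M} (applied to rays $b\in R(\bSigma)$) the Brieskorn module $\scrG$ is generated over $\scrD$ by finitely many $w_{v_i}$, so $\gr_\cF(\scrG)$ is generated over $\gr_\cF(\scrD)=\cO_B[\xi_1,\dots,\xi_m]$ by their images, and in particular is coherent over $\cO_B[\xi_1,\dots,\xi_m]$. By the definition of $V$, its support over $V$ lies in the zero section $\{\xi_1=\dots=\xi_m=0\}$, so by Noetherianity some power $(\xi_1,\dots,\xi_m)^N$ annihilates $\gr_\cF(\scrG)|_V$ (locally, hence on $V$ after refining the open cover); consequently $\gr_\cF(\scrG)|_V$ is generated over $\cO_V$ by the finitely many $w_{v_i}\,\xi^{\alpha}$ with $|\alpha|<N$, and is graded in bounded degrees. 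As already noted in the text, this forces the filtration $\cF_l(\scrG)|_V$ to stabilise at some $l_0$, so $\scrG|_V=\cF_{l_0}(\scrG)|_V$, which is a successive extension of the finitely many coherent $\cO_V$-modules $\gr_l(\scrG)|_V$, $l\le l_0$, hence itself coherent over the Noetherian sheaf of rings $\cO_V$. Restricting to $U\times\Lie\T\times\C_z\subset V$ gives the Corollary.

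I do not expect a serious obstacle here: the substance—bounding $\Ch(\scrG)$ via the principal symbols $\sigma(P_{v,\lambda})$ of the relations \eqref{eq:relation_in_M} and using the convexity of the fan polytope $\Delta$ together with the hypothesis \eqref{eq:S_is_in_Delta}—has already been carried out in the preceding lemma. The only points requiring a little care are the passage from ``support on the zero section'' to ``finite over $\cO_V$'' (which is precisely where coherence of $\gr_\cF(\scrG)$ over $\cO_B[\xi_1,\dots,\xi_m]$, and hence the finite generation of $\scrG$ over $\scrD$, enters) and checking compatibility with the $\C_z$- and $\Lie\T$-directions; the latter is routine since the principal symbols $\sigma(P_{v,\lambda})$ are independent of the variables $(\chi,z)$.
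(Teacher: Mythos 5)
Your proposal is correct and follows essentially the same route as the paper: the corollary is there stated as an immediate consequence of the preceding lemma together with the facts, asserted when the filtration and characteristic variety are introduced, that $\gr_\cF(\scrG)$ is $\cO$-finite over the locus $V$ of \eqref{eq:smooth_locus} and that this implies finite generation (hence coherence) of $\scrG$ itself. You have merely supplied the details of that "easy argument" (annihilation of $\gr_\cF(\scrG)|_V$ by a power of $(\xi_1,\dots,\xi_m)$, stabilisation of the filtration, and extension of coherent graded pieces), all of which are accurate.
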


\subsection{Locally freeness and rank}
We complete the proof of Proposition \ref{prop:weakFano_Bri_locallyfree}.
Recall from Theorem \ref{thm:mirror_isom} that the completed
equivariant Brieskorn module is isomorphic to the quantum
D-module of $\frX$. Thus we have
\[
\sfG/\frakm_\bSigma \sfG \cong \widehat{\sfG}
/\frakm_\bSigma \widehat{\sfG} \cong H^*_{\CR,\T}(\frX)[z],
\]
where $\frakm_\bSigma \subset \C[\Laa(\bSigma)_+]$ denotes
the maximal ideal corresponding to $0_\bSigma$
and $\widehat{\sfG}$ denotes the $\frakm_\bSigma$-adic
completion as discussed
in Section~\ref{subsec:completion}. This implies that
the restriction $\scrG|_{\{0_\bSigma\} \times \Lie \T \times \C_z}$
is free of rank $\dim H^*_\CR(\frX)$.
Hence by coherence, $\scrG$ is generated by
$\dim H^*_\CR(\frX)$ many sections
in a neighbourhood of $\{0_\bSigma\} \times \Lie \T \times \C_z$.
On the other hand, the localization map ``$\Loc$'' appearing
in \cite[Definition 4.17]{CCIT:MS} gives $\dim H^*_\CR(\frX)$
many linearly independent solutions of $\scrG$ for a generic $(\chi,z)
\in \Lie \T \times \C_z$; the weak Fano condition ensures
the convergence of the power series solution $\Loc$.
This implies that $\scrG$ is locally
free of rank $\dim H_\CR^*(\frX)$
in a neighbourhood of $\{0_\bSigma\} \times
\Lie \T \times \C_z$.

To see that $\scrG$ is locally free on an open set of the
form $U \times \Lie \T \times \C_z$ (for some open
set $U \subset \Spec \C[\Laa(\bSigma)_+]$ containing $0_\bSigma$),
we use the grading operator \eqref{eq:grading_B}.
The grading operator makes $\scrG$ a $\C^\times$-equivariant sheaf;
the induced $\C^\times$-action on the base is given by
the same grading operator on $\C[\Laa(\bSigma)_+][z]$ and
the weight one $\C^\times$-action on $\Lie \T$.

\begin{Lemma}[cf.~{\cite[Section~3.1.4]{Iritani:Integral}}]
\label{lem:non-negative_grading}
When $\frX$ is weak Fano, $\C[\OO(\bSigma)_+]$ is non-negatively
graded with respect to the grading operator~\eqref{eq:grading_B}.
\end{Lemma}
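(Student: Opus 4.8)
The plan is to prove Lemma~\ref{lem:non-negative_grading} by computing the $\Gr$-grading of each monomial of $\C[\OO(\bSigma)_+]$ explicitly and checking non-negativity. First I would identify the grading. Modulo $z$, the operator in~\eqref{eq:grading_B} is the toric derivation $\sum_{b\in S}u_b\parfrac{}{u_b}$ of $\C[\OO(\bSigma)_+]$. Being toric it acts on the character $u^{(\lambda,v)}$ by a scalar $\zeta(\lambda,v)$, where $\zeta\colon\OO(\bSigma)\otimes\Q\to\Q$ is linear (from $u^{(m+m')}=u^{m}u^{m'}$ and the Leibniz rule); since $\zeta(e_b,b)=1$ for every $b\in S$ while $\{(e_b,b):b\in S\}$ is a $\Q$-basis of $\OO(\bSigma)\otimes\Q$, necessarily $\zeta(\lambda,v)=\sum_{b\in S}\lambda_b$, the sum of the $\Q^S$-coordinates of $(\lambda,v)\in\OO(\bSigma)\subset\Q^S\oplus\bN$. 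Thus the claim reduces to the inequality $\sum_{b\in S}\lambda_b\ge 0$ for every $(\lambda,v)\in\OO(\bSigma)_+$.

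Next I would reduce to $\Laa(\bSigma)_+$. By Lemma~\ref{lem:OO_Laa_+} one may write $(\lambda,v)=(\Psi^\bSigma(\overline{v}),v)+(\mu,0)$ with $\mu\in\Laa(\bSigma)_+$, so $\sum_b\lambda_b=\sum_b\Psi^\bSigma_b(\overline{v})+\sum_b\mu_b$; the first summand is $\ge 0$ directly from Notation~\ref{nota:Psi}. For the second, note $\mu\in\LL_\Q\subset\Q^S$, and since each $D_b=D(e_b^\star)\in\LL^\star$ acts as the $b$-th coordinate on $\LL$, $\sum_{b\in S}\mu_b=(\sum_{b\in S}D_b)\cdot\mu$. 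Using~\eqref{eq:cone_monoid_decomp} I would decompose $\mu=\mu'+\sum_{b\in G(\bSigma)}n_b\delta_b^\bSigma$ with $\mu'\in\Laa^\bSigma_+=\Laa^\bSigma\cap\NE(\frX_\bSigma)$ and $n_b\in\Z_{\ge 0}$. Because $D_b$ is trivial on $\frX_\bSigma$ for ghost rays $b\in G(\bSigma)$, one has $\sum_{b\in S}\overline{D}_b=c_1(\frX_\bSigma)$, so $(\sum_b D_b)\cdot\mu'$ is the intersection number $c_1(\frX_\bSigma)\cdot\mu'$, which is $\ge 0$ by the weak-Fano hypothesis (nefness of $-K_{\frX_\bSigma}$) together with $\mu'\in\NE(\frX_\bSigma)$. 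For a ghost ray $b\in G(\bSigma)$, $\delta_b^\bSigma=e_b-\Psi^\bSigma(b)$ gives $(\sum_{b'}D_{b'})\cdot\delta_b^\bSigma=1-\sum_{b'\in S}\Psi^\bSigma_{b'}(b)$, so it remains to prove $\sum_{b'\in S}\Psi^\bSigma_{b'}(b)\le 1$.

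This last inequality is the main obstacle, and it is exactly where the standing assumption $S\subset\bN\cap\Delta$ of~\eqref{eq:S_is_in_Delta} enters (recall the fan polytope $\Delta$ is the union of the simplices $\Delta_{\sigma'}:=\mathrm{conv}(\{0\}\cup\{\overline{b'}:b'\in R(\bSigma)\cap\sigma'\})$ over the maximal cones $\sigma'$ of $\Sigma$). I would argue: let $\sigma_0\in\Sigma$ be the unique cone whose relative interior contains $\overline{b}$; since $\overline{b}\in\Delta$, the fan axioms force $\overline{b}\in\Delta_{\sigma'}$ for some maximal cone $\sigma'\supseteq\sigma_0$ (any $\Delta_{\sigma''}$ meeting the relative interior of $\sigma_0$ must have $\sigma''\supseteq\sigma_0$, as $\Delta_{\sigma''}\subseteq\sigma''$ and intersections in a fan are faces). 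Writing $\overline{b}$ in barycentric coordinates of $\Delta_{\sigma'}$ and using uniqueness of the expansion in the simplicial cone $\sigma'$ — which annihilates the coefficients of rays not in $\sigma_0$ and recovers precisely the $\Psi^\bSigma$-coefficients, these being independent of the choice of cone containing $\overline{b}$ by compatibility of the construction under passing to the face $\sigma_0$ — one finds that the coefficient of the vertex $0$ equals $1-\sum_{b'\in S}\Psi^\bSigma_{b'}(b)$ and is $\ge 0$. Assembling the three non-negativity statements yields $\sum_{b\in S}\lambda_b\ge 0$ for all $(\lambda,v)\in\OO(\bSigma)_+$, which is the claim; everything besides this last step is routine bookkeeping with the exact sequences of Sections~\ref{subsec:data}--\ref{subsec:ext_refined_fanseq}.
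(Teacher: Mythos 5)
Your proof is correct, but it takes a genuinely different route from the paper's. The paper makes the same initial reduction -- non-negativity of $\sum_{b\in S}\lambda_b$ on $\OO(\bSigma)_+$, which it treats as immediate rather than deriving as you do -- and then, using $\hOEf(\frX_\bSigma)=\sum_{\sigma\in\Sigma(n)}\tC_{\bSigma,\sigma}$ and linearity of the functional, checks the inequality cone by cone: for a maximal cone $\sigma$ it takes the linear function $h\colon\bN_\R\to\R$ with $h(\overline{b})=1$ for $b\in R(\bSigma)\cap\sigma$, notes that convexity of $\Delta$ (the guise in which weak Fano enters there) together with~\eqref{eq:S_is_in_Delta} gives $h(\overline{b})\le 1$ for all $b\in S$, and concludes $0\le h(\beta(\lambda))\le \sum_{b\in S}\lambda_b$ for every $\lambda\in\tC_{\bSigma,\sigma}$. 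You instead split each element of $\OO(\bSigma)_+$ via Lemma~\ref{lem:OO_Laa_+} and~\eqref{eq:cone_monoid_decomp} into a $\Psi^\bSigma$-part, a curve class in $\Laa^\bSigma_+=\Laa^\bSigma\cap\NE(\frX_\bSigma)$, and ghost contributions $\delta_b^\bSigma$, and verify non-negativity piecewise: trivially, by nefness of $-K_{\frX_\bSigma}$ paired against the Mori cone, and by the simplex argument from $S\subset\Delta$, respectively. Your route is longer but makes the geometric content of the grading visible -- the degree of $q^\lambda$ for a curve class $\lambda$ is $c_1(\frX_\bSigma)\cdot\lambda$, and the degree of a ghost coordinate $t_b$ is $1-\sum_{b'}\Psi^\bSigma_{b'}(b)$, consistent with the grading of the mirror coordinates in Remark~\ref{rem:mirror_map_asymptotic} -- at the cost of invoking the identification of the combinatorial cone $\NE(\frX_\bSigma)$ with the honest Mori cone (asserted in Section~\ref{subsec:ext_refined_fanseq}) so that nefness applies, whereas the paper's argument uses weak Fano only through convexity of the fan polytope and avoids intersection theory altogether.
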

\begin{proof}It suffices to show that $\sum\limits_{b\in S} \lambda_b \ge 0$ for any
$(\lambda,v) \in \OO(\bSigma)_+$. By the definition~\eqref{eq:O+Laa+} of~$\OO(\bSigma)_+$, it
suffices to show that $\sum\limits_{b\in S} \lambda_b \ge 0$ for all
maximal cones $\sigma\in \Sigma$ and
$\lambda \in \tC_{\bSigma,\sigma}$
(see~\eqref{eq:tC_bSigma_sigma}).
Define a linear function $h \colon \bN_\R\to \R$ by $h(\overline{b}) = 1$
for all $b\in R(\bSigma) \cap \sigma$.
Then the weak Fano condition (i.e., the convexity of $\Delta$) together with
\eqref{eq:S_is_in_Delta} implies that $h(\overline{b}) \le 1$
for all $b\in S$. Hence, for $\lambda \in \tC_{\bSigma,\sigma}$, we have
\[
0\le h(\beta(\lambda)) = \sum_{b\in R(\bSigma) \cap \sigma}
\lambda_b + \sum_{b\notin R(\bSigma)\cap \sigma}
\lambda_b h(\overline{b}) \le \sum_{b\in S} \lambda_b.
\]
This proves the lemma.
\end{proof}

In particular, $\C[\Laa(\bSigma)_+]$ is non-negatively graded.
Because the locus where $\scrG$ is locally free is preserved by
the $\C^\times$-action and contains a neighbourhood of
$(0_\bSigma,0,0) \in B$, it follows that $\scrG$ is locally free
on an open set of the form $U\times \Lie \T \times \C_z$.
The proof of Proposition \ref{prop:weakFano_Bri_locallyfree}
is now complete.

\begin{Remark}The generic rank of the GKZ system has been studied by many people,
notably by Gelfand--Kapranov--Zelevinsky~\cite{GKZ:hypergeom},
Adolphson \cite{Adolphson}, Matusevich--Miller--Walther~\cite{MMW}
and has been identified with the volume\footnote{The volume is normalized so that
the standard simplex has volume~1. When we allow $\bN$ to have torsions,
the generic rank is $|\bN_{\rm tor}|\times \vol(\Delta)$.}
of $\Delta$ (when $\chi$ is not special).
Over the open torus $(\C^\times)^m$ contained in
$\Spec \C[\Laa(\bSigma)_+]$,
the Brieskorn module in this paper corresponds to
the better-behaved GKZ system of Borisov--Horja
\cite{Borisov-Horja:bbGKZ}; they showed that the
generic rank of the better-behaved GKZ system
equals $\vol(\Delta)$ (independently of $\chi$).
\end{Remark}

\section{Proof of Lemma \ref{lem:M-tame}} \label{append:M-tame}

We only prove Part (1) of the lemma; the argument for Part~(2)
is the same.
It is easy to see that $F_{q,0}^{-1}(\bu)$ and $\partial A_{q,0}(\eta)$
intersect transversally at $x\in F_{q,0}^{-1}(\bu)
\cap \partial A_{q,0}(\eta)$ if and only if $\grad F_{q,0}(x)$
and $\grad H(x)$ are linearly independent over $\C$,
where we set
\[
\grad f(x) = \left(\overline{x_i \parfrac{f}{x_i}}\right)_{i=1}^n.
\]
Suppose that the lemma is not true. Then we can find sequences
$(q(k), \bu(k))\in K$
and $x(k) \in \cYsm_{q(k)}$ such that the following holds:
\begin{itemize}\itemsep=0pt
\item $\eta_k = H(x(k)) \to \infty$ as $k\to \infty$;
\item $F_{q(k),0}(x(k)) = \bu(k)$;
\item $\grad F_{q(k),0}(x(k))$ and $\grad H(x(k))$ are linearly dependent
over $\C$, i.e., there exists $\alpha_k\in \C$ such that $\grad F_{q(k),0}(x(k)) =
\alpha_k \grad H(x(k))$.
\end{itemize}
Define an $\R^n$-valued function $\bv(x)$ by
\[
\bv(x) = \sum_{b\in S_-} |x^b|^2 \overline{b}.
\]
Then we have
\[
\grad H(x) = \frac{\bv(x)}{2 H(x)}.
\]
Writing $\bv \cdot \bw = \sum\limits_{i=1}^n \bv_i \bw_i$ for
the $\C$-bilinear scalar product, the third condition above can be written as
\[
\grad F_{q(k),0}(x(k)) = \frac{\grad F_{q(k),0}(x(k)) \cdot \bv(x(k))}
{\|\bv(x(k))\|^2}
\bv(x(k)).
\]
By the curve selection lemma in \cite[Lemma 2]{Nemethi-Zaharia:Milnor},
we can find a real analytic curve $(0,\epsilon) \ni s\mapsto (q(s), \bu(s), x(s))$
admitting a Laurent expansion at $s=0$
such that for $0<s<\epsilon$,
$(q(s),\bu(s)) \in K$,
$F_{q(s),0}(x(s)) = \bu(s)$,
\begin{equation}
\label{eq:linear_dependence_s}
\grad F_{q(s),0}(x(s)) = \frac{\grad F_{q(s),0}(x(s)) \cdot \bv(x(s))}
{\|\bv(x(s)\|^2} \bv(x(s)),
\end{equation}
and $\lim\limits_{s\to +0} H(x(s)) =\infty$.
Since $K$ is compact, $(q(0),\bu(0)) :=
\lim\limits_{s\to +0} (q(s),\bu(s))$ exists in $K$.
We write $x_i(s) = s^{\xi_i} a_i (1+ O(s))$ with $a_i\neq 0$
and $\xi_i \in \Z$. Then the leading term for $H(x(s))^2$ is
\begin{equation}\label{eq:H_leading}
H(x(s))^2 = \left( \sum_{b\in \sigma}
|a|^{2b}\right) s^{-2m} + \text{higher order terms},
\end{equation}
where we set $\sigma := \{b\in S_-\colon \xi \cdot \overline{b} = -m\}$
with $\xi = (\xi_1,\dots,\xi_n)$ and
$-m := \min\{\xi\cdot \overline{b} \colon b\in S_-\}$, and
$|a|^{2b} =\prod\limits_{i=1}^n |a_i|^{2b_i}$.
Note that elements of
$\sigma$ spans a face of the polytope~$\Delta_-$.
Since $\lim\limits_{s\to +0} H(x(s)) =\infty$, we have $m>0$.
We calculate:
\begin{gather*}
\bv(s) = \left( \sum_{b\in \sigma} |a|^{2b} \overline{b}\right) s^{-2m} +
\text{higher order terms}, \\
\bv(s) \cdot \frac{d \log x(s)}{ds}
 = - m \left(\sum_{b\in \sigma} |a|^{2b}\right)
s^{-2m-1} + \text{higher order terms}, \\
\overline{\grad F_{q(s),0}(x(s))} = \sum_{b\in S_-}
q(s)^{\ell_b} x(s)^b \overline{b}
= \left( \sum_{b\in \sigma} a^b q(0)^{\ell_b} \overline{b} \right) s^{-m}
+ \text{higher order terms}.
\end{gather*}
By differentiating the equality
$\bu(s) = F_{q(s),0}(x(s))$ in $s$, we get
\begin{gather}
\frac{du(s)}{ds} - \sum_{i=1}^n \parfrac{F_{q(s),0}(x(s))}{q^i}
\frac{dq^i(s)}{ds}
= \overline{\grad F_{q(s),0}(x(s))} \cdot \frac{d\log x(s)}{ds}\nonumber \\
\qquad{}= \left(\overline{\grad F_{q(s),0}(x(s))} \cdot
\frac{\bv(x(s))}{\|\bv(x(s)\|^2} \right)
\cdot
\left(\bv(x(s)) \cdot \frac{d\log x(s)}{ds}\right),\label{eq:diff_in_s}
\end{gather}
where $\{q^i\}$ denotes a local co-ordinate system on $\cMsmtimes$
and we used~\eqref{eq:linear_dependence_s} in the second line.
We compare the leading order terms as $s\to +0$.
By the above calculation, the right-hand side of~\eqref{eq:diff_in_s}
has the leading term
\begin{equation}
\label{eq:leading_lhs}
c
\left( \sum_{b\in \sigma} a^b q(0)^{\ell_b} \overline{b} \right)
\cdot
\left( \sum_{b\in \sigma}|a|^{2b} \overline{b}\right) s^{-m-1}
\end{equation}
with
$c = - m \sum\limits_{b\in \sigma}|a|^{2b} \neq 0$.
On the other hand, because $q(s)$, $\bu(s)$ are regular at $s=0$ and
\[
\left |\parfrac{F_{q,0}(x(s))}{q^i}\right |
= \left |\sum_{b\in S} \parfrac{q(s)^{\ell_b}}{q^i} x^b\right |
\le C \cdot H(x(s)) \qquad \text{for some $C>0$},
\]
the left-hand side of \eqref{eq:diff_in_s} has poles of order
at most $m$ by \eqref{eq:H_leading}.
Hence the quantity \eqref{eq:leading_lhs} must
vanish. This together with \eqref{eq:linear_dependence_s} implies:
\begin{gather*}
\overline{\grad F_{q(s),0}(x(s))} =
\frac{\overline{\grad F_{q(s),0}(x(s))} \cdot \bv(x(s))}
{\|\bv(x(s)\|^2} \bv(x(s)) =O\big(s^{-m+1}\big).
\end{gather*}
On the other hand, Proposition~\ref{prop:strong_tame}
and~\eqref{eq:H_leading} give an estimate of the form
\[
\|\grad F_{q(s),0}(x(s))\| \ge \epsilon_1 H(x(s)) \ge \epsilon_2 s^{-m}
\]
for sufficiently small $s>0$ (for some $\epsilon_1, \epsilon_2>0$).
These two estimates contradict each other. Lemma~\ref{lem:M-tame} is proved.

\subsection*{Acknowledgements}
I would like to thank Pedro Acosta, Arend Bayer, Andrea Brini, Tom Coates,
Alessio Corti,
Sergey Galkin,
Vasily Golyshev,
Eduardo Gonz\'alez,
Claus Hertling,
Yuki Hirano,
Paul Horja,
Yunfeng Jiang,
Yuan-Pin Lee,
Chiu-Chu Melissa Liu,
Wanmin Liu,
Thomas Reichelt,
Yongbin Ruan,
Kyoji Saito,
Fumihiko Sanda,
Christian Sevenheck,
Yota Shamoto,
Mark Shoemaker,
Takuro Mochizuki,
Mauricio Romo,
Hsian-Hua Tseng,
Chris Woodward
for many insightful discussions and explanations.
I also thank the anonymous referees for their careful reading and helpful comments.
This work is supported by
EPSRC grant EP/E022162/1,
and JSPS Kakenhi Grants Number
22740042, 23224002, 24224001, 25400069, 26610008,
16K05127, 16H06335, 16H06337 and 17H06127.
Part of this work was done while I was in residence at
the Mathematical Sciences Research Institute
in Berkeley, California, during the Spring semester of 2018
and the stay was supported
by the National Science Foundation under
Grant No.~DMS-1440140. I~thank the organizers and
participants of the programme for many stimulating discussions.

\addcontentsline{toc}{section}{References}
\LastPageEnding

\end{document}